\def\l@section{\@tocline{1}{12pt plus2pt}{0pt}{}{\bfseries}}
\def\l@subsection{\@tocline{2}{0pt}{2pc}{2pc}{}}
\def\subsection{\@startsection{subsection}{2}{\z@}%
	{-3.25ex\@plus -1ex \@minus -.2ex}%
	{1.5ex \@plus .2ex}%
	{\normalfont\bfseries\boldmath}}
\def\subsubsection{\@startsection{subsubsection}{3}%
	\z@{.5\linespacing\@plus.7\linespacing}{-.5em}%
	{\normalfont\bfseries\boldmath}}
\renewcommand\paragraph{\@startsection{paragraph}{4}{\z@}%
	{3.25ex \@plus1ex \@minus.2ex}%
	{-1em}%
	{\normalfont\normalsize\bfseries}}
\theoremstyle{plain}
\newtheorem{thm}{Theorem}[section]
\newtheorem{cor}[thm]{Corollary}
\newtheorem{lem}[thm]{Lemma}
\newtheorem{prop}[thm]{Proposition}
\theoremstyle{definition}
\theoremstyle{remark}
\newtheorem{rem}[thm]{Remark}
\newtheorem{obs}[thm]{Observation}
\theoremstyle{plain}
\newtheorem{conj}[thm]{Conjecture}
\numberwithin{equation}{section}
\theoremstyle{plain} 
\newcommand{\thistheoremname}{}
\newtheorem{genericthm}[thm]{\thistheoremname}
  \newtheorem*{genericthm*}{\thistheoremname}
\newenvironment{namedthm*}[1]
  {\renewcommand{\thistheoremname}{#1}%
   \begin{genericthm*}}
  {\end{genericthm*}}
\newcommand{\ep}{\epsilon}
\newcommand{\al}{\alpha}
\newcommand{\del}{\delta}
\newcommand{\R}{{\mathbb R}}
\newcommand{\Q}{{\mathbb Q}}
\newcommand{\N}{{\mathbb N}}
\newcommand{\Z}{{\mathbb Z}}
\newcommand{\TT}{{\mathbb T}}
\newcommand{\calA}{{\mathcal A}}
\newcommand{\calB}{{\mathcal B}}
\newcommand{\calF}{{\mathcal F}}
\newcommand{\calH}{{\mathcal H}}
\newcommand{\calI}{{\mathcal I}}
\newcommand{\calJ}{{\mathcal J}}
\newcommand{\calL}{{\mathcal L}}
\newcommand{\calN}{{\mathcal N}}
\newcommand{\calR}{{\mathcal R}}
\newcommand{\calS}{{\mathcal S}}
\newcommand{\calU}{{\mathcal U}}
\newcommand{\calV}{{\mathcal V}}
\newcommand{\supp}{{\textrm{supp}}}
\newcommand{\vast}{\bBigg@{4}}
\newcommand{\Vast}{\bBigg@{5}}
\newcommand{\frakA}{{\mathfrak A}}
\newcommand{\frakB}{{\mathfrak B}}
\newcommand{\frakC}{{\mathfrak C}}
\newcommand{\frakD}{{\mathfrak D}}
\newcommand{\frakm}{{\mathfrak m}}
\newcommand{\frakM}{{\mathfrak M}}
\newcommand{\frakE}{{\mathfrak E}}
\newcommand{\frakF}{{\mathfrak F}}
\newcommand{\frakG}{{\mathfrak G}}
\newcommand{\frakH}{{\mathfrak H}}
\newcommand{\frakJ}{{\mathfrak J}}
\newcommand{\frakj}{{\mathfrak j}}
\newcommand{\sfp}{{\mathsf p}}
\newcommand{\bbS}{{\mathbb S}}
\newcommand{\bbA}{{\mathbb A}}
\newcommand{\bbB}{{\mathbb B}}
\newcommand{\bbU}{{\mathbb U}}
\newcommand{\frakl}{{\mathfrak l}}
\newcommand{\TFC}{\textnormal{{\bf TFC}}}
\newcommand{\sTFC}{\textnormal{{$\mathsf{TFC}$}}}
\newcommand{\cTFC}{\textnormal{{$\mathcal{TFC}$}}}
\newcommand{\sfa}{\textnormal{{$\mathsf{a}$}}}
\newcommand{\sfb}{\textnormal{{$\mathsf{b}$}}}
\newcommand{\sfc}{\textnormal{{$\mathsf{c}$}}}
\newcommand{\sfS}{\textnormal{{$\mathsf{S}$}}}
\newcommand{\sfU}{\textnormal{{$\mathsf{U}$}}}
\def\udot#1{\ifmmode\oalign{$#1$\crcr\hidewidth.\hidewidth
    }\else\oalign{#1\crcr\hidewidth.\hidewidth}\fi}
\def\R{\mathbb{R}}
\def\Z{\mathbb{Z}}
\def\T{\mathbb{T}}
\def\Q{\mathbb{Q}}
\def\beq{\begin{equation}}
\def\eeq{\end{equation}}
\newcommand{\doublewidetilde}[1]{{%
  \mathpalette\double@widetilde{#1}%
}}
\newcommand{\double@widetilde}[2]{%
  \sbox\z@{$\m@th#1\widetilde{#2}$}%
  \ht\z@=.9\ht\z@
  \widetilde{\box\z@}%
}
\def\one{\mbox{1\hspace{-4.25pt}\fontsize{12}{14.4}\selectfont\textrm{1}}}
\def\@makefnmark{%
  \leavevmode
  \raise.9ex\hbox{\fontsize\sf@size\z@\normalfont\tiny\@thefnmark}}
\begin{document}
	
\title[On the curved Trilinear Hilbert transform]{On the curved Trilinear Hilbert transform}
\author{Bingyang Hu and Victor Lie}

\address{Bingyang Hu: Department of Mathematics, Purdue University, 150 N. University St, W. Lafayette, IN 47907, U.S.A.}%
\email{hu776@purdue.edu}

\address{Victor D. Lie: Department of Mathematics, Purdue University, 150 N. University St, W. Lafayette, IN 47907, U.S.A.  and
The ``Simion Stoilow" Institute of Mathematics of the Romanian Academy, Bucharest, RO 70700, P.O. Box 1-764, Romania}%
\email{vlie@purdue.edu}

\begin{abstract} Building on the (Rank I) LGC-methodology introduced by the second author in \cite{Lie19} and on the novel perspective employed in the time-frequency discretization of the non-resonant bilinear Hilbert--Carleson operator, \cite{BBLV21}, we develop a new, versatile method---referred to as Rank II LGC---that has as a consequence the resolution of the $L^p$ boundedness of the trilinear Hilbert transform along the moment curve. More precisely, we show that the operator
\begin{equation*}
H_{C}(f_1, f_2, f_3)(x):= \textrm{p.v.}\,\int_{\R} f_1(x-t)f_2(x+t^2)f_3(x+t^3) \frac{dt}{t}, \quad x \in \R\,,
\end{equation*}
is bounded from $L^{p_1}(\R)\times L^{p_2}(\R)\times L^{p_3}(\R)$ into $L^{r}(\R)$ within the Banach H\"older range
$\frac{1}{p_1}+\frac{1}{p_2}+\frac{1}{p_3}=\frac{1}{r}$ with $1<p_1,p_3<\infty$, $1<p_2\leq \infty$ and $1\leq r <\infty$.

A crucial difficulty in approaching this problem is the lack of absolute summability for the linearized discretized model (derived via Rank I LGC method) of the quadrilinear form associated to $H_{C}$. In order to overcome this, we develope a so-called \emph{correlative} time-frequency model whose control is achieved via the following interdependent elements: (1) a sparse-unform decomposition of the input functions adapted to an appropriate time-frequency foliation of the phase-space, (2) a structural analysis of suitable maximal ``joint Fourier coefficients", and (3)  a level set analysis with respect to the time-frequency correlation set.
\end{abstract}
\date{\today}

\keywords{Trilinear Hilbert transform, correlative time-frequency/wave-packet analysis, joint Fourier coefficients,  zero/non-zero curvature, LGC method, sparse-uniform decomposition, level set analysis, time-frequency correlation set.}

\thanks{}

\dedicatory{Dedicated to Nicolae Popa on the\\ occasion of his $80^{\textrm{th}}$ birthday celebration.}

\maketitle

\setcounter{tocdepth}{2}
\tableofcontents

\newpage
\section{Introduction}

In this paper, we study the boundedness properties of the \emph{curved} tri-linear Hilbert transform defined by
\begin{equation}\label{CurvedTHTdef}
H_{C}(f_1, f_2, f_3)(x):= \textrm{p.v.}\,\int_{\R} f_1(x-t)f_2(x+t^2)f_3(x+t^3) \frac{dt}{t}, \quad x \in \R.
\end{equation}
where here $f_1, f_2, f_3 \in \calS(\R)$ are Schwarz functions.

The interest in considering this topic is multifaceted---with motivations from the areas of harmonic analysis, ergodic theory,  additive combinatorics and number theory---and will be detailed later in Section \ref{Motivation}. For the time being, embracing only the time-frequency perspective, we mention that this is the first proof in the literature of the natural Banach range\footnote{In the sense that we require both the domain and the codomain of $H_{C}$ to be Banach spaces.} bound for a trilinear singular oscillatory integral operator with translation and dilation symmetries and kernel singularity that is directly related to the celebrated open problem regarding the boundedness of the trilinear Hilbert transform defined by
\begin{equation}\label{THT}
{\bf T}(f_1, f_2, f_3)(x):= \textrm{p.v.}\,\int_{\R} f_1(x+t)f_2(x+2t)f_3(x+3t) \frac{dt}{t}, \quad x \in \R.
\end{equation}

For approaching the operator $H_{C}$ we develop a new method---referred to as {Rank II LGC---that
builds on the work on the (non-resonant) bilinear Hilbert--Carleson operator, \cite{BBLV21}, and further originates in the (Rank I) LGC-methodology\footnote{The abbreviation ``LGC" stands for the three main steps composing this method: (L)inearization, (G)abor frame decomposition, (C)ancellation via (C)orrelation.} introduced by the second author in 2019 in \cite{Lie19}. For more on the origin, motivation, and main steps in the development of this method, we invite the reader to consult Section \ref{LGCphil}, Section \ref{Keyideas}, and also Section \ref{SDTmotiv}. Indeed, readers who are less familiar with the
literature--in particular with the role of the zero/non-zero curvature dichotomy in the context of multilinear
singular integral operators and the challenges presented by modulation symmetries in the time-frequency
discretization of such operators--may wish to consult those subsections before proceeding further.

\subsection{Main results; future directions of investigation}\label{MR}
\medskip

The main result of our paper is given by the following

\begin{thm} \label{MAINthm}
Let $\vec{f}:=(f_1,f_2,f_3,f_4)\in \calS^4(\R)$ and, via a dualization argument, let us  represent \eqref{CurvedTHTdef} by the quadrilinear form\footnote{Throughout the paper, for notational simplicity, we ignore in our writing the effect of the conjugation. Also, in several instances, when convenient, we ignore the principal value symbol.}
\begin{equation}\label{DefL}
\Lambda(\vec{f}):=\int_{\R}\int_{\R} f_1(x-t)f_2(x+t^2)f_3(x+t^3)f_4(x)\frac{dtdx}{t}\,.
\end{equation}

Then, for $\vec{p}:=(p_1,p_2,p_3,p_4)$ and $\|\vec{f}\|_{L^{\vec{p}}}:=\prod_{j=1}^4 \|f_j\|_{L^{p_j}}$, the following holds
\begin{equation}\label{boundQ}
|\Lambda(\vec{f})|\lesssim_{\vec{p}} \|\,\vec{f}\,\|_{L^{\vec{p}}}\,,
\end{equation}
for any $\vec{p}\in\textbf{B}:=\left\{(p_1,p_2,p_3,p_4)\:|\:\sum_{j=1}^4\frac{1}{p_j}=1,\,\substack{1<p_1,p_3<\infty\\ 1<p_2,p_4\leq \infty}\right\}$.
\end{thm}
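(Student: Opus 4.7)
The plan is to establish the quadrilinear bound via a multi-scale, \emph{curved} time-frequency analysis that combines the (Rank~I) LGC framework of \cite{Lie19} with the new correlative (Rank~II) ideas announced in the abstract. I would begin by dyadically decomposing the singular kernel $\frac{1}{t}=\sum_{k\in\Z} 2^{-k}\psi(2^{-k}t)$ and rescaling $t\mapsto 2^k t$ so that $t$ is localized to a unit interval on each scale; this reduces $\Lambda(\vec f)$ to $\sum_k \Lambda_k(\vec f)$, where the anisotropy of the moment curve manifests as the natural sampling of $f_1,f_2,f_3$ at the non-isotropic scales $2^k, 2^{2k}, 2^{3k}$, while $f_4$ is averaged on the coarsest scale. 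Following the (L)inearization and (G)abor-frame steps of Rank~I LGC, I would then expand each $f_j$ at its proper scale. Inserting the expansions and integrating in $x$ yields, for each quadruple of tiles, an oscillatory one-dimensional integral with phase $\Phi(t;\vec m)=-m_1 t+m_2 t^2+m_3 t^3$ tested against a wave-packet coefficient of $f_4$ at the combined frequency. Away from the resonant surface $\{(m_1,m_2,m_3):\Phi'(t;\vec m)=0 \text{ for some }t\in\supp\psi\}$, van der Corput / stationary-phase arguments yield decay that is both $k$-summable and absolutely summable in the tile indices, reproducing the classical (C)ancellation step of Rank~I LGC.

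The principal difficulty---precisely the one emphasized in the abstract---appears on the resonant surface: the linearized discrete model derived from Rank~I LGC fails to be absolutely summable there, so naive triangle-inequality bookkeeping cannot close. Genuine cancellation must be extracted from the correlated interaction among triples of wave packets pinned to this surface; this is achieved via the three interdependent ingredients of Rank~II LGC. \emph{First}, a sparse-uniform decomposition $f_j=f_j^{\mathrm{sp}}+f_j^{\mathrm{un}}$ adapted to the time-frequency foliation induced by the resonant manifold: the sparse piece is concentrated on a controlled collection of tiles and treated by size estimates, while the uniform piece enjoys universally small tile-Fourier coefficients. \emph{Second}, one introduces maximal \emph{joint} Fourier coefficients $\calJ(f_1,f_2,f_3)$ adapted to the time-frequency correlation set and establishes Bessel-type $L^p$ structural inequalities for them---this is the genuinely trilinear object that has no direct analogue in the bilinear Carleson-type literature. \emph{Third}, a level-set analysis on the time-frequency correlation set trades the size of a joint-coefficient level against the measure of the associated tile collection, producing a scale-uniform gain.

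The hardest step, I expect, is the second: obtaining Bessel-type structural control of the maximal joint Fourier coefficients along the resonant surface, where the three-fold correlation between $f_1,f_2,f_3$ must be exploited simultaneously. This is where the Rank~II methodology truly departs from the single-frequency analysis underlying the (non-resonant) bilinear Hilbert--Carleson work of \cite{BBLV21}. Once the three ingredients are combined, one obtains for each scale a bound $|\Lambda_k(\vec f)|\lesssim 2^{-\eps k}\|\vec f\|_{L^{\vec p}}$ on each piece of the sparse-uniform decomposition, which sums geometrically to \eqref{boundQ}; a final multilinear interpolation covers the full Banach H\"older range $\mathbf{B}$, including the $L^\infty$ endpoints permitted for $p_2$ and $p_4$.
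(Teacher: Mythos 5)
Your proposal is an accurate paraphrase of the strategy announced in the abstract and Section \ref{Keyideas}, but it is a plan rather than a proof: the three Rank~II ingredients are named and never executed, and the entire mathematical content of the theorem lives inside them. Concretely, your ``Bessel-type structural inequalities for maximal joint Fourier coefficients'' is the estimate $\textrm{\textbf{L}}_{m,k}(f_1,f_2)\lesssim 2^{-\epsilon_2 m}\|f_1\|_{L^2}\|f_2\|_{L^2}$ of Theorem \ref{20230202thm01}, whose proof (most of Section \ref{MaindiagKpositive}) is a bootstrap ``constancy propagation'' for the linearizing function $\lambda$ followed by a curvature argument exploiting a discrete mixed second derivative of the phase; your ``level-set analysis'' is the cardinality bound $\#\sTFC\lesssim 2^{(3-\epsilon_5)k}$ of Section \ref{20230306subsec01}, proved by iterated Cauchy--Schwarz against the measurable functions $a,b,c$. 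Nothing in your write-up indicates how either would be obtained, and no standard Bessel inequality substitutes for them. You have also misidentified the correlative model itself: you propose a \emph{trilinear} joint coefficient $\calJ(f_1,f_2,f_3)$, whereas the whole point of the Rank~II discretization is to Gabor-decompose \emph{only one} input ($f_3$ when $k\geq 0$) and form the \emph{bilinear} joint coefficient \eqref{joint} of the pair $(f_1,f_2)$, the third frequency entering only through the linearized oscillatory factor; decomposing all three inputs puts you back in the non-summable model \eqref{LGC1abs} that the method is designed to avoid.

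There is also a quantitative error in your closing step. A bound $|\Lambda_k(\vec f)|\lesssim 2^{-\eps k}\|\vec f\|_{L^{\vec p}}$ cannot ``sum geometrically'' over $k\in\Z$, and even $2^{-\eps|k|}$ is not what the analysis yields: the correct single-piece decay (Theorem \ref{mainthm}) is $2^{-\epsilon c(\vec p)\min\{|k|,\frac{m}{2}\}}$, coupling the physical scale $k$ to the frequency scale $m$, and for fixed frequency indices the sum over $k$ is \emph{not} absolutely convergent from this estimate alone. It must be performed via almost-orthogonality in $k$ --- shifted square functions and Parseval over the pairwise disjoint frequency supports $\{|\xi|\sim 2^{j+k}\}$ --- exactly as in Lemma \ref{offdiagnonst} and Section \ref{20230411Sec01}, where the $L^2$ slots of the exponent tuple absorb the $k$-summation and interpolation with the crude fixed-$(j,l,m)$ bounds of Lemma \ref{fixjlm} then recovers the full range $\textbf{B}$. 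Finally, your resonant/non-resonant dichotomy conflates the paper's decomposition: the non-stationary regime is indeed easy, but the stationary set splits further into the diagonal term (all three frequency parameters comparable), which is the genuinely hard case requiring Rank~II, and the stationary off-diagonal terms, which are still handled by Rank~I LGC plus time-frequency correlation counting.
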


We next present the following

\begin{obs}\textsf{[Extensions]}\label{Mgen}  One can extend the above result as follows:
\begin{enumerate}[label=(\roman*)]
\item Our particular choice for the moment curve $\gamma_0(t)=(-t,\,t^2,\,t^3)$ associated with the $t$-argument of the input functions $(f_1,f_2,f_3)$ is just for expository reasons. Indeed, the conclusion of our theorem holds (up to possibly endpoints, \textit{i.e}, requiring $1<p_j<\infty$ for all $j$) if one replaces $\gamma_0$ by any curve of the form $\gamma(t)=(t^{\al_1},\,t^{\al_2},\,t^{\al_3})$ with $\al_j\in\R\setminus\{0\}$, $1\leq j \leq 3$, distinct. Using the techniques in \cite{Lie15} and \cite{Lie19}, this result can be further extended to the situation when $\gamma(t)=(P_1(t),\,P_2(t),\,P_3(t))$ with $P_j(t)=\sum_{r=1}^{n_j}a_{j,r}\, t^{\al_{j,r}}$ generalized polynomials/fewnomials such that $\{a_{j,r}\}_{j,r},\,\{\al_{j,r}\}_{j,r}\subset\R\setminus\{0\}$ and the $\al_{j,r}$ are distinct. Moreover, with some supplementary adjustments, the latter condition can be further relaxed by only requiring that the three sets formed with the smallest and highest degrees for each of the three fewnomials are pairwise disjoint. This latter condition is required in order to avoid resonances\footnote{For more on the implications/consequences of the existence of resonances please see \cite{BBLV21} and \cite{GL22}.} among the input functions that are usually indicative of (possibly hidden) modulation invariance properties of the operator/form under analysis. However, if one combines the method developed here with (linear) modulation invariance techniques, then one can even control the case of two-input resonances as revealed in Observation \ref{FW} below!

\item The boundedness range for our theorem can be improved to cover suitable subunitary values of $p_4$ (see the general Conjecture \ref{Mult} and Remark \ref{Boundrangeconj} below for the maximal projected range). This requires the usage of multilinear interpolation techniques beyond the Banach range involving exceptional/major sets.\footnote{For the unfamiliar reader, a good reference for this subject is the monograph \cite{MS13}.} Due to the volume and highly involved conceptual and technical nature of this paper we did not pursue this direction here. Even so, the most difficult result needed for such an extension is in fact provided here in the form of Theorem \ref{mainthm}.
\item We believe that the new methods/techniques introduced in the present paper are suitable for approaching both the singular and the maximal variant of the higher multilinear case--see Conjecture \ref{Mult}--and we hope to address (at least part of) these topics in the not too distant future.
\end{enumerate}
\end{obs}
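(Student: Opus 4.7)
The plan is to take the main theorem (Theorem \ref{MAINthm}) together with the technical Theorem \ref{mainthm} as black boxes and address the three items (i)--(iii) of the observation in parallel, each grafted onto the existing Rank II LGC machinery.

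For item (i), my strategy is to trace through the proof of Theorem \ref{MAINthm} and identify at each step where the specific exponents $(1,2,3)$ of $\gamma_0$ enter; all such dependencies reduce to either (a) the non-vanishing torsion of the curve (equivalently, non-degeneracy of the Wronskian of the velocity vectors), or (b) the scaling relations used in the Gabor/wave-packet decomposition. For a general monomial curve $\gamma(t)=(t^{\alpha_1}, t^{\alpha_2}, t^{\alpha_3})$ with distinct nonzero real $\alpha_j$, condition (a) persists since the associated Wronskian is a nonzero power of $t$, while (b) merely forces an adjustment of the scaling parameters in the time-frequency foliation; the net effect is that the estimates go through up to possibly losing endpoint exponents. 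For fewnomial curves $P_j(t)=\sum_r a_{j,r} t^{\alpha_{j,r}}$, I would invoke the dyadic localization scheme of \cite{Lie15}: on each dyadic scale $|t|\sim 2^k$, one monomial term of each $P_j$ dominates modulo controllable errors, reducing the problem to the pure monomial case and summing the resulting geometric series. The distinctness condition on $\{\alpha_{j,r}\}$ guarantees that no linear combination of the three phases produces a modulation-invariant phase, preserving the (C)ancellation step of Rank II LGC. Under the relaxed condition of only pairwise disjoint smallest/highest-degree sets, intermediate-scale interactions may generate two-input partial resonances; these would be absorbed by hybridizing Rank II LGC with linear modulation-invariant (Carleson-type) techniques in the resonant direction, as flagged in Observation \ref{FW}.

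For item (ii), my plan is to upgrade the output of Theorem \ref{mainthm}: as the observation itself remarks, Theorem \ref{mainthm} already encapsulates the hard analytic content, so a careful reading should yield a restricted weak-type bound with an implicit major-set construction---that is, for any target index $j_0\in\{1,\dots,4\}$ and any characteristic functions $f_j=\mathbf{1}_{E_j}$, one can extract a major subset $E'_{j_0}\subset E_{j_0}$ with $|E'_{j_0}|\geq \frac{1}{2}|E_{j_0}|$ on which the quadrilinear form is controlled by $\prod_j |E_j|^{1/p_j}$. The sparse-uniform decomposition already present in the proof of Theorem \ref{MAINthm} provides a natural candidate for the exceptional set $E_{j_0}\setminus E'_{j_0}$, essentially the portion where the sparse component concentrates. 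Once the restricted weak-type estimate is in hand, standard multilinear interpolation with exceptional sets (Grafakos--Tao, codified in Muscalu--Schlag \cite{MS13}) extends the bound to the full projected H\"older range, capturing the subunitary $p_4$ regime.

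For item (iii), the higher-multilinear extension targets $\Lambda^{(n)}(\vec{f}):=\int f_1(x+t^{\alpha_1})\cdots f_n(x+t^{\alpha_n})\, f_{n+1}(x)\,\frac{dt\, dx}{t}$ with $n\geq 4$ and distinct nonzero $\alpha_j$, as well as its maximal analogue $\sup_{\epsilon}\bigl|\int_{|t|>\epsilon}\cdots\frac{dt}{t}\bigr|$. The Rank II LGC blueprint adapts in principle: the sparse-uniform decomposition, the structural analysis of maximal joint Fourier coefficients, and the level-set analysis of the time-frequency correlation set all admit natural $n$-linear analogues. For the maximal variant, one would linearize the truncation parameter via a measurable stopping time $\epsilon=\epsilon(x)$ and treat the resulting object as a singular form with one additional measurability constraint. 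The genuine obstacle---and the reason (iii) is stated only as a conjectural future direction---is the combinatorial explosion in the geometry of the $(n-1)$-parameter correlation set: as $n$ grows, the number of distinct types of Gabor wave-packet interactions proliferates, forcing the sparse-uniform dichotomy to be refined into a multi-layered hierarchy if absolute summability of the discretized model is to be recovered; this is, I expect, where the bulk of the new conceptual work will be required.
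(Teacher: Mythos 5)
Your proposal matches the paper's own treatment: the paper offers no proof of this observation, only the same pointers you follow --- reduction of general monomial/fewnomial curves to the non-resonant framework via the dyadic-dominance techniques of \cite{Lie15}, \cite{Lie19} for item (i), multilinear interpolation with exceptional/major sets in the sense of \cite{MS13} built on Theorem \ref{mainthm} (restricted weak-type input with the sparse component feeding the exceptional set) for item (ii), and an explicit deferral of the $n\geq 4$ singular and maximal variants to future work for item (iii). Your sketch is therefore essentially the same approach, fleshed out at the level of detail the paper itself leaves implicit.
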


\begin{obs}\textsf{[Forthcoming work]}\label{FW}
Once one understands the complexity of the object defined in \eqref{CurvedTHTdef} in this \emph{pure-curvature} setting it is natural to go one step further, and construct another key model for the trilinear Hilbert transform \eqref{THT} that captures not only the \emph{kernel singularity} and the \emph{dilation and translation symmetries together with the multilinear singular} nature of ${\bf T}$ but also its \emph{linear modulation invariant} property. A successful embodiment of all these features is represented by the so-called \emph{hybrid} trilinear Hilbert transform\footnote{The monomial $t^3$ in the definition of $H_{H}$ can be replaced with any $t^{\alpha}$ under the restriction $\alpha\in\R\setminus\{0,1,2\}$ in order to avoid resonances, or more generally, with any nonconstant (generalized)  polynomial having no linear and no quadratic terms.}
\begin{equation}\label{HTHTdef}
H_{H}(f_1, f_2, f_3)(x):= \textrm{p.v.}\,\int_{\R} f_1(x-t)f_2(x+t)f_3(x+t^3) \frac{dt}{t}, \quad x \in \R.
\end{equation}
Indeed, notice that $H_{H}$ extends the classical bilinear Hilbert transform
\begin{equation}\label{BHTdef}
B(f_1, f_2)(x):= \textrm{p.v.}\,\int_{\R} f_1(x-t)f_2(x+t) \frac{dt}{t}, \quad x \in \R\,,
\end{equation}
and thus it inherits the natural linear modulation invariance properties  of $B$ (relative to the inputs $f_1$ and $f_2$); moreover, $H_{H}$ also exhibits pure-curvature traits derived from the third input $f_3$, whence its truly \emph{hybrid} nature.

Combining our newly developed method here with the approach in \cite{BBLV21} the authors in \cite{BBL23} provide the boundendness of $H_{H}$ within a range that extends the Banach case discussed in the present paper.
\end{obs}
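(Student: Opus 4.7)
The plan is to dualize $H_{H}$ to the quadrilinear form
\begin{equation*}
\Lambda_H(\vec{f}):=\int_{\R}\int_{\R} f_1(x-t)\,f_2(x+t)\,f_3(x+t^3)\,f_4(x)\,\frac{dt\,dx}{t}\,,
\end{equation*}
perform a dyadic decomposition of $\tfrac{1}{t}$ into pieces $\rho_k$ supported on $|t|\sim 2^k$, and develop a time-frequency analysis that simultaneously accommodates two structurally distinct symmetries: (i) the linear modulation invariance acting on the triple $(f_1,f_2,f_4)$ inherited from the classical bilinear Hilbert transform $B$ in \eqref{BHTdef}, and (ii) the pure-curvature behavior of $f_3$ generated by the cubic phase $t^3$. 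At scale $2^k$, the pair $(f_1,f_2)$ is to be expanded via BHT-type wave packets of unit Heisenberg area on the linear frequency scale $\sim 2^{-k}$, organized into tiles in the spirit of Lacey--Thiele, while $f_3$ is decomposed by a Gabor frame adapted to its natural frequency scale $\sim 2^{-3k}$ coming from the cubic phase---precisely the discretization step consistent with the Rank II LGC framework developed in the present paper.

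Integrating out $t$ after this hybrid discretization produces a joint Fourier coefficient whose phase splits as a linear part depending on the frequencies of $f_1,f_2,f_4$ and a cubic part depending on the frequency of $f_3$. A stationary phase/$TT^*$ analysis exploiting the non-degenerate curvature of $t\mapsto t^3$ yields rapid off-diagonal decay \emph{except} along the one-parameter BHT modulation-invariance locus in $(\xi_1,\xi_2,\xi_4)$ coupled with a pinned range for $\xi_3$ dictated by the cubic critical point. Off this locus the LGC cancellation/summation is absolute and directly parallels the analysis developed for the form \eqref{DefL}. On the locus, the form collapses into a modulation-invariant bilinear expression in $(f_1,f_2)$, perturbed by the frozen $(f_3,f_4)$ wave packet data; here the sparse--uniform decomposition applied to $f_3$ (combined with a maximal joint Fourier coefficient bound in the spirit of Theorem \ref{mainthm}) is designed to reduce the residual uniform contribution to a BHT-like form, to which the tree-selection/tile-sum technology of \cite{BBLV21} and its extensions can be applied.

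The main obstacle is orchestrating these two time-frequency paradigms so that they coexist without mutual interference. The Rank II LGC sparse--uniform dichotomy on $f_3$ must be carried out in a way that preserves the tree/tile hierarchy required for the modulation-invariant analysis of $(f_1,f_2)$; concretely, after restricting to the BHT-resonance locus, the uniform component of $f_3$ has to yield a BHT-type bilinear operator on $(f_1,f_2)$ whose symbol depends only mildly on the fixed $f_3$ wave packet parameters, so that the Lacey--Thiele-type bounds can be applied with constants uniform in those parameters. Meanwhile, the sparse component of $f_3$ must be absorbed into an $\ell^p$-summable cascade governed by the joint Fourier coefficient estimates. This conceptual dovetailing---Rank II LGC on the curved input $f_3$, modulation-invariant time-frequency analysis on the resonant pair $(f_1,f_2)$ via \cite{BBLV21}---is what in \cite{BBL23} delivers the $L^p$ boundedness of $H_H$ in \eqref{HTHTdef} within a range extending the Banach range of Theorem \ref{MAINthm} for $H_C$.
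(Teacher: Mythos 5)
This statement is an observation, not a theorem that the paper proves: the only content that can be verified within the paper is the structural remark that $H_{H}$ in \eqref{HTHTdef} extends $B$ in \eqref{BHTdef} and therefore inherits the linear modulation invariance in the pair $(f_1,f_2)$ --- a one-line computation, since replacing $f_1$ by $e^{ib(\cdot)}f_1$ and $f_2$ by $e^{ib(\cdot)}f_2$ produces the factor $e^{ib(x-t)}e^{ib(x+t)}=e^{2ibx}$, which is independent of $t$ and passes outside the integral while $f_3(x+t^3)$ is untouched --- together with a citation of the forthcoming work \cite{BBL23} for the actual boundedness. You never carry out that (easy) verification; instead you sketch a program for the deferred boundedness result itself, which the present paper does not prove and against which your sketch therefore cannot be checked.

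At the level of strategy your outline is consistent with what the observation announces (a Rank II LGC treatment of the curved input $f_3$ dovetailed with the modulation-invariant time-frequency technology of \cite{BBLV21} for the resonant pair $(f_1,f_2)$), but it is a roadmap rather than a proof: the claims that off the BHT-resonance locus the discretized model is absolutely summable, that the uniform component of $f_3$ produces a BHT-type bilinear form with bounds uniform in the frozen wave-packet parameters, and that the sparse component can be absorbed into a summable cascade via joint Fourier coefficient estimates are precisely the difficult points, and none of them is argued. Note also that the paper itself (Remark \ref{Boundrangeconj}) emphasizes that even in \cite{BBL23} the hybrid operator is bounded only in a partial range, so any such scheme must identify exactly where the interaction of the two paradigms breaks down; your sketch gives no indication of this. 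In short, there is no paper proof to measure you against, but your text neither establishes the observation's checkable claim nor constitutes a proof of the cited result.
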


\begin{rem}\label{RM} The program dedicated to the study of the boundedness properties for the operators $H_{C}$ and $H_{H}$---regarded primarily but not exclusively as ``curved" models for {\bf T} (see Section \ref{Motivation})---was envisioned more than a decade\footnote{The original arXiv submission date of \cite{Lie15} was October 2011.} ago by the second author here--see the Remarks section in \cite{Lie15}.  In light of Theorem \ref{MAINthm}, Observation \ref{FW} and subject to the resolution of the work in \cite{BBL23}, this program is essentially complete\footnote{Up to the maximal boundedness range--see Conjecture \ref{Mult} and Remark \ref{Boundrangeconj}.}. However, the methods developed along the way for achieving it open now new avenues and appear promising for approaching various other difficult ``purely curved" and hybrid problems, among which, the most natural in the context of the present paper is the higher multilinear case, $n\geq 4$, of the following:
\end{rem}

\begin{conj}\textsf{[Curved/Hybrid $n-$linear Hilbert transform and Maximal operators]}\label{Mult}
\medskip

Fix $n\geq 2$ and, as before, let $\vec{p}:=(p_1,\ldots, p_n)$, $\vec{f}=(f_1,\ldots, f_n)\in \calS^n(\R)$ and $\|\vec{f}\|_{L^{\vec{p}}}:=\prod_{j=1}^n \|f_j\|_{L^{p_j}}$.

Define the curved $n-$linear Hilbert transform as
\begin{equation}\label{nCurvedTHTdef}
H_{C,n}(\vec{f})(x):= \textrm{p.v.}\,\int_{\R} f_1(x-t)f_2(x+t^2)\ldots f_n(x+t^n) \frac{dt}{t}, \quad x \in \R\,,
\end{equation}
and the curved $n-$(sub)linear maximal operator as
\begin{equation}\label{nCurvedMTHTdef}
M_{C,n}(\vec{f})(x):= \sup_{\ep>0}\,\frac{1}{2\ep}\,\int_{-\ep}^{\ep} \left| f_1(x-t)f_2(x+t^2)\ldots f_n(x+t^n)\right|\,dt, \quad x \in \R\,.
\end{equation}
Then
\begin{equation}\label{nboundHM}
\|H_{C,n}(\vec{f})\|_{L^{p_{n+1}}},\,\|M_{C,n}(\vec{f})\|_{L^{p_{n+1}}}\lesssim_{\vec{p}} \|\vec{f}\|_{L^{\vec{p}}}\,,
\end{equation}
for any $\vec{p}:=(p_1,\ldots, p_n)\in (1,\infty]^n$ with $\sum_{j=1}^n\frac{1}{p_j}=\frac{1}{p_{n+1}}$ and $\frac{1}{n}<p_{n+1}<\infty$.

The same range is conjectured to hold in the hybrid situation, \textit{i.e.}, when the argument $x+t^2$ for $f_2$ above is replaced by $x+t$, case in which the corresponding operators for \eqref{nCurvedTHTdef} and \eqref{nCurvedMTHTdef} are referred to as $H_{H,n}$ and $M_{H,n}$, respectively.
\end{conj}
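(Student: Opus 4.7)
The plan is to proceed by induction on $n$, with Theorem~\ref{MAINthm} and the curved bilinear Hilbert transform serving as base cases, and to extend the Rank~II LGC methodology to the $n$-linear setting at each inductive step. Following \eqref{DefL}, I would first dualize $H_{C,n}$ against an auxiliary input $f_{n+1}$ to obtain an $(n+1)$-linear form $\Lambda_{C,n}$ and aim at the Banach H\"older range $\sum_{j=1}^{n+1}\frac{1}{p_j}=1$. The LGC recipe is then deployed: \textbf{(L)} linearize via a smooth dyadic partition in the $t$-parameter; \textbf{(G)} expand each input in a Gabor frame whose phase-space foliation matches the $(1,2,\ldots,n)$-anisotropic scaling dictated by the moment curve $(-t,t^2,\ldots,t^n)$; \textbf{(C)} extract cancellation from the resulting discretized model through maximal joint Fourier coefficients.

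The principal new difficulty beyond $n=3$ is that the \emph{time-frequency correlation set} now lives in an $(n-1)$-dimensional parameter space, so the joint Fourier coefficients couple $(n-1)$-fold products of inputs. The analytic engine---the nondegeneracy of the phase of these coefficients---is provided by an $(n-1)\times(n-1)$ Vandermonde-type determinant in the distinct exponents $(1,2,\ldots,n)$, which is uniformly nonzero and thus allows repeated integration by parts / van der Corput bounds in the spirit of the third step of the proof of Theorem~\ref{mainthm}. I would therefore perform an iterative sparse-uniform decomposition: at each stage one input is declared the ``density carrier'' while the remaining inputs are shown to be $L^\infty$-uniform relative to the current foliation, yielding a power gain through a level-set analysis of the correlation set. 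The sparse remainder is absorbed into a BMO/Carleson-type bound on the density carrier, and the scheme closes via a secondary induction on a numerical counter that tracks the number of inputs still to be treated as uniform.

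For the maximal variant $M_{C,n}$, I would linearize via a measurable radius function $\epsilon:\R\to(0,\infty)$, reducing matters to an $(n+1)$-linear form of the same general type as for $H_{C,n}$ but with a smoother (non-principal-value) symbol. The key simplification is that positivity allows step \textbf{(C)} to be replaced by a pointwise majorization through an $n$-linear Kakeya-type square function built from Gabor shifts, the essential gain again being extracted from the Vandermonde nondegeneracy. Once the Banach H\"older range has been secured for both $H_{C,n}$ and $M_{C,n}$, the extension to the full conjectured range $\frac{1}{n}<p_{n+1}<\infty$ follows by standard multilinear interpolation with exceptional/major sets along the lines of \cite{MS13}.

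The main obstacle I expect is controlling the combinatorial complexity of the correlative analysis when $n\geq 4$: with $n-1$ free parameters in the correlation set and $n$ inputs coupled through each joint Fourier coefficient, the bookkeeping of the iterated sparse-uniform decomposition---already delicate at $n=3$---will require a careful prioritization of which input to designate as carrier at each inductive stage, so as to balance the loss incurred at the sparse part against the gain obtained at the uniform part. For the hybrid variants $H_{H,n}$ and $M_{H,n}$ anticipated in Observation~\ref{FW}, one must additionally graft the (linear) modulation-invariant time-frequency machinery onto this framework, which introduces further interaction with the ``flat'' $t$-direction and appears to be the deepest feature of the conjecture.
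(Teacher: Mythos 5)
The statement you are addressing is Conjecture~\ref{Mult}, which the paper explicitly leaves open: the authors prove only the case $n=3$, purely curved, singular, and only in the Banach range $p_4\geq 1$ (Theorem~\ref{MAINthm}), and they state in Observation~\ref{Mgen} and Remark~\ref{Boundrangeconj} that the higher multilinear case, the maximal variants, the sub-Banach range $\frac{1}{n}<p_{n+1}<1$, and the hybrid case are all future or forthcoming work. What you have written is therefore a research program, not a proof, and it cannot be checked against a proof in the paper because none exists.

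Beyond that formal point, there are concrete gaps in your plan. First, the induction on $n$ has no actual mechanism: the passage from $n=2$ to $n=3$ in this paper was \emph{not} inductive --- the Rank~I LGC model that suffices for the bilinear case fails to be absolutely summable for the trilinear form (see \eqref{LGC1abs}), and overcoming this required the genuinely new correlative discretization in which only one input receives a Gabor expansion while the others are fused into joint Fourier coefficients. Your step \textbf{(G)}, which expands \emph{each} input in a Gabor frame, reproduces exactly the non-summable Rank~I model; invoking a Vandermonde determinant for the phase does not repair summability, which is the central obstruction, not phase nondegeneracy. Second, for $n\geq 4$ one must decide how many inputs to fuse, how to propagate constancy of the linearizing function, and how to run the level-set analysis on an $(n-1)$-parameter correlation set; none of this is supplied, and the paper's own $n=3$ argument (Sections~\ref{MaindiagKpositive}--\ref{Sec06}) shows these steps are where all the work lies. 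Third, the range $\frac{1}{n}<p_{n+1}<1$ is not reachable by ``standard multilinear interpolation'' from the Banach range alone --- even for $n=3$ the authors state this extension requires substantial additional work they did not carry out --- and the hybrid case inherits the modulation-invariance difficulties of the bilinear Hilbert transform, whose own conjectured range below $r=\frac{2}{3}$ remains open.
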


\begin{rem}\label{Boundrangeconj} Notice that for $n\in\{2,3\}$ the above conjecture is, at least in part, settled:  The case $n=2$ is completely solved in the (purely) non-zero curvature setting, (\cite{Li13}, \cite{Lie15}, \cite{LX16}, \cite{Lie18}, \cite{GL20}, \cite{GL22}), while, in the (hybrid) zero curvature context is only partly known under the restriction\footnote{For more on this please see Section \ref{LGCphil} and Section \ref{CTFrev} including Observation \ref{BHTinv} therein.} $p_3>\frac{2}{3}$,(\cite{LT97}, \cite{LT99}). For the case $n=3$, referring only to the singular version(s) \eqref{nCurvedTHTdef}, we have: the purely curved situation is the topic of the present paper and covers the situation $p_4\geq 1$ while the hybrid situation is treated within a partial range, in the forthcoming work \cite{BBL23}.
\end{rem}

\subsection{Main ideas behind the proof}\label{Keyideas}
\medskip

The leitmotif of the Rank II LGC method that we develop in this paper is the interplay between \emph{cancellation} and \emph{correlation}.\footnote{With the latter understood here in a loosely manner.}  This manifests throughout the proof via the following elements, from its strategy conception, (i), to its realization and conclusion, (ii):
\begin{enumerate}[label=(\roman*)]
\item  \emph{correlative time-frequency discretized model};

\item  \emph{time-frequency foliation} and a \emph{sparse-uniform dichotomy} in order to control the behavior of
\begin{enumerate}
\item the \emph{linearizing supremum function} $\lambda$---in the uniform case;

\item the \emph{time-frequency correlation sets}---in the sparse case.
\end{enumerate}
\end{enumerate}
In what follows, we elaborate on the above itemization.

\subsubsection{Correlative time-frequency discretized model}\label{CTF}

The essence of what will be discussed in this section may be expressed by the following antithesis:
\begin{itemize}
\item Except for the recent work in \cite{BBLV21}, all the previous literature involving linear wave-packet discretization and addressing singular integral operators--irrespective of treating zero or non-zero curvature problems\footnote{For a more elaborate discussion, please see Section \ref{LGCphil}.}--have in common the following procedure: given the operator $T$ to be analyzed, after a dualization argument, $T$ is replaced by the corresponding form $\Lambda$; then, the first fundamental step in the proof, relies on identifying the proper discretized model for $\Lambda$.
    After a preliminary discretization of both the kernel/physical side (providing the physical scale) and of the multiplier/frequency side (providing the location or/and height of the oscillatory phase) this procedure relies on
\begin{itemize}
\item a Gabor decomposition of \emph{all} the input functions, and, subject to the scale and frequency location, the input wave-packet discretizations are to be performed \emph{independently} of one another;\footnote{This relative independency refers strictly to the discretization process and is not to be confused with the later process of controlling/gluing the tiles together when tree selection algorithms based on concepts such as size and mass fundamentally rely on input interdependencies.}

\item obtaining control on the resulting discretized model of $\Lambda$ via \emph{absolute values} of the input Gabor coefficients, thus requiring this model to be \emph{absolutely summable}.
\end{itemize}

\item In contrast with the above, in the context of our problem, if one attempts a similar procedure\footnote{This amounts to an application of the standard (Rank I) LGC-method that passes through linearization and Gabor decomposition of the inputs.} for the quadrilinear form $\Lambda$ defined in \eqref{DefL} the resulting discretized model is \emph{not absolutely summable}, \textit{i.e}, the sum of the absolute values of the input Gabor coefficients is divergent. In order to circumvent this key difficulty, we will develop a different discretization strategy that will involve a Gabor decomposition for only \emph{one} of the inputs, allowing for the other three inputs to mix their frequency information along spatial fibers and thus exploit a hidden cancellation. The process through which we obtain this new summable discretized model  will be referred to as \emph{correlative time-frequency analysis}, on
which we now elaborate through a staged approach.
\end{itemize}

\newpage

\noindent\textbf{$1^{st}$ stage: The issue of absolute summability; joint Fourier coefficient decomposition}
$\newline$

Departing from the quadrilinear form $\Lambda(\vec{f})$ defined in \eqref{DefL}, after some standard considerations involving  discretizations on both the kernel and multiplier sides followed by stationary phase analysis we find that the main contribution to our form is brought by the diagonal component
\begin{equation}\label{DiagtermI}
\Lambda^{D}\approx\sum_{k \in \Z}\:\: \sum_{m \in \N} \Lambda^k_{m}\,,
\end{equation}
where here
\begin{equation}\label{mfI}
\Lambda_m^k(\vec{f})=\int_{\R}\int_{\R} f_{1,m+k}(x-t)f_{2,m+2k}(x+t^2)f_{3,m+3k}(x+t^3)f_{4}(x)\,2^k \varrho(2^k t) dtdx,
\end{equation}
with $\widehat{f_{j,m+j k}}(\xi):=\hat{f}_{j}(\xi)\,\phi(\frac{\xi}{2^{m+j k}})$, $j\in\{1,2,3\}$ and $\phi, \varrho$ are smooth and compactly supported away from the origin.

Taking $m\in\N$ and assuming without loss of generality that $k\geq 0$ is fixed, it is only natural to restrict the $x-$domain of integration to an interval of the form $[2^{-k},2^{-k+1}]:=I^k$ whose shape is induced by the range of $t$ and some standard almost orthogonality arguments.

Applying now the (Rank I) LGC method introduced in \cite{Lie19}, we deduce that\footnote{See Section \ref{LGC(I)} for more details.}
\begin{eqnarray} \label{LGC1I}
&&\Lambda_m^k(\vec{f})\approx 2^{\frac{3k}{2}}\sum_{\substack{p \sim 2^{\frac{m}{2}+2k} \\ q \sim 2^{\frac{m}{2}}}} \sum_{(n_1, n_2, n_3)\in \TFC^{I}_q} C_{q, n_1, n_2, n_3}\,\left\langle f_1, \phi_{\frac{p}{2^{2k}}-q, n_1}^k \right\rangle \left\langle f_2, \phi_{\frac{p}{2^{k}}+\frac{q^2}{2^{\frac{m}{2}}}, n_2}^{2k} \right \rangle  \\
&&\qquad\qquad\qquad\qquad\qquad\qquad\qquad\qquad\qquad\qquad\qquad\cdot\left\langle f_3, \phi_{p+\frac{q^3}{2^m}, n_3}^{3k} \right \rangle \left\langle f_4, \phi_{p, n_3+\frac{n_2}{2^k}+\frac{n_1}{2^{2k}}}^{3k} \right \rangle\,,\nonumber
\end{eqnarray}
where here $\phi_{r, n}^{j k}(x):=2^{\frac{m}{4}+\frac{j k}{2}} \phi \left(2^{\frac{m}{2}+jk}x-r \right) e^{i 2^{\frac{m}{2}+j k}xn}$ are the standard Gabor frames (wave-packets), the coefficients $C_{q, n_1, n_2, n_3}$ are some unitary complex numbers that only depend on $q, n_1, n_2, n_3$ but not on the input functions $\{f_j\}_j$, and
\begin{equation} \label{TFCqqI}
\TFC^{I}_q:=\left\{ \substack{\left(n_1, n_2, n_3 \right)\\n_1, n_2, n_3 \sim 2^{\frac{m}{2}}}\, :\, n_1-\frac{2q}{2^{\frac{m}{2}}} n_2- \frac{3q^2}{2^m} n_3\approx 0 \right\}
\end{equation}
stands for the \emph{time-frequency correlation set}, which captures the inter-dependencies between the spatial and frequency parameters---a direct manifestation of the presence of curvature in our problem.
\medskip

The first major obstacle for controlling \eqref{DiagtermI} is that the model discretization derived in \eqref{LGC1I} is \emph{not absolutely summable}. Thus we have to design a new type of discretization that exploits the hidden cancelations encoded within the local Fourier coefficients of the input functions.
\medskip

\textit{The key insight is to perform a \emph{partial} time-frequency decomposition in which we apply a Gabor frame discretization of \emph{only one} of the input functions---that, for $k\geq 0$, corresponds to the frequency dominant input $f_3$, and treat the remaining inputs---in a first stage three and later, in the second stage, reduced to just two inputs---as \emph{a single entity} on which we act via a discretization of the \emph{physical} side only, allowing thus for the frequencies aligned along spatial fibers to freely interact and produce the desired cancellation.}
\medskip

Concretely, in the first stage of our time-frequency discretization, we proceed as follows: adopting the \emph{linearization} step in Rank I LGC that provides us with the correct scale for the spatial cuttings of the $(x,t)-$domain so that the $t$ arguments for each $f_j$ behave linearly on each of the resulting subintervals, we first write
\begin{equation}\label{linI}
\Lambda_m^k(\vec{f})\approx 2^k\,\sum_{\substack{p \sim 2^{\frac{m}{2}+2k} \\ q \sim 2^{\frac{m}{2}}}} \int_{I_p^{m, 3k}} \int_{I_q^{m, k}} f_1(x-t) f_2(x+t^2) f_3(x+t^3) f_4(x) dtdx
\end{equation}
where here  for $m\in \N$ and $k,\,q \in \Z$ we let $I_q^{m, k}:=\left[\frac{q}{2^{\frac{m}{2}+k}}, \frac{q+1}{2^{\frac{m}{2}+k}} \right]\,.$ Next, we perform a Gabor decomposition of the third input $f_3(x)=\sum_{{r \sim 2^{\frac{m}{2}+2k}}\atop{n \sim 2^{\frac{m}{2}}}}  \left\langle f_3, \phi_{r, n}^{3k} \right \rangle \phi_{r, n}^{3k}(x)$. Substituting this back into \eqref{linI} and exploiting the spatial localization and the linearization effect (this results in suitable correlations of the spatial parameters), we deduce
\begin{equation} \label{discI}
 \Lambda_m^k(\vec{f}) \sim 2^k \sum_{\substack{p \sim 2^{\frac{m}{2}+2k} \\ q, n \sim 2^{\frac{m}{2}}}} e^{-2i n \cdot \frac{q^3}{2^m}}\,\left\langle f_3, \phi_{p+\frac{q^3}{2^m}, n}^{3k}  \right\rangle \int_{I_p^{m, 3k}} {J}^{k}_{q, n}(f_1,f_2)(x)\,\phi_{p, n}^{3k} (x)\,f_4(x)dx
\end{equation}
with
\begin{equation} \label{joint}
{J}^{k}_{q, n}(f_1,f_2)(x) := \int_{I_q^{m, k}} f_1(x-t)f_2(x+t^2) e^{3i \cdot 2^{\frac{m}{2}+k}t \left(\frac{q}{2^\frac{m}{2}} \right)^2 n} dt\,.
\end{equation}

The important realization (and main goal of our paper) is that the substitution of \eqref{discI} into \eqref{DiagtermI} produces an \emph{absolutely summable model}. Of course, there is a price to pay for this: encoded in the ``joint Fourier coefficient" \eqref{joint}, we now have to deal with the fuzziness and mixture of the frequency localization of the input functions $f_1$, $f_2$ and $f_4$. In order to better control this phenomenon, we perform a second stage time-frequency analysis that is discussed immediately below.

$\newline$
\noindent\textbf{$2^{nd}$ stage: Decoupling of the pair $(f_1,f_2)$ from $f_4$}\label{TFfoliation}
$\newline$

The crux of this second stage may be explained as follows: in order to analyze the behavior of
$$\int_{I_p^{m, 3k}} {J}^{k}_{q, n}(f_1,f_2)(x)\,\phi_{p, n}^{3k} (x)\,f_4(x)dx$$
we intend to  simultaneously accomplish:
\begin{itemize}\label{KeYprinc}
\item the preservation of the joint behavior for the pair $(f_1,f_2)$;
\item the decoupling of the information carried by $f_4$ from that corresponding to the pair $(f_1,f_2)$.
\end{itemize}

To achieve this we appeal to a frequency partition of the inputs $f_1$ and $f_2$ adapted to the Heisenberg localization of the wave-packet $\phi_{p, n}^{3k}$. As it turns out, this partition is influenced by the relative positions of the physical scale $k\geq 0$ and of the frequency scale $m\in\N$:
\medskip

\noindent{\underline{\textsf{The case $k\geq \frac{m}{2}$}}.}
\medskip
In this situation, exploiting the hypothesis $\supp \ \widehat{f_i} \subseteq \left[2^{m+ik}, 2^{m+ik+1} \right]$ for $i\in\{1,2\}$
and using the relation $|I_p^{m, 3k}|=2^{-\frac{m}{2}-3k} \le 2^{-m-2k}$, we deduce that\footnote{See the Key Heuristic 1 in Section \ref{IIlgcklarge}.}
\begin{equation} \label{Key1I}
\textrm{ the map}\:\:x \mapsto {J}^{k}_{q, n}(f_1,f_2)(x)\:\textrm{is morally constant on the interval}\:I_p^{m, 3k}\,,
\end{equation}
and thus, in this regime there is no need to further subdivide the frequency locations on the spatial fibers associated with $f_1$ and $f_2$ in order to achieve the above declared goal.

Consequently, we obtain the following (correlative) time-frequency model:
\begin{eqnarray} \label{Case1}
&& \left|\Lambda_m^k(\vec{f})\right|  \lesssim 2^k \sum_{\substack{p \sim 2^{\frac{m}{2}+2k} \\ q, n \sim 2^{\frac{m}{2}}}} \left| \left\langle f_3, \phi_{p+\frac{q^3}{2^m}, n}^{3k}  \right\rangle \right| \left|  \left \langle f_4, \phi_{p, n}^{3k} \right \rangle \right| \\
&& \quad \quad \quad \quad \cdot \frac{1}{\left|I_p^{m, 3k} \right|} \left| \int_{I_p^{m, 3k}} \int_{I_q^{m, k}} f_1(x-t)f_2(x+t^2) e^{3i \cdot 2^{\frac{m}{2}+k}t \left(\frac{q}{2^\frac{m}{2}} \right)^2 n} dt dx \right| . \nonumber
\end{eqnarray}

\medskip
\noindent{\underline{\textsf{The case $0\leq k<\frac{m}{2}$}}.}
\medskip
In this situation, guided by the Heisenberg principle, our strategy is to discretize the frequency information of $f_1$ and $f_2$ into equidistant intervals such that both of the items below hold:
\begin{itemize}
\item the length of the frequency intervals is dominated by the length of the frequency support of the wave-packet $\phi_{p, n}^{3k}$: this guarantees the desired decoupling goal;

\item the number of the subdivisions corresponding to $f_1$ agrees with the corresponding number for $f_2$: this is crucial for controlling the time-frequency correlation set--see Subsection \ref{RescTFC}.
\end{itemize}

Concretely,  we proceed as follows: for $j\in\{1,2,3\}$ we divide
$$\supp \ \widehat{f_j} \subseteq\left[2^{m+jk}, 2^{m+jk+1} \right]=\bigcup_{\ell_j \sim 2^{\frac{m}{2}-k}} \omega_j^{\ell_j}:=\bigcup_{\ell_j\sim 2^{\frac{m}{2}-k}} \left[\ell_j 2^{\frac{m}{2}+(j+1)k}, \left(\ell_j+1 \right) 2^{\frac{m}{2}+(j+1)k}\right]\,,$$
write
\begin{equation}\label{freqdec}
f_j=\sum_{\ell_j \sim 2^{\frac{m}{2}-k}} f_j^{\omega_j^{l_j}}\,,
\end{equation}
with $f_j^{\omega_j^{l_j}}$ designating the Fourier projection of $f_j$ onto $\omega_j^{l_j}$ and finally let $\underline{f}_j^{\omega_j^{\ell_j}}(x):= f_j^{\omega_j^{\ell_j}}(x)\, e^{-i \ell_j 2^{\frac{m}{2}+(j+1)k} x}$ be the shift at the zero frequency of $f_j^{\omega_j^{l_j}}$.

With these notations, recalling \eqref{joint}, we have
\begin{equation}\label{DecIntr}
{J}^{k}_{q, n}(f_1,f_2)(x)=\sum_{\ell_1, \ell_2 \sim 2^{\frac{m}{2}-k}} e^{i \left(\ell_1 2^{\frac{m}{2}+2k}+\ell_2 2^{\frac{m}{2}+3k} \right)x} \underline{J}^k_{q,n} \left( \underline{f}_1^{\omega_1^{\ell_1}}, \underline{f}_2^{\omega_2^{\ell_2}} \right)(x),
\end{equation}
with
\begin{equation}
\underline{J}^k_{q,n} \left( \underline{f}_1^{\omega_1^{\ell_1}}, \underline{f}_2^{\omega_2^{\ell_2}} \right)(x):=\int_{I_q^{m, k}}  \underline{f}_1^{\omega_1^{\ell_1}}(x-t) \underline{f}_2^{\omega_2^{\ell_2}}(x+t^2) e^{i \left(-\ell_1 2^{\frac{m}{2}+2k}t+\ell_2 2^{\frac{m}{2}+3k} t^2+3 \cdot 2^{\frac{m}{2}+k} \cdot \frac{q^2}{2^m} n t \right)} dt.
\end{equation}
Once at this point, one can apply a similar argument with the one in \eqref{Key1I}:
\begin{equation} \label{Key2I}
x \to \underline{J}^k_{q,n} \left( \underline{f}_1^{\omega_1^{\ell_1}}, \underline{f}_2^{\omega_2^{\ell_2}} \right)(x)
\:\textrm{is morally constant on the interval}\:I_p^{m, 3k}\,.
\end{equation}
With these, exploiting the dependencies among the spatial and frequency parameters--see Section \ref{Diagkpsmall}, we obtain the following (correlative) time-frequency model:
\begin{eqnarray} \label{CTFM2}
&&|\Lambda_m^k(\vec{f})|\lesssim 2^k \sum_{\substack{p \sim 2^{\frac{m}{2}+2k}\\ q \sim 2^{\frac{m}{2}} \\ n \sim 2^k}} \sum_{(\ell_1, \ell_2, \ell_3) \in \TFC_q}  \left| \left\langle f_3^{\omega_3^{\ell_3}}, \phi_{p+\frac{q^3}{2^m}, \ell_3 2^k+n}^{3k} \right \rangle \right|  \left|\left \langle f_4, \phi^{3k}_{p, \ell_1 2^{-k}+\ell_2+\ell_32^k+n} \right \rangle \right| \nonumber \\
&&\frac{1}{\left|I_p^{m, 3k} \right|} \bigg| \int_{I_p^{m, 3k}} \int_{I_q^{m, k}}  \underline{f}_1^{\omega_1^{\ell_1}}(x-t) \underline{f}_2^{\omega_2^{\ell_2}}(x+t^2) e^{i 2^{\frac{m}{2}+k}t \cdot \frac{3q^2}{2^m}n}  e^{i 2^{\frac{m}{2}+k}t \cdot 2^k \left(-\ell_1 +\ell_2 \frac{2q}{2^{\frac{m}{2}}}+ \ell_3 \frac{3q^2}{2^m}\right)} dt dx \bigg|
\end{eqnarray}
with \emph{the time-frequency correlation set} $\TFC_q$ defined as follows:
for each $q \sim 2^{\frac{m}{2}}$ we have
\begin{equation} \label{TFCq}
\TFC_q:=\left\{ \substack{\left(\ell_1, \ell_2, \ell_3 \right)\\\ell_1, \ell_2, \ell_3 \sim 2^{\frac{m}{2}-k}}\, :\, \ell_1-\frac{2q}{2^{\frac{m}{2}}} \ell_2- \frac{3q^2}{2^m} \ell_3\approx 0\right\}.
\end{equation}

\subsubsection{Time-frequency foliation and a sparse-unform dichotomy}

Once we have understood the correct perspective on the discretization of our form $\Lambda_m^k(\vec{f})$, the main challenge is to obtain a good control on the models \eqref{Case1} and \eqref{CTFM2}. This is provided by Theorem \ref{mainthm} in Section \ref{SDTmotiv} in the form: there exists $\epsilon>0$ and $c(\vec{p})>0$ such that
\begin{equation} \label{Keyyy}
\left|\Lambda_m^k(\vec{f})\right| \lesssim_{\vec{p}} 2^{-\epsilon\,c(\vec{p})\,\min\{|k|,\frac{m}{2}\}} \,\|\vec{f}\|_{L^{\vec{p}}}\,,
\end{equation}
for any $p$ within the range of Theorem \ref{MAINthm}. A key underlying realization that contrasts with the typical curvature estimates is the formulation of the correct decaying factor in \eqref{Keyyy} that captures the interplay between the physical scale $k$ and the frequency scale $m$.

The proof of this result involves the analysis of how the information carried by the input functions $f_j$ distributes within the time-frequency space. The specifics of this analysis depend though on the two cases discussed above:

\medskip
\noindent{\textsf{\underline{The case $k\geq \frac{m}{2}$}: control over the linearizing supremum function $\lambda$}}
\medskip

In this situation, since we are not subdividing the frequency locations corresponding to $f_1$ and $f_2$ along the spatial fibers, we only focus on the spatial distribution of the inputs: each $f_j$ is decomposed into a sparse $f_j^{\calS}$ and a uniform component $f_j^{\calU}$ that, at the heuristic level, correspond to the locations where the function $f_j$ is above or below the expected value in the $L^2-$average sense, respectively. Accordingly, one decomposes the form $\Lambda_m^k(\vec{f})$ into a sparse $\Lambda_m^{k, \calS}(f)$ and a uniform component $\Lambda_m^{k, \calU}(f)$, with the latter addressing the case when, informally, all the input functions are uniform.

Now, relying on the smallness of the spatial support of the inputs, the sparse component $\Lambda_m^{k, \calS}(f)$ can be easily controlled. Thus, we are left with the treatment of the uniform case--the key task in the context $k\geq\frac{m}{2}$. Departing from \eqref{Case1}, after successive Cauchy-Schwarz arguments, a final $l^2\times l^2\times l^{\infty}$ H\"older application in the $n-$parameter, reduces matters to
\begin{eqnarray}\label{Holder}
&&\left| \Lambda_m^{k, \calU} (\vec{f}) \right| \lesssim 2^k \sum_{\ell \sim 2^{2k}} \left( \sum_{n \sim 2^{\frac{m}{2}}} \sum_{p=2^{\frac{m}{2}}\ell}^{2^{\frac{m}{2}}(\ell+1)} \left| \left \langle f_3^{\calU}, \phi_{p, n}^{3k} \right \rangle \right |^2 \right)^{\frac{1}{2}} \left( \sum_{n \sim 2^{\frac{m}{2}}} \sum_{p=2^{\frac{m}{2}} \ell}^{2^{\frac{m}{2}}(\ell+1)} \left| \left \langle f_4^{\calU}, \phi_{p, n}^{3k} \right \rangle \right |^2 \right)^{\frac{1}{2}}\nonumber \\
&&\left( \frac{1}{\left|I_0^{m,3k} \right|}  \sup_{n \sim 2^{\frac{m}{2}}} \int_{I_\ell^{0, 3k}} \left( \sum_{q \sim 2^{\frac{m}{2}}}  \left| \int_{I_q^{m, k}} f^{\calU}_1(x-t)f^{\calU}_2(x+t^2) e^{3i \cdot 2^{\frac{m}{2}+k}t \left(\frac{q}{2^\frac{m}{2}} \right)^2 n} dt \right|^2 \right) dx \right)^{\frac{1}{2}}.\qquad
\end{eqnarray}
Exploiting now the uniformity of the input functions one concludes that
$$
\left|\Lambda_m^{k, \calU}(\vec{f}) \right| \lesssim 2^k\,\left\|f_3 \right\|_{L^2 \left(3 I^k \right)} \left\|f_4 \right\|_{L^2 \left(3 I^k \right)} \textbf{L}_{m,k}(f_1, f_2),
$$
with
\begin{equation}\label{Lmk}
\left(\textbf{L}_{m,k}(f_1, f_2)\right)^2 := \frac{1}{\left|I_0^{m,k} \right|} \int_{I^{k}} \left( \sum_{q \sim 2^{\frac{m}{2}}}  \left| \int_{I_q^{m, k}} f^{\calU}_1(x-t)f^{\calU}_2(x+t^2) e^{3i \cdot 2^{\frac{m}{2}+k}t \left(\frac{q}{2^\frac{m}{2}} \right)^2 \lambda(x)} dt \right|^2 \right) dx\,,
\end{equation}
where $\lambda$ is the linearizing function that achieves the supremum in \eqref{Holder}. Notice that the inner expression in \eqref{Lmk} may be interpreted as a suitable maximal joint Fourier coefficient of the pair $(f^{\calU}_1,f^{\calU}_2)$ relative to a fixed spatial location $x\in I^k$ hence the \emph{correlative} nature of our approach.

The crux of the matter and the most technically involved reasoning is the proof of the statement: there exists $\tilde{\epsilon}>0$ such that for any $k,m\in\N$ with $k\geq \frac{m}{2}$ the following holds:
\begin{equation} \label{keyCase1}
\textrm{\textbf{L}}_{m,k}(f_1, f_2) \lesssim 2^{-\tilde{\epsilon} m} \left\|f_1 \right\|_{L^2 \left(3 I^k \right)} \left\|f_2 \right\|_{L^2 \left(3 I^k \right)}.
\end{equation}

This is the content of Theorem \ref{20230202thm01}, whose treatment covers most of Section \ref{MaindiagKpositive}. Relying on the interplay between the \emph{local} and the \emph{global} properties of \eqref{Lmk}, the main novelty in this proof is represented by a \emph{bootstrap argument involving the structure of the linearizing function}--see Key Heuristic 4 and Observation \ref{constancy}. This may be regarded as the analog of an induction on scale argument in which the starting  hypothesis is the constancy of $\lambda$ at a fixed suitably small scale and the conclusion is that either one can propagate the constancy of $\lambda$ to a larger interval (scale) or  the phase in \eqref{Lmk} oscillates fast enough to provide local control on the (outer) $x-$integrand of $\textrm{\textbf{L}}_{m,k}(f_1, f_2)$ that further implies \eqref{keyCase1}. This bootstrap algorithm is only run twice in order to reach the situation when $\lambda$ may be assumed constant at the level of the linearizing scale $|I_0^{m, k}|$. Once at this point--see Proposition \ref{mainprop02}--the desired control is obtained at the global level by exploiting the curvature between the spatially discretized $(x,t)-$fibers, which, heuristically speaking, forces a suitable ``discrete mixed second order derivative of the phase"\footnote{Note the parallelism with a H\"ormander type condition.} to be large on average--this is the underlying principle that is exploited along the way from \eqref{20230215eq10} to the final control obtained in \eqref{Jkey3}.

\medskip
\noindent{\textsf{\underline{The case $0\leq k<\frac{m}{2}$}: control over the time-frequency correlation sets}}
\medskip

Before elaborating on the philosophy behind this case, we mention that the entire approach described below necessitates, in reality, a split in two subcases: the case $0\leq k\leq\frac{m}{4}$ that is treated in Section \ref{Sec05} and the case $\frac{m}{4} \le k<\frac{m}{2}$ that is addressed in Section \ref{Sec06}.

With this caveat noted, we start by saying that in the regime $0\leq k<\frac{m}{2}$, the sparse-uniform decomposition requires a more sophisticated procedure due to the fact that the model \eqref{CTFM2} involves a summation over suitable frequency fibers corresponding to the input decomposition(s) in \eqref{freqdec}.

Indeed, for each input $f_j$, we design a carefully chosen tiling--according to a rescaled grid--$\mathcal{P}_j$ of the phase-space plane (depending on the parameters $j,m,k$) that gives the decomposition\footnote{This decomposition is only implicit in our proof but can be easily recovered from the definition of the sparse and uniform sets--see Sections \ref{SU1} and \ref{SU2}.}
\begin{equation}\label{freqdecc1}
f_j=\sum_{P_j\in\mathcal{P}_j} f_j(P_j)\,,
\end{equation}
where $f_j(P_j)$ is adapted to $P_j$ with $P_j=\omega_{P_j}\times I_{P_j}$ having the same scale within each family $\mathcal{P}_j$ and of a fixed area  $\geq 2^{\frac{m}{2}}$.

Now, referring to a spatial fiber as the collection of all tiles having the same spatial interval, we employ the sparse-uniform decomposition of each $f_j$, as follows: the sparse component $f^{\calS}_j$ is formed by those pieces $ f_j(P_j)$ whose $L^2$ mass captures a big chunk from the $L^2$ mass of the whole fiber to which they belong, while the uniform component $f^{\calU}_j$ is the superposition of all the other remaining pieces $ f_j(P_j)$. It is worth mentioning here that in order for this sparse-uniform dichotomy to be effective it is crucial to work with appropriate rescaled versions of the standard (\emph{i.e.}, of area one) Heisenberg tiles and thus go beyond the more common Fourier coefficient size analysis that in this case would run into the very delicate problem of understanding the \emph{structure}\footnote{In the spirit of Freiman's theorem for small sum sets, \cite{Fre66}, \cite{Fre73}.} of the set representing  intermediate size Fourier coefficients aligned along a given spatial fiber and how these interact for different inputs.

Once at this point, similarly with what we have seen for the case $k\geq \frac{m}{2}$, one decomposes the form $\Lambda_m^k(\vec{f})$ into a sparse $\Lambda_m^{k, \calS}(\vec{f})$ and a uniform component $\Lambda_m^{k, \calU}(\vec{f})$, but this time, the accent falls on the former component that covers the situation when \emph{all} the input functions are sparse.

Indeed, in contrast with the first case above, in the current context it is the uniform component $\Lambda_m^{k, \calU}(\vec{f})$ that has a simpler treatment. This is a direct consequence of: 1) the model form \eqref{CTFM2} involves a summation over frequency fibers, and, 2) the information carried by an input of the form $f^{\calU}_j$ is, essentially, uniformly distributed along the frequency locations belonging to a given spatial fiber.

The treatment of the sparse component $\Lambda_m^{k, \calS}(\vec{f})$ relies fundamentally on the interplay between the
time-frequency correlation set defined in \eqref{Keyyy} and the properly defined sparse input functions  $\{f^{\calS}_j\}_j$. After suitable rescalings, exploiting the fact that for each $j$, and each spatial fiber, $f^{\calS}_j$ has essentially only one possible frequency location, one reaches the fundamental step in the proof of this case--that of obtaining a good control over a new/special type of \emph{time-frequency correlation set} that now involves suitable dependencies between measurable functions, thus reducing matters to a \emph{level set analysis}--see Sections \ref{20230306subsec01} and \ref{SparseII}.

\subsection{Motivation and historical background}\label{Motivation}

In this section, we discuss the origin and motivation for our problem and how it relates to the existing literature. The presentation below is tailored to reflect the transformation/evolution of this problem: from its formulation in \cite{Lie15} as a ``curved" prototype for the fundamental open question regarding the boundedness of the trilinear Hilbert transform \eqref{THT} to its present resolution serving as a main driving force in the development of the Rank II LGC method.

\subsubsection{The trilinear Hilbert Transform}\label{THTdes}

For $\vec{f}:=(f_1,f_2,f_3)\in \calS^3(\R)$, define the $\gamma-$Trilinear Hilbert Transform by the expression
\begin{equation}\label{THTgama}
{\bf T_{\gamma}}(\vec{f})(x):= \textrm{p.v.}\,\int_{\R} f_1(x+\gamma_1(t))f_2(x+\gamma_2(t))f_3(x+\gamma_3(t)) \frac{dt}{t}, \quad x \in \R\,,
\end{equation}
where $\gamma=(\gamma_1,\gamma_2,\gamma_3)\,:\R\mapsto\R^3$ is a suitable nondegenerate, smooth curve in $\R^3$.

The problem of understanding the boundedness properties of ${\bf T_{\gamma}}$ in the zero-curvature setting constitutes one of the major open questions in the Fourier analysis area and may be formulated as follows:

\begin{conj}\textsf{[The classical/zero-curvature Trilinear Hilbert Transform]}\label{THTZ}
\medskip

Let $\gamma(t)=(a_1 t,a_2 t, a_3 t)$ be a nondegenerate\footnote{In this context, by nondegenerate we mean $\{a_j\}_{j=1}^3\subset \R\setminus\{0\}$ pairwise distinct.} line in $\R^3$. Then ${\bf T_{\gamma}}$ defined in \eqref{THTgama} obeys
\begin{equation}\label{boundTHT}
\|{\bf T_{\gamma}}(\vec{f})\|_{L^{p_4}}\lesssim_{\vec{p},\gamma} \|\,\vec{f}\,\|_{L^{\vec{p}}}\,,
\end{equation}
for any $\vec{p}:=(p_1, p_2, p_3)\in (1,\infty]^3$ with $\sum_{j=1}^3\frac{1}{p_j}=\frac{1}{p_4}$ and $\frac{1}{2}<p_{4}<\infty$.
\end{conj}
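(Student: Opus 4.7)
The plan is to adapt the Rank II LGC framework just developed to the zero-curvature setting, while being candid that Conjecture \ref{THTZ}---the celebrated trilinear Hilbert transform problem, open for over two decades---is widely believed to require an essentially new ingredient beyond the present methods. As a starting point, I would dualize to the quadrilinear form
\begin{equation*}
\Lambda_\gamma(\vec{f})=\int_\R\int_\R f_1(x+a_1 t)f_2(x+a_2 t)f_3(x+a_3 t)f_4(x)\,\frac{dt\,dx}{t},
\end{equation*}
Littlewood--Paley decompose in both the kernel scale $k\in\Z$ and the frequency scale $m\in\N$, and attempt to reduce matters to diagonal pieces $\Lambda^k_m$. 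A first difficulty already surfaces: the three maps $t\mapsto a_j t$ share a common scaling, so the stationary-phase diagonalization that in \eqref{DiagtermI}--\eqref{mfI} forces $f_j$ to live at frequency $\sim 2^{m+jk}$ is no longer available, and one must accept a single global ``modulation'' parameter $\eta$ for the triple $(f_1,f_2,f_3)$.

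Next, mimicking the correlative time-frequency philosophy of Section \ref{CTF}, I would discretize only one input (say $f_3$) via a Gabor frame and fold $(f_1,f_2)$ into a joint Fourier coefficient
\begin{equation*}
{J}^{k}_{q,n}(f_1,f_2)(x):=\int_{I_q^{m,k}}f_1(x+a_1 t)f_2(x+a_2 t)\,e^{i\,\theta(q,n)\,t}\,dt,
\end{equation*}
paralleling \eqref{joint}. I would then perform the sparse--uniform decomposition of Section \ref{Keyideas}: the uniform component I would attack via iterated Cauchy--Schwarz, seeking an $L^2$-bilinear estimate analogous to \eqref{keyCase1}; the sparse component would require a level-set analysis of the corresponding analogue of $\TFC_q$.

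The hard part---and the reason the conjecture remains open---is that the time-frequency correlation set $\TFC_q$ in \eqref{TFCq} \emph{degenerates} here. In the curved regime the weights $1$, $\frac{2q}{2^{m/2}}$, $\frac{3q^2}{2^m}$ depend nontrivially on $q$, and it is precisely this $q$-dependence that drives the decay $2^{-\epsilon\min\{|k|,m/2\}}$ in \eqref{Keyyy}; for $\gamma(t)=(a_1t,a_2 t,a_3 t)$ the analogous set collapses to the fixed hyperplane $\{a_1 n_1+a_2 n_2+a_3 n_3\approx 0\}$, independent of $q$. Equivalently, $J^k_{q,n}(f_1,f_2)$ retains the full linear modulation symmetry of the bilinear Hilbert transform relative to $(f_1,f_2)$, so no absolute-value bound on a Gabor expansion of $f_3$ can produce decay in $m$. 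Any successful attack will, I expect, have to graft a genuinely modulation-invariant tritile wave-packet analysis---an extension of the Lacey--Thiele scheme accommodating three simultaneous modulation symmetries---onto the correlative discretization developed here, and the classical tritile orthogonality on which Lacey--Thiele rests is believed to fail in this setting. Identifying a viable replacement is the crux of the problem and appears to lie beyond the reach of the Rank II LGC method as it presently stands.
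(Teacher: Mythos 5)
The statement you were asked about is a \emph{conjecture}: the paper does not prove it, states explicitly that it is ``one of the major open questions in the Fourier analysis area,'' and your proposal---quite correctly---does not prove it either. So there is no proof to compare against; what can be assessed is whether your account of the obstructions matches the paper's own diagnosis in Remark \ref{ChallengesTHT} and Section \ref{CTFrev}.

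Your discussion is accurate as far as it goes: the degeneration of the time-frequency correlation set to the fixed hyperplane $\{a_1 n_1+a_2 n_2+a_3 n_3\approx 0\}$, the loss of the $q$-dependence that drives the decay in \eqref{Keyyy}, the failure of stationary-phase diagonalization to separate the frequency supports of the $f_j$, and the persistence of the linear modulation invariance \eqref{LinModInv} inside the joint coefficient $J^k_{q,n}(f_1,f_2)$ are all genuine and are consistent with the paper's ``lack of absolute summability'' item. However, you omit what the paper singles out as the deepest structural obstacle: $\mathbf{T}$ also obeys the \emph{quadratic} modulation invariance \eqref{QuadModInv}, which places it in a strictly harder class than operators with only linear modulation symmetry. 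The paper's position (see also the fourth bullet in the discussion of \cite{GL22} in Section \ref{LGCphil}) is that any linearization followed by a standard linear wave-packet analysis---including the ``tritile extension of Lacey--Thiele'' you propose to graft on---is provably insufficient, and that a \emph{higher-order} wave-packet discretization in the spirit of the Quadratic Carleson operator (\cite{Lie09}, \cite{Lie20}) is a necessity, not merely an option. Your closing paragraph, which locates the crux in a failure of tritile orthogonality within a linear wave-packet scheme, therefore understates the difficulty: even perfect control of linear tritiles would not address \eqref{QuadModInv}. This is a gap in the diagnosis rather than in a proof, since no proof exists.
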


\begin{rem}\label{BoundrangeconjTHT} Notice the difference between the conjectured boundedness range above and the one corresponding to the hybrid, $n=3$ case in Conjecture \ref{Mult}. In fact, on the negative side, the failure of \eqref{boundTHT} outside the range specified in the above conjecture was verified for the specific case $p_1=p_2=p_3=p>1$ as follows: first, in \cite{Dem08}, for some $1<p<\frac{3}{2}$ and the choice $(a_1,a_2,a_3)=(1,2,3)$; then, in \cite{KL12}, for all $1<p<\frac{3}{2}$ and the choice $(a_1,a_2,a_3)=(1,-1,c)$ with $c$ irrational and $(c-1)(c+1)>0$; and finally, in \cite{Kom21}, for some $1<p<\frac{3}{2}$ and all nondegenerate triples $(a_1,a_2,a_3)$.
\end{rem}

The interest in studying the boundedness properties of the trilinear Hilbert transform\footnote{In what follows, for expository reasons we fix $(a_1,a_2,a_3)=(1,2,3)$ and thus identify ${\bf T_{\gamma}}$ with ${\bf T}$ defined in \eqref{THT}.} may be motivated from multiple angles:
\begin{itemize}
\item on the time-frequency/classical harmonic analysis side ${\bf T}$  is part of a natural hierarchy of objects that has as the foundational case the classical Hilbert transform followed, in the bilinear setting, by the bilinear Hilbert transform \eqref{BHTdef} originating in A. Calder\'on's study of the Cauchy transform on Lipschitz curves;

\item on the analytic number theory/additive combinatorics side the study of ${\bf T}$ is connected with problems of counting additive patterns in subsets of $\Z$ in the spirit of Roth's and Szemer\'edi's theorems on arithmetic progressions;

\item on the ergodic theory side the study of ${\bf T}$ relates to the behavior of suitable Furstenberg non-conventional averages as well as to the pointwise convergence of bilinear Birkhoff averages for two commuting measure-preserving transformations.
\end{itemize}

In what follows, given the main focus of our present paper, we will only discuss the first item above; for the remaining two items we refer the interested reader to the extensive Introduction in \cite{Lie20}--see Sections 1.4.6 and 1.4.7 therein and also the related discussion in Section \ref{ErNT} below.

We start our time-frequency perspective saga by letting  $M_{l,b}f(x):=e^{i b x^l}\,f(x)$, $b\in\R$, be the generalized modulation of order $l\in\N$. Given the fact that the trilinear Hilbert transform ${\bf T}$ extends the behavior of the bilinear Hilbert transform
$B$ we obviously have that ${\bf T}$ commutes with translation and dilation symmetries and is also linear modulation invariant as seen from the following relation:
\begin{equation}\label{LinModInv}
\textbf{T}(M_{1,3b_1} f_1, M_{1,3b_2} f_2, M_{1,-b_1-2 b_2} f_3 )=M_{1, 2 b_1+b_2} \textbf{T} (f_1, f_2, f_3)\qquad\forall\;b_1,b_2\in\R \,.
\end{equation}
This immediately suggests that any successful approach in the affirmative to Conjecture \ref{THTZ} must involve modulation invariance techniques, and, likely, a wave-packet analysis that subsumes the methods developed for proving the boundendness of the Carleson operator (\cite{Car66}, \cite{Feff73}, \cite{LT00}), and, in a partial range, of the  bilinear Hilbert transform (\cite{LT97}, \cite{LT99}).

However, this is only one side of the story: on top of the above,  $\textbf{T}$ also obeys a \emph{quadratic modulation invariance}:
\begin{equation}\label{QuadModInv}
\textbf{T}(M_{2,3b} f_1, M_{2,-3b} f_2, M_{2,b} f_3)=M_{2, b} \textbf{T} (f_1, f_2, f_3)\qquad\forall\;b\in\R \,.
\end{equation}

In this way, we are naturally brought to the following closing commentary:
\medskip
\begin{rem}\textsf{[Challenges for bounding $\textbf{T}$]}\label{ChallengesTHT} Below we list what we believe are some major difficulties in approaching Conjecture \ref{THTZ}:
\begin{itemize}
\item \textit{[Additive combinatorics]}: The arithmetic structure of the input-function arguments in the $t-$variable is not only determinative of the modulation invariance properties in \eqref{LinModInv} and \eqref{QuadModInv} but also responsible for subtle connections with additive combinatorics, such as Gowers's work, \cite{Gow98}, on Szemer\'edi's theorem regarding arithmetic progressions on length $\geq 4$, \cite{Sze69}, \cite{Sze75}.\footnote{In particular the concept of Gowers uniformity norm of order 2 seems naturally tailored for evaluating expressions similar to a single scale piece of $\textbf{T}$.} Another manifestation of the connections between the study of $\textbf{T}$-related objects\footnote{Though single scale and positive in nature.} and additive combinatorics appears in \cite{Chr01}.

\item \textit{[Lack of absolute summability for discretized linear wave-packet models]}: If one performs a scale adapted (linear) wave-packet decomposition of each of the input functions of $\textbf{T}$ the resulting model operator is \emph{not absolutely summable}. A similar phenomenon manifests when attempting to show the $L^{p_1}\times L^{p_2}\rightarrow L^{p_3}$ boundedness of the bilinear Hilbert transform $B$ when $\frac{1}{p_1}+\frac{1}{p_2}=\frac{1}{p_3}$ with $p_1,p_2>1$ and $\frac{1}{2}<p_3<\frac{2}{3}$. While the method that we develop in the present paper suggests a possible route towards approaching the lack of the summability issue for the latter problem---see Observation \ref{BHTinv}, it is unlikely that such an approach could be transferred to $\textbf{T}$ without accounting for the topic discussed next.

\item \textit{[Higher order wave-packet analysis]}: The presence of the quadratic modulation invariance property in \eqref{QuadModInv} is an indication that the problem of providing bounds for $\textbf{T}$ belongs to a higher, more difficult class of problems than that addressing operators that only share a linear modulation invariance structure--such as the classical Carleson operator or the bilinear Hilbert transform. Consequently, any time-frequency approach to $\textbf{T}$ is expected to involve a higher order wave-packet discretization. Unfortunately, to date, we have a very limited understanding of how to implement such a higher order wave-packet analysis for treating quadratic modulation invariant operators. The only exception to this is the case of polynomial (Quadratic) Carleson operator--see \cite{Lie09} and \cite{Lie20}.
\end{itemize}
\end{rem}

\subsubsection{Zero versus non-zero curvature dichotomy and the LGC-methodology}\label{LGCphil}

In this section we discuss based on some fundamental examples one of the oldest and most consequential phenomena in harmonic analysis---the zero versus non-zero curvature dichotomy. We start with a very brief heuristic.
\begin{itemize}
\item \textsf{The zero curvature case}: In this situation, the operator under scrutiny, say $T$, obeys some suitable (linear) modulation invariance that, regarded on the frequency side, manifests into a translation invariant property of the multiplier. Many of the most interesting and challenging problems in harmonic analysis---see the discussion below---belong to this case. The methods developed for approaching the zero-curvature situation require the usage of wave-packet analysis and involve a careful analysis of the tiles representing the time-frequency discretization of $T$. The information stored in these tiles is then grouped depending on concepts such as mass or size, in the spirit developed for approaching the Carleson operator and the bilinear Hilbert transform.

\item  \textsf{The non-zero curvature case}: In this situation, the operator $T$ has no modulation invariance structure, with the zero frequency playing a favorite role in the analysis of its multiplier. The latter usually encompasses a highly oscillatory phase whose basic understanding relies on a stationary phase analysis. Usually, some sort of scale type decay is expected, where here the reference to ``scale" must be suitably interpreted and is most frequently related to the height of the phase of the multiplier. Besides stationary phase analysis, the classical approach for a large set of problems belonging to this category involved orthogonality methods such as the $TT^{*}-$method or Littlewood-Paley theory, square functions and vector valued inequalities, etc. In general the problems pertaining to this class are easier in nature than those corresponding to the first item, though there are a number of non-zero curvature problems that are still open and quite difficult---see in particular Conjecture \ref{Mult} above.
\end{itemize}

Now, in order to add more concrete elements/content to the above philosophy while still keeping the exposition very focused, we follow the discussion in \cite{Lie19} while omitting the bibliographical/historical details\footnote{For this, the interested reader is invited to visit the extensive Introduction in \cite{Lie19}.} by providing only a brief outline of three main themes in harmonic analysis:

\begin{itemize}

\item (I) \emph{The linear Hilbert transform along $\gamma$}, with $x=(x_1,x_2)\in\R^2$:
\begin{eqnarray}\label{HVT}
H_{\gamma}(f)(x):= \textrm{p.v.}\int_{\R}f(x_1-t,\,x_2+\gamma(x,t))\,\frac{dt}{t}\,.
\end{eqnarray}
The open problem of providing $L^p$ bounds in the zero curvature case $\gamma(x,t)=a(x)\,t$ with $a$ Lipschitz is the singular integral version of the so called Zygmund conjecture on differentiability along Lipschitz vector fields - one of the deep open questions in harmonic analysis. The non-zero curvature case $\gamma(x,t)=\sum_{j=2}^n a_j(x)\,t^j$, $n\geq2$, with $a_j(\cdot)$ merely measurable functions originates in PDE in the study of constant coefficient parabolic equations.
\medskip

\item (II) \emph{The $\gamma$ - Carleson operator}:

\begin{eqnarray}\label{gC}
 C_{\gamma}f(x):=\textit{p.v.}\,\int_{\R} f(x-t)\, e^{i\,\gamma(x,t)}\,\frac{dt}{t}\,.
\end{eqnarray}

The zero curvature case $\gamma(x,t)=a(x)\,t$ with $a(\cdot)$ measurable represents precisely the classical Carleson operator addressing the old problem of the pointwise convergence of Fourier series.  The non-zero curvature case $\gamma(x,t)=\sum_{j=2}^n a_j(x)\,t^j$ with $a_j(\cdot)$ measurable and $n\geq 2$ is related to the study of oscillatory singular integral operators on the Heisenberg group. Finally, the hybrid case $\gamma(x,t)=\sum_{j=1}^n a_j(x)\,t^j$ (again with $a_j(\cdot)$ measurable, $n\geq 1$) is referred to as the Polynomial Carleson operator. From the three themes discussed here this is the only one for which we have a complete understanding in all three regimes: zero-curvature, non-zero curvature and hybrid. Also, as mentioned in the discussion about the trilinear Hilbert transform, the latter regime provides the only known situation to date for which we are able to provide $L^p$ bounds for a singular integral operator having linear but also higher order modulation invariance.
\medskip

\item (III) \emph{The bilinear Hilbert transform along $\gamma$}:

\begin{eqnarray}\label{defbHVT}
 B_{\gamma}(f,g)(x):= \textrm{p.v.}\int_{\R}f(x-t)\:g(x+\gamma(t))\:\frac{dt}{t}\,.
\end{eqnarray}

The zero curvature case $\gamma(t)= a t$ with $a\in\R\setminus\{-1,0\}$ represents the classical bilinear Hilbert transform. The study of its $L^p$ boundedness properties was proposed by A. Calder\'on in his search for understanding the behavior of the Cauchy integral along Lipschitz curves and was solved by M. Lacey and C. Thiele within the range $\frac{1}{p}+\frac{1}{q}=\frac{1}{r}$ for $1<p,q\leq \infty$ with $\frac{2}{3}<r<\infty$. As mentioned earlier, the case $\frac{1}{2}<r<\frac{2}{3}$ is still open due to the lack of absolute summability of the wave-packet discretized model.
The purely non-zero curvature case $\gamma(t)= \sum_{j=2}^n a_j t^j$, $n\geq 2$ is now completely understood and was motivated, in part, by analogous models studied in ergodic theory and number theory.
\end{itemize}

With this panoramic view on some of the pre-existent methods and type of problems addressing the zero/non-zero curvature realm  we are ready to outline the origin and evolution of the recently developed LGC method with its two layers.

The main motivation for this method was to create a unified perspective and set of tools for approaching both zero and non-zero curvature problems. In order to be able to achieve this the first key requirement was the integration of wave-packet analysis/time-frequency discretization within the non-zero curvature setting. This naturally required a preliminary \emph{linearization} process applied either on the multiplier side or on the spatial side followed by a \emph{Gabor frame decomposition} of the input functions. The resulting time-frequency discretized model operator presented then the following key feature: as a direct consequence of the curvature of the original operator one is able to relate via some algebraic identities the spatial and frequency parameters appearing in the above discretization, thus creating the so-called \emph{time-frequency correlations}. Understanding the latter proves to be an essential task in obtaining the desired global control on the operator under discussion.

This approach was developed in \cite{Lie19} and provided a first unified treatment for both the singular and the maximal versions of the themes (I) and (II) above in the non-zero curvature setting $\gamma(x,t)=\sum_{j=1}^n a_j(x) t^{\alpha_j}$ with $a_j(\cdot)$ measurable and $\alpha_j\in\R\setminus\{0,1\}$, $1\leq j\leq n$, distinct, where here for the first theme we imposed the extra restriction $a_j(x)=a_j(x_1)$.

The next step accomplished in \cite{GL22} came as a confirmation that the LGC approach provides a natural environment in which zero and non-zero curvature techniques can be integrated; indeed, the authors therein prove several results, among which we mention
\begin{itemize}
\item the $L^2$-boundedness of \eqref{HVT} and \eqref{gC} in the hybrid case $\gamma(x,t)=a_1(x)t+\sum_{j=2}^n a_j(x) t^{\alpha_j}$ with $a_j(\cdot)$ measurable and $\alpha_j\in \R\setminus\{0,1,2\}$, $2\leq j\leq n$, distinct, where for the first theme we operate under the same assumption $a_j(x)=a_j(x_1)$;
\item  the $L^p\times L^q\rightarrow L^r$ boundedness of \eqref{defbHVT} with $\frac{1}{p}+\frac{1}{q}=\frac{1}{r}$, $p,q>1$ and $\frac{2}{3}<r<\infty$ in the hybrid case $\gamma(t)=at +b t^{\alpha}$ for $a\in\R\setminus\{1\}$, $b\in\R$ and $\alpha\in(0,\infty)\setminus\{1,2\}$;
\item  a short, simple(r), self-contained proof for the key ingredient used in the boundedness of the curved triangular Hilbert transform in \cite{CDR20};
\item a first rigorous explanation of why in the case of quadratic modulation invariance the higher order wave-packet analysis introduced in \cite{Lie09} and \cite{Lie20} is in fact a necessity and thus why a linearization argument followed by a standard (linear) wave-packet analysis, as in the case of LGC, does not apply; this is also the deep reason for which at the first two items above one is required to eliminate the value $2$ from the range of the admissible $\alpha-$exponents.
\end{itemize}

Now, while all of these more recent results in \cite{Lie19} and \cite{GL22} demonstrated the versatility of this new method, they all had in common the fact that after the linearization and adapted input Gabor decomposition steps the resulting discretized wave-packet models become \emph{absolutely summable} - a feature that if incorporated in the above method is referred to as ``\emph{Rank I LGC}".

This latter fact however changes when confronting the next hierarchy of problems: the Bilinear Hilbert--Carleson operator, \cite{BBLV21}, and the curved and hybrid trilinear Hilbert discussed here and in \cite{BBL23}, respectively. Indeed, in \cite{BBLV21}, the authors therein studied the boundedness properties of the Bilinear Hilbert--Carleson operator
\beq\label{Top}
BC^{\alpha}(f,g)(x):= \sup_{\lambda\in \R}  \Big|\int f(x-t)\, g(x+t)\, e^{i\lambda t^{\alpha}} \, \frac{dt}{t} \Big|
\eeq
in the non-resonant case $\alpha\in(0,\,\infty)\setminus\{1,\,2\}$, which is a hybrid operator in the sense that it enjoys both modulation invariant structure inherited from the bilinear Hilbert transform \eqref{BHTdef} and non-zero curvature features derived from the shape of the oscillatory phase. In this situation the application of the standard (Rank I) LGC method above runs into the key difficulty of the lack of absolute summability of the resulting discretized wave-packet model.

The underlying philosophy of the Rank II LGC approach developed in our current paper builds on the insight in \cite{BBLV21} and makes the crucial observation that in order to address this lack of absolute summability issue, it is necessary to treat the input functions $(f,g)$ as a single entity, which in turn entails the associated analysis of the resulting ``joint" (maximal) Fourier coefficient. However, unlike in the setting of the Bilinear Hilbert--Carleson operator---where the linearizing function arises \emph{inherently} in the context of a maximal oscillatory integral operator---the curved trilinear Hilbert transform has \emph{no immediately apparent maximal structure}, and accordingly the aptitude of this insight in the present setting is surprising.

On top of this, there are also numerous additional major difficulties that must be overcome in our present approach, including, to name only a few: the quadrilinear nature of the dualized form that requires an interdependent analysis of the four input functions (see the $1^{st}$ and $2^{nd}$ stages in the model discretization part of Section \ref{Keyideas}); the necessity of a structural analysis for the linearizing function $\lambda$ in the setting of the iterative constancy propagation approach; and the symbiosis between the time-frequency foliation algorithm and the analysis of the time-frequency correlation set.

\subsubsection{Ergodic and number theory perspectives}\label{ErNT}

In this section we summarize some natural connections between the main topic of our paper---see Theorem \ref{MAINthm} but also Observation \ref{FW} and Conjecture \ref{Mult}---and problems in ergodic theory and number theory/additive combinatorics.

We start our brief discussion within the realm of ergodic theory: Let $(X, \mu, T)$ be a $(\sigma-)$finite measure preserving system with $T:\:X\,\rightarrow\,X$ an invertible bimeasurable map. For any $k,\,N\in\N$, any polynomials\footnote{Here $\Z[n]$ designates the space of all polynomials $P(n)$ in one indeterminate $n$ with
integer coefficients.} $P_1,\ldots, P_k\in\Z[n]$ and any measurable functions $f_1,\ldots, f_k\in L^{0}(X)$ define the associated non-conventional polynomial multiple ergodic average as

\begin{eqnarray}\label{Nonconvmultergodav}
A_{N}^{P_1,\ldots,\,P_k}(f_1,\ldots,\,f_k)(x):=\frac{1}{N} \sum_{n=1}^N f_1(T^{P_1(n)}x)\ldots f_k(T^{P_k(n)}x)\,.
\end{eqnarray}

These averages play an important role in detecting recurrence point patterns and their study has a long history, as (only partly) reflected in what follows:

\begin{itemize}
\item the base case $k=1$, $P_1(n)=n$: the norm convergence of \eqref{Nonconvmultergodav} in $L^2(X)$ is the classical von Neumann mean ergodic theorem, \cite{Neu32}, while the pointwise convergence counterpart result is Birkhoff’s ergodic theorem in \cite{Bir31};

\item the base case $k=1$, general polynomial: the question on the pointwise convergence of \eqref{Nonconvmultergodav} for an $L^{\infty}$ input was proposed independently in the early 1980s by A. Bellow and H. Furstenberg and was positively solved by J. Bourgain in \cite{Bou88b}, \cite{Bou88a}, \cite{Bou89};

\item the case $k=2$, $P_1(n)=a\,n$ and $P_2(n)=b\,n$, $a,b\in\Z$: the norm convergence in $L^2(X)$ was settled by H. Furstenberg in \cite{Fur77a}, while the much harder pointwise convergence result was established by J. Bourgain in \cite{Bou90}, see also \cite{Dem07};

\item the case $k=2$, $P_1(n)=n$ and $P_2(n)$ of degree at least two: the pointwise convergence result was recently obtained in \cite{KMT22};

\item the case $k=3$, $P_i(n)=a_i\,n$ with $\{a_i\}_{i=1}^3\subset\Z$: the norm convergence in $L^2(X)$ was proved by  H. Furstenberg and B. Weiss in \cite{FW96}.

\item the general case $k\in\N$, linear polynomials, \textit{i.e.} $P_i(n)=a_i\,n$ with $\{a_i\}_{i=1}^k\subset\Z$: B. Host and B. Kra, \cite{HK05} and  T. Ziegler, \cite{Zieg07}, established norm convergence in $L^2(X)$;

\item the general case $k\in\N$ for general polynomials: norm convergence in $L^2(X)$ was settled by A. Leibman in \cite{Leib05a};

\item for more connections between the study of \eqref{Nonconvmultergodav} and other topics such as Furstenberg’s ergodic-theoretic proof \cite{Fur77a} of Szemerédi’s theorem \cite{Sze75} and Furstenberg--Bergelson--
Leibman conjecture one is invited to consult the Introduction in \cite{KMT22}.
\end{itemize}

With these being said it is worth noticing that the key question concerning the pointwise convergence of \eqref{Nonconvmultergodav} for $k\geq 3$ is still completely open. In this context, our main result in Theorem \ref{MAINthm} may be regarded as the natural continuous singular integral analogue for the pointwise convergence problem in the case $k=3$ with $P_i(t)=a_i\,t^i$, $a_i\in\Z$ and $i\in\{1,2,3\}$.

We now move to the number theory/additive combinatorics realm. The generic problem of interest for us may be heuristically formulated as follows: let as before $k\in \N$ and $P_1,\ldots,\,P_k$ polynomials (with no constant terms) in $K[y]$ where here one may take $K$ as $\Z$, a finite field $F_q$ or $\R$. We then want to identify qualitative and/or quantitative conditions on a set $S\subset K$ that guarantee the existence of and/or lower bounds for polynomial progressions $x,\,x+P_1(y),\ldots, x+P_k(y)$ lying within the set $S$. This generic problem has a very rich history and again, without aiming to be exhaustive, we mention some of the most relevant results:

\begin{itemize}

\item  in the linear polynomial, $K=\Z$ setting, the case $k=2$  was first studied by K. Roth, \cite{Ro53}, while the case $k\geq 3$ was famously addressed by E. Szemer\'edi in \cite{Sze75} and later T. Gowers in \cite{Gow01};

\item for the general polynomial, $K=\Z$ setting and general $k\geq 2$ qualitative results were obtained by Bergelson, Furstenberg and Weiss in \cite{BFW86} and  Bergelson and Leibman in \cite{BL96};

\item for $k=2$ and the real setting $K=\R$, Bourgain, \cite{Bou88c}, studied the problem for $P_1(y)=y$ and $P_2(y)=y^d$, $d\in\N, d\geq 2$; for more general polynomials $P_1$ and $P_2$ similar results were obtained in \cite{CGL21}, \cite{DGR19};

\item in the setting $K=F_q$ the easier case $k=2$ was first addressed in \cite{BC17} and more recently in \cite{Pel18}, \cite{DLS20};

\item in the same setting $K=F_q$ and general $k$, S. Peluse \cite{Pel19} obtained via additive combinatorics techniques quantitative bounds under the hypothesis that $\{P_i\}_{i=1}^k\subset \Z[y]$ are linearly independent over $\Q$;

\item the situation $K=\Z$ for general $k$ and $\{P_i\}_{i=1}^k\subset \Z[y]$ of distinct degrees was studied in \cite{Pel20} building upon the work in \cite{PP19};

\item finally, the topological field situation $K=\R$ for general $k$ was recently addressed in \cite{KBPW22} under the hypothesis that $\{P_i\}_{i=1}^k\subset \R[y]$ have distinct degrees.

\end{itemize}

It is now conceivable that the techniques developed in this paper together with those in the forthcoming \cite{BBL23} might offer a different avenue in treating the problem above in the $K=\R$ and $k=3$ setting and possibly under more relaxed hypotheses.

\subsection*{Acknowledgments}
The second author was supported by the NSF grant DMS-1900801 and by the Simons Foundation grant no. 682242. Also the second author wishes to thank the Department of Mathematics at Princeton University for hosting him during the Spring 2022 semester as part of the Simons fellowship award mentioned above.

\section{Basic reductions}\label{basicred}

In this section we will focus on a preliminary discretization of our main object of interest represented by the quadrilinear form
\begin{equation}\label{DefL}
\Lambda(\vec{f})=\int_{\R}\int_{\R} f_1(x-t)f_2(x+t^2)f_3(x+t^3)f_4(x)\frac{dtdx}{t}\,.
\end{equation}

\subsection{Discretization of the multiplier}

Rewriting \eqref{DefL} in the form
\begin{equation}\label{DefL1}
\Lambda(\vec{f})=\int_{\R^3} \frakm(\xi, \eta, \tau)\,\widehat{f_1}(\xi) \widehat{f_2}(\eta) \widehat{f_3}(\tau) \widehat{f_4}(\xi+\eta+\tau) d\xi d\eta d\tau\,,
\end{equation}
with
\begin{equation}
\frakm(\xi, \eta, \tau):=\int_\R e^{-i \xi t} e^{i \eta t^2} e^{i \tau t^3}  \frac{dt}{t}\,,
\end{equation}
we proceed with two types of decompositions:

\subsubsection{A physical discretization}

In the usual fashion, dictated by the singularities at zero and infinity of the kernel $\frac{1}{t}$ we perform a Whitney decomposition relative to the origin, that is, taking $\rho \in C_0^\infty (\R)$ an odd function with $\textrm{supp}\, \rho \subset \left[-4, 4 \right] \backslash \left[ -\frac{1}{4}, \frac{1}{4} \right]$ we write
$$
\frac{1}{t}=\sum_{k \in \Z} \rho_{k}(t),
$$
where $\rho_k(t):=2^k \rho \left( 2^k t\right)$. This yields a first decomposition
\begin{equation}\label{spatialdecL}
\Lambda=\sum_{k \in \Z} \Lambda^k,
\end{equation}
where $\Lambda_k$ has as a multiplier
\begin{equation}
\frakm^k(\xi, \eta, \tau):=\int_\R e^{-i \xi t} e^{i \eta t^2} e^{i \tau t^3} \rho_{k}(t) dt=\int_\R e^{-i \frac{\xi}{2^k} t} e^{i \frac{\eta}{2^{2k}} t^2} e^{i \frac{\tau}{2^{3k}} t^3} \rho(t)dt.
\end{equation}

\subsubsection{A frequency discretization}

We now move our attention on the frequency side. Let us denote the phase function by
\begin{equation} \label{20230307eq01}
\varphi_{\xi, \eta, \tau}^k(t):=-\frac{\xi}{2^k}t+\frac{\eta}{2^{2k}} t^2+\frac{\tau}{2^{3k}}t^3.
\end{equation}
Since the derivative of the phase function plays--via (non)stationary phase principle--a fundamental role in the behavior of our operator, it is natural to analyze the relative size of the coefficients of \eqref{20230307eq01}. More precisely, we let $\phi$ be an even function with $\textrm{supp}\,\phi \subseteq \left[-4, 4 \right] \backslash \left[-\frac{1}{4}, \frac{1}{4} \right]$ which satisfies the partition of unity
$$
1=\sum_{j, l, m \in \Z} \phi \left(\frac{\xi}{2^{j+k}} \right) \phi \left(\frac{\eta}{2^{l+2k}} \right) \phi\left(\frac{\tau}{2^{m+3k}} \right).
$$
This invites us to further consider for every $k, j, l, m \in \Z$
$$
\frakm_{j, l, m}^k(\xi, \eta, \tau):=\left( \int_{\R} e^{i\varphi_{\xi, \eta, \tau}^k(t)} \rho(t) dt \right) \phi \left(\frac{\xi}{2^{j+k}} \right) \phi \left(\frac{\eta}{2^{l+2k}} \right) \phi\left(\frac{\tau}{2^{m+3k}} \right)
$$
and thus to correspondingly decompose $\Lambda^k$ as
\begin{equation}\label{freqdeconmpL}
\Lambda^k=\Lambda^k_{j, l, m},
\end{equation}
with $\frakm_{j, l, m}^k$ being the multiplier of $\Lambda^k_{j, l, m}$.

Finally, we decompose
\begin{equation}\label{Llohi}
\Lambda=\Lambda^{Hi}\,+\,\Lambda^{Lo}=\Lambda^{Hi}_{+}\,+\,\Lambda^{Hi}_{-}\,+\,\Lambda^{Lo}\,,\qquad\textrm{with}
\end{equation}
\begin{itemize}
\item the \emph{low oscillatory}\footnote{Here we are making a slight notational abuse: while strictly speaking, in the regime $(j, l, m) \in \Z^3\backslash \N^3$ the phase of the multiplier can still oscillate, this can only happen in at most two out of the three directions $\xi, \eta$ and $\tau$.} component defined by
$$\Lambda^{Lo}:=\sum_{k \in \Z}\:\: \sum_{(j, l, m) \in \Z^3 \backslash \N^3} \Lambda^k_{j, l, m}\,;$$
\item the \emph{high oscillatory} component defined by
$$\Lambda^{Hi}:=\sum_{k \in \Z}\:\: \sum_{(j, l, m) \in \N^3} \Lambda^k_{j, l, m}\,, \qquad
\textrm{and}\footnote{Here, by convention, we set $\Z_{-}:=\Z\setminus\Z_{+}$ with $\Z_{+}:=\N\cup\{0\}$.}
\qquad \Lambda^{Hi}_{\pm}:=\sum_{k \in \Z_{\pm}}\:\: \sum_{(j, l, m) \in \N^3} \Lambda^k_{j, l, m}\,.$$
\end{itemize}

\subsection{Phase analysis for the high oscillatory component $\Lambda^{Hi}$}

This section is aimed to better understand the phase behavior corresponding to the multiplier of $\Lambda^{Hi}$.

Thus, we start by computing the derivatives of the phase function $\varphi_{\xi, \eta, \tau}^k$: for each $k \in \Z$,
\begin{equation} \label{20230307eq02}
\frac{d}{dt} \varphi_{\xi, \eta, \tau}^k (t)=-\frac{\xi}{2^k}+\frac{2\eta}{2^{2k}}t+\frac{3\tau}{2^{3k}} t^2,
\end{equation}
\begin{equation} \label{20230307eq03}
\frac{d^2}{dt^2} \varphi_{\xi, \eta, \tau}^k(t)=\frac{2\eta}{2^{2k}}+\frac{6\tau}{2^{3k}} t
\end{equation}
and
$$
\frac{d^3}{dt^3} \varphi_{\xi, \eta, \tau}^k(t)=\frac{6\tau}{2^{3k}}.
$$
Now we notice that \eqref{20230307eq02} represents a second degree polynomial whose reduced discriminant is
\begin{equation} \label{20230307eq04}
\Delta:=\frac{\eta^2+3\xi \tau}{2^{4k}}
\end{equation}
Next, from \eqref{20230307eq03}, we set $t_0$ be the unique root of $\frac{d^2}{dt^2} \varphi_{\xi, \eta, \tau}^k$, that is
\begin{equation} \label{20230307eq05}
t_0:=-\frac{2^k \eta}{3\tau} \quad \textrm{and} \quad \frac{d^2}{dt^2} \varphi_{\xi, \eta, \tau}^k(t_0)=0.
\end{equation}
Using \eqref{20230307eq04} and \eqref{20230307eq05}, we can rewrite $\frac{d}{d t} \varphi_{\xi, \eta, \tau}^k$ as
\begin{equation} \label{deriv}
\frac{d}{dt} \varphi_{\xi, \eta, \tau}^k(t)=\frac{3\tau}{2^{3k}} \cdot \left[ \left(t-t_0 \right)^2- \left(\frac{2^{3k}}{3\tau} \right)^2 \Delta \right]
\end{equation}
and hence
\begin{equation*}
\frac{d}{dt} \varphi_{\xi, \eta, \tau}^k(t_0)=-\frac{2^{3k}}{3\tau} \Delta.
\end{equation*}
In the case when $\Delta \ge 0$, we let $t_1$ and $t_2$ be the roots of the equation $\frac{d}{dt} \varphi_{\xi, \eta, \tau}^k(t)=0$, that is
\begin{equation} \label{20230307eq11}
t_1=-\frac{2^k \eta}{3\tau}+\frac{2^{3k} \sqrt{\Delta}}{3\tau}  \quad \textrm{and} \quad t_2=-\frac{2^k \eta}{3\tau}-\frac{2^{3k} \sqrt{\Delta}}{3\tau} .
\end{equation}
Moreover, we have
$$
\left|t_1-t_0 \right|=\left|t_2-t_0 \right|=\frac{2^{3k} \sqrt{\Delta}}{3|\tau|}.
$$

\subsection{Decomposing $\Lambda^{Hi}$ into a diagonal $\Lambda^{D}$ and an off-diagonal $\Lambda^{\not{D}}$ component}

As a consequence of the discussion in the previous two sections we further decompose
\begin{equation}\label{LlohiDN}
\Lambda^{Hi}=\Lambda^{D}\,+\,\Lambda^{\not{D}}\qquad\textrm{with}
\end{equation}
\begin{itemize}
\item $\Lambda^{D}$ the \emph{diagonal} component having the frequency index range obey
\begin{equation} \label{diag}
\max\{j, l, m\}-\min\{j, l, m\} \le 300\,;
\end{equation}
\item $\Lambda^{\not{D}}$ the \emph{off-diagonal} component  having the frequency index range obey
\begin{equation} \label{20230308eq01}
\max\{j, l, m\}-\min\{j, l, m\} > 300\,.
\end{equation}
\end{itemize}
Obvious extensions apply for the definitions of $\Lambda^{D}_{\pm}$ and $\Lambda^{\not{D}}_{\pm}$.

We further subdivide the off-diagonal component as
\begin{equation}\label{LodSNS}
\Lambda^{\not{D}}=\Lambda^{\not{D},S}\,+\,\Lambda^{\not{D},NS}\qquad\textrm{with}
\end{equation}
\begin{itemize}
\item $\Lambda^{\not{D},S}$ the \emph{stationary} off-diagonal component corresponding to the stationary phase regime, \emph{i.e.}, when at least one of the roots $\left\{t_1, t_2 \right\}$ has the absolute value of size $\approx 1$.

\item $\Lambda^{\not{D},NS}$ the  \emph{non-stationary} off-diagonal component corresponding to the non-stationary phase regime, \emph{i.e.},  when the first item above is not satisfied.
\end{itemize}

Next, we characterize $\Lambda^{\not{D},S}$ in terms of the frequency index range:

 \begin{lem} \label{20230308lem01}
The stationary off-diagonal regime corresponds precisely to one of the below situations\footnote{For any $j, l \in \Z$, $j \cong l$ means $|j-l|<20$.}:
\begin{enumerate}
    \item [(1)] $j \cong l$ and $j>m+100$;
    \item [(2)] $l \cong m$ and $l>j+100$;
    \item [(3)] $m \cong j$ and $m>l+100$.
\end{enumerate}
 \end{lem}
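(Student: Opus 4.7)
Setting up the stationary-phase equation, write $A := \xi/2^k$, $B := 2\eta/2^{2k}$, $C := 3\tau/2^{3k}$, so that, by the frequency localization, $|A|\sim 2^j$, $|B|\sim 2^l$, $|C|\sim 2^m$. From \eqref{20230307eq02}, the existence of a critical point of $\varphi^k_{\xi,\eta,\tau}$ inside $\supp(\rho)\subseteq\{1/4\le |t|\le 4\}$ is equivalent to the quadratic equation $Ct^2+Bt-A=0$ having a real root with $|t|\asymp 1$.

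For the necessary direction, the pivotal observation is that at every $t\in\supp(\rho)$ the three summands $|A|$, $|Bt|$, $|Ct^2|$ are of respective sizes $2^j$, $2^l$, $2^m$ up to an absolute constant independent of $t$. Hence, if one of $j,l,m$ exceeds the other two by more than some fixed constant (say $20$), then that term dominates uniformly on $\supp(\rho)$ and $\partial_t\varphi^k$ has size $2^{\max\{j,l,m\}}$, precluding any root there. Combined with the off-diagonal hypothesis $\max\{j,l,m\}-\min\{j,l,m\}>300$, the only configurations that remain admissible are precisely those in which two of the indices differ by at most $20$ while the third lies below both by a gap comfortably exceeding $100$; these three alternatives are exactly (1), (2), (3).

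For the sufficient direction, in each of the three cases a dominant-balance plus perturbation argument---which can also be read off from the explicit formulas in \eqref{20230307eq11}---produces a genuine real root of size $\asymp 1$. In case~(1) the linear balance $Bt=A$ gives a candidate $t_\star=A/B$ with $|t_\star|\sim 2^{j-l}\asymp 1$, and the neglected quadratic term $|Ct_\star^2|\sim 2^m$ is at least $2^{100}$ times smaller than $|B|\sim 2^l$, so a standard Newton/implicit-function argument places a true root within $O(2^{m-l})$ of $t_\star$. Case~(2) is analogous after factoring $t(B+Ct)=A$: one discards the root near $0$ and retains the root near $-B/C$ of magnitude $\sim 2^{l-m}\asymp 1$. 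Case~(3) follows from the balance $Ct^2=A$, yielding $|t|\sim 2^{(j-m)/2}\asymp 1$. In each case one also verifies from $\Delta=(\eta^2+3\xi\tau)/2^{4k}$ that $\Delta\ge 0$, which is automatic in cases (1) and (2), where $\eta^2$ dominates, and in case~(3) once the signs of $\xi$ and $\tau$ are compatible; if the sign test fails, the roots are complex and we are back in the non-stationary regime (already handled by the dominance argument above).

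The main (modest) obstacle is purely bookkeeping: one must align the three discrete thresholds---$20$ for the $\cong$ relation, $100$ for the dominance gap, and $300$ for off-diagonality---with the multiplicative tolerances hidden in the symbols $\sim$ and $\asymp$. Because these numerical thresholds are chosen comfortably apart, the matching is routine and no deeper phenomenon is involved.
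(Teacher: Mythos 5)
Your proposal is correct and follows essentially the same route as the paper: both arguments reduce to comparing the magnitudes $2^j$, $2^l$, $2^m$ of the three monomials of $\frac{d}{dt}\varphi^k_{\xi,\eta,\tau}$ at $|t|\sim 1$, the paper via the explicit asymptotics of the roots $t_1,t_2$ in \eqref{20230307eq11} and a case split on $j+m\gtrless 2l$, you via a term-domination argument for necessity and a dominant-balance perturbation for sufficiency. Your explicit handling of the sign of $\Delta$ in case (3) is a small point the paper glosses over (it simply assumes $\eta>0$ without loss of generality), but it does not change the substance.
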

\begin{proof}
Assuming without loss of generality that $\eta>0$ in \eqref{20230307eq11}, we deduce that
\begin{equation}\label{t12}
|t_1| \sim \min\{2^{j-l},2^{\frac{j-m}{2}}\}\qquad\textrm{and}\qquad |t_2| \sim \max\{2^{l-m},2^{\frac{j-m}{2}}\}\,.
\end{equation}

We now consider two distinct cases:
\medskip

\noindent\textsf{Case i)} $j+m\geq 2l$. In this situation, from \eqref{t12} we have $\left|t_{1, 2} \right| \sim 2^{\frac{j-m}{2}}$, and thus we must have $j \cong m$. Adding now condition \eqref{20230308eq01} we see that this case corresponds to (3) above.
\medskip

\medskip
\noindent\textsf{Case ii)} $j+m<2l$. In this situation, from \eqref{t12} we have $|t_1| \sim 2^{j-l}$. Therefore, if $|t_1| \sim 1$, we have $j \cong l$, which together with \eqref{20230308eq01} gives (1) above. Otherwise, if $|t_1| \not\sim 1$, then $|t_2| \sim 2^{l-m} \sim 1$, or equivalently, $l \cong m$, which together with \eqref{20230308eq01} gives (2) above.
\end{proof}

\subsection{Treatment of the non-stationary off-diagonal term $\Lambda^{\not{D},NS}$}

In this section we provide a quick treatment for $\Lambda^{\not{D},NS}$. Indeed, we have the following:

\begin{lem}
For any $(j, l, m)\in\N^3$ within the non-stationary off-diagonal regime, \textit{i.e.}, satisfying \eqref{20230308eq01} but not the three cases described in Lemma \ref{20230308lem01}, one has for any $|t| \sim 1$ that
\begin{equation} \label{20230308eq10}
\left| \frac{d}{dt} \varphi_{\xi, \eta, \tau}^k(t) \right| \gtrsim 2^j+2^l+2^m\,.
\end{equation}
\end{lem}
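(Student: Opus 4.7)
The plan is to exploit the non-stationary off-diagonal hypothesis to show that among the three terms of
$$\tfrac{d}{dt}\varphi^k_{\xi,\eta,\tau}(t) = -\tfrac{\xi}{2^k} + \tfrac{2\eta}{2^{2k}}t + \tfrac{3\tau}{2^{3k}}t^2,$$
exactly one strictly dominates on $|t|\sim 1$, so that a direct triangle inequality delivers the lower bound \eqref{20230308eq10}.

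The first step is a purely combinatorial index analysis. I claim that in the non-stationary off-diagonal regime, the maximum among $\{j, l, m\}$ is uniquely attained and exceeds the other two indices by at least $20$. Suppose for contradiction that the maximum were realized by two indices within $20$ of each other---say $j \cong l$ with both equal to $\max(j,l,m)$. Then \eqref{20230308eq01} gives $\max - \min > 300$, so the remaining index (here $m$) is more than $100$ below $j$ and $l$; this places us in stationary case (1) of Lemma \ref{20230308lem01}, contradicting the non-stationary hypothesis. The two symmetric configurations ($l\cong m$ or $m\cong j$ at the max) are excluded in exactly the same way by cases (2) and (3). Hence the maximum is strictly attained with a gap of at least $20$ over the other two indices.

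Now suppose without loss of generality that $j$ is this strict max, so $j \ge l + 20$ and $j \ge m + 20$. On the support of the multiplier $\frakm^k_{j,l,m}$, the factors $\phi(\xi/2^{j+k})$, $\phi(\eta/2^{l+2k})$, $\phi(\tau/2^{m+3k})$ together with $|t|\sim 1$ yield the size bounds
$$\left|\tfrac{\xi}{2^k}\right| \in [2^{j-2}, 2^{j+2}], \qquad \left|\tfrac{2\eta}{2^{2k}}t\right| \le 2^{l+5}, \qquad \left|\tfrac{3\tau}{2^{3k}}t^2\right| \le 2^{m+8}.$$
The dominance gap then gives $2^{l+5} \le 2^{j-15}$ and $2^{m+8} \le 2^{j-12}$, so the triangle inequality applied to \eqref{20230307eq02} produces
$$\left|\tfrac{d}{dt}\varphi^k_{\xi,\eta,\tau}(t)\right| \ge 2^{j-2} - 2^{j-15} - 2^{j-12} \gtrsim 2^j \gtrsim 2^j + 2^l + 2^m,$$
where the final $\gtrsim$ uses $2^l, 2^m \le 2^{j-20}$. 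The two remaining scenarios, where $l$ or $m$ is the strict max, are handled by the analogous triangle estimate: the corresponding term in \eqref{20230307eq02} provides the lower bound of size $2^l$ (resp.\ $2^m$), while the other two terms are absorbed by the strict $20$-gap.

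The only delicate part of the proof is the combinatorial Step 1---verifying that the negation of conditions (1), (2), (3) in Lemma \ref{20230308lem01}, taken together with \eqref{20230308eq01}, forces a unique maximum with a comfortable gap. Once this is established, the remaining quantitative bound is a routine triangle inequality that exploits only the frequency localizations built into the multiplier decomposition and the hypothesis $|t|\sim 1$; no stationary phase analysis is needed in this off-diagonal, non-stationary regime.
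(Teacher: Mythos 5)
Your proof is correct, and it takes a genuinely different route from the paper's. The paper argues directly from the factorization of the quadratic $\frac{d}{dt}\varphi^k_{\xi,\eta,\tau}$ through its roots: it splits on the sign of the discriminant $\Delta$ in \eqref{20230307eq04}, writes $|\varphi'|\sim|\tfrac{\tau}{2^{3k}}||t-t_1||t-t_2|$ when $\Delta\ge 0$ and uses \eqref{deriv} when $\Delta<0$, and then case-analyzes on the sizes of $|t_1|,|t_2|$ relative to $1$ (and on the relative positions of $j+m$ and $2l$) — i.e., it stays within the stationary-phase framework already used to characterize $\Lambda^{\not{D},S}$ in Lemma \ref{20230308lem01}. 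You instead extract a purely combinatorial consequence of the non-stationary hypothesis — that the negation of cases (1)–(3) together with \eqref{20230308eq01} forces a unique maximal index exceeding the other two by at least $20$ (your case check is complete: whichever pair sits at the top, the corresponding congruence $j\cong l$, $l\cong m$ or $m\cong j$ holds, and $\max-\min>300$ supplies the required $>100$ gap to the remaining index, landing in a stationary case) — and then conclude by a bare triangle inequality on \eqref{20230307eq02}, using only the support conditions $|\xi|\sim 2^{j+k}$, $|\eta|\sim 2^{l+2k}$, $|\tau|\sim 2^{m+3k}$ and $\tfrac14\le|t|\le 4$; your constants check out. Your argument is more elementary and arguably more transparent for this particular lemma, since it makes explicit that "non-stationary" is equivalent to "one frequency strictly dominates"; the paper's root-based computation has the advantage of reusing verbatim the quantities $t_1,t_2,\Delta$ that were just set up to define the stationary/non-stationary dichotomy, and of exhibiting the lower bound as a product $|\tfrac{\tau}{2^{3k}}||t-t_1||t-t_2|$, which is the form that generalizes when one of the roots does sit at unit distance.
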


\begin{proof}
We proceed with a case discussion:
\medskip

\noindent\textsf{Case I}: $\Delta\geq 0\qquad$   In this situation, from \eqref{deriv} and \eqref{20230307eq11}, for $|t| \sim 1$ we have

$$
\left| \frac{d}{dt} \varphi_{\xi, \eta, \tau}^k(t) \right| \sim \left| \frac{\tau}{2^{3k}} \right| \left|t-t_1 \right| \left|t-t_2 \right|\,.
$$

Assuming again without loss of generality that $\eta>0$ we have that \eqref{t12} holds and also that $|t_1|\leq |t_2|$.
Based on these observations we now have three possible subcases:
\begin{enumerate}
\item [$\bullet$] If $|t_1|\leq |t_2| \ll 1$ then $\left| \frac{d}{dt} \varphi_{\xi, \eta, \tau}^k(t) \right| \sim \left|\frac{\tau}{2^{3k}} \right| \sim 2^m$ and $\max\{j,l\}<m$.

\item [$\bullet$] If $|t_1| \ll 1\ll |t_2|$ then $\left| \frac{d}{dt} \varphi_{\xi, \eta, \tau}^k(t) \right| \sim \left|\frac{\tau}{2^{3k}} \right| \left|t_2 \right| \gtrsim 2^m \cdot 2^{l-m}=2^l$ and $\max\{j,m\}<l$.

\item [$\bullet$] If $1 \ll |t_1|\leq |t_2|$ then $\left| \frac{d}{dt} \varphi_{\xi, \eta, \tau}^k(t) \right| \sim \left|\frac{\tau}{2^{3k}} \right| \left|t_1 \right| \left|t_2 \right|\gtrsim 2^m \min\{2^{j-l},2^{\frac{j-m}{2}}\}(2^{l-m}+ 2^{\frac{j-m}{2}})\gtrsim 2^{j}$ and $\max\{l,m\}<j$.
\end{enumerate}

\medskip

\noindent\textsf{Case II}: $\Delta< 0\qquad$ In this situation we must have

\begin{equation} \label{delneg}
j+m>2l-20\,.
\end{equation}

Using now relation \eqref{deriv} we have three subcases:
\begin{enumerate}
\item [$\bullet$] If $|l-m|<10$ then combining \eqref{20230308eq01} and \eqref{delneg} we must have  $j+m>2l+100$ and hence
$\left|\frac{d}{dt} \varphi_{\xi, \eta, \tau}^k(t)\right|\gtrsim \frac{2^{3k}}{|\tau|}  |\Delta|\gtrsim 2^j$.

\item [$\bullet$] If $|l-m|\geq 10$ and $2l-20<j+m<2l+20$ then $\left|\frac{d}{dt} \varphi_{\xi, \eta, \tau}^k(t)\right|\gtrsim 2^m\max\{1, 2^{2(l-m)}\}$.

\item [$\bullet$] If $|l-m|\geq 10$ and $j+m\geq 2l+20$ then $\left| \frac{d}{dt} \varphi_{\xi, \eta, \tau}^k(t) \right| \gtrsim 2^m\max\{1, 2^{2(l-m)}\}+2^j$.
\end{enumerate}

It is now immediate to see that any of the above subcases satisfy the desired estimate \eqref{20230308eq10}.
\end{proof}

We are now ready to state and prove the following:

\begin{lem}\label{offdiagnonst}
With the previous notations, for any
\begin{equation}\label{defBbar}
\vec{p}\in\overline{\textbf{B}}:=\left\{(p_1,p_2,p_3,p_4)\,\Big|\,\sum_{j=1}^4\frac{1}{p_j}=1\:\:\textrm{and}\:\:1<p_j\le \infty\right\}\,,
\end{equation}
we have:
\begin{equation}
\left|\Lambda^{\not{D},NS}(\vec{f})\right|\lesssim_{\vec{p}} \|\vec{f}\|_{L^{\vec{p}}}\,.
\end{equation}
\end{lem}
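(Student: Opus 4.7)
The plan is to combine a non-stationary phase analysis of the multiplier $\mathfrak{m}^k_{j,l,m}$ with a standard multilinear Coifman--Meyer (or H\"ormander--Mihlin) argument, followed by a Littlewood--Paley summation in the scale parameter $k$. First, exploiting \eqref{20230308eq10} together with the obvious upper bounds on higher derivatives of $\varphi^k_{\xi,\eta,\tau}$ for $|t|\sim 1$, one integrates by parts $N$ times in the $t$-integral defining the oscillatory factor of $\mathfrak{m}^k_{j,l,m}$ to obtain the rapid decay
$$\left|\int_\R e^{i\varphi^k_{\xi,\eta,\tau}(t)}\rho(t)\,dt\right| \lesssim_N 2^{-N\max\{j,l,m\}}$$
uniformly on the support of $\phi(\xi/2^{j+k})\phi(\eta/2^{l+2k})\phi(\tau/2^{m+3k})$, valid for every $N\in\N$.

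Next, a routine differentiation under the integral sign, combined with the chain rule and the standard estimates on the derivatives of the $\phi$-cutoffs, shows that $\mathfrak{m}^k_{j,l,m}$ is a rescaled Coifman--Meyer symbol of size $2^{-N\max\{j,l,m\}}$; more precisely,
$$\left|\partial_\xi^{\alpha_1}\partial_\eta^{\alpha_2}\partial_\tau^{\alpha_3}\mathfrak{m}^k_{j,l,m}(\xi,\eta,\tau)\right| \lesssim_{N,\vec{\alpha}}\, 2^{-N\max\{j,l,m\}}\cdot 2^{-\alpha_1(j+k)-\alpha_2(l+2k)-\alpha_3(m+3k)}$$
for any multi-index $\vec{\alpha}=(\alpha_1,\alpha_2,\alpha_3)\in\N^3$. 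Applying the multilinear Coifman--Meyer theorem within the Banach H\"older range (see, e.g., the work of Grafakos--Torres), for any $\vec{p}\in\overline{\textbf{B}}$ we deduce
$$|\Lambda^k_{j,l,m}(\vec{f})| \lesssim_N 2^{-N\max\{j,l,m\}}\,\|P^{(1)}_{j+k}f_1\|_{p_1}\|P^{(2)}_{l+2k}f_2\|_{p_2}\|P^{(3)}_{m+3k}f_3\|_{p_3}\|P^{(4)}_{j,l,m,k}f_4\|_{p_4},$$
where $P^{(i)}_\cdot$ stands for a Littlewood--Paley projection adapted to the frequency annulus dictated by the $\phi$-cutoff acting on the $i$-th input in $\mathfrak{m}^k_{j,l,m}$, and $P^{(4)}_{j,l,m,k}$ is a smooth Fourier projection onto $\{|\zeta|\lesssim 2^{\max\{j+k,l+2k,m+3k\}}\}$.

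The summation is then organized in two stages: the sum over $(j,l,m)$ in the non-stationary off-diagonal regime produces only a polynomial factor in $\max\{j,l,m\}$, which is absorbed by choosing $N$ sufficiently large; for fixed $(j,l,m)$, the remaining sum over $k\in\Z$ is handled by H\"older's inequality combined with the Littlewood--Paley square function characterization of $L^{p_i}$ at those indices where $1<p_i<\infty$, while the endpoint cases $p_i=\infty$ are absorbed via standard almost-orthogonality (using the residual rapid decay to handle the loss of a square function on the $L^\infty$-inputs). This yields the desired estimate $|\Lambda^{\not{D},NS}(\vec{f})|\lesssim_{\vec{p}}\|\vec{f}\|_{L^{\vec{p}}}$. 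The main (mild) obstacle is ensuring that the Coifman--Meyer step and the subsequent Littlewood--Paley summation remain valid uniformly throughout the full Banach H\"older range $\overline{\textbf{B}}$, particularly at the $p_i=\infty$ endpoints, which is done by routing the square function inequality exclusively through the inputs with finite $L^{p_i}$ norm; the gain $2^{-N\max\{j,l,m\}}$ trivialises all such technicalities.
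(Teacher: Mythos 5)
Your first step (non-stationary phase yielding $2^{-N\max\{j,l,m\}}$) is sound --- the paper in fact only needs $N=1$ --- but the second half of your argument has a genuine gap. The multiplier $\frakm^k_{j,l,m}$ is \emph{not} a Coifman--Meyer symbol at the scales you claim: differentiating the oscillatory factor $e^{-i\xi t/2^k}$ in $\xi$ produces a factor $t/2^k$ with $|t|\sim1$, which is $2^{j}$ times larger than the Mihlin gain $2^{-(j+k)}$ you assert (similarly $2^{l}$, $2^{m}$ for $\eta$, $\tau$). Equivalently, on the physical side the kernel of each frequency-localized piece is a bump at scale $2^{-(j+k)}$ (resp.\ $2^{-(l+2k)}$, $2^{-(m+3k)}$) \emph{translated by $2^{j}$ (resp.\ $2^{l}$, $2^{m}$) times its own width}. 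This is precisely the feature that an off-the-shelf Coifman--Meyer theorem and ``standard almost-orthogonality'' cannot see. The $k$-summation has to be run through the \emph{shifted} square function and \emph{shifted} maximal function of Lemma \ref{sqfunct}, whose operator norms grow logarithmically in the shift, hence polynomially in $j,l,m$; it is exactly this loss --- the factor $j\,l\,m$ in \eqref{FMH} --- that the gain $2^{-\max\{j,l,m\}}$ from the integration by parts must absorb. Your proposal never identifies this loss, so as written the argument does not close.

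There is also a more mechanical problem with your summation in $k$: you place the $L^{p_i}$ norms outside the sum and then invoke the Littlewood--Paley characterization, but $\bigl(\sum_k \|P_{j+k}f\|_{p}^2\bigr)^{1/2}\lesssim\|f\|_{p}$ fails for $p>2$. The orthogonality in $k$ must be exploited \emph{pointwise}: one first dominates $\sum_k|\Lambda^{k;\frakA}_{j,l,m}(\vec{f})|$ by an integral of a product of shifted square and maximal functions, as in \eqref{prepsq}, and only then applies H\"older (using that at most two of the $p_i$ equal $\infty$, so two inputs can carry square functions and two carry maximal functions). With these two corrections --- replacing the Coifman--Meyer step by the explicit kernel bound \eqref{20211101eq01} together with the shifted-operator estimates, and reordering the $k$-summation and the H\"older step --- your outline becomes essentially the paper's proof.
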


\begin{proof}
Fix $k\in\Z$ and $(j,l,m)\in\N^3$ in the non-stationary off-diagonal regime. Using integration by parts, we see that
$$
\frakm_{j, l, m}^k(\xi, \eta, \tau)=\frakA_{j, l, m}^k(\xi, \eta, \tau)+\frakB_{j, l, m}^k(\xi, \eta, \tau),
$$
where
$$
\frakA_{j, l, m}^k(\xi, \eta, \tau):=i\left(\int_\R e^{i\varphi_{\xi, \eta, \tau}^k(t)} \frac{\rho'(t)}{\frac{d}{dt} \varphi^k_{\xi, \eta, \tau}(t))} dt \right) \phi_{j, l, m}^k(\xi, \eta, \tau)
$$
and
$$
\frakB_{j, l, m}^k(\xi, \eta, \tau):=\frac{1}{i}\left(\int_\R e^{i\varphi_{\xi, \eta, \tau}^k(t)} \frac{\rho(t) \frac{d^2}{dt^2} \varphi_{\xi, \eta, \tau}^k(t)}{\left(\frac{d}{dt} \varphi_{\xi, \eta, \tau}^k(t)\right)^2} dt \right) \phi_{j, l, m}^k(\xi, \eta, \tau).
$$
This further allows us to write
$$
\Lambda^k_{j, l, m}=\Lambda^{k;  {\frakA}}_{j, l, m}\,+\,\Lambda^{k;  {\frakB}}_{j, l, m},
$$
where $\Lambda^{k;  {\frakA}}_{j, l, m}$ and $\Lambda^{k;  {\frakB}}_{j, l, m}$ have the multipliers $\frakA_{j, l, m}^k$ and $\frakB_{j, l, m}^k$, respectively.

Let us denote $n:=\max\{j, l, m\}$. Recall that for $|t| \simeq 1$
\begin{equation}
\left|\frac{d^2}{dt^2} \varphi_{\xi, \eta, \tau}^k(t)\right|
=\frac{6\tau}{2^{3k}} \cdot \left| t-t_0 \right| \lesssim 2^m \cdot \max\{2^{l-m}, 1 \}
\leq  2^n \lesssim \left|\frac{d}{dt}\varphi_{\xi, \eta, \tau}^k(t) \right|,
\end{equation}
Hence, we have
\begin{equation} \label{20211101eq01}
\left|\frac{\rho'(t)}{\frac{d}{dt} \varphi_{\xi, \eta, \tau}^k(t)} \right|, \quad \left| \frac{\rho(t) \frac{d^2}{dt^2} \varphi_{\xi, \eta, \tau}^k(t)}{\left(\frac{d}{dt} \varphi_{\xi, \eta, \tau}^k(t)\right)^2}  \right| \lesssim \frac{1}{2^n}
\end{equation}
and therefore it suffices for us to analyse the term $\Lambda^{k;  {\frakA}}_{j, l, m}$. Applying a Taylor series argument we have\footnote{We ignore here the error terms derived from higher order terms in the Taylor approximation since these error terms have a similar structure with the main term displayed in our estimate.}
\begin{eqnarray}\label{prepsq}
|\Lambda^{k;  {\frakA}}_{j, l, m}(\vec{f})|
&=&\left|\int_{\R}f_4(x)\int_{\R^3} \widehat{f_1}(\xi) \widehat{f_2}(\eta) \widehat{f_3}(\tau) \left[\int_{\R} e^{i \left(-\frac{\xi t}{2^k}+\frac{\eta t^2}{2^{2k}}+\frac{\tau t^3}{2^{3k}} \right)} \frac{\frac{d}{dt} \rho(t)}{\frac{d}{dt} \varphi_{\xi, \eta, \tau}^k(t)} dt \right] \phi_{j, l, m}^k(\xi, \eta, \tau) e^{i(\xi+\eta+\tau)x}\,d\xi d\eta d\tau dx\right| \nonumber\\
&\lesssim&\frac{1}{2^n}\,\int_{\R\times\{\frac{1}{4}\leq |t|\leq 4\}} \left|\left( \widehat{f_1} (\cdot) \phi \left(\frac{\cdot}{2^{j+k}} \right) \right)^{\vee} \left(x-\frac{t}{2^k} \right)\right| \left|\left( \widehat{f_2} (\cdot) \phi \left(\frac{\cdot}{2^{l+2k}} \right) \right)^{\vee} \left(x+\frac{t^2}{2^{2k}} \right) \right| \nonumber\\
&& \quad \quad \quad \cdot \left|\left( \widehat{f_3} (\cdot) \phi \left(\frac{\cdot}{2^{m+3k}} \right) \right)^{\vee} \left(x+\frac{t^3}{2^{3k}} \right)\right| \left|\left( \widehat{f_4} (\cdot) \chi_{A(k,j,l,m)}(\cdot)\right)^{\vee} \left(x\right)\right| dt dx\,,
\end{eqnarray}
where $A(k,j,l,m):=\textrm{supp}\,\phi\left(\frac{\cdot}{2^{j+k}} \right)\,+\,\textrm{supp}\,\phi\left(\frac{\cdot}{2^{l+2k}} \right)\,+\,\textrm{supp}\,\phi\left(\frac{\cdot}{2^{m+3k}} \right)$.
\medskip

Once at this point, we recall the following useful result proved in \cite[Lemma 4.8]{Lie19} and \cite[Lemma 3.3, Lemma 3.4]{GL20}:

\begin{lem}\label{sqfunct}
Fix $t\in\R$ and for $\phi,\varphi \in C_0^\infty(\R)$ with $\phi(0)=0$ define
\begin{itemize}
\item the \emph{$t$-shifted square function} $S_t^{\phi}$ by
\begin{equation} \label{sq}
S_t^\phi f(x):=\left(\sum_{k \in \Z} \left| \left( f* \left(2^k \check{\phi}(2^k \cdot) \right) \right) \left(x-\frac{t}{2^k} \right)\right|^2 \right)^{\frac{1}{2}};
\end{equation}
\item the \emph{$t$-shifted Hardy-Littlewood maximal function} $M_t^{\varphi}$ by
\begin{equation} \label{maxsh}
M_t^\varphi f(x):=\sup_{k \in \Z} \left| \left( f* \left(2^k \check{\varphi}(2^k \cdot) \right) \right) \left(x-\frac{t}{2^k} \right)\right|\,;
\end{equation}
\end{itemize}
Then, setting $r^{*}=\min\{r,r'\}$ with $r'$ the H\"older conjugate of $r$, we have
\begin{itemize}
\item for any $1<r<\infty$ one has
\begin{equation} \label{sq1}
\|S_t^\phi f\|_{L^r}\lesssim_{r,\phi} \left(\log (10+|t|)\right)^{\frac{2}{r^{*}}-1}\,\|f\|_{L^r}\,.
\end{equation}
\item  for any $1<r\leq\infty$ one has
\begin{equation} \label{sq1}
\|M_t^\varphi f\|_{L^r}\lesssim_{r,\varphi} \left(\log (10+|t|)\right)^{\frac{1}{r}}\,\|f\|_{L^r}\,.
\end{equation}
\end{itemize}
\end{lem}

Fix now $\vec{p}=(p_1,p_2,p_3,p_4)\in\overline{\textbf{B}}$ and assume without loss of generality that $p_1,p_2\not=\infty$. Then, returning to \eqref{prepsq} and making use of \eqref{sq} and \eqref{maxsh} above, we have:
\begin{equation*}
\sum_{k \in \Z} |\Lambda^{k;  {\frakA}}_{j, l, m}(\vec{f})|\lesssim\frac{1}{2^n}\,\int_{\R}\int_{\frac{1}{4}<|t|<4} S_{2^j t}^{\phi}f_1(x) S_{-2^l t^2}^{\phi} f_2(x)\,M_{-2^m t^3}^{\phi} f_3(x)\,M f_4(x)\, dt\,dx
\end{equation*}
which, from Fubini, Minkowski, H\"older inequalities and Lemma \ref{sqfunct}, implies
\begin{equation}\label{FMH}
\left| \sum_{k \in \Z} \Lambda^{k;  {\frakA}}_{j, l, m}(\vec{f})\right|\lesssim_{\vec{p}} \frac{1}{2^n}\,
\int_{\frac{1}{4}<|t|<4} \left\|S_{2^j t}^{\phi} f_1 \right\|_{L^{p_1}}\, \left\|S_{-2^l t^2}^{\phi} f_2\right\|_{L^{p_2}}\,
\left\|M_{-2^m t^3}^{\phi} f_3\right\|_{L^{p_3}}\,\left\|M f_4\right\|_{L^{p_4}} dt
\lesssim_{\vec{p}} \frac{j\,l\,m}{2^n} \|\vec{f}\|_{L^{\vec{p}}}\,.
\end{equation}

Summing up the above we get
\begin{equation}\label{Easyprange}
\left|\Lambda^{\not{D},NS}(\vec{f})\right|
\lesssim_{\vec{p}} \left( \sum_{n \ge 1} \frac{n^6}{2^n} \right)\,\|\vec{f}\|_{L^{\vec{p}}}\lesssim \|\vec{f}\|_{L^{\vec{p}}}\,.
\end{equation}

The conclusion of our theorem follows now from the symmetry of our proof in $f_1, f_2, f_3$ and $f_4$.
\end{proof}

We are now left with treating the main terms represented by the diagonal component $\Lambda^{D}$ and the stationary off-diagonal component $\Lambda^{\not{D},S}$.

\section{The diagonal term $\Lambda^{D}$: Strategy and motivation}\label{SDTmotiv}

We start by recalling that without loss of generality one has that
\begin{equation}\label{Diagterm}
\Lambda^{D}\approx\sum_{k \in \Z}\:\: \sum_{m \in \N} \Lambda^k_{m}\,,
\end{equation}
where here, for notational simplicity we set $\Lambda^k_{m}=\Lambda^k_{m,m,m}$.

The main ingredient on which the proof of the boundedness of the curved trilinear Hilbert transform relies is given by the following

\begin{thm} \label{mainthm}
Let $k\in\Z$ and $m\in\N$ be given, and, for  $j\in\{1,2,3\}$, set $\widehat{f_{j,m+j k}}(\xi):=\hat{f}_{j}(\xi)\,\phi(\frac{\xi}{2^{m+j k}})$. Then
\begin{equation}\label{mf}
\Lambda_m^k(\vec{f})=\int_{\R}\int_{\R} f_{1,m+k}(x-t)f_{2,m+2k}(x+t^2)f_{3,m+3k}(x+t^3)f_{4}(x)\,2^k \varrho(2^k t) dtdx,
\end{equation}
obeys: there exists some $\epsilon>0$ absolute constant such that for any $\vec{p}\in\textbf{B}=\left\{\vec{p}\in\overline{\textbf{B}}\,\Big|\,1<p_1,\,p_3<\infty\right\}$ there exists $c=c(\vec{p})>0$ depending only on $\vec{p}$ such that
\begin{equation}\label{KEYestim}
\left|\Lambda_m^k(\vec{f})\right| \lesssim_{\vec{p}} 2^{-\epsilon\,c(\vec{p})\,\min\{|k|,\frac{m}{2}\}} \,\|\vec{f}\|_{L^{\vec{p}}}\,.
\end{equation}
\end{thm}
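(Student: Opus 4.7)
By reflecting $t\mapsto -t$ and relabeling the inputs according to which one carries the dominant frequency, one may assume throughout that $k\geq 0$; the case $k<0$ is handled by a symmetric argument giving decay in $|k|$ instead of $m/2$. The plan is to first establish the $L^2\times L^2\times L^2\times L^2$ backbone estimate $|\Lambda_m^k(\vec f)|\lesssim 2^{-\epsilon\min\{k,m/2\}}\prod_j\|f_j\|_{L^2}$, and then extend to all of $\textbf{B}$ by standard multilinear interpolation against the trivial H\"older $L^\infty$-bound (uniform in $k,m$ up to polynomial losses in $m$), with $c(\vec p)$ shrinking away from the diagonal $\vec p=(4,4,4,4)$. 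Within the $L^2$ analysis, I would split into the two physical regimes $k\geq m/2$ and $0\leq k<m/2$, which require genuinely different discretizations and decompositions.

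\textbf{Correlative model and sparse-uniform split.} The engine common to both regimes is the correlative time-frequency discretization explained in Section~\ref{Keyideas}: linearize $\Lambda_m^k$ on the cells $I_p^{m,3k}\times I_q^{m,k}$ where the curved shifts become affine, perform a Gabor decomposition of $f_3$ only, and keep the pair $(f_1,f_2)$ together inside the joint Fourier coefficient $J^k_{q,n}(f_1,f_2)$ of \eqref{joint}. When $k\geq m/2$, the Heisenberg uncertainty forces $J^k_{q,n}$ to be morally constant on $I_p^{m,3k}$, producing the model \eqref{Case1}; when $0\leq k<m/2$, a further frequency subdivision of each $f_j$ at scale $2^{m/2+(j+1)k}$ produces the model \eqref{CTFM2} together with the time-frequency correlation set $\TFC_q$ of \eqref{TFCq}. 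In either model, each input is decomposed as $f_j=f_j^{\calS}+f_j^{\calU}$ relative to a rescaled tiling $\mathcal{P}_j$ of the phase plane (tile area $\gtrsim 2^{m/2}$), where sparse pieces essentially concentrate their spatial-fiber $L^2$-mass on a single tile and uniform pieces are the rest; multilinearity then splits $\Lambda_m^k$, with only the pure-sparse and pure-uniform components requiring separate arguments.

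\textbf{The two regimes.} When $k\geq m/2$, the sparse component is trivially controlled via the smallness of its physical support, so the real task is the uniform component. Two Cauchy-Schwarz applications in $p$ and $n$, followed by an $\ell^2\times\ell^2\times\ell^\infty$ H\"older, isolate the maximal joint coefficient $\textbf{L}_{m,k}(f_1,f_2)$ of \eqref{Lmk}, and matters reduce to the key bound \eqref{keyCase1}. I would prove \eqref{keyCase1} by a bootstrap-constancy argument on the linearizing function $\lambda$: start with $\lambda$ trivially constant at the finest scale; at each step show that on any cell where the constancy hypothesis fails, the discrete mixed second derivative of the phase in \eqref{Lmk} is large on average (a discrete H\"ormander-type condition flowing from the curvature of the moment curve), yielding direct $L^2$-decay on that cell; after a bounded number of iterations $\lambda$ must be essentially constant at the linearizing scale $|I_0^{m,k}|$, at which point a final curvature estimate between the spatially discretized $(x,t)$-fibers provides the $2^{-\tilde\epsilon m}$ gain. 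When $0\leq k<m/2$ the roles reverse: the uniform component is easy, because uniform inputs distribute their $L^2$-mass essentially uniformly along every frequency fiber of the model \eqref{CTFM2}; the heart of the matter is the pure-sparse component, where each spatial fiber of each input supports essentially one admissible frequency location $\ell_j$, so the defining condition of $\TFC_q$ degenerates into an algebraic near-equality $\ell_1-\frac{2q}{2^{m/2}}\ell_2-\frac{3q^2}{2^m}\ell_3\approx 0$ between measurable selector functions $\ell_j(q,\cdot)$, and a level-set analysis exploiting the non-degeneracy in $q$ of the map $(\ell_2,\ell_3)\mapsto\frac{2q}{2^{m/2}}\ell_2+\frac{3q^2}{2^m}\ell_3$ gives the required $2^{-\epsilon k}$-decay.

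\textbf{Principal obstacle.} The hardest single step is the bootstrap-constancy argument in the regime $k\geq m/2$. Since $\lambda$ is merely measurable, no a~priori regularity is available, so each iterative step must simultaneously upgrade the structural hypothesis on $\lambda$ and pay for itself by a genuine $L^2$-gain coming from a discrete stationary-phase estimate, all while the iteration must survive the full range of dyadic scales between $2^{-m/2-k}$ and $2^{-k}$. A secondary but substantial obstacle, in the regime $k<m/2$, lies in the correct choice of the rescaled tilings $\mathcal{P}_j$: one must avoid any Freiman-type structural description of intermediate-size Fourier coefficients, and this is precisely what forces the sparse-uniform split to be carried out on rescaled Heisenberg tiles of area $\gtrsim 2^{m/2}$ rather than on individual Fourier coefficient sizes.
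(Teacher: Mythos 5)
Your architecture tracks the paper's own proof closely: the reduction to localized vertex estimates followed by multilinear interpolation, the correlative model with a Gabor decomposition of $f_3$ alone and the joint coefficient \eqref{joint}, the two regimes $k\geq\frac m2$ and $0\leq k<\frac m2$ with models \eqref{Case1} and \eqref{CTFM2}, the sparse--uniform dichotomy with the uniform component dominant in the first regime and the sparse component dominant in the second, the bootstrap constancy propagation for $\lambda$ culminating in a curvature estimate at the linearizing scale, and the level-set analysis of $\TFC_q$. Two points, however, need repair. First, the opening reduction is wrong as stated: the reflection $t\mapsto -t$ does not convert $k<0$ into $k\geq 0$, since the kernel piece $2^k\varrho(2^kt)$ lives on $|t|\sim 2^{-k}$ and no reflection changes which input is frequency-dominant. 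The correct device is the change of variables $t^3\to t$, which turns the curve into $(-t^{1/3},t^{2/3},t)$, makes $f_3$ the linear input and $f_1$ the one receiving the Gabor decomposition, and replaces the dominant spatial scale $2^{-k}$ by $2^{-3k}$. This is not a formal symmetry one can invoke and forget: the entire machinery (the analogues of Key Heuristics 1--4, the constancy propagation, and the $\frakD$-- and $\calJ$--type phase estimates) must be rerun with fractional-power phases, and the resulting algebra --- e.g.\ the cubing step leading to a degree-nine polynomial identity in the analogue of Proposition \ref{mainprop02} --- is genuinely different, which is why the paper devotes all of Section \ref{Negative} to it.

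Second, your ``backbone'' $|\Lambda_m^k(\vec f)|\lesssim 2^{-\epsilon\min\{k,m/2\}}\prod_j\|f_j\|_{L^2(\R)}$ cannot be a global estimate: the tuple $(2,2,2,2)$ is not H\"older-admissible for a quadrilinear form ($\sum_j 1/p_j=2\neq 1$), and a sum over spatial blocks of products of four local $L^2$ norms does not close. The estimate must be stated locally on intervals of length $\sim 2^{-k}$ (resp.\ $2^{-3k}$ for $k<0$), as in Theorem \ref{mainthm-2022}; only after summing in $n$ --- which converts two of the four local $L^2$ factors into $L^\infty$ norms --- does one land in $\overline{\textbf{B}}$, and the interpolation with the non-decaying bounds of Lemma \ref{fixjlm} then yields $c(\vec p)$. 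Two smaller omissions worth recording: the regime $0\leq k<\frac m2$ must itself be split at $k=\frac m4$ according to whether $\ell_2$ or $n$ dominates in the Gabor coefficient of $f_4$ (Key Heuristic 2), and the piecewise bounds produced by the different regimes must be glued continuously across the transition values of $k$ (Observation \ref{ContBound}) before the $k$-summation can be carried out.
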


The aim of this entire section is to reveal the key idea part of what we will refer from now on as \emph{correlative time-frequency analysis}--an approach that defines and guides our entire discretization process of $\Lambda_m^k$ and plays a crucial role in proving Theorem \ref{mainthm} above.

\subsection{Preparatives}

We start by splitting
\begin{equation}\label{Diagtermdec}
\Lambda^{D} \approx \Lambda^{D}_{+}\,+\,\Lambda^{D}_{-}\,,
\end{equation}
with
\begin{equation}\label{Diagtermpm}
\Lambda^{D}_{\pm}:=\sum_{k \in \Z_{\pm}}\:\: \sum_{m \in \N} \Lambda^k_{m}\,.
\end{equation}

For the reminder of the section we will focus on $\Lambda^{D}_{+}$ as the analysis performed for this case can be naturally adapted to the $-$ counterpart.\footnote{For a detailed discussion on the adaptations required for treating  $\Lambda^{D}_{-}$ we invite the reader to consult Section \ref{Negative}.}

Thus, throughout the reminder of this section we will fix $k,\,m \in \N$. Guided by the fact that in this regime $|t|\approx 2^{-k}<1$ and hence $|t|^3\leq t^2\leq |t|$, we are invited to divide
\begin{equation} \label{20230130eq01}
\Lambda_m^k(\vec{f})=\sum_{n\in\Z}\Lambda_m^{k, I_n^{0, k}}(\vec{f})\,,
\end{equation}
with
\begin{equation}\label{mf}
\Lambda_m^{k, I_n^{0, k}}(\vec{f}):=\int_{I_n^{0, k}}\int_{\R} f_{1,m+k}(x-t)f_{2,m+2k}(x+t^2)f_{3,m+3k}(x+t^3)f_{4,m+3k}(x)\,2^k \varrho(2^k t) dtdx,
\end{equation}
where here we use the notation: for $m\in \N$ and $k,\,q \in \Z$ we set
$$I_q^{m, k}:=\left[\frac{q}{2^{\frac{m}{2}+k}}, \frac{q+1}{2^{\frac{m}{2}+k}} \right]\,.$$

With these, Theorem \ref{mainthm} will be a consequence of the following:
$\newline$

\begin{thm} \label{mainthm-2022}
There exists some $\epsilon>0$, such that for any $k\geq 0$, $m,\,n\in\N$ and
$$\forall\:\:\vec{p}\in {\bf H}^{+}:=\{\vec{p}\in \overline{\textbf{B}}\,|\,p_1=2\:\:\&\:\:(p_2,p_3,p_4)\in\{2,\infty\}\}$$
the following holds:
\begin{equation}\label{keyestim}
\left|\Lambda_m^{k, I_n^{0, k}}(\vec{f})\right| \lesssim 2^{-\epsilon \min\{k,\frac{m}{2}\}} \,\|\vec{f} \|_{L^{\vec{p}}(3\, I_n^{0, k})}\,.
\end{equation}
\end{thm}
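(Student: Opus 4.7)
The plan is to follow the blueprint laid out in Section \ref{Keyideas}, organized by the dichotomy between the two regimes $k \geq \frac{m}{2}$ (where the control comes from a structural analysis of the linearizing function) and $0 \leq k < \frac{m}{2}$ (where the control comes from the time-frequency correlation set). By the translation invariance of $\Lambda_m^{k, I_n^{0, k}}$ in the $x$-variable, the first reduction is to set $n = 0$ and work on $I^k := [0, 2^{-k}]$, so that $3I_n^{0,k}$ becomes $3I^k$ of length $\sim 2^{-k}$.

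\textbf{Case I: $k \geq \frac{m}{2}$.} Here I would first derive the first-stage correlative discretization \eqref{Case1}, whose validity rests on Key Heuristic \eqref{Key1I}: the joint Fourier coefficient $x \mapsto J^k_{q,n}(f_1, f_2)(x)$ is morally constant on each $I_p^{m, 3k}$. For the base Hölder tuple $(2,2,2,2)$, I would apply Cauchy-Schwarz in $p$, Bessel's inequality on the Gabor coefficients of $f_3$ and $f_4$, and a final $\ell^2\times\ell^2\times\ell^\infty$ Hölder in $n$ to isolate the quantity $\textbf{L}_{m,k}(f_1,f_2)$ from \eqref{Lmk}, where the linearizing function $\lambda(x)$ realizes the supremum over $n$. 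Invoking Theorem \ref{20230202thm01} delivers $\textbf{L}_{m,k}(f_1, f_2) \lesssim 2^{-\tilde\epsilon m} \|f_1\|_{L^2(3I^k)} \|f_2\|_{L^2(3I^k)}$. To pass from the $L^2$-tuple to an arbitrary $\vec p \in \mathbf{H}^+$, I would use the elementary inequality $\|g\|_{L^2(3I^k)} \lesssim |3I^k|^{1/2}\|g\|_{L^\infty(3I^k)} \sim 2^{-k/2}\|g\|_{L^\infty(3I^k)}$; each application gains $2^{-k/2}$, and since $p_1 = 2$ is fixed while exactly two of $p_2,p_3,p_4$ equal $\infty$, two applications suffice to absorb the leading $2^k$ in \eqref{Case1}. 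Since $\min\{k, \frac{m}{2}\} = \frac{m}{2}$ in this regime, the resulting bound $2^{-\tilde\epsilon m}\|\vec f\|_{L^{\vec p}(3I^k)}$ is of the required form.

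\textbf{Case II: $0 \leq k < \frac{m}{2}$.} Here I would pass to the second-stage discretization producing the model \eqref{CTFM2} together with its time-frequency correlation set $\TFC_q$ from \eqref{TFCq}, based on the frequency partition \eqref{freqdec}. I would then perform a sparse-uniform decomposition $f_j = f_j^{\calS} + f_j^{\calU}$ of each input, adapted to the rescaled tiling $\mathcal{P}_j$ of area $\approx 2^{m/2}$ per tile (as in the discussion following \eqref{freqdecc1}), with the sparse pieces being those that capture a large share of the $L^2$-mass of their spatial fiber. This splits $\Lambda_m^{k}$ into $\Lambda_m^{k,\calU}$ and $\Lambda_m^{k,\calS}$, to be treated separately. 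The uniform piece is the easier one: the key point is that $f_j^{\calU}$ is essentially equidistributed among the frequency pieces on each spatial fiber, which, combined with the constraint that $(\ell_1,\ell_2,\ell_3)\in\TFC_q$, produces the required gain $2^{-\epsilon k}$ after $L^2$-orthogonality. The sparse piece is where the heart of the argument lies: essentially each spatial fiber of each $f_j^{\calS}$ carries a single dominant frequency label $\ell_j(x)$, so the constraint defining $\TFC_q$ becomes the level-set condition $\ell_1(x) - \frac{2q}{2^{m/2}}\ell_2(x) - \frac{3q^2}{2^m}\ell_3(x) \approx 0$ on the spatial variable. A Chebyshev-type estimate summed over $q\sim 2^{m/2}$, together with a careful separation of the subcases $0\leq k\leq \frac{m}{4}$ (handled in Section \ref{Sec05}) and $\frac{m}{4} < k < \frac{m}{2}$ (handled in Section \ref{Sec06}), extracts the decay $2^{-\epsilon k} = 2^{-\epsilon\min\{k,m/2\}}$.

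\textbf{Main obstacle.} The hardest part will be the sparse sub-case in Case II. The conceptually crucial move is to bypass a Freiman-type structural description of the intermediate-size Fourier coefficients per spatial fiber by replacing area-one Heisenberg tiles with rescaled tiles of area $\geq 2^{m/2}$; once this is accomplished, the combinatorial and measure-theoretic analysis of the time-frequency correlation set can be carried out as a genuine level-set estimate. In Case I, the analogous obstacle is encoded in Theorem \ref{20230202thm01}, whose proof relies on a bootstrap argument propagating the constancy of the linearizing function $\lambda$ across scales; I would treat this as a black box for the present reduction.
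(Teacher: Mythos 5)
Your outline tracks the paper's own route (Sections \ref{MaindiagKpositive}, \ref{Sec05} and \ref{Sec06}), but as written it has two genuine gaps. The first is in Case I: you pass directly from the model \eqref{Case1} to $\textbf{L}_{m,k}(f_1,f_2)$ and then invoke Theorem \ref{20230202thm01}, omitting the spatial sparse--uniform dichotomy of Section \ref{MaindiagKpositive}. This is not optional. Theorem \ref{20230202thm01} is stated and proved only for the \emph{uniform} components $f_1^{\calU_{\mu_0}},f_2^{\calU_{\mu_0}}$ --- its proof repeatedly uses the local $L^2$ smallness $\int_{I_q^{m,k}}|f_i|^2\lesssim 2^{(\mu_0-\frac12)m}\|f_i\|_{L^2(3I^k)}^2$ to estimate the terms $\calA,\frakC,\frakE,\frakG,\calI$ --- and the summation over $\ell\sim 2^{2k}$ in \eqref{Holder} only closes because $\|f_3^{\calU_\mu}\|_{L^2(3I_\ell^{0,3k})}\lesssim 2^{\mu m/2}2^{-k}\|f_3\|_{L^2(3I^k)}$, which again is the uniformity of $f_3$. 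Without first splitting each $f_i$ into $f_i^{\calS_\mu}+f_i^{\calU_\mu}$ and disposing of the sparse contributions separately (via the small support estimate \eqref{badpart}, as in Section \ref{20230330subsec01}), the chain of inequalities you describe does not produce any $m$-decay.

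The second gap is in how the two regimes of Case II are glued. You assert that the subcase $\frac{m}{4}\le k<\frac{m}{2}$ "extracts the decay $2^{-\epsilon k}$", but the level-set analysis of the correlation set in that regime (Theorem \ref{mainthmII}) only yields $\min\{1,\,k\,2^{-\underline{\epsilon}(\frac{m}{2}-k)}\}$: the cardinality gain is measured against $\frac{m}{2}-k$, not $k$, because the frequency labels $\ell_j$ now range over $[2^{\frac{m}{2}-k},2^{\frac{m}{2}-k+1}]$. This bound degenerates as $k\uparrow\frac{m}{2}$, so it does not by itself give $2^{-\epsilon\min\{k,\frac{m}{2}\}}$ there; one must additionally perturb the $k\ge\frac{m}{2}$ machinery into a neighborhood $|k-\frac{m}{2}|<\delta m$ (as in Section \ref{AppendixC}) to cover the transition. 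Your treatment of the sparse sub-case in Case II and the conversion from the $L^2$ tuple to general $\vec{p}\in{\bf H}^{+}$ via $\|g\|_{L^2(3I^k)}\lesssim 2^{-k/2}\|g\|_{L^\infty(3I^k)}$ are otherwise consistent with the paper.
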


$\newline$

Since $n\in\N$ is fixed, without loss of generality we may focus only on $n=0$. For notational simplicity, throughout the reminder of the paper we identify $I_0^{0, k}$ with $I^k$ and (with a slight notational abuse) $\Lambda_m^{k, I_0^{0, k}}$ with $\Lambda_m^k$. Also, we will drop the second index in the definition of $f's$ above.

\subsection{Rank I LGC - an overview}\label{LGC(I)}

Our goal in this section is two folded:
\begin{itemize}
\item to outline the main steps in the standard (Rank I) LGC-methodology developed in \cite{Lie19};
\item to extract the curvature information encoded in $\Lambda_m^k$ in the form of certain time-frequency correlations, most prominently expressed in \eqref{TFCqq} and \eqref{LGC1}.
\end{itemize}

\noindent\textsf{Step 1. Linearization}
\smallskip

The aim of this first step is the linearization of the \emph{physical} argument, that is the $t-$integrand. In order to do so one has two account for two key parameters:
\begin{itemize}
\item the \emph{physical scale} represented by $k$: this captures the size--on the physical side--of the kernel $\frac{1}{t}$ associated with the quadrilinear form $\Lambda_m^k$;

\item the \emph{frequency scale} represented by $m$: this captures the size--on the frequency side--of the phase of the multiplier associated with the quadrilinear form $\Lambda_m^k$.
\end{itemize}
Thus, bringing together the homogeneity for each of the $t$-arguments and the frequency location corresponding to each of the corresponding functions $f_j$, via Heisenberg principle, we are able to identify the scale that has the linearizing effect in the discretization of our input functions (below $x\in\R$ is a fixed parameter):
\begin{itemize}
\item for $f_1(x-t)$ we use discretization in spatial intervals of length $2^{-\frac{m}{2}-k}$;
\item for $f_2(x+t^2)$ we use discretization in spatial intervals of length $2^{-\frac{m}{2}-2k}$;
\item for $f_3(x+t^3)$ we use discretization in spatial intervals of length $2^{-\frac{m}{2}-3k}$.
\end{itemize}

With these, we are naturally brought to the next step of our method:

\smallskip
\noindent\textsf{Step 2. Adapted Gabor wave-packet decomposition}
\smallskip

For each $j\in\{1,2,3\}$ and each input $f_j$ we apply a Gabor wave-packet decomposition adapted to the corresponding linearizing scale identified at the first step:
\begin{equation}\label{Gabordec}
f_j=\sum_{ \substack{p_j \sim 2^{\frac{m}{2}+(j-1)k} \\ n_j \sim 2^{\frac{m}{2}}}} \left \langle f_j, \phi_{p_j, n_j}^{j k} \right \rangle \phi_{p_j, n_j}^{j k}
\end{equation}
where here
\begin{equation}\label{GF3}
\phi_{r, n}^{j k}(x):=2^{\frac{m}{4}+\frac{j k}{2}} \phi \left(2^{\frac{m}{2}+jk}x-r \right) e^{i 2^{\frac{m}{2}+j k}xn}
\end{equation}
with $\phi$ a smooth cut-off function supported away from the origin.

Next, given the $k\geq 0$ regime, we partition the $t$-integral domain in intervals of length $2^{-\frac{m}{2}-k}$ and the $x$-integral domain in intervals of length $2^{-\frac{m}{2}-3k}$.

From the above, we deduce
\begin{eqnarray} \label{20230222eq10}
&& \Lambda_m^k(\vec{f})= 2^k \sum_{\substack{p \sim 2^{\frac{m}{2}+2k} \\ q \sim 2^{\frac{m}{2}}}}  \sum_{\substack{p_j \sim 2^{\frac{m}{2}+\left(j-1 \right)k}, \\ n_j \sim 2^{\frac{m}{2}},  \ j\in\{1, 2, 3\}}} \left \langle f_1, \phi_{p_1, n_1}^k \right \rangle \left \langle f_2, \phi_{p_2, n_2}^{2k} \right \rangle \left \langle f_3, \phi_{p_3, n_3}^{3k} \right \rangle \nonumber \\
&&\quad \cdot \int_{I_p^{m, 3k}} \int_{I_q^{m, k}} \phi_{p_1, n_1}^k \left(x-t \right) \phi_{p_2, n_2}^{2k} \left(x+t^2 \right) \phi_{p_3, n_3}^{3k} \left(x+t^3 \right) f_4(x) dtdx\nonumber\\
&& \approx 2^{\frac{m}{2}+\frac{5k}{2}}  \sum_{\substack{p \sim 2^{\frac{m}{2}+2k} \\ q \sim 2^{\frac{m}{2}}}}  \sum_{\substack{p_j \sim 2^{\frac{m}{2}+\left(j-1 \right)k}, \\ n_j \sim 2^{\frac{m}{2}},  \ j\in\{1, 2, 3\}}} \left \langle f_1, \phi_{p_1, n_1}^k \right \rangle \left \langle f_2, \phi_{p_2, n_2}^{2k} \right \rangle \left \langle f_3, \phi_{p_3, n_3}^{3k} \right \rangle \\
&& \quad \cdot \int_{I_p^{m, 3k}} \int_{I_q^{m, k}} 2^{\frac{m}{4}+\frac{3k}{2}} f_4(x) \phi \left(2^{\frac{m}{2}+k}x-q-p_1 \right) \phi \left(2^{\frac{m}{2}+2k} x+\frac{q^2}{2^{\frac{m}{2}}}-p_2 \right) \nonumber \\
&& \quad \quad \quad  \quad \quad \quad \cdot \phi \left(2^{\frac{m}{2}+3k} x+\frac{q^3}{2^m}-p_3 \right) e^{i2^{\frac{m}{2}+3k} x \left(n_3+\frac{n_2}{2^k}+\frac{n_1}{2^{2k}} \right)}  e^{-i \left(2^{\frac{m}{2}+k}tn_1-2^{\frac{m}{2}+2k}t^2 n_2-2^{\frac{m}{2}+3k} t^3 n_3 \right)} dtdx. \nonumber
\end{eqnarray}

\medskip
\noindent\textsf{Step 3. Time-frequency correlations}
\medskip

Relation \eqref{20230222eq10} allows us to identify now the interdependencies between the time and frequency parameters.  We first analyze the restrictions imposed on the spatial parameters:

Indeed, since $x \in I_p^{m, 3k}$, we have
\begin{equation} \label{20230222eq03}
\left|2^{\frac{m}{2}+3k}x-p \right| \lesssim 1.
\end{equation}
From \eqref{20230222eq10}, \eqref{20230222eq03} and the Schwartz localization of $\phi$ we immediately deduce the spatial correlations
\begin{equation} \label{20230222eq11}
    p_3\cong p+\frac{q^3}{2^m}\,,
\end{equation}

\begin{equation} \label{20230222eq12}
    p_2\cong\frac{p}{2^k}+\frac{q^2}{2^{\frac{m}{2}}}\,,
\end{equation}
and
\begin{equation} \label{20230222eq13}
p_1\cong\frac{p}{2^{2k}}-q.
\end{equation}

Substituting \eqref{20230222eq11}, \eqref{20230222eq12} and \eqref{20230222eq13} back to \eqref{20230222eq10}, we get
\begin{eqnarray} \label{20230222eq21}
\Lambda_m^k(\vec{f}) &\approx& 2^{\frac{m}{2}+\frac{5k}{2}}\sum_{\substack{p \sim 2^{\frac{m}{2}+2k} \\ q \sim 2^{\frac{m}{2}}}} \sum_{n_1, n_2, n_3, \sim 2^{\frac{m}{2}}}\left\langle f_1, \phi_{\frac{p}{2^{2k}}-q, n_1}^k \right \rangle \left\langle f_2, \phi_{\frac{p}{2^{k}}+\frac{q^2}{2^{\frac{m}{2}}}, n_2}^{2k} \right \rangle  \\
&\cdot& \left\langle f_3, \phi_{p+\frac{q^3}{2^m}, n_3}^{3k} \right \rangle \left\langle f_4, \phi_{p, n_3+\frac{n_2}{2^k}+\frac{n_1}{2^{2k}}}^{3k} \right \rangle \cdot \int_{I_q^{m, k}} e^{-i \varphi_{n_1, n_2, n_3}(t)} dt, \nonumber
\end{eqnarray}
where
$$
\varphi_{n_1, n_2, n_3}(t):=2^{\frac{m}{2}+k}tn_1-2^{\frac{m}{2}+2k} t^2 n_2-2^{\frac{m}{2}+3k} t^3 n_3.
$$
Notice that as a consequence of the $x-$integral in \eqref{20230222eq10} we obtain in \eqref{20230222eq21} the frequency correlation
\begin{equation} \label{TFC-01}
n_4=\frac{n_1}{2^{2k}}+\frac{n_2}{2^k}+n_3\,,
\end{equation}
where here  $n_4$ is the parameter representing the frequency localization of $f_4$.

We now pass to the induced time-frequency conditionalities. This is the crucial point where we make use of the \emph{linearizing} effect of the scale identified at Step 1 and involved in the Gabor decomposition at Step 2. Indeed, by writing $t=\left(t-\frac{q}{2^{\frac{m}{2}+k}}\right)+\frac{q}{2^{\frac{m}{2}+k}}$ and using $t \in I_q^{m, k}$ we linearize the phase function $\varphi_{n_1, n_2, n_3}$ via a Taylor series argument:
$$
\varphi_{n_1, n_2, n_3}(t)=2^{\frac{m}{2}+k} t \left(n_1-\frac{2q}{2^{\frac{m}{2}}} n_2-\frac{3q^2}{2^m} n_3 \right)+\frac{q^2n_2}{2^{\frac{m}{2}}}+\frac{2q^3n_3}{2^m}+O(1).
$$
Therefore, up to admissible error terms
\begin{equation} \label{20230422eq01}
\int_{I_q^{m, k}} e^{-i \varphi_{n_1, n_2, n_3}(t)} dt \approx 2^{-\frac{m}{2}-k} \cdot  C_{q, n_1, n_2, n_3} \cdot \int_0^1 e^{-it  \left(n_1-\frac{2q}{2^{\frac{m}{2}}} n_2-\frac{3q^2}{2^m} n_3 \right)} dt
\end{equation}
where $C_{q, n_1, n_2, n_3}= e^{-i \left[\frac{q^2 n_2}{2^{\frac{m}{2}}}+\frac{2q^3 n_3}{2^m}+q  \left(n_1-\frac{2q}{2^{\frac{m}{2}}} n_2-\frac{3q^2}{2^m} n_3 \right)\right]}$.

As a consequence, for a fixed $q\sim 2^{\frac{m}{2}}$, the main contribution arises when the triple $(n_1,n_2,n_3)$ belongs to the \emph{time-frequency correlation set}
\begin{equation} \label{TFCqq}
\TFC^{I}_q:=\left\{ \substack{\left(n_1, n_2, n_3 \right)\\n_1, n_2, n_3 \sim 2^{\frac{m}{2}}}\, :\, n_1-\frac{2q}{2^{\frac{m}{2}}} n_2- \frac{3q^2}{2^m} n_3\cong 0 \right\}.
\end{equation}
With these, we conclude that\footnote{For some more details justifying this step the reader in invited to consult Section \ref{AppendixB} in the Appendix.}
\begin{eqnarray} \label{LGC1}
&&\Lambda_m^k(\vec{f})\approx 2^{\frac{3k}{2}}\sum_{\substack{p \sim 2^{\frac{m}{2}+2k} \\ q \sim 2^{\frac{m}{2}}}} \sum_{(n_1, n_2, n_3)\in \TFC^{I}_q} C_{q, n_1, n_2, n_3}\,\left\langle f_1, \phi_{\frac{p}{2^{2k}}-q, n_1}^k \right\rangle \left\langle f_2, \phi_{\frac{p}{2^{k}}+\frac{q^2}{2^{\frac{m}{2}}}, n_2}^{2k} \right \rangle  \\
&&\qquad\qquad\qquad\qquad\qquad\qquad\qquad\qquad\qquad\qquad\qquad\cdot\left\langle f_3, \phi_{p+\frac{q^3}{2^m}, n_3}^{3k} \right \rangle \left\langle f_4, \phi_{p, n_3+\frac{n_2}{2^k}+\frac{n_1}{2^{2k}}}^{3k} \right \rangle\,.\nonumber
\end{eqnarray}

\subsection{Standard versus correlative time-frequency analysis}\label{CTFrev}

In this section, we are going to present a brief antithetical discussion on the time-frequency analysis of multi-linear forms. In the context of our story, the ``original sin" is expressed by the fundamental fact that the trigonometric system is \emph{not an unconditional basis} for $L^p(\TT)$ unless $p=2$. We will illustrate this via three examples starting from a more philosophical perspective and ending up with a model that explains the obstacle in proving the full conjectural boundedness range for the Bilinear Hilbert transform.

The first example, may be thought at the philosophical level as a key motivation for the development of the Littlewood--Paley theory: indeed, the central piece of the latter, expressed in the well-known form\footnote{Here, in the standard fashion, $\psi$ is assumed to be a smooth real function with zero mean and $\psi_k(x)=2^k \psi \left( 2^k x\right)$.}
\begin{equation} \label{LP}
\|f\|_{L^p(\R)}\approx \|Sf\|_{L^p(R)}:=\left\|\left(\sum_{k\in\Z} |f*\psi_k|^2 \right)^{\frac{1}{2}}\right\|_{L^p(\R)}\,,\qquad\qquad 1<p<\infty\,,
\end{equation}
could be thought as an upgrade of (the partial) Young's inequality in the effort of controlling the spatial information encoded in $\|f\|_{L^p(\R)}$ by properly regrouping the frequency information carried by $\hat{f}$. Thus, a first lesson that we are learning out of this is the very delicate balance in controlling the spatial properties of an object when deciding the structure of the cuttings on the frequency side.

A second delicate example confirming and adding to the above lesson, this time in the multilinear setting, was offered in \cite{BBLV21}. For reader's convenience we summarize it below: define\footnote{Here, $\T=[0, 1]$.}
\begin{equation}
\label{Example1}
\Lambda(F_1,F_2,F_3):=((F_1 F_2)*F_3)(0)=\int_{\TT} F_1(t) F_2(t) F_3(-t) dt\,,
\end{equation}
where here $F_1, F_2\in L^2(\TT)$ and $F_3\in L^\infty(\TT)$. For $j\in\{1,2,3\}$ let $F_j(t)=\sum\limits_{n\in\Z} a_{j,n} e^{i n t}$ be the Fourier Series representation of $F_j$. Then
\begin{itemize}
\item on the one hand, from Parseval and H\"older, we have
\begin{equation*}
|\Lambda(F_1,F_2,F_3)|\simeq \left|\sum_{n,k\in\Z}a_{1,n-k} a_{2,k} a_{3,n} \right|\lesssim \|F_1\|_{L^2(\TT)} \|F_2\|_{L^2(\TT)} \|F_3\|_{L^{\infty}(\TT)};
\end{equation*}
\item on the other hand, for generic $\|F_1\|_{L^2(\TT)}=\|F_2\|_{L^2(\TT)}=\|F_3\|_{L^{\infty}(\TT)}=1$, we have
\begin{equation*}
\sum_{n,k\in\Z}\left|a_{1,n-k} a_{2,k} a_{3,n}\right|=\infty.
\end{equation*}
\end{itemize}
 We end this revelatory discussion with yet another example: define
\begin{equation}
\label{Example3}
T(F_1,F_2)(x):=\int_{\TT} F_1(x-t) F_2(x+t) dt\,,
\end{equation}
where here $F_1\in L^{p_1}(\TT)$, $F_2\in L^{p_2}(\TT)$ with $p_1,\,p_2\geq 1$ and, as before, for $j\in\{1,2\}$ we let $F_j(t)\approx\sum\limits_{n\in\Z} a_{j,n} e^{i n t}$ be the Fourier Series representation of $F_j$. Then, we have
\begin{itemize}
\item on the one hand, from H\"older, for any $\frac{1}{p_3}=\frac{1}{p_1}+\frac{1}{p_2}$, we have
\begin{equation}\label{cex}
\|T(F_1,F_2)\|_{L^{p_3}(\TT)}\simeq \left(\int_{\TT}\left|\sum_{n\in\Z} a_{1,n}\,a_{2,n}\,e^{i 2 n x}\,dx\right|^{p_3}\right)^{\frac{1}{p_3}}\lesssim \|F_1\|_{L^{p_1}(\TT)} \|F_2\|_{L^{p_2}(\TT)};
\end{equation}
\item on the other hand, \eqref{cex} can not hold for $p_3<\frac{2}{3}$ if we allow a random signum change of the Fourier coefficients for $F_1$ and/or $F_2$ as one can easily deduce via an application of Hincin's inequality.
\end{itemize}

All these examples above explicitly emphasize the challenge brought by the \emph{conditional} frequency discretizations.
This aspect proves to be a ``red thread" of modern Fourier analysis: until very recently, all the advancement in the time-frequency area was strictly dependent on the existence of an \emph{unconditional discretized wave-packet model} of the object under investigation. One of the most prominent example confirming these is offered by the bilinear Hilbert transform:
$$B(f,g)(x)=\int_{\R} f(x-t)g(x+t)\,\frac{dt}{t}$$
The gap between the conjectured boundedness range $B:\,L^p\times L^q\,\mapsto L^r$ for $\frac{1}{p}+\frac{1}{q}=\frac{1}{r}$ with
$1<p,q\leq \infty$, $r>\frac{1}{2}$ and the currently known range imposing the restriction $r>\frac{2}{3}$ derives precisely from the fact that the discretized  wave-packet model of $B$ stops being unconditional when $r$ drops below $\frac{2}{3}$. This latter aspect is in fact a direct consequence of the third example discussed above - see \eqref{Example3} and \eqref{cex} (for more on this one can for example consult \cite{MS13}).

The first instance of controlling a nontrivial operator that does not admit a standard \emph{unconditional discretized wave-packet model} was offered in \cite{BBLV21} where the authors therein discuss the non-resonant Bilinear Hilbert Carleson operator defined earlier in \eqref{Top}.

The present paper offers a second such instance: as it turns out, the representation in \eqref{LGC1} is not an $m-$decaying unconditional (summable) model, in the sense that, for any $\vec{p}\in {\bf H}^{+}$

 \begin{eqnarray} \label{LGC1abs}
&&(\Lambda_m^k)^{*}(\vec{f}):= 2^{\frac{3k}{2}}\sum_{\substack{p \sim 2^{\frac{m}{2}+2k} \\ q \sim 2^{\frac{m}{2}}}} \sum_{(n_1, n_2, n_3)\in \TFC^{I}_q} \left|\left\langle f_1, \phi_{\frac{p}{2^{2k}}-q, n_1}^k \right\rangle \right| \left|\left\langle f_2, \phi_{\frac{p}{2^{k}}+\frac{q^2}{2^{\frac{m}{2}}}, n_2}^{2k} \right \rangle \right| \\
&&\qquad\qquad\qquad\qquad\qquad\cdot\left|\left\langle f_3, \phi_{p+\frac{q^3}{2^m}, n_3}^{3k} \right \rangle\right|  \left|\left\langle f_4, \phi_{p, n_3+\frac{n_2}{2^k}+\frac{n_1}{2^{2k}}}^{3k} \right \rangle\right|\approx \|\vec{f}\|_{L^{\vec{p}}}\,.\nonumber
\end{eqnarray}
Indeed, \eqref{LGC1abs} can be easily checked by considering the situation when for each $j$ the local Fourier (Gabor) coefficients associated with $f_j$ are equidistributed.

Thus, the standard time-frequency analysis approach that would rely on manipulating absolute values of Gabor coefficients can not be relied upon in order to achieve Theorem \ref{mainthm}.

This justifies a new approach that we will refer to from now on as \emph{correlative} time-frequency analysis: unlike the standard situation in which the model operator/form relies on the discretization of each of its input functions based strictly on the spatial and frequency properties\footnote{These are derived from the spatial and frequency properties of the kernel/multiplier.} of the input but \emph{independent} of the structure of the other inputs, in the new situation the discretization of the input functions is allowed to depend on the other inputs in the form of joint local Fourier coefficients--see the discussion in Section \ref{CTF}--thus creating correlations between the input functions, hence the name of the approach.\footnote{The specifics of this new approach will become transparent in the next section.} In view of the above it is only natural to end this commentary with the following

\begin{obs}\textsf{[A natural direction of investigation for BHT with $r<\frac{2}{3}$]}\label{BHTinv} As was already revealed above, we know that the absolute summability of the wave-packet model used to address the boundedness of the Bilinear Hilbert transform for the $r>\frac{2}{3}$ case, (\cite{LT97}, \cite{LT99}), admits a single scale counterexample in the situation $r<\frac{2}{3}$ - see the third example defined by \eqref{Example3}. An inspection of this counterexample reveals the discrepancy between  controlling an expression of the form
\begin{equation}\label{jointmax}
x\,\longrightarrow\,\int_{\R} f_1(x-t) f_2(x+t)\,\rho(t)\,dt\,,
\end{equation}
versus controlling the absolute values of (local) Fourier coefficients arising in the Fourier decomposition of the corresponding input functions above.

In view of the Rank II LGC method developed in this paper which addresses a very similar situation for the case of the curved trilinear Hilbert transform, it is quite natural to propose as a line of inquiry a \emph{correlative time-frequency model} for BHT in which instead of analyzing the original interaction of \emph{individual Gabor decompositions} (single function) to operate with \emph{joint Gabor coefficients} corresponding to (multiscale versions of) \eqref{jointmax}.
\end{obs}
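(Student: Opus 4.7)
The proposal is to construct, for each single-scale piece of the dualized trilinear form associated with BHT, a \emph{correlative} discretization in which only one designated input (say the dual output $f_3$) is decomposed via a standard Gabor frame, while the pair $(f_1,f_2)$ is left undecomposed and is accessed through a multiscale family of \emph{joint Gabor coefficients} of the shape
\begin{equation*}
J_{I,\omega}(f_1,f_2)(x) := \int_{I} f_1(x-t)\,f_2(x+t)\,e^{i\omega t}\,\rho_I(t)\,dt,
\end{equation*}
where $I$ ranges over a Whitney decomposition associated with the kernel scale and $\omega$ ranges over a companion frequency grid matched to the Gabor window of $f_3$. The idea is to mirror the two-stage discretization of Section \ref{CTF}, with $J_{I,\omega}$ playing the role of $\underline{J}^k_{q,n}$ from \eqref{DecIntr}: first, impose the linearization scale prescribed by the Heisenberg principle so that the joint quantity is morally constant on the outer $x$-interval (the analogue of \eqref{Key2I}); then, perform a Fourier partition of each of $f_1$ and $f_2$ onto equidistant intervals whose length matches the outer Gabor window, so as to decouple the $f_3$-information from that of the pair.

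Once this joint model is set up, the plan is to import wholesale the sparse-uniform dichotomy machinery: rescaled tilings $\mathcal{P}_1,\mathcal{P}_2$ of the phase plane with tiles of appropriate area would split each input into a sparse part $f_j^{\calS}$, capturing mass-concentrated fibers, and a uniform part $f_j^{\calU}$. The uniform bound would follow the $L^2\times L^2\times L^\infty$ route analogous to \eqref{Holder}, with the $L^\infty$ slot absorbed by $J_{I,\omega}$ after an $\ell^\infty$ H\"older in the frequency parameter and reduction to a linearizing function $\lambda$ exactly as in the bootstrap leading to \eqref{keyCase1}. In parallel, the sparse component would be controlled by a level-set analysis of a BHT-adapted \emph{time-frequency correlation set} playing the role of \eqref{TFCq}.

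The hard part, and the reason this program cannot literally parallel the curved case, is that in BHT the correlation relation replacing \eqref{TFCq} is \emph{linear} rather than quadratic: schematically it reads $\ell_1+\ell_2\approx 0$, with no curvature-induced $q$-dependent coefficient. This is nothing but a direct expression of the linear modulation invariance of BHT, and the resulting correlation set is a full codimension-one linear subspace, genuinely huge compared to the sparse set \eqref{TFCq}; a bare level-set analysis on it cannot yield any $m$-gain and, worse, is responsible for the failure of absolute summability that forces the barrier at $r=\tfrac{2}{3}$. Overcoming this will likely demand either (i) combining the correlative discretization with tree/size/mass selection in the spirit of Lacey--Thiele \cite{LT97,LT99}, so as to harvest cancellation from $f_3$ once the modulation invariance has been quotiented out by the joint coefficient, or (ii) a genuine higher-order wave-packet analysis at the joint-coefficient level, in the spirit of \cite{Lie09,Lie20}, whereby the linearizing function $\lambda$ that controls the maximal $L^2$-size of $J_{I,\omega}(f_1,f_2)(x)$ is analyzed through a bootstrap on constancy scales, exploiting the second-order behaviour of the combined phase in $t$ rather than the vanishing first-order phase. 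This latter step, namely exhibiting cancellation in $J_{I,\omega}$ after the modulation symmetry is fully absorbed, is the crucial obstacle separating the Rank II LGC paradigm from a resolution of BHT in the range $r<\tfrac{2}{3}$.
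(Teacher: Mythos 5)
Observation \ref{BHTinv} is a programmatic remark, not a theorem: the paper offers no proof, only the proposal of a correlative model for BHT built on joint Gabor coefficients, and your write-up reproduces exactly this blueprint (Gabor decomposition of a single designated input, joint coefficients $J_{I,\omega}(f_1,f_2)$ for the pair, followed by the sparse--uniform machinery), so it takes essentially the same approach as the paper. Your added diagnosis---that the curvature-free, linear correlation relation expressing the modulation invariance of $B$ makes the correlation set large and blocks a verbatim transplant of the level-set analysis, pointing instead toward tree-selection or higher-order wave-packet input---is consistent with the paper's own discussion in Section \ref{CTFrev} and Remark \ref{ChallengesTHT}, and goes slightly beyond the stated observation without contradicting it.
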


\subsection{Rank II LGC: preliminaries}\label{IIPrelim}

Our goal in this section is to provide a concrete shape to the philosophy of correlative time-frequency analysis evoked earlier.
This will be done by exploiting a cancelation hidden in a suitable correlation between the input functions $f_1$ and $f_2$.

We start by recalling \eqref{GF3} and then decompose
$$f_3(x)=\sum_{{r \sim 2^{\frac{m}{2}+2k}}\atop{n \sim 2^{\frac{m}{2}}}}  \left\langle f_3, \phi_{r, n}^{3k} \right \rangle \phi_{r, n}^{3k}(x)\,.$$
Now, we substitute the above in \eqref{mf}:
\begin{eqnarray} \label{20230131eq01}
\Lambda_m^k(\vec{f})&=&\sum_{\substack{p \sim 2^{\frac{m}{2}+2k} \\ q \sim 2^{\frac{m}{2}}}} \int_{I_p^{m, 3k}} \int_{I_q^{m, k}} f_1(x-t) f_2(x+t^2) \left(\sum_{\substack{ r \sim 2^{\frac{m}{2}+2k} \\ n \sim 2^{\frac{m}{2} }}}\left\langle f_3, \phi_{r, n}^{3k} \right \rangle \phi_{r, n}^{3k}(x+t^3) \right)  f_4(x) 2^k \varrho(2^kt) dtdx \nonumber \\
&=&\sum_{\substack{p \sim 2^{\frac{m}{2}+2k} \\ q \sim 2^{\frac{m}{2}}}} 2^k \sum_{\substack{r \sim 2^{\frac{m}{2}+2k} \\ n \sim 2^{\frac{m}{2}}}} \langle f_3, \phi_{r, n}^{3k} \rangle  \int_{I_p^{m, 3k}} \widetilde{J}_{q, r, n}^k(f_1,f_2)(x) f_4(x)dx,
\end{eqnarray}
where
\begin{eqnarray} \label{20230131eq02}
&&\qquad\qquad\widetilde{J}_{q, r, n}^k(f_1,f_2)(x):=  \int_{I_q^{m, k}} f_1(x-t)f_2(x+t^2) \phi_{r, n_3}^{3k} (x+t^3) \varrho(2^k t) dt\\
&&= \int_{I_q^{m, k}} f_1(x-t)f_2(x+t^2) 2^{\frac{\frac{m}{2}+3k}{2}}
\phi \left(2^{\frac{m}{2}+3k} (x+t^3)-r \right) e^{i 2^{\frac{m}{2}+3k}(x+t^3) n}  \varrho(2^k t) dt\nonumber\,.
\end{eqnarray}
We now take aside \eqref{20230131eq02} and model it according to the linearization step of the Rank I LGC methodology, that is: for $t \in I_q^{m, k}$, at the phase level, one uses
\begin{eqnarray} \label{20221229eq02}
2^{\frac{m}{2}+3k} t^3=3\cdot 2^{\frac{m}{2}+k} t \left( \frac{q}{2^{\frac{m}{2}}} \right)^2-2 \cdot 2^{\frac{m}{2}}  \left( \frac{q}{2^{\frac{m}{2}}} \right)^3+O \left(2^{-\frac{m}{2}} \right),
\end{eqnarray}
while  at the spatial argument level, one only needs the zero order approximation
\begin{equation} \label{20221229eq03}
2^{\frac{m}{2}+3k} t^3=\frac{q^3}{2^m}+O(1).
\end{equation}
Substituting \eqref{20221229eq02} and \eqref{20221229eq03} back to \eqref{20230131eq02} and applying a Taylor expansion argument, we see that it suffices to treat\footnote{Here, for notational simplicity, we continue to refer to the main term with the original label  $\widetilde{J}_{q, r, n}^k(f_1,f_2)(x)$.} \footnote{Recall that throughout the paper, the compactly supported  function $\phi$, is allowed to change from line to line.}
\begin{equation} \label{20221229eq04}
\widetilde{J}_{q, r, n}^k(f_1,f_2)(x) \approx \int_{I_q^{m, k}} f_1(x-t)f_2(x+t^2) 2^{\frac{\frac{m}{2}+3k}{2}} \phi \left(2^{\frac{m}{2}+3k}x+\frac{q^3}{2^m}-r \right) e^{-2i n \cdot \frac{q^3}{2^m}} e^{i 2^{\frac{m}{2}+3k}xn} e^{3i \cdot 2^{\frac{m}{2}+k}t \left(\frac{q}{2^\frac{m}{2}} \right)^2 n} dt.
\end{equation}

Now since we have that $x \in I_p^{m, 3k}$, we notice the \textit{time correlation} identity
\begin{equation} \label{20221229eq05}
r \cong p+\frac{q^3}{2^m}.
\end{equation}
As a consequence, for $x \in I_p^{m, 3k}$, we deduce that
\begin{equation} \label{20221229eq06}
\widetilde{J}_{q, r, n}^k(f_1,f_2)(x) \approx \widetilde{J}^{k}_{q, p+\frac{q^3}{2^m}, n}(f_1,f_2)(x) = \left(\int_{I_q^{m, k}} f_1(x-t)f_2(x+t^2) e^{3i \cdot 2^{\frac{m}{2}+k}t \left(\frac{q}{2^\frac{m}{2}} \right)^2 n} dt \right) \cdot \phi_{p, n}^{3k} (x) \cdot e^{-2i n \cdot \frac{q^3}{2^m}}.
\end{equation}
Substituting \eqref{20221229eq05} and \eqref{20221229eq06} back to the formula of $\Lambda_m^k(\vec{f})$, we conclude
\begin{equation} \label{20221229eq07}
 \Lambda_m^k(\vec{f}) \sim 2^k \sum_{\substack{p \sim 2^{\frac{m}{2}+2k} \\ q, n \sim 2^{\frac{m}{2}}}} \left\langle f_3, \phi_{p+\frac{q^3}{2^m}, n}^{3k}  \right\rangle \int_{I_p^{m, 3k}} \widetilde{J}^{k}_{q, p+\frac{q^3}{2^m}, n}(f_1,f_2)(x) f_4(x)dx
\end{equation}

\subsection{Rank II LGC: the case $k\geq \frac{m}{2}$}\label{IIlgcklarge}

Following the preview offered in Section \ref{Keyideas}, our goal in this section is to obtain the final discretization of our form that allows us to decouple the
the information carried by the pairs $(f_1,f_2)$ from that of $f_4$.

As it turns out, in order to achieve this we will need to split the range of $k\geq 0$ in three regimes.

Indeed, the first regime for $k$ is invited by \eqref{20221229eq06}--\eqref{20221229eq07} as revealed by the following:
$\newline$

\noindent\textbf{Key Heuristic 1.} \textit{Since $\supp \ \widehat{f_i} \subseteq \left[2^{m+ik}, 2^{m+ik+1} \right]$ for $i\in\{1,2\}$, via Heisenberg uncertainty principle, we have that
\begin{enumerate}
\item [(1)] $f_1$ is morally a constant on intervals of length $2^{-m-k}$;
\item [(2)] $f_2$ is morally a constant on intervals of length $2^{-m-2k}$.
\end{enumerate}
Imposing now the condition $k \ge \frac{m}{2}$ we deduce that $|I_p^{m, 3k}|=2^{-\frac{m}{2}-3k} \le 2^{-m-2k}$. As a consequence, at the moral level, we have that the mapping
\begin{equation} \label{20230131eq04}
x \mapsto {J}^{k}_{q, n}(f_1,f_2)(x) := \int_{I_q^{m, k}} f_1(x-t)f_2(x+t^2) e^{3i \cdot 2^{\frac{m}{2}+k}t \left(\frac{q}{2^\frac{m}{2}} \right)^2 n}dt
\end{equation}
is constant on the interval $I_p^{m, 3k}$. (In effect the expression in \eqref{20230131eq04} is morally constant on intervals of length $2^{-m-2k}$.)}

\begin{obs}\textsf{[Error terms I]}\label{ETerm}
Throughout this paper, due to expository reasons and space limitation, we will only focus on the \emph{main terms} arising in our estimates. Thus, for example, we will not discuss in detail how to make rigorous statements like ``a certain expression is morally constant on a specific interval" as these kind of things can be easily handled via standard tools such as inverse Fourier representation, Fourier series,  Taylor series expansions, etc.. In some rare occasions we will defer such reasonings to the discussion in the Appendix. In particular, the constancy of the term \eqref{20230131eq04} is just briefly touched in Section \ref{AppendixA}.
\end{obs}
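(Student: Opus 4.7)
The content of Observation \ref{ETerm} that actually requires justification is the constancy claim advertised in Key Heuristic 1, namely that for $k\geq \frac{m}{2}$ the map $x\mapsto J^k_{q,n}(f_1,f_2)(x)$ in \eqref{20230131eq04} is ``morally constant'' on the interval $I_p^{m,3k}$ of length $2^{-\frac{m}{2}-3k}$. The plan is to convert this heuristic into a quantitative statement of the form
\[
J^k_{q,n}(f_1,f_2)(x)\;=\;e^{i\Phi_k(x)}\,\bar J^k_{q,n}(p)\;+\;O\!\left(2^{-\eps m}\right)
\quad\text{for all}\quad x\in I_p^{m,3k},
\]
where $\bar J^k_{q,n}(p)$ is independent of $x$ and $\Phi_k$ is a smooth phase that is absorbed into adjacent wave-packet parameters. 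The starting point is to use the frequency support hypothesis $\supp \widehat{f_i}\subseteq [2^{m+ik},2^{m+ik+1}]$ ($i\in\{1,2\}$) to factor out the dominant modulation, writing $f_i(y)=g_i(y)\,e^{i\,2^{m+ik}y}$ with $\widehat{g_i}$ supported in $[0,2^{m+ik}]$. Substituting into \eqref{20230131eq04} produces
\[
J^k_{q,n}(f_1,f_2)(x)=e^{i(2^{m+k}+2^{m+2k})x}\!\int_{I_q^{m,k}}\!g_1(x-t)\,g_2(x+t^2)\,e^{i\Psi_{q,n}(t)}\,dt,
\]
so that the nontrivial $x$-dependence is now carried entirely by the slowly varying factors $g_1,g_2$.

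The next step is to Taylor expand $g_1(x-t)$ and $g_2(x+t^2)$ in $x$ around the midpoint $x_0$ of $I_p^{m,3k}$. Writing $x=x_0+h$ with $|h|\leq 2^{-\frac{m}{2}-3k}$, and using the Bernstein-type bounds $\|g_i^{(j)}\|_\infty\lesssim 2^{j(m+ik)}\|g_i\|_\infty$ inherited from the frequency support of $g_i$, the $j$-th term in the expansion is dominated by $(|h|\cdot 2^{m+ik})^j\|g_i\|_\infty$. For $i=1$ this is $\leq 2^{j(\frac{m}{2}-2k)}$ and for $i=2$ this is $\leq 2^{j(\frac{m}{2}-k)}$; the assumption $k\geq \frac{m}{2}$ ensures both are bounded by $1$, and for any $k$ strictly greater than $\frac{m}{2}$ one obtains geometric decay in $j$. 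Summing the series and integrating over the short interval $I_q^{m,k}$ (which only contributes an extra factor $|I_q^{m,k}|=2^{-\frac{m}{2}-k}$) yields the claimed pointwise approximation, with an $L^\infty$ error dominated by $2^{-\eps\min\{k,\frac{m}{2}\}}$ times $\|g_1\|_\infty\|g_2\|_\infty$. The same argument can be run inside the inverse Fourier representation of $f_1,f_2$, which is the route implicitly taken when the authors say ``inverse Fourier representation, Fourier series, Taylor series expansions.''

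The remaining, and main, obstacle is to show that the residual modulation $e^{i(2^{m+k}+2^{m+2k})x}$ and the pointwise error $O(2^{-\eps m})$ are indeed \emph{admissible} once fed back into the full quadrilinear model \eqref{20221229eq07}. The plan here is twofold: first, absorb $e^{i(2^{m+k}+2^{m+2k})x}$ into the phase of the Gabor element paired with $f_4$, noting that the frequency shift it induces, $\sim 2^{m+2k}$, corresponds to a translation by $2^{m+2k}/2^{\frac{m}{2}+3k}=2^{\frac{m}{2}-k}\lesssim 1$ on the integer grid of frequency labels $n_4$ when $k\geq \frac{m}{2}$, hence remains compatible with the time-frequency discretization. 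Second, verify that the $O(2^{-\eps m})$ error, after summation over $p\sim 2^{\frac{m}{2}+2k}$, $q,n\sim 2^{\frac{m}{2}}$ and multiplication by the Gabor coefficient factors $|\langle f_3,\phi^{3k}_{p+q^3/2^m,n}\rangle||\langle f_4,\phi^{3k}_{p,n}\rangle|$, can be bounded by $2^{-\eps'm}\|\vec f\|_{L^{\vec p}}$ by applying Cauchy--Schwarz exactly as in \eqref{Case1}. This bookkeeping is routine but notationally heavy, which is precisely why Observation \ref{ETerm} signals that it will be dealt with in the Appendix, and only the main terms will be displayed in the body of the paper.
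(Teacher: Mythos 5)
There is a genuine gap, and it sits exactly at the point where your plan tries to upgrade ``morally constant'' to a quantitative approximation with error $O(2^{-\eps m})$. Factoring out the modulation, $f_i(y)=g_i(y)e^{i2^{m+ik}y}$, gains nothing: $\widehat{g_i}$ is supported in an interval of \emph{length} $\sim 2^{m+ik}$, so the Bernstein bounds for $g_i$ are the same as for $f_i$, and the $j$-th Taylor term for the $f_2$-factor on $I_p^{m,3k}$ is of size $\sim \frac{1}{j!}\,2^{j(\frac m2-k)}$. At and near the endpoint $k=\frac m2$ this is $O(1)$, not $O(2^{-\eps m})$: over an interval of length $2^{-\frac m2-3k}=2^{-m-2k}$ a function with frequency $\sim 2^{m+2k}$ genuinely varies by a full unit of phase, so no decaying pointwise error exists, and your subsequent ``admissibility'' step (absorbing the residual modulation into the $f_4$ wave packet and summing the $O(2^{-\eps m})$ error via Cauchy--Schwarz as in \eqref{Case1}) collapses in the critical range $k\in[\frac m2,\frac m2+\eps m]$. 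A second problem is that your error is controlled by $\|g_1\|_\infty\|g_2\|_\infty$, whereas in ${\bf H}^{+}$ one always has $p_1=2$ and all the subsequent estimates (e.g.\ \eqref{Case1}, Theorem \ref{20230202thm01}) are $L^2(3I^k)$-based; an $L^\infty$-type remainder is not an admissible input for that machinery.

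The paper's Appendix \ref{AppendixA} argument is structured precisely to avoid claiming smallness. Writing $f_2(x+t^2)$ via its inverse Fourier representation and expanding $e^{i(x-p2^{-\frac m2-3k})\eta}$ in a Taylor series around the representative point $p2^{-\frac m2-3k}$ of $I_p^{m,3k}$, the $j$-th error term equals $\frac{i^j}{j!}\bigl[2^{m+2k}(x-p2^{-\frac m2-3k})\bigr]^j$ times an expression $J^k_{q,n,j}(f_1,f_2)$ of \emph{exactly the same structure} as the main term \eqref{20230421eq02}, only with the symbol $\phi$ replaced by $(\eta/2^{m+2k})^j\phi(\eta/2^{m+2k})$, uniformly bounded in $j$. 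Since $k\ge\frac m2$ makes the bracket $O(1)$ and the $1/j!$ makes the series summable, one simply transfers the entire subsequent ($L^2$-based, sparse/uniform) analysis to each $J^k_{q,n,j}$ and sums in $j$; the same is done for $f_1$, where the corresponding bracket is even of size $2^{\frac m2-2k}\le 2^{-\frac m2}$. If you want to repair your write-up, replace the ``small error'' claim by this transfer-of-structure argument (no modulation factoring, no frequency shift to absorb into $f_4$), and state the conclusion as: the map \eqref{20230131eq04} may be replaced by its value at the representative point of $I_p^{m,3k}$ modulo a factorially weighted sum of terms to which every estimate proved for the main term applies verbatim with constants uniform in $j$.
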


Using the above heuristic together with \eqref{20221229eq07} (see also Observation \ref{ETerm}), we obtain the following \emph{correlative time-frequency model}:
\begin{eqnarray} \label{20230131main01}
&& \left|\Lambda_m^k(\vec{f})\right|  \lesssim 2^k \sum_{\substack{p \sim 2^{\frac{m}{2}+2k} \\ q, n \sim 2^{\frac{m}{2}}}} \left| \left\langle f_3, \phi_{p+\frac{q^3}{2^m}, n}^{3k}  \right\rangle \right| \left|  \left \langle f_4, \phi_{p, n}^{3k} \right \rangle \right| \\
&& \quad \quad \quad \quad \cdot \frac{1}{\left|I_p^{m, 3k} \right|} \left| \int_{I_p^{m, 3k}} \int_{I_q^{m, k}} f_1(x-t)f_2(x+t^2) e^{3i \cdot 2^{\frac{m}{2}+k}t \left(\frac{q}{2^\frac{m}{2}} \right)^2 n} dt dx \right| . \nonumber
\end{eqnarray}

\subsection{Rank II LGC: the case $0<k<\frac{m}{2}$}\label{Diagkpsmall}

Again, as outlined earlier in Section \ref{CTF} (Second stage), we would next like to \emph{decouple} the action of the pair $(f_1,f_2)$ from $f_4$ by achieving a similar conclusion with the one displayed in \eqref{20230131eq04}. However, the latter can not be fulfilled in the range $k\leq \frac{m}{2}$ without a reorganization of the time-frequency information carried by $f_1$ and $f_2$. This process requires a further analysis that will be performed in several steps below:

\subsubsection{Preliminary frequency foliation}\label{Timefreqfoliation}

We return now to \eqref{20221229eq06}--\eqref{20221229eq07}. Thus, the strategy--guided by the Heisenberg principle--will be to discretize (simultaneously) each spatial fibers of $f_1$ and $f_2$ into frequency locations such that we achieve \emph{both} of the following key requirements: (i) the length of the frequency subdivisions has to be dominated by the length of the frequency support of the wave-packet $\phi_{p, n}^{3k}$, and, (ii) the number of the subdivision corresponding to $f_1$ and $f_2$ is the same.\footnote{The reasons behind will become transparent in the next two subsections.}

Concretely, we proceed as follows: for $j\in\{1, 2\}$ we write
$$
\supp \ \widehat{f_j} \subseteq\left[2^{m+jk}, 2^{m+jk+1} \right]=\bigcup_{\ell_l \sim 2^{\frac{m}{2}-k}} \omega_j^{\ell_j}:=\bigcup_{\ell_j \sim 2^{\frac{m}{2}-k}} \left[\ell_j 2^{\frac{m}{2}+(j+1)k}, \left(\ell_j+1 \right) 2^{\frac{m}{2}+(j+1)k}\right]\,.$$

Further on, in view of the time-frequency correlation relation that will become one of the main pillars in our approach, we also subdivide the frequency supports of $f_3$ and $f_4$ as follows: for $j\in\{3, 4\}$,
$$\supp \ \widehat{f_j}\subseteq\left[2^{m+3k}, 2^{m+3k+1} \right]=\bigcup_{\ell_j \sim 2^{\frac{m}{2}-k}} \omega_j^{\ell_j}:=\bigcup_{\ell_j \sim 2^{\frac{m}{2}-k}} \left[\ell_j 2^{\frac{m}{2}+4k}, \left(\ell_j+1 \right) 2^{\frac{m}{2}+4k}\right].
$$
In summary, we achieved the following: if $j\in\{1,\ldots,4\}$ and $\underline{j}=\min\{j,3\}$ then
\begin{equation} \label{freq}
  \omega_j^{\ell_j} \subset \left[2^{m+k\underline{j}}, 2^{m+k\underline{j}+1} \right]\quad\textrm{with}\quad \left|\omega_j^{\ell_j}\right|=2^{\frac{m}{2}+(\underline{j}+1)k}\quad\textrm{and}\quad\# \ell_j \sim 2^{\frac{m}{2}-k}\,.
\end{equation}

\subsubsection{Action on the ``thickened" frequency fibers}\label{Decksmall}
$\newline$

In this section we utilize the frequency foliation explained above in order to obtain an analogue of the Key Heuristic 1. Thus, we let $\chi$ be a smooth cut-off function with $\supp \ \chi \subset \{\frac{1}{2}<|s|<2\}$ and for $j\in\{1, 2\}$ one writes
$$
f_j=\sum_{\ell_j \sim 2^{\frac{m}{2}-k}} f_j^{\omega_j^{l_j}},
$$
where
\begin{equation} \label{20230505eq01}
f_j^{\omega_j^{\ell_j}}(x)=\int_{\R} \widehat{f_j}(\xi) \chi \left( \frac{\xi-\ell_j 2^{\frac{m}{2}+(j+1)k}}{\left|\omega_j^{\ell_j} \right|} \right) e^{i \xi x} d\xi=\underline{f}_j^{\omega_j^{\ell_j}}(x) \cdot e^{i \ell_j 2^{\frac{m}{2}+(j+1)k} x},
\end{equation}
with
\begin{equation} \label{20230408eq01}
\underline{f}_j^{\omega_j^{\ell_j}}(x):=\int_{\R} \widehat{f}_j \left(\xi+\ell_j 2^{\frac{m}{2}+(j+1)k} \right) \chi \left( \frac{\xi}{\left|\omega_j^{\ell_j} \right|} \right) e^{i\xi x} d\xi.
\end{equation}

\begin{obs} \label{keyobs-01}
Note that since $\supp\,\calF \left(\underline{f}_1^{\omega_1^{\ell_1}} \right) \subseteq (-2|\omega_1^{\ell_1}|,\,2|\omega_1^{\ell_1}|)$ with $\left|\omega_1^{\ell_1} \right|=2^{\frac{m}{2}+2k}$ and $\supp\, \calF\left(\underline{f}_2^{\omega_2^{\ell_2}} \right) \subseteq (-2|\omega_2^{\ell_2}|,\,2|\omega_2^{\ell_2}|)$  with $\left|\omega_2^{\ell_2} \right|=2^{\frac{m}{2}+3k}$, we have
$$
\underline{f}_1^{\omega_1^{\ell_1}}(\cdot),  \ \underline{f}_2^{\omega_2^{\ell_2}}(\cdot) \ \textnormal{are morally constant on} \ \left\{I_p^{m, 3k} \right\}_{p \sim 2^{\frac{m}{2}+2k}}.
$$
\end{obs}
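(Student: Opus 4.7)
The claim is essentially an instance of the Heisenberg uncertainty principle: if a function has Fourier support confined to an interval of length $L$, then it can only vary on spatial scales of size $\gtrsim 1/L$. My plan is to make this precise by a standard reproducing-formula-plus-Taylor argument and then to note that, as announced in Observation \ref{ETerm}, the ``morally constant'' formulation is a shorthand for an identity up to a Schwartz-tail remainder that is admissible in the subsequent global estimate.

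The scale comparison is the content of the observation. Indeed, recall $|I_p^{m,3k}|=2^{-\frac{m}{2}-3k}$, while $1/|\omega_1^{\ell_1}|=2^{-\frac{m}{2}-2k}$ and $1/|\omega_2^{\ell_2}|=2^{-\frac{m}{2}-3k}$. Since $k\ge 0$, one has $|I_p^{m,3k}|\le 1/|\omega_j^{\ell_j}|$ for both $j\in\{1,2\}$, with room to spare when $j=1$ and tight when $j=2$. Thus the spatial scale of each $I_p^{m,3k}$ sits at or below the Heisenberg-dictated oscillation scale of $\underline{f}_j^{\omega_j^{\ell_j}}$, which is exactly the input required for a constancy statement.

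To make this rigorous I would select, for each $j\in\{1,2\}$, a Schwartz bump $\psi_j$ with $\widehat{\psi_j}\equiv 1$ on $[-2|\omega_j^{\ell_j}|,\,2|\omega_j^{\ell_j}|]$ and with $\psi_j$ Schwartz-localized at scale $1/|\omega_j^{\ell_j}|$, so that the reproducing identity $\underline{f}_j^{\omega_j^{\ell_j}}=\underline{f}_j^{\omega_j^{\ell_j}}\ast\psi_j$ holds. A first-order Taylor expansion of $\psi_j$ then yields, for any $x,y\in I_p^{m,3k}$,
$$
\underline{f}_j^{\omega_j^{\ell_j}}(x)-\underline{f}_j^{\omega_j^{\ell_j}}(y)=(x-y)\int \underline{f}_j^{\omega_j^{\ell_j}}(z)\,\psi_j'(x_\ast-z)\,dz,
$$
for some $x_\ast$ between $x$ and $y$. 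Since $|x-y|\le |I_p^{m,3k}|\le 1/|\omega_j^{\ell_j}|$ and $\psi_j'$ is of size $|\omega_j^{\ell_j}|^2$ and Schwartz-localized at scale $1/|\omega_j^{\ell_j}|$, the right-hand side is bounded--up to rapidly decaying tails in $p$--by an averaged version of $\underline{f}_j^{\omega_j^{\ell_j}}$ on a slightly enlarged neighborhood of $I_p^{m,3k}$. Fixing a distinguished point $c_p\in I_p^{m,3k}$ one then writes $\underline{f}_j^{\omega_j^{\ell_j}}(x)=\underline{f}_j^{\omega_j^{\ell_j}}(c_p)+R_{j,p}(x)$ with $R_{j,p}$ obeying the same Schwartz-tail concentration on $I_p^{m,3k}$; this is exactly the analog, in the present regime, of the justification of Key Heuristic 1 in Section \ref{IIlgcklarge}.

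The only real ``obstacle''--which is purely a matter of bookkeeping rather than of technique--is to verify that when this approximation is substituted back into \eqref{20221229eq06}--\eqref{20221229eq07} and then into the subsequent summation over $p,q,\ell_1,\ell_2,n$, the residual $R_{j,p}$-contributions telescope against the Schwartz decay of the wave-packets $\phi_{p,n}^{3k}$ (and of $\psi_j$) to produce error terms of the same structure as the main term, with a gain that is absorbable into the factor $2^{-\epsilon c(\vec p)\min\{k,m/2\}}$ of Theorem \ref{mainthm}. In line with Observation \ref{ETerm}, these routine but tedious verifications are deferred and the constancy is used henceforth as if it were literal.
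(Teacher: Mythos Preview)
Your proposal is correct and follows essentially the same approach as the paper. The paper itself does not give a proof of this observation; it simply records the Fourier-support/spatial-scale comparison and invokes the general ``morally constant'' principle of Observation~\ref{ETerm}, with the rigorous template supplied (for the analogous Key Heuristic~1) in Appendix~\ref{AppendixA}. Your reproducing-kernel-plus-Taylor argument is a standard equivalent of the paper's direct inverse-Fourier-plus-Taylor expansion shown there, and your scale computation and bookkeeping remarks are exactly in line with what the paper intends.
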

Applying the above decomposition, we have
\begin{eqnarray}\label{Dec}
{J}^{k}_{q, n}(f_1,f_2)(x)&=&\int_{I_q^{m, k}} f_1(x-t)f_2(x+t^2)e^{3i \cdot 2^{\frac{m}{2}+k} \cdot \frac{q^2}{2^m} n t} dt\nonumber \\
&=&\sum_{\ell_1, \ell_2 \sim 2^{\frac{m}{2}-k}} e^{i \left(\ell_1 2^{\frac{m}{2}+2k}+\ell_2 2^{\frac{m}{2}+3k} \right)x} \underline{J}^k_{q,n} \left( \underline{f}_1^{\omega_1^{\ell_1}}, \underline{f}_2^{\omega_2^{\ell_2}} \right)(x),
\end{eqnarray}
where
\begin{equation}
\underline{J}^k_{q,n} \left( \underline{f}_1^{\omega_1^{\ell_1}}, \underline{f}_2^{\omega_2^{\ell_2}} \right)(x):=\int_{I_q^{m, k}}  \underline{f}_1^{\omega_1^{\ell_1}}(x-t) \underline{f}_2^{\omega_2^{\ell_2}}(x+t^2) e^{i \left(-\ell_1 2^{\frac{m}{2}+2k}t+\ell_2 2^{\frac{m}{2}+3k} t^2+3 \cdot 2^{\frac{m}{2}+k} \cdot \frac{q^2}{2^m} n t \right)} dt.
\end{equation}

\begin{obs} \label{keyobs-02}
 As a consequence of Observation \ref{keyobs-01}, we obtained the desired analogue of Key Heuristic 1, that is,  we have that the mapping
$$
x \to \underline{J}^k_{q,n} \left( \underline{f}_1^{\omega_1^{\ell_1}}, \underline{f}_2^{\omega_2^{\ell_2}} \right)(x)
$$
is morally constant on intervals $\left\{I_p^{m, 3k} \right\}_{p \sim 2^{\frac{m}{2}+2k}}$.
\end{obs}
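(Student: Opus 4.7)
The plan is to deduce Observation \ref{keyobs-02} as a direct corollary of Observation \ref{keyobs-01} by inspecting the $x$-dependence of the integrand defining $\underline{J}^k_{q,n}$. The first key remark is that the exponential factor
\[
e^{i\left(-\ell_1 2^{\frac{m}{2}+2k}t+\ell_2 2^{\frac{m}{2}+3k} t^2+3\cdot 2^{\frac{m}{2}+k}\cdot\tfrac{q^2}{2^m} n t\right)}
\]
appearing inside the integral depends only on $t$ and not on $x$. Therefore, the full $x$-dependence of the integrand is carried solely by the two amplitude factors $\underline{f}_1^{\omega_1^{\ell_1}}(x-t)$ and $\underline{f}_2^{\omega_2^{\ell_2}}(x+t^2)$.

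Next, I fix $p\sim 2^{\frac{m}{2}+2k}$ and choose a reference point $x_p\in I_p^{m,3k}$. For every $x\in I_p^{m,3k}$ one has $|x-x_p|\leq |I_p^{m,3k}|=2^{-\frac{m}{2}-3k}$, and this same estimate governs the displacement of both arguments $x-t$ and $x+t^2$ appearing in the integrand. By Observation \ref{keyobs-01}, the function $\underline{f}_1^{\omega_1^{\ell_1}}$ is morally constant on intervals of length $2^{-\frac{m}{2}-2k}\geq 2^{-\frac{m}{2}-3k}$, while $\underline{f}_2^{\omega_2^{\ell_2}}$ is morally constant on intervals of the precise length $2^{-\frac{m}{2}-3k}$. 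Consequently,
\[
\underline{f}_1^{\omega_1^{\ell_1}}(x-t)\approx \underline{f}_1^{\omega_1^{\ell_1}}(x_p-t),\qquad \underline{f}_2^{\omega_2^{\ell_2}}(x+t^2)\approx \underline{f}_2^{\omega_2^{\ell_2}}(x_p+t^2),
\]
uniformly in $t\in I_q^{m,k}$. Substituting these approximations into the definition of $\underline{J}^k_{q,n}$ and using the $x$-independence of the phase yields
\[
\underline{J}^k_{q,n}\!\left(\underline{f}_1^{\omega_1^{\ell_1}},\underline{f}_2^{\omega_2^{\ell_2}}\right)(x)\approx \underline{J}^k_{q,n}\!\left(\underline{f}_1^{\omega_1^{\ell_1}},\underline{f}_2^{\omega_2^{\ell_2}}\right)(x_p)
\]
for every $x\in I_p^{m,3k}$, which is exactly the claimed moral constancy.

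The only remaining issue is to rigorously justify the approximations above in the sense of Observation \ref{ETerm}. Following the authors' convention, this is standard: by Fourier inversion one writes $\underline{f}_j^{\omega_j^{\ell_j}}(y)=\int \widehat{\underline{f}_j^{\omega_j^{\ell_j}}}(\xi)e^{i\xi y}d\xi$ where $\widehat{\underline{f}_j^{\omega_j^{\ell_j}}}$ is supported in an interval of size $\lesssim |\omega_j^{\ell_j}|$, expands the kernel $e^{i\xi(y-y_0)}=\sum_{N\geq 0}\frac{(i\xi(y-y_0))^N}{N!}$, and exploits the fact that $|\xi(y-y_0)|\lesssim 1$ on the relevant scales. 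The leading ($N=0$) term reproduces the value at the reference point, while the tail terms generate structurally identical expressions multiplied by rapidly decaying factors in $N$ and can be absorbed into the main estimate; a fully analogous bookkeeping already appears in the treatment of \eqref{20230131eq04} outlined in Section \ref{AppendixA}. I do not expect this verification to present any genuine obstacle: the substantive difficulty of the program lies not in Observation \ref{keyobs-02} itself, but in the subsequent exploitation of this constancy to produce the correlative time-frequency model \eqref{CTFM2} and to control it via the interplay between the time-frequency foliation, the sparse-uniform dichotomy, and the time-frequency correlation set $\TFC_q$.
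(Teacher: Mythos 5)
Your proposal is correct and follows exactly the paper's route: the paper deduces Observation \ref{keyobs-02} directly from Observation \ref{keyobs-01} by noting that the oscillatory phase in $\underline{J}^k_{q,n}$ is $x$-independent and that both amplitudes $\underline{f}_1^{\omega_1^{\ell_1}}$ and $\underline{f}_2^{\omega_2^{\ell_2}}$ are morally constant at scale $|I_p^{m,3k}|=2^{-\frac{m}{2}-3k}$ by their Fourier supports, with the rigorous Taylor-expansion bookkeeping deferred to the Appendix exactly as you describe. You merely spell out the (correct) details that the paper leaves implicit.
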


\subsubsection{Rescaled time-frequency correlation set}\label{RescTFC}
$\newline$

In this section we focus on obtaining a rescaled version of \eqref{TFCqq} that is adapted to the frequency foliation accomplished earlier and that will prove quintessential in controlling our quadrilinear form $\Lambda_m^k$ in the case $0\leq k\leq \frac{m}{2}$.

\begin{lem}\label{Rtfc}
Let $\ell_1, \ell_2, \ell_3$ and $q$ be defined as in \eqref{20230505eq01}, then one has
\begin{equation} \label{TFC-03}
\ell_1-\frac{2q}{2^{\frac{m}{2}}} \ell_2- \frac{3q^2}{2^m} \ell_3\cong 0.
\end{equation}
\end{lem}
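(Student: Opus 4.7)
The plan is to derive \eqref{TFC-03} by the same non-stationary phase mechanism that produced the original Rank I correlation \eqref{TFCqq}, now applied to the rescaled frequency parameters $\ell_j$. Heuristically, since the foliation of Section~\ref{Timefreqfoliation} assigns to $f_j^{\omega_j^{\ell_j}}$ the frequency localization $\xi_j\approx \ell_j\, 2^{m/2+(j+1)k}$, while the original Rank I wave-packet $\phi_{p_j,n_j}^{jk}$ carried frequency $\xi_j\approx n_j\, 2^{m/2+jk}$, the two parametrizations are related by $n_j\approx 2^k\ell_j$. Substituting this into \eqref{TFCqq} and dividing through by $2^k$ formally reproduces \eqref{TFC-03}; my task is to justify this rescaling directly from the phase analysis of the $t$-integral.

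To this end, I would start from the integral expression for $\Lambda_m^k(\vec f)$ in \eqref{mf}, insert the decompositions $f_j=\sum_{\ell_j} f_j^{\omega_j^{\ell_j}}$ for $j\in\{1,2,3\}$, and pass to the Fourier side so that each $\widehat{f_j^{\omega_j^{\ell_j}}}$ is concentrated in $\omega_j^{\ell_j}$. After a standard stationary phase/Taylor reduction (of the same kind that led from \eqref{20230131eq02} to \eqref{20221229eq04}), the relevant phase in $t$ becomes
\begin{equation*}
\Phi(t) := -\ell_1\,2^{\frac{m}{2}+2k}\,t \;+\; \ell_2\,2^{\frac{m}{2}+3k}\,t^2 \;+\; \ell_3\,2^{\frac{m}{2}+4k}\,t^3.
\end{equation*}
Writing $t=t_q+s$ with $t_q:=q/2^{\frac{m}{2}+k}$ and $|s|\leq 2^{-\frac{m}{2}-k}$ (the length of $I_q^{m,k}$), I would Taylor-expand $\Phi$ around $t_q$; using $\ell_j\sim 2^{\frac{m}{2}-k}$ and $q\sim 2^{\frac{m}{2}}$, a direct scale check gives $|\Phi''(t_q)\,s^2|+|\Phi'''(t_q)\,s^3|\lesssim 1$, so that $\Phi(t)=\Phi(t_q)+\Phi'(t_q)\,s+O(1)$.

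A direct computation then yields
\begin{equation*}
\Phi'(t_q)=2^{\frac{m}{2}+2k}\left(-\ell_1+\frac{2q}{2^{\frac{m}{2}}}\,\ell_2+\frac{3q^2}{2^{m}}\,\ell_3\right),
\end{equation*}
and standard non-stationary phase (integration by parts) on an interval of length $2^{-\frac{m}{2}-k}$ forces the $t$-integral to be negligible unless $|\Phi'(t_q)|\lesssim 2^{\frac{m}{2}+k}$, which is precisely the condition $\big|\ell_1-\frac{2q}{2^{m/2}}\ell_2-\frac{3q^2}{2^{m}}\ell_3\big|\lesssim 2^{-k}$ expressed by \eqref{TFC-03}. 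The only mildly delicate point, and the main (though not serious) obstacle, is the bookkeeping verifying that the soft frequency cut-offs in the foliation, together with the $O(1)$-error Taylor remainder, do not inflate the correlation tolerance beyond what the downstream arguments of Sections~\ref{Sec05}--\ref{Sec06} can absorb; the uniform scale constraint $\ell_j\sim 2^{\frac{m}{2}-k}$ and the carefully matched discretization lengths imposed in Section~\ref{Timefreqfoliation} are precisely what keep this bookkeeping clean.
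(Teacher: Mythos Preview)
Your proof is correct, but it takes a longer route than the paper's. In your opening paragraph you actually describe exactly what the paper does: it invokes the already-established Rank~I correlation \eqref{TFCqq}, observes that $n_j\,2^{\frac{m}{2}+jk}\in\omega_j^{\ell_j}$ forces $n_j\in[\ell_j 2^k,(\ell_j+1)2^k]$, i.e.\ $n_j=2^k\ell_j+O(2^k)$, substitutes this into \eqref{TFCqq}, and divides through by $2^k$---a three-line argument. You then set this aside and re-derive the correlation from scratch via a direct Taylor / non-stationary-phase analysis of the $t$-integral, which is perfectly valid but duplicates the work already carried out in Section~\ref{LGC(I)}. The paper's route is more economical precisely because it leverages that prior work; your route is more self-contained.

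One small imprecision: your stated tolerance $\lesssim 2^{-k}$ arises because your phase $\Phi$ takes the frequency to be exactly $\ell_j\,2^{\frac{m}{2}+(j+1)k}$. Once you account for the $O(1)$ spread of $\xi_j/2^{\frac{m}{2}+(j+1)k}$ within each $\omega_j^{\ell_j}$, the tolerance relaxes to $O(1)$, matching the paper's convention $\cong 0$. This does not affect the validity of your argument.
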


\begin{proof}
Let us fix the choice of $\ell_1, \ell_2$ and $\ell_3$ first. Recall now the decomposition that we performed in Section \ref{LGC(I)}, specifically \eqref{LGC1}. Note that for any $n_j \sim 2^{\frac{m}{2}}, j\in\{1, 2, 3\}$, if $n_j 2^{\frac{m}{2}+jk} \in \omega_j^{\ell_j}$, then
\begin{equation} \label{20230203eq40}
n_j \in
\widetilde{\omega_j^{\ell_j}}:=\left[\ell_j 2^k, \left(\ell_j+1 \right) 2^k \right].
\end{equation}
By \eqref{TFCqq}, that is, $n_1-\frac{2q}{2^{\frac{m}{2}}}n_2-\frac{3q^2}{2^m}n_3=O(1)$, we see that
$$
2^k \ell_1-\frac{2q}{2^{\frac{m}{2}}} \cdot 2^k \ell_2-\frac{3q^2}{2^m} \cdot 2^k \ell_3=O(2^k).
$$
The desired claim then follows by dividing by $2^k$ on both sides of the above equation.
\end{proof}

As a result of the above lemma, it is now natural to introduce \emph{the time-frequency correlation set} as follows:
for each $q \sim 2^{\frac{m}{2}}$ we let
\begin{equation} \label{TFCq}
\TFC_q:=\left\{ \substack{\left(\ell_1, \ell_2, \ell_3 \right)\\\ell_1, \ell_2, \ell_3 \sim 2^{\frac{m}{2}-k}}\, :\, \ell_1-\frac{2q}{2^{\frac{m}{2}}} \ell_2- \frac{3q^2}{2^m} \ell_3\cong 0\right\}.
\end{equation}

\subsubsection{The correlative time-frequency model}\label{Model}
$\newline$

With the above settled, we are now ready to refine the time-frequency decomposition \eqref{20221229eq07} of our quadrilinear form  $\Lambda_m^k$. Indeed, using \eqref{Dec}, Observation \ref{keyobs-02}, Lemma \ref{Rtfc} and \eqref{TFCq}, we have
\begin{eqnarray} \label{20230223main01a}
&& |\Lambda_m^k(\vec{f})|\lesssim 2^k \sum_{\substack{p \sim 2^{\frac{m}{2}+2k}\\ q \sim 2^{\frac{m}{2}}}} \sum_{(\ell_1, \ell_2, \ell_3) \in \TFC_q} \sum_{n \in \widetilde{\omega_3^{\ell_3}}} \left| \left\langle f_3^{\omega_3^{\ell_3}}, \phi_{p+\frac{q^3}{2^m}, n}^{3k} \right \rangle \right|  \left| \left \langle f_4 e^{i \left(\ell_12^{\frac{m}{2}+2k}+\ell_22^{\frac{m}{2}+3k} \right) \left( \cdot \right) }, \phi_{p, n}^{3k} \right \rangle \right| \nonumber \\
&& \quad  \cdot \frac{1}{\left|I_p^{m, 3k} \right|} \bigg| \int_{I_p^{m, 3k}} \int_{I_q^{m, k}}  \underline{f}_1^{\omega_1^{\ell_1}}(x-t) \underline{f}_2^{\omega_2^{\ell_2}}(x+t^2) e^{i \left(-\ell_1 2^{\frac{m}{2}+2k}t+\ell_2 2^{\frac{m}{2}+3k} t^2+3 \cdot 2^{\frac{m}{2}+k} \cdot \frac{q^2}{2^m}n t \right)} dt dx \bigg|\,.
\end{eqnarray}
Next, linearizing the phase of the double integral in the main term \eqref{20230223main01a}, for $t \in I_q^{m, k}$, we have $t^2= \left(t-\frac{q}{2^{\frac{m}{2}+k}}+\frac{q}{2^{\frac{m}{2}+k}} \right)^2 =\frac{2tq}{2^{\frac{m}{2}+k}}-\frac{q^2}{2^{m+2k}}+O \left(\frac{1}{2^{m+2k}} \right)$, hence
\begin{eqnarray} \label{20230223main02}
|\Lambda_m^k(\vec{f})|&\lesssim&2^k \sum_{\substack{p \sim 2^{\frac{m}{2}+2k}\\ q \sim 2^{\frac{m}{2}}}} \sum_{(\ell_1, \ell_2, \ell_3) \in \TFC_q} \sum_{n \in \widetilde{\omega_3^{\ell_3}}} \left| \left\langle f_3^{\omega_3^{\ell_3}}, \phi_{p+\frac{q^3}{2^m}, n}^{3k} \right \rangle \right|  \left| \left \langle f_4 e^{i \left(\ell_12^{\frac{m}{2}+2k}+\ell_22^{\frac{m}{2}+3k} \right) \left( \cdot \right) }, \phi_{p, n}^{3k} \right \rangle \right| \nonumber \\
&\cdot& \frac{1}{\left|I_p^{m, 3k} \right|} \bigg| \int_{I_p^{m, 3k}} \int_{I_q^{m, k}}  \underline{f}_1^{\omega_1^{\ell_1}}(x-t) \underline{f}_2^{\omega_2^{\ell_2}}(x+t^2)\,e^{i 2^{\frac{m}{2}+k}t \left(-\ell_1 2^k+\ell_2 2^k \frac{2q}{2^{\frac{m}{2}}}+\frac{3q^2}{2^m} n\right)} dt dx \bigg|.  \nonumber
\end{eqnarray}
Finally, in order to decouple the information carried by $\ell_3$ and $n$, we apply the change variable $n \to n+\ell_3 2^k$ in order to conclude
\begin{eqnarray} \label{20230224main01}
&&|\Lambda_m^k(\vec{f})|\lesssim 2^k \sum_{\substack{p \sim 2^{\frac{m}{2}+2k}\\ q \sim 2^{\frac{m}{2}} \\ n \sim 2^k}} \sum_{(\ell_1, \ell_2, \ell_3) \in \TFC_q}  \left| \left\langle f_3^{\omega_3^{\ell_3}}, \phi_{p+\frac{q^3}{2^m}, \ell_3 2^k+n}^{3k} \right \rangle \right|  \left| \left \langle f_4 e^{i 2^{\frac{m}{2}+3k} \left(\ell_1 2^{-k}+\ell_2 \right) \left( \cdot \right) }, \phi_{p, \ell_3 2^k+n}^{3k} \right \rangle \right| \nonumber \\
&&\quad\cdot \frac{1}{\left|I_p^{m, 3k} \right|} \bigg| \int_{I_p^{m, 3k}} \int_{I_q^{m, k}}  \underline{f}_1^{\omega_1^{\ell_1}}(x-t) \underline{f}_2^{\omega_2^{\ell_2}}(x+t^2) e^{i 2^{\frac{m}{2}+k}t \cdot \frac{3q^2}{2^m}n}  e^{i 2^{\frac{m}{2}+k}t \cdot 2^k \left(-\ell_1 +\ell_2 \frac{2q}{2^{\frac{m}{2}}}+ \ell_3 \frac{3q^2}{2^m}\right)} dt dx \bigg|.
\end{eqnarray}
We now reach the conclusion of this section\footnote{Recall that throughout our paper, for notational and expository simplicity, we ignore the action of the conjugation.}
$\newline$

\noindent\textbf{Key Heuristic 2.} \textit{Observe that in \eqref{20230224main01} above
\begin{equation}\label{KH2}
\left \langle f_4 e^{i 2^{\frac{m}{2}+3k} \left(\ell_1 2^{-k}+\ell_2 \right) \left( \cdot \right) }, \phi_{p, \ell_3 2^k+n}^{3k} \right \rangle=\left \langle f_4, \phi^{3k}_{p, \ell_1 2^{-k}+\ell_2+\ell_32^k+n} \right \rangle.
\end{equation}
In analyzing the Gabor (local Fourier) coefficient in \eqref{KH2} we have
\begin{enumerate}
\item [$\bullet$] $\ell_2$ ranges over an interval of length $\sim 2^{\frac{m}{2}-k}$;
\item [$\bullet$] $n$ ranges over an interval of length $\sim 2^k$.
\end{enumerate}
This naturally invites to divide our discussion into two different cases depending on the regime of dominance:\vspace{-0.1in}
$\newline$
\noindent\textsf{Case I:} $0 \le k \le \frac{m}{4}$; this corresponds to $\ell_2$ dominance.
$\newline$
\noindent\textsf{Case II:} $\frac{m}{4} \le k \le \frac{m}{2}$; this corresponds to $n$ dominance.}

\section{The diagonal term $\Lambda^{D}$: The case $k \ge \frac{m}{2}$}\label{MaindiagKpositive}

With the notations from Section \ref{SDTmotiv} our goal in this section is to prove

\begin{thm} \label{mainthIb}
There exists some $\epsilon_1>0$, such that for any $k \ge \frac{m}{2}$ and $\vec{p}\in {\bf H}^{+}$
\begin{equation}\label{keyestim}
\left|\Lambda_m^{k, I_0^{0, k}}(\vec{f})\right| \lesssim 2^{-\epsilon_1 m} \,\|\vec{f} \|_{L^{\vec{p}}(3\, I_0^{0, k})}\,.
\end{equation}
\end{thm}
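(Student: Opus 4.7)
My plan is to begin from the correlative time-frequency model \eqref{20230131main01}, perform a sparse--uniform decomposition of each input, and reduce the uniform part to controlling the maximal joint Fourier coefficient $\textbf{L}_{m,k}$ defined in \eqref{Lmk}; the hard step is the quantitative analysis of this quantity. Since in the regime $k\geq\frac{m}{2}$ Key Heuristic~1 guarantees that $x\mapsto J^k_{q,n}(f_1,f_2)(x)$ is essentially constant on $I_p^{m,3k}$, no frequency subdivision of $f_1,f_2$ is required, and the dichotomy can be based purely on the spatial distribution of each $f_j$: decompose $f_j=f_j^{\calS}+f_j^{\calU}$, where $f_j^{\calS}$ gathers those dyadic sub-intervals of $3I^k$ on which $|f_j|^2$ is much larger than its scale-adapted $L^2$ average and $f_j^{\calU}$ is the remainder. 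This splits $\Lambda_m^k=\Lambda_m^{k,\calS}+\Lambda_m^{k,\calU}$, and $\Lambda_m^{k,\calS}$ is handled by estimating a trivial (\emph{i.e.}, non-oscillatory) bound on the right-hand side of \eqref{20230131main01} and using the smallness of the spatial measure of the sparse sets relative to $|I^k|\sim 2^{-k}$, together with the defining threshold of the sparse component, to produce a decay of the form $2^{-\epsilon_0 m}$.

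For the main uniform contribution $\Lambda_m^{k,\calU}$, I would apply successive Cauchy--Schwarz inequalities in $(p,q,n)$ to \eqref{20230131main01} in order to extract full Bessel sums in $(p,n)$ against $f_3^{\calU}$ and $f_4^{\calU}$, followed by an $\ell^2\times\ell^2\times\ell^\infty$ H\"older in $n$ which converts the resulting supremum into a measurable linearizing function $\lambda:I^k\to\R$. Using the almost orthogonality of the Gabor frame together with the embedding $\|f\|_{L^2(I^k)}\lesssim|I^k|^{\frac12-\frac1{p_j}}\|f\|_{L^{p_j}(I^k)}$ valid for $p_j\geq 2$, this yields
\[
\left|\Lambda_m^{k,\calU}(\vec f)\right|\lesssim 2^k\,\|f_3\|_{L^2(3I^k)}\,\|f_4\|_{L^2(3I^k)}\,\textbf{L}_{m,k}(f_1^{\calU},f_2^{\calU}),
\]
with the $2^k$ prefactor being absorbed by the scaling of the $L^2$ norms for each of the admissible choices of $\vec p\in\mathbf{H}^+$. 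Matters are thereby reduced to proving \eqref{keyCase1}.

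The main obstacle is the proof of \eqref{keyCase1}, which I would attack by the bootstrap scheme announced in Key Heuristic~4 and Observation~\ref{constancy}. One starts at the smallest dyadic scale on which, by the Heisenberg principle, $\lambda$ may be taken essentially constant, and attempts to propagate that constancy to the next larger scale. At each round the bootstrap offers two mutually exclusive outcomes: either $\lambda$ remains essentially constant at the next scale and the induction continues, or the failure of constancy provides enough oscillation in the phase $3\cdot 2^{m/2+k}\,t\,(q/2^{m/2})^2\,\lambda(x)$ to yield local decay of $\textbf{L}_{m,k}$ via integration by parts in $x$. After two iterations one reaches the linearizing scale $|I_0^{m,k}|$ at which $\lambda$ can be treated as constant (cf.\ Proposition~\ref{mainprop02}); squaring the $q$-sum, expanding via Fubini in $(t,t')$, and linearizing the remaining phase then produces a ``discrete mixed second derivative in $(t-t',q)$'' whose non-vanishing is a manifestation of the non-degenerate curvature of the moment curve, and van~der~Corput in $q$ delivers the required $2^{-\tilde\epsilon m}$ gain. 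Combining with the sparse bound yields \eqref{keyestim} with $\epsilon_1=\min(\epsilon_0,\tilde\epsilon)/2$. This bootstrap-plus-curvature step is where essentially all the difficulty of the theorem is concentrated; the earlier reductions are structural, whereas here one must simultaneously quantify the stability of $\lambda$ across scales and the effective non-degeneracy of the phase after the $L^2$ duality built in by the Cauchy--Schwarz reductions.
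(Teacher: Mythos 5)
Your overall architecture coincides with the paper's: the same spatial sparse--uniform dichotomy (with no frequency foliation needed since $k\geq\frac m2$), the same Cauchy--Schwarz plus $\ell^2\times\ell^2\times\ell^\infty$ H\"older reduction to the linearizing function $\lambda$ and to the maximal joint Fourier coefficient $\textbf{L}_{m,k}$, and the same two-stage bootstrap (constancy propagation of $\lambda$ followed by a global curvature argument at the linearizing scale). The sparse part is treated a bit more casually than in the paper --- which needs two separate subcases and a Cauchy--Schwarz/Parseval argument rather than a purely ``trivial'' bound --- but that is a matter of detail.

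There is, however, a genuine gap in the mechanism you propose for the non-constant branch of the dichotomy. You claim that when constancy of $\lambda$ fails to propagate, ``the failure of constancy provides enough oscillation in the phase $3\cdot 2^{m/2+k}t(q/2^{m/2})^2\lambda(x)$ to yield local decay of $\textbf{L}_{m,k}$ via integration by parts in $x$.'' This cannot work: $\lambda$ is an arbitrary measurable (piecewise constant) function, so $x\mapsto e^{ic\lambda(x)t}$ has no smoothness in $x$ and no stationary-phase or integration-by-parts argument in $x$ is available; indeed the phase does not even depend on $x$ except through $\lambda$. What the paper does instead (Proposition \ref{mainprop01}) is partition the range of $\lambda$ into level sets, isolate the ``light'' indices where $\lambda$ is spread out, and then run a $TT^{*}$ argument that replaces the single phase by a \emph{difference} $\lambda(x)-\lambda(y)$; the gain then comes from a sublevel-set count --- using the monotonicity and the derivative size $\approx 2^{-m/2}$ of the map $q\mapsto\sqrt{2^{m/2}u+q^2}/q$ --- showing that for typical $(x,y)$ only $\lesssim 2^{(\frac12-\frac{\epsilon_0^w}{2})m}$ values of $q$ can make the combined phase $\Xi_{x,y,z,u}(q)$ small. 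A similar remark applies to your final step: after reaching the constant-$\lambda$ scale, the paper's Proposition \ref{mainprop02} does not conclude by a classical van der Corput estimate in $q$ but by a level-set analysis of the four-term combination $V_{p_1,p_2,r_1,r_2}(x)$ of shifted values of $\lambda$, whose quadratic structure in the parameters forces the sublevel sets to be small on average. Your ``discrete mixed second derivative'' intuition is the right one, but to close the argument you must replace both oscillatory-integral steps by measure/counting estimates for sublevel sets of expressions built from the measurable function $\lambda$.
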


\subsection{A spatial sparse--uniform dichotomy}

In what follows we will need to decompose our quadrilinear form $\Lambda_m^k(\vec{f})$ depending on how the information carried by the input functions is spatially distributed. Thus, our analysis will involve a spatial \emph{sparse--uniform} dichotomy.

In order to proceed with this we pick $\mu \in (0, 1)$ sufficiently small (to be chosen later) and define the \emph{sparse index sets} relative to the input functions as follows:
\begin{equation}\label{sparse12}
\calS_{\mu} (f_i):= \left\{ r \sim 2^{m+(i-1)k}: \frac{1}{|I_{r}^{0, ik+m}|}\int_{3 I_{r}^{0, ik+m}} |M f_i|^2 \gtrsim \frac{2^{\mu m}}{|I^k|} \int_{3I^k} |f_i|^2  \right\}, \quad i\in\{1, 2\}\,,
\end{equation}
and
\begin{equation}\label{sparse34}
\calS_{\mu}(f_i):= \left\{ \ell \sim 2^{\frac{m}{2}+2k}: \frac{1}{|I_\ell^{m, 3k}|}\int_{3I_\ell^{m, 3k}} |f_i|^2 \gtrsim \frac{2^{\mu m}}{|I^k|} \int_{3I^k} |f_i|^2 \right\}, \quad i\in\{3, 4\}.
\end{equation}
The \emph{uniform  index sets} $\{\calU_{\mu} (f_i)\}_{i=1}^4$ are introduced in an obvious manner as complements of their respective sparse index sets.

Using the above sets we decompose our input functions accordingly:
\begin{equation}\label{decsu}
f_i=f_i^{\calS_\mu}+f_i^{\calU_\mu}, \quad i\in \{1,\ldots, 4\}
\end{equation}
where
\begin{equation} \label{20230202eq02}
f_i^{\calS_{\mu}}:=\sum_{r \in \calS_\mu(f_i)} \one_{I_r^{0, ik+m}} f_i , \quad  f_i^{\calU_{\mu}}:=\sum_{r \in \calU_\mu(f_i)} \one_{I_r^{0, ik+m}} f_i, \quad i\in\{1, 2\};
\end{equation}
and
\begin{equation} \label{20230202eqq}
f_i^{\calS_\mu}:=\sum_{\ell \in \calS_\mu(f_i)} \one_{I_\ell^{m, 3k}} f_i , \quad \textrm{and} \quad f_i^{\calU_{\mu}}:=\sum_{\ell \in \calU_\mu(f_i)} \one_{I_\ell^{m, 3k}} f_i, \quad i\in\{3, 4\}.
\end{equation}

With these done, taking $\mu_0>\mu>0$ suitable small, we decompose our main form $\Lambda_m^k$ into
\begin{itemize}
\item the \textit{uniform} component $\Lambda_m^{k, \calU_\mu}(\vec{f})$ given by
\begin{equation} \label{uniformeq01}
\Lambda_m^{k, \calU_\mu}(\vec{f}):=\Lambda_m^k \left(f_1^{\calU_{\mu_0}}, f_2^{\calU_{\mu_0}}, f_3^{\calU_\mu}, f_4^{\calS_\mu} \right)+ \Lambda_m^k \left(f_1^{\calU_{\mu_0}}, f_2^{\calU_{\mu_0}}, f_3^{\calS_\mu}, f_4^{\calU_\mu} \right)
 + \Lambda_m^k \left(f_1^{\calU_{\mu_0}}, f_2^{\calU_{\mu_0}}, f_3^{\calU_\mu}, f_4^{\calU_\mu} \right),
\end{equation}
\item the \textit{sparse} component $\Lambda_m^{k, \calS_\mu}(\vec{f})$ given by
\begin{equation} \label{sparseeq01}
\Lambda_m^{k, \calS_\mu}(\vec{f}):=\Lambda_m^k(\vec{f})-\Lambda_m^{k, \calU_\mu}(\vec{f}).
\end{equation}
\end{itemize}

We end this short subsection with the following

\begin{obs}\textsf{[Spatial sparse-uniform dichotomy: philosophy]}\label{CZtype}  This sparse-uniform decomposition of the input functions $\{f_i\}_{i=1}^{4}$ may be regarded as a scale adapted Marcinkiewicz type decomposition\footnote{This heuristic applies to the situation $i\in\{1,2\}$; notice that as opposed to Calderon-Zygmund decomposition, we do not require any mean zero condition for the ``bad" function.} in which
\begin{itemize}
\item the \emph{good part} represented by the uniform component obeys:
\begin{itemize}
\item if $i\in\{1,2\}$ then $f_i^{\calU_{\mu_0}}\in L^\infty$ and satisfies:
\begin{equation} \label{goodpart}
\|f_i^{\calU_{\mu_0}}\|_{L^{\infty}(3 I^k)}\lesssim 2^{\frac{\mu_0 m}{2}}\,2^{\frac{k}{2}}\,\|f_i\|_{L^{2}(3 I^k)}\,;
\end{equation}

\item if $i\in\{3,4\}$ then $f_i^{\calU_{\mu}}\in L^2$ and for any $I\subseteq 3I^k$ dyadic interval with $|I|\geq |I_0^{m, 3k}|$
\begin{equation} \label{goodpart2}
\|f_i^{\calU_{\mu}}\|_{L^{2}(I)}\lesssim \min\left\{2^{\frac{\mu m}{2}}\,\left(\frac{|I|}{|I^k|}\right)^{\frac{1}{2}},\,1\right\}\,\|f_i\|_{L^{2}(3 I^k)}\,;
\end{equation}
\end{itemize}
\item the \emph{bad part} represented by the sparse component lives in a relatively ``small" spatial region:
\begin{equation} \label{badpart}
\max_{i\in\{1,2\}}|\textrm{supp}\,f_i^{\calS_{\mu_0}}|\lesssim 2^{-\mu_0 m}\,|I^k|\qquad\textrm{and}\qquad \max_{i\in\{3,4\}}|\textrm{supp}\,f_i^{\calS_{\mu}}|\lesssim 2^{-\mu m}\,|I^k|\,.
\end{equation}
\end{itemize}
\end{obs}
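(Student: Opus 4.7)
The plan is to deduce the three displayed estimates directly from the defining inequalities for the sparse/uniform index sets in \eqref{sparse12}--\eqref{sparse34}: frequency localization of $f_1,f_2$ drives \eqref{goodpart}, a trivial packing argument drives \eqref{goodpart2}, and $L^2$-boundedness of the Hardy--Littlewood maximal operator $M$ drives \eqref{badpart}. There is no genuine obstacle; the only mild care needed is to keep all auxiliary dilations inside a fixed dilate of $I^k$, which is harmless since $f_i$ has been preliminarily localized to $3I^k$ (the Schwartz tails produce admissible error terms).

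For \eqref{goodpart} I exploit that, for $i\in\{1,2\}$, the input $f_i$ has Fourier support in $\left[2^{m+ik},2^{m+ik+1}\right]$ and is therefore, by Bernstein/uncertainty, morally constant at the Heisenberg scale $|I_r^{0,m+ik}|=2^{-m-ik}$; concretely,
\begin{equation*}
\|f_i\|_{L^\infty(I_r^{0,m+ik})}^2 \,\lesssim\, \frac{1}{|I_r^{0,m+ik}|}\int_{3I_r^{0,m+ik}}|f_i|^2 \,\le\, \frac{1}{|I_r^{0,m+ik}|}\int_{3I_r^{0,m+ik}}|Mf_i|^2.
\end{equation*}
For $r\in\calU_{\mu_0}(f_i)$, the defining inequality of the uniform set upper-bounds the right-hand side by $\frac{2^{\mu_0 m}}{|I^k|}\|f_i\|_{L^2(3I^k)}^2 = 2^{\mu_0 m}\,2^k\,\|f_i\|_{L^2(3I^k)}^2$, which, after extracting a square root and using that $f_i^{\calU_{\mu_0}}$ is supported on $\bigcup_{r\in\calU_{\mu_0}(f_i)}I_r^{0,m+ik}\subseteq 3I^k$, produces \eqref{goodpart}.

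The verification of \eqref{goodpart2} is immediate. For any dyadic $I\subseteq 3I^k$ with $|I|\ge|I_0^{m,3k}|$ I decompose $I$ as a bounded-overlap union of intervals of the form $I_\ell^{m,3k}$, invoke the defining uniform estimate on each $\ell\in\calU_\mu(f_i)$, and sum to obtain
\begin{equation*}
\|f_i^{\calU_\mu}\|_{L^2(I)}^2 \,\lesssim\, \frac{|I|}{|I^k|}\,2^{\mu m}\,\|f_i\|_{L^2(3I^k)}^2.
\end{equation*}
Comparing this with the trivial bound $\|f_i^{\calU_\mu}\|_{L^2(I)}\le\|f_i\|_{L^2(3I^k)}$ yields the minimum appearing in \eqref{goodpart2}.

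Finally, for \eqref{badpart} I run the packing argument dual to \eqref{sparse12}: summing the defining inequality over $r\in\calS_{\mu_0}(f_i)$ and using bounded overlap of the family $\{3I_r^{0,m+ik}\}$ together with the $L^2$-boundedness of $M$ (applied to $f_i$ after its truncation to a fixed dilate of $I^k$) gives
\begin{equation*}
\#\calS_{\mu_0}(f_i)\cdot|I_r^{0,m+ik}|\cdot \frac{2^{\mu_0 m}}{|I^k|}\,\|f_i\|_{L^2(3I^k)}^2 \,\lesssim\, \sum_{r\in\calS_{\mu_0}(f_i)}\int_{3I_r^{0,m+ik}}|Mf_i|^2 \,\lesssim\, \|f_i\|_{L^2(3I^k)}^2,
\end{equation*}
so that $|\supp f_i^{\calS_{\mu_0}}|\le\#\calS_{\mu_0}(f_i)\cdot|I_r^{0,m+ik}|\lesssim 2^{-\mu_0 m}|I^k|$. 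The same argument with $M$ removed (replace $Mf_i$ by $f_i$ and the maximal $L^2$-boundedness by the trivial inclusion) yields the $i\in\{3,4\}$ half of \eqref{badpart}, completing the proposed verification.
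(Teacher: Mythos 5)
Your proposal is correct and follows essentially the same route as the paper: \eqref{goodpart2} and \eqref{badpart} are read off directly from the defining inequalities in \eqref{sparse12}--\eqref{sparse34} via the packing/summation arguments you describe, and \eqref{goodpart} is obtained exactly as in the text, by combining the Heisenberg-scale constancy of $f_i$ (Key Heuristic 1) with the pointwise chain $|f_i^{\calU_{\mu_0}}(x)|\lesssim\bigl(\tfrac{1}{|I_{p_i}^{0,m+ik}|}\int_{3I_{p_i}^{0,m+ik}}|Mf_i|^2\bigr)^{1/2}$ and the uniform-set bound. Your cautionary remarks about Schwartz tails and about localizing $Mf_i$ to a fixed dilate of $I^k$ are consistent with the paper's conventions (Observation \ref{ETerm}) and introduce no gap.
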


Indeed, \eqref{goodpart2} and \eqref{badpart} are immediate from \eqref{sparse12} and \eqref{sparse34} while for \eqref{goodpart} we use the first part of Key Heuristic 1 together with: if $i\in\{1,2\}$, $p_i\sim 2^{m+(i-1)k}$ and $x\in I_{p_i}^{0, m+i k}\subseteq \textrm{supp}\,f_i^{\calS_{\mu_0}}$ fixed, then
\begin{equation*}
\left| f_i^{\calU_{\mu_0}} (x) \right|
  \lesssim  \frac{1}{\left|I_{p_i}^{0, m+ i k}\right|} \int_{3I_{p_i}^{0, m+ i k}} M f_i\left(\alpha\right) d\alpha\lesssim \left( \frac{1}{\left|I_{p_i}^{0, m+ i k}\right|} \int_{3 I_{p_i}^{0, m+ i k}} \left|M f_i\left(\alpha\right)\right|^2 d\alpha \right)^{\frac{1}{2}} \lesssim 2^{\frac{k}{2}} \cdot 2^{\frac{\mu_0 m}{2}} \left\|f_i \right\|_{L^2 \left(3I^k\right)}.
\end{equation*}

\subsection{Treatment of the sparse component $\Lambda_m^{k, \calS_\mu}(\vec{f})$} \label{20230330subsec01}

In what follows, we analyze two subcases for the sparse component $\Lambda_m^{k, \calS_\mu}(\vec{f})$:
\begin{enumerate}
\item [(1)] \textsf{Both $f_3$ and $f_4$ are sparse.} This corresponds to the term
$$
\Lambda_m^{k, \calS_\mu, \textnormal{I}}(\vec{f}):=\Lambda_m^k \left(f_1, f_2, f_3^{\calS_\mu}, f_4^{\calS_\mu} \right)\,;
$$
\item [(2)] \textsf{One of the functions $\left\{f_3, f_4 \right\}$ is uniform and one of the functions $\left\{f_1, f_2 \right\}$ is sparse.} This corresponds to the term
\begin{equation*}
\Lambda_m^{k, \calS_\mu, \textnormal{II}}(\vec{f}):=\Lambda_m^{k, \calS_\mu}(\vec{f})-\Lambda_m^{k, \calS_\mu, \textnormal{I}}(\vec{f})\,.
\end{equation*}
\end{enumerate}

\subsubsection{Treatment of $\Lambda_m^{k, \calS_{\mu}, \textnormal{I}}(\vec{f})$}
$\newline$

To begin with, we decompose the regime of $p$ as follows\footnote{Throughout our paper, for notational simplicity we allow the  following slight notational abuse: when decomposing the range of an integer parameter into a (disjoint) union of intervals, we do not write explicitly the restriction of those intervals to the set of integers.}:
$$\left\{p \sim 2^{\frac{m}{2}+2k} \right\}=\bigcup\limits_{s \sim 2^k} \bigcup\limits_{\ell \in \left[s 2^k, (s+1)2^k \right)} \left[2^{\frac{m}{2}}\ell, 2^{\frac{m}{2}}(\ell+1) \right).$$
Using the above decomposition, we can write
\begin{equation} \label{20230217eq01}
\left|\Lambda_m^{k, \calS_\mu, \textnormal{I}}(\vec{f})\right| \lesssim \sum_{s \sim 2^k} \Lambda_m^{k, \calS_\mu, \textnormal{I}, s}(\vec{f}),\qquad\textrm{with}
\end{equation}
\begin{eqnarray*}
&& \Lambda_m^{k, \calS_\mu, \textnormal{I}, s}(\vec{f}):=2^k \sum_{\substack{\ell \in \left[s2^k, (s+1)2^k \right) \\ q, n \sim 2^{\frac{m}{2}}}} \sum_{p \in \left[ 2^{\frac{m}{2}} \ell, 2^{\frac{m}{2}}(\ell+1) \right)} \left| \left\langle f_3^{\calS_\mu}, \phi_{p+\frac{q^3}{2^m}, n}^{3k}  \right\rangle \right| \left|  \left \langle f_4^{\calS_\mu}, \phi_{p, n}^{3k} \right \rangle \right| \\
&& \quad \quad \quad \quad \quad \quad \cdot \frac{1}{\left|I_p^{m, 3k} \right|} \left| \int_{I_p^{m, 3k}} \int_{I_q^{m, k}} f_1(x-t)f_2(x+t^2) e^{3i \cdot 2^{\frac{m}{2}+k}t \left(\frac{q}{2^\frac{m}{2}} \right)^2 n} dt dx \right|.
\end{eqnarray*}
Applying Cauchy-Schwarz in $t$ first, followed by another Cauchy-Schwarz in $n$ and then Parseval, one has
\begin{eqnarray} \label{20230219eq10}
&& \Lambda_m^{k, \calS_{\mu}, \textnormal{I}, s}(\vec{f}) \lesssim 2^k \sum_{\substack{\ell \in \left[s 2^k, (s+1)2^k \right] \\ q \sim 2^{\frac{m}{2}}}} \sum_{p \in \left[2^{\frac{m}{2}}\ell, 2^{\frac{m}{2}}(\ell+1) \right]} \left( \int_{I_{p+\frac{q^3}{2^m}}^{m, 3k}} \left|f_3^{\calS_{\mu}} \right|^2 \right)^{\frac{1}{2}} \left( \int_{I_{p}^{m, 3k}} \left|f_4^{\calS_{\mu}} \right|^2 \right)^{\frac{1}{2}} \nonumber\\
&&  \quad \quad \quad \quad \quad \quad \cdot \frac{1}{\left|I_p^{m, 3k} \right|}  \int_{I_p^{m, 3k}} \left(\int_{3I_{\frac{p}{2^{2k}}-q}^{m, k}} |f_1|^2 \right)^{\frac{1}{2}} \left(\int_{I_q^{m, k}} \left|f_2(x+t^2)\right|^2 dt \right)^{\frac{1}{2}} dx.
\end{eqnarray}
Using now the information that $f_2$ is morally constant on intervals of length $2^{-m-2k}$, we have, up to error terms, that
\begin{equation}\label{f2est}
\int_{I_q^{m, k}} \left|f_2(x+t^2) \right|^2 dt\lesssim
2^k \int_{3I_0^{m, 2k}} \left|f_2 \left(\frac{p}{2^{\frac{m}{2}+3k}}+t+\frac{q^2}{2^{m+2k}} \right) \right|^2 dt\lesssim 2^k \int_{5I_{\frac{q^2}{2^{\frac{m}{2}}}}^{m, 2k}} \left|f_2 \left(\frac{\ell}{2^{3k}}+t \right) \right|^2 dt\,,
\end{equation}
where for the last inequality we used the observation  that as $p$ covers $\left[2^{\frac{m}{2}} \ell, 2^{\frac{m}{2}} \left(\ell+1 \right) \right]$ the term $\frac{p}{2^{\frac{m}{2}+3k}}$ varies within an interval of length $\frac{1}{2^{3k}}\leq |I_0^{m, 2k}|$.

Plugging the above estimate back to \eqref{20230219eq10} and applying Cauchy-Schwarz in $p$ as well as in $q$, we get
\begin{eqnarray*}
\Lambda_m^{k, \calS_\mu, \textnormal{I}, s}(\vec{f}) &\lesssim& 2^{\frac{3k}{2}} \sum_{\ell \in \left[s 2^k, (s+1)2^k \right]} \left( \sum_{p \in \left[2^{\frac{m}{2}}(\ell-1), 2^{\frac{m}{2}}(\ell+2) \right]} \int_{I_{p}^{m, 3k}} \left|f_3^{\calS_{\mu}} \right|^2 \right)^{\frac{1}{2}} \left( \sum_{p \in \left[2^{\frac{m}{2}}\ell, 2^{\frac{m}{2}}(\ell+1) \right]}  \int_{I_{p}^{m, 3k}} \left|f_4^{\calS_{\mu}} \right|^2 \right)^{\frac{1}{2}} \\
&& \quad \quad \quad \quad \quad \quad\quad \quad \cdot \left(\sum_{q \sim 2^{\frac{m}{2}}}\int_{5I_{\frac{\ell}{2^{2k-\frac{m}{2}}}-q}^{m, k}} |f_1|^2 \right)^{\frac{1}{2}} \left( \sum_{q \sim 2^{\frac{m}{2}}}\int_{5I_{\frac{q^2}{2^{\frac{m}{2}}}}^{m, 2k}} \left|f_2 \left(\frac{\ell}{2^{3k}}+t \right) \right|^2 dt \right)^{\frac{1}{2}}\\
&\lesssim& 2^{\frac{3k}{2}} \left\|f_1\right\|_{L^2 \left(3 I^k \right)} \left\|f_2 \right\|_{L^2 \left(3I_s^{0, 2k} \right)} \sum_{\ell \in \left[s 2^k, (s+1)2^k \right]}  \left\|f_3^{\calS_\mu} \right\|_{L^2 \left(3I_\ell^{0, 3k} \right)} \left\|f_4^{\calS_\mu} \right\|_{L^2 \left(I_\ell^{0, 3k} \right)} \\
&\lesssim& 2^{\frac{3k}{2}} \left\|f_1\right\|_{L^2 \left(3 I^k \right)} \left\|f_2 \right\|_{L^2 \left(3I_s^{0, 2k} \right)} \left\|f_3^{\calS_\mu} \right\|_{L^2 \left(3 I_s^{0, 2k} \right)}\left\|f_4^{\calS_\mu} \right\|_{L^2 \left(3 I_s^{0, 2k} \right)}.
\end{eqnarray*}

Substituting the above estimate back to \eqref{20230217eq01}, we have
\begin{equation*}
\left|\Lambda_m^{k, \calS_\mu, \textnormal{I}}(\vec{f})\right| \lesssim 2^{\frac{3k}{2}} \left\|f_1 \right\|_{L^2 \left(3 I^k \right)} \sum_{s \sim 2^k} \left\|f_2 \right\|_{L^2 \left(3I_s^{0, 2k} \right)} \left\|f_3^{\calS_\mu} \right\|_{L^2 \left(3 I_s^{0, 2k} \right)}\left\|f_4^{\calS_\mu} \right\|_{L^2 \left(3 I_s^{0, 2k} \right)}.
\end{equation*}
This immediately implies
\begin{equation}\label{sparse-I}
\left|\Lambda_m^{k, \calS_\mu, \textnormal{I}}(\vec{f})\right| \lesssim 2^{-\frac{\mu}{2} m} \,\|\vec{f} \|_{L^{\vec{p}}(3 I^{k})}\qquad \forall\:\vec{p}\in {\bf H}^{+}\,.
\end{equation}

\subsubsection{Treatment of $\Lambda_m^{k, \calS_{\mu}, \textnormal{II}}(\vec{f})$}
$\newline$

In what follows we only consider a representative of the second sparse component (as all the other terms can be treated similarly), that is $$\Lambda_m^k \left(f_1^{\calS_{\mu_0}}, f_2, f_3^{\calU_\mu}, f_4 \right)\,,$$ and, with a slight notational abuse, in what follows we will continue to refer to the above as $\Lambda_m^{k, \calS_\mu, \textnormal{II}}(\vec{f})$.

Thus, from \eqref{20230131main01}, we have
\begin{equation} \label{20230201eq04}
\left|\Lambda_m^{k, \calS, \textnormal{II}}(\vec{f}) \right| \lesssim  2^k \sum_{\substack{p \sim 2^{\frac{m}{2}+2k} \\ q, n \sim 2^{\frac{m}{2}}}} \left| \left\langle f^{\calU_\mu}_3, \phi_{p+\frac{q^3}{2^m}, n}^{3k}  \right\rangle \right| \left|  \left \langle f_4, \phi_{p, n}^{3k} \right \rangle \right| \cdot \frac{1}{\left|I_p^{m, 3k} \right|}  \int_{I_p^{m, 3k}} \int_{I_q^{m, k}} \left| f^{\calS_{\mu_0}}_1(x-t) \right| \left| f_2(x+t^2) \right|  dt dx.
\end{equation}
Next, for $I \subseteq I^k$ interval, we set $\widetilde{S_{\mu_0}^{3I}}(f_1):=\bigcup\limits_{\substack{r \in \calS_{\mu_0}(f_1) \\ I_r^{0, k+m} \subset 3I}} I_r^{0, k+m}$.  Then, applying Cauchy-Schwarz twice and Parseval, one has
\begin{eqnarray*}
&& \left|\Lambda_m^{k, \calS, \textnormal{II}}(\vec{f}) \right| \lesssim  2^k \sum_{\substack{p \sim 2^{\frac{m}{2}+2k} \\ q \sim 2^{\frac{m}{2}}}} \left\| f^{\calU_\mu}_3 \right\|_{L^2 \left(I_{p+\frac{q^3}{2^m}}^{m, 3k} \right)} \left\|f_4 \right\|_{L^2 \left(I_p^{m, 3k} \right)} \nonumber  \\
&& \quad \quad \cdot \left|\widetilde{S_{\mu_0}^{3 I_{\frac{p}{2^{2k}}-q}^{m, k}}}(f_1) \right|^{\frac{1}{2}} \left(\frac{1}{\left|I_0^{m,3k}\right|} \int_{I_p^{m, 3k}} \int_{I_q^{m, k}} \left|f_1^{\calS_{\mu_0}} \left( x-t \right) \right|^2 \left| f_2 \left(x+t^2 \right) \right|^2 dt dx \right)^{\frac{1}{2}} .
\end{eqnarray*}
Exploiting now Observation \ref{CZtype}--see \eqref{goodpart2}, we have
$\left\| f^{\calU_\mu}_3 \right\|_{L^2 \left(I_{p+\frac{q^3}{2^m}}^{m, 3k} \right)} \lesssim \left( \frac{2^{\mu m}}{2^{\frac{m}{2}+2k}} \right)^{\frac{1}{2}} \left\|f_3 \right\|_{L^2(3 I^k)}$ and hence, via a Cauchy-Schwarz in $q$ and then in $p$ followed by the application of \eqref{badpart}, we have
\begin{eqnarray} \label{sparse-II}
&&\left|\Lambda_m^{k, \calS, \textnormal{II}}(\vec{f}) \right|\lesssim 2^k \cdot \left( \frac{2^{\mu m}}{2^{\frac{m}{2}+2k}} \right)^{\frac{1}{2}} \sum_{p \sim 2^{\frac{m}{2}+2k}} \left\|f_3 \right\|_{L^2(3 I^k)} \left\|f_4 \right\|_{L^2 \left(I_p^{m, 3k} \right)} \cdot   \left( \sum_{q \sim 2^{\frac{m}{2}}}  \left|\widetilde{S_{\mu_0}^{3I_{\frac{p}{2^{2k}}-q}^{m, k}}}(f_1) \right| \right)^{\frac{1}{2}}\nonumber \\
&& \quad \quad \cdot \left( \sum_{q \sim 2^{\frac{m}{2}}} \frac{1}{\left|I_0^{m,3k}\right|} \int_{I_p^{m, 3k}} \int_{I_q^{m, k}} \left|f_1^{\calS_{\mu_0}} \left( x-t \right) \right|^2 \left| f_2 \left(x+t^2 \right) \right|^2 dtdx \right)^{\frac{1}{2}}\nonumber \\
&& \lesssim 2^k \cdot \left(\frac{2^{\mu m}}{2^{\frac{m}{2}+2k}} \right)^{\frac{1}{2}} \cdot \left| \widetilde{S_{\mu_0}^{3I^k}(f_1)} \right|^{\frac{1}{2}} \|f_3\|_{L^2(3 I^k)} \left(\sum_{p \sim 2^{\frac{m}{2}+2k}} \left\|f_4 \right\|^2_{L^2 \left(I_p^{m, 3k} \right)} \right)^{\frac{1}{2}} \nonumber \\
&& \quad \quad \quad \cdot \left( \sum_{p \sim 2^{\frac{m}{2}+2k}} \frac{1}{\left|I_0^{m,3k}\right|} \int_{I_p^{m, 3k}} \int_{I^k} \left|f_1^{\calS_{\mu_0}} \left( x-t \right) \right|^2 \left| f_2 \left(x+t^2 \right) \right|^2 dt dx \right)^{\frac{1}{2}} \nonumber
\end{eqnarray}

\begin{eqnarray}
&&=2^{\frac{m}{4}+\frac{5k}{2}} \cdot \left(\frac{2^{\mu m}}{2^{\frac{m}{2}+2k}} \right)^{\frac{1}{2}} \cdot \left| \widetilde{S_{\mu_0}^{3I^k}(f_1)} \right|^{\frac{1}{2}} \|f_3\|_{L^2(3I^k)} \left\|f_4 \right\|_{L^2(I^k)} \, \cdot \left( \int_{I^k} \int_{I^k} \left|f_1^{\calS_{\mu_0}} \left( x-t \right) \right|^2 \left| f_2 \left(x+t^2 \right) \right|^2 dt dx \right)^{\frac{1}{2}} \nonumber  \\
&&\lesssim2^{\frac{m}{4}+\frac{5k}{2}}\cdot \left(\frac{2^{\mu m}}{2^{\frac{m}{2}+2k}} \right)^{\frac{1}{2}} \cdot 2^{-\frac{k}{2}} \cdot \left( \frac{ \left| \widetilde{S_{\mu_0}^{3I^k}(f_1)} \right| }{|I^k|} \right)^{\frac{1}{2}}\cdot \prod_{\ell=1}^4 \left\|f_\ell\right\|_{L^2(3I^k)} \lesssim 2^k \cdot 2^{\frac{(\mu-\mu_0)m}{2}}  \cdot \prod_{\ell=1}^4 \left\|f_\ell\right\|_{L^2(3I^k)}\nonumber,
\end{eqnarray}
which further implies (recall that $\mu_0>\mu$)
\begin{equation}\label{sparse-II}
\left|\Lambda_m^{k, \calS_\mu, \textnormal{II}}(\vec{f})\right| \lesssim 2^{-\frac{\mu_0-\mu}{2} m} \,\|\vec{f} \|_{L^{\vec{p}}(3I^{k})}\qquad \forall\:\vec{p}\in {\bf H}^{+}\,.
\end{equation}

\subsection{Treatment of the uniform component $\Lambda_m^{k, \calU_\mu}(\vec{f})$}\label{DiagUnifKlarge}

In the $k\geq \frac{m}{2}$ setting, this is the main part of our quadrilinear form and the most difficult to treat.

As before, for notational convenience we will identify the entire uniform component with one of its representatives, that is
$$\Lambda_m^{k, \calU_\mu}(\vec{f})\equiv\Lambda_m^k \left(f_1^{\calU_{\mu_0}}, f_2^{\calU_{\mu_0}}, f_3^{\calU_\mu}, f_4 \right)\,.$$

We start by noticing from \eqref{20230131main01} that one has
\begin{eqnarray} \label{20230201eq11}
\left|\Lambda_m^{k, \calU_\mu}(\vec{f}) \right| &\lesssim& 2^k \sum_{\substack{p \sim 2^{\frac{m}{2}+2k} \\ q, n \sim 2^{\frac{m}{2}}}} \left| \left\langle f^{\calU_\mu}_3, \phi_{p+\frac{q^3}{2^m}, n}^{3k}  \right\rangle \right| \left|  \left \langle f_4, \phi_{p, n}^{3k} \right \rangle \right|   \\
&\cdot& \frac{1}{\left|I_p^{m, 3k} \right|} \left| \int_{I_p^{m, 3k}} \int_{I_q^{m, k}} f^{\calU_{\mu_0}}_1(x-t)f^{\calU_{\mu_0}}_2(x+t^2) e^{3i \cdot 2^{\frac{m}{2}+k}t \left(\frac{q}{2^\frac{m}{2}} \right)^2 n} dt dx \right| .\nonumber
\end{eqnarray}
Next, we perform a discretization of the range of $p$
\begin{equation} \label{20230219eq50}
\left\{p \sim 2^{\frac{m}{2}+2k} \right\}=\bigcup_{\ell \sim 2^{2k}} \left[2^{\frac{m}{2}} \ell, 2^{\frac{m}{2}}(\ell+1) \right].
\end{equation}
Applying now \eqref{20230219eq50} and then Cauchy-Schwarz in $p$ and $q$, we have
\begin{eqnarray*}
\left| \Lambda_m^{k, \calU_\mu}(\vec{f}) \right|  &\lesssim& 2^k \sum_{\substack{\ell \sim 2^{2k} \\ n \sim 2^{\frac{m}{2}}}} \left( \sum_{p=2^{\frac{m}{2}} \ell}^{2^{\frac{m}{2}}(\ell+1)} \sum_{q \sim 2^{\frac{m}{2}}} \left| \left\langle f^{\calU_\mu}_3, \phi_{p+\frac{q^3}{2^m}, n}^{3k}  \right\rangle \right|^2\left|  \left \langle f_4, \phi_{p, n}^{3k} \right \rangle \right|^2 \right)^{\frac{1}{2}}  \\
&\cdot& \left( \sum_{p=2^{\frac{m}{2}} \ell}^{2^{\frac{m}{2}}(\ell+1)} \sum_{q \sim 2^{\frac{m}{2}}} \frac{1}{\left| I_p^{m, 3k} \right|} \int_{I_p^{m, 3k}} \left| \int_{I_q^{m, k}} f^{\calU_{\mu_0}}_1(x-t)f^{\calU_{\mu_0}}_2(x+t^2) e^{3i \cdot 2^{\frac{m}{2}+k}t \left(\frac{q}{2^\frac{m}{2}} \right)^2 n} dt \right|^2 dx \right)^{\frac{1}{2}} \\
&\lesssim& 2^k \sum_{\substack{\ell \sim 2^{2k} \\ n \sim 2^{\frac{m}{2}}}} \left( \sum_{p=2^{\frac{m}{2}}(\ell-1)}^{2^{\frac{m}{2}}(\ell+2)} \left| \left \langle f_3^{\calU_\mu}, \phi_{p, n}^{3k} \right \rangle \right |^2 \right)^{\frac{1}{2}} \left( \sum_{p=2^{\frac{m}{2}} \ell}^{2^{\frac{m}{2}}(\ell+1)} \left| \left \langle f_4, \phi_{p, n}^{3k} \right \rangle \right |^2 \right)^{\frac{1}{2}} \\
&\cdot& \left( \sum_{q \sim 2^{\frac{m}{2}}} \frac{1}{\left| I_0^{m,3k} \right|} \int_{I_\ell^{0, 3k}} \left| \int_{I_q^{m, k}} f^{\calU_{\mu_0}}_1(x-t)f^{\calU_{\mu_0}}_2(x+t^2) e^{3i \cdot 2^{\frac{m}{2}+k}t \left(\frac{q}{2^\frac{m}{2}} \right)^2 n} dt \right|^2 dx \right)^{\frac{1}{2}} \\
\end{eqnarray*}
Next, applying H\"older's inequality in the form $l^2\times l^2\times l^{\infty}$ relative to $n$, we have
\begin{eqnarray*}
 \left| \Lambda_m^{k, \calU_\mu} (\vec{f}) \right| &\lesssim& 2^k \sum_{\ell \sim 2^{2k}} \left( \sum_{n \sim 2^{\frac{m}{2}}} \sum_{p=2^{\frac{m}{2}}(\ell-1)}^{2^{\frac{m}{2}}(\ell+2)} \left| \left \langle f_3^{\calU_\mu}, \phi_{p, n}^{3k} \right \rangle \right |^2 \right)^{\frac{1}{2}} \left( \sum_{n \sim 2^{\frac{m}{2}}} \sum_{p=2^{\frac{m}{2}} \ell}^{2^{\frac{m}{2}}(\ell+1)} \left| \left \langle f_4, \phi_{p, n}^{3k} \right \rangle \right |^2 \right)^{\frac{1}{2}} \\
&\cdot& \left( \frac{1}{\left|I_0^{m,3k} \right|}  \sup_{n \sim 2^{\frac{m}{2}}} \int_{I_\ell^{0, 3k}} \left( \sum_{q \sim 2^{\frac{m}{2}}}  \left| \int_{I_q^{m, k}} f^{\calU_{\mu_0}}_1(x-t)f^{\calU_{\mu_0}}_2(x+t^2) e^{3i \cdot 2^{\frac{m}{2}+k}t \left(\frac{q}{2^\frac{m}{2}} \right)^2 n} dt \right|^2 \right) dx \right)^{\frac{1}{2}}.
\end{eqnarray*}
Let us denote now by $\lambda$ the measurable function that assumes the supremum in the last term above. Then
\begin{enumerate}\label{lambdainit}
    \item [$\bullet$] $\lambda: I^k \mapsto \left[2^{\frac{m}{2}}, 2^{\frac{m}{2}+1} \right] \cap \Z$;
    \item [$\bullet$] $\lambda$ is constant on intervals $\left\{I_\ell^{0, 3k} \right\}_{\ell \sim 2^{2k}}$.
\end{enumerate}
With these together with Parseval, we have the key relation
\begin{eqnarray*}
\left| \Lambda_m^{k, \calU_\mu}(\vec{f})   \right| &\lesssim& 2^k \sum_{\ell \sim 2^{2k}} \left\| f_3^{\calU_\mu} \right\|_{L^2 \left(3 I_\ell^{0, 3k} \right)}  \left\| f_4 \right\|_{L^2 \left(I_\ell^{0, 3k} \right)} \\
&\cdot& \left( \frac{1}{\left|I_0^{m,3k} \right|} \int_{I_\ell^{0, 3k}} \left( \sum_{q \sim 2^{\frac{m}{2}}}  \left| \int_{I_q^{m, k}} f^{\calU_{\mu_0}}_1(x-t)f^{\calU_{\mu_0}}_2(x+t^2) e^{3i \cdot 2^{\frac{m}{2}+k}t \left(\frac{q}{2^\frac{m}{2}} \right)^2 \lambda(x)} dt \right|^2 \right) dx \right)^{\frac{1}{2}}.
\end{eqnarray*}
From \eqref{goodpart2} we have $\left\|f_3^{\calU_\mu} \right\|_{L^2 \left(3 I_\ell^{0, 3k} \right)} \lesssim \frac{2^{\frac{\mu m}{2}}}{2^k} \left\|f_3\right\|_{L^2 \left(3 I^k \right)}$ which paired with a Cauchy--Schwarz in $\ell$ gives
\begin{eqnarray*}
\left| \Lambda_m^{k, \calU_\mu} (\vec{f})  \right| &\lesssim& 2^k \cdot 2^{\frac{\mu m}{2}} \|f_3\|_{L^2 \left(3 I^k \right)}  \cdot \left( \sum_{\ell \sim 2^{2k}} \left\| f_4 \right\|^2_{L^2 \left(I_\ell^{0, 3k} \right)} \right)^{\frac{1}{2}} \\
&\cdot& \left( \frac{1}{\left|I_0^{m,k} \right|} \sum_{\ell \sim 2^{2k}} \int_{I_\ell^{0, 3k}} \left( \sum_{q \sim 2^{\frac{m}{2}}}  \left| \int_{I_q^{m, k}} f^{\calU_{\mu_0}}_1(x-t)f^{\calU_{\mu_0}}_2(x+t^2) e^{3i \cdot 2^{\frac{m}{2}+k}t \left(\frac{q}{2^\frac{m}{2}} \right)^2 \lambda(x)} dt \right|^2 \right) dx \right)^{\frac{1}{2}}.
\end{eqnarray*}
Thus, we conclude that
\begin{equation}\label{unifc}
\left|\Lambda_m^{k, \calU_\mu}(\vec{f}) \right| \lesssim 2^k \cdot 2^{\frac{\mu m}{2}} \left\|f_3 \right\|_{L^2 \left(3 I^k \right)} \left\|f_4 \right\|_{L^2 \left(I^k \right)} \textbf{L}_{m,k}(f_1, f_2),
\end{equation}
where
\begin{equation}\label{defL}
\left(\textbf{L}_{m,k}(f_1, f_2)\right)^2 := \frac{1}{\left|I_0^{m,k} \right|} \int_{I^{k}} \left( \sum_{q \sim 2^{\frac{m}{2}}}  \left| \int_{I_q^{m, k}} f^{\calU_{\mu_0}}_1(x-t)f^{\calU_{\mu_0}}_2(x+t^2) e^{3i \cdot 2^{\frac{m}{2}+k}t \left(\frac{q}{2^\frac{m}{2}} \right)^2 \lambda(x)} dt \right|^2 \right) dx.
\end{equation}

We are now left with estimating the term $\textbf{L}_{m,k}^2(f_1, f_2)$. This is the subject of our main theorem below:

\begin{thm} \label{20230202thm01}
Let $k \ge \frac{m}{2}$ and recall \eqref{sparse12}--\eqref{20230202eq02}. Then there exists an absolute $\epsilon_2>0$, such that for any $\lambda (\cdot): I^k \mapsto \left[2^{\frac{m}{2}}, 2^{\frac{m}{2}+1} \right] \cap \Z$ measurable function that is constant on intervals $\left\{I_\ell^{0, 3k} \right\}_{\ell \sim 2^{2k}}$ one has
\begin{equation} \label{20230202eq01}
\textrm{\textbf{L}}_{m,k}(f_1, f_2) \lesssim 2^{-\epsilon_2 m} \left\|f_1 \right\|_{L^2 \left(3 I^k \right)} \left\|f_2 \right\|_{L^2 \left(3 I^k \right)}.
\end{equation}
\end{thm}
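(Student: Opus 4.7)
The plan is to establish \eqref{20230202eq01} through a bootstrap argument on the structure of the linearizing function $\lambda$, ultimately reducing matters to a curvature-based decay estimate at the linearizing scale $|I_0^{m,k}|$.

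First, I would expand the square to write
\begin{equation*}
\textbf{L}_{m,k}^2(f_1,f_2) \,=\, \frac{1}{|I_0^{m,k}|} \int_{I^k} \sum_{q \sim 2^{\frac{m}{2}}} \iint_{I_q^{m,k}\times I_q^{m,k}} F(x,t,t')\, e^{i\Phi_q(x,t,t')}\, dt\, dt'\, dx,
\end{equation*}
where $F(x,t,t') := f_1^{\calU_{\mu_0}}(x-t)\,\overline{f_1^{\calU_{\mu_0}}(x-t')}\, f_2^{\calU_{\mu_0}}(x+t^2)\,\overline{f_2^{\calU_{\mu_0}}(x+t'^2)}$ and $\Phi_q(x,t,t') := 3 \cdot 2^{\frac{m}{2}+k}(t-t')(q/2^{\frac{m}{2}})^2 \lambda(x)$.

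Second, I would run the bootstrap procedure alluded to in Key Heuristic 4. Starting from the hypothesis that $\lambda$ is constant on intervals of the initial scale $2^{-3k}$ (which, since $k \ge \frac{m}{2}$, is finer than the linearizing scale $|I_0^{m,k}| = 2^{-\frac{m}{2}-k}$), I would establish a dichotomy: either the constancy of $\lambda$ propagates to a coarser scale (thereby narrowing the gap with $|I_0^{m,k}|$), or, in the opposite situation, the variation of $\lambda$ at that coarser scale induces an oscillation of $e^{i\Phi_q}$ in $x$ which—integrated against the $x$-slowly-varying function $F$, whose moral constancy in $x$ at small scales is dictated by the Heisenberg localization of the uniform pieces $f_j^{\calU_{\mu_0}}$ together with the $L^\infty$-bound \eqref{goodpart}—directly yields a factor $2^{-\epsilon m}$ via a non-stationary phase/Plancherel argument. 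As indicated in the commentary after Key Heuristic 4, this bootstrap is iterated a bounded number of times (twice) until $\lambda$ may be assumed constant on each interval $J$ of length $|I_0^{m,k}|$.

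Third, in that final regime, with $\lambda \equiv \lambda_J$ on each such $J$, I would exploit the curvature encoded in the joint $(q,t)$-dependence of $\Phi_q$. The ``discrete mixed second-order derivative'' $\partial_q \partial_t \Phi_q \sim 2^k \lambda_J \sim 2^{\frac{m}{2}+k}$ exactly matches the Heisenberg resolution of $I_q^{m,k}$, so the frequencies $c_q(x) := 3 \cdot 2^{\frac{m}{2}+k}(q/2^{\frac{m}{2}})^2 \lambda_J$ are maximally spread as $q$ varies over $\sim 2^{\frac{m}{2}}$. An $\ell^2$-orthogonality/Parseval argument across the $q$-slots, combined with a H\"ormander-type estimate on the off-diagonal phase differences between distinct $q, q'$, then yields decay in the relevant double integral; after summation over $J$, together with the $L^2$-control \eqref{goodpart2} on the uniform pieces, this delivers the required factor $2^{-\epsilon_2 m}$.

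The main obstacle is the quantitative execution of the bootstrap step: one must show that in the ``bad'' branch—when $\lambda$ fails to be constant at the next coarser scale—the induced phase oscillation in $x$ is strong enough to absorb both the loss $\sim 2^{\frac{\mu_0 m}{2}} \cdot 2^{\frac{k}{2}}$ coming from the $L^\infty$-blowup \eqref{goodpart} and the trivial $2^{\frac{m}{2}}$ loss from the summation over $q$. Delicately balancing the local $L^\infty$-control inherited from uniformity against global $L^2$-orthogonality at the scale where $\lambda$ assumes a new value is precisely the content of the ``structural analysis of the linearizing function'' evoked in Section \ref{Keyideas}, and constitutes the technical heart of the proof.
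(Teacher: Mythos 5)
Your outline correctly reproduces the architecture of the paper's argument (bootstrap on the constancy scale of $\lambda$, the dichotomy of Key Heuristic 4, and a final curvature estimate once $\lambda$ is constant at scale $|I_0^{m,k}|$), but this architecture is already announced in Section \ref{Keyideas}; the two steps that carry the actual content are missing, and the mechanisms you name for them would not close the argument. First, in the ``bad'' branch of the dichotomy you propose to gain $2^{-\epsilon m}$ by ``non-stationary phase/Plancherel'' in $x$, integrating $e^{i\Phi_q}$ against the slowly varying $F$. But $\lambda$ is merely measurable (constant only on the tiny intervals $I_\ell^{0,3k}$), so the phase $\Phi_q(x,t,t')=3\cdot 2^{\frac m2+k}(t-t')(q/2^{\frac m2})^2\lambda(x)$ has no regularity in $x$ and no integration by parts is available. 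What the paper actually does (Proposition \ref{mainprop01}) is a measure-theoretic sublevel-set argument: after a $TT^{*}$ in $t$ the difference variable $v=t-t'$ produces factors $\lceil \Xi_{x,y,s}(q,q_1)\rceil$, and the gain comes from partitioning $I_p^{0,k+w}$ into level sets of $\lambda$ of width $2^{\epsilon_0^w m}$, discarding the heavy indices (at most $2^{\epsilon_0^w m}$ of them), and using the strict monotonicity of $q\mapsto\sqrt{2^{m/2}u+q^2}/q$ to bound the number of ``singular'' $q$'s. None of this is a phase-oscillation estimate, and without it the dichotomy has no quantitative content; in particular your stated worry about absorbing the $2^{\mu_0 m/2}2^{k/2}$ loss from \eqref{goodpart} is resolved in the paper not by oscillation but by the local $L^2$ uniformity bounds $\int_{I_q^{m,k}}|f_1(x-t)|^2dt\lesssim 2^{(\mu_0-\frac12)m}\|f_1\|^2_{L^2(3I^k)}$, which you do not invoke.

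Second, in the final regime your proposed ``$\ell^2$-orthogonality across the $q$-slots'' cannot produce decay: $\textbf{L}_{m,k}^2$ is a sum of squares $\sum_q|\int_{I_q^{m,k}}\cdots|^2$ over pairwise disjoint $t$-intervals, so there are no cross-$q$ terms to which orthogonality or a H\"ormander-type off-diagonal estimate could apply, and the trivial Cauchy--Schwarz bound in $t$ already saturates without any gain. The interaction between different $q$'s must be manufactured through the shared $x$-integral, which is the point of the long chain $\frakE,\frakF,\frakG,\frakH,\calI,\calJ$ in Proposition \ref{mainprop02}: after several changes of variables and Cauchy--Schwarz applications one arrives at the quadrilinear phase $V_{p_1,p_2,r_1,r_2}(x)=\Phi_{r_1,p_1}-\Phi_{r_1,p_2}-\Phi_{r_2,p_1}+\Phi_{r_2,p_2}$, and the decisive gain \eqref{Jkey3} is a counting estimate for the tuples on which this rigid quadratic-in-$(p,r)$ combination of values of the measurable function $\lambda$ can be small, obtained by the excision argument around \eqref{Jkey2}. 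The ``discrete mixed second derivative'' heuristic you cite is precisely the paper's footnoted analogy, not a usable estimate. In short, the proposal restates the roadmap but replaces both technical pillars (Propositions \ref{mainprop01} and \ref{mainprop02}) with arguments that fail for a merely measurable $\lambda$ and for a diagonal-in-$q$ sum of squares, respectively.
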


The proof of this crucial theorem relies on a novel and powerful approach that could prove useful in a number of other settings; it is split in two parts that can be informally summarized as follows:
\begin{itemize}
\item  The first part addresses the \emph{local} behavior of $\lambda$ treating the situation when $\lambda$  has ``high" oscillations on ``small" intervals; this is done via a \emph{bootstrap algorithm} relying on Key Heuristic 3 and Proposition \ref{constantprop} followed by Key Heuristic 4 and Proposition \ref{mainprop01}.

\item The second part addresses the \emph{global} behavior of $\lambda$: based on the first part one reduces matters to the situation when $\lambda$ is morally constant on intervals of length $2^{-k-\frac{m}{2}}$ representing precisely the linearizing scale in $t$ derived via LGC. The cancellation is then obtained at the global level based on the \emph{structure of the time-frequency correlation set}; this is the subject of Proposition \ref{mainprop02}.
\end{itemize}

\medskip
\noindent\textsf{Part 1}. \textsf{The local behavior of $\lambda$: treating the high variational component}
$\newline$

As it turns out, this first part of our proof revolves around the behavior of the joint maximal Fourier coefficients\footnote{Throughout the reminder of this section, for notational simplicity, we set $f_1=f_1^{\calU_{\mu_0}}$ and $f_2=f_2^{\calU_{\mu_0}}$.}
$$\left\{\left| \int_{I_q^{m, k}} f_1(\cdot-t)f_2(\cdot+t^2) e^{3i 2^{\frac{m}{2}+k}t \left(\frac{q}{2^\frac{m}{2}} \right)^2 \lambda(\cdot)} dt \right|\right\}_{q \sim 2^{\frac{m}{2}}}$$
at two critical scales $x\in I_p^{0, k+w},\,p \sim 2^w$: in the first stage for $w=k$ and then in the second one for $w=\frac{m}{2}$.

Thus, setting from now on $w\in\{k,\frac{m}{2}\}$, in view of the above, it becomes natural to rewrite \eqref{defL} as
\begin{equation} \label{Lvmk}
\left(\textbf{L}_{m,k}(f_1, f_2)\right)^2=\frac{1}{2^w} \sum_{p \sim 2^w} V_{m, k}^{p,w}(f_1, f_2)\,,
\end{equation}
where
\begin{equation} \label{vmk}
V_{m, k}^{p,w}  \left(f_1, f_2 \right):=2^{\frac{m}{2}+k+w} \sum_{q \sim 2^{\frac{m}{2}}} \int_{I_p^{0, k+w}} \left| \int_{I_q^{m, k}} f_1(x-t) f_2(x+t^2) e^{3i 2^{\frac{m}{2}+k} \cdot \frac{q^2}{2^m} \lambda(x) t} dt \right|^2 dx.
\end{equation}
With these specifications, we are now ready to describe our strategy for addressing Part 1:

\begin{obs} \label{constancy}[\textsf{Constancy propagation for $\lambda$}]  In what follows we will run the following bootstrap algorithm:
\begin{itemize}
\item from our initial hypothesis--see \eqref{lambdainit}--we know that $\lambda$ is constant on intervals $\left\{I_\ell^{0, 3k} \right\}_{\ell \sim 2^{2k}}$. We will then apply Propositions \ref{constantprop} and \ref{mainprop01} below for $w=k$ in order to reduce matters (see also Observation \ref{boots2}) to the case when $\lambda$ is constant on intervals $\left\{I_\ell^{0, 2k} \right\}_{\ell \sim 2^{k}}$ (notice that in our present regime $|I_\ell^{0, 2k}|\geq |I_p^{m, k}|$).

\item feeding this conclusion back to Propositions \ref{constantprop} and \ref{mainprop01}, this time for $w=\frac{m}{2}$, we will be able to put ourselves in the situation when $\lambda$ is constant on all intervals $\left\{I_p^{m, k} \right\}_{p \sim 2^{\frac{m}{2}}}$ which will become the underlying hypothesis/starting point for Part 2.
\end{itemize}
\end{obs}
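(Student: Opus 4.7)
The plan is to realize the strategy outlined in Observation \ref{constancy} via a two-step constancy-propagation bootstrap for $\lambda$, followed by a global estimate that exploits the curvature encoded in the quadratic dependence on $q$. Using the rewriting \eqref{Lvmk}--\eqref{vmk}, I would partition $I^k$ into pieces $\{I_p^{0,k+w}\}_{p\sim 2^w}$ at scale $w\in\{k,m/2\}$ and argue by dichotomy on each piece. Either the ``frequency'' $3\cdot 2^{\frac{m}{2}+k}(q/2^{\frac{m}{2}})^2\lambda(x)$ has significant variation in $x$ across $I_p^{0,k+w}$ relative to the $t$-integration length $|I_q^{m,k}|=2^{-\frac{m}{2}-k}$---in which case a (non)stationary phase analysis in $x$ applied to the joint Fourier coefficient inside $V^{p,w}_{m,k}$ extracts a factor of $2^{-\epsilon m}$ locally---or the variation is small enough that $\lambda$ may be treated as constant at the finer scale $2^{-k-w}$, allowing the iteration to continue. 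Two successive applications, first with $w=k$ starting from the given hypothesis that $\lambda$ is constant on $\{I_\ell^{0,3k}\}_{\ell\sim 2^{2k}}$ and then with $w=m/2$, reduce the analysis to the regime in which $\lambda$ may be assumed constant on each interval $I_q^{m,k}$ of the linearizing scale. The decay extracted in the ``high oscillation'' branches is composed with the $L^2$ mass bounds for $f_1^{\calU_{\mu_0}}, f_2^{\calU_{\mu_0}}$ provided by the uniform component structure in \eqref{goodpart}.

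Once $\lambda$ is constant on each $I_q^{m,k}$, the global argument takes over. I would expand the modulus squared in \eqref{vmk}, apply the substitution $s=t-t'$ to symmetrize, and exchange the sum in $q$ with the integral in $(x,s)$, obtaining an expression of the form
\begin{equation*}
V^{p,w}_{m,k}(f_1,f_2)\ \sim\ \iint K(x,s)\,\Biggl(\sum_{q\sim 2^{\frac{m}{2}}} e^{3i\cdot 2^{-\frac{m}{2}+k}\,q^2\,\lambda(x)\,s}\Biggr)\,ds\,dx,
\end{equation*}
where $K(x,s)$ packages the four-function product and the scaling factors arising after reparametrization. For $|s|\lesssim 2^{-\frac{m}{2}-k}$ and $\lambda(x)\sim 2^{\frac{m}{2}}$, the phase coefficient $\alpha(x,s):=3\cdot 2^{-\frac{m}{2}+k}\lambda(x)s$ has size $\sim 2^{-\frac{m}{2}}$, so the quadratic Gauss-type sum in $q$ admits a bound of order $2^{\frac{m}{4}}$ (as opposed to the trivial $2^{\frac{m}{2}}$), yielding a cancellation factor $2^{-\frac{m}{4}}$. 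This is the concrete manifestation of the nondegeneracy of the ``discrete mixed second derivative'' of the original phase in $(t,q)$---the H\"ormander-type condition alluded to in the excerpt. Closing the resulting $L^2$-estimate in $(x,s)$ via Cauchy--Schwarz against $\|f_1\|_{L^2(3I^k)}\|f_2\|_{L^2(3I^k)}$ and Plancherel in the appropriate variable then produces the global decay $2^{-\epsilon_2 m}$.

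The principal obstacle is the calibration of the bootstrap dichotomy: the threshold separating ``high'' and ``low'' oscillation of $\lambda$ on each scale $2^{-k-w}$ must be tuned so that (a) in the high oscillation branch one genuinely gains $2^{-\epsilon m}$ without losing more than polynomial factors in the $L^2$ mass of $f_1^{\calU_{\mu_0}}, f_2^{\calU_{\mu_0}}$, and (b) in the low oscillation branch the residual variation of $\lambda$ is negligible enough that, in the next iteration, the phase $e^{3i\cdot 2^{\frac{m}{2}+k}t(q/2^{\frac{m}{2}})^2\lambda(x)}$ may be treated as a pure linear phase on each $I_q^{m,k}$. A secondary but substantial difficulty arises in the global step: the Gauss-sum cancellation in $q$ manifests only after averaging against a suitable measure in $s$, so making it rigorous demands care with the joint support of the integrand in $(x,t,t')$, with the boundary regime where the discretization $q\sim 2^{\frac{m}{2}}$ meets the Gauss-sum regime, and with preserving the uniform $L^\infty$ control from \eqref{goodpart} throughout the interchange of sum and integral.
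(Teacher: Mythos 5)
Your overall architecture---a dichotomy at each of the two scales $w=k$ and $w=\frac m2$ (either $\lambda$ varies enough to produce decay, or it is effectively constant at the next finer scale), followed by a global argument exploiting the quadratic dependence on $q$---is indeed the skeleton of the paper's Part 1/Part 2 split (Key Heuristics 3 and 4, Propositions \ref{constantprop}, \ref{mainprop01} and \ref{mainprop02}). But both mechanisms you propose for actually extracting the gain are not the ones that work. In the ``high variation'' branch you invoke a ``(non)stationary phase analysis in $x$''. The linearizing function $\lambda$ is merely measurable, so no stationary-phase estimate in $x$ is available; moreover, in \eqref{vmk} the $x$-integrand is the \emph{modulus squared} of the joint coefficient, so there is no $x$-oscillation to exploit before a $TT^*$ expansion, and after expanding, the oscillation lives in the $t-s$ variable with a frequency governed by differences of $\lambda$ at distinct points. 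The paper's replacement is structural rather than oscillatory: one partitions the \emph{range} of $\lambda$ into cells of length $2^{\epsilon_0^w m}$, splits $I_p^{0,k+w}$ into heavy and light preimages, and in the light case runs a $TT^*$ argument whose gain comes from a level-set/counting analysis of $\widetilde\lambda(x)-\widetilde\lambda(y)$ combined with the monotonicity of $q\mapsto\sqrt{2^{\frac m2}u+q^2}\,/\,q$ (which bounds the ``singular'' set of $q$'s). The heavy case is the branch that propagates constancy, at the cost of a loss $2^{O(\epsilon_0^w m)}$ that must be absorbed in the final bookkeeping; your proposal does not account for this loss or for how the dichotomy threshold interacts with it.

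The global step is the more serious gap. You reduce matters to a Gauss sum $\sum_{q}e^{i\alpha q^{2}s}$ against a kernel $K(x,s)$ that ``packages'' the input functions, but this decoupling is not available: the $t$-domain $I_q^{m,k}$, hence the arguments of $f_1$ and $f_2$, depend on $q$, and after the normalizing changes of variables $\lambda$ itself is evaluated at $q$- (and later $p$-, $r$-) dependent shifted points. Since $\lambda$ is only constant at scale $2^{-\frac m2-k}$ while these shifts are much larger, the phase is not $\alpha q^{2}$ with a fixed $\alpha$; and the inputs oscillate at the same frequency $2^{m+k}$ as the phase, so any bound that discards them cannot detect the relevant cancellation---this resonance is precisely why the absolute-value model fails in the first place. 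What Proposition \ref{mainprop02} actually does is a chain of Cauchy--Schwarz reductions ($\frakE,\frakF,\frakG,\frakH,\calI,\calJ$) that uses the uniformity of $f_1,f_2$ to strip them away, isolating a purely $\lambda$-dependent quantity $\calJ$; the decay of $\calJ$ is then obtained by a level-set analysis of the second-order mixed difference $V_{p_1,p_2,r_1,r_2}$ of the phases $\Phi_{r,p}$, exploiting the quadratic interdependence of the parameters and an excision of the small exceptional set of $q$'s---see the passage from \eqref{20230215eq10} to \eqref{Jkey3}. Your phrase ``discrete mixed second derivative'' points at the right phenomenon, but its realization is a combinatorial level-set estimate, not a Gauss-sum evaluation, and the Gauss-sum route as written does not close.
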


The first step for accomplishing the above strategy is based on the following

\medskip
\noindent\textbf{Key Heuristic 3.} \textit{Let $w\in\{k,\,\frac{m}{2}\}$ and assume that
\begin{equation} \label{constancy666}
\lambda(\cdot)\:\textrm{is constant on intervals of length}\:2^{-2k-w}\,.
\end{equation}
Then, for each $q \sim 2^{\frac{m}{2}}$ and $p \sim 2^w$, the continuous function
\begin{equation} \label{20220202eq04}
 F: I_0^{0, k+w}\ni\alpha \longmapsto \int_{I_p^{0, k+w}} \left| \int_{I_q^{m, k}} f_1(x-t+\alpha)f_2(x+t^2)e^{3i 2^{\frac{m}{2}+k} \cdot \frac{q^2}{2^m} \cdot \lambda(x) t} dt \right|^2 dx
\end{equation}
is essentially constant. This will allow us later to remove the joint $x$-dependence of the arguments of $f_1$ and $f_2$, a key step in the constancy propagation of $\lambda$ from the initial scale $2^{-3k}$ to the scale $2^{-2k}$ (for $w=k$) and then finally to the scale $2^{-k-\frac{m}{2}}$ (for $w=\frac{m}{2}$).}
$\newline$

Now let us turn to the rigorous proof of the above heuristic.

\begin{prop} \label{constantprop}
Fix $k, m \in \N$, with $k\geq \frac{m}{2}\geq 100$, $w\in\{k,\,\frac{m}{2}\}$, $q\sim 2^{\frac{m}{2}}$ and $p\sim 2^w$. Let $\del_w>2\mu_0$ be sufficiently small. Then, for any $\alpha, \beta\in I_0^{0, k+w+\del_w m}$, the continuous function $F (\cdot)$ defined in \eqref{20220202eq04} obeys
\begin{equation} \label{20220202eq03}
\left|F(\alpha)-F(\beta) \right| \lesssim 2^{-(\del_w-2\mu_0) m} \cdot 2^{-m-k-w} \cdot \left\|f_1 \right\|_{L^2 \left(3 I^k \right)}^2 \left\|f_2 \right\|_{L^2 \left(3 I^k \right)}^2\,.
\end{equation}
\end{prop}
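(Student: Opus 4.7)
My plan is to compare $F(\alpha)$ with $F(\beta)$ by performing a change of variables that isolates the $\alpha$-dependence, then bounding the resulting pieces using the pointwise uniform bounds on $f_1,f_2$ from \eqref{goodpart} and the piecewise constancy of $\lambda$.

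\textbf{Step 1 (Change of variables).} In the outer integral defining $F(\alpha)$, substitute $y=x+\alpha$. This recasts $F(\alpha)$ as
\begin{equation*}
F(\alpha)=\int_{I_p^{0,k+w}+\alpha}\Bigl|\int_{I_q^{m,k}} f_1(y-t)\,f_2(y-\alpha+t^2)\,e^{ic_q\lambda(y-\alpha)t}\,dt\Bigr|^{2}\,dy,
\end{equation*}
with $c_q=3\cdot 2^{m/2+k}q^2/2^m$, and analogously for $F(\beta)$. Thus the shift in $\alpha$ is transferred from $f_1$'s argument onto (i) the outer integration domain, (ii) the argument of $f_2$, and (iii) the argument of $\lambda$.

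\textbf{Step 2 (Decomposition).} Subtract and split $F(\alpha)-F(\beta)$ into three pieces: a \emph{boundary piece} over the symmetric difference $(I_p^{0,k+w}+\alpha)\triangle(I_p^{0,k+w}+\beta)$, of measure $\leq|\alpha-\beta|\leq 2^{-k-w-\delta_w m}$; a \emph{$\lambda$-transition piece} over the subset of $(I_p+\alpha)\cap(I_p+\beta)$ where $\lambda(y-\alpha)\neq\lambda(y-\beta)$, which, since $\lambda$ is piecewise constant at scale $2^{-2k-w}$, lies in the union of $|\alpha-\beta|$-neighborhoods of the $\lambda$-jumps in $I_p^{0,k+w}$ and has measure $\leq\min(2^k|\alpha-\beta|,|I_p^{0,k+w}|)$; and finally an \emph{interior piece} over the remaining set, where the $\lambda$-factors coincide and the difference reduces to a pure $\alpha$-shift in $f_2$'s argument.

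\textbf{Step 3 (Boundary and transition).} For the first two pieces, combine the measure bounds above with the pointwise estimate
\begin{equation*}
\|G(\cdot,\gamma)\|_{L^\infty}\leq\|f_1\|_{L^\infty}\|f_2\|_{L^\infty}|I_q^{m,k}|\lesssim 2^{(\mu_0-1/2)m}\|f_1\|_{L^2(3I^k)}\|f_2\|_{L^2(3I^k)},
\end{equation*}
which uses \eqref{goodpart}. Multiplying the measure by $\|G\|_{L^\infty}^{2}$ and checking both subcases of the minimum yields the target bound $2^{-m-k-w-(\delta_w-2\mu_0)m}\|f_1\|^{2}\|f_2\|^{2}$.

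\textbf{Step 4 (Interior piece).} On the set where $\lambda(y-\alpha)=\lambda(y-\beta)=\lambda(y)$, the difference is
\begin{equation*}
\widetilde G(y,\alpha)-\widetilde G(y,\beta)=\int_{I_q^{m,k}} f_1(y-t)\bigl[f_2(y-\alpha+t^2)-f_2(y-\beta+t^2)\bigr]e^{ic_q\lambda(y)t}\,dt.
\end{equation*}
I apply Cauchy--Schwarz in $t$, pull out $\|f_1\|_{L^\infty}^{2}$, and after Fubini convert the $f_2$-factor into $\|f_2(\cdot-\alpha)-f_2(\cdot-\beta)\|_{L^2(I_p^{0,k+w})}$, which, by uniformity \eqref{goodpart2}, satisfies the local bound $\lesssim 2^{\mu_0 m-w}\|f_2\|_{L^2(3I^k)}^{2}$. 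Pairing the resulting estimate for $\|\widetilde G(\cdot,\alpha)-\widetilde G(\cdot,\beta)\|_{L^2(I_p)}$ against the trivial bound $\|\widetilde G\|_{L^2(I_p)}^{2}\lesssim 2^{-m-k-w+2\mu_0 m}\|f_1\|^{2}\|f_2\|^{2}$ via the identity $|a|^{2}-|b|^{2}=(a-b)\bar a+b\overline{(a-b)}$, and extracting the residual gain $|\alpha-\beta|/|I_p^{0,k+w}|=2^{-\delta_w m}$ from the ratio of scales, gives the desired bound for the interior contribution.

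\textbf{Main obstacle.} The hardest step is Step 4: since the natural oscillation scale of $f_2$ is $2^{-m-2k}$, which is smaller than $|\alpha-\beta|$ in our regime, one cannot obtain $2^{-\delta_w m}$ from a pointwise near-invariance of $f_2$ under the shift nor from a direct linearization of $e^{i\xi(\alpha-\beta)}-1$ on $\supp\hat f_2$. The gain must instead be extracted from the smallness of $|\alpha-\beta|$ relative to the outer window $|I_p^{0,k+w}|$, leveraging the local $L^2$ control provided by \eqref{goodpart2} in a nontrivial way (and relying on the fact that $\delta_w>2\mu_0$ leaves a strictly positive exponent after absorbing the $L^\infty$-induced factors of $2^{\mu_0 m}$).
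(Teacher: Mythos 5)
Your Step 1 places the shift in the wrong variable, and this is what makes Steps 3--4 unclosable. The outer substitution $y=x+\alpha$ transfers the \emph{full} shift $\alpha-\beta$ (of size up to $2^{-k-w-\del_w m}$) onto the arguments of both $f_2$ and $\lambda$. This shift exceeds the constancy scale $2^{-m-2k}$ of $f_2$ \emph{and} the constancy scale $2^{-2k-w}$ of $\lambda$ (the latter because $\del_w m<k$ in the regime $k\geq\frac{m}{2}$). Two consequences: (a) your ``$\lambda$-transition piece'' is not small --- your own bound $\min(2^k|\alpha-\beta|,|I_p^{0,k+w}|)$ evaluates to $|I_p^{0,k+w}|$, since $2^k|\alpha-\beta|$ can be as large as $2^{k-\del_w m}|I_p^{0,k+w}|$; so that piece yields $2^{-m-k-w+2\mu_0 m}$ with no factor $2^{-\del_w m}$. (b) In Step 4 there is no gain to extract: on the Fourier side $f_2(\cdot-\alpha)-f_2(\cdot-\beta)$ has symbol $\widehat{f_2}(\xi)(e^{-i\xi\alpha}-e^{-i\xi\beta})$ with $|\xi|\,|\alpha-\beta|$ as large as $2^{m+k-w-\del_w m}\geq 2^{\frac{m}{2}-\del_w m}\gg 1$, so the two shifted copies decorrelate completely and the difference has full $L^2$ norm. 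The ``ratio of scales $|\alpha-\beta|/|I_p^{0,k+w}|$'' is not a source of smallness for this quantity, and \eqref{goodpart2} controls only the size of local $L^2$ masses, not any modulus of continuity under translation. You correctly identify this as the main obstacle, but the proposed mechanism does not overcome it.

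The paper's proof avoids the obstacle by changing variables in the \emph{inner} integral, $t\to t+\alpha$. This leaves $f_1$ untouched, and --- the key point --- because of the quadratic argument and the localization $t\in I_q^{m,k}$, it acts on $f_2(x+t^2)$ as the \emph{fixed} translation $x\mapsto x+2\alpha t_0$ (with $t_0$ the center of $I_q^{m,k}$) up to errors of size $O(2^{-\frac{m}{2}-2k-w-\del_w m})$, which \emph{are} below $f_2$'s constancy scale. The compensating outer substitution $x\to x-2\alpha t_0$ then lands a shift of size $|2\alpha t_0|\sim 2^{-2k-w-\del_w m}$ --- a factor $t_0\sim 2^{-k}$ smaller than the shift in your scheme --- on $f_1$, on $\lambda$, and on the two integration domains. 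This smaller shift lies below $f_1$'s constancy scale and is exactly $2^{-\del_w m}$ times $\lambda$'s constancy scale, so the $\lambda$-transition set and the two symmetric differences all have relative measure $\lesssim 2^{-\del_w m}$; the uniform bounds then close the estimate precisely as in your Step 3. Without this conversion of the $t$-shift into an $x$-shift attenuated by $t_0$, the decomposition in Steps 2--4 cannot produce the factor $2^{-(\del_w-2\mu_0)m}$.
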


\begin{proof}
Applying the change of variable $t \to t+\alpha$, we write
\begin{equation} \label{20230203eq02}
F(\alpha)=\int_{I_p^{0, k+w}} \left| \int_{I_q^{m, k}-\alpha} f_1(x-t)f_2\left(x+(t+\alpha)^2 \right)e^{3i 2^{\frac{m}{2}+k} \cdot \frac{q^2}{2^m} \cdot \lambda(x) t} dt \right|^2 dx.
\end{equation}
Since $\alpha \in I_0^{0,k+w+\del_w m}$ we have $|\alpha|^2 \le 2^{-2k-2w-2\del_w m}$ and thus
$(t+\alpha)^2=t^2+2\alpha t_0+ O \left(2^{-\frac{m}{2}-2k-w-\del_w m} \right)$ where $t_0$ is the center of the interval\footnote{Note that $I_q^{m, k}-\alpha \subseteq 3 I_q^{m, k}$} $I_q^{m, k}$. Recalling now that $f_2$ is morally constant on intervals of length $2^{-m-2k}$ (and $2^{-\frac{m}{2}-2k-w-\del_w m}<2^{-m-2k}$), we have\footnote{Recall Observation \ref{ETerm}.}
\begin{equation} \label{20200203eq01}
f_2\left(x+(t+\alpha)^2 \right) \approx f_2 \left(x+t^2+2\alpha t_0 \right).
\end{equation}
Plugging \eqref{20200203eq01} back to \eqref{20230203eq02} and applying another change variable $x \to x-2\alpha t_0$, we can write
\begin{eqnarray} \label{20230203eq11}
F(\alpha) &\approx& \int_{I_p^{0, k+w}+2\alpha t_0} \left| \int_{I_q^{m, k}-\alpha} f_1(x-t-2\alpha t_0) f_2(x+t^2) e^{3i 2^{\frac{m}{2}+k} \cdot \frac{q^2}{2^m} \cdot \lambda \left(x-2\alpha t_0 \right) t} dt \right|^2 dx \nonumber \\
&\approx& \int_{I_p^{0, k+w}+2\alpha t_0} \left| \int_{I_q^{m, k}-\alpha} f_1(x-t) f_2(x+t^2) e^{3i 2^{\frac{m}{2}+k} \cdot \frac{q^2}{2^m} \cdot \lambda \left(x-2\alpha t_0 \right) t} dt \right|^2 dx,
\end{eqnarray}
where in the last equation above, we used that $f_1$ is morally constant on intervals of length $2^{-m-k}$, while
\begin{equation}\label{vars}
|\alpha t_0| \sim 2^{-2k-w-\del_w m}\,.
\end{equation}
Next, appealing to the same \eqref{vars}, we have\footnote{Throughout this paper, given $A,B$ two measurable sets on the real line, we will use the notation $A\Delta B:=(A\setminus B)\cup (B\setminus A)$ for the symmetric difference of the pair $(A,B)$.}
\begin{enumerate}
    \item [(1)] $\lambda(x-2\alpha t_0)\approx\lambda(x)$ since from our hypothesis $\lambda$ is constant on intervals of length $2^{-2k-w}$;

    \item [(2)] $\left| \left(I_p^{0, k+w}+2\alpha t_0  \right)\:\:\Delta \:\:I_p^{0, k+w} \right| \lesssim 2^{-\del_w m-k} \left|I_p^{0, k+w} \right| $;

    \item [(3)] $ \left| \left(I_q^{m, k}-\alpha \right)\:\:\Delta\:\:I_q^{m, k}  \right| \lesssim 2^{-\del_w m} \left|I_q^{m, k} \right| $.
\end{enumerate}
Using the above itemization the desired conclusion follows via standard reasonings. For reader's convenience we will exemplify only the step corresponding to the first item, that is we sketch the proof of
\begin{equation} \label{20230203eq12}
|F_1(\alpha)-F(\alpha)| \lesssim 2^{-\left(\del_w-2\mu_0 \right) m} \cdot 2^{-m-k-w} \cdot \left\|f_1\right\|^2_{L^2 \left(3 I^k\right)} \left\|f_2 \right\|^2_{L^2 \left(3 I^k \right)}\,,
\end{equation}
where
$$F_1(\alpha):=\int_{I_p^{0,k+w}+2\alpha t_0} \left| \int_{I_q^{m, k}-\alpha} f_1(x-t) f_2(x+t^2) e^{3i 2^{\frac{m}{2}+k} \cdot \frac{q^2}{2^m} \cdot \lambda \left(x \right) t} dt \right|^2 dx\,.$$

For proving \eqref{20230203eq12} it is enough to note that the functions $\lambda(x)$ and $\lambda(x-2\alpha t_0)$ may only differ on a subset $\frakJ \subset I_p^{0, k+w}+2\alpha t_0$ whose size obeys\footnote{More precisely, $\frakJ$ is a union of at most $2^k$ many intervals with length $\lesssim 2^{-\del_w m-2k-w}$.}
\begin{equation}\label{setI}
|\frakJ| \lesssim 2^{-\del_w m} |I_p^{0, k+w}|\,.
\end{equation}
Therefore, via a Cauchy--Schwarz argument we immediately deduce
\begin{equation}\label{20230203eq13}
\left|F_1(\alpha)-F(\alpha) \right| \lesssim \int_{\frakJ} \left(\int_{I_q^{m, k}-\alpha} |f_1(x-t)|^2 dt \right) \left(\int_{I_q^{m, k}-\alpha} \left|f_2(x+t^2)\right|^2 dt \right) dx.
\end{equation}
Since both $f_1$ and $f_2$ are uniform, based on Observation \ref{CZtype}, the desired \eqref{20230203eq12} follows now from
\begin{equation*}
\int_{I_q^{m, k}-\alpha} |f_1(x-t)|^2 dt \lesssim 2^{(\mu_0-\frac{1}{2})m}\, \left\|f_1 \right\|_{L^2 \left(3 I^k \right)}^2
\end{equation*}
and
\begin{equation*}
\int_{I_q^{m, k}-\alpha}  \left| f_2\left(x+t^2 \right) \right|^2 dt\lesssim 2^{(\mu_0-\frac{1}{2})m}\,\left\|f_2 \right\|^2_{L^2 \left(3 I^k \right)}.
\end{equation*}
We leave the other remaining details to the interested reader.
\end{proof}

It is now useful to record the following straightforward corollary of Proposition \ref{constantprop} above:
\begin{cor} \label{20230203cor01}
With the previous notations and conventions, for each $p \sim 2^w$ and $q \sim 2^{\frac{m}{2}}$, one has
\begin{eqnarray} \label{20230203eq31}
&& \qquad\qquad\qquad\int_{I_p^{0, k+w}} \left| \int_{I_q^{m, k}} f_1(x-t)f_2(x+t^2)e^{3i 2^{\frac{m}{2}+k} \cdot \frac{q^2}{2^m} \cdot \lambda(x) t} dt \right|^2 dx \nonumber \\
&& \quad  \quad \lesssim \frac{2^{\del_w m}}{\left|I_0^{0, k+w} \right|} \int_{3I_0^{0, k+w}} \int_{I_0^{0, k+w}} \bigg| \int_{I_q^{m, k}} f_1 \left(\alpha-t+\frac{p}{2^{k+w}} \right)\,f_2 \left(x+t^2+\frac{p}{2^{k+w}} \right) e^{3i 2^{\frac{m}{2}+k} t \cdot \frac{q^2}{2^m} \cdot \lambda \left(x+\frac{p}{2^{k+w}} \right)}\,dt \bigg|^2 dx d\alpha \nonumber\\
&& \quad\quad+\,O\left(2^{-(\del_w-2\mu_0) m} \cdot 2^{-m-k-w} \cdot \left\|f_1 \right\|_{L^2 \left(3 I^k \right)}^2 \left\|f_2 \right\|_{L^2 \left(3 I^k \right)}^2\right)\nonumber.
\end{eqnarray}
with all the implicit constants involved above being independent of  the choice of $p, q, m$, and $k$.
\end{cor}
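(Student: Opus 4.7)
The plan is to derive \eqref{20230203eq31} as an averaging consequence of Proposition \ref{constantprop} combined with a single clean change of variables.

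First, I observe that the left-hand side of \eqref{20230203eq31} is precisely $F(0)$, where $F$ is the non-negative continuous function defined in \eqref{20220202eq04}. Since $F \ge 0$ pointwise, and Proposition \ref{constantprop} provides uniform control on the oscillation of $F$ over the small interval $I_0^{0,\,k+w+\delta_w m}$, I would average the pointwise estimate $F(0) \le F(\beta) + O(\cdot)$ with $\beta \in I_0^{0,\,k+w+\delta_w m}$ to obtain
\begin{equation}\label{plan-step1}
F(0) \leq \frac{1}{\left|I_0^{0,\, k+w+\delta_w m}\right|} \int_{I_0^{0,\,k+w+\delta_w m}} F(\beta)\, d\beta + O\!\left( 2^{-(\delta_w - 2\mu_0)m}\cdot 2^{-m-k-w}\, \|f_1\|^2_{L^2(3I^k)} \|f_2\|^2_{L^2(3I^k)} \right).
\end{equation}

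Second, I would translate $F(\beta)$ into the form appearing on the right-hand side of \eqref{20230203eq31}. The spatial change of variables $x \mapsto x + p/2^{k+w}$ inside $F(\beta)$ converts the outer domain $I_p^{0,\,k+w}$ into $I_0^{0,\,k+w}$ and introduces the shifts $p/2^{k+w}$ inside the arguments of $f_2$ and $\lambda$. Writing $H(\alpha, x)$ for the integrand appearing inside $\left|\cdot\right|^2$ on the RHS of \eqref{20230203eq31}, this substitution yields the identity
\begin{equation}\label{plan-step2}
F(\beta) = \int_{I_0^{0,\,k+w}} \left|H(\beta + x, x)\right|^2 dx.
\end{equation}

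Third, after invoking Fubini, I would perform the substitution $\alpha = \beta + x$ in the inner $\beta$-integral (for each fixed $x$), producing
\begin{equation}\label{plan-step3}
\int_{I_0^{0,\,k+w+\delta_w m}} F(\beta)\, d\beta = \int_{I_0^{0,\,k+w}} \int_{I_0^{0,\,k+w+\delta_w m} + x} \left|H(\alpha, x)\right|^2 d\alpha\, dx.
\end{equation}
Since $\delta_w > 0$ and $x \in I_0^{0,\,k+w}$, the translated interval $I_0^{0,\,k+w+\delta_w m} + x$ is contained in $3 I_0^{0,\,k+w}$, so by the non-negativity of $|H|^2$ I may enlarge the inner region of integration to $3 I_0^{0,\,k+w}$. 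Combining this with \eqref{plan-step1}, using Fubini once more to exchange the order of the $dx\, d\alpha$-integration, and recalling the identity $\left|I_0^{0,\,k+w+\delta_w m}\right|^{-1} = 2^{\delta_w m}/\left|I_0^{0,\,k+w}\right|$, yields precisely the bound claimed in \eqref{20230203eq31}.

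No new technical obstacle arises beyond Proposition \ref{constantprop}; the sole mildly delicate point is to ensure that the chain of substitutions $x \mapsto x + p/2^{k+w}$ (spatial translation producing the offsets in $f_2$ and $\lambda$) and $\alpha = \beta + x$ (decoupling the $f_1$ argument from $x$) line up so that the arguments of $f_1, f_2$, and $\lambda$ in the resulting integrand exactly reproduce the RHS of \eqref{20230203eq31}. The uniformity of the implicit constants in $p, q, m, k$ is inherited directly from the corresponding uniformity in Proposition \ref{constantprop}.
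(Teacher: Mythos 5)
Your proposal is correct and is precisely the "straightforward" derivation the paper intends (the paper records the corollary without proof, noting only that it follows from Proposition \ref{constantprop}): averaging the oscillation bound over $\beta\in I_0^{0,k+w+\del_w m}$, translating $x\mapsto x+p/2^{k+w}$, substituting $\alpha=\beta+x$, and enlarging the $\alpha$-domain to $3I_0^{0,k+w}$ reproduces the right-hand side with the correct normalization $|I_0^{0,k+w+\del_w m}|^{-1}=2^{\del_w m}/|I_0^{0,k+w}|$. No gaps.
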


With these completed, we are now ready to state the essential step in addressing Part 1:

$\newline$ \noindent\textbf{Key Heuristic 4.} \textit{Our strategy relies on the following \textbf{dichotomy}:
$\newline$
Fix $p \sim 2^w$ and recall \eqref{vmk}. Then, the following holds:
\begin{enumerate}
    \item [$\bullet$] either $\lambda (\cdot)$ is morally constant on $I_p^{0, k+w}$, i.e.
    \begin{equation} \label{20230205eq01}
    \left| \lambda(x)-\lambda(y) \right| \lesssim 1 \quad \textrm{for ``most" of } \ x, y \in I_p^{0, k+w};
    \end{equation}
    \item [$\bullet$] or there exists some $\epsilon_3^w>0$, such that
    \begin{equation} \label{20230205eq02}
    \left|V_{m, k}^{p,w} \left(f_1, f_2 \right) \right| \lesssim 2^{-2\epsilon_3^w m} \left\|f_1 \right\|_{L^2 \left(3 I^k \right)}^2 \left\| f_2 \right\|_{L^2 \left(3I^{k} \right)}^2\,.
    \end{equation}
\end{enumerate}}
\medskip

In view of \eqref{Lvmk} the above heuristic together with Proposition \ref{mainprop02} below may be interpreted as creating a correspondence between the \emph{structure} of the linearizing function $\lambda$ and the ability to control the \emph{pointwise} or the \emph{average} behavior of $V_{m, k}^{p,w}(f_1, f_2)$.

The remainder of this section will deal with the rigorous proof of Key Heuristic 4. Since such an enterprise requires in particular a precise quantification of \eqref{20230205eq01} our first step will be to gain more information on the \emph{structure} of the linearizing function $\lambda$:
$\newline$

Depending on the value of $w\in\{k,\frac{m}{2}\}$, we fix $\epsilon_{0}^{w}>0$ small (to be chosen later--see Observation \ref{bkI}) and divide the range of $\lambda$ into smaller intervals of length $2^{\epsilon_{0}^{w} m}$, \emph{i.e.}
$$
\textrm{Range of} \ \lambda(\cdot)=\bigcup_{l \sim 2^{\frac{m}{2}-\epsilon_{0}^{w} m}} \left[l2^{\epsilon_{0}^{w} m}, \left(l+1 \right) 2^{\epsilon_{0}^{w} m}  \right].
$$
Next, we partition $I_p^{0, k+w}$ in sets corresponding to the pre-images of $\lambda$ of the above subintervals, \textit{i.e.}, for each $l \sim 2^{\frac{m}{2}-\epsilon_{0}^{w} m}$, we consider\footnote{By the definition of $\lambda$, $\frakM_l \left(I_p^{0, k+w} \right)$ consists of a union of intervals of the form $I_{\ell'}^{0, 2k+w}$ with $\ell'\sim 2^{k+w}$ (see, \eqref{constancy666}).}
$$
\frakM_l \left(I_p^{0, k+w} \right):= \left\{ x \in I_p^{0, k+w}: \lambda(x) \in \left[l 2^{\epsilon_{0}^{w} m}, \left(l+1 \right) 2^{\epsilon_{0}^{w} m}  \right] \right\}.
$$
We now define
\begin{itemize}
\item the \emph{heavy} set of $l$-indices:
$$
\calH_p^w:= \left\{l \sim 2^{\frac{m}{2}-\epsilon_{0}^{w} m}: \left| \frakM_l \left(I_p^{0, k+w} \right) \right| \gtrsim \left| I_p^{0, k+w} \right| 2^{-\epsilon_{0}^{w} m} \right\}\,;
$$
\item the \emph{light} set of $l$-indices:
$$
\calL_p^w:=\left\{l \sim 2^{\frac{m}{2}-\epsilon_{0}^{w} m}: \left| \frakM_l \left(I_p^{0, k+w} \right) \right| \lesssim \left| I_p^{0, k+w} \right| 2^{-\epsilon_{0}^{w} m} \right\}.
$$
\end{itemize}
Moreover, we let
\begin{equation} \label{20230205eq11}
\widetilde{\calH}_p^{w}:=\bigcup_{l \in \calH_p^w} \frakM_l \left(I_p^{0, k+w} \right) \quad \textrm{and} \quad  \widetilde{\calL}_p^{w}:=\bigcup_{l \in \calL_p^w} \frakM_l \left(I_p^{0, k+w} \right).
\end{equation}
It is now immediate to notice that
\begin{equation} \label{partIp}
I_p^{0, k+w}=\widetilde{\calH}_p^{w} \cup \widetilde{\calL}_p^{w}\quad\textrm{and}\quad \#\calH_p^w \lesssim 2^{ \epsilon_{0}^{w} m}.
\end{equation}

Next, using \eqref{20230205eq11}, we decompose
\begin{equation} \label{20230205eq12}
V_{m, k}^{p,w}(f_1, f_2)=V_{m, k}^{p, \calH_p^w}(f_1, f_2)+V_{m, k}^{p, \calL_p^w}(f_1, f_2),
\end{equation}
where
\begin{equation*}
V_{m, k}^{p, \calH_p^w}  \left(f_1, f_2 \right):=2^{\frac{m}{2}+k+w} \sum_{q \sim 2^{\frac{m}{2}}} \int_{\widetilde{\calH}_p^{w}} \left| \int_{I_q^{m, k}} f_1(x-t) f_2(x+t^2) e^{3i 2^{\frac{m}{2}+k} \cdot \frac{q^2}{2^m} \lambda(x) t} dt \right|^2 dx
\end{equation*}
and
$$
V_{m, k}^{p, \calL_p^w}  \left(f_1, f_2 \right):=2^{\frac{m}{2}+k+w} \sum_{q \sim 2^{\frac{m}{2}}} \int_{\widetilde{\calL}_p^{w}} \left| \int_{I_q^{m, k}} f_1(x-t) f_2(x+t^2) e^{3i 2^{\frac{m}{2}+k} \cdot \frac{q^2}{2^m} \lambda(x) t} dt \right|^2 dx.
$$

Once at this point, let us notice that in our Key Heuristic 4, \eqref{20230205eq01} corresponds to the heavy term $V_{m, k}^{p, \calH_p^w}$ for which $\lambda (\cdot)$ is essentially constant over large(r) intervals, while \eqref{20230205eq02} corresponds to the light term $V_{m, k}^{p, \calL_p^w}$. Indeed, this latter situation is the subject of the following:

\begin{prop} \label{mainprop01}
Let $w\in\{k,\frac{m}{2}\}$. Then, there exists some $\epsilon_{3}^{w}>0$, such that
\begin{equation} \label{lighttermest}
\left|V_{m, k}^{p, \calL_p^w} \left(f_1, f_2 \right) \right| \lesssim 2^{-2\epsilon_{3}^{w} m} \left\|f_1 \right\|_{L^2 \left(3 I^k \right)}^2 \left\| f_2 \right\|_{L^2 \left(3 I^{k} \right)}^2.
\end{equation}
\end{prop}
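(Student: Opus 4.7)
Plan. We first expand the square in \eqref{vmk}, then reorder the integrations so that the $x$-integral over $\widetilde{\calL}_p^w$ sits inside:
$$
V_{m, k}^{p, \calL_p^w}(f_1, f_2) = 2^{\frac{m}{2}+k+w}\!\! \sum_{q \sim 2^{\frac{m}{2}}} \int_{I_q^{m,k}}\!\int_{I_q^{m,k}} \!\!\int_{\widetilde{\calL}_p^w}\!\! f_1(x{-}t)\overline{f_1(x{-}s)}f_2(x{+}t^2)\overline{f_2(x{+}s^2)}e^{i\theta_q(t-s)\lambda(x)} dx\,dt\,ds,
$$
where $\theta_q := 3\cdot 2^{k-m/2}q^2$. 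The strategy is to exploit the fact that on the light set $\widetilde{\calL}_p^w = \bigsqcup_{l\in\calL_p^w}\frakM_l$, $\lambda$ takes values across many narrow bands $[l\cdot 2^{\epsilon_0^w m},(l+1)\cdot 2^{\epsilon_0^w m}]$ with each preimage small ($|\frakM_l|\lesssim 2^{-\epsilon_0^w m}|I_p^{0,k+w}|$), so that across neighboring slabs the phase $e^{i\theta_q(t-s)\lambda(x)}$ differs by a rotation $e^{i\theta_q(t-s)\cdot 2^{\epsilon_0^w m}}$, providing a source of cancellation.

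The first step is to apply Corollary \ref{20230203cor01} (whose proof adapts to $\widetilde{\calL}_p^w$ in place of $I_p^{0,k+w}$, since the $\alpha$-shift only acts on the $f_1$-argument and the slab decomposition of $\widetilde{\calL}_p^w$ is built from unions of $\lambda$-constancy intervals of length $2^{-2k-w}$, the very scale on which the corollary operates) to decouple the $x$-dependence of $f_1$ from that of $f_2$ and $\lambda$: with $P:=p/2^{k+w}$, the corollary replaces $f_1(x-t)$ by $f_1(\alpha-t+P)$ and averages in $\alpha\in 3I_0^{0,k+w}$, at the price of a multiplicative factor $2^{\delta_w m}$ and an additive error absorbable by our target bound. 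After this decoupling, on each slab I would write $\lambda(x+P) = l\cdot 2^{\epsilon_0^w m}+\tilde\lambda_l(x+P)$ with $|\tilde\lambda_l|\leq 2^{\epsilon_0^w m}$, factor the phase as $e^{il\theta_q(t-s)\cdot 2^{\epsilon_0^w m}}\cdot e^{i\theta_q(t-s)\tilde\lambda_l(x+P)}$, and employ the uniformity $L^\infty$-bounds $\|f_i^{\calU_{\mu_0}}\|_{L^\infty(3I^k)}\lesssim 2^{\mu_0 m/2+k/2}\|f_i\|_{L^2(3I^k)}$ from \eqref{goodpart} to control the $f_1$- and $f_2$-factors pointwise.

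Summing over $l\in\calL_p^w$ (with $\#\calL_p^w\lesssim 2^{(1/2-\epsilon_0^w)m}$), integrating in $(t,s,\alpha)$, and summing over $q\sim 2^{m/2}$, we would combine the gain $2^{-\epsilon_0^w m}$ per slab (coming from $|\frakM_l|$) against the losses $2^{\delta_w m}$ from the corollary and $2^{\mu_0 m}$ from uniformity, obtaining an estimate of the form $V_{m,k}^{p,\calL_p^w}\lesssim 2^{Am}\|f_1\|^2\|f_2\|^2$ with an exponent $A$ linear in $\delta_w,\mu_0,\epsilon_0^w$; choosing $\epsilon_0^w$ sufficiently large relative to $\mu_0$ and $\delta_w\in(2\mu_0,\epsilon_0^w)$ sufficiently small yields $A<0$ and hence the claimed decay with $\epsilon_3^w := -A/2>0$. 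The main obstacle is to extract actual cancellation from the $l$-sum $\sum_{l\in\calL_p^w} e^{il\theta_q(t-s)\cdot 2^{\epsilon_0^w m}}\cdot(\text{slab integral})$: since $\calL_p^w$ is a priori an arbitrary subset of $\{l\sim 2^{m/2-\epsilon_0^w m}\}$ no direct geometric-sum cancellation is available, so one must rely on the joint $(l,q)$-oscillation structure via a van der Corput / Weyl-type argument applied to the quadratic-in-$q$, linear-in-$l$ phase, carefully balancing the range $|t-s|\lesssim 2^{-m/2-k}$ against the height of the oscillation.
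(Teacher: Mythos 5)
Your setup (expanding the square, reordering so that the $x$-integral over $\widetilde{\calL}_p^w$ is innermost, and invoking Corollary \ref{20230203cor01} to decouple the $f_1$-argument) matches the opening of the paper's argument, but there is a genuine gap precisely at the point you yourself flag as ``the main obstacle'': the cancellation is never extracted, and the mechanism you propose for extracting it cannot work as stated. The bookkeeping ``gain $2^{-\epsilon_0^w m}$ per slab against the losses'' produces no gain at all: the slabs $\frakM_l$ partition $\widetilde{\calL}_p^w$, so $\sum_l |\frakM_l|\le |I_p^{0,k+w}|$ and the smallness of each preimage is exactly compensated by the number $\#\calL_p^w\sim 2^{(\frac{1}{2}-\epsilon_0^w)m}$ of slabs; bounding each slab contribution by its measure times the $L^\infty$ uniformity bounds merely reproduces the trivial estimate $\lesssim 2^{2\mu_0 m}\|f_1\|^2\|f_2\|^2$, which is what must be beaten. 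Moreover, the exponential sum $\sum_{l\in\calL_p^w} e^{il\theta_q(t-s)2^{\epsilon_0^w m}}\cdot(\text{slab integral})$ carries coefficients depending on $l$ through the arbitrary measurable sets $\frakM_l$ and through $f_1,f_2$ restricted to them, so no van der Corput/Weyl argument in the $l$-variable alone can produce oscillatory cancellation.

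The paper's route is structurally different at this stage and is worth internalizing. The lightness of $\calL_p^w$ is exploited only \emph{after} two Cauchy--Schwarz/$TT^*$ expansions strip off all the $f$-factors (using the uniformity in local $L^2$ form, not pointwise $L^\infty$), leaving the purely level-set quantity $\sum_{q,q_1}\iiint \lceil \Xi_{x,y,s}(q,q_1)\rceil$, where $\Xi$ compares $\widetilde{\lambda}(x)-\widetilde{\lambda}(y)$ with $\tfrac{q_1}{q}$ times a shifted difference. The diagonal pairs (with $\lambda(x),\lambda(y)$ in the same or neighbouring bands) form a set of relative measure $2^{-\epsilon_0^w m}$ in $(x,y)$ --- that is where $|\frakM_l|\lesssim 2^{-\epsilon_0^w m}|I_p^{0,k+w}|$ actually enters --- while for off-diagonal pairs one has $|\widetilde{\lambda}(x)-\widetilde{\lambda}(y)|\gtrsim 2^{\epsilon_0^w m}$, and the gain then comes from a counting argument in $q$: the map $q\mapsto \sqrt{2^{m/2}u+q^2}/q$ is monotone with derivative $\approx 2^{-m/2}$, so the singular set of $q$ for which $\Xi$ is small has cardinality $\lesssim 2^{\frac{m}{2}-\frac{\epsilon_0^w m}{2}}$. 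You would also need the paper's preliminary Step I, extending $\widetilde{\calL}_p^w$ to the full interval via a modified, everywhere-light linearizing function $\widetilde{\lambda}$; working directly on the union of slabs obstructs the subsequent changes of variables.
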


\begin{proof}
We divide our proof into several steps.

\medskip

\noindent\textsf{Step I: Extension of $V_{m, k}^{p, \calL_p^w} \left(f_1, f_2 \right)$.} While we may asssume that $\widetilde{\calL}_p^{w} \subset I_p^{0, k+w}$ is ``large" \footnote{As otherwise we can apply a trivial argument in order to obtain the desired control on $V_{m, k}^{p, \calL_p^w}$.}, it turns out that it is more convenient to extend the set $\widetilde{\calL}_p^{w}$ to \emph{fully} cover the interval $I_p^{0, k+w}$. For this purpose, we define a new measurable function $\widetilde{\lambda}$ that is
\begin{enumerate}
    \item [$\bullet$] constant on intervals $\left\{I_\ell^{0, 2k+w}\right\}_{\ell \sim 2^{k+w}}$;
    \item [$\bullet$] takes the same value as $\lambda$ for $x \in \widetilde{\calL}_p^{w}$;
    \item [$\bullet$] the set of heavy indices corresponding to $\widetilde{\lambda}$, denoted here by $\underline{\calH}_p^w$,  is empty; this can be achieved for example as follows: since the original $\widetilde{\calH}_p^{w}$ is a finite union of intervals of the form $I_\ell^{0, 2k+w}$, we arrange them in the increasing order of $\ell$, relabel this resulting collection of intervals as $\left\{\frakJ_1, \frakJ_2, \dots \right\}$ and for each $x\in\frakJ_{\ell}$ we let\footnote{Here and throughout the paper $\lfloor x\rfloor$ stands for the entire part of $x\in\R$.} $\widetilde{\lambda}(x):=2^{\frac{m}{2}}+\ell-\left\lfloor\frac{\ell}{2^{\frac{m}{2}}}\right\rfloor\,2^{\frac{m}{2}}$;
   \item [$\bullet$] finally, as a technical convenience to be used later in the proof, we repeat the above process for the adjacent $8$ left and $8$ right brothers of  $I_p^{0, k+w}$ so that $\widetilde{\lambda}(x)$ becomes ``completely light" on $17 I_p^{0, k+w}$.
\end{enumerate}
From the above, we  deduce that
$$
V_{m, k}^{p, \calL_p^w}(f_1, f_2) \le \widetilde{V_{m, k}^{p, \calL_p^w}}(f_1, f_2),
$$
where
\begin{equation} \label{20230206eq01}
\widetilde{V_{m, k}^{p, \calL_p^w}}(f_1, f_2):=2^{\frac{m}{2}+k+w} \sum_{q \sim 2^{\frac{m}{2}}} \int_{I_p^{0, k+w}} \left| \int_{I_q^{m, k}} f_1(x-t) f_2(x+t^2) e^{3i 2^{\frac{m}{2}+k} \cdot \frac{q^2}{2^m} \widetilde{\lambda}(x) t} dt \right|^2 dx.
\end{equation}

\medskip

\noindent\textsf{Step II: A reduction via Corollary \ref{20230203cor01} and $TT^*$.}  Via Corollary \ref{20230203cor01}, we see that up to an error term of at most $O\left(2^{-(\del_w-2\mu_0) m} \, \left\|f_1 \right\|_{L^2 \left(3 I^k \right)}^2 \left\|f_2 \right\|_{L^2 \left(3 I^k \right)}^2\right)$, the expression in \eqref{20230206eq01} is bounded from above by
\begin{equation} \label{20230206eq02}
\sum_{q \sim 2^{\frac{m}{2}}} \frac{2^{\frac{m}{2}+k+w+\del_w m}}{\left|I_0^{0, k+w} \right|} \int_{3I_0^{0, k+w}} \int_{I_0^{0, k+w}} \bigg| \int_{I_q^{m, k}} f_1 \left(\alpha-t+\frac{p}{2^{k+w}} \right)\,f_2 \left(x+t^2+\frac{p}{2^{k+w}} \right) e^{3i 2^{\frac{m}{2}+k} t \cdot \frac{q^2}{2^m} \cdot \widetilde{\lambda} \left(x+\frac{p}{2^{k+w}} \right)}\,dt \bigg|^2 dx d\alpha.
\end{equation}
Now, without loss of generality, we may
\begin{enumerate}
    \item [$\bullet$] assume $\alpha=0$, since the $\alpha$-integral in \eqref{20230206eq02} represents an average value relative to the interval $3I_0^{0, k+w}$ and our estimates below will not depend on the specific location of $\alpha$;
    \item [$\bullet$] relabel $f_1 \left(\frac{p}{2^{k+w}}-\cdot \right)$ by $f_1 (\cdot)$, $f_2 \left(\frac{p}{2^{k+w}}+\cdot \right)$ by $f_2 (\cdot)$, and $\widetilde{\lambda} \left(\frac{p}{2^{k+w}}+\cdot \right)$ by $\widetilde{\lambda}  (\cdot)$, respectively, since $p\sim 2^{w}$ is fixed throughout our proof.
\end{enumerate}

Therefore, it suffices to consider the term
\begin{equation} \label{20230206eq03}
\calV_{m, k}^{w}:=2^{\frac{m}{2}+k+w+\del_w m} \sum_{q \sim 2^{\frac{m}{2}}} \int_{I_0^{0, k+w}} \left| \int_{I_q^{m, k}} f_1 \left(t \right)  f_2 \left(x+t^2 \right) e^{3i 2^{\frac{m}{2}+k} t \cdot \frac{q^2}{2^m} \cdot \widetilde{\lambda}  \left(x\right)} dt \right|^2 dx.
\end{equation}
Applying the change of variable $t \to  t+\frac{q}{2^{\frac{m}{2}+k}}$ and then using the fact that
$$
\left(t+\frac{q}{2^{\frac{m}{2}+k}} \right)^2=\frac{2tq}{2^{\frac{m}{2}+k}}+\frac{q^2}{2^{m+2k}}+O \left(\frac{1}{2^{m+2k}} \right),
$$
we have
\begin{equation}
\calV_{m, k}^{w} \approx 2^{\frac{m}{2}+k+w+\del_w m} \sum_{q \sim 2^{\frac{m}{2}}} \int_{I_0^{0, k+w}} \bigg| \int_{I_0^{m, k}} f_1 \left(t+\frac{q}{2^{\frac{m}{2}+k}} \right)\,f_2 \left(x+\frac{2tq}{2^{\frac{m}{2}+k}}+\frac{q^2}{2^{m+2k}} \right)  e^{3i 2^{\frac{m}{2}+k} t \cdot \frac{q^2}{2^m} \cdot \widetilde{\lambda} \left(x\right)} dt \bigg|^2 dx.
\end{equation}
Applying another change of variable $x \to x-\frac{q^2}{2^{m+2k}}$ and Fubini we further have
\begin{eqnarray*}
\calV_{m, k}^{w} &\lesssim& 2^{\frac{m}{2}+k+w+\del_w m} \sum_{q \sim 2^{\frac{m}{2}}} \int_{9I_0^{0, k+w}} \bigg| \int_{I_0^{m, k}} f_1 \left(t+\frac{q}{2^{\frac{m}{2}+k}} \right)\,f_2 \left(x+\frac{2tq}{2^{\frac{m}{2}+k}} \right)  e^{3i 2^{\frac{m}{2}+k} t \cdot \frac{q^2}{2^m} \cdot \widetilde{\lambda}  \left(x-\frac{q^2}{2^{m+2k}}\right)} dt \bigg|^2 dx \\
&=&2^{\frac{m}{2}+k+w+\del_w m} \sum_{q \sim 2^{\frac{m}{2}}} \iint_{\left(I_0^{m, k} \right)^2} f_1 \left(t+\frac{q}{2^{\frac{m}{2}+k}} \right) f_1 \left(s+\frac{q}{2^{\frac{m}{2}+k}} \right) \\
&\cdot& \bigg(\int_{9I_0^{0, k+w}} f_2 \left(x+\frac{2tq}{2^{\frac{m}{2}+k}}  \right) f_2 \left(x+\frac{2sq}{2^{\frac{m}{2}+k}}  \right)\,e^{3i 2^{\frac{m}{2}+k} (t-s) \cdot \frac{q^2}{2^m} \cdot \widetilde{\lambda} \left(x-\frac{q^2}{2^{m+2k}}\right)} dx \bigg) dtds.
\end{eqnarray*}
Applying Cauchy-Schwarz twice, we notice
\begin{equation} \label{20230206eq30}
\calV_{m, k}^{w} \lesssim 2^{\frac{m}{2}+k+w+\del_w m} {\calA}\: {\calB},
\end{equation}
where
$$
{\calA}^2:=\iint_{\left(I_0^{m, k} \right)^2} \sum_{q \sim 2^{\frac{m}{2}}} \left|f_1 \left(t+\frac{q}{2^{\frac{m}{2}+k}} \right) \right|^2\left|f_1 \left(s+\frac{q}{2^{\frac{m}{2}+k}} \right) \right|^2 dtds
$$
and
\begin{equation} \label{20230206eq32}
{\calB}^2:=\iint_{\left(I_0^{m, k} \right)^2} \sum_{q \sim 2^{\frac{m}{2}}}  \bigg| \int_{9I_0^{0, k+w}} f_2 \left(x+\frac{2tq}{2^{\frac{m}{2}+k}}  \right) f_2 \left(x+\frac{2sq}{2^{\frac{m}{2}+k}}  \right) e^{3i 2^{\frac{m}{2}+k} (t-s) \cdot \frac{q^2}{2^m} \cdot \widetilde{\lambda}  \left(x-\frac{q^2}{2^{m+2k}}\right)} dx \bigg|^2 dtds.
\end{equation}

The estimate of $\calA$ follows easily from the uniformity of $f_1$:
\begin{equation*}
\calA^2=\sum_{q \sim 2^{\frac{m}{2}}} \left( \int_{I_0^{m, k}} \left|f_1 \left(t+\frac{q}{2^{\frac{m}{2}+k}} \right) \right|^2 dt \right) \left( \int_{I_0^{m, k}} \left|f_1 \left(s+\frac{q}{2^{\frac{m}{2}+k}} \right) \right|^2 ds \right) \lesssim 2^{(2\mu_0-\frac{1}{2})m}\,\left\|f_1\right\|_{L^2 \left(3 I^k \right)}^4.
\end{equation*}
Plugging the above estimate back to \eqref{20230206eq30}, we have
\begin{equation} \label{20230206eq31}
\calV_{m, k}^{w} \lesssim 2^{\frac{m}{4}+k+w+(\del_w+\mu_0)m} \|f_1\|_{L^2 \left(3 I^k \right)}^2 \calB.
\end{equation}

\medskip

\noindent\textsf{Step III: Treatment of $\calB$.}

By applying in \eqref{20230206eq32} the change of variables $t \to \frac{2^{\frac{m}{2}}}{q}t, s \to \frac{2^{\frac{m}{2}}}{q} s$ followed by $s \to t-v$ and Fubini we get
\begin{eqnarray} \label{20230207eq01}
{\calB}^2 &\lesssim& \sum_{q \sim 2^{\frac{m}{2}}} \iint_{\left(9I_0^{0, k+w} \right)^2} \iint_{3I_0^{m, k}\times 7I_0^{m, k}} f_2 \left(x+\frac{2t}{2^k} \right) f_2 \left(x+\frac{2(t-v)}{2^k} \right)  \\
&\cdot&  f_2 \left(y+\frac{2t}{2^k} \right) f_2 \left(y+\frac{2 (t-v)}{2^k} \right) e^{3i 2^{\frac{m}{2}+k} v \cdot \frac{q}{2^{\frac{m}{2}}} \cdot \left[\widetilde{\lambda}  \left(x-\frac{q^2}{2^{m+2k}}\right)-\widetilde{\lambda} \left(y-\frac{q^2}{2^{m+2k}} \right) \right] } dt dv dx dy. \nonumber
\end{eqnarray}
We next apply another change of variable $x \to x-\frac{2t}{2^k}$ and  $y \to y-\frac{2t}{2^k}$ which gives,
\begin{equation}\label{admiserr}
\textrm{up to an admissible global error term}\:O\left(2^{-(\frac{1}{4}-\del_w-2\mu_0) m} \, \left\|f_1 \right\|_{L^2 \left(3 I^k \right)}^2 \left\|f_2 \right\|_{L^2 \left(3 I^k \right)}^2\right)\,,
\end{equation}
\begin{eqnarray*}
{\calB}^2&\lesssim&\iint_{\left(9I_0^{0, k+w} \right)^2}\left|\int_{7I_0^{m, k}} f_2 \left(x\right) f_2 \left(y\right) f_2 \left(x-\frac{2v}{2^k}\right) f_2 \left(y-\frac{2v}{2^k} \right) \nonumber \right.\\
&\cdot& \left.\bigg(\int_{3 I_0^{m, k}}\bigg( \sum_{q \sim 2^{\frac{m}{2}}} e^{3i 2^{\frac{m}{2}+k} v \cdot \frac{q}{2^{\frac{m}{2}}} \cdot \left[\widetilde{\lambda}  \left(x-\frac{2t}{2^k}-\frac{q^2}{2^{m+2k}}\right)-\widetilde{\lambda}  \left(y-\frac{2t}{2^k}-\frac{q^2}{2^{m+2k}}  \right) \right]} \bigg) dt\bigg)dv\,\right|dx\,dy.
\end{eqnarray*}

By Cauchy-Schwarz, we have
\begin{equation} \label{20230207eq05}
\frakB^2 \le \frakC \frakD,
\end{equation}
where
\begin{equation}
\frakC^2:=\iiint_{\left(9I_0^{0, k+w} \right)^2 \times 7I_0^{m, k}} \bigg| f_2 \left(x\right) f_2 \left(y \right)\,f_2 \left(x-\frac{2v}{2^k} \right) f_2 \left(y-\frac{2v}{2^k} \right) \bigg|^2 dxdy dv,
\end{equation}
and
\begin{equation}
\frakD^2:=\iiint_{\left(9I_0^{0, k+w} \right)^2 \times 7I_0^{m, k}} \bigg|\int_{3I_0^{m, k}}\bigg( \sum_{q \sim 2^{\frac{m}{2}}} e^{3i 2^{\frac{m}{2}+k} v \cdot \frac{q}{2^{\frac{m}{2}}} \cdot \left[\widetilde{\lambda}  \left(x-\frac{2t}{2^k}-\frac{q^2}{2^{m+2k}}\right)-\widetilde{\lambda}  \left(y-\frac{2t}{2^k}-\frac{q^2}{2^{m+2k}} \right) \right]}\bigg) dt \bigg|^2 dv dx dy.
\end{equation}
As usual, the treatment of $\frakC$ is straightforward due to the uniformity of $f_2$:
\begin{equation}
\frakC^2\lesssim 2^{-\frac{m}{2}+k-2w} \cdot 2^{4\mu_0m} \left\|f_2\right\|_{L^2 \left(3 I^k \right)}^8,
\end{equation}
which combined with \eqref{20230206eq31} and \eqref{20230207eq05} gives:
\begin{equation} \label{20230207eq06}
\calV_{m, k}^{w} \lesssim 2^{\frac{m}{8}+\frac{5k}{4}+\frac{w}{2}+\left(\del_w+2\mu_0\right)m} \left\|f_1 \right\|_{L^2 \left(3 I^k \right)}^2 \left\|f_2 \right\|_{L^2 \left(3 I^k \right)}^2 \frakD^{\frac{1}{2}}.
\end{equation}

\medskip

\noindent\textsf{Step IV: Treatment of $\frakD$.} Developing the square, and applying Fubini and triangle inequality we have
\begin{eqnarray*}
&&\frakD^2\lesssim  \sum_{q\sim 2^{\frac{m}{2}} } \sum_{q_1\sim 2^{\frac{m}{2}}} \iint_{\left(9I_0^{0, k+w} \right)^2}  \iint_{\left(3I_0^{m, k} \right)^2} dt dt_1 dx dy\\
&&\bigg|\int_{7 I_0^{m, k}} \quad e^{3i 2^{\frac{m}{2}+k} v \cdot \left(\frac{q}{2^{\frac{m}{2}}} \cdot \left[\widetilde{\lambda} \left(x-\frac{2t}{2^k}-\frac{q^2}{2^{m+2k}}\right)-\widetilde{\lambda}  \left(y-\frac{2t}{2^k}-\frac{q^2}{2^{m+2k}}  \right) \right]- \frac{q_1}{2^{\frac{m}{2}}} \cdot \left[\widetilde{\lambda} \left(x-\frac{2t_1}{2^k}-\frac{q_1^2}{2^{m+2k}}\right)-\widetilde{\lambda}  \left(y-\frac{2t_1}{2^k}-\frac{q_1^2}{2^{m+2k}} \right) \right] \right)}\,dv\bigg|.
 \end{eqnarray*}
Next, applying another change of variables $x \to x+\frac{2t}{2^k}+\frac{q^2}{2^{m+2k}}$ and $y \to y+\frac{2t}{2^k}+\frac{q^2}{2^{m+2k}}$ followed by $t\to t_1+s$ we deduce\footnote{Throughout the paper, for $x\in\R$ we use the notation $\lceil x \rceil:=\frac{1}{1+|x|}$.}:
\begin{equation} \label{20230208eq22}
 \frakD^2 \le \left|I_0^{m, k} \right|^2 \sum_{q\sim 2^{\frac{m}{2}}} \sum_{q_1\sim 2^{\frac{m}{2}}} \iiint_{\left(17 I_0^{0, k+w} \right)^2\times 6I_0^{m,k}}  \left\lceil\Xi_{x, y, s}(q,q_1)\right\rceil dsdxdy\,,
\end{equation}
where
$$
\Xi_{x, y, s}(q,q_1):= \left[\widetilde{\lambda} \left(x\right)-\widetilde{\lambda}  \left(y \right) \right]- \frac{q_1}{q} \cdot \left[\widetilde{\lambda} \left(x+\frac{2 s}{2^k}-\frac{q_1^2-q^2}{2^{m+2k}}\right)-\widetilde{\lambda}  \left(y+\frac{2 s}{2^k}-\frac{q_1^2-q^2}{2^{m+2k}}\right) \right]\,.
$$

Recall now that by the construction of the light set for  $\widetilde{\lambda}$--see \eqref{20230206eq03}--we have
\begin{equation} \label{20230208eq21}
17I_0^{0, k+w}=\bigcup_{l \in \widetilde{\calL}^w} \widetilde{\frakM_l}\left(17I_0^{0, k+w} \right),
\end{equation}
where
$$
\widetilde{\frakM_l}\left(17 I_0^{0, k+w} \right):=\left\{x \in 17 I_0^{0, k+w}: \widetilde{\lambda}(x) \in \left[l2^{\epsilon_{0}^{w} m}, (l+1)2^{\epsilon_{0}^{w} m} \right] \right\},
$$
and
\begin{equation} \label{20230208eq33}
\widetilde{\calL}^{w}:=\left\{l \sim 2^{\frac{m}{2}-\epsilon_{0}^{w} m}: \left| \widetilde{\frakM_l}\left(17 I_0^{0, k+w} \right) \right| \lesssim \left|I_0^{0, k+w} \right| 2^{-\epsilon_{0}^{w} m} \right\}.
\end{equation}
Substituting  \eqref{20230208eq21} back to \eqref{20230208eq22} we let
\begin{equation} \label{20230208eq24}
\frakD^2=\frakD_{\textnormal{diag}}^2+\frakD_{\textnormal{off}}^2,
\end{equation}
where
\begin{equation} \label{20230505eq02}
\frakD_{\textnormal{diag}}^2=\left|I_0^{m, k} \right|^2 \sum_{q\sim 2^{\frac{m}{2}}} \sum_{q_1\sim 2^{\frac{m}{2}}} \sum_{ \substack{l, l_1 \in \widetilde{\calL}^{w} \\ |l-l_1| \le 10} }\iiint_{\widetilde{\frakM_l}\left(17 I_0^{0, k+w} \right) \times \widetilde{\frakM_{l_1}}\left(17 I_0^{0, k+w} \right)\times 6I_0^{m,k}}  \left\lceil\Xi_{x, y, s}(q,q_1)\right\rceil dsdxdy,
\end{equation}
and
\begin{equation*}
\frakD_{\textnormal{off}}^2=\left|I_0^{m, k} \right|^2 \sum_{q\sim 2^{\frac{m}{2}}} \sum_{q_1\sim 2^{\frac{m}{2}}} \sum_{ \substack{l, l_1 \in \widetilde{\calL}^{w} \\ |l-l_1|>10} }\iiint_{\widetilde{\frakM_l}\left(17 I_0^{0, k+w} \right) \times \widetilde{\frakM_{l_1}}\left(17 I_0^{0, k+w} \right)\times 6I_0^{m,k}} \left\lceil\Xi_{x, y, s}(q,q_1)\right\rceil dsdxdy\,.
\end{equation*}
The contribution of the diagonal term $\frakD_{\textnormal{diag}}^2$ is straightforward:
\begin{eqnarray*}
\frakD_{\textnormal{diag}}^2  &\le&  \frac{2^{m}}{2^{\frac{3m}{2}+3k}} \cdot \sum_{l \in \widetilde{\calL}^w} \left(\int_{\widetilde{\frakM_l}\left(17 I_0^{0, k+w} \right)} dx \right) \cdot \left( \sum_{|l-l_1| \le 10} \int_{\widetilde{\frakM_{l_1}}\left(17 I_0^{0, k+w} \right)} dy \right)  \\
&\lesssim& \frac{2^{m}}{2^{\frac{3m}{2}+3k}} \cdot \left|I_0^{0, k+w} \right|^2 \cdot 2^{-\epsilon_{0}^{w} m}=2^{ -\frac{m}{2}-5k-2w-\epsilon_{0}^{w}m}.
\end{eqnarray*}
Substituting this in \eqref{20230207eq06}, we see that the contribution from the diagonal term is dominated by
\begin{equation}\label{contrib}
 2^{\left(\del_w+2\mu_0-\frac{\epsilon_{0}^{w}}{4} \right)m} \left\|f_1 \right\|_{L^2 \left(3 I^k \right)}^2 \left\|f_2 \right\|_{L^2 \left(3 I^k \right)}^2
\end{equation}
which gives the desired decay as long as we choose $0<2\epsilon_{3}^{w}\leq \frac{\epsilon_{0}^{w}}{4}-\del_w-2\mu_0$ with $\epsilon_{0}^{w}>4\del_w+8\mu_0$.

\medskip

\noindent\textsf{Step V: Treatment of $\frakD_{\textnormal{off}}^2$.} Applying the changes of variable $\frac{2 s}{2^k}\to \frac{2z}{2^k}+\frac{q_1^2-q^2}{2^{m+2k}}-\frac{1}{2^{\frac{m}{2}+2k}}\left\lfloor\frac{q_1^2-q^2}{2^{\frac{m}{2}}}\right\rfloor$ followed by $u=\left\lfloor\frac{q_1^2-q^2}{2^{\frac{m}{2}}}\right\rfloor$ and using that $\frac{q_1}{q}=\sqrt{1+\frac{2^{\frac{m}{2}}u}{q^2}+O(\frac{1}{2^{\frac{m}{2}}})}$, we have
\begin{equation} \label{20230208eq40}
\frakD_{\textnormal{off}}^2 \lesssim\left|I_0^{m, k} \right|^2  \sum_{ \substack{l, l_1 \in \widetilde{\calL}^w \\ |l-l_1|>10} }\iiint_{\widetilde{\frakM_l}\left(17 I_0^{0, k+w} \right) \times \widetilde{\frakM_{l_1}}\left(17 I_0^{0, k+w} \right)\times 9I_0^{m,k}} \sum_{q, u \sim 2^{\frac{m}{2}}}  \left\lceil\Xi_{x, y, z,u}(q)\right\rceil  dzdxdy\,,
\end{equation}
where
$$
\Xi_{x, y, z, u}(q):=\left[\widetilde{\lambda} \left(x\right)-\widetilde{\lambda}  \left(y \right) \right]- \frac{\sqrt{2^{\frac{m}{2}}u+q^2}}{q} \cdot \left[\widetilde{\lambda} \left(x+\frac{2z}{2^k}-\frac{u}{2^{\frac{m}{2}+2k}}\right)-\widetilde{\lambda}  \left(y+\frac{2z}{2^k}-\frac{u}{2^{\frac{m}{2}+2k}} \right) \right]\,.
$$

Observe that since $|l-l_1|>10$, $x \in \widetilde{\frakM_l}\left(17 I_0^{0, k+w} \right)$, and $y \in \widetilde{\frakM_{l_1}}\left(17 I_0^{0, k+w} \right)$, one has
$$
\left|\widetilde{\lambda}(x)-\widetilde{\lambda}(y) \right| \geq 2^{\epsilon_{0}^{w} m}\,.
$$
Fix now $x,\,y$ above; it is then sufficient to consider the situation when the pairs $(z,u)$ satisfy the condition
\begin{equation} \label{20230208eq35}
\left| \widetilde{\lambda} \left(x+\frac{2z}{2^k}-\frac{u}{2^{\frac{m}{2}+2k}}\right)-\widetilde{\lambda}  \left(y+\frac{2z}{2^k}-\frac{u}{2^{\frac{m}{2}+2k}} \right) \right|>\frac{2^{\epsilon_{0}^{w} m}}{100}.
\end{equation}
Indeed, if this is not the case, then the magnitude of $|\Xi_{x, y, z, u}(q)|$ is $\gtrsim 2^{\epsilon_{0}^{w} m}$ since $\frac{\sqrt{2^{\frac{m}{2}}u+q^2}}{q}<10$ and therefore, by \eqref{20230208eq40}, the corresponding term is less than
\begin{equation}  \label{20230208eq50}
\lesssim 2^{-\frac{3m}{2}-3k} \cdot 2^m \cdot 2^{-\epsilon_{0}^{w} m} \cdot \left|I_0^{0, k+w} \right|^2\lesssim 2^{-\frac{m}{2}-5k-2w-\epsilon_{0}^{w} m}
\end{equation}
which, combined with \eqref{20230207eq06}, gives the same contribution as in \eqref{contrib}.

Thus, we may assume now that \eqref{20230208eq35} holds and fix $l, l_1 \in \widetilde{\calL}^w$, $u \sim 2^{\frac{m}{2}}$ and $(x,y)\in  \widetilde{\frakM_l}\left( 17 I_0^{0, k+w} \right) \times \widetilde{\frakM_{l_1}}\left( 17 I_0^{0, k+w} \right)$. We then note that the mapping
$$
[2^{\frac{m}{2}},\,2^{\frac{m}{2}+1}]\ni q \mapsto \frac{\sqrt{2^{\frac{m}{2}}u+q^2}}{q}\in \left[\frac{1}{10},10\right]
$$
is strictly decreasing. Moreover,
\begin{equation} \label{20230208eq51}
\left| \frac{d}{dq} \left( \frac{\sqrt{2^{\frac{m}{2}}u+q^2}}{q} \right) \right|= \left| \left(\frac{2^{\frac{m}{2}}u}{q^2}
+1 \right)^{-\frac{1}{2}} \cdot \frac{2^{\frac{m}{2}}u}{q^3} \right|\approx\frac{1}{2^{\frac{m}{2}}}.
\end{equation}
Define now the \emph{singular set} associated to $x, y, z$ and $u$ by
$$
\calS_{x, y, z, u}:=\left\{q \sim 2^{\frac{m}{2}}: |\Xi_{x, y, z, u}(q)| \le 2^{\frac{\epsilon_{0}^{w} m}{2}} \right\},
$$
and the \emph{non-singular set} associated to $x, y, z$ and $u$ by
$$
\calN_{x, y, z, u}:=\left\{q \sim 2^{\frac{m}{2}}: |\Xi_{x, y, z, u}(q) | >2^{\frac{\epsilon_{0}^{w} m}{2}} \right\}.
$$
Note that by \eqref{20230208eq35} and \eqref{20230208eq51}, one has for fixed $x, y, z$ and $u$,
$$
\# \calS_{x, y, z, u} \lesssim 2^{\frac{m}{2}-\frac{\epsilon_{0}^{w} m}{2}}.
$$
Therefore, using \eqref{20230208eq40}, one can write
$$
\frakD_{\textnormal{off}}^2 \le \frakD_{\textnormal{off}}^{2, \calS}+\frakD_{\textnormal{off}}^{2, \calN},
$$
where
\begin{equation}
\frakD_{\textnormal{off}}^{2, \calS} :=\left|I_0^{m, k} \right|^2  \sum_{ \substack{l, l_1 \in \widetilde{\calL}^w \\ |l-l_1|>10} }\iiint_{\widetilde{\frakM_l}\left( 17 I_0^{0, k+w} \right) \times \widetilde{\frakM_{l_1}}\left( 17 I_0^{0, k+w} \right)\times 9 I_0^{m,k}} \sum_{u \sim 2^{\frac{m}{2}}} \sum_{q \in \calS_{x, y, z, u}}  \lceil\Xi_{x, y, z, u}(q)\rceil dzdxdy
\end{equation}
and
\begin{equation}
\frakD_{\textnormal{off}}^{2, \calN} :=\left|I_0^{m, k} \right|^2  \sum_{ \substack{l, l_1 \in \widetilde{\calL}^w \\ |l-l_1|>10} }\iiint_{\widetilde{\frakM_l}\left( 17 I_0^{0, k+w} \right) \times \widetilde{\frakM_{l_1}}\left( 17 I_0^{0, k+w} \right) \times 9 I_{0}^{m,k}} \sum_{u \sim 2^{\frac{m}{2}}} \sum_{q \in \calN_{x, y,z, u}}  \lceil\Xi_{x, y, z, u}(q)\rceil dzdxdy.
\end{equation}
The treatment of the term $\frakD_{\textnormal{off}}^{2, \calN}$ follows exactly the same argument as that in \eqref{20230208eq50} with $\epsilon_{0}^{w}$ there replaced now by $\frac{\epsilon_{0}^{w}}{2}$. Thus, in the current situation we obtain the desired control by choosing $0<\epsilon_{3}^{w}\leq \frac{\epsilon_{0}^{w}}{8}-\del_w-2\mu_0$ and $\epsilon_{0}^{w}>8\del_w+16\mu_0$.

Finally, for the term $\frakD_{\textnormal{off}}^{2, \calS}$, we have
\begin{eqnarray*}
\left| \frakD_{\textnormal{off}}^{2, \calS}\right| &\lesssim& \left|I_0^{m, k} \right|^2  \sum_{ \substack{l, l_1 \in \widetilde{\calL}^w \\ |l-l_1|>10} } \iiint_{\widetilde{\frakM_l}\left( 17 I_0^{0, k+w} \right) \times \widetilde{\frakM_{l_1}}\left( 17 I_0^{0, k+w} \right) \times 9 I_{0}^{m,k}} \sum_{u \sim 2^{\frac{m}{2}}} \sum_{q \in \calS_{x, y, z,u}} 1\,dzdxdy \\
&\lesssim& 2^{\frac{m}{2}-\frac{\epsilon_{0}^{w} m}{2}} \left|I_0^{m, k} \right|^2  \sum_{ \substack{l, l_1 \in \widetilde{\calL}^w \\ |l-l_1|>10} } \iiint_{\widetilde{\frakM_l}\left( 17 I_0^{0, k+w} \right) \times \widetilde{\frakM_{l_1}}\left( 17 I_0^{0, k+w} \right) \times 9 I_{0}^{m,k}} \sum_{u \sim 2^{\frac{m}{2}}} 1\,dzdxdy.
\end{eqnarray*}
From this point on, one can again mimic the proof of \eqref{20230208eq50}.

Conclude thus, that \eqref{lighttermest} holds with the final choice
\begin{equation}\label{ep3}
2\epsilon_{3}^{w}=\frac{\epsilon_{0}^{w}}{8}-\del_w-2\mu_0\,,
\end{equation}
under the assumptions $\epsilon_{0}^{w}>8\del_w+16\mu_0$. This completes the proof of Proposition \ref{mainprop01}.
\end{proof}

We end the proof of this first part with the following

\begin{obs}\label{boots2}[\textsf{Completing the bootstrap algorithm}]

The conclusion of the above allows us to reduce matters to the treatment of the heavy component derived from \eqref{20230205eq12} that is meant to address the situation summarized by \eqref{20230205eq01}.

Recalling now that
$$
\widetilde{\calH}_p^{w}=\bigcup_{l \in \calH_p^w} \frakM_l \left(I_p^{0, k+w} \right) \quad \textrm{and} \quad \# \calH_p^w \lesssim 2^{\epsilon_{0}^{w} m},
$$
we further deduce that up to an admissible loss of at most $2^{2\epsilon_{0}^{w} m}$ we may assume without loss of generality that $\widetilde{\calH_p^{w}}=\frakM_{l} \left(I_p^{0, k+w} \right)=I_p^{0, k+w}$ for some heavy index $l=l(p)$ and that
\begin{equation} \label{20230211eq01}
\lambda(x)=\lambda(y)\quad\quad \forall\:x,\,y \in I_p^{0, k+w}.
\end{equation}

Applying now all the above reasonings first for $w=k$ and then for $w=\frac{m}{2}$ (recall Observation \ref{constancy}) we are able to place ourselves in the situation when $\lambda$ is constant on intervals of length $2^{-\frac{m}{2}-k}$.
\end{obs}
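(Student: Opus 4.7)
The plan is to verify the bootstrap reduction summarized in Observation \ref{boots2} by iterating the two ingredients of Part 1---Proposition \ref{constantprop} (which transfers constancy of $\lambda$ to a constancy of the joint Fourier coefficient in its outer $x$-coordinate) together with Proposition \ref{mainprop01} (which yields exponential decay of the light component)---first at scale parameter $w=k$ and then at $w=\frac{m}{2}$. Throughout, the identity $\textrm{\textbf{L}}_{m,k}^2=2^{-w}\sum_{p\sim 2^w}V_{m,k}^{p,w}$ from \eqref{Lvmk} is what allows us to sum and reassemble the bound.

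One round at scale $w$ proceeds as follows. Decompose $V_{m,k}^{p,w}=V_{m,k}^{p,\calH_p^w}+V_{m,k}^{p,\calL_p^w}$. The light piece is directly absorbed by Proposition \ref{mainprop01}, contributing a gain $2^{-2\epsilon_3^w m}\|f_1\|_{L^2(3I^k)}^2\|f_2\|_{L^2(3I^k)}^2$ which, summed over $p\sim 2^w$ via \eqref{Lvmk}, is acceptable. For the heavy piece, $\widetilde{\calH}_p^w=\bigcup_{l\in\calH_p^w}\frakM_l(I_p^{0,k+w})$ is a union of at most $2^{\epsilon_0^w m}$ near-level sets of $\lambda$; pigeonholing on the single dominant heavy index and paying the admissible combinatorial loss $2^{2\epsilon_0^w m}$ allows us to replace $\lambda$ by a constant on each $I_p^{0,k+w}$. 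Starting from the initial hypothesis that $\lambda$ is constant on $\{I_\ell^{0,3k}\}$ (length $2^{-3k}$), the first round ($w=k$) meets the Proposition \ref{constantprop} hypothesis $2^{-2k-w}=2^{-3k}$ and upgrades the constancy to intervals of length $2^{-2k}$; the second round ($w=\frac{m}{2}$) then meets the hypothesis $2^{-2k-w}=2^{-2k-m/2}\le 2^{-2k}$ and upgrades further to intervals of length $2^{-k-m/2}$---the exact starting hypothesis required for Part 2.

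The main difficulty in this reduction is purely bookkeeping of the exponent hierarchy: one must arrange $\mu_0<\delta_k,\delta_{m/2}<\epsilon_0^k,\epsilon_0^{m/2}$ so that the resulting $\epsilon_3^w$ in \eqref{ep3} remains strictly positive and so that the cumulative combinatorial loss $2^{2(\epsilon_0^k+\epsilon_0^{m/2})m}$ across the two rounds is strictly dominated by the eventual global gain $2^{-\epsilon_2 m}$ that Proposition \ref{mainprop02} will supply in Part 2. No new conceptual ingredient enters this reduction step: the oscillatory content has already been isolated by Propositions \ref{constantprop} and \ref{mainprop01}, and the residual global cancellation---which will exploit the curvature of the moment curve through a discrete mixed second derivative of the phase $(t,q)\mapsto q^2\lambda_p t$ jointly in $(p,q)$ after a $TT^*$ expansion in $t$---belongs entirely to Part 2.
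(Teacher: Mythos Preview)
Your proposal is correct and follows essentially the same approach as the paper: the observation itself is the argument, and you have faithfully expanded its content---splitting into light and heavy pieces via \eqref{20230205eq12}, absorbing the light term by Proposition \ref{mainprop01}, pigeonholing on the heavy index set of size $\lesssim 2^{\epsilon_0^w m}$ to reduce to a single level set on which $\lambda$ is effectively constant, and then iterating first with $w=k$ (constancy $2^{-3k}\to 2^{-2k}$) and then with $w=\frac{m}{2}$ (constancy $2^{-2k}\to 2^{-k-m/2}$), exactly as outlined in Observation \ref{constancy}. Your remarks on the exponent bookkeeping and on where the actual curvature-based cancellation enters (Part 2, Proposition \ref{mainprop02}) are accurate and match the paper's structure.
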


$\newline$
\noindent\textsf{Part 2}. \textsf{The global behavior of $\lambda$: treating the low variational component}
$\newline$

Our entire focus in this second part will be on proving the following

\begin{prop} \label{mainprop02}
Assume that for each $p \sim 2^{\frac{m}{2}}$ relation \eqref{20230211eq01} holds for $w=\frac{m}{2}$. Then, recalling \eqref{defL}, we have that there exists $\epsilon_4>0$ absolute constant such that
\begin{equation} \label{20230210eq01}
\textbf{L}_{m,k}^2 \left(f_1, f_2 \right) \lesssim 2^{-2\epsilon_4 m} \left\|f_1 \right\|^2_{L^2 \left(3 I^k \right)}\left\|f_2 \right\|^2_{L^2 \left(3 I^k \right)}.
\end{equation}
\end{prop}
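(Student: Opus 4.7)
The plan is to leverage the hypothesis that $\lambda$ is constant at the linearizing scale through a $TT^{*}$ reduction in $t$, followed by a Gauss-sum-type cancellation in $q$ afforded by the underlying cubic curvature, and finally a level-set analysis in the discretized $x$-parameter. Writing $\lambda_p := \lambda|_{I_p^{m, k}}$ (well-defined by hypothesis), one has
\begin{equation*}
\textbf{L}_{m,k}^2(f_1,f_2) = \frac{1}{|I_0^{m,k}|}\sum_{p, q \sim 2^{m/2}} \int_{I_p^{m,k}} \left|\int_{I_q^{m, k}} f_1(x-t)f_2(x+t^2) e^{3i\cdot 2^{m/2+k}(q^2/2^m)\lambda_p\, t}\,dt\right|^2 dx.
\end{equation*}
The first move is a $TT^{*}$ in $t$: introducing $v:=t-t'$, using that $f_2$ is morally constant on intervals of length $2^{-m-2k}$ (Key Heuristic 1) to replace $(t-v)^2$ by $t^2-2tv$, and changing variables $y=x-t$, the phase collapses to $3\cdot 2^{m/2+k} v (q^2/2^m)\lambda_p$, which is linear in $v$ and quadratic in $q$.

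Next, after bringing the $q$-sum inside the $v$-integral, the key oscillatory object becomes the Gauss-type sum $\sum_{q \sim 2^{m/2}} e^{i\alpha_{v,p} q^2}$ with $\alpha_{v,p} := 3\cdot 2^{k}\lambda_p v/2^{m/2}$. By Weyl/van der Corput for quadratic phases, this admits a $2^{m/4}$-type decay provided $\alpha_{v,p}$ avoids a Diophantine-bad neighborhood of the origin. I would split the $v$-integration at the threshold $|v| \lesssim 2^{-m/2-k-\rho m}$ for a small $\rho>0$: on the diagonal regime the uniformity estimates of Observation \ref{CZtype} and the smallness of the region already give an acceptable factor $2^{-\rho m}$, while on the complementary off-diagonal regime one invokes the Gauss-sum cancellation. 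This essentially replicates the $\frakC$-$\frakD$ dichotomy of Proposition \ref{mainprop01}, but now with $\lambda$ itself playing the role of the quantized-phase parameter rather than the local-light indicator.

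The off-diagonal contribution requires a second layer: after extracting the $q$-cancellation I would Cauchy--Schwarz (or $TT^{*}$ once more) in the $p$-parameter, which reduces the estimate to counting the resonant set
\begin{equation*}
\calR_v := \bigl\{(p, p_1) \in [2^{m/2}, 2^{m/2+1}]^2 \,:\, \bigl|3\cdot 2^k v\,(\lambda_p - \lambda_{p_1})/2^{m/2}\bigr| \lesssim 2^{-\epsilon m}\bigr\}.
\end{equation*}
Since $\lambda_p \in [2^{m/2},2^{m/2+1}]\cap\Z$, a pigeonhole/divisor-type count—mirroring the structure of the time-frequency correlation set in Section \ref{RescTFC}—yields $|\calR_v| \lesssim 2^{(1-\delta)m}$ for some $\delta>0$. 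This is the rigorous incarnation of the heuristic that the \emph{discrete mixed second-order derivative} $v\,\Delta\lambda_p$ of the phase is large on average, and plays the role of a H\"ormander-type non-degeneracy at the global scale—precisely the ``curvature between spatially discretized $(x,t)$-fibers'' mentioned in the introduction.

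Combining the Gauss-sum gain of order $2^{-m/4}$, the level-set gain of order $2^{-\delta m}$, and the uniformity/$L^{2}$ bounds on $f_1,f_2$, one extracts the claimed $2^{-2\epsilon_4 m}$ decay for a suitable $\epsilon_4>0$ tuned to the parameters $\rho,\delta,\epsilon$. The main obstacle will be the third step: ensuring that the discrete map $p\mapsto\lambda_p$ genuinely spreads out rather than concentrating on level sets of the quadratic phase. This is precisely the point where the cubic curvature of the moment curve $(-t,t^2,t^3)$ makes itself globally felt—after the bootstrap of Part 1 has exhausted the local constancy propagation, what remains is a ``second-order'' mechanism in which the moment-curve curvature forces a non-trivial separation in the $\lambda_p$ values across neighboring spatial fibers.
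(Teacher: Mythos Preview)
Your proposal contains two genuine gaps that prevent the argument from closing.

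\textbf{First gap: the ``Gauss sum in $q$'' cannot be isolated.} After your $TT^{*}$ in $t$, the integrand still contains $f_1(x-t)\overline{f_1(x-t+v)}$ and $f_2(x+t^2)\overline{f_2(x+t^2-2tv)}$, and the domain $t\in I_q^{m,k}$ ties $t$ to $q$. If you merge $\sum_q\int_{I_q^{m,k}}$ into $\int_{I^k}dt$, the phase becomes $\approx 3\cdot 2^{\frac{m}{2}+3k}\lambda_p\,v\,t^2$, an oscillatory integral in $t$ rather than a clean Gauss sum in $q$. But now van der Corput fails: the $t$-derivative of this phase is $\sim 2^{\frac{m}{2}+k}$, whereas $f_1(x-t)$ oscillates at frequency $\sim 2^{m+k}$, which dominates. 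So there is no direct quadratic cancellation to harvest. The paper handles this by a sequence of changes of variable (Step~I) designed so that the $f_2$ argument becomes $x+2t/2^k$, \emph{independent of the $\mathfrak{q}$-parameter}, after which a Cauchy--Schwarz cleanly separates $f_2$ from the rest and the oscillatory analysis can proceed on the $f_1$ side alone.

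\textbf{Second gap: the level-set count in $(p,p_1)$ is vacuous in the hardest case.} Your set $\calR_v$ is defined by $|\lambda_p-\lambda_{p_1}|\lesssim \cdots$, and you claim $|\calR_v|\lesssim 2^{(1-\delta)m}$ by pigeonhole. But $\lambda$ is an \emph{arbitrary} measurable function constant on the $I_p^{m,k}$; in particular it may be globally constant, in which case $\calR_v$ is the full set of size $2^m$ and there is no gain whatsoever. The paper's mechanism is entirely different: after repeated Cauchy--Schwarz the phase $V_{p_1,p_2,r_1,r_2}(x)$ is reorganized (via the change of variables $(p_1,p_2,r_1,r_2)\mapsto(u_1,u_2,v_1,r_1)$) into the form $A_{u_1,u_2,v_1}(x)+\tfrac{r_1}{2^{\frac{m}{2}+\tau m}}B_{u_1,u_2,v_1}(x)$, and the decay comes from summing the \emph{free} variable $r_1$ against a coefficient that is large for most $(q,x)$ because of the \emph{multiplicative} structure $\lambda(x)-\tfrac{q^2}{2^{m+2\tau m}}\lambda(x+\cdots)$. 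Even for constant $\lambda$ this factor equals $\lambda\bigl(1-\tfrac{q^2}{2^{m+2\tau m}}\bigr)$, which is large for all but a thin set of $q$'s. The curvature is thus exploited in the \emph{transformed $q$-variables}, not in the $p$-variable as you propose.
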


\begin{proof}
We proceed as before by dividing our proof into several steps:
\medskip

\noindent\textsf{Step I: Preliminary reductions.} For technical reasons we appeal to the following two step procedure:
\begin{itemize}
\item first we refine the present partition $\{I_q^{m, k}\}_{q\sim 2^{\frac{m}{2}}}$ of the interval $I^k$, that is, we will work with a new partition of the form $\{I_{\mathfrak{q}}^{m, k+\tau m}\}_{\mathfrak{q}\sim 2^{\frac{m}{2}+\tau m}}$ for a suitable small parameter $\tau>0$ that will be chosen later. To implement this we appeal to Cauchy-Schwarz inequality:
\begin{eqnarray*}
\textbf{L}_{m,k}^2 (f_1, f_2)&=&\frac{1}{\left|I_0^{m, k} \right|} \sum_{ q \sim 2^{\frac{m}{2}}} \int_{I^k} \left| \int_{I_q^{m, k}} f_1(x-t)f_2(x+t^2)e^{3i 2^{\frac{m}{2}+k} \cdot \frac{q^2}{2^m} \cdot \lambda(x) t} dt \right|^2 dx \\
&\leq & \frac{1}{\left|I^{m,k+\tau m}_0 \right|}  \sum_{\mathfrak{q}\sim 2^{\frac{m}{2}+\tau m}} \int_{I^k} \left| \int_{I_\mathfrak{q}^{m, k+\tau m}} f_1(x-t)f_2(x+t^2)e^{3i 2^{\frac{m}{2}+k} \cdot \frac{\left\lfloor\frac{\mathfrak{q}}{2^{\tau m}}\right\rfloor^2}{2^m} \cdot \lambda(x) t} dt \right|^2 dx  \,.
\end{eqnarray*}

\item secondly, we subdivide the range of $\mathfrak{q}$ as follows: for some small $\frac{1}{10}>\sigma>\tau$ (to be specified later--see Observation \ref{bkI}) we write
\begin{equation} \label{qpart}
\left\{\mathfrak{q} \sim 2^{\frac{m}{2}+\tau m} \right\}=\bigcup_{\frakj \sim 2^{\sigma m}} \mathfrak{I}_{\frakj}=:\bigcup_{\frakj \sim 2^{\sigma m}}\left[2^{\frac{m}{2}+\tau m-\sigma\,m}\frakj,  2^{\frac{m}{2}+\tau m-\sigma\,m}(\frakj+1)\right]\,.
\end{equation}
\end{itemize}

With this done, we proceed with the proof of \eqref{20230210eq01}:  we start by performing the change of variable $t \to t+\frac{\mathfrak{q}}{2^{\frac{m}{2}+\tau m+k}}$ in order to get
\begin{eqnarray*}
&&\qquad\qquad\qquad\qquad\qquad\textbf{L}_{m,k}^2 (f_1, f_2)\lesssim\frac{1}{\left|I^{m,k+\tau m}_0 \right|}\sum_{\frakj \sim 2^{\sigma m}}\sum_{\mathfrak{q}\in \mathfrak{I}_{\frakj}}\\
&&\int_{I^k} \bigg| \int_{I_0^{m, k+\tau m}} f_1 \left(x-t-\frac{\mathfrak{q}}{2^{\frac{m}{2}+k+\tau m}} \right)\,f_2\left(x+\frac{2\mathfrak{q}t}{2^{\frac{m}{2}+k+\tau m}}+\frac{\mathfrak{q}^2}{2^{m+2k+2\tau m}} \right) e^{3i2^{\frac{m}{2}+k}t \cdot \frac{\left\lfloor\frac{\mathfrak{q}}{2^{\tau m}}\right\rfloor^2}{2^m} \lambda(x) }dt \bigg|^2 dx.
\end{eqnarray*}

Next, we set $x_{\frakj}:=\left(\frac{\frakj}{2^{k+\sigma m}}\right)^2$ and applying another change of variable $x \to x-\frac{\mathfrak{q}^2}{2^{m+2\tau m+2k}}$ and using \eqref{20230211eq01} and an approximation argument similar with the one provided in the proof of Proposition \ref{constantprop}, we have
\begin{eqnarray} \label{20230211eq03}
&&\textbf{L}_{m,k}^2 \left(f_1, f_2 \right) \lesssim O( 2^{-(\sigma-2\mu_0)m}\,\left\|f_1 \right\|^2_{L^2 \left(3 I^k \right)}\left\|f_2 \right\|^2_{L^2 \left(3 I^k \right)})\,+\,\frac{1}{\left|I^{m,k+\tau m}_0 \right|}\sum_{\frakj \sim 2^{\sigma m}}\sum_{\mathfrak{q}\in \mathfrak{I}_{\frakj}}\\
&&\int_{I^k} \bigg| \int_{I_0^{m, k+\tau m}} f_1 \left(x-t-\frac{\mathfrak{q}}{2^{\frac{m}{2}+k+\tau m}}-\frac{\mathfrak{q}^2}{2^{m+2k+2\tau m}}  \right)\,f_2\left(x+\frac{2\mathfrak{q}t}{2^{\frac{m}{2}+k+\tau m}}\right) e^{3i2^{\frac{m}{2}+k}t \cdot \frac{\left\lfloor\frac{\mathfrak{q}}{2^{\tau m}}\right\rfloor^2}{2^m} \lambda\left(x-x_{\frakj}\right) }dt \bigg|^2 dx.\nonumber
\end{eqnarray}

Further on, for each $\frakj$, $\mathfrak{q}\in \mathfrak{I}_{\frakj}$ we apply the change of variable $t \to t-\frac{\mathfrak{q}^2}{2^{m+2\tau m+2k}}+x_{\frakj}$ in order to get (putting aside the first error term above)

\begin{eqnarray*}
&&\substack{\textrm{RHS in} \\\\ \eqref{20230211eq03}}\lesssim \frac{1}{\left|I^{m,k+\tau m}_0 \right|}\sum_{\frakj \sim 2^{\sigma m}}\sum_{\mathfrak{q}\in \mathfrak{I}_{\frakj}}
\int_{I^k} \bigg| \int_{I_0^{m, k+\tau m}+\frac{q^2}{2^{m+2\tau m+2k}}-x_{\frakj}} f_1 \left(x-x_{\frakj}-t-\frac{\mathfrak{q}}{2^{\frac{m}{2}+k+\tau m}}\right)\\
&&\qquad\qquad\qquad f_2\left(x+\frac{2\mathfrak{q}t}{2^{\frac{m}{2}+k+\tau m}}-\frac{2\mathfrak{q}^3}{2^{\frac{3m}{2}+3\tau m +3k}}+\frac{2\mathfrak{q}x_{\frakj}}{2^{\frac{m}{2}+k+\tau m}} \right) e^{3i2^{\frac{m}{2}+k}t \cdot \frac{\left\lfloor\frac{\mathfrak{q}}{2^{\tau m}}\right\rfloor^2}{2^m} \lambda(x-x_{\frakj}) }dt \bigg|^2 dx,
\end{eqnarray*}
which, by yet another change variable $x \to x+\frac{2\mathfrak{q}^3}{2^{\frac{3m}{2}+3\tau m+3k}}-\frac{2\mathfrak{q}x_{\frakj}}{2^{\frac{m}{2}+k+\tau m}}$, becomes
\begin{eqnarray} \label{20230212eq04}
&&=\frac{1}{\left|I^{m,k+\tau m}_0 \right|}\sum_{\frakj \sim 2^{\sigma m}}\sum_{\mathfrak{q}\in \mathfrak{I}_{\frakj}}
\int_{I^k-\frac{2\mathfrak{q}^3}{2^{\frac{3m}{2}+3\tau m+3k}}+\frac{2\mathfrak{q}x_{\frakj}}{2^{\frac{m}{2}+k+\tau m}}} \bigg| \int_{I_0^{m, k+\tau m}+\frac{\mathfrak{q}^2}{2^{m+2\tau m+2k}}-x_{\frakj}}\nonumber\\
&&f_1 \left(x-x_{\frakj}-t-\frac{\mathfrak{q}}{2^{\frac{m}{2}+k+\tau m}}+\frac{2\mathfrak{q}^3}{2^{\frac{3m}{2}+3\tau m+3k}}-\frac{2\mathfrak{q}x_{\frakj}}{2^{\frac{m}{2}+k+\tau m}}\right)\qquad\qquad\nonumber\\
&& f_2\left(x+\frac{2\mathfrak{q}t}{2^{\frac{m}{2}+k+\tau m}}\right) e^{3i2^{\frac{m}{2}+k}t \cdot \frac{\left\lfloor\frac{\mathfrak{q}}{2^{\tau m}}\right\rfloor^2}{2^m} \lambda\left(x-x_{\frakj}+\frac{2\mathfrak{q}^3}{2^{\frac{3m}{2}+3\tau m+3k}}-\frac{2\mathfrak{q}x_{\frakj}}{2^{\frac{m}{2}+k+\tau m}}\right) }dt \bigg|^2 dx
\end{eqnarray}

Since $\left|\frac{\mathfrak{q}^2}{2^{m+2\tau m+2k}}-x_{\frakj}\right|\lesssim 2^{-2k-\sigma m}$ (and hence $\left|\frac{2\mathfrak{q}^3}{2^{\frac{3m}{2}+3\tau m+3k}} -\frac{2\mathfrak{q}x_{\frakj}}{2^{\frac{m}{2}+k+\tau m}} \right| \lesssim 2^{-3k-\sigma m}$) we deduce that
$$\left|I^k \Delta \left(I^k-\frac{2\mathfrak{q}^3}{2^{\frac{3m}{2}+3\tau m+3k}} + \frac{2\mathfrak{q}x_{\frakj}}{2^{\frac{m}{2}+k+\tau m}} \right)\right| \lesssim 2^{-2k} \left|I^k \right|$$ and
$$\left|I_0^{m, k+\tau m}\Delta \left(I_0^{m, k+\tau m}+\frac{\mathfrak{q}^2}{2^{m+2\tau m+2k}} - x_{\frakj}\right) \right| \lesssim 2^{-(\sigma-\tau) m} \left|I_0^{m, k+\tau m}\right|$$ for any $\frakj$ and $\mathfrak{q}\in \mathfrak{I}_{\frakj}$. These estimates together with \eqref{20230211eq01} and the fact that $f_1$ is morally a constant on intervals of length $2^{-m-k}$ implies that \eqref{20230212eq04} reduces up to an admissible error of at most
\begin{equation} \label{Err0}
O( 2^{-2(\sigma-\tau-\mu_0)m}\,\left\|f_1 \right\|^2_{L^2 \left(3 I^k \right)}\left\|f_2 \right\|^2_{L^2 \left(3 I^k \right)}
\end{equation}
to
\begin{equation} \label{20230212eq05}
\frac{1}{\left|I^{m,k+\tau m}_0 \right|}\sum_{\frakj \sim 2^{\sigma m}}\sum_{\mathfrak{q}\in \mathfrak{I}_{\frakj}} \int_{I^k}\bigg|\int_{I^{m,k+\tau m}_0} f_1 \left(x-x_{\frakj}-t-\frac{\mathfrak{q}}{2^{\frac{m}{2}+k+\tau m}}\right)
f_2\left(x+\frac{2\mathfrak{q}t}{2^{\frac{m}{2}+k+\tau m}}\right) e^{3i2^{\frac{m}{2}+k}t \cdot \frac{\left\lfloor\frac{\mathfrak{q}}{2^{\tau m}}\right\rfloor^2}{2^m} \lambda\left(x-x_{\frakj}\right) }dt \bigg|^2 dx.
\end{equation}
We now choose another parameter $\varsigma$ with $0<5 \varsigma<\tau(<\sigma)$ and further subdivide
$$\bigcup_{\frakj \sim 2^{\sigma m}} \bigcup_{\mathfrak{q}\in \mathfrak{I}_{\frakj}}=\bigcup_{\frakl\sim 2^{\varsigma m}} \bigcup_{\frakj= \frakl 2^{(\sigma-\varsigma) m}}^{(\frakl+1) 2^{(\sigma-\varsigma) m}} \bigcup_{\mathfrak{q}\in \mathfrak{I}_{\frakj}}=:\bigcup_{\frakl\sim 2^{\varsigma m}} \bigcup_{\frakj\in A_{\frakl}} \bigcup_{\mathfrak{q}\in \mathfrak{I}_{\frakj}}\,.$$
Letting $A:=A_{2^{\varsigma m}}$ and using an $l^{\infty}$ argument in $\frakl$ we may assume wlog that \eqref{20230212eq05} is controlled by
\begin{equation} \label{supl}
\frac{2^{\varsigma m}}{\left|I^{m,k+\tau m}_0 \right|}\sum_{\frakj\in A}\sum_{\mathfrak{q}\in \mathfrak{I}_{\frakj}}\int_{I^k} \bigg| \int_{I^{m,k+\tau m}_0} f_1 \left(x-x_{\frakj}-t-\frac{\mathfrak{q}}{2^{\frac{m}{2}+k+\tau m}}\right)
f_2\left(x+\frac{2\mathfrak{q}t}{2^{\frac{m}{2}+k+\tau m}}\right) e^{3i2^{\frac{m}{2}+k}t \cdot \frac{\left\lfloor\frac{\mathfrak{q}}{2^{\tau m}}\right\rfloor^2}{2^m} \lambda\left(x-x_{\frakj}\right) }dt \bigg|^2 dx.
\end{equation}

Next, by applying the change of variable $t \to \frac{2^{\frac{m}{2}+\tau m}}{\mathfrak{q}} t$, we have that \eqref{supl} is equivalent with
\begin{equation} \label{20230212eq060}
\frac{2^{\varsigma m}}{\left|I^{m,k+\tau m}_0 \right|}\sum_{\frakj\in A}\sum_{\mathfrak{q}\in \mathfrak{I}_{\frakj}}
\int_{I^k} \bigg| \int_{\frac{\mathfrak{q}}{2^{\frac{m}{2}+\tau m}}I^{m,k+\tau m}_0} f_1 \left(x-x_{\frakj}-\frac{2^{\frac{m}{2}+\tau m}}{\mathfrak{q}} t-\frac{\mathfrak{q}}{2^{\frac{m}{2}+k+\tau m}}\right)
f_2\left(x+\frac{2t}{2^{k}}\right) e^{3i2^{k}t \cdot \frac{2^{\tau m}}{\mathfrak{q}}\left\lfloor\frac{\mathfrak{q}}{2^{\tau m}}\right\rfloor^2 \lambda\left(x-x_{\frakj}\right) }dt \bigg|^2 dx.
\end{equation}

Using now that for any $j, j'\in A$ and $\mathfrak{q}\in \mathfrak{I}_{\frakj}$ we have that
$\left|I^{m,k+\tau m}_0\Delta \left(\frac{\mathfrak{q}}{2^{\frac{m}{2}+\tau m}}I^{m,k+\tau m}_0\right) \right| \lesssim 2^{-\varsigma m} \left|I^{m,k+\tau m}_0\right|$ and that $|x_j-x_{j'}|\lesssim 2^{-2k-\varsigma m}$, via a similar approximation argument with the one above, we have that up to an admissible global error term of at most
\begin{equation} \label{Err01}
O\left( 2^{-(\varsigma-2\mu_0)m}\,\left\|f_1 \right\|^2_{L^2 \left(3 I^k \right)}\left\|f_2 \right\|^2_{L^2 \left(3 I^k \right)}\right)\,,
\end{equation}
the expression in \eqref{20230212eq060} reduces to
\begin{equation} \label{20230212eq06}
\frac{2^{\varsigma m}}{\left|I^{m,k+\tau m}_0 \right|}\sum_{\frakj\in A}\sum_{\mathfrak{q}\in \mathfrak{I}_{\frakj}}
\int_{I^k} \bigg| \int_{I^{m,k+\tau m}_0} f_1 \left(x-x_{\frakj}-\frac{2^{\frac{m}{2}+\tau m}}{\mathfrak{q}} t-\frac{\mathfrak{q}}{2^{\frac{m}{2}+k+\tau m}}\right)
f_2\left(x+\frac{2t}{2^{k}}\right) e^{3i2^{k}t \cdot \frac{2^{\tau m}}{\mathfrak{q}}\left\lfloor\frac{\mathfrak{q}}{2^{\tau m}}\right\rfloor^2 \lambda\left(x\right) }dt \bigg|^2 dx\,,
\end{equation}
wehere in the above we made a slight notational abuse and redenoted by $\lambda(x)$ the expression $\lambda(x-x_{j_0})$ where here $j_0$ stands for a fixed representative within the set $A$.

Now, by Cauchy-Schwarz, we have
\begin{equation*}
\eqref{20230212eq06}\lesssim 2^{\frac{m}{2}+k+\tau m +\varsigma m} \frakE \frakF,
\end{equation*}
where
$$
\frakE^2:= \int_{I^k} \iint_{\left(I^{m,k+\tau m}_0 \right)^2} \left| f_2\left(x+\frac{2t}{2^k} \right)f_2\left(x+\frac{2s}{2^k} \right) \right|^2 dtdsdx,
$$
and
\begin{eqnarray*}
&&\frakF^2:= \int_{I^k} \iint_{\left(I^{m,k+\tau m}_0 \right)^2} \bigg |\sum_{\frakj\in A}\sum_{\mathfrak{q}\in \mathfrak{I}_{\frakj}} f_1 \left(x-x_{\frakj}-\frac{2^{\frac{m}{2}+\tau m}t}{\mathfrak{q}}-\frac{\mathfrak{q}}{2^{\frac{m}{2}+\tau m+k}}\right)\\
&& \qquad\qquad\qquad f_1 \left(x-x_{\frakj}-\frac{2^{\frac{m}{2}+\tau m}s}{\mathfrak{q}}-\frac{\mathfrak{q}}{2^{\frac{m}{2}+\tau m+k}}\right) e^{3i2^k(t-s) \cdot \frac{2^{\tau m}}{\mathfrak{q}}\left\lfloor\frac{\mathfrak{q}}{2^{\tau m}}\right\rfloor^2 \lambda (x) }\bigg|^2\,dtdsdx.
\end{eqnarray*}
As usual, $\frakE$ is easy to treat since $f_2$ is uniform:
\begin{equation}
 \frakE^2=\int_{I^k} \left( \int_{I^{m,k+\tau m}_0} \left|f_2 \left(x+\frac{2t}{2^k} \right) \right|^2 dt \right)^2 dx \lesssim 2^{-k} \cdot \left(2^{\mu_0 m} \cdot 2^{-\frac{m}{2}-\tau m} \right)^2 \cdot \left\|f_2\right\|_{L^2 \left(3 I^k \right)}^4 = \frac{2^{2(\mu_0-\tau) m}}{2^{m+k}} \left\|f_2 \right\|_{L^2\left(3 I^k\right)}^4.
\end{equation}
Therefore, the main estimate now becomes
\begin{equation} \label{20230212eq10}
\textbf{L}_{m,k}^2 \left(f_1, f_2 \right)%
\lesssim 2^{\frac{k}{2}} \cdot 2^{\mu_0 m+ \varsigma m} \left\|f_2 \right\|_{L^2 \left(3 I^k \right)}^2 \frakF.
\end{equation}

\noindent\textsf{Step II: Treatment of $\frakF$.}  In what follows, we focus on the $\frakF$-term:
\begin{eqnarray*}
&&\frakF^2=\int_{I^k} \iint_{\left(I^{m,k+\tau m}_0 \right)^2} \sum_{\frakj\in A}\sum_{\mathfrak{q}\in \mathfrak{I}_{\frakj}} \sum_{\frakj_1\in A}\sum_{\mathfrak{q}_1\in \mathfrak{I}_{\frakj_1}}
 f_1 \left(x-x_{\frakj}-\frac{2^{\frac{m}{2}+\tau m}t}{\mathfrak{q}}-\frac{\mathfrak{q}}{2^{\frac{m}{2}+\tau m+k}}\right)\\
&&f_1 \left(x-x_{\frakj_1}-\frac{2^{\frac{m}{2}+\tau m}t}{\mathfrak{q}_1}-\frac{\mathfrak{q}_1}{2^{\frac{m}{2}+\tau m+k}}\right) f_1\left(x-x_{\frakj}-\frac{2^{\frac{m}{2}+\tau m}s}{\mathfrak{q}}-\frac{\mathfrak{q}}{2^{\frac{m}{2}+\tau m+k}}\right) \\
&&f_1 \left(x-x_{\frakj_1}-\frac{2^{\frac{m}{2}+\tau m}s}{\mathfrak{q}_1}-\frac{\mathfrak{q}_1}{2^{\frac{m}{2}+\tau m+k}}\right) e^{3i2^k(t-s)  \left(\frac{2^{\tau m}}{\mathfrak{q}}\left\lfloor\frac{\mathfrak{q}}{2^{\tau m}}\right\rfloor^2 -\frac{2^{\tau m}}{\mathfrak{q}_1}\left\lfloor\frac{\mathfrak{q}_1}{2^{\tau m}}\right\rfloor^2 \right) \lambda (x) } dtdsdx.\qquad
\end{eqnarray*}
Applying now the change variables $x \to x+\frac{\mathfrak{q}}{2^{\frac{m}{2}+\tau m+k}}$, $t \to \frac{\mathfrak{q}_1}{2^{\frac{m}{2}+\tau m}} t$, and $s \to \frac{\mathfrak{q}_1}{2^{\frac{m}{2}+\tau m}}s$, together with another approximation argument, we have that
\begin{equation} \label{mainerrorterm}
\textrm{up to an admissible global error term}\: O( 2^{-(\frac{\varsigma}{2}-2\mu_0)m}\,\left\|f_1 \right\|^2_{L^2 \left(3 I^k \right)}\left\|f_2 \right\|^2_{L^2 \left(3 I^k \right)})
\end{equation}
one can reduce matters to
\begin{eqnarray}\label{initF}
&& \int_{I_{-1}^{0, k}} \iint_{\left(I^{m,k+\tau m}_0 \right)^2} \sum_{{\frakj,\,\frakj_1\in A}\atop{|\frakj-\frakj_1|\geq 2^{(\sigma-2\varsigma) m}}}\sum_{{\mathfrak{q}\in \mathfrak{I}_{\frakj}}\atop{\mathfrak{q}_1\in \mathfrak{I}_{\frakj_1}}}
 f_1 \left(x-x_{\frakj}-\frac{\mathfrak{q}_1t}{\mathfrak{q}}\right) f_1 \left(x-x_{\frakj_1}-t+\frac{\mathfrak{q}-\mathfrak{q}_1}{2^{\frac{m}{2}+\tau m+k}} \right)\\
&& \quad \cdot  f_1 \left(x-x_{\frakj}-\frac{\mathfrak{q}_1 s}{\mathfrak{q}} \right)  f_1 \left(x-x_{\frakj_1}-s+\frac{\mathfrak{q}-\mathfrak{q}_1}{2^{\frac{m}{2}+\tau m+k}} \right) e^{3i2^{k-\frac{m}{2}}(t-s) \left(\frac{\mathfrak{q}_1}{\mathfrak{q}}\left\lfloor\frac{\mathfrak{q}}{2^{\tau m}}\right\rfloor^2 -\left\lfloor\frac{\mathfrak{q}_1}{2^{\tau m}}\right\rfloor^2 \right)\lambda \left(x+\frac{\mathfrak{q}}{2^{\frac{m}{2}+\tau m+k}} \right) } dtdsdx.\nonumber
\end{eqnarray}
Now, for a fixed $\frakj, \frakj_1$ with $2^{(\sigma-2\varsigma) m}\leq |\frakj-\frakj_1|\leq 2^{(\sigma-\varsigma) m}$ and under the restriction $p,r\in\Z$, we apply the change of variable
$$
\begin{cases}
p=\mathfrak{q}-\mathfrak{q}_1 \\
\\
\frac{r}{2^{\frac{m}{2}+\tau m}}=\frac{\mathfrak{q}_1}{\mathfrak{q}}+O(2^{-\frac{m}{2}-\tau m})
\end{cases}
$$
Notice that $|p|\sim 2^{\frac{m}{2}+\tau m-\sigma\,m}|\frakj-\frakj_1|\geq  2^{\frac{m}{2}+\tau m-2\varsigma\,m}$. This in turn implies
$$
\begin{cases}
\mathfrak{q}=\frac{p}{1-\frac{r}{2^{\frac{m}{2}+\tau m}}}+O\left(\frac{2^{\sigma m}}{|j-j_1|}\right) \\
\\
\mathfrak{q}_1=\frac{p \cdot \frac{r}{2^{\frac{m}{2}+\tau m}}}{1-\frac{r}{2^{\frac{m}{2}+\tau m}}}+O\left(\frac{2^{\sigma m}}{|j-j_1|}\right)
\end{cases}\,.
$$
Using that $\frac{\mathfrak{q}_1}{\mathfrak{q}}\,t=\frac{r}{2^{\frac{m}{2}+\tau m}}\,t\,+\,O(\frac{1}{2^{m+k+2\tau m}})$ and the morally $2^{-m-k}$--constancy of $f_1$, and setting
$$R:=2^{\frac{m}{2}+\tau m}\,\frac{j_1}{j}+\left[-2^{\frac{m}{2}+\tau m-\sigma\,m},\,2^{\frac{m}{2}+\tau m-\sigma\,m}\right]$$
and
$$P(r):=\left(1-\frac{r}{2^{\frac{m}{2}+\tau m}}\right)\,2\mathfrak{I}_{\frakj} \cap\left(\frac{2^{\frac{m}{2}+\tau m}}{r}-1\right)\,2\mathfrak{I}_{\frakj_1}$$
we have that\footnote{It is here where we exploit the crucial constancy hypothesis at scale $2^{-\frac{m}{2}-k}$ imposed on $\lambda$.}
\begin{equation} \label{mainerrorterm2}
\textrm{up to an admissible global error term}\: O( 2^{-(\frac{\tau-2\varsigma}{2}-2\mu_0)m}\,\left\|f_1 \right\|^2_{L^2 \left(3 I^k \right)}\left\|f_2 \right\|^2_{L^2 \left(3 I^k \right)})
\end{equation}
\eqref{initF} is bounded from above by at most $O(2^{2(\sigma-\varsigma) m})$ terms of the form
\begin{eqnarray} \label{20230213eq10}
&&\frac{2^{\sigma m}}{|j-j_1|}\,\int_{I_{-1}^{0, k}} \iint_{\left(I^{m,k+\tau m}_0 \right)^2} \sum_{r\in R} \left|\sum _{p\in P(r)} f_1 \left(x-x_{\frakj}-\frac{rt}{2^{\frac{m}{2}+\tau m}}\right) f_1 \left(x-x_{\frakj_1}-t+\frac{p}{2^{\frac{m}{2}+k+\tau m}} \right)\right.\,\nonumber\\
&&f_1 \left(x-x_{\frakj}-\frac{rs}{2^{\frac{m}{2}+\tau m}} \right) f_1 \left(x-x_{\frakj_1}-s+\frac{p}{2^{\frac{m}{2}+k+\tau m}} \right) e^{3i2^{\frac{m}{2}+k+\tau m} (t-s)  \Phi_{r, p}(x)}\Bigg|\,dtdsdx,
\end{eqnarray}
where from now on $\frakj, \frakj_1$ are fixed with $2^{(\sigma-2\varsigma) m}\leq |\frakj-\frakj_1|\leq 2^{(\sigma-\varsigma) m}$ and
\begin{equation} \label{20230215eq01}
\Phi_{r, p}(x):=\frac{\frac{r}{2^{\frac{m}{2}+\tau m}} \cdot p^2}{2^{m+3\tau m} \left(1-\frac{r}{2^{\frac{m}{2}+\tau m}} \right)} \lambda \left(x+\frac{\frac{p}{1-\frac{r}{2^{\frac{m}{2}+\tau m}}}}{2^{\frac{m}{2}+k+\tau m}} \right)\,.
\end{equation}

We then apply Cauchy-Schwarz twice, first in the variable $r$ and next in the $3$-tuple $(t, s, x)$ to deduce
\begin{equation*}
\eqref{20230213eq10} \lesssim \frac{2^{\sigma m}}{|j-j_1|}\, \frakG \frakH,
\end{equation*}
where, for notational simplicity letting $f_{1,a}(x):=f_1(x-a)$, $a\in\R$, we have
$$
\frakG^2:=\int_{I_{-1}^{0, k}}\iint_{\left(I^{m,k+\tau m}_0 \right)^2} \sum_{r\in R} \left|f_{1,x_{\frakj}} \left(x-\frac{rt}{2^{\frac{m}{2}+\tau m}} \right) f_{1,x_{\frakj}} \left(x-\frac{rs}{2^{\frac{m}{2}+\tau m}} \right)  \right|^2 dtdsdx,
$$
and
\begin{equation}
\frakH^2:=\int_{I_{-1}^{0, k}} \iint_{\left(I^{m,k+\tau m}_0 \right)^2} \sum_{r\in R} \bigg|  \sum_{p \in P(r)} f_{1,x_{\frakj_1}} \left(x-t+\frac{p}{2^{\frac{m}{2}+k+\tau m}} \right) f_{1,x_{\frakj_1}} \left(x-s+\frac{p}{2^{\frac{m}{2}+k+\tau m}} \right)\,e^{3i2^{\frac{m}{2}+k+\tau m} (t-s)  \Phi_{r, p}(x) } \bigg|^2\,dtdsdx\,.
 \end{equation}

The term $\frakG$ can be handled by using the uniformity assumption on $f_1$ and Observation \ref{CZtype}:
\begin{equation}
\frakG^2=\sum_{r\in R} \int_{I_{-1}^{0, k}} \left(\int_{I^{m,k+\tau m}_0} \left|f_{1,x_{\frakj}} \left(x-\frac{rt}{2^{\frac{m}{2}+\tau m}}  \right) \right|^2 dt \right)^2 dx \lesssim 2^{-\frac{m}{2}-k} \cdot 2^{2\mu_0 m} \cdot 2^{-\sigma m}\cdot2^{-\tau m}\cdot \left\|f_1\right\|_{L^2(3I^k)}^4.
\end{equation}
Therefore, the main estimate \eqref{20230212eq10} becomes now
\begin{equation} \label{20230213eq20}
\textbf{L}_{m,k}^2 \left(f_1, f_2 \right)\lesssim  2^{\frac{k}{4}-\frac{m}{8}} \cdot 2^{\frac{3\mu_0m}{2}+\frac{3}{4}\sigma m+\varsigma m-\frac{\tau}{4}m} \left\|f_1 \right\|_{L^2\left(3 I^k \right)}  \left\|f_2\right\|^2_{L^2\left(3 I^k \right)}   \frakH^{\frac{1}{2}}.
\end{equation}
Next, we estimate the term $\frakH$. Note that
\begin{eqnarray*}
\frakH^2&&=\sum_{r\in R}\sum_{ p_1, p_2 \in P(r)} \int_{I_{-1}^{0, k}} \iint_{\left(I^{m,k+\tau m}_0 \right)^2} f_{1,x_{\frakj_1}} \left(x-t+\frac{p_1}{2^{\frac{m}{2}+k+\tau m}} \right) f_{1,x_{\frakj_1}} \left(x-s+\frac{p_1}{2^{\frac{m}{2}+k+\tau m}} \right) \\
&& \cdot f_{1,x_{\frakj_1}}\left(x-t+\frac{p_2}{2^{\frac{m}{2}+k+\tau m}} \right) f_{1,x_{\frakj_1}} \left(x-s+\frac{p_2}{2^{\frac{m}{2}+k+\tau m}} \right)e^{3i2^{\frac{m}{2}+k+\tau m} (t-s) \left(\Phi_{r, p_1}(x)-\Phi_{r, p_2}(x) \right) }  dtdsdx.
\end{eqnarray*}
Next, we set $P:=[(\frakj-\frakj_1-1)\,2^{\frac{m}{2}+\tau m-\sigma m},\,(\frakj-\frakj_1+1)\,2^{\frac{m}{2}+\tau m-\sigma m}]$ and define the interval $R(p)$ by the condition $p\in P(r)$ iff $r\in R(p)$. Then, by applying first Fubini followed by a Cauchy-Schwarz, we have
\begin{equation} \label{20230214eq11}
\frakH^2 \le \calI \calJ
\end{equation}
where
\begin{eqnarray*}
&& \calI^2:=\sum_{p_1, p_2 \in P} \int_{I_{-1}^{0, k}} \iint_{\left(I^{m,k+\tau m}_0 \right)^2} \bigg|f_{1,x_{\frakj_1}} \left(x-t+\frac{p_1}{2^{\frac{m}{2}+k+\tau m}} \right) f_{1,x_{\frakj_1}}\left(x-t+\frac{p_2}{2^{\frac{m}{2}+k+\tau m}} \right)\\
&& \quad \quad \quad \quad \quad \quad \quad \quad \quad \quad \quad \quad \cdot  f_{1,x_{\frakj_1}} \left(x-s+\frac{p_1}{2^{\frac{m}{2}+k+\tau m}} \right)f_{1,x_{\frakj_1}} \left(x-s+\frac{p_2}{2^{\frac{m}{2}+k+\tau m}} \right)\bigg|^2 dtdsdx
\end{eqnarray*}
and, for $R(p_1,p_2):=R(p_1)\cap R(p_2)$
$$
\calJ^2:=\sum_{p_1, p_2 \in P} \int_{I_{-1}^{0, k}} \iint_{\left(I^{m,k+\tau m}_0 \right)^2} \left| \sum_{r\in R(p_1,p_2)} e^{3i 2^{\frac{m}{2}+k+\tau m} (t-s) \left(\Phi_{r, p_1}(x)-\Phi_{r, p_2}(x) \right)} \right|^2 dt ds dx.
$$
The former term $\calI^2$ can be handled easily due to the uniformity of $f_1$:
\begin{equation}
\calI^2\lesssim 2^k \cdot 2^{4\mu_0 m}\cdot 2^{-2\sigma m} \cdot \left\|f_1 \right\|^8_{L^2 \left(3 I^k \right)},
\end{equation}
which, together with \eqref{20230213eq20} and \eqref{20230214eq11}, gives
\begin{equation} \label{20230214eq12}
\textbf{L}_{m,k}^2 \left(f_1, f_2 \right)\lesssim 2^{\frac{3k}{8}-\frac{m}{8}} \cdot 2^{2\mu_0 m+\frac{1}{2}\sigma m-\frac{\tau}{4}m+\varsigma m} \left\|f_1 \right\|_{L^2(3 I^k)}^2 \left\|f_2 \right\|_{L^2(3 I^k)}^2 \calJ^{\frac{1}{4}}.
\end{equation}

\medskip

\noindent\textsf{Step III: Treatment of $\calJ$.} By definition, we have
$$
\calJ^2=\sum_{\substack{p_1, p_2 \in P\\ r_1, r_2\in R(p_1,p_2)}} \int_{I_{-1}^{0, k}} \iint_{\left(I^{m,k+\tau m}_0 \right)^2} e^{3i2^{\frac{m}{2}+k+\tau m}(t-s)\,V_{p_1, p_2, r_1, r_2}(x)} dtdsdx,
$$
where
\begin{equation} \label{phaseV}
V_{p_1, p_2, r_1, r_2}(x):=\Phi_{r_1, p_1}(x)-\Phi_{r_1, p_2}(x)-\Phi_{r_2, p_1}(x)+\Phi_{r_2, p_2}(x).
\end{equation}

Deduce that up to an admissible global error term of the form $O( 2^{-(\frac{\sigma}{8}-\varsigma -2\mu_0)m}\,\left\|f_1 \right\|^2_{L^2 \left(3 I^k \right)}\left\|f_2 \right\|^2_{L^2 \left(3 I^k \right)})$, we have that
\begin{equation} \label{20230215eq10}
\calJ^2\lesssim 2^{-m-2k-2\tau m}\sum_{p_1, p_2 \in P}\:\:\sum_{{r_1, r_2 \in R(p_1,p_2)}\atop{|r_1-r_2|\geq 2^{\frac{m}{2}+\tau m-2\sigma m}}} \int_{I_{-1}^{0, k}} \lceil V_{p_1, p_2, r_1, r_2}(x)\rceil\,dx\,,
\end{equation}
where, from the definition of $\Phi_{r, p}$ (see, \eqref{20230215eq01}), we have
\begin{eqnarray*}
&&\left|V_{p_1, p_2, r_1, r_2}(x) \right|\\
&=&\vast| \frac{r_1}{2^{m+3\tau m}}(2^{\frac{m}{2}+\tau m}-r_1) \left(\frac{p_1}{2^{\frac{m}{2}+\tau m}-r_1}\right)^2 \lambda \left(x+\frac{1}{2^{k}} \cdot \frac{p_1}{2^{\frac{m}{2}+\tau m}-r_1} \right) \\
&-&\frac{r_1}{2^{m+3\tau m}}(2^{\frac{m}{2}+\tau m}-r_1) \left(\frac{p_2}{2^{\frac{m}{2}+\tau m}-r_1}\right)^2  \lambda \left(x+\frac{1}{2^k} \cdot \frac{p_2}{2^{\frac{m}{2}+\tau m}-r_1} \right) \\
&-& \frac{r_2}{2^{m+3\tau m}}(2^{\frac{m}{2}+\tau m}-r_2) \left(\frac{p_1}{2^{\frac{m}{2}+\tau m}-r_2}\right)^2  \lambda \left(x+\frac{1}{2^k} \cdot \frac{p_1}{2^{\frac{m}{2}+\tau m}-r_2} \right)\\
&+& \frac{r_2}{2^{m+3\tau m}}(2^{\frac{m}{2}+\tau m}-r_2) \left(\frac{p_2}{2^{\frac{m}{2}+\tau m}-r_2}\right)^2 \lambda \left(x+\frac{1}{2^k} \cdot \frac{p_2}{2^{\frac{m}{2}+\tau m}-r_2}\right) \vast |.
\end{eqnarray*}

Our goal here is to improve over the trivial estimate
$$\calJ^2\lesssim 2^{m-3k-4\sigma m+2\tau m-2\varsigma m}\,.$$
This will be done by exploiting the curvature that is present in the mixing of the parameters $p_1, p_2, r_1, r_2$ that are part from the definition of \eqref{phaseV}, spirit that has been seen present taking different but related forms first in \cite{Lie19} (see the non-degeneracy condition in Definition 11 and the proof of Proposition 49) and more recently in \cite{BBLV21} (see the proof of Proposition 5.5 therein).

In order to fulfill our goal, we first apply the change variable
$$
u_1=\left\lfloor\frac{2^{\frac{m}{2}+\tau m}p_1}{2^{\frac{m}{2}+\tau m}-r_1}\right\rfloor, \quad u_2=\left\lfloor\frac{2^{\frac{m}{2}+\tau m}p_2}{2^{\frac{m}{2}+\tau m}-r_1}\right\rfloor, \quad \textrm{and}\quad v_1=\left\lfloor\frac{2^{\frac{m}{2}+\tau m}p_1}{2^{\frac{m}{2}+\tau m}-r_2}\right\rfloor,
$$
that is,
$$
\frac{p_1}{2^{\frac{m}{2}+\tau m}-r_1}=\frac{u_1}{2^{\frac{m}{2}+\tau m}}+O(2^{-\frac{m}{2}-\tau m}), \quad \frac{p_2}{2^{\frac{m}{2}+\tau m}-r_1}=\frac{u_2}{2^{\frac{m}{2}+\tau m}}+O(2^{-\frac{m}{2}-\tau m})$$
and  $\frac{p_1}{2^{\frac{m}{2}+\tau m}-r_2}=\frac{v_1}{2^{\frac{m}{2}+\tau m}}+O(2^{-\frac{m}{2}-\tau m})$.
Notice that this further implies
$$\frac{p_2}{2^{\frac{m}{2}+\tau m}-r_2}=\frac{u_2 v_1}{2^{\frac{m}{2}+\tau m} u_1}+O(2^{-\frac{m}{2}-\tau m}) \quad \textrm{and} \quad
\frac{2^{\frac{m}{2}+\tau m}-r_2}{2^{\frac{m}{2}+\tau m}-r_1}=\frac{u_1}{v_1}+O(2^{-\frac{m}{2}-\tau m})\,.$$
Then, using the fact that $\lambda$ is constant on intervals of length $2^{-\frac{m}{2}-k}$ with $\lambda\sim 2^{\frac{m}{2}}$, and the observation that due to the restrictions in the ranges of $p_1,p_2,r_1,r_2$ we have $|p_1|\, |p_2|,\,|2^{\frac{m}{2}+\tau m}-r_1|,\,|2^{\frac{m}{2}+\tau m}-r_2|\sim |\frakj-\frakj_1|\,2^{\frac{m}{2}+\tau m-\sigma m}$, $u_1,u_2,v_1\in 2\mathfrak{I}_{\frakj}$ and $|u_1-v_1|\gtrsim 2^{\frac{m}{2}+\tau m-2\sigma m}$, we now have that
\begin{equation} \label{mainerrorterm3}
\textrm{up to an admissible global error term}\: O( 2^{-(\frac{\tau}{8}-2\mu_0-\varsigma)m}\,\left\|f_1 \right\|^2_{L^2 \left(3 I^k \right)}\left\|f_2 \right\|^2_{L^2 \left(3 I^k \right)})
\end{equation}

\begin{eqnarray*}
&&|\tilde{V}_{r_1, u_1,u_2,v_1}(x)|:=\frac{|\frakj-\frakj_1|\,2^{\frac{m}{2}+\tau m-\sigma m}}{|r_1-2^{\frac{m}{2}+\tau m}|}\,\left|V_{p_1, p_2, r_1, r_2}(x) \right|\\
&=& |\frakj-\frakj_1|\,2^{\frac{m}{2}-2\tau m-\sigma m}\vast | r_1\left(\frac{u_1}{2^{m+\tau m}} \right)^2 \lambda \left(x+\frac{u_1}{2^{\frac{m}{2}+k+\tau m}} \right)- r_1\left(\frac{u_2}{2^{m+\tau m}} \right)^2 \lambda \left(x+\frac{u_2}{2^{\frac{m}{2}+k+\tau m}} \right) \\
&-& \left(\frac{u_1}{v_1}(r_1-2^{\frac{m}{2}+\tau m})+2^{\frac{m}{2}+\tau m}\right)\frac{u_1}{v_1}\left(\frac{v_1}{2^{m+\tau m}} \right)^2\lambda \left(x+\frac{v_1}{2^{\frac{m}{2}+k+\tau m}} \right)\\
&+&\left(\frac{u_1}{v_1}(r_1-2^{\frac{m}{2}+\tau m})+2^{\frac{m}{2}+\tau m}\right)\frac{u_1}{v_1}\left(\frac{u_2v_1}{u_1 2^{m+\tau m}} \right)^2 \lambda \left(x+\frac{\frac{u_2v_1}{u_1}}{2^{\frac{m}{2}+k+\tau m}} \right) \vast|.
\end{eqnarray*}

Separating the terms in the $r_1$ variable we further have
\begin{equation}
|\tilde{V}_{r_1, u_1,u_2,v_1}(x)| \simeq |\frakj-\frakj_1|\,2^{-\tau m-\sigma m}\, \left| A_{u_1,u_2,v_1}(x)+\frac{r_1}{2^{\frac{m}{2}+\tau m}} B_{u_1,u_2,v_1}(x) \right|,
\end{equation}
where
\begin{equation*}
A_{u_1,u_2,v_1}(x):=-\frac{(v_1-u_1)u_1}{2^{m+2\tau m}} \lambda \left(x+\frac{v_1}{2^{\frac{m}{2}+k+\tau m}} \right)+\frac{(v_1-u_1)u_2^2}{2^{m+2\tau m} u_1} \lambda \left(x+\frac{u_2v_1}{2^{\frac{m}{2}+k+2\tau m} u_1} \right)
\end{equation*}
and
\begin{eqnarray*}
&&B_{u_1,u_2,v_1}(x):=\frac{u_1^2}{2^{m+2\tau m}} \lambda \left(x+\frac{u_1}{2^{\frac{m}{2}+k+\tau m}} \right)-\frac{u_2^2}{2^{m+2\tau m}} \lambda \left(x+\frac{u_2}{2^{\frac{m}{2}+k+\tau m}} \right)\\
&&\qquad\qquad\qquad-\frac{u_1^2}{2^{m+2 \tau m}} \lambda \left(x+\frac{v_1}{2^{\frac{m}{2}+k+\tau m}} \right)+\frac{u_2^2}{2^{m+2\tau m}} \lambda \left(x+\frac{u_2v_1}{2^{\frac{m}{2}+k+\tau m} u_1} \right).
\end{eqnarray*}
Putting everything together, we have that \eqref{20230215eq10} implies

\begin{equation} \label{Jkey}
\calJ^2\lesssim 2^{-m-2k-2\tau m}\sum_{r_1\in R}\:\sum_{{u_1,u_2,v_1\in 2\mathfrak{I}_{\frakj}}\atop{|u_1-v_1|\gtrsim 2^{\frac{m}{2}+\tau m-2\sigma m}}} \int_{I_{-1}^{0, k}} \left\lceil 2^{-2\varsigma m-\tau m}\, \left| A_{u_1,u_2,v_1}(x)+\frac{r_1}{2^{\frac{m}{2}+\tau m}} B_{u_1,u_2,v_1}(x) \right|\right\rceil\,dx\,.
\end{equation}
Set now:
\begin{equation*}
\tilde{A}_{u_1,u_2,v_1}(x):=-\frac{2^{m+2\tau m}}{(v_1-u_1)u_1}\,A_{u_1,u_2,v_1}\left(x-\frac{v_1}{2^{\frac{m}{2}+k+\tau m}}\right)=\lambda\left(x\right)-\frac{u_2^2}{u_1^2}  \lambda \left(x+\frac{v_1}{2^{\frac{m}{2}+k+\tau m}}\left(\frac{u_2}{u_1}-1\right)\right)
\end{equation*}
and
\begin{equation*}
\tilde{B}_{u_1,u_2,v_1}(x):=-\frac{2^{m+2\tau m}}{(v_1-u_1)u_1}\,B_{u_1,u_2,v_1}\left(x-\frac{v_1}{2^{\frac{m}{2}+k+\tau m}}\right)\,.
\end{equation*}
Then \eqref{Jkey} becomes
\begin{equation} \label{Jkey1}
\calJ^2\lesssim 2^{-m-2k-2\tau m}\sum_{r_1\in R}\:\sum_{{u_1,u_2,v_1\in 2\mathfrak{I}_{\frakj}}\atop{|u_1-v_1|\gtrsim 2^{\frac{m}{2}+\tau m-2\sigma m}}} \int_{5I^{k}} \left\lceil 2^{-4\sigma m}\, \left| \tilde{A}_{u_1,u_2,v_1}(x)\frac{2^{\frac{m}{2}+\tau m}}{r_1} \,+\,\tilde{B}_{u_1,u_2,v_1}(x) \right|\right\rceil\,dx\,.
\end{equation}

Set now $\frac{u_2}{u_1}=\frac{q}{2^{\frac{m}{2}+\tau m}}+O(2^{-\frac{m}{2}-\tau m})$ and, for $l\in\{2,\ldots,\,2^{(\sigma-\varsigma)m}\}$, define
$$\mathcal{C}_l:=\left\{q\in 10\mathfrak{I}_{2^{\sigma m}}\,\Big|\,\left|\frac{q}{2^{\frac{m}{2}+\tau m}}-1\right|\in [(l-1) 2^{\varsigma m-2\sigma m},\, l 2^{\varsigma m-2\sigma m}]\right\}\,.$$
For each such $l$ (fixed) and $q\in \mathcal{C}_l $ we further make the change of variable
$v=\left\lfloor \frac{2^{(-\varsigma+2\sigma) m}}{l}\,(\frac{q}{2^{\frac{m}{2}+\tau m}}-1)\,v_1\right\rfloor$.

Deduce now, that  up to the same admissible global error term as in \eqref{mainerrorterm3}, we have

\begin{eqnarray}\label{Jkey2}
&&\qquad\qquad\qquad\calJ^2\lesssim 2^{-m-2k-2\tau m}\,\sum_{l=2}^{2^{(\sigma-\varsigma)m}}\,\sum_{r_1\in R}\:\sum_{{u_1\in 2\mathfrak{I}_{\frakj},\: v\in 4 \mathfrak{I}_{\frakj}}\atop{q\in 10\mathfrak{I}_{2^{\sigma m}}}}\nonumber\\
&& \int_{5I^{k}} \left\lceil \left|2^{-4\sigma m}\,
\left(\lambda\left(x\right)-\frac{q^2}{2^{m+2\tau m}}  \lambda \left(x+\frac{v\,l\,2^{(\varsigma-2\sigma) m}}{2^{\frac{m}{2}+k+\tau m}}\right)\right)\frac{2^{\frac{m}{2}+\tau m}}{r_1}\,
+\,C_{q,l,u_1,v}(x)\right|\right\rceil\,dx\,,
\end{eqnarray}
for some suitable measurable functions $|C_{q,l,u_1,v}(x)|\lesssim 2^{\frac{m}{2}}$.

Next, for each fixed $x$, $l$, $u_1$ and $v$, we excise the set of $q$'s for which
$\left|\lambda\left(x\right)-\frac{q^2}{2^{m+2\tau m}}  \lambda \left(x+\frac{v\,l\,2^{(\varsigma-2\sigma) m}}{2^{\frac{m}{2}+k+\tau m}}\right)\right|<2^{\frac{m}{2}-3\sigma m}$ obtaining thus an admissible global contribution of at most
$O( 2^{-(\frac{\sigma}{8}-2\mu_0-\varsigma)m}\,\left\|f_1 \right\|^2_{L^2 \left(3 I^k \right)}\left\|f_2 \right\|^2_{L^2 \left(3 I^k \right)})$.

Finally, on the remaining component, summing in the free variable $r_1$, we deduce
\begin{equation}\label{Jkey3}
\calJ^2\lesssim 2^{\frac{1}{2}m-3k+5\sigma m+2\tau m}\,.
\end{equation}

Substituting now \eqref{Jkey3} into \eqref{20230214eq12} and recalling \eqref{Err0}, \eqref{mainerrorterm}, \eqref{mainerrorterm2} and \eqref{mainerrorterm3} we conclude that

\begin{equation} \label{finalprop4p8}
\textbf{L}_{m,k}^2 \left(f_1, f_2 \right)\lesssim 2^{2\mu_0 m}\,(2^{-\frac{m}{16}} \cdot 2^{\frac{9}{8}\sigma m+\varsigma m}\,+\,2^{-2(\sigma-\tau)m}\,+\,2^{-\frac{\varsigma}{2}m}\,+\, 2^{-(\frac{\tau}{8}-\varsigma)m}) \left\|f_1 \right\|_{L^2(3I^k)}^2 \left\|f_2 \right\|_{L^2(3I^k)}^2\,,
\end{equation}
finishing thus the proof of Proposition \ref{mainprop02} with the choice
\begin{equation} \label{choiceps4}
2\epsilon_4=\min\left\{\frac{1}{16}-\frac{9}{8}\sigma-\varsigma,\:
2(\sigma-\tau),\:\frac{\varsigma}{2},\:\frac{\tau}{8}-\varsigma\right\}-2\mu_0\,.
\end{equation}
\end{proof}

We conclude this section with the following

\begin{obs} \label{bkI}[\textsf{Bookkeeping (I)}]  Here we put together the conditions that our many parameters obey:
\begin{itemize}
\item From \eqref{sparse-I}, \eqref{sparse-II}, \eqref{unifc} and \eqref{20230202eq01} we have that the parameter $\epsilon_1$ in Theorem \ref{mainthIb} can be taken as
\begin{equation} \label{ep1}
\epsilon_1=\min\left\{\frac{\mu}{2},\:\:\:\frac{\mu_0-\mu}{2},\:\:\:\epsilon_2-\frac{\mu}{2}\right\}\,.
\end{equation}

\item  Running the bootstrap algorithm first for the values $w=k$ and then for $w=\frac{m}{2}$, from Proposition \ref{constantprop}, Corollary \ref{20230203cor01}, Proposition \ref{mainprop01}, Observation \ref{boots2} and Proposition \ref{mainprop02}  the resulting value for the parameter $\epsilon_2$ in Theorem \ref{20230202thm01} is
\begin{equation} \label{ep2}
\epsilon_2=\min\left\{\frac{\delta_k}{2}-\mu_0,\:\:\:\epsilon_{3}^{k},\:\:\:\frac{\delta_{\frac{m}{2}}}{2}-\mu_0-\epsilon_0^k,\:\:\: \epsilon_{3}^{\frac{m}{2}}-\epsilon_0^k,\:\:\: \epsilon_4-\epsilon_{0}^{\frac{m}{2}}-\epsilon_{0}^{k} \right\}\,.
\end{equation}

\item The parameter $\epsilon_{3}^{w}$ in Proposition \ref{mainprop01}--recalling \eqref{ep3}--is given by
\begin{equation}\label{eps3}
\epsilon_{3}^{w}=\frac{\epsilon_{0}^{w}}{16}-\frac{\del_w}{2}-\mu_0\,.
\end{equation}

\item The parameter $\epsilon_4$ in Proposition \ref{mainprop02}--see \eqref{choiceps4}--is given by
\begin{equation}\label{eps4}
\epsilon_4=\min\left\{\frac{1}{32}-\frac{9}{16}\sigma-\frac{\varsigma}{2},\:
\sigma-\tau,\:\frac{\varsigma}{4},\:\frac{\tau}{16}-\frac{\varsigma}{2}\right\}\,-\mu_0\,.
\end{equation}
\end{itemize}
It is now easy to notice that one can make the choice $\mu_0=2\mu$, $\varsigma=\frac{\tau}{12}$, $\sigma=\frac{49}{48}\tau$,  $\tau=\frac{1}{21}$, $\delta_k=\frac{\epsilon_0^k}{16}$,
$\delta_{\frac{m}{2}}=\frac{\epsilon_0^{\frac{m}{2}}}{16}$, $\epsilon_{0}^{\frac{m}{2}}=33 \epsilon_{0}^{k}$, $\mu=\frac{\epsilon_{0}^{k}}{96}$ and $\epsilon_0^k\approx\frac{1}{50000}$. Then, Theorem \ref{mainthIb} holds with $\epsilon_1\approx 10^{-7}$.
\end{obs}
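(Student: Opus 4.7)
The final statement is a pure consistency check: it claims that the coupled set of inequalities on $(\mu,\mu_0,\varsigma,\tau,\sigma,\del_k,\del_{m/2},\epsilon_0^k,\epsilon_0^{m/2})$ accumulated along Section \ref{MaindiagKpositive} admits the displayed numerical solution and produces an $\epsilon_1$ of order $10^{-7}$. The plan is therefore arithmetic and proceeds in three short steps.

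First, I would \emph{collect the structural constraints} that each proposition imposes: (a) $\del_w>2\mu_0$ for $w\in\{k,m/2\}$, required by Proposition \ref{constantprop}; (b) $\epsilon_0^w>8\del_w+16\mu_0$, needed so that the formula \eqref{eps3} gives $\epsilon_3^w>0$ (see the derivation leading to \eqref{ep3} inside Proposition \ref{mainprop01}); (c) the ordering $0<5\varsigma<\tau<\sigma<1/10$, used throughout the proof of Proposition \ref{mainprop02}; (d) $\mu_0>\mu>0$ from the sparse--uniform decomposition \eqref{uniformeq01}; and (e) positivity of the min formulas \eqref{eps4}, \eqref{ep2}, \eqref{ep1}. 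The only non-mechanical constraint is (e), which ultimately dictates the ratios between $\varsigma,\tau,\sigma$.

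Second, I would \emph{substitute and check the non-numerical constraints}. The chain (c) is immediate from $\varsigma=\tau/12$, $\sigma=49\tau/48$, and $\sigma=49/1008<1/10$. With $\mu_0=\epsilon_0^k/48$, $\del_k=\epsilon_0^k/16$, condition (a) reduces to $\tfrac{1}{16}>\tfrac{1}{24}$, while (b) for $w=k$ becomes $1>\tfrac{1}{2}+\tfrac{1}{3}$; both checks for $w=m/2$ follow by scaling by the factor $33$ since $\epsilon_0^{m/2}=33\epsilon_0^k$ and $\del_{m/2}=33\del_k$. Condition (d) holds by definition $\mu_0=2\mu$.

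Third, I would \emph{reduce all min formulas to a common denominator and read off the answer}. With the common denominator $96$,
\[
\tfrac{\del_k}{2}-\mu_0=\tfrac{\epsilon_0^k}{32}-\tfrac{\epsilon_0^k}{48}=\tfrac{\epsilon_0^k}{96},\qquad \epsilon_3^k=\tfrac{\epsilon_0^k}{16}-\tfrac{\del_k}{2}-\mu_0=\tfrac{\epsilon_0^k}{96},
\]
and similarly $\tfrac{\del_{m/2}}{2}-\mu_0-\epsilon_0^k=\epsilon_3^{m/2}-\epsilon_0^k=\tfrac{\epsilon_0^k}{96}$. For \eqref{eps4}, a short common-denominator computation with $\tau=1/21$, $\varsigma=1/252$, $\sigma=49/1008$ gives $\sigma-\tau=\varsigma/4=\tau/16-\varsigma/2=1/1008$, while $\tfrac{1}{32}-\tfrac{9\sigma}{16}-\tfrac{\varsigma}{2}=31/16128>16/16128=1/1008$, so $\epsilon_4=1/1008-\mu_0>0$. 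This makes the fifth slot of \eqref{ep2} equal to $\epsilon_4-34\epsilon_0^k\approx 3\cdot 10^{-4}$, much larger than $\tfrac{\epsilon_0^k}{96}\approx 2\cdot 10^{-7}$, so $\epsilon_2=\tfrac{\epsilon_0^k}{96}$. Finally the three entries of \eqref{ep1} collapse to $\tfrac{\mu}{2}=\tfrac{\mu_0-\mu}{2}=\epsilon_2-\tfrac{\mu}{2}=\tfrac{\epsilon_0^k}{192}$, hence $\epsilon_1=\tfrac{\epsilon_0^k}{192}\approx 1/(9.6\cdot 10^6)\approx 10^{-7}$.

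I expect no genuine obstacle: the only mildly nontrivial observation is why the ratios $\varsigma/\tau=1/12$ and $\sigma/\tau=49/48$ are forced, and this is explained by the fact that at exactly these ratios the three active slots $\sigma-\tau$, $\varsigma/4$, $\tau/16-\varsigma/2$ of \eqref{eps4} simultaneously saturate to $1/1008$; any deviation would either waste one of the three constraints or violate the chain (c). The remaining choices $\mu=\epsilon_0^k/96$, $\mu_0=2\mu$, $\del_w=\epsilon_0^w/16$ are then made to equalize all four active slots of \eqref{ep2} at $\epsilon_0^k/96$, which is the optimal common value; $\epsilon_0^k\approx 1/50000$ is then the largest choice compatible with $34\epsilon_0^k<1/1008-\mu_0$ (to keep the fifth slot of \eqref{ep2} inactive), and delivers $\epsilon_1\approx 10^{-7}$.
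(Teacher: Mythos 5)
Your verification is correct and is exactly the arithmetic the paper leaves implicit ("it is now easy to notice"): the four active slots of \eqref{ep2} all equal $\epsilon_0^k/96$, the fifth is $\epsilon_4-34\epsilon_0^k\gg\epsilon_0^k/96$, and \eqref{ep1} then collapses to $\epsilon_0^k/192\approx 10^{-7}$, with the structural constraints $\del_w>2\mu_0$, $\epsilon_0^w>8\del_w+16\mu_0$, $0<5\varsigma<\tau<\sigma<\tfrac{1}{10}$ and $\mu_0>\mu$ all satisfied. The only loose remark is the aside that $\epsilon_0^k\approx 1/50000$ is the \emph{largest} admissible value (one could in fact take $\epsilon_0^k$ up to roughly $1/34300$ before the fifth slot of \eqref{ep2} becomes active), but this does not affect the correctness of the paper's stated choice or of $\epsilon_1\approx 10^{-7}$.
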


\section{The diagonal term $\Lambda^{D}$: The case $0 \le k \le \frac{m}{4}$} \label{Sec05}

With the setting and notations specified at the beginning of Section \ref{SDTmotiv} we restate the goal for the present section as

\begin{thm} \label{mainthmI}
There exists some $\bar{\epsilon}>0$, such that for any $0 \le k \le \frac{m}{4}$ and $\vec{p}\in {\bf H}^{+}$
\begin{equation}\label{keyestim}
\left|\Lambda_m^{k, I_0^{0, k}}(\vec{f})\right| \lesssim 2^{-\bar{\epsilon}\,k} \,\|\vec{f} \|_{L^{\vec{p}}(3\, I_0^{0, k})}\,.
\end{equation}
\end{thm}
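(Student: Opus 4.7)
The plan is to follow the correlative time-frequency model \eqref{20230224main01} established in Section \ref{Model}, and carry out a sparse–uniform analysis adapted to a rescaled phase–space tiling, as outlined in the second half of Section \ref{Keyideas}. Fix $k, m$ with $0 \le k \le m/4$. For each input $f_j$ ($j \in \{1,2,3,4\}$), I would construct a family $\mathcal{P}_j$ of Heisenberg tiles $P_j = I_{P_j} \times \omega_{P_j}$ with $|\omega_{P_j}|$ matching the length of the foliation intervals $\omega_j^{\ell_j}$ from \eqref{freqdec} and area $|P_j| = 2^{m/2}$. Along every spatial fiber (i.e., the collection of tiles sharing a common spatial interval) declare $f_j^{\calS_\mu}$ to be the superposition of pieces $f_j(P_j)$ whose $L^2$-mass captures a large fraction (controlled by a small parameter $\mu$) of the fiber's total $L^2$-mass, and $f_j^{\calU_\mu}$ to be the complementary uniform component. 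This induces a splitting
\[
\Lambda_m^k(\vec{f}) = \Lambda_m^{k,\calU_\mu}(\vec{f}) + \Lambda_m^{k,\calS_\mu}(\vec{f}),
\]
where the uniform form aggregates every contribution with at least one input uniform, and the sparse form is the one in which all four inputs are sparse along each of the relevant fibers.

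Next I would treat the uniform form. The key observation is that, for a uniform input $f_j^{\calU_\mu}$, along every spatial fiber relevant to \eqref{20230224main01} the Gabor coefficients are essentially equidistributed across the $\sim 2^{m/2-k}$ frequency labels $\ell_j$ in that fiber. A careful $l^2\times l^2\times l^\infty$-type H\"older in the $(\ell_1,\ell_2,\ell_3,n)$-summation followed by Parseval in the Gabor expansions of $f_3$ and $f_4$ and orthogonality of the fibers of $f_1,f_2$ then produces a gain of size $2^{-\delta(m/2-k)}$ for some $\delta > 0$ depending on $\mu$; since $k \le m/4$ yields $m/2-k \ge k$, this yields the required $2^{-\bar\epsilon k}$ decay after choosing $\mu$ appropriately.

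The crux of the proof is the treatment of the sparse form $\Lambda_m^{k,\calS_\mu}(\vec{f})$. Here, for each sparse input, every spatial fiber carries essentially a single frequency label, so each $\ell_j$ becomes, after passing to the $L^2$-significant piece, a measurable function $\ell_j(\cdot)$ of the spatial fiber of $f_j$. The constraint
\[
\ell_1(\cdot) - \tfrac{2q}{2^{m/2}}\,\ell_2(\cdot) - \tfrac{3q^2}{2^m}\,\ell_3(\cdot) \cong 0
\]
defining the time-frequency correlation set \eqref{TFCq} thus becomes a functional relation among three measurable maps, coupled across inputs via the spatial correlations \eqref{20230222eq11}--\eqref{20230222eq13}. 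Following the scheme of Sections \ref{20230306subsec01}--\ref{SparseII}, the strategy is to reduce the estimate to a level-set analysis for the set of admissible $q \sim 2^{m/2}$ given a generic spatial configuration. In the present regime $k \le m/4$, the $\ell_2$-dominance from Key Heuristic 2 guarantees that $|\ell_2(\cdot)|$ is typically large, whence the $q^2$-coefficient of $\ell_3$ forces a nontrivial curvature effect in $q$: for most configurations only a set of size $\lesssim 2^{m/2-\eta k}$ values of $q$ satisfy the approximate relation, producing the desired $2^{-\bar\epsilon k}$ factor when combined with the sparsity of the spatial supports.

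The main obstacle is this last step — extracting honest polynomial-in-$k$ decay from a purely algebraic constraint among measurable functions, without any oscillatory cancellation to exploit. It is the sparse-regime analog of the bootstrap-constancy analysis carried out for the uniform case $k \ge m/2$ in Section \ref{MaindiagKpositive}, and success here hinges on matching the curvature saving in $\TFC_q$ against the trivial $L^2$-bounds for the sparse pieces; the scale-balancing required to convert the $\ell_2$-dominance into a $k$-decay factor is the most delicate bookkeeping in the argument and dictates the final choice of the parameters $\mu$ and $\bar\epsilon$.
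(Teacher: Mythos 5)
Your overall architecture matches the paper's: start from the correlative model \eqref{20230224main01}, perform a sparse--uniform splitting along spatial fibers of a rescaled phase-space tiling, dispatch the uniform part by equidistribution/Cauchy--Schwarz/Parseval, and reduce the all-sparse part to a cardinality bound for a time-frequency correlation set among measurable functions. However, there are two concrete problems. First, your choice of tiling is the wrong one for this regime. You take tiles of area exactly $2^{\frac{m}{2}}$ with frequency side $\omega_j^{\ell_j}$ (so $\sim 2^{\frac{m}{2}-k}$ frequency labels per fiber); that is the tiling the paper uses in the complementary regime $\frac{m}{4}\le k<\frac{m}{2}$. For $0\le k\le\frac{m}{4}$ the paper instead uses the coarser intervals $\Omega_j^{L_j}$ of length $2^{m+(\underline{j}-1)k}$, with only $\sim 2^k$ labels $L_j$ per fiber and tiles of area $2^{m-2k}\ge 2^{\frac{m}{2}}$. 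This is not cosmetic: the reduction of the sparse part to a correlation set over the coarse parameters $(\widetilde{p},\widetilde{q})\sim(2^{2k},2^k)$ requires the constraint $L_1-\frac{2q}{2^{m/2}}L_2-\frac{3q^2}{2^m}L_3=O(1)$ to be stable as $q$ varies over an interval of length $2^{\frac{m}{2}-k}$ (cf.\ \eqref{20230225eq39}); with the coarse labels the variation is $\lesssim \frac{2^{m/2-k}}{2^{m/2}}\cdot 2^k=O(1)$, whereas with your fine labels $\ell_2\sim 2^{\frac{m}{2}-k}$ it is $\sim 2^{\frac{m}{2}-2k}\gg 1$ precisely when $k<\frac{m}{4}$. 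So with your tiling the correlation set does not decouple from the fine $q$-parameter, and the subsequent level-set analysis (which must live on $O(2^{3k})$ configurations with an $O(1)$-precision constraint among functions of range $\sim 2^k$, in order to beat the trivial bound $2^{3k}$ by $2^{-\epsilon k}$) cannot be set up as you describe. The boundary $k=\frac{m}{4}$ in Key Heuristic 2 is exactly the point where these two tilings exchange roles.

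Second, even granting the correct tiling, the heart of the proof --- the improved bound $\#\,\sTFC\lesssim 2^{(3-\epsilon_5)k}$ of Subsection \ref{20230306subsec01} --- is not supplied. You assert a ``nontrivial curvature effect in $q$'' forcing the admissible set to be small, but the $L_j$'s are arbitrary measurable functions and there is no pointwise curvature argument available; the paper's mechanism is an iterated Cauchy--Schwarz / change-of-variables scheme that successively eliminates the functions until one reaches the incompatibility between \eqref{abl} and \eqref{abu}. You correctly identify this as the main obstacle, but flagging it is not the same as closing it. A smaller omission: your uniform case is more delicate than ``H\"older plus Parseval'' --- the sub-case where $f_3,f_4$ are both sparse but one of $f_1,f_2$ is uniform (Subsection \ref{subsec4.2.2}) still requires extracting the unique frequency function $L_3(\widetilde{p})$ and passing through the correlation set $\overline{\TFC}_{\widetilde{p},q}$ before the uniformity of $f_1$ or $f_2$ can be cashed in.
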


\subsection{A time-frequency sparse--uniform dichotomy}\label{SU1}

 We start by reconsidering the foliation of the time-frequency space as follows: for $j\in\{1,\ldots, 4\}$, define
$$
\Omega_j^{L_j}:=\left[L_j 2^{m+(\underline{j}-1)k}, \left(L_j+1 \right) 2^{m+(\underline{j}-1)k} \right], \quad L_j \sim 2^k.
$$
where we recall that $\underline{j}=\min \{j, 3\}$, and set
$$
\left\{\xi_j: \xi_j \sim 2^{m+\underline{j} k} \right\}= \bigcup_{L_j \sim 2^k} \Omega_j^{L_j}\,.
$$
Notice that $\left|\Omega_j^{L_j} \right| \ge \left|\omega_j^{l_j} \right|$.

Next, we perform the sparse/uniform decomposition adapted to the above foliation. More precisely, for $\mu \in (0, 1)$ suitably small, we define the \emph{sparse set} of indices as follows: for $j\in\{1, 2\}$,
$$
\bbS_\mu \left(f_j \right):=\left\{ (q_j, L_j) : \int_{3 I_{q_j}^{0, (j+1)k}} \left|f_j^{\Omega_j^{L_j}} \right|^2  \gtrsim 2^{-3\mu k} \int_{3 I_{q_j}^{0, (j+1)k}} \left|f_j \right|^2, \ q_j \sim 2^{jk}, L_j \sim 2^k \right\},
$$
and for $j\in\{3, 4\}$
\begin{equation} \label{20230225eq23}
\bbS_\mu \left(f_j \right):=\left\{ (q_j, L_j) : \int_{3 I_{q_j}^{0, 3k}} \left|f_j^{3 \Omega_j^{L_j}} \right|^2  \gtrsim 2^{-\mu k} \int_{3 I_{q_j}^{0, 3k}} \left|f_j \right|^2, \ q_j \sim 2^{2k}, L_j \sim 2^k \right\}.
\end{equation}
As usual, for $\chi$ a smooth cut-off with $\supp \ \chi \subset \left[\frac{1}{2}, 2\right]$, we denote by $f_j^{\Omega_j^{L_j}}$ the frequency projection onto $\Omega_j^{L_j}$
$$f_j^{\Omega_j^{L_j}}(x):=\int_{\R} \widehat{f_j}(\xi) \chi \left(\frac{\xi-L_j2^{m+(\underline{j}-1)k}}{\left|\Omega_j^{L_j} \right|} \right) e^{i\xi x} d\xi, \quad j\in\{1,\ldots, 4\}\,.$$
 We also define the \emph{uniform sets} of indices as the complement of the corresponding sparse sets.

Now, we decompose our functions into the sparse and uniform components as follows: for $j\in\{1,\ldots,4\}$ and  $\iota:= \min\{j+1,3\}$ we set
\begin{enumerate}
    \item [$\bullet$] the \emph{uniform component} of $f_j$ is given by
    \begin{equation}\label{unifj}
    f_j^{\bbU_\mu}:=\sum_{(q_j, L_j) \in \bbU_\mu(f_j)} \one_{I_{q_j}^{0, \iota k}} f_j^{\Omega_j^{L_j}};
    \end{equation}
    \item [$\bullet$] the \emph{sparse component} of $f_j$ is given by
    \begin{equation}\label{sparsj}
    f_j^{\bbS_\mu}:=\sum_{(q_j, L_j) \in \bbS_\mu(f_j)} \one_{I_{q_j}^{0, \iota k}} f_j^{\Omega_j^{L_j}}.
    \end{equation}
\end{enumerate}

With these done, we decompose
\begin{equation}\label{Ldecksmall}
\Lambda_m^k(\vec{f})=:\Lambda_m^{k, \bbU_\mu}(\vec{f})\,+\,\Lambda_m^{k, \bbS_\mu}(\vec{f}),
\end{equation}
with $\Lambda_m^{k, \bbS_\mu}(\vec{f}):=\Lambda_m^k \left(f_1^{\bbS_\mu}, f_2^{\bbS_\mu}, f_3^{\bbS_\mu}, f_4^{\bbS_\mu} \right)$.

\begin{obs}\textsf{[Error terms II]}\label{ETerm2} In agreement with the view expressed in Observation \ref{ETerm}, in what follows we will only focus on the main terms arising from the time-frequency localization operated in \eqref{unifj} and \eqref{sparsj} thus pretending that the information carried by these functions is (essentially) supported both in space and in frequency corresponding to the given cutoffs. Of course, one can run an approximation argument for the tails or simply work directly with adapted wave-packet decompositions instead of spatial/frequency projections.
\end{obs}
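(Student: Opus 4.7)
The plan is to verify that the hybrid cutoffs defining $f_j^{\bbU_\mu}$ and $f_j^{\bbS_\mu}$ in \eqref{unifj}--\eqref{sparsj}---a sharp spatial indicator $\one_{I_{q_j}^{0,\iota k}}$ composed with the smooth frequency projection onto $\Omega_j^{L_j}$---produce only negligible tail contributions, and hence may be replaced by fully Schwartz-localized wave-packet components without affecting the bound \eqref{keyestim} claimed in Theorem \ref{mainthmI}. The main point to exploit is the large scale separation between the frequency projection width $|\Omega_j^{L_j}|^{-1}=2^{-m-(\underline{j}-1)k}$ and the spatial cutoff length $|I_{q_j}^{0,\iota k}|=2^{-\iota k}$, which is comfortable in the present regime $0\leq k\leq m/4$.

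First, I would write the frequency projection as a convolution,
\begin{equation*}
f_j^{\Omega_j^{L_j}}(x)=\bigl(f_j*K_{j,L_j}\bigr)(x)\,e^{i L_j 2^{m+(\underline{j}-1)k}x},
\end{equation*}
where $K_{j,L_j}$ is a Schwartz kernel of width $|\Omega_j^{L_j}|^{-1}$, and then replace the sharp indicator $\one_{I_{q_j}^{0,\iota k}}$ by a smooth bump $\psi_{q_j}^{0,\iota k}$ adapted to $I_{q_j}^{0,\iota k}$. The $L^2$ error produced by this substitution comes entirely from the boundary layer of width $O(2^{-m-(\underline{j}-1)k})$ around $\partial I_{q_j}^{0,\iota k}$, so by the rapid decay of $K_{j,L_j}$ one gains, for any $N$,
\begin{equation*}
\bigl\|\one_{I_{q_j}^{0,\iota k}}f_j^{\Omega_j^{L_j}}-\psi_{q_j}^{0,\iota k}f_j^{\Omega_j^{L_j}}\bigr\|_{L^2(\R)}\lesssim_{N} 2^{-Nk}\,\bigl\|f_j\bigr\|_{L^2(3I_0^{0,k})}.
\end{equation*}
By Heisenberg uncertainty the smoothly cut piece $\psi_{q_j}^{0,\iota k}f_j^{\Omega_j^{L_j}}$ is essentially supported in frequency in $2\Omega_j^{L_j}$; hence it is an honest wave-packet adapted to the phase-space cell $\Omega_j^{L_j}\times I_{q_j}^{0,\iota k}$ (of unit area).

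Second, I would propagate this replacement through the quadrilinear form $\Lambda_m^k(\vec{f})$. Each of the four inputs produces a main piece and a tail, so expanding the quadrilinear form one gets a main quadrilinear expression (the one analyzed in the remainder of Section \ref{Sec05}) plus at most fifteen ``mixed'' terms containing at least one tail factor. By a crude Cauchy-Schwarz/H\"older argument combined with the trivial $L^{\vec{p}}\to L^1$ bound for $\Lambda_m^k$ with a factor $2^{3k}$ from the support size, every mixed term is majorized by $2^{Ck-Nk}\|\vec{f}\|_{L^{\vec{p}}(3I_0^{0,k})}$ for some fixed absolute $C$, and the factor $N$ is at our disposal. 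Summing over the $O(2^{Ck})$ admissible tuples $(q_j,L_j)_{j=1}^4$ and choosing $N$ sufficiently large relative to the final $\bar{\epsilon}$ of Theorem \ref{mainthmI} absorbs all mixed contributions into the desired $2^{-\bar{\epsilon}k}$ decay.

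Third, I would check that the sparse/uniform classifications in \eqref{20230225eq23} are stable under the smooth-versus-sharp replacement. The defining $L^2$-averages over $3I_{q_j}^{0,\iota k}$ of $\psi_{q_j}^{0,\iota k}f_j^{\Omega_j^{L_j}}$ differ from those of $\one_{I_{q_j}^{0,\iota k}}f_j^{\Omega_j^{L_j}}$ by quantities of order $2^{-Nk}\|f_j\|_{L^2(3I_0^{0,k})}^2$, which is much smaller than the threshold $2^{-\mu k}$ for any choice $N\gg\mu$. Consequently the bookkeeping of sparse and uniform indices is unaffected, and so is the subsequent time-frequency correlation analysis, since the correlation set $\TFC_q$ in \eqref{TFCq} depends on the centers of the frequency intervals $\Omega_j^{L_j}$ only up to $O(1)$ perturbations. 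The only conceptual obstacle in implementing the above is to keep the tail-decay exponent $N$ uniformly larger than the various small parameters $\mu, \bar{\epsilon}$ produced along the way; this is a pure bookkeeping matter since $N$ may be chosen arbitrarily.
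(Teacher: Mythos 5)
There is no proof to compare against here: the observation is a declared expository convention (in the spirit of Observation \ref{ETerm}), so the only issue is whether your sketch would actually justify it, and its very first step contains a genuine gap. The displayed estimate $\bigl\|\one_{I_{q_j}^{0,\iota k}}f_j^{\Omega_j^{L_j}}-\psi_{q_j}^{0,\iota k}f_j^{\Omega_j^{L_j}}\bigr\|_{L^2}\lesssim_N 2^{-Nk}\|f_j\|_{L^2(3I_0^{0,k})}$ is false in general. The difference $(\one_{I_{q_j}^{0,\iota k}}-\psi_{q_j}^{0,\iota k})\,f_j^{\Omega_j^{L_j}}$ is nothing other than $f_j^{\Omega_j^{L_j}}$ restricted (up to bounded factors) to the boundary layer where the two cutoffs disagree, and the rapid decay of the kernel $K_{j,L_j}$ constrains the \emph{frequency} tails of $f_j^{\Omega_j^{L_j}}$, not its \emph{spatial} distribution: take $f_j$ to be a single wave packet of spatial width $2^{-m-(\underline{j}-1)k}$ with frequency in $\Omega_j^{L_j}$, centered at an endpoint of $I_{q_j}^{0,\iota k}$; then the left-hand side is comparable to $\|f_j\|_{L^2}$, with no gain at all. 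Since your second step (absorbing the fifteen mixed terms via a $2^{Ck-Nk}$ bound) and your third step (stability of the sparse/uniform thresholds) are both powered by this fictitious $2^{-Nk}$ smallness, they fall with it. A related overclaim appears implicitly later: even the \emph{frequency} leakage of a sharply truncated piece $\one_{I_{q_j}^{0,\iota k}}f_j^{\Omega_j^{L_j}}$ outside the $2^s$-dilate of $\Omega_j^{L_j}$ is only of size $O(2^{-s/2})\|f_j\|_{L^2}$ (a half-cut wave packet saturates this), never rapidly small at a fixed dilate; so no single ``small error'' of size $2^{-Nk}$ is available anywhere in this picture.

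What a correct justification does is organize, not shrink, the tails. One keeps the sharp pieces exactly as defined (so the sparse/uniform bookkeeping in \eqref{20230225eq23} needs no stability argument at all--it is formulated for these pieces), and then splits each $\one_{I_{q_j}^{0,\iota k}}f_j^{\Omega_j^{L_j}}$ into a frequency core supported in a bounded dilate of $\Omega_j^{L_j}$ plus dyadic frequency tails at distances $2^s|\Omega_j^{L_j}|$, each carrying $L^2$ mass $O(2^{-s/2})$ and having the same structure as the core (a function frequency-localized in an interval of length $\sim|\Omega_j^{L_j}|$); equivalently, one expands each piece in a wave-packet frame adapted to the tile $\Omega_j^{L_j}\times I_{q_j}^{0,\iota k}$, with coefficients decaying rapidly in the distance of the packet to the tile. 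The entire analysis of Section \ref{Sec05} is then rerun termwise in the tail parameter and the resulting geometric (respectively rapidly decaying) series is summed; your closing remark that $\TFC_q$ only sees the frequency centers up to $O(1)$ shifts is correct and is what makes this termwise repetition harmless. So the conclusion of the observation is indeed obtainable ``via standard tools,'' but through summable tail decompositions rather than through a pointwise $2^{-Nk}$ comparison of sharp and smooth cutoffs, and your scheme as written does not close.
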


\subsection{Treatment of the uniform component $\Lambda_m^{k, \bbU_\mu}(\vec{f})$}

 Applying Cauchy-Schwarz in \eqref{20230224main01}, we have
\begin{equation} \label{20230224eq01}
|\Lambda_m^{k, \bbU_\mu}(\vec{f})|\lesssim 2^k \sum_{p \sim 2^{\frac{m}{2}+2k}} \bbA_p \bbB_p,
\end{equation}
where
\begin{equation} \label{20230305eq01}
\bbA_p^2:=\sum_{\substack{n_3 \sim 2^k \\ q \sim 2^{\frac{m}{2}}}} \sum_{\left(\ell_1, \ell_2, \ell_3 \right) \in \TFC_q} \left| \left\langle f_3^{\omega_3^{\ell_3}}, \phi_{p+\frac{q^3}{2^m}, \ell_3 2^k+n_3}^{3k} \right \rangle \right|^2  \left| \left \langle f_4 e^{i 2^{\frac{m}{2}+3k} \left(\ell_1 2^{-k}+\ell_2 \right) \left( \cdot \right) }, \phi_{p, \ell_3 2^k+n_3}^{3k} \right \rangle \right|^2,
\end{equation}
and
\begin{eqnarray} \label{20230305eq02}
\bbB_p^2:=\sum_{\substack{n_3 \sim 2^k \\ q \sim 2^{\frac{m}{2}}}} \sum_{\left(\ell_1, \ell_2, \ell_3 \right) \in \TFC_q}  &&\frac{1}{\left|I_p^{m, 3k} \right|}\,\int_{I_p^{m, 3k}} \bigg| \int_{I_q^{m, k}}  \underline{f}_1^{\omega_1^{\ell_1}}(x-t) \underline{f}_2^{\omega_2^{\ell_2}}(x+t^2)\\
&&\cdot e^{i 2^{\frac{m}{2}+k}t \cdot \frac{3q^2}{2^m}n_3}  e^{i 2^{\frac{m}{2}+k}t \cdot 2^k \left(-\ell_1 +\ell_2 \frac{2q}{2^{\frac{m}{2}}}+ \ell_3 \frac{3q^2}{2^m}\right)} dt  \bigg|^2 dx. \nonumber
\end{eqnarray}

\medskip

\noindent\textit{Treatment of $\bbA_p$}. First, applying the change of variable $l_2 \to l_2-l_1 2^{-k}$, we deduce
\begin{equation}\label{ap}
\bbA_p^2\lesssim \sum_{\substack{n_3 \sim 2^k \\ q \sim 2^{\frac{m}{2}}}} \sum_{\ell_2, \ell_3 \sim 2^{\frac{m}{2}-k}}  \left| \left\langle f_3^{\omega_3^{\ell_3}}, \phi_{p+\frac{q^3}{2^m}, \ell_3 2^k+n_3}^{3k} \right \rangle \right|^2  \left| \left \langle f_4, \phi_{p, \ell_3 2^k+n_3+\ell_2}^{3k} \right \rangle \right|^2.
\end{equation}
Next, on each spatial fiber, summing up with respect to $\ell_2$ followed by $n_3$ and using then the fact that $2^k \le 2^{\frac{m}{2}-k}$, we see that the above term is further bounded from above by\footnote{Here we use the convention $\left| \left\langle f_3^{\omega_3^{\ell_3}}, \phi_{p+\frac{q^3}{2^m}, \ell_3 2^k+\left[0, 2^{k+1} \right]}^{3k} \right \rangle \right|^2:= \sum\limits_{n \in [0, 2^{k+1}] \cap \Z} \left| \left\langle f_3^{\omega_3^{\ell_3}}, \phi_{p+\frac{q^3}{2^m}, \ell_3 2^k+n}^{3k} \right \rangle \right|^2.$}
$$
\lesssim \sum_{\substack{\ell_3 \sim 2^{\frac{m}{2}-k} \\ q \sim 2^{\frac{m}{2}}}} \left| \left\langle f_3^{\omega_3^{\ell_3}}, \phi_{p+\frac{q^3}{2^m}, \ell_3 2^k+\left[0, 2^{k+1} \right]}^{3k} \right \rangle \right|^2  \left| \left \langle f_4, \phi_{p, \ell_3 2^k+\left[0, 2^{\frac{m}{2}-k+1} \right]}^{3k} \right \rangle \right|^2\,.
$$

By Parseval, we further have
\begin{eqnarray} \label{20230225eq01}
\bbA_p^2 &\lesssim& \sum_{\substack{\ell_3 \sim 2^{\frac{m}{2}-k} \\ q \sim 2^{\frac{m}{2}}}} \left\|f_3^{3\omega_3^{\ell_3}} \right\|^2_{L^2 \left(I_{p+\frac{q^3}{2^m}}^{m, 3k} \right)} \left\|f_4^{2^{\frac{m}{2}+4k}\ell_3+\left[0, 2^{m+2k+1} \right]} \right\|_{L^2 \left(I_p^{m, 3k} \right)}^2 \nonumber  \\
&\lesssim& \sum_{\ell_3 \sim 2^{\frac{m}{2}-k}} \left\|f_3^{3\omega_3^{\ell_3}} \right\|^2_{L^2 \left(3 I_{\frac{p}{2^{\frac{m}{2}}}}^{0, 3k} \right)} \left\|f_4^{2^{\frac{m}{2}+4k}\ell_3+\left[0, 2^{m+2k+1} \right]} \right\|_{L^2 \left(I_p^{m, 3k} \right)}^2 \nonumber  \\
&\lesssim& \sum_{L_3 \sim 2^k} \left\|f_3^{3 \Omega_3^{L_3}} \right\|_{L^2\left(3I_{\frac{p}{2^{\frac{m}{2}}}}^{0, 3k} \right)}^2  \left\|f_4^{3\Omega_4^{L_3}} \right\|_{L^2 \left(I_p^{m, 3k} \right)}^2.
\end{eqnarray}

\medskip

\noindent\textit{Treatment of $\bbB_p$.} Observe that for any $h\in L^2(I^k)$, by Parseval, one has
\begin{equation} \label{20230225eq02}
\sum_{n_3 \sim 2^k} \left|\int_{I_q^{m, k}} h(t) e^{i2^{\frac{m}{2}+k}t \cdot \frac{3q^2}{2^m} n_3} dt \right|^2 \lesssim  \left|I_q^{m, k} \right| \int_{I_q^{m, k}} |h(s)|^2 ds.
\end{equation}
Applying \eqref{20230225eq02} for
$$
h(t)=\underline{f}_1^{\omega_1^{\ell_1}}(x-t) \underline{f}_2^{\omega_2^{\ell_2}}(x+t^2) e^{i 2^{\frac{m}{2}+k}t \cdot 2^k \left(-\ell_1 +\ell_2 \frac{2q}{2^{\frac{m}{2}}}+ \ell_3 \frac{3q^2}{2^m}\right)},
$$
we have
\begin{equation} \label{20230225eq03}
\bbB_p^2 \lesssim 2^{2k}\,\sum_{\substack{\ell_1, \ell_2 \sim 2^{\frac{m}{2}-k} \\q \sim 2^{\frac{m}{2}}}}  \int_{I_p^{m, 3k}} \int_{I_q^{m, k}} \left|\underline{f}_1^{\omega_1^{\ell_1}}(x-t) \underline{f}_2^{\omega_2^{\ell_2}}(x+t^2) \right|^2 dtdx.
\end{equation}

\medskip

Combining \eqref{20230224eq01}, \eqref{20230225eq01}, and \eqref{20230225eq03}, we conclude that
\begin{eqnarray} \label{20230305eq05}
|\Lambda_m^{k, \bbU_\mu}(\vec{f})| &\lesssim& 2^{2k} \sum_{p \sim 2^{\frac{m}{2}+2k}} \left( \sum_{L_3 \sim 2^k} \left\|f_3^{3 \Omega_3^{L_3}} \right\|_{L^2\left(3 I_{\frac{p}{2^{\frac{m}{2}}}}^{0, 3k} \right)}^2  \left\|f_4^{3\Omega_3^{L_3}} \right\|_{L^2 \left(I_p^{m, 3k} \right)}^2  \right)^{\frac{1}{2}}  \\
&\cdot& \left(\sum_{\substack{\ell_1, \ell_2 \sim 2^{\frac{m}{2}-k} \\q \sim 2^{\frac{m}{2}}}}  \int_{I_p^{m, 3k}} \int_{I_q^{m, k}} \left|\underline{f}_1^{\omega_1^{\ell_1}}(x-t) \underline{f}_2^{\omega_2^{\ell_2}}(x+t^2) \right|^2 dtdx \right)^{\frac{1}{2}}\,. \nonumber
\end{eqnarray}

Applying a partial Cauchy-Schwarz in $p$ and then grouping the intervals $\{I_q^{m, k}\}_{q \sim 2^{\frac{m}{2}}}$ into a collection of larger intervals $\left\{I_{\widetilde{q}}^{0, 2k}\right\}_{\widetilde{q} \sim 2^k}$, we deduce that
\begin{eqnarray} \label{20230225eq04}
|\Lambda_m^{k, \bbU_\mu}(\vec{f})|&\lesssim& 2^{2k} \sum_{\widetilde{p} \sim  2^{2k}}  \left( \sum_{L_3 \sim 2^k} \left\|f_3^{3 \Omega_3^{L_3}} \right\|^2_{L^2 \left(3 I_{\widetilde{p}}^{0, 3k} \right)}\left\|f_4^{3\Omega_3^{L_3}} \right\|^2_{L^2 \left(3 I_{\widetilde{p}}^{0, 3k} \right)} \right)^{\frac{1}{2}} \\
&\cdot& \left(\sum_{\substack{\ell_1, \ell_2 \sim 2^{\frac{m}{2}-k} \\ \widetilde{q} \sim 2^k}}   \int_{I_{\widetilde{p}}^{0, 3k}} \int_{I_{\widetilde{q}}^{0, 2k}} \left|\underline{f}_1^{\omega_1^{\ell_1}}(x-t) \underline{f}_2^{\omega_2^{\ell_2}}(x+t^2) \right|^2 dtdx \right)^{\frac{1}{2}}.  \nonumber
\end{eqnarray}
Taking the double integral in \eqref{20230225eq04} and applying the change of variable $\begin{cases}
u=x-t \\
v=x+t^2
\end{cases}$, we have
\begin{equation} \label{20230225eq38}
\int_{I_{\widetilde{p}}^{0, 3k}} \int_{I_{\widetilde{q}}^{0, 2k}} \left|\underline{f}_1^{\omega_1^{\ell_1}}(x-t) \underline{f}_2^{\omega_2^{\ell_2}}(x+t^2) \right|^2 dtdx \lesssim \left(\int_{3 I^{0, 2k}_{\frac{\widetilde{p}}{2^k}-\widetilde{q}}} \left|\underline{f}_1^{\omega_1^{\ell_1}}(u) \right|^2 du \right)  \left( \int_{3 I^{0, 3k}_{\widetilde{p}+\frac{\widetilde{q}^2}{2^k}}} \left| \underline{f}_2^{\omega_2^{\ell_2}}(v) \right|^2 dv \right)\,,
\end{equation}
where in the above we used that the Jacobian for this change variable is $\sim 1$.

Now substituting \eqref{20230225eq38} back to \eqref{20230225eq04} and then applying Parseval, we conclude that
\begin{eqnarray} \label{20230225main01}
\left|\Lambda_m^{k, \bbU_\mu}(\vec{f})\right| &\lesssim& 2^{2k} \sum_{\widetilde{p} \sim  2^{2k}}  \left( \sum_{L_3 \sim 2^k} \left\|f_3^{3 \Omega_3^{L_3}} \right\|^2_{L^2 \left(3I_{\widetilde{p}}^{0, 3k} \right)}\left\|f_4^{3\Omega_3^{L_3}} \right\|^2_{L^2 \left(3 I_{\widetilde{p}}^{0, 3k} \right)} \right)^{\frac{1}{2}} \\
&\cdot& \left[\sum_{\substack{L_1, L_2 \sim 2^k \\ \widetilde{q} \sim 2^k}}
\left(\int_{3 I^{0, 2k}_{\frac{\widetilde{p}}{2^k}-\widetilde{q}}} \left|\underline{f}_1^{\Omega_1^{L_1}}(u) \right|^2 du \right)  \left( \int_{3 I^{0, 3k}_{\widetilde{p}+\frac{\widetilde{q}^2}{2^k}}} \left| \underline{f}_2^{\Omega_2^{L_2}}(v) \right|^2 dv \right) \right]^{\frac{1}{2}}.  \nonumber
\end{eqnarray}
Now we consider several different cases.

\subsubsection{Either $f_3$ or $f_4$ is uniform} \label{20230305subsec01} In this case, without loss of generality, we assume $f_3=f_3^{\bbU_\mu}$. Then from \eqref{20230225main01}, Parseval and Cauchy--Schwarz one has
 \begin{eqnarray} \label{Uniform-I-Small}
&&\left|\Lambda_m^{k, \bbU_\mu}(\vec{f})\right|\lesssim 2^{2k} \sum_{\widetilde{p} \sim  2^{2k}}  \left( \sum_{L_3 \sim 2^k} 2^{-\mu k} \left\|f_3 \right\|^2_{L^2 \left(3I_{\widetilde{p}}^{0, 3k} \right)}\left\|f_4^{3\Omega_3^{L_3}} \right\|^2_{L^2 \left(3 I_{\widetilde{p}}^{0, 3k} \right)} \right)^{\frac{1}{2}}\nonumber \\
&&\qquad\qquad\cdot \left[\sum_{\substack{L_1, L_2 \sim 2^k \\ \widetilde{q} \sim 2^k}}
\left(\int_{3 I^{0, 2k}_{\frac{\widetilde{p}}{2^k}-\widetilde{q}}} \left|\underline{f}_1^{\Omega_1^{L_1}}(u) \right|^2 du \right)  \left( \int_{3 I^{0, 3k}_{\widetilde{p}+\frac{\widetilde{q}^2}{2^k}}} \left| \underline{f}_2^{\Omega_2^{L_2}}(v) \right|^2 dv \right) \right]^{\frac{1}{2}}\nonumber \\
&&\lesssim 2^{2k} \cdot 2^{-\frac{\mu k}{2}} \left( \sum_{\widetilde{p} \sim  2^{2k}}  \left\|f_3 \right\|^2_{L^2 \left(3I_{\widetilde{p}}^{0, 3k} \right)}\left\|f_4 \right\|^2_{L^2 \left(3 I_{\widetilde{p}}^{0, 3k} \right)} \right)^{\frac{1}{2}}\,\left[ \sum_{\substack{ \doublewidetilde{p} \sim 2^{k} \\ \widetilde{q} \sim 2^k}}
\left(\int_{3 I^{0, 2k}_{\doublewidetilde{p}-\widetilde{q}}} \left|f_1(u) \right|^2 du \right)  \left( \int_{3 I^{0, 2k}_{\doublewidetilde{p}}} \left| f_2(v) \right|^2 dv \right) \right]^{\frac{1}{2}} \nonumber \\
&&\lesssim 2^{2k} \cdot 2^{-\frac{\mu k}{2}} \left\|f_1 \right\|_{L^2 \left(3 I^k \right)} \left\|f_2 \right\|_{L^2 \left(3 I^k \right)}\left( \sum_{\widetilde{p} \sim  2^{2k}}  \left\|f_3 \right\|^2_{L^2 \left(3 I_{\widetilde{p}}^{0, 3k} \right)}\left\|f_4 \right\|^2_{L^2 \left(3 I_{\widetilde{p}}^{0, 3k} \right)} \right)^{\frac{1}{2}}\nonumber\,,
\end{eqnarray}
which implies
\begin{equation}\label{unif1ksmall}
\left|\Lambda_m^{k, \bbU_\mu}(f_1, f_2,f_3^{\bbU_\mu},f_4)\right| \lesssim 2^{-\frac{\mu k}{2}} \,\|\vec{f} \|_{L^{\vec{p}}(3 I^{k})}\qquad \forall\:\vec{p}\in {\bf H}^{+}\,.
\end{equation}

\subsubsection{Both $f_3$ and $f_4$ are sparse, and at least one of the functions $f_1$ or $f_2$ is uniform} \label{subsec4.2.2}

In this case, by \eqref{20230225eq23}, we deduce that up to a factor of at most $2^{\mu k}$, there exists a unique measurable function $L_3: \left[2^{2k}, 2^{2k+1}\right]\cap \Z \to \left[2^k, 2^{k+1} \right]\cap\Z$ such that
\begin{equation} \label{20230225eq54}
 2^{-\mu k} \int_{3 I_{\widetilde{p}}^{0, 3k}} \left|f_j \right|^2\lesssim\int_{3 I_{\widetilde{p}}^{0, 3k}} \left| f_j^{\Omega_j^{3 L_3 \left(\widetilde{p} \right)}} \right|^2  \lesssim \int_{3 I_{\widetilde{p}}^{0, 3k}} |f_j|^2, \quad j\in\{3, 4\}.
\end{equation}
Recall that $\omega_3^{\ell_3}=\left[\ell_3 2^{\frac{m}{2}+4k}, \left(\ell_3+1 \right) 2^{\frac{m}{2}+4k} \right]$ and $\Omega_3^{L_3}=\left[L_3 2^{m+2k}, \left(L_3+1 \right) 2^{m+2k} \right]$, and hence if $\omega_3^{\ell_3} \subseteq \Omega_3^{L_3}$, one has
\begin{equation} \label{20230225eq32}
\ell_3 \in \left[L_3 2^{\frac{m}{2}-2k}, \left(L_3+1 \right) 2^{\frac{m}{2}-2k} \right].
\end{equation}
Now for each $p \sim 2^{\frac{m}{2}+2k}$ and $q \sim 2^{\frac{m}{2}}$, we set
\begin{equation*}
\TFC_{p, q}:=\left\{ \left(\ell_1, \ell_2, \ell_3 \right) \in \TFC_q: \ell_3 \in \left[L_3 \left(  \left\lfloor \frac{p}{2^{\frac{m}{2}}} \right\rfloor\right) 2^{\frac{m}{2}-2k}, \left(L_3 \left(\left\lfloor \frac{p}{2^{\frac{m}{2}}} \right\rfloor\right)+1 \right) 2^{\frac{m}{2}-2k} \right] \right\}.
\end{equation*}
Hence, by \eqref{20230224main01}, we have
\begin{eqnarray*}
|\Lambda_m^{k, \bbU_\mu}(\vec{f})|\lesssim 2^k \cdot 2^{\mu k} \sum_{\substack{p \sim 2^{\frac{m}{2}+2k}\\ q \sim 2^{\frac{m}{2}} \\ n_3 \sim 2^k}} \quad\sum_{(\ell_1, \ell_2, \ell_3) \in \TFC_{p, q}}\left| \left\langle f_3^{\omega_3^{\ell_3}}, \phi_{p+\frac{q^3}{2^m}, \ell_3 2^k+n_3}^{3k} \right\rangle \right|  \left| \left\langle f_4 e^{i 2^{\frac{m}{2}+3k} \left(\ell_1 2^{-k}+\ell_2 \right) \left( \cdot \right) }, \phi_{p, \ell_3 2^k+n_3}^{3k} \right\rangle \right| \nonumber \\
\cdot \frac{1}{\left|I_p^{m, 3k} \right|} \bigg| \int_{I_p^{m, 3k}} \int_{I_q^{m, k}}  \underline{f}_1^{\omega_1^{\ell_1}}(x-t) \underline{f}_2^{\omega_2^{\ell_2}}(x+t^2) e^{i 2^{\frac{m}{2}+k}t \cdot \frac{3q^2}{2^m}n_3}\, e^{i 2^{\frac{m}{2}+k}t \cdot 2^k \left(-\ell_1 +\ell_2 \frac{2q}{2^{\frac{m}{2}}}+ \ell_3 \frac{3q^2}{2^m}\right)} dt dx \bigg|.
\end{eqnarray*}
Arguing as in \eqref{20230224eq01}, \eqref{20230225eq01}, \eqref{20230225eq03}, and \eqref{20230225eq04}, we have
\begin{equation} \label{20250225eq30}
|\Lambda_m^{k, \bbU_\mu}(\vec{f})| \lesssim 2^{2k} \cdot 2^{\mu k} \sum_{\widetilde{p} \sim 2^{2k}} \left\|f_3^{3 \Omega_3^{L_3(\widetilde{p})}} \right\|_{L^2 \left(3 I_{\widetilde{p}}^{0, 3k} \right)} \left\|f_4^{3 \Omega_4^{L_3(\widetilde{p})}} \right\|_{L^2 \left(3 I_{\widetilde{p}}^{0, 3k} \right)} \calR_{\widetilde{p}},
\end{equation}
where
$$
\calR_{\widetilde{p}}^2:=\sum_{q \sim 2^{\frac{m}{2}}} \sum_{\left(\ell_1, \ell_2, \ell_3 \right) \in \TFC_{2^{\frac{m}{2}} \widetilde{p}, q}}  \int_{I_{\widetilde{p}}^{0, 3k}} \int_{I_q^{m, k}} \left| \underline{f}_1^{\omega_1^{\ell_1}}(x-t) \underline{f}_2^{\omega_2^{\ell_2}}(x+t^2) \right|^2 dxdt.
$$
Now we estimate the term $\calR_{\widetilde{p}}$. Recall that $\left(\ell_1, \ell_2, \ell_3 \right) \in \TFC_{2^{\frac{m}{2}} \widetilde{p}, q}$ means
\begin{equation} \label{20230225eq31}
\begin{cases}
\ell_1-\frac{2q}{2^{\frac{m}{2}}} \ell_2-\frac{3q^2}{2^m} \ell_3=O(1); \\
\\
\ell_3 \in \left[L_3\left(\widetilde{p} \right)2^{\frac{m}{2}-2k}, \left(L_3\left(\widetilde{p} \right)+1 \right) 2^{\frac{m}{2}-2k} \right].
\end{cases}
\end{equation}
Now if $\omega_1^{\ell_1} \subset \Omega_1^{L_1}$ and $\omega_2^{\ell_2} \subset \Omega_2^{L_2}$, that is, by \eqref{20230225eq32}, if $\ell_1 \in \left[L_12^{\frac{m}{2}-2k}, \left(L_1+1 \right)2^{\frac{m}{2}-2k} \right]$
and $\ell_2 \in \left[L_22^{\frac{m}{2}-2k}, \left(L_2+1 \right)2^{\frac{m}{2}-2k} \right]$, then by \eqref{20230225eq31}, we have
$$
L_1 -\frac{2q}{2^{\frac{m}{2}}} L_2 -\frac{3q^2}{2^m} L_3 \left(\widetilde{p} \right)=O\left(1 \right).
$$
This motivates the introduction of the time-frequency correlations relative to $q \sim 2^{\frac{m}{2}}$ and $\widetilde{p} \sim 2^{2k}$ as
$$
\overline{\TFC}_{\widetilde{p}, q}:=\left\{ \substack{\left(L_1, L_2 \right)\\ L_1, L_2 \sim 2^k} : L_1 -\frac{2q}{2^{\frac{m}{2}}} L_2 -\frac{3q^2}{2^m} L_3 \left(\widetilde{p} \right)=O\left(1 \right) \right\}.
$$
Then, by Parseval, we have that
\begin{eqnarray*}
&& \calR_{\widetilde{p}}^2  \lesssim \sum_{q \sim 2^{\frac{m}{2}}} \sum_{(L_1, L_2) \in \overline{\TFC}_{\widetilde{p}, q}} \sum_{\substack{ \ell_1 \in \left[L_12^{\frac{m}{2}-2k}, \left(L_1+1 \right)2^{\frac{m}{2}-2k} \right] \\ \ell_2 \in \left[L_22^{\frac{m}{2}-2k}, \left(L_2+1 \right)2^{\frac{m}{2}-2k} \right] \\  \ell_3 \in \left[L_3\left(\widetilde{p} \right)2^{\frac{m}{2}-2k}, \left(L_3\left(\widetilde{p} \right)+1 \right) 2^{\frac{m}{2}-2k} \right] \\ \ell_1-\frac{2q}{2^{\frac{m}{2}}} \ell_2-\frac{3q^2}{2^m} \ell_3=O(1)}} \int_{I_{\widetilde{p}}^{0, 3k}} \int_{I_q^{m, k}} \left| \underline{f}_1^{\omega_1^{\ell_1}}(x-t) \underline{f}_2^{\omega_2^{\ell_2}}(x+t^2) \right|^2 dtdx \\
&& \quad \lesssim \sum_{q \sim 2^{\frac{m}{2}}} \sum_{(L_1, L_2) \in \overline{\TFC}_{\widetilde{p}, q}} \int_{I_{\widetilde{p}}^{0, 3k}}\int_{I_q^{m, k}} \left|f_1^{\Omega_1^{L_1}}(x-t) f_2^{\Omega_2^{L_2}} \left(x+t^2 \right) \right|^2 dtdx,
\end{eqnarray*}
where in the above estimate, we used the fact that if $q, \ell_1$ and $\ell_2$ are chosen, then $\ell_3$ is (up to $O(1)$) uniquely determined.

Next, by splitting the regime of $q$ as usual as
$\left\{q \sim 2^{\frac{m}{2}} \right\}=\bigcup_{\widetilde{q} \sim 2^k} \left[ \widetilde{q}2^{\frac{m}{2}-k}, \left(\widetilde{q}+1 \right) 2^{\frac{m}{2}-k} \right]$ we notice that whenever $q \in  \left[ \widetilde{q}2^{\frac{m}{2}-k}, \left( \widetilde{q}+1 \right) 2^{\frac{m}{2}-k} \right]$ one has that
\begin{equation} \label{20230225eq39}
(L_1, L_2) \in \overline{\TFC}_{\widetilde{p}, q}\quad\Longleftrightarrow\quad (L_1, L_2) \in \overline{\TFC}_{\widetilde{p}, 2^{\frac{m}{2}-k}\widetilde{q}}.
\end{equation}
As a consequence, we have
\begin{eqnarray*}
\calR_{\widetilde{p}}^2 &\lesssim& \sum_{\widetilde{q} \sim 2^k} \sum_{q= \widetilde{q}2^{\frac{m}{2}-k}}^{\left(\widetilde{q}+1 \right) 2^{\frac{m}{2}-k}} \sum_{(L_1, L_2) \in \overline{\TFC}_{\widetilde{p}, 2^{\frac{m}{2}-k} \widetilde{q}}}
 \int_{I_{\widetilde{p}}^{0, 3k}} \int_{I_q^{m, k}} \left|f_1^{\Omega_1^{\frac{2\widetilde{q}}{2^k}L_2+ \frac{3\widetilde{q}^2}{2^{2k}} L_3\left(\widetilde{p} \right)}}(x-t) f_2^{\Omega_2^{L_2}} \left(x+t^2 \right) \right|^2 dtdx \\
 &\lesssim& \sum_{\substack{ \widetilde{q} \sim 2^k \\ L_2 \sim 2^k}}  \int_{I_{\widetilde{p}}^{0, 3k}} \int_{I_{\widetilde{q}}^{0, 2k}}
 \left|f_1^{\Omega_1^{\frac{2\widetilde{q}}{2^k}L_2+ \frac{3\widetilde{q}^2}{2^{2k}} L_3\left(\widetilde{p}  \right)}}(x-t) f_2^{\Omega_2^{L_2}} \left(x+t^2 \right) \right|^2 dtdx.
\end{eqnarray*}
Applying now a similar argument with that in \eqref{20230225eq38}, we can rewrite the above relation as
\begin{equation} \label{20230225eq47}
\calR_{\widetilde{p}}^2 \lesssim \sum_{\substack{\widetilde{q} \sim 2^k \\ L_2 \sim 2^k}} \left( \int_{3 I_{\frac{\widetilde{p}}{2^k}-\widetilde{q}}^{0, 2k}} \left|f_1^{\Omega_1^{\frac{2\widetilde{q}}{2^k}L_2+ \frac{3\widetilde{q}^2}{2^{2k}} L_3\left(\widetilde{p}  \right)}}\right|^2  \right) \left(\int_{3 I_{\widetilde{p}+\frac{\widetilde{q}^2}{2^k}}^{0, 3k}} \left| f_2^{\Omega_2^{L_2}} \right|^2 \right).
\end{equation}
Recalling now that we are in the regime in which at least one of the functions $f_1$, $f_2$ is uniform. without loss of generality, we may assume $f_1=f_1^{\bbU_\mu}$. Then, by \eqref{20230225eq47} and Parseval, we have
\begin{equation}
\calR_{\widetilde{p}}^2 \lesssim 2^{-3\mu k} \sum_{\substack{\widetilde{q} \sim 2^k \\ L_2 \sim 2^k}} \left( \int_{3 I_{\frac{\widetilde{p}}{2^k}-\widetilde{q}}^{0, 2k}} \left |f_1 \right|^2 \right) \left(\int_{3 I_{\widetilde{p}+\frac{\widetilde{q}^2}{2^k}}^{0, 3k}} \left| f_2^{\Omega_2^{L_2}} \right|^2 \right)\lesssim 2^{-3\mu k} \sum_{\widetilde{q} \sim 2^k } \left( \int_{3 I_{\frac{\widetilde{p}}{2^k}-\widetilde{q}}^{0, 2k}} \left |f_1 \right|^2 \right) \left(\int_{3 I_{\widetilde{p}+\frac{\widetilde{q}^2}{2^k}}^{0, 3k}} \left| f_2 \right|^2 \right).
\end{equation}
This together with \eqref{20250225eq30} gives
\begin{equation} \label{20230305eq40}
|\Lambda_m^{k, \bbU_\mu}(\vec{f})| \lesssim 2^{2k} \cdot 2^{-\frac{\mu k}{2}} \sum_{\widetilde{p} \sim 2^{2k}} \left\|f_3^{3 \Omega_3^{L_3(\widetilde{p})}} \right\|_{L^2 \left(3 I_{\widetilde{p}}^{0, 3k} \right)} \left\|f_4^{3 \Omega_4^{L_3(\widetilde{p})}} \right\|_{L^2 \left(3 I_{\widetilde{p}}^{0, 3k} \right)}\, \left[  \sum_{\widetilde{q} \sim 2^k } \left( \int_{3 I_{\frac{\widetilde{p}}{2^k}-\widetilde{q}}^{0, 2k}} \left |f_1 \right|^2 \right) \left(\int_{3 I_{\widetilde{p}+\frac{\widetilde{q}^2}{2^k}}^{0, 3k}} \left| f_2 \right|^2 \right)  \right]^{\frac{1}{2}}.
\end{equation}
Applying Cauchy-Schwarz in $\widetilde{p}$ and then the change variable $\widetilde{q} \to \widetilde{q}+\frac{\widetilde{p}}{2^k}$, we have
\begin{eqnarray} \label{Uniform-II-Small}
\left|\Lambda_m^{k, \bbU_\mu}(\vec{f})\right| &\lesssim& 2^{2k} \cdot 2^{-\frac{\mu k}{2}} \cdot \left( \sum_{\widetilde{p} \sim 2^{2k}} \left\|f_3^{3 \Omega_3^{L_3(\widetilde{p})}} \right\|^2_{L^2 \left(3 I_{\widetilde{p}}^{0, 3k} \right)} \left\|f_4^{3 \Omega_4^{L_3(\widetilde{p})}} \right\|^2_{L^2 \left(3 I_{\widetilde{p}}^{0, 3k} \right)} \right)^{\frac{1}{2}} \nonumber \\
&\cdot& \left[ \sum_{\widetilde{p} \sim 2^{2k}} \sum_{\widetilde{q} \sim 2^k } \left( \int_{3 I_{-\widetilde{q}}^{0, 2k}} \left |f_1 \right|^2 \right) \left(\int_{3 I_{\widetilde{p}+\frac{(\widetilde{q}+\frac{\widetilde{p}}{2^k})^2}{2^k}}^{0, 3k}} \left| f_2 \right|^2 \right)  \right]^{\frac{1}{2}} \nonumber  \\
&\lesssim& 2^{2k} \cdot 2^{-\frac{\mu k}{2}}  \left\|f_1 \right\|_{L^2 \left(3 I^k \right)} \left\|f_2 \right\|_{L^2 \left(3 I^k \right)} \left( \sum_{\widetilde{p} \sim 2^{2k}} \left\|f_3 \right\|^2_{L^2 \left(3 I_{\widetilde{p}}^{0, 3k} \right)} \left\|f_4 \right\|^2_{L^2 \left(3 I_{\widetilde{p}}^{0, 3k} \right)} \right)^{\frac{1}{2}}\nonumber,
\end{eqnarray}
where gives
\begin{equation}\label{unif2ksmall}
\left|\Lambda_m^{k, \bbU_\mu}(f_1^{\bbU_\mu}, f_2,f_3^{\bbS_\mu},f_4^{\bbS_\mu})\right| \lesssim 2^{-\frac{\mu k}{2}} \,\|\vec{f} \|_{L^{\vec{p}}(3 I^{k})}\qquad \forall\:\vec{p}\in {\bf H}^{+}\,.
\end{equation}

\subsection{Treatment of the sparse component $\Lambda_m^{k, \bbS_\mu}$}

Recall that in this case, we have that all $\left\{f_1, f_2, f_3, f_4\right\}$ are sparse. Therefore, we see that
\begin{enumerate}
    \item [$\bullet$] up to a $2^{\mu k}$ factor, there is a unique measurable function $L_3: \left[2^{2k}, 2^{2k+1} \right]\cap\Z \mapsto \left[2^k, 2^{k+1}\right]\cap\Z$ such that\footnote{Note that the frequency localization of $f_3$ and $f_4$ are essentially determined by the same frequency interval $\left[L_3 2^{m+2k}, (L_3+1)2^{m+2k} \right]$--see \eqref{20230225eq01}.} for each $p \sim 2^{2k}$,
\begin{equation} \label{20230226eq01}
2^{-\mu k}\,\int_{3 I_{\widetilde{p}}^{0, 3k}} |f_j|^2\lesssim\int_{3 I_{\widetilde{p}}^{0, 3k}} \left| f_j^{3 \Omega_j^{L_3 \left(\widetilde{p} \right)}} \right|^2  \lesssim \int_{3 I_{\widetilde{p}}^{0, 3k}} |f_j|^2, \quad j\in\{3, 4\}.
\end{equation}

\item [$\bullet$] up to a $2^{3\mu k}$ factor and for given $j \in \{1, 2\}$, there exists a unique measurable function $L_j: \left[2^{jk}, 2^{jk+1} \right]\cap\Z \mapsto \left[2^k, 2^{k+1}\right]\cap\Z$ such that for each $q_j \sim 2^{jk}$,
 \begin{equation} \label{20230226eq02}
2^{-3\mu k}\,\int_{3 I_{q_j}^{0, (j+1)k}} |f_j|^2\lesssim \int_{3 I_{q_j}^{0, (j+1)k}} \left|f_j^{\Omega_j^{L_j(q_j)}} \right|^2 \lesssim \int_{3 I_{q_j}^{0, (j+1)k}} |f_j|^2, \quad j\in\{1, 2\}.
 \end{equation}
\end{enumerate}

Notice from \eqref{20230226eq01} and \eqref{20230226eq02} that the frequency information of the functions $\{f_1, f_2, f_3, f_4\}$ is localized in a ``compressed" interval within each spatial fiber. This will be later exploited via time-frequency correlations (that are encoding the curvature of our problem) in order to obtain the desired $k-$decaying estimate.

\subsubsection{Reductions via frequency sparsity of the spatial fibers} Starting from \eqref{20230224main01} and in the spirit provided by the approach in Subsection \ref{subsec4.2.2}, we first exploit the frequency localization as follows:
\begin{equation} \label{20230226eq10}
|\Lambda^{k, \bbS_\mu}_m(\vec{f})| \lesssim 2^k \cdot 2^{7\mu k} \sum_{\substack{ \widetilde{p} \sim 2^{2k}\,\widetilde{q} \sim 2^k}} \bbA_{\widetilde{p}, \widetilde{q}}\:\bbB_{\widetilde{p}, \widetilde{q}},
\end{equation}
where
\begin{equation}
\bbA_{\widetilde{p}, \widetilde{q}}^2 :=\sum_{\substack{p \in \left[\widetilde{p}2^{\frac{m}{2}}, \left(\widetilde{p}+1 \right)2^{\frac{m}{2}} \right] \\ q \in \left[ \widetilde{q} 2^{\frac{m}{2}-k}, \left( \widetilde{q}+1 \right)2^{\frac{m}{2}-k} \right] \\ n_3 \sim 2^k}} \sum_{\substack{(\ell_1, \ell_2, \ell_3) \in \TFC_{p, q} \\\eqref{20230226eq01}\,\&\, \eqref{20230226eq02}}} \left| \left\langle f_3^{\omega_3^{\ell_3}}, \phi_{p+\frac{q^3}{2^m}, \ell_3 2^k+n_3}^{3k} \right \rangle \right|^2  \left| \left \langle f_4\, e^{i 2^{\frac{m}{2}+3k} \left(\ell_1 2^{-k}+\ell_2 \right) \left( \cdot \right) }, \phi_{p, \ell_3 2^k+n_3}^{3k} \right \rangle \right|^2,
\end{equation}
and
\begin{eqnarray*}
&& \bbB_{\widetilde{p}, \widetilde{q}}^2:=\sum_{\substack{p \in \left[\widetilde{p}2^{\frac{m}{2}}, \left(\widetilde{p}+1 \right)2^{\frac{m}{2}} \right] \\ q \in \left[ \widetilde{q} 2^{\frac{m}{2}-k}, \left( \widetilde{q}+1 \right)2^{\frac{m}{2}-k} \right] \\ n_3 \sim 2^k}} \sum_{\substack{(\ell_1, \ell_2, \ell_3) \in \TFC_{p, q} \\\eqref{20230226eq01}\,\&\, \eqref{20230226eq02}}}\\
&& \quad \quad \quad  \quad \frac{1}{\left|I_p^{m, 3k} \right|^2} \bigg| \int_{I_p^{m, 3k}} \int_{I_q^{m, k}}  \underline{f}_1^{\omega_1^{\ell_1}}(x-t) \underline{f}_2^{\omega_2^{\ell_2}}(x+t^2) e^{i 2^{\frac{m}{2}+k}t \cdot \frac{3q^2}{2^m}n_3} \, e^{i 2^{\frac{m}{2}+k}t \cdot 2^k \left(-\ell_1 +\ell_2 \frac{2q}{2^{\frac{m}{2}}}+ \ell_3 \frac{3q^2}{2^m}\right)} dt dx \bigg|^2.
\end{eqnarray*}

\noindent\textit{Treatment of $\bbA_{\widetilde{p}, \widetilde{q}}$.} The estimate of this part is similar to the treatment of the term $\bbA_p$ in \eqref{20230225eq01}:
$$
\bbA_{\widetilde{p}, \widetilde{q}}^2 \lesssim \sum_{\substack{p \in \left[\widetilde{p}2^{\frac{m}{2}}, \left(\widetilde{p}+1 \right)2^{\frac{m}{2}} \right] \\ q \in \left[ \widetilde{q} 2^{\frac{m}{2}-k}, \left( \widetilde{q}+1 \right)2^{\frac{m}{2}-k} \right] \\ \ell_3 \in \left[L_3 \left(\widetilde{p}\right)2^{m-2k}, \left(L_3\left(\widetilde{p} \right)+1 \right)2^{m-2k}\right]}}\left\|f_3^{3\omega_3^{\ell_3}} \right\|^2_{L^2 \left(I_{p+\frac{q^3}{2^m}}^{m, 3k} \right)} \left\|f_4^{2^{\frac{m}{2}+4k}\ell_3+\left[0, 2^{m+2k+1} \right]} \right\|_{L^2 \left(I_p^{m, 3k} \right)}^2.
$$
Applying Parseval in $\ell_3$ first, then in $q$, followed by the partition of $\left[\widetilde{p}2^{\frac{m}{2}}, \left(\widetilde{p}+1 \right)2^{\frac{m}{2}} \right]$ into smaller intervals of length $2^{\frac{m}{2}-k}$, we have
\begin{eqnarray}  \label{20230306eq01}
\bbA_{\widetilde{p}, \widetilde{q}}^2 &\lesssim&   \sum_{\substack{p \in \left[\widetilde{p}2^{\frac{m}{2}}, \left(\widetilde{p}+1 \right)2^{\frac{m}{2}} \right] \\ q \in \left[ \widetilde{q} 2^{\frac{m}{2}-k}, \left( \widetilde{q}+1 \right)2^{\frac{m}{2}-k} \right]}} \left\|f_3^{3 \Omega_3^{L_3 (\widetilde{p})}} \right\|^2_{L^2 \left(I_{p+\frac{q^3}{2^m}}^{m, 3k} \right)} \left\|f_4^{3\Omega_4^{L_3(\widetilde{p})}} \right\|_{L^2 \left(I_p^{m, 3k} \right)}^2 \\
&\lesssim& \sum_{s=0}^{2^k-1} \left\|f_3^{3 \Omega_3^{L_3(\widetilde{p})}} \right\|^2_{L^2 \left(3 I_{2^k \widetilde{p}+s+\frac{\widetilde{q}^3}{2^{2k}}}^{0, 4k} \right)} \left\|f_4^{3\Omega_4^{L_3(\widetilde{p})}} \right\|_{L^2 \left(3 I_{2^k \widetilde{p}+s}^{0, 4k} \right)}^2. \nonumber
\end{eqnarray}

\noindent\textit{Treatment of $\bbB_{\widetilde{p}, \widetilde{q}}$.} The estimate for $\bbB_{\widetilde{p}, \widetilde{q}}$ follows from similar arguments as in \eqref{20230225eq03}, \eqref{20230225eq04}, \eqref{20250225eq30} and \eqref{20230225eq47}:
$$
 \bbB_{\widetilde{p}, \widetilde{q}}^2 \lesssim 2^{2k} \sum_{
 \substack{L_2 \sim 2^k \\ \eqref{20230226eq02}}}  \left( \int_{3 I_{\frac{\widetilde{p}}{2^k}-\widetilde{q}}^{0, 2k}} \left|f_1^{\Omega_1^{\frac{2\widetilde{q}}{2^k}L_2+ \frac{3\widetilde{q}^2}{2^{2k}} L_3\left(\widetilde{p}  \right)}}\right|^2  \right) \left(\int_{3 I_{\widetilde{p}+\frac{\widetilde{q}^2}{2^k}}^{0, 3k}} \left| f_2^{\Omega_2^{L_2}} \right|^2 \right).
$$

\medskip

Combining now the estimates of $\bbA_{\widetilde{p}, \widetilde{q}}$ and $\bbB_{\widetilde{p}, \widetilde{q}}$ with \eqref{20230226eq10}, we have
\begin{eqnarray} \label{20230226eq11}
&& |\Lambda^{k, \bbS_\mu}_m(\vec{f})|  \lesssim 2^{2k} \cdot 2^{7\mu k} \sum_{\substack{\widetilde{p} \sim 2^{2k} \\ \widetilde{q} \sim 2^k}}\left[  \sum_{s=0}^{2^k-1} \left\|f_3^{3 \Omega_3^{L_3(\widetilde{p})}} \right\|^2_{L^2 \left(3 I_{2^k \widetilde{p}+s+\frac{\widetilde{q}^3}{2^{2k}}}^{0, 4k} \right)} \left\|f_4^{3\Omega_4^{L_3(\widetilde{p})}} \right\|_{L^2 \left(3 I_{2^k \widetilde{p}+s}^{0, 4k} \right)}^2 \right]^{\frac{1}{2}}    \\
&& \quad \quad \quad \quad \cdot \left[ \sum_{
 \substack{L_2 \sim 2^k \\ \eqref{20230226eq02}}}   \left( \int_{3 I_{\frac{\widetilde{p}}{2^k}-\widetilde{q}}^{0, 2k}} \left|f_1^{\Omega_1^{\frac{2\widetilde{q}}{2^k}L_2+ \frac{3\widetilde{q}^2}{2^{2k}} L_3\left(\widetilde{p}  \right)}}\right|^2  \right) \left(\int_{3 I_{\widetilde{p}+\frac{\widetilde{q}^2}{2^k}}^{0, 3k}} \left| f_2^{\Omega_2^{L_2}} \right|^2 \right) \right]^{\frac{1}{2}}.  \nonumber
\end{eqnarray}
Motivated by the time-frequency localization induced by the above estimate, we define the time-frequency correlation set
$$
\sTFC:=\left\{\substack{\left( \widetilde{p}, \widetilde{q} \right)\\ \widetilde{p} \sim 2^{2k}, \widetilde{q} \sim 2^k}: \frac{2\widetilde{q}}{2^k}L_2 \left(\widetilde{p}+\frac{\widetilde{q}^2}{2^k} \right)+\frac{3\widetilde{q}^2}{2^{2k}} L_3 \left( \widetilde{p} \right)=L_1 \left(\frac{\widetilde{p}}{2^k}-\widetilde{q} \right)+O(1) \right\}.
$$
Therefore, by Cauchy-Schwarz, \eqref{20230226eq11} is further bounded from above by
\begin{eqnarray}  \label{Uniform-I}
&&\lesssim 2^{2k} \cdot 2^{7\mu k} \sum_{\left( \widetilde{p}, \widetilde{q} \right) \in \sTFC} \left\|f_1 \right\|_{L^2 \left(3 I_{\frac{\widetilde{p}}{2^k}-\widetilde{q}}^{0, 2k} \right)} \left\|f_2 \right\|_{L^2 \left(3 I_{\widetilde{p}+\frac{\widetilde{q}^2}{2^k}}^{0, 3k}\right)}\,\left[  \sum_{s=0}^{2^k-1} \left\|f_3 \right\|^2_{L^2 \left(3 I_{2^k \widetilde{p}+s+\frac{\widetilde{q}^3}{2^{2k}}}^{0, 4k} \right)} \left\|f_4 \right\|_{L^2 \left(3 I_{2^k \widetilde{p}+s}^{0, 4k} \right)}^2 \right]^{\frac{1}{2}} \nonumber\\
&& \lesssim 2^{(2+7\mu) k} \cdot \left( \# \sTFC \right)^{\frac{1}{2}} \vast[ \sum_{\substack{ \widetilde{p} \sim 2^{2k} \\ \widetilde{q} \sim 2^k }}\left\|f_1 \right\|^2_{L^2 \left(3 I_{\frac{\widetilde{p}}{2^k}-\widetilde{q}}^{0, 2k} \right)} \left\|f_2 \right\|^2_{L^2 \left(3 I_{\widetilde{p}+\frac{\widetilde{q}^2}{2^k}}^{0, 3k}\right)}\left(\sum_{s=0}^{2^k-1} \left\|f_3 \right\|^2_{L^2 \left(3 I_{2^k \widetilde{p}+s+\frac{\widetilde{q}^3}{2^{2k}}}^{0, 4k} \right)} \left\|f_4 \right\|_{L^2 \left(3 I_{2^k \widetilde{p}+s}^{0, 4k} \right)}^2 \right) \vast]^{\frac{1}{2}}\nonumber,
\end{eqnarray}
which implies
\begin{equation}\label{sparseksmall}
|\Lambda^{k, \bbS_\mu}_m(\vec{f})| \lesssim  2^{7\mu k} \cdot \left(2^{-3k}\,\# \sTFC \right)^{\frac{1}{2}}\,\|\vec{f} \|_{L^{\vec{p}}(3 I^{k})}\qquad \forall\:\vec{p}\in {\bf H}^{+}\,.
\end{equation}

Our desired estimate follows now if we can improve over the trivial estimate $\# \sTFC\lesssim 2^{3k}$.

\subsubsection{Improved control over the time-frequency correlation set: a level set analysis}  \label{20230306subsec01}

As suggested by the title, this section is devoted to prove the key estimate \eqref{TFCcontrolkey} below.\footnote{Simpler (two term) but related level set estimates were a also a key ingredient in \cite{Lie19}. More general level set estimates were recently studied by M. Christ, see \emph{e.g.} \cite{Chr22}.} For notational simplicity, we set $L_1=c$, $L_2=a$, $L_3=b$, $p=\widetilde{p}$ and $q=\widetilde{q}$; with these the time-frequency correlation set can be rewritten as follows\footnote{Recall that the equality is understood up to $O(1)$.}:
$$
\sTFC:=\left\{ \substack{\left( p, q\right)\\p \sim 2^{2k}, q \sim 2^k}\,:\,\frac{2q}{2^k} a \left(p+\frac{q^2}{2^k} \right)+\frac{3q^2}{2^{2k}} b \left(p \right)=c \left(\frac{p}{2^k}-q \right) \right\},
$$
where $a(\cdot)$, $b(\cdot)$, $c(\cdot)$ are some generic measurable functions with
$$
a, b: \left[2^{2k}, 2^{2k+1} \right] \cap \Z \longmapsto \left[2^k, 2^{k+1} \right]  \cap \Z,
$$
and
$$
c: \left[2^k, 2^{k+1} \right] \cap \Z \longmapsto \left[2^k, 2^{k+1} \right]  \cap \Z.
$$
Our goal is to show that there exists some absolute $\epsilon_5>0$, such that
\begin{equation} \label{TFCcontrolkey}
\# \sTFC \lesssim_{\epsilon_5} 2^{(3-\epsilon_5)k}.
\end{equation}
We prove \eqref{TFCcontrolkey} by contradiction. We thus assume that for any $\epsilon_5>0$
\begin{equation} \label{20230226eq30}
\# \sTFC \gtrsim_{\epsilon_5} 2^{(3-\epsilon_5)k}.
\end{equation}
Write now
$$
\sTFC=\bigcup_{r \sim 2^k} \sTFC_r,
$$
where
$$
\sTFC_r:=\left\{ (p, q) \in \sTFC: p \in \left[r2^k, (r+1)2^k \right] \right\}.
$$
For $r \sim 2^k$ and $s \sim 2^k$, set
$$
a_r(s)=a\left(\left(r-\frac{1}{2} \right)2^k+s \right) \quad \textrm{and} \quad b_r(s)=b\left(\left(r-\frac{1}{2} \right)2^k+s \right)\,.
$$
With these, \eqref{20230226eq30} can be rephrased as
$$
\sum_{r, \textsf p, q \sim 2^k} \left\lceil \frac{2q}{2^k}a_r\left(\sfp+\frac{q^2}{2^k} \right)+\frac{3q^2}{2^{2k}} b_r (\sfp)-c(r-q)   \right\rceil \gtrsim 2^{(3-\epsilon_5)k}.
$$
Our approach will exploit the curvature present in the definition of the time-frequency correlation set $\sTFC$ above via the short-long range interaction between the local parameters $p,\,q$ (short range) and the global parameter $r$ (long range).

Concretely, our argument develops as follows: applying first a Cauchy-Schwarz, we deduce that
$$
\sum_{r, q \sim 2^k} \left(\sum_{\sfp \sim 2^k} \left\lceil \frac{2q}{2^k}a_r\left(\sfp+\frac{q^2}{2^k} \right)+\frac{3q^2}{2^{2k}} b_r (\sfp)-c(r-q)   \right\rceil  \right)^2\gtrsim 2^{(4-2\epsilon_5)k},
$$
which implies
\begin{eqnarray} \label{20230301eq02}
&& \sum_{r, q \sim 2^k} \sum_{\sfp_1, \sfp_2 \sim 2^k}\left\lceil 2a_r\left(\sfp_1+\frac{q^2}{2^k} \right)+\frac{3q}{2^{k}} b_r (\sfp_1)- \frac{3q}{2^{k}} b_r (\sfp_2)-2a_r\left(\sfp_2+\frac{q^2}{2^k} \right)  \right\rceil \\
&& \quad \quad \quad\quad \quad\cdot \left\lceil \frac{2q}{2^k}a_r\left(\sfp_1+\frac{q^2}{2^k} \right)+\frac{3q^2}{2^{2k}} b_r (\sfp_1)-c(r-q)   \right\rceil \gtrsim 2^{(4-2\epsilon_5)k}. \nonumber
\end{eqnarray}
Applying a further change variable $\frac{q^2}{2^k}+\sfp_1 \to u$ and $\sfp_2 \to \sfp_2+\sfp_1$, we then have\footnote{Note here that we have $u-\sfp_1 \sim 2^k$. Also, by convention, given $x\in\R$ we set $c(x):=c(\lfloor x\rfloor)$ and similarly for $a(\cdot)$ and $b(\cdot)$.}
\begin{eqnarray*}
&& \sum_{r, u \sim 2^k} \sum_{\sfp_1, \sfp_2 \sim 2^k} \left\lceil 2a_r\left(u \right)-2a_r\left(\sfp_2+u \right) +\frac{3\sqrt{u-\sfp_1}}{2^{\frac{k}{2}}} \left(b_r (\sfp_1)-b_r (\sfp_1+\sfp_2) \right) \right\rceil \\
&& \quad  \cdot \left\lceil \frac{2\sqrt{u-\sfp_1}}{2^{\frac{k}{2}}}a_r\left(u \right)+\frac{3(u-\sfp_1)}{2^k} b_r (\sfp_1)-c \left(r- \sqrt{2^k(u-\sfp_1)} \right)   \right\rceil \gtrsim 2^{(4-2\epsilon_5)k}.
\end{eqnarray*}
Applying now a second Cauchy-Schwarz with respect to $r, u, \sfp_2$, we get
\begin{eqnarray*}
&& \sum_{r, u, \sfp_2 \sim 2^k} \Bigg( \sum_{\sfp_1 \sim 2^k} \left\lceil 2a_r\left(u \right)-2a_r\left(\sfp_2+u \right) +\frac{3\sqrt{u-\sfp_1}}{2^{\frac{k}{2}}} \left(b_r (\sfp_1)-b_r (\sfp_1+\sfp_2) \right) \right\rceil \\
&& \quad \quad  \cdot \left\lceil \frac{2\sqrt{u-\sfp_1}}{2^{\frac{k}{2}}}a_r\left(u \right)+\frac{3(u-p_1)}{2^k} b_r (\sfp_1)-c \left(r- \sqrt{2^k(u-\sfp_1)} \right)   \right\rceil \Bigg)^2\gtrsim 2^{(5-4\epsilon_5)k},
\end{eqnarray*}
which implies
\begin{eqnarray} \label{20230227eq40}
 && \sum_{\substack{r, u, \sfp_2 \sim 2^k \\ \sfp_1, \sfp_3 \sim 2^k}}  \left\lceil \sqrt{\frac{u-\sfp_1}{u-\sfp_3}} \left(b_r(\sfp_1)-b_r(\sfp_1+\sfp_2) \right)-   \left(b_r(\sfp_3)-b_r(\sfp_3+\sfp_2) \right)\right\rceil \\
 && \quad \quad \cdot \left\lceil 2a_r\left(u \right)-2a_r\left(\sfp_2+u \right) +\frac{3\sqrt{u-\sfp_1}}{2^{\frac{k}{2}}} \left(b_r (\sfp_1)-b_r (\sfp_1+\sfp_2) \right) \right\rceil \nonumber \\
 && \quad \quad \cdot \left\lceil \frac{2\sqrt{u-\sfp_1}}{2^{\frac{k}{2}}}a_r\left(u \right)+\frac{3(u-\sfp_1)}{2^k} b_r (\sfp_1)-c \left(r- \sqrt{2^k(u-\sfp_1)} \right)   \right\rceil \nonumber \\
 && \quad \quad \cdot \left\lceil \frac{2\sqrt{u-\sfp_3}}{2^{\frac{k}{2}}}a_r\left(u \right)+\frac{3(u-\sfp_3)}{2^k} b_r (\sfp_3)-c \left(r- \sqrt{2^k(u-\sfp_3)} \right)   \right\rceil \gtrsim 2^{(5-4\epsilon_5)k}. \nonumber
\end{eqnarray}
Let us now observe that the main contribution on the left-hand side of \eqref{20230227eq40} comes from the off-diagonal term, \textit{i.e.} $\left|\sfp_1-\sfp_3 \right| \gtrsim 2^{k-4\epsilon_5 k}$. This is trivial since the diagonal component of the left-hand side of \eqref{20230227eq40} is bounded from above by
$$
\lesssim 2^{4k} \cdot 2^{k-4\epsilon_5 k}\leq 2^{(5-4\epsilon_5)k}.
$$
Next, we claim that one may also assume $(r, \sfp_1, \sfp_2) \in \bbS$, where
\begin{equation} \label{defS}
\bbS:=\left\{(r, \sfp_1, \sfp_2): \left|b_r(\sfp_1)-b_r(\sfp_1+\sfp_2) \right| \lesssim 2^{8\epsilon_5 k} \right\}.
\end{equation}
Indeed, for any fixed choice of $r, \sfp_1, \sfp_2$, and $\sfp_3$ with $(r, \sfp_1, \sfp_2) \in \bbS^c$ and $\left|\sfp_1-\sfp_3 \right| \gtrsim 2^{k-4\epsilon_5 k}$, we consider the mapping
$$
[2^k, 2^{k+1}] \ni u \longmapsto \sqrt{ \frac{u-\sfp_1}{u-\sfp_3}} \sim 1.
$$
It is clear that such a mapping is monotone. Moreover, its derivative is of magnitude $\gtrsim \frac{1}{2^{(1+4\epsilon_5)k}}$. Therefore, (up to polynomial factors that we never account for in our estimates since we are after exponential decay)
\begin{equation} \label{20230227eq50}
\sum_{\substack{(r, \sfp_1, \sfp_2) \in \bbS^c \\ \sfp_3, u \sim 2^k}}  \left\lceil \sqrt{\frac{u-\sfp_1}{u-\sfp_3}} \left(b_r(\sfp_1)-b_r(\sfp_1+\sfp_2) \right)-   \left(b_r(\sfp_3)-b_r(\sfp_3+\sfp_2) \right)\right\rceil   \lesssim  2^{(5-4\epsilon_5)k}.
\end{equation}

Thus, we must have that
\begin{eqnarray} \label{20230228eq01}
 && \sum_{\substack{r, u, \sfp_2 \sim 2^k \\ \sfp_1, \sfp_3 \sim 2^k \\ |\sfp_1-\sfp_3| \gtrsim 2^{k-4\epsilon_5 k} \\ (r, \sfp_1, \sfp_2) \in \bbS}}  \left\lceil \sqrt{\frac{u-\sfp_1}{u-\sfp_3}} \left(b_r(\sfp_1)-b_r(\sfp_1+\sfp_2) \right)-   \left(b_r(\sfp_3)-b_r(\sfp_3+\sfp_2) \right)\right\rceil \\
 && \quad \quad \cdot \left\lceil 2a_r\left(u \right)-2a_r\left(\sfp_2+u \right) +\frac{3\sqrt{u-\sfp_1}}{2^{\frac{k}{2}}} \left(b_r (\sfp_1)-b_r (\sfp_1+\sfp_2) \right) \right\rceil \nonumber \\
 && \quad \quad \cdot \left\lceil \frac{2\sqrt{u-\sfp_1}}{2^{\frac{k}{2}}}a_r\left(u \right)+\frac{3(u-\sfp_1)}{2^k} b_r (\sfp_1)-c \left(r- \sqrt{2^k(u-\sfp_1)} \right)   \right\rceil \nonumber \\
 && \quad \quad \cdot \left\lceil \frac{2\sqrt{u-\sfp_3}}{2^{\frac{k}{2}}}a_r\left(u \right)+\frac{3(u-\sfp_3)}{2^k} b_r (\sfp_3)-c \left(r- \sqrt{2^k(u-\sfp_3)} \right)   \right\rceil\gtrsim 2^{(5-4\epsilon_5)k}. \nonumber
\end{eqnarray}
Now from \eqref{defS}, \eqref{20230228eq01}, triangle's inequality and the change of variable $\sfp_2 \to \sfp_2-\sfp_1$, we deduce
\begin{equation} \label{20230227eq42}
\sum_{\substack{r, u \\ \sfp_1, \sfp_2} \sim 2^k}  \left\lceil \frac{2\sqrt{u-\sfp_1}}{2^{\frac{k}{2}}}a_r\left(u \right)+\frac{3(u-\sfp_1)}{2^k} b_r (\sfp_2)-c \left(r- \sqrt{2^k(u-\sfp_1)} \right)   \right\rceil \gtrsim 2^{(4-12\epsilon_5)k}\,.
\end{equation}
Applying another change variable $\frac{q^2}{2^k}=u-\sfp_1$, relation \eqref{20230227eq42} can be further reduced to
$$
\sum_{r, u, q, \sfp_2 \sim 2^k} \left\lceil \frac{2q}{2^k}a_r\left(u \right)+\frac{3q^2}{2^{2k}} b_r (\sfp_2)-c \left(r- q \right)   \right\rceil \gtrsim 2^{(4-12\epsilon_5)k}.
$$
Therefore, there exists some $u_0, p_0 \sim 2^k$  such that
$$
\sum_{r, q \sim 2^k} \left \lceil \frac{2q}{2^k} a_r(u_0)+\frac{3q^2}{2^{2k}} b_r(p_0)-c(r-q) \right\rceil \gtrsim 2^{(2-12\epsilon_5) k}.
$$
Denote $a_0(r):=a_r(u_0)$ and $b_0(r):=b_r(u_0)$. Then by applying the change variable $r-q \to q$, we have
$$
\sum_{r, q \sim 2^k} \left\lceil \frac{2(r-q)}{2^k}a_0(r)+\frac{3(r-q)^2}{2^{2k}}b_0(r)-c(q) \right\rceil \gtrsim 2^{(2-12\epsilon_5)k}.
$$
Applying again Cauchy-Schwarz, we see that
\begin{equation} \label{20230301eq01}
\sum_{q, r_1, r_2 \sim 2^k} \bigg\lceil \frac{3q^2}{2^{2k}} \sfa(r_1, r_2)+\frac{2q}{2^k} \sfb(r_1, r_2)+\sfc(r_1, r_2) \bigg\rceil \gtrsim 2^{(3-24\epsilon_5)k}.
\end{equation}
where, for notational simplicity, we set
$$
\sfa(r_1, r_2):=b_0(r_1)-b_0(r_2),
$$
$$
\sfb(r_1, r_2):=a_0(r_2)-a_0(r_1)  +\frac{3r_2}{2^k}b_0(r_2)-\frac{3r_1}{2^k}b_0(r_1),
$$
and
$$
\sfc(r_1, r_2):=\frac{2r_1}{2^k}a_0(r_1)-\frac{2r_2}{2^k}a_0(r_2)+\frac{3r_1^2}{2^{2k}}b_0(r_1)-\frac{3r_2^2}{2^{2k}}b_0(r_2).
$$
Applying Cauchy-Schwarz in $q$ in \eqref{20230301eq01}, we deduce
\begin{equation*}
\sum_{{r_1, r_2 \sim 2^k}\atop{q_1, q_2 \sim 2^k}}\left\lceil \frac{3(q_1^2-q_2^2)}{2^{2k}} \sfa(r_1, r_2)+\frac{2(q_1-q_2)}{2^k} \sfb(r_1, r_2) \right\rceil \left\lceil \frac{3q_2^2}{2^{2k}} \sfa(r_1, r_2)+\frac{2q_2}{2^k} \sfb(r_1, r_2)+\sfc(r_1, r_2) \right\rceil  \gtrsim 2^{(4-48\epsilon_5)k},
\end{equation*}
which further implies
\begin{equation*}
\sum_{r_1, r_2 \sim 2^k}  \sum_{q_1, q_2 \sim 2^k} \left\lceil \frac{3 q_1}{2^{k}} \sfa(r_1, r_2)+ 2 \sfb(r_1, r_2) \right\rceil\,\left\lceil \frac{3q_2^2}{2^{2k}} \sfa(r_1, r_2)+\frac{2q_2}{2^k} \sfb(r_1, r_2)+\sfc(r_1, r_2) \right\rceil  \gtrsim 2^{(4-96\epsilon_5)k}.
\end{equation*}
From here we deduce that
\begin{equation}\label{abc}
\sum_{r_1, r_2 \sim 2^k} \left\lceil \sfa(r_1, r_2) \right\rceil  \left\lceil \sfb(r_1, r_2) \right\rceil  \left\lceil \sfc(r_1, r_2) \right\rceil \gtrsim 2^{2k-1536\epsilon_5 k}\,.
\end{equation}
Now a moment of thought gives the following quadratic dependence:
\begin{equation}\label{abc1}
c(r_1,r_2)+\frac{2 r_2}{2^k} b(r_1,r_2)+\frac{3 r_2^2}{2^{2k}} a(r_1,r_2)=\frac{2(r_1-r_2)}{2^k} a_0(r_1)+\frac{3(r_1-r_2)^2}{2^{2k}} b_0(r_1)\,.
\end{equation}
Combining \eqref{abc} with \eqref{abc1} we have
\begin{equation}\label{abl}
\sum_{r_1, r_2 \sim 2^k} \left\lceil 2 a_0(r_1)+\frac{3r_2}{2^k} b_0(r_1) \right\rceil \gtrsim 2^{2k-6144\epsilon_5 k}\,.
\end{equation}
However, since $a_0(r_1), b_0(r_1) \sim 2^k$, we must have that
\begin{equation} \label{abu}
\sum_{r_1, r_2 \sim 2^k} \left\lceil 2 a_0(r_1)+\frac{3r_2}{2^k} b_0(r_1) \right\rceil \lesssim k\,2^{k}\,.
\end{equation}
From \eqref{abl}and \eqref{abu} we deduce that
\begin{equation} \label{20230306eq03}
k\,2^{k} \gtrsim 2^{2k-6144\epsilon_5 k},
\end{equation}
which gives the desired contradiction if we pick $\epsilon_5=\frac{1}{10000}$.

Conclude from \eqref{sparseksmall} and the above that
\begin{equation}\label{sparseksmall1}
|\Lambda^{k, \bbS_\mu}_m(\vec{f})| \lesssim  2^{(7\mu-\frac{1}{20000})k}\,\|\vec{f} \|_{L^{\vec{p}}(I^{k})}\qquad \forall\:\vec{p}\in {\bf H}^{+}\,.
\end{equation}
thus finishing the sparse case.
\medskip

\begin{obs} \label{bkII}[\textsf{Bookkeeping (II)}]  Putting now together \eqref{Ldecksmall}, \eqref{unif1ksmall}, \eqref{unif2ksmall} and \eqref{sparseksmall1} we deduce that Theorem \ref{mainthmI} holds with $\bar{\epsilon}=\min\{\frac{\mu}{2},\,\frac{1}{20000}-7\mu\}\approx \frac{1}{3}\cdot 10^{-5}$.
\end{obs}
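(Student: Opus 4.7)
The plan is entirely elementary: the proof is a triangle-inequality combination of the three decay estimates, followed by optimizing the free parameter $\mu$.

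First, I will invoke the sparse--uniform decomposition \eqref{Ldecksmall} to split $\Lambda_m^k(\vec{f})=\Lambda_m^{k,\bbU_\mu}(\vec{f})+\Lambda_m^{k,\bbS_\mu}(\vec{f})$. The uniform piece $\Lambda_m^{k,\bbU_\mu}(\vec{f})$ was itself further decomposed in Section \ref{Sec05} according to whether at least one of $\{f_3,f_4\}$ is uniform (Section \ref{20230305subsec01}) or both are sparse with at least one of $\{f_1,f_2\}$ uniform (Section \ref{subsec4.2.2}); by the symmetry of the argument in the pairs $(f_1,f_2)$ and $(f_3,f_4)$, these two sub-cases collectively exhaust all mixed sparse/uniform configurations and each is controlled by \eqref{unif1ksmall} or \eqref{unif2ksmall}, giving the bound $2^{-\frac{\mu k}{2}}\|\vec{f}\|_{L^{\vec{p}}(3I^k)}$ on $\Lambda_m^{k,\bbU_\mu}(\vec{f})$. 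The sparse piece is handled by \eqref{sparseksmall1}, yielding the bound $2^{(7\mu-\frac{1}{20000})k}\|\vec{f}\|_{L^{\vec{p}}(3I^k)}$.

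Combining these via the triangle inequality gives, for every $\vec{p}\in{\bf H}^{+}$,
\begin{equation*}
\left|\Lambda_m^{k,I_0^{0,k}}(\vec{f})\right|\lesssim \bigl(2^{-\frac{\mu k}{2}}+2^{(7\mu-\frac{1}{20000})k}\bigr)\,\|\vec{f}\|_{L^{\vec{p}}(3I^k)}\lesssim 2^{-\bar{\epsilon} k}\|\vec{f}\|_{L^{\vec{p}}(3I^k)},
\end{equation*}
with $\bar{\epsilon}=\min\bigl\{\tfrac{\mu}{2},\,\tfrac{1}{20000}-7\mu\bigr\}$, provided we choose $\mu$ small enough so that the second term is positive (i.e. $\mu<\tfrac{1}{140000}$).

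The only remaining task is the numerical optimization over $\mu$. The two expressions in the minimum are equalized when $\tfrac{\mu}{2}=\tfrac{1}{20000}-7\mu$, i.e. $\tfrac{15\mu}{2}=\tfrac{1}{20000}$, yielding $\mu=\tfrac{1}{150000}$ and correspondingly $\bar{\epsilon}=\tfrac{\mu}{2}=\tfrac{1}{300000}\approx \tfrac{1}{3}\cdot 10^{-5}$. There is no real obstacle here; the only point worth noting is that the constraint $\mu<\mu_0$ (and the implicit compatibility with the various parameter choices fixed earlier, cf. Observation \ref{bkI} for the analogous bookkeeping in the case $k\geq\tfrac{m}{2}$) is automatically satisfied for this tiny value of $\mu$, so no feedback into the earlier estimates is required.
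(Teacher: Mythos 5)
Your proposal is correct and is essentially the paper's own argument: the paper's Observation is exactly the triangle-inequality combination of \eqref{Ldecksmall} with the uniform bounds \eqref{unif1ksmall}, \eqref{unif2ksmall} (which together exhaust the configurations with at least one uniform input) and the sparse bound \eqref{sparseksmall1}, followed by the choice of $\mu$ balancing $\frac{\mu}{2}$ against $\frac{1}{20000}-7\mu$, i.e. $\mu=\frac{1}{150000}$ and $\bar{\epsilon}=\frac{1}{300000}\approx\frac{1}{3}\cdot 10^{-5}$. Your explicit optimization and the remark that this tiny $\mu$ is compatible with the other parameter constraints match the paper's (implicit) bookkeeping.
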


The proof of the case $0 \le k < \frac{m}{4}$ is now complete.

\section{The diagonal term $\Lambda^{D}$: The case $\frac{m}{4} \le k<\frac{m}{2}$}\label{Sec06}

In this section we prove a slightly weaker version than the one needed for concluding Theorem \ref{mainthm-2022}. Indeed, instead of presenting the direct analogue of Theorem \ref{mainthmI} we will prove the following

\begin{thm} \label{mainthmII}
There exists some $\underline{\epsilon}>0$, such that for any $\frac{m}{4} \le k \le \frac{m}{2}$ and $\vec{p}\in {\bf H}^{+}$
\begin{equation}\label{keyestim}
\left|\Lambda_m^{k, I_0^{0, k}}(\vec{f})\right| \lesssim \min\{1,\,k\,2^{-\underline{\epsilon}\,(\frac{m}{2}-k)}\} \,\|\vec{f} \|_{L^{\vec{p}}(3\, I_0^{0, k})}\,.
\end{equation}
\end{thm}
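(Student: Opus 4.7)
The plan is to adapt the arguments of Section~\ref{Sec05} to the regime $\frac{m}{4} \le k < \frac{m}{2}$, which, according to Key Heuristic~2, corresponds to the $n$-dominance case. The conceptual shift is that the roles of the parameters $\ell_2$ and $n$ in the correlative model \eqref{20230224main01} are effectively reversed; accordingly, the sparse-uniform dichotomy (``SU2'' in the terminology of the Introduction) must be built on frequency tiles of length $\sim 2^{m+(\underline{j}-1)k}$ --- the tiles $\Omega_j^{L_j}$ of Section~\ref{SU1}, which for $k\ge m/4$ are now \emph{thinner} than the preliminary fibers $\omega_j^{\ell_j}$ --- with the sparse threshold in the analogue of \eqref{20230225eq23} chosen proportional to $2^{-\mu(m/2-k)}$ in place of $2^{-\mu k}$.

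First I would introduce this modified sparse-uniform decomposition of each input $f_j$, yielding a splitting $\Lambda_m^k = \Lambda_m^{k,\bbU} + \Lambda_m^{k,\bbS}$. For the uniform component (where at least one input is uniform) I would follow the template of Subsections~\ref{20230305subsec01}--\ref{subsec4.2.2}: apply Cauchy--Schwarz in the summation parameters of \eqref{20230224main01}, invoke Parseval to dispose of the Gabor coefficients of $f_3$ and $f_4$, perform the change of variables $(u,v)=(x-t, x+t^2)$ as in \eqref{20230225eq38} to decouple the inner double integral in the pair $(f_1,f_2)$, and then exploit the uniformity hypothesis --- which in the present regime concerns the $n$-direction rather than the $\ell_2$-direction --- to produce the gain $2^{-\underline{\epsilon}(m/2-k)}$. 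This half of the argument should be analogous but not identical to the corresponding part of Section~\ref{Sec05} and is expected to require no essentially new ideas.

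For $\Lambda_m^{k,\bbS}$, where all four inputs are sparse, the sparseness pins down (up to an admissible multiplicative loss) a unique dominant frequency location $L_j(\cdot)$ on each spatial fiber of $f_j$. As in Subsection~\ref{20230306subsec01}, this reduces matters to a level-set estimate for a time-frequency correlation set of schematic form
\[
\sTFC^{II} := \Big\{(p,q):\ \tfrac{2q}{2^{m/2}}\, a\!\left(p+\tfrac{q^2}{2^{m/2}}\right) + \tfrac{3q^2}{2^{m}}\, b(p) = c\!\left(\tfrac{p}{2^{m/2}}-q\right) + O(1) \Big\},
\]
where $a,b,c$ are measurable functions encoding the pinned-down frequency locations and the range of $(p,q)$ is dictated by the parameters $\ell_j \sim 2^{m/2-k}$ and $n \sim 2^k$ appearing in \eqref{20230224main01}. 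The goal is to prove an upper bound of the schematic form $\#\sTFC^{II} \lesssim k \cdot 2^{(\textnormal{trivial exponent})-\underline{\epsilon}(m/2-k)}$, which, combined with the uniform bounds, yields the stated conclusion.

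The hard part will be this last level-set estimate. I expect the argument to follow the same algebraic template as in Subsection~\ref{20230306subsec01}: successive Cauchy--Schwarz applications --- one in a ``short-range'' variable, another in a ``long-range'' variable --- coupled with carefully chosen changes of variable that exploit the mixed quadratic-linear structure of the correlation relation and collapse, as in the passage from \eqref{20230227eq40} to \eqref{abl}, into a one-dimensional bound between the measurable functions $a(\cdot)$ and $b(\cdot)$. Two new features arise: the effective scale on which cancellation can be extracted is now $2^{m/2-k}$ rather than $2^k$, forcing a re-tuning of the thresholds $\mu,\epsilon_0^w,\epsilon_5$; and there is an unavoidable over-counting of the long-range parameter by a factor of $\sim k$, which is precisely the origin of both the multiplicative factor $k$ in the statement and of the degeneration of the bound to the trivial $O(1)$ as $k \to m/2$ --- the latter boundary regime being anyway covered by Theorem~\ref{mainthIb}.
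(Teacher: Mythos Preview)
Your overall strategy is correct and matches the paper's approach: adapt Section~\ref{Sec05} by replacing the decay parameter $k$ with $\frac{m}{2}-k$ throughout, with the multiplicative factor $k$ arising in the level-set analysis. Two specifics, however, are miscalibrated.

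First, the tile choice for $f_1,f_2$ is reversed. In this regime the paper's sparse--uniform dichotomy for $f_1,f_2$ uses the \emph{wider} tiles $\omega_j^{\ell_j}$ (indexed by $\ell_j\sim 2^{\frac{m}{2}-k}$), not the thinner $\Omega_j^{L_j}$ you propose; only $f_3,f_4$ are handled via $\Omega_j^{L_j}$. The point is that the correlation structure $\TFC_q$ of \eqref{TFCq} is formulated in terms of $\ell_j$, and sparseness at the $\omega$-scale is exactly what pins down a unique $\ell_j(\cdot)$ per spatial fiber and feeds directly into the level-set reduction. Using $\Omega_j^{L_j}$ for $f_1,f_2$ would over-refine: the number of such tiles per fiber is $\sim 2^k$, while the relevant constraint only sees the coarser $\ell_j$.

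Second, the schematic form of your $\sTFC^{II}$ has the normalizations confused. The correlation set retains exactly the shape from Subsection~\ref{20230306subsec01}, namely
\[
\frac{2\widetilde q}{2^k}\,a\!\left(\widetilde p+\tfrac{\widetilde q^2}{2^k}\right)+\frac{3\widetilde q^2}{2^{2k}}\,b(\widetilde p)=c\!\left(\tfrac{\widetilde p}{2^k}-\widetilde q\right)+O(1),\qquad \widetilde p\sim 2^{2k},\ \widetilde q\sim 2^k,
\]
and the \emph{only} modification is that $a,b,c$ now take values $\sim 2^{\frac{m}{2}-k}$ rather than $\sim 2^k$. That range change is precisely what drives both the decay $2^{-\epsilon(\frac{m}{2}-k)}$ and, via the analogue of \eqref{abu} (which now reads $\lesssim k\cdot 2^{2k}\cdot 2^{-(\frac{m}{2}-k)}$), the factor $k$. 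The denominators $2^{m/2}$ in your displayed relation and the remark that ``the range of $(p,q)$ is dictated by $\ell_j$ and $n$'' conflate the spatial variables $(\widetilde p,\widetilde q)$ with the frequency variables; the former are unchanged from Section~\ref{Sec05}.
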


For covering the desired exponential decay also in the limiting case when $k$ approaches $\frac{m}{2}$ one has to apply/adapt the reasonings developed in Section \ref{MaindiagKpositive}. A very brief discussion of the latter is presented in Section \ref{AppendixC} of the Appendix.

Focussing now exclusively on Theorem \ref{mainthmII}, its proof is closely related to the proof of Theorem \ref{mainthmI}, and therefore, we will only provide a sketch of our approach outlining along the way the main differences/modifications and leaving the details to the interested reader.

\subsection{A time-frequency sparse--uniform dichotomy}\label{SU2}

Recall from the previous case that for $j\in\{1,\ldots,4\}$,
$$
\omega_j^{\ell_j}=\left[\ell_j 2^{\frac{m}{2}+\left(\underline{j}+1 \right)k}, \left(\ell_j+1 \right) 2^{\frac{m}{2}+\left(\underline{j}+1 \right)k}  \right], \quad \ell_j \sim 2^{\frac{m}{2}-k},
$$
and
$$
\Omega_j^{L_j}= \left[L_j 2^{m+(\underline{j}-1)k}, \left(L_j+1 \right) 2^{m+(\underline{j}-1)k}\right], \quad L_j \sim 2^k\,,
$$
and notice that in the current context we have the reverse inequality $|\omega_j^{\ell_j}|\geq |\Omega_j^{L_j}|$.

Adapting ourselves to the present situation we define the \emph{sparse set} of indices as follows: for $j\in\{1, 2\}$,
$$
\bbS_{\mu}(f_j):= \left\{(q_j, \ell_j): \int_{3 I_{q_j}^{0, (j+1)k}} \left|f_j^{\omega_j^{\ell_j}} \right|^2  \gtrsim 2^{-3\mu \left(\frac{m}{2}-k \right)} \int_{3 I_{q_j}^{0, (j+1)k}} |f_j|^2, \ q_j \sim 2^{jk}, \ell_j \sim 2^{\frac{m}{2}-k}\right\},
$$
and for $j\in\{3, 4\}$,
$$
\bbS_{\mu}(f_j):= \left\{(q_j, L_j): \int_{3 I_{q_j}^{0, 3k}} \left|f_j^{3\Omega_j^{L_j}} \right|^2  \gtrsim 2^{-\mu \left(\frac{m}{2}-k \right)} \int_{3 I_{q_j}^{0, 3k}} |f_j|^2, \  q_j \sim 2^{2 k},   L_j \sim 2^k \right\}.
$$
We define the \emph{uniform set} of indices, that is, $\bbU_\mu(f_j), j\in\{1, \ldots, 4\}$, and decompose the main term \eqref{20230224main01} into \emph{sparse/uniform components}, that is, $\Lambda_m^{k, \bbS_\mu}$ and $\Lambda_m^{k, \bbU_\mu}$ as usual within the regime $k$ small. We start with the usual pattern by treating first the uniform component.

\subsection{Treatment of the uniform component $\Lambda_m^{k, \bbU_\mu}$}

Following \eqref{20230224eq01}, we apply Cauchy-Schwarz and write
\begin{equation} \label{20230305eq10}
\Lambda_m^{k, \bbU_\mu} \lesssim 2^k \sum_{p \sim 2^{\frac{m}{2}+2k}} \bbA_p \bbB_p,
\end{equation}
where $\bbA_p$ and $\bbB_p$ are defined by \eqref{20230305eq01} and \eqref{20230305eq02}, respectively. The estimate of $\bbB_p$ follows the same steps as in \eqref{20230225eq03} in order to conclude
$$
\bbB_p^2 \lesssim  2^{2k}  \sum_{\substack{\ell_1, \ell_2 \sim 2^{\frac{m}{2}-k} \\q \sim 2^{\frac{m}{2}}}}  \int_{I_p^{m, 3k}} \int_{I_q^{m, k}} \left|\underline{f}_1^{\omega_1^{\ell_1}}(x-t) \underline{f}_2^{\omega_2^{\ell_2}}(x+t^2) \right|^2 dtdx.
$$
Now we turn to the estimate of the term $\bbA_p$. Recalling \eqref{ap}, and using the fact that $2^k \ge 2^{\frac{m}{2}-k}$, we split the regime of $n_3$ into smaller intervals of length $2^{\frac{m}{2}-k}$, in order to get
\begin{equation}
\bbA_p^2 \lesssim \sum_{\substack{\widetilde{n_3} \sim 2^{2k-\frac{m}{2}} \\ q \sim 2^{\frac{m}{2}}}} \quad\sum_{n_3 \in \left[ \widetilde{n_3}2^{\frac{m}{2}-k}, \left(\widetilde{n_3}+1 \right) 2^{\frac{m}{2}-k} \right]}\quad \sum_{\ell_2, \ell_3 \sim 2^{\frac{m}{2}-k}} \left| \left\langle f_3^{\omega_3^{\ell_3}}, \phi_{p+\frac{q^3}{2^m}, \ell_3 2^k+n_3}^{3k} \right \rangle \right|^2  \left| \left \langle f_4, \phi_{p, \ell_3 2^k+n_3+\ell_2}^{3k} \right \rangle \right|^2.
\end{equation}
Taking the above summation with respect to $\ell_2$ first and then $n_3$, we see that the above term is bounded above by\footnote{Notice here that within each spatial fiber we zoom in into the original frequency interval $\omega_3^{\ell_3}$ in order to obtain a more precise localization at the level of an interval of length $\approx 2^{m+2k}$.}
\begin{equation} \label{20230305eq04}
\sum_{\widetilde{n_3} \sim 2^{2k-\frac{m}{2}}} \sum_{ \substack{ \ell_3 \sim 2^{\frac{m}{2}-k} \\ q \sim 2^{\frac{m}{2}}}} \left|  \left \langle f_3^{\omega_3^{\ell_3}}, \phi_{p+\frac{q^3}{2^m}, \ell_3 2^k+\widetilde{n_3} 2^{\frac{m}{2}-k}+ 3 \left[0, 2^{\frac{m}{2}-k} \right]}^{3k} \right\rangle \right|^2\,\left|  \left \langle f_4, \phi_{p, \ell_3 2^k+\widetilde{n_3} 2^{\frac{m}{2}-k}+ 3 \left[0, 2^{\frac{m}{2}-k} \right]}^{3k} \right\rangle \right|^2.
\end{equation}
Deduce now from \eqref{20230305eq04} that
\begin{eqnarray} \label{20230205eq13}
\bbA_p^2 &\lesssim& \sum_{\widetilde{n_3} \sim 2^{2k-\frac{m}{2}}} \sum_{\substack{ \ell_3 \sim 2^{\frac{m}{2}-k} \\ q \sim 2^{\frac{m}{2}}}} \left\|f_3^{3 \Omega_3^{2^{2k-\frac{m}{2}}\ell_3+\widetilde{n_3}}} \right\|^2_{L^2 \left(I_{p+\frac{q^3}{2^m}}^{m, 3k} \right)} \left\|f_4^{3 \Omega_4^{2^{2k-\frac{m}{2}}\ell_3+\widetilde{n_3}}} \right\|^2_{L^2 \left(I_p^{m, 3k} \right)} \nonumber  \\
&\lesssim& \sum_{\substack{\widetilde{n_3} \sim 2^{2k-\frac{m}{2}} \\ \ell_3 \sim 2^{\frac{m}{2}-k}}} \left\|f_3^{3 \Omega_3^{2^{2k-\frac{m}{2}}\ell_3+\widetilde{n_3}}} \right\|^2_{L^2 \left(3 I_{\frac{p}{2^{\frac{m}{2}}}}^{0, 3k} \right)} \left\|f_4^{3 \Omega_4^{2^{2k-\frac{m}{2}}\ell_3+\widetilde{n_3}}} \right\|^2_{L^2 \left(I_p^{m, 3k} \right)} \nonumber \\
&\lesssim& \sum_{L_3 \sim 2^k} \left\|f_3^{3 \Omega_3^{L_3}} \right\|_{L^2\left(3 I_{\frac{p}{2^{\frac{m}{2}}}}^{0, 3k} \right)}^2  \left\|f_4^{3\Omega_4^{L_3}} \right\|_{L^2 \left(I_p^{m, 3k} \right)}^2.
\end{eqnarray}
Substituting the estimates for $\bbA_p$ and $\bbB_p$ within \eqref{20230305eq10}, we obtain precisely \eqref{20230305eq05}. Therefore, in the case when \emph{either $f_3$ or $f_4$ is uniform}, one can apply similar arguments with the ones in Subsection \ref{20230305subsec01} in order to obtain the analogue of \eqref{unif1ksmall} with $\mu k$ there replaced by $\mu \left(\frac{m}{2}-k \right)$.

\medskip

We move now to the case when \emph{both $f_3$ and $f_4$ are sparse, and at least one of the functions $f_1$ or $f_2$ is uniform}. Again, as in the case $0 \le k \le \frac{m}{4}$, we can pick---up to a $2^{\mu \left(\frac{m}{2}-k \right)}$-factor---a unique measurable function $L_3: \left[ 2^{2k}, 2^{2k+1} \right] \to \left[2^k, 2^{k+1} \right]$, such that, for $\widetilde{p} \sim 2^{2k}$,
$$
2^{-\mu \left(\frac{m}{2}-k \right)}\int_{3 I_{\widetilde{p}}^{0, 3k}} |f_j|^2\lesssim \int_{3 I_{\widetilde{p}}^{0, 3k}} \left| f_j^{3 \Omega_j^{L_3 \left(\widetilde{p} \right)}} \right|^2  \lesssim \int_{3 I_{\widetilde{p}}^{0, 3k}} |f_j|^2, \quad j\in\{3, 4\}\,.
$$
Note that in the regime $\frac{m}{4} \le k <\frac{m}{2}$, if $\Omega_3^{L_3} \subseteq \omega_3^{\ell_3}$, then  $\ell_3=\left\lfloor \frac{L_3}{2^{2k-\frac{m}{2}}} \right\rfloor+ O(1)$. Consequently, it makes sense to define the map $\ell_3:\,[2^{2k}, 2^{2k+1}] \to [2^{\frac{m}{2}-k}, 2^{\frac{m}{2}-k+1}]$ by
$\ell_3 \left(\widetilde{p} \right):= \left\lfloor \frac{L_3(\widetilde{p})}{2^{2k-\frac{m}{2}}} \right\rfloor$.

Arguing as in \eqref{20230225eq03}--\eqref{20230225eq04} and making use of \eqref{20230205eq13}, we deduce that
\begin{equation} \label{20230305eq20}
\Lambda_m^{k, \bbU_\mu} \lesssim 2^{2k} \cdot 2^{\mu \left(\frac{m}{2}-k \right)} \sum_{\widetilde{p} \sim 2^{2k}} \left\|f_3^{3 \Omega_3^{L_3(\widetilde{p})}} \right\|_{L^2 \left(3 I_{\widetilde{p}}^{0, 3k} \right)} \left\|f_4^{3 \Omega_4^{L_3(\widetilde{p})}} \right\|_{L^2 \left(I_{\widetilde{p}}^{0, 3k} \right)} \cdot \calR_{\widetilde{p}},
\end{equation}
where
$$
\calR_{\widetilde{p}}^2:=\sum_{q \sim 2^{\frac{m}{2}}} \sum_{\left(\ell_1, \ell_2, \ell_3 \left(\widetilde{p} \right) \right) \in \TFC_q} \int_{I_{\widetilde{p}}^{0, 3k}} \int_{I_q^{m, k}} \left| \underline{f}_1^{\omega_1^{\ell_1}}(x-t) \underline{f}_2^{\omega_2^{\ell_2}}(x+t^2) \right|^2 dtdx.
$$
Similar to the case $0 \le k \le \frac{m}{4}$, in order to estimate $\calR_{\widetilde{p}}$ we first split the regime of $q$ as follows $\left\{q \sim 2^{\frac{m}{2}} \right\}=\bigcup_{\widetilde{q} \sim 2^k} \left[ \widetilde{q} 2^{\frac{m}{2}-k}, \left( \widetilde{q}+1 \right) 2^{\frac{m}{2}-k}\right]\,,$
and observe that if $q \in \left[ \widetilde{q} 2^{\frac{m}{2}-k}, \left( \widetilde{q}+1 \right) 2^{\frac{m}{2}-k}\right]$ and $\left(\ell_1, \ell_2, \ell_3( \widetilde{p}) \right) \in \TFC_q$, then $\left(\ell_1, \ell_2, \ell_3 (\widetilde{p}) \right) \in \TFC_{2^{\frac{m}{2}-k} \widetilde{q}}$.  Indeed, this is an immediate consequence of
$$
\left| \left(  \ell_1-\frac{2q}{2^{\frac{m}{2}}} \ell_2-\frac{3q^2}{2^m} \ell_3 (\widetilde{p}) \right)- \left( \ell_1-\frac{2  \widetilde{q} \cdot 2^{\frac{m}{2}-k}}{2^{\frac{m}{2}}} \ell_2-\frac{3 \widetilde{q}^2  \cdot 2^{m-2k} }{2^m} \ell_3 (\widetilde{p}) \right)  \right| \lesssim 2^{\frac{m}{2}-2k} \le 1.
$$

Hence,  via Parseval and a similar argument with the one in \eqref{20230225eq38}, one has
\begin{eqnarray} \label{20230306eq02}
\calR_{\widetilde{p}}^2&\lesssim &\sum_{\widetilde{q} \sim 2^k} \sum_{\substack{ q \in \left[ \widetilde{q}2^{\frac{m}{2}-k}, \left(\widetilde{q}+1 \right)2^{\frac{m}{2}-k} \right] \\ \left(\ell_1, \ell_2, \ell_3 \left(\widetilde{p} \right) \right) \in \TFC_{2^{\frac{m}{2}-k}\widetilde{q}}}} \int_{I_{\widetilde{p}}^{0, 3k}} \int_{I_q^{m, k}} \left| \underline{f}_1^{\omega_1^{\ell_1}}(x-t) \underline{f}_2^{\omega_2^{\ell_2}}(x+t^2) \right|^2 dtdx  \nonumber \\
&\lesssim& \sum_{\substack{\widetilde{q} \sim 2^k \\ \ell_2 \sim 2^{\frac{m}{2}-k}}} \int_{I_{\widetilde{p}}^{0, 3k}} \int_{I_{\widetilde{q}}^{0, 2k}}\left|\underline{f}_1^{\omega_1^{\frac{2 \widetilde{q}}{2^k} \ell_2+\frac{3\widetilde{q}^2}{2^{2k}} \ell_3(\widetilde{p})}}(x-t)  \underline{f}_2^{\omega_2^{\ell_2}}(x+t^2) \right|^2 dtdx \nonumber  \\
&\lesssim& \sum_{\substack{\widetilde{q} \sim 2^k \\ \ell_2 \sim 2^{\frac{m}{2}-k}}}  \left(\int_{3 I^{0, 2k}_{\frac{\widetilde{p}}{2^k}-\widetilde{q}}} \left| f_1^{\omega_1^{\frac{2 \widetilde{q}}{2^k} \ell_2+\frac{3\widetilde{q}^2}{2^{2k}} \ell_3(\widetilde{p})}}  \right|^2 \right) \left(\int_{3 I_{\widetilde{p}+\frac{\widetilde{q}^2}{2^k}}^{0, 3k}} \left|f_2^{\omega_2^{\ell_2}} \right|^2 \right).
\end{eqnarray}
Recall that from our assumption, at least one of the functions $f_1$, $f_2$ is uniform. Without loss of generality, we may assume $f_1=f_1^{\bbU_\mu}$. Therefore, we have
\begin{equation*}
\calR_{\widetilde{p}}^2  \lesssim 2^{-3\mu \left(\frac{m}{2}-k \right)} \cdot  \sum_{\widetilde{q} \sim 2^k}   \left(\int_{3 I^{0, 2k}_{\frac{\widetilde{p}}{2^k}-\widetilde{q}}} \left| f_1 \right|^2 \right) \left(\int_{3 I_{\widetilde{p}+\frac{\widetilde{q}^2}{2^k}}^{0, 3k}} \left|f_2 \right|^2 \right).
\end{equation*}
Inserting the above estimate back to \eqref{20230305eq20}, we see that the resulting estimate is precisely \eqref{20230305eq40} with $\mu k$ replaced by $\mu \left(\frac{m}{2}-k \right)$. The rest of the proof, therefore, follows now the same treatment as in \eqref{Uniform-II-Small}.

\subsection{Treatment of the sparse component $\Lambda_m^{k, \bbS_\mu}$}\label{SparseII}

In this case, since all the functions $f_1, f_2, f_3, f_4$ are sparse, we have
\begin{enumerate}
    \item [$\bullet$] up to a $2^{\mu \left(\frac{m}{2}-k \right)}$ factor, there is a unique measurable $L_3: \left[2^{2k}, 2^{2k+1} \right] \to \left[2^k, 2^{k+1} \right]$ such that for $j\in\{3, 4\}$ and $\widetilde{p} \sim 2^{2k}$
   \begin{equation} \label{ctl34}
   2^{-\mu \left(\frac{m}{2}-k \right)}\,\int_{3 I_{\widetilde{p}}^{0, 3k}} |f_j|^2\lesssim \int_{3 I_{\widetilde{p}}^{0, 3k}} \left| f_j^{3 \Omega_j^{L_3 \left(\widetilde{p} \right)}} \right|^2 \lesssim \int_{3 I_{\widetilde{p}}^{0, 3k}} |f_j|^2\,.
   \end{equation}
    Moreover, as in the previous case, we denote $\ell_3 \left(\widetilde{p} \right):= \left\lfloor \frac{L_3(\widetilde{p})}{2^{2k-\frac{m}{2}}} \right\rfloor$ for $\widetilde{p} \sim 2^{2k}$.

    \item [$\bullet$] up to a $2^{3\mu \left(\frac{m}{2}-k\right)}$ factor, there exists unique measurable functions $\ell_j: \left[2^{jk}, 2^{jk+1} \right] \to \left[2^{\frac{m}{2}-k}, 2^{\frac{m}{2}-k+1} \right]$, $j\in\{1, 2\}$, such that for each $q_j \sim 2^{jk}$
    \begin{equation} \label{20230305eq60}
   2^{-3\mu \left(\frac{m}{2}-k\right)}\,\int_{3I_{q_j}^{0, (j+1)k}} \left|f_j \right|^2\lesssim \int_{3I_{q_j}^{0, (j+1)k}} \left|f_j^{\omega_j^{{\ell_j(q_j)}}} \right|^2 \lesssim  \int_{3I_{q_j}^{0, (j+1)k}} \left|f_j \right|^2\,.
    \end{equation}
\end{enumerate}
Using these together with \eqref{20230224main01} and Cauchy--Schwarz, we deduce
\begin{equation} \label{20230305eq61}
\Lambda^{k, \bbS_\mu}_m  \lesssim  2^k \cdot 2^{7\mu  \left(\frac{m}{2}-k \right)} \sum_{\substack{ \widetilde{p} \sim 2^{2k} \\ \widetilde{q} \sim 2^k}} \bbA_{\widetilde{p}, \widetilde{q}}\bbB_{\widetilde{p}, \widetilde{q}},
\end{equation}
where
\begin{eqnarray*}
&& \bbA_{\widetilde{p}, \widetilde{q}}^2 :=\sum_{\substack{p \in \left[\widetilde{p}2^{\frac{m}{2}}, \left(\widetilde{p}+1 \right)2^{\frac{m}{2}} \right] \\ q \in \left[ \widetilde{q} 2^{\frac{m}{2}-k}, \left( \widetilde{q}+1 \right)2^{\frac{m}{2}-k} \right] \\ n_3 \sim 2^k}} \sum_{\substack{\left(\ell_1, \ell_2, \ell_3\left( \frac{p}{2^{\frac{m}{2}}} \right) \right) \in \TFC_{p, q} \\ \eqref{ctl34}\&\eqref{20230305eq60}}} \\
&& \qquad \qquad\qquad\qquad   \left| \left\langle f_3^{\omega_3^{\ell_3\left( \frac{p}{2^{\frac{m}{2}}} \right) }}, \phi_{p+\frac{q^3}{2^m}, \ell_3\left( \frac{p}{2^{\frac{m}{2}}} \right)  2^k+n_3}^{3k} \right\rangle \right|^2 \left| \left\langle f_4, \phi_{p, \ell_3\left( \frac{p}{2^{\frac{m}{2}}} \right)  2^k+n_3+\ell_1 2^{-k}+\ell_2 }^{3k} \right\rangle \right|^2,
\end{eqnarray*}
and
\begin{eqnarray*}
&& \bbB_{\widetilde{p}, \widetilde{q}}^2:=\sum_{\substack{p \in \left[\widetilde{p}2^{\frac{m}{2}}, \left(\widetilde{p}+1 \right)2^{\frac{m}{2}} \right] \\ q \in \left[ \widetilde{q} 2^{\frac{m}{2}-k}, \left( \widetilde{q}+1 \right)2^{\frac{m}{2}-k} \right] \\ n_3 \sim 2^k}} \sum_{\substack{\left(\ell_1, \ell_2, \ell_3\left( \frac{p}{2^{\frac{m}{2}}} \right) \right) \in \TFC_{p, q} \\ \eqref{ctl34}\&\eqref{20230305eq60}}} \\
&&\frac{1}{\left|I_p^{m, 3k} \right|^2} \bigg| \int_{I_p^{m, 3k}} \int_{I_q^{m, k}}  \underline{f}_1^{\omega_1^{\ell_1}}(x-t) \underline{f}_2^{\omega_2^{\ell_2}}(x+t^2) e^{i 2^{\frac{m}{2}+k}t \cdot \frac{3q^2}{2^m}n_3}\,e^{i 2^{\frac{m}{2}+k}t \cdot 2^k \left(-\ell_1 +\ell_2 \frac{2q}{2^{\frac{m}{2}}}+ \ell_3\left( \frac{p}{2^{\frac{m}{2}}} \right) \frac{3q^2}{2^m}\right)} dt dx \bigg|^2.
\end{eqnarray*}
We first estimate $\bbA_{\widetilde{p}, \widetilde{q}}$ by following the arguments in \eqref{20230305eq04} and \eqref{20230205eq13}:
$$
\bbA_{\widetilde{p}, \widetilde{q}}^2  \lesssim \sum_{\substack{p \in \left[\widetilde{p}2^{\frac{m}{2}}, \left(\widetilde{p}+1 \right)2^{\frac{m}{2}} \right] \\ q \in \left[ \widetilde{q} 2^{\frac{m}{2}-k}, \left( \widetilde{q}+1 \right)2^{\frac{m}{2}-k} \right]}} \left\|f_3^{3\Omega_3^{L_3 (\widetilde{p})}} \right\|^2_{L^2 \left(3 I_{p+\frac{q^3}{2^m}}^{m, 3k} \right)} \left\|f_4^{3\Omega_4^{L_3(\widetilde{p})}} \right\|_{L^2 \left(3 I_p^{m, 3k} \right)}^2
$$
which matches the estimate \eqref{20230306eq01}, and therefore
$$
\bbA_{\widetilde{p}, \widetilde{q}}^2  \lesssim \sum_{s=0}^{2^k-1} \left\|f_3^{3\Omega_3^{L_3(\widetilde{p})}} \right\|^2_{L^2 \left(3 I_{2^k \widetilde{p}+s+\frac{\widetilde{q}^3}{2^{2k}}}^{0, 4k} \right)} \left\|f_4^{3\Omega_4^{L_3(\widetilde{p})}} \right\|_{L^2 \left(3 I_{2^k \widetilde{p}+s}^{0, 4k} \right)}^2.
$$
Next, we estimate $\bbB_{\widetilde{p}, \widetilde{q}}$. Arguing as in \eqref{20230225eq03} and \eqref{20230306eq02}, we have
$$
\bbB_{\widetilde{p}, \widetilde{q}}^2 \lesssim 2^{2k} \sum_{\substack{\ell_2 \sim 2^{\frac{m}{2}-k} \\ \eqref{ctl34}\&\eqref{20230305eq60}}} \left(\int_{3 I^{0, 2k}_{\frac{\widetilde{p}}{2^k}-\widetilde{q}}} \left| f_1^{\omega_1^{\frac{2 \widetilde{q}}{2^k} \ell_2+\frac{3\widetilde{q}^2}{2^{2k}} \ell_3(\widetilde{p})}}  \right|^2 \right) \left(\int_{3 I_{\widetilde{p}+\frac{\widetilde{q}^2}{2^k}}^{0, 3k}} \left|f_2^{\omega_2^{\ell_2}} \right|^2 \right).
$$
Combining the both estimates of $\bbA_{\widetilde{p}, \widetilde{q}}$ and $\bbB_{\widetilde{p}, \widetilde{q}}$ with \eqref{20230305eq61}, we have
\begin{eqnarray*}
\Lambda_m^{k, \bbS_\mu} &\lesssim& 2^{2k} \cdot 2^{7 \mu \left(\frac{m}{2}-k \right)} \sum_{\widetilde{p} \sim 2^{2k}, \ \widetilde{q} \sim 2^k} \left[\sum_{s=0}^{2^k-1} \left\|f_3^{3 \Omega_3^{L_3(\widetilde{p})}} \right\|^2_{L^2 \left(3 I_{2^k \widetilde{p}+s+\frac{\widetilde{q}^3}{2^{2k}}}^{0, 4k} \right)} \left\|f_4^{3\Omega_4^{L_3(\widetilde{p})}} \right\|_{L^2 \left(3 I_{2^k \widetilde{p}+s}^{0, 4k} \right)}^2 \right]^{\frac{1}{2}} \\
&\cdot& \left[ \sum_{\substack{\ell_2 \sim 2^{\frac{m}{2}-k} \\ \eqref{ctl34}\&\eqref{20230305eq60}}} \left(\int_{3 I^{0, 2k}_{\frac{\widetilde{p}}{2^k}-\widetilde{q}}} \left| f_1^{\omega_1^{\frac{2 \widetilde{q}}{2^k} \ell_2+\frac{3\widetilde{q}^2}{2^{2k}} \ell_3(\widetilde{p})}}  \right|^2 \right) \left(\int_{3 I_{\widetilde{p}+\frac{\widetilde{q}^2}{2^k}}^{0, 3k}} \left|f_2^{\omega_2^{\ell_2}} \right|^2 \right) \right]^{\frac{1}{2}}.
\end{eqnarray*}
Again, using the inherited time-frequency localization in the above estimate, we define
$$
\sTFC:=\left\{ \substack{\left( \widetilde{p}, \widetilde{q} \right)\\\widetilde{p} \sim 2^{2k}, \widetilde{q} \sim 2^k }\::\:  \frac{2\widetilde{q}}{2^k}\ell_2 \left(\widetilde{p}+\frac{\widetilde{q}^2}{2^k} \right)+\frac{3\widetilde{q}^2}{2^{2k}} \ell_3 \left( \widetilde{p} \right)=\ell_1 \left(\frac{\widetilde{p}}{2^k}-\widetilde{q} \right)+O(1) \right\}.
$$
Following the argument in \eqref{Uniform-I}, we see that it suffices to show that there exists some $\epsilon_6>0$, such that
$$
\# \sTFC \lesssim k\, 2^{3k-\epsilon_6 \left(\frac{m}{2}-k \right)}.
$$
The proof of the above estimate follows essentially the argument in Subsection \ref{20230306subsec01} with only some simple modifications/adaptations among which we mention:
\begin{enumerate}
    \item [(1)] The three generic measurable functions $a(\cdot), b(\cdot)$, and $c(\cdot)$ have now the range
    $\newline$ $\left[2^{\frac{m}{2}-k}, 2^{\frac{m}{2}-k+1}\right] \cap \Z$, rather than the previous $[2^k, 2^{k+1} ]\cap \Z$;

    \item [(2)] In the current argument one replaces $\epsilon k$ by $\epsilon \left(\frac{m}{2}-k \right)$;

     \item [(3)] Finally, from the analogue of \eqref{abu}, that in this new setting reads:
 \begin{equation} \label{abu1}
\sum_{r_1, r_2 \sim 2^k} \left\lceil 2 a_0(r_1)+\frac{3r_2}{2^k} b_0(r_1) \right\rceil \lesssim k\,2^{2k}\, 2^{-(\frac{m}{2}-k)}\,,
\end{equation}
and some straightforward reasonings one gets $\epsilon_6\approx\frac{1}{10000}$.
\end{enumerate}

This completes the proof of Theorem \ref{mainthmII} with $\left(\frac{\mu}{2}=\right)\underline{\epsilon}\approx \frac{1}{3}\cdot 10^{-5}$.

\medskip

\section{The stationary off-diagonal term $\Lambda^{\not{D},S}$: The case $j=l>m+100$ and $k \ge 0$} \label{20230321sec01}

\begin{obs}\textsf{[Hierarchy]}\label{lgcsufficient}  The approach of $\Lambda^{\not{D},S}$ is based entirely on the Rank I LGC methodology and is consistent with the fact that this term represents the stationary off-diagonal component that is ``less singular" than the main diagonal component. Even so, it is worth noticing that the treatment of  $\Lambda^{\not{D},S}$ encompasses as a particular case the bilinear Hilbert transform along (some suitable) non-flat curves providing thus yet another simpler approach of the latter topic originally treated in \cite{Li13} and \cite{Lie15}. Finally, the complete treatment of $\Lambda^{\not{D},S}$ will be covered in the next three sections: the present one, Section \ref{stationaryoff2} and Section \ref{stationaryoff3}.
\end{obs}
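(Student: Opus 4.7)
The plan is to apply the Rank I LGC methodology of \cite{Lie19,Lie15} to produce an absolutely summable wave-packet model for the restriction of $\Lambda^{\not{D},S}$ to the regime $j=l>m+100$, $k\geq 0$. By Lemma \ref{20230308lem01} and the computations of \eqref{20230307eq11}--\eqref{t12}, this regime corresponds to a \emph{non-degenerate stationary point} of $\frac{d}{dt}\varphi^k_{\xi,\eta,\tau}$ at $|t_1|\sim 1$, with the second root $|t_2|\sim 2^{\frac{j-m}{2}}\gg 1$ outside the support of $\rho$. Consequently, the first step is a standard stationary-phase analysis: inserting the partition $\rho(t)=\chi_{|t-t_1|\leq 2^{-j/2}}\rho+\chi_{|t-t_1|>2^{-j/2}}\rho$, integration by parts (as in the treatment of $\Lambda^{\not D, NS}$ in \eqref{20211101eq01}) handles the outer piece, while a localization to $|t-t_1|\leq 2^{-j/2}$ yields an amplitude gain of $2^{-j/2}$ on the main term, reducing matters to an oscillatory integral whose phase in $s:=t-t_1$ is, up to admissible errors, quadratic of size $2^{j}\,s^{2}$.

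Next, I would perform the linearization and adapted Gabor decomposition as dictated by the stationary scale. For $f_1,f_2$ the appropriate linearizing scale is $2^{-\frac{j}{2}-k}$ (spatial) with frequency wave-packets of size $2^{\frac{j}{2}+k}$; for $f_3$ the relevant scale is tied to $m\ll j$, which allows us to treat $f_3(x+t^3)$ as morally constant on the stationary window and absorb it through a $t$-shifted maximal function $M_t^{\varphi}f_3$ via Lemma \ref{sqfunct}. The localized operator then mimics a bilinear object in $(f_1,f_2)$ along the quadratic curve $s\mapsto(-s,\,2t_1 s+s^2)$, paired with $f_4$ at scale $2^{-k}$---this is precisely the mechanism alluded to in Observation \ref{lgcsufficient} that encompasses the bilinear Hilbert transform along non-flat curves.

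The curvature now enters through the algebraic relations between spatial and frequency parameters produced by the Gabor discretization: the quadratic phase in $s$ forces Gabor coefficients of $f_1,f_2$ to be coupled through a time-frequency correlation set analogous to \eqref{TFCqqI}, this time built from the relation $n_1\approx 2 t_1 n_2$ (plus higher-order terms). Exploiting this correlation, together with the prefactor $2^{-j/2}$ from stationary phase, would deliver an \emph{absolutely summable} discretized model, and the usual orthogonality/Cauchy--Schwarz arguments on the correlation set (as in \cite[\S4]{Lie19}) would give an estimate of the form
\begin{equation}\label{planbound}
\left|\Lambda^k_{j,j,m,I_n^{0,k}}(\vec f)\right|\lesssim 2^{-\epsilon\min\{j,\,j-m\}}\,\|\vec f\|_{L^{\vec p}(3I_n^{0,k})}
\end{equation}
for $\vec p\in\mathbf{H}^{+}$, which after summation in $k,j,m$ yields the desired $L^p$ bound.

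The principal obstacle I expect is quantifying the decay in $m$ rather than merely in $j$: the stationary-phase gain $2^{-j/2}$ alone is not enough to sum over the full range $0\le m\le j-100$ of the diagonal $j=l$, so one must extract an additional polynomial decay from the interplay between the frequency scale $m$ of $f_3$ and the correlation set. Concretely, the difficulty lies in showing that when $m$ is far below $j$, the cancellation between the weakly oscillating $f_3$ factor and the Gabor coefficients of $(f_1,f_2)$ inside the correlation set produces a factor of the form $2^{-\epsilon(j-m)}$. This is the analogue of the ``curvature gain" already exploited in Section \ref{MaindiagKpositive}, but here it must be realized at the level of the stationary point rather than uniformly in $t$---and it is precisely this step that requires the extra sections \ref{stationaryoff2} and \ref{stationaryoff3} announced at the end of Observation \ref{lgcsufficient}.
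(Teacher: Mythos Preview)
Your outline departs from the paper's route in a structural way. You propose a preliminary stationary-phase localization $\rho=\chi_{|t-t_1|\le 2^{-j/2}}\rho+\chi_{|t-t_1|>2^{-j/2}}\rho$ on the multiplier side, then a Gabor discretization at the single linearizing scale $2^{-j/2-k}$. The paper never does this. It works directly on the spatial side and instead splits into five regimes (I.1--I.3, II.1--II.2) according to the relative positions of $k$, $j-m$ and $(j-m)/2$. The reason is that the location of $\supp\widehat{f_4}$ changes with these parameters (sometimes $\sim 2^{j+2k}$, sometimes $\sim 2^{m+3k}$), which forces different choices of $(x,t)$-partition scale and different roles for $f_3$: in I.1 $f_3$ is genuinely frozen as you suggest, but in I.2 and II.1 it must be Gabor-decomposed alongside $f_1,f_2$, and in I.3, II.2 one instead decouples $(f_1,f_2)$ from $(f_3,f_4)$. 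A single stationary-phase window near $t_1$ does not detect these regime changes, and since $t_1=t_1(\xi,\eta,\tau)$ is frequency-dependent your cutoff does not translate cleanly into a spatial-side operator.

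Two concrete gaps follow. First, your target estimate \eqref{planbound} has the wrong shape: the paper's decay exponents are $2^{-\epsilon(\frac{j}{2}-k)}$, $2^{-\epsilon(j-m-k)}$, $2^{-\epsilon j}$, $2^{-\epsilon(j-m)}$ depending on the regime---the dependence on $k$ is essential and cannot be captured by $\min\{j,j-m\}$ alone. Second, the mechanism that produces these exponents is not ``usual orthogonality/Cauchy--Schwarz on the correlation set'' but a \emph{sparse--uniform dichotomy} on the spatial fibers: for uniform inputs the decay is immediate, while for sparse inputs one must bound the cardinality of a time-frequency correlation set $\cTFC$ (built from measurable ``selector'' functions $L_i(\cdot)$) via a level-set argument. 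This is the analogue of Section \ref{20230306subsec01} and is the engine of every subcase in Sections \ref{20230321sec01}--\ref{stationaryoff3}; your sketch omits it entirely, and without it the absolutely summable model alone does not yield the claimed exponential decay.
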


We now start by noticing that in the case addressed in this section, we have
\begin{align*}
 & \supp \widehat{f_1} \subseteq \left\{|\xi| \sim 2^{j+k} \right\},  \quad  \supp \widehat{f_2} \subseteq \left\{| \eta| \sim 2^{j+2k} \right\}, \\
& \supp \widehat{f_3} \subseteq \left\{|\tau| \sim 2^{m+3k}  \right\}\  \textrm{and} \quad \supp \widehat{f_4} \subseteq \left\{|\eta| \sim 2^{j+2k} \right\}+ \left\{|\tau| \sim 2^{m+3k} \right\}.
\end{align*}
In what follows, We will split our analysis into the following cases\footnote{The itemization that follows assumes $\xi\in \supp \widehat{f_1}$, $\eta\in \supp \widehat{f_2}$ and $\tau\in \supp \widehat{f_3}$.}:
\begin{enumerate}
    \item [I.] $0 \le m \le \frac{j}{2}$ with the subdivisions
    \begin{enumerate}
        \item [I.1] $0 \le k \le \frac{j-m}{2}$: this corresponds to the case when $|\tau| \lesssim |\xi|$;
        \item [I.2] $\frac{j-m}{2} <k < j-m$: this corresponds to the case when $|\xi| \lesssim |\tau| \lesssim |\eta|$;
        \item [I.3] $k \ge j-m$: this corresponds to the case when $|\eta| \lesssim |\tau|$.
    \end{enumerate}
    \item [II.] $\frac{j}{2} \le m \le j-100$ with the subdivisions
     \begin{enumerate}
        \item [II.1] $0 \le k < j-m$: this corresponds to the case when $|\xi|,\,|\tau| \lesssim |\eta|$;
        \item [II.2] $k \ge j-m$: this corresponds to the case when $|\eta| \lesssim |\tau|$.
    \end{enumerate}
\end{enumerate}

\subsection{Treatment of Case I.1:  $0 \le m \le \frac{j}{2}$ and $0 \le k \le \frac{j-m}{2}$.} \label{20230321subsec01}

 Recall that the main term we wish to understand is given by
\begin{equation}
\Lambda_{j, j, m}^k(\vec{f}):=\Lambda^k_{j, m}(\vec{f})\approx2^k \sum_{\substack{p \sim 2^{\frac{j}{2}+2k} \\ q \sim 2^{\frac{j}{2}}}} \int_{I_p^{j, 3k}} \int_{I_q^{j, k}} f_{1,j+k}(x-t)f_{2,j+2k}(x+t^2)f_{3,m+3k}(x+t^3)f_4(x)dtdx.
\end{equation}
Now, for $x \in I_p^{j, 3k}$ and $t \in I_q^{j, k}$, we have
\begin{equation}
 f_{3,m+3k}(x+t^3)=\int_{\R} \widehat{f_3}(\tau) \phi \left(\frac{\tau}{2^{m+3k}} \right) e^{i\tau(x_p+t_q^3)}  \cdot e^{i\tau(x-x_p)} e^{i\tau(t^3-t_q^3)} d\tau,
\end{equation}
where\footnote{Here for any interval $I$, $c(I)$ refers to the center of such an interval.} $x_p=c \left(I_p^{j, 3k} \right)$ and $t_q=c \left(I_q^{j, k} \right)$. We now notice that
$$
\left| \tau \left(x-x_p \right) \right| \lesssim  2^{m+3k}\cdot\left|I_p^{j, 3k} \right|=2^{m-\frac{j}{2}}=O(1),
$$
and
$$
\left| \tau \left(t^3-t^3_q \right) \right| \lesssim  2^{m+3k}\cdot\left| I_q^{j, k} \right|  \cdot 2^{-2k} =2^{m-\frac{j}{2}}=O(1).
$$
Therefore, we conclude that $f_3(x+t^3) \approx f_3(x_p+t_q^3)$ for any $x \in I_p^{j, 3k}, t \in I_q^{j, k}$ and consequently, one expects\footnote{For notational convenience we now drop the dependence on $j,m,k$ in the formulas involving $f_i$'s.}
\begin{equation} \label{20230310eq10}
\Lambda_{j, m}^k(\vec{f}) \approx 2^k \sum_{\substack{p \sim 2^{\frac{j}{2}+2k} \\ q \sim 2^{\frac{j}{2}}}} \left(\frac{1}{\left|I_0^{j, 3k} \right|} \int_{I_{p+\frac{q^3}{2^j}}^{j, 3k}} f_3 \right) \int_{I_p^{j, 3k}} \int_{I_q^{j, k}} f_1(x-t)f_2(x+t^2)f_4(x) dtdx.
\end{equation}
Once at this point, as already alluded, we apply the Rank I LGC methodology, as follows:

Thus, at \textsf{Step 1}, we isolate the appropriate physical scale that has the \emph{linearizating} effect on the $t-$parameter:
\begin{itemize}
\item for $f_1(x-t)$ we use a spatial discretization in intervals of length $2^{-\frac{j}{2}-k}$;
\item for $f_2(x+t^2)$ we use a spatial discretization in intervals of length $2^{-\frac{j}{2}-2k}$.
\end{itemize}

Next, at \textsf{Step 2}, we apply an \emph{adapted Gabor decomposition of the input functions} $f_1$ and $f_2$:
\begin{equation} \label{20230313eq01}
f_r=\sum_{{p_r \sim 2^{\frac{j}{2}+(r-1)k}}\atop{ n_r \sim 2^{\frac{j}{2}}}} \left\langle f_r, \phi_{p_r, n_r}^{j, rk} \right\rangle \phi_{p_r, n_r}^{j, rk}, \qquad r\in\{1, 2\},
\end{equation}
where we recall that
$$
\phi_{p_r, n_r}^{j, rk}(x):=\left|I_0^{j, rk} \right|^{-\frac{1}{2}} \phi \left(2^{\frac{j}{2}+rk}x-p_r \right) e^{i2^{\frac{j}{2}+rk}xn_r}, \quad r\in\{1, 2\}.
$$
Therefore, we have
\begin{eqnarray*}
&& \int_{I_p^{j, 3k}} \int_{I_q^{j, k}} f_1(x-t)f_2(x+t^2)f_4(x) dtdx  \\
=&&\sum_{\substack{p_1 \sim 2^{\frac{j}{2}} \\ n_1 \sim 2^{\frac{j}{2}}}} \sum_{\substack{p_2 \sim 2^{\frac{j}{2}+k} \\ n_2 \sim 2^{\frac{j}{2}}}} \left\langle f_1, \phi_{p_1, n_1}^{j, k} \right\rangle
\left\langle f_2, \phi_{p_2, n_2}^{j, 2k} \right\rangle\int_{I_p^{j, 3k}}\int_{I_q^{j, k}} \phi_{p_1, n_1}^{j, k}(x-t) \phi_{p_2, n_2}^{j, 2k}(x+t^2) f_4(x)dtdx.
\end{eqnarray*}
Finally, at \textsf{Step 3}, we extract the \emph{time-frequency correlations}:

Using the definition of the terms $\phi_{p_1, n_1}^{j, k}(x-t)$  and $\phi_{p_2, n_2}^{j, 2k}(x+t^2)$, together with the facts that $x \in I_p^{j, 3k}$ and $t \in I_q^{j, k}$, we deduce the first spatial correlations:
\begin{equation} \label{TFCaseI.a-I}
p_1=\frac{p}{2^{2k}}-q+O(1), \quad \textrm{and} \quad
p_2=\frac{p}{2^k}+\frac{q^2}{2^{\frac{j}{2}}}+O(1).
\end{equation}
Next, making use  of the linearizing effect of the scale chosen in \textsf{Step 1} that reflects into the condition $t \in I_q^{j, k}$, we have that $t^2=\frac{2tq}{2^{\frac{j}{2}+k}}-\frac{q^2}{2^{j+2k}}+O\left(\frac{1}{2^{j+2k}} \right)$ and hence, via \eqref{TFCaseI.a-I}, we deduce
\begin{eqnarray} \label{20230310eq1011}
&\left|\int_{I_p^{j, 3k}}\int_{I_q^{j, k}} \phi_{p_1, n_1}^{j, k}(x-t) \phi_{p_2, n_2}^{j, 2k}(x+t^2) f_4(x)dtdx \right| \approx \nonumber \\
& 2^{\frac{j}{2}+\frac{3k}{2}} \left| \int_{I_p^{j, 3k}} \left( \int_{I_q^{j, k}} e^{i2^{\frac{j}{2}+k}t \left(-n_1+\frac{2q}{2^{\frac{j}{2}}} n_2\right)} dt \right)
e^{i2^{\frac{j}{2}+2k} x \left(\frac{n_1}{2^k}+n_2 \right)} f_4(x) dx \right| \lesssim 2^{-k-\frac{j}{4}} \left| \left \langle f_4, \phi_{p, \frac{n_1}{2^{2k}}+\frac{n_2}{2^k}}^{j, 3k} \right \rangle \right|
\end{eqnarray}
together with the \emph{time-frequency correlation} condition
\begin{equation} \label{TFCaseI.a-II}
n_1=\frac{2q}{2^{\frac{j}{2}}} n_2+O(1).
\end{equation}
Therefore, from \eqref{20230310eq10} and \eqref{20230310eq1011}, one concludes the discretized model
\begin{equation} \label{20230311eq01}
\left|\Lambda_{j, m}^k(\vec{f})\right|\lesssim 2^{-\frac{j}{4}} \sum_{\substack{p \sim 2^{\frac{j}{2}+2k} \\ q, n_2 \sim 2^{\frac{j}{2}}}} \left(\frac{1}{\left|I_0^{j, 3k} \right|} \int_{I_{p+\frac{q^3}{2^j}}^{j, 3k}} f_3 \right) \left| \left \langle f_1, \phi_{\frac{p}{2^{2k}}-q, \frac{2q}{2^{\frac{j}{2}}} n_2}^{j, k} \right \rangle \right|\,\left| \left \langle f_2, \phi_{\frac{p}{2^k}+\frac{q^2}{2^{\frac{j}{2}}}, n_2}^{j, 2k} \right \rangle \right| \left| \left \langle f_4, \phi_{p, \frac{n_2}{2^k}\left(1+\frac{2q}{2^{\frac{j}{2}}} \cdot \frac{1}{2^k} \right)}^{j, 3k} \right \rangle \right|.
\end{equation}
We next decompose both spatial and frequency indices as follows. For spatial indices, we write
$$\left\{p \sim 2^{\frac{j}{2}+2k} \right\}=\bigcup_{P \sim 2^{\frac{j}{2}+k}} J_{P}^k, \quad \textrm{where} \quad J_P^k:=[P2^k, (P+1)2^k]\:\:\:\:\textrm{and},$$
$$\left\{q \sim 2^{\frac{j}{2}} \right\}=\bigcup_{Q \sim 2^{\frac{j}{2}-k}} J_Q^k, \quad \textrm{where} \quad J_Q^k:=[Q2^k, (Q+1)2^k].$$
For frequency indices, we have
$$\supp \widehat{f_1} \subset \left\{|\xi| \sim 2^{j+k} \right\}=\bigcup_{L_1 \sim 2^{\frac{j}{2}-k}} \Omega_1^{L_1}, \quad \textrm{where} \quad \Omega_1^{L_1}:=\left[L_1 2^{\frac{j}{2}+2k},  \left(L_1+1 \right)2^{\frac{j}{2}+2k} \right],$$
$$\supp \widehat{f_2} \subset \left\{|\eta| \sim 2^{j+2k} \right\}=\bigcup_{L_2 \sim 2^{\frac{j}{2}-k}} \Omega_2^{L_2}, \quad \textrm{where} \quad \Omega_2^{L_2}:=\left[L_2 2^{\frac{j}{2}+3k},  \left(L_2+1 \right)2^{\frac{j}{2}+3k} \right]\quad\textrm{and}$$
$$\supp \widehat{f_4} \subset \left\{|\eta| \sim 2^{j+2k} \right\}=\bigcup_{L_4 \sim 2^{\frac{j}{2}-k}} \Omega_4^{L_4}, \quad \textrm{where} \quad \Omega_4^{L_4}:=\left[L_4 2^{\frac{j}{2}+3k},  \left(L_4+1 \right)2^{\frac{j}{2}+3k} \right].$$
Note that for $n_2 2^{\frac{j}{2}+2k} \in \Omega_2^{L_2}$ we have $n_2 \in \widetilde{\Omega_2^{L_2}}:=\left[L_2 2^k, (L_2+1)2^k \right]$. Substituting all these decomposition back to \eqref{20230311eq01}, we see that
\begin{equation}
\left|\Lambda_{j, m}^k(\vec{f})\right|  \lesssim \sum_{\substack{P \sim 2^{\frac{j}{2}+k} \\ Q, L_2 \sim 2^{\frac{j}{2}-k}}} \sum_{\substack{p \in J_P^k \\ q \in J_Q^k \\ n_2 \in \widetilde{\Omega_2^{L_2}}}} \left(\frac{2^{-\frac{j}{4}}}{\left|I_0^{j, 3k} \right|} \int_{I_{p+\frac{q^3}{2^j}}^{j, 3k}} f_3 \right) \left| \left \langle f_4, \phi^{j, 3k}_{p, L_2 \left(1+\frac{2Q}{2^{\frac{j}{2}}} \right)} \right \rangle \right|\left| \left \langle f_1, \phi_{\frac{p}{2^{2k}}-q, \frac{2q}{2^{\frac{j}{2}}} n_2}^{j, k} \right \rangle \right| \left| \left \langle f_2, \phi_{\frac{p}{2^k}+\frac{q^2}{2^{\frac{j}{2}}}, n_2}^{j, 2k} \right \rangle \right|.
\end{equation}
Applying Cauchy-Schwarz first in $n_2$ and then in $p, q$, we further have
\begin{eqnarray*}
&&\left|\Lambda_{j, m}^k(\vec{f})\right|  \lesssim 2^{-\frac{j}{4}} \sum_{\substack{P \sim 2^{\frac{j}{2}+k} \\ Q, L_2 \sim 2^{\frac{j}{2}-k}}} \sum_{\substack{p \in J_P^k \\ q \in J_Q^k }} \left(\frac{1}{\left|I_0^{j, 3k} \right|} \int_{I_{p+\frac{q^3}{2^j}}^{j, 3k}} f_3 \right) \left| \left \langle f_4, \phi^{j, 3k}_{p, L_2 \left(1+\frac{2Q}{2^{\frac{j}{2}}} \right)} \right \rangle \right| \\
&& \quad \quad \quad \quad \quad \quad \quad \quad \cdot \left\|f_1^{\Omega_1^{\frac{2QL_2}{2^{\frac{j}{2}-k}}}} \right\|_{L^2 \left(I_{\frac{p}{2^{2k}}-q}^{j, k} \right)} \left\| f_2^{\Omega_2^{L_2}} \right\|_{L^2 \left( I_{\frac{p}{2^k}+\frac{q^2}{2^{\frac{j}{2}}} }^{j, 2k} \right)} \\
&& \quad \lesssim 2^{-\frac{j}{4}} \sum_{\substack{P \sim 2^{\frac{j}{2}+k} \\ Q, L_2 \sim 2^{\frac{j}{2}-k}}}  \left( \sum_{\substack{p \in J_P^k \\ q \in J_Q^k }} \left(\frac{1}{\left|I_0^{j, 3k} \right|} \int_{I_{p+\frac{q^3}{2^j}}^{j, 3k}} f_3 \right)^2 \left\|f_1^{\Omega_1^{\frac{2QL_2}{2^{\frac{j}{2}-k}}}} \right\|^2_{L^2 \left(I_{\frac{p}{2^{2k}}-q}^{j, k} \right)} \right)^{\frac{1}{2}} \\
&& \quad \quad \quad \quad \quad \quad \quad \quad\cdot \left( \sum_{\substack{p \in J_P^k \\ q \in J_Q^k }} \left\| f_4^{\Omega_4^{L_2 \left(1+\frac{2Q}{2^{\frac{j}{2}}} \right)}} \right\|^2_{L^2 \left(I_p^{j, 3k} \right)} \left\| f_2^{\Omega_2^{L_2}} \right\|^2_{L^2 \left( I_{\frac{p}{2^k}+\frac{q^2}{2^{\frac{j}{2}}} }^{j, 2k} \right)} \right)^{\frac{1}{2}}.
\end{eqnarray*}
By Parseval, we deduce that
\begin{equation} \label{20230311eq20}
\left|\Lambda_{j, m}^k(\vec{f})\right| \lesssim 2^{\frac{3k}{2}} \sum_{\substack{P \sim 2^{\frac{j}{2}+k} \\ Q, L_2 \sim 2^{\frac{j}{2}-k}}} \left\|f_3 \right\|_{L^2 \left(I^{j, 2k}_{P+\frac{Q^3}{2^{j-2k}}} \right)}  \left\|f_1^{\Omega_1^{\frac{2QL_2}{2^{\frac{j}{2}-k}}}} \right\|_{L^2 \left(I_{\frac{P}{2^{2k}}-Q}^{j, 0} \right)}\left\| f_4^{\Omega_4^{L_2 \left(1+\frac{2Q}{2^{\frac{j}{2}}} \right)}} \right\|_{L^2 \left(I_P^{j, 2k} \right)} \left\| f_2^{\Omega_2^{L_2}} \right\|_{L^2 \left( I_{\frac{P}{2^k}+\frac{Q^2}{2^{\frac{j}{2}-k}} }^{j, k}  \right)}.
\end{equation}
Define now the \emph{sparse set} of indices for {\bf Case I.1} as follows\footnote{Recall that $\underline{i}:=\min\{i, 3\}$.}: for $i\in\{1, 2, 4\}$,
$$\sfS_\mu(f_i):= \left\{ \substack{(Q_i, L_i)\\Q_i \sim 2^{\frac{j}{2}+(\underline{i}-2)k}\\ L_i \sim 2^{\frac{j}{2}-k}} : \int_{I_{Q_i}^{j, (\underline{i}-1)k}} \left| f_i^{\Omega_i^{L_i}} \right|^2 \gtrsim 2^{-\mu \left(\frac{j}{2}-k \right)} \int_{I_{Q_i}^{j, (\underline{i}-1)k}} |f_i|^2\right\}\,.$$
Also, for $i\in\{1, 2, 4\}$,  define in the usual way the \emph{uniform set} of indices $\sfU_\mu(f_i)$ as well as the sparse/uniform components $f_i^{\sfS_\mu}$ and $f_i^{\sfU_\mu}$. With these done, we let
$$\Lambda_{j, m}^k=\Lambda_{j, m}^{k, \sfS_\mu}+\Lambda_{j, m}^{k, \sfU_\mu}$$
where $\Lambda_{j, m}^{k, \sfS_\mu}:=\Lambda_{j, m}^k \left(f_1^{\sfS_\mu}, f_2^{\sfS_\mu}, f_3, f_4^{\sfS_\mu} \right)$. As expected, we divide our proof into two cases:

\subsubsection{Treatment of $\Lambda_{j, m}^{k, \sfU_\mu}$} In this case, we have at least one of $f_1, f_2, f_4$ is uniform; without loss of generality, we may assume $f_1=f_1^{\sfU_\mu}$. Then by \eqref{20230311eq20} and applying Cauchy-Schwarz first in $L_2$, and then in $P, Q$, one has
\begin{eqnarray} \label{20230312eq01}
\left|\Lambda_{j, m}^k(\vec{f})\right|  \lesssim 2^{\frac{3k}{2}} \cdot 2^{-\mu \left(\frac{j}{2}-k \right)} \sum\limits_{\substack{P \sim 2^{\frac{j}{2}+k} \\ Q \sim 2^{\frac{j}{2}-k}}} \left\|f_3 \right\|_{L^2 \left(I^{j, 2k}_{P+\frac{Q^3}{2^{j-2k}}} \right)}  \left\|f_1\right\|_{L^2 \left(I_{\frac{P}{2^{2k}}-Q}^{j, 0} \right)} \left\| f_4 \right\|_{L^2 \left(I_P^{j, 2k} \right)} \left\| f_2 \right\|_{L^2 \left( I_{\frac{P}{2^k}+\frac{Q^2}{2^{\frac{j}{2}-k}} }^{j, k}  \right)} \qquad\nonumber  \\
\lesssim  2^{\frac{3k}{2}} \cdot 2^{-\mu \left(\frac{j}{2}-k \right)}  \left( \sum\limits_{\substack{P \sim 2^{\frac{j}{2}+k} \\ Q \sim 2^{\frac{j}{2}-k}}}  \left\|f_1\right\|^2_{L^2 \left(I_{\frac{P}{2^{2k}}-Q}^{j, 0} \right)} \left\| f_4 \right\|^2_{L^2 \left(I_P^{j, 2k} \right)} \right)^{\frac{1}{2}} \left( \sum\limits_{\substack{P \sim 2^{\frac{j}{2}+k} \\ Q \sim 2^{\frac{j}{2}-k}}}  \left\| f_2 \right\|^2_{L^2 \left( I_{\frac{P}{2^k}+\frac{Q^2}{2^{\frac{j}{2}-k}} }^{j, k}  \right)} \left\|f_3 \right\|^2_{L^2 \left(I^{j, 2k}_{P+\frac{Q^3}{2^{j-2k}}} \right)} \right)^{\frac{1}{2}}.
\end{eqnarray}
For the first double summation in \eqref{20230312eq01}, via Parseval, we get
$$
\sum_{\substack{P \sim 2^{\frac{j}{2}+k} \\ Q \sim 2^{\frac{j}{2}-k}}}  \left\|f_1\right\|^2_{L^2 \left(I_{\frac{P}{2^{2k}}-Q}^{j, 0} \right)} \left\| f_4 \right\|^2_{L^2 \left(I_P^{j, 2k} \right)}  \lesssim \left\|f_1 \right\|_{L^2(3I^k)}^2 \left\|f_4 \right\|_{L^2(3I^k)}^2.
$$
while for the second double summation in \eqref{20230312eq01}, via successive Cauchy--Schwarz applications, we get
$$\sum_{P_2 \sim 2^k}\sum_{P_1=P_2 2^{\frac{j}{2}-k}}^{(P_2+1) 2^{\frac{j}{2}-k}}\sum_{Q \sim 2^{\frac{j}{2}-k}} \sum_{P=P_12^k}^{(P_1+1)2^k} \left\| f_2 \right\|^2_{L^2 \left( I_{\frac{P}{2^k}+\frac{Q^2}{2^{\frac{j}{2}-k}} }^{j, k}  \right)} \left\|f_3 \right\|^2_{L^2 \left(I^{j, 2k}_{P+\frac{Q^3}{2^{j-2k}}} \right)}\lesssim \sum_{P_2 \sim 2^k} \left\|f_2 \right\|^2_{L^2 \left(3I_{P_2}^{0, 2k} \right)} \left\|f_3 \right\|_{L^2 \left(3 I_{P_2}^{0, 2k} \right)}^2.$$

Substituting these estimates back to \eqref{20230312eq01}, we conclude that
\begin{equation}\label{20230312eq02}
\left|\Lambda_{j, m}^k(\vec{f}) \right| \lesssim 2^{\frac{3k}{2}} \cdot 2^{-\mu \left(\frac{j}{2}-k \right)} \left\|f_1 \right\|_{L^2(3I^k)} \left\|f_4\right\|_{L^2(3I^k)}\left( \sum_{P_2 \sim 2^k} \left\|f_2 \right\|^2_{L^2 \left(3I_{P_2}^{0, 2k} \right)} \left\|f_3 \right\|_{L^2 \left(3 I_{P_2}^{0, 2k} \right)}^2 \right)^{\frac{1}{2}}.
\end{equation}

\subsubsection{Treatment of $\Lambda_{j, m}^{k, \sfS_\mu}$} Recall that in this case, we have $f_i=f_i^{\sfS_\mu}$ for $i\in\{1, 2, 4\}$. Therefore, up to a factor of $2^{\mu\left(\frac{j}{2}-k \right)}$, there exists a unique measurable function $L_i: \left[2^{\frac{j}{2}+(\underline{i}-2)k}, 2^{\frac{j}{2}+(\underline{i}-2)k+1} \right]\cap\Z \rightarrow \left[2^{\frac{j}{2}-k}, 2^{\frac{j}{2}-k+1} \right]\cap \Z$, such that
$$
2^{-\mu\left(\frac{j}{2}-k \right)}\, \int_{I_{Q_i}^{j, (\underline{i}-1)k}} |f_i|^2\lesssim \int_{I_{Q_i}^{j, (\underline{i}-1)k}} \left| f_1^{\Omega_i^{L_i}} \right|^2 \lesssim \int_{I_{Q_i}^{j, (\underline{i}-1)k}} |f_i|^2.
$$
Combining this with \eqref{20230311eq20}, we see that
\begin{eqnarray} \label{20230312eq23}
\left|\Lambda^{k, \sfS_\mu}_{j, m}(\vec{f}) \right| \lesssim 2^{\frac{3k}{2}} \cdot 2^{3\mu \left(\frac{j}{2}-k \right)} \cdot \sum\limits_{(P, Q) \in \cTFC}  \left\|f_3 \right\|_{L^2 \left(I^{j, 2k}_{P+\frac{Q^3}{2^{j-2k}}} \right)}  \left\|f_1\right\|_{L^2 \left(I_{\frac{P}{2^{2k}}-Q}^{j, 0} \right)}\,\left\| f_4 \right\|_{L^2 \left(I_P^{j, 2k} \right)} \left\| f_2\right\|_{L^2 \left( I_{\frac{P}{2^k}+\frac{Q^2}{2^{\frac{j}{2}-k}} }^{j, k}  \right)} \nonumber \\
\lesssim 2^{\frac{3k}{2}} \cdot 2^{3\mu \left(\frac{j}{2}-k \right)} \cdot \left(\# \cTFC \right)^{\frac{1}{2}} \vast ( \sum\limits_{\substack{P \sim 2^{\frac{j}{2}+k} \\ Q \sim 2^{\frac{j}{2}-k}}} \left\|f_1\right\|^2_{L^2 \left(I_{\frac{P}{2^{2k}}-Q}^{j, 0} \right)} \left\| f_2\right\|^2_{L^2 \left( I_{\frac{P}{2^k}+\frac{Q^2}{2^{\frac{j}{2}-k}} }^{j, k}  \right)} \left\|f_3 \right\|^2_{L^2 \left(I^{j, 2k}_{P+\frac{Q^3}{2^{j-2k}}} \right)}\left\| f_4 \right\|^2_{L^2 \left(I_P^{j, 2k} \right)}  \vast)^{\frac{1}{2}}\nonumber
\end{eqnarray}
where
$$\cTFC:=\left\{ \substack{(P, Q)\\P \sim 2^{\frac{j}{2}+k}, \ Q \sim 2^{\frac{j}{2}-k}\\\exists\:L_2^*\in\N}\,:\, \frac{2QL_2^{*}}{2^{\frac{j}{2}-k}}=L_1 \left(\frac{P}{2^{2k}}-Q \right), \  L_2^*=L_2 \left(\frac{P}{2^k}+\frac{Q^2}{2^{\frac{j}{2}-k}} \right), \ L_2^* \cdot \left(1+\frac{2Q}{2^{{\frac{j}{2}}}} \right)=L_4(P) \right\}.$$
The desired conclusion follows now from the claim that there exists a suitable $\epsilon>0$, such that
\begin{equation} \label{20230312eq24}
\# \cTFC \lesssim 2^j \cdot 2^{-\epsilon \left(\frac{j}{2}-k \right)}.
\end{equation}
The proof of this claim follows now similar reasoning with those presented in the previous sections and thus we leave further details to the interested reader.

In summary, in this case, we proved there exists some $\epsilon>0$, such that
$$
\left|\Lambda_{j, m}^k (\vec{f}) \right| \lesssim 2^{-\epsilon \left(\frac{j}{2}-k \right)} \left\| \vec{f} \right\|_{L^{\vec{p}} \left(3I_0^{0, k}\right)}, \quad \forall \: \vec{p} \in {\bf H}^{+}.
$$

\subsection{Treatment of Case I.2:  $0 \le m \le \frac{j}{2}$ and $\frac{j-m}{2}< k < j-m$.}

In this situation, we combine two features:
\begin{itemize}
\item Rank I LGC methodology adapted to the input functions $f_{1,j+k}$ and $f_{2,j+2k}$, and
\item decomposition of the third input $f_{3,m+3k}$ into Gabor frames adapted to the spatial scale inherited from the second input $f_{2,j+2k}$.
\end{itemize}
    More precisely, at \textsf{Step 1}, we divide the $(x,t)$ domain of integration as
$$
\Lambda_{j, m}^k(\vec{f})\approx 2^k \sum_{\substack{p \sim 2^{\frac{j}{2}+k} \\ q \sim 2^{\frac{j}{2}}}} \int_{I_p^{j, 2k}} \int_{I_q^{j, k}} f_{1,j+k}(x-t)f_{2,j+2k}(x+t^2)f_{3,m+3k}(x+t^3)f_4(x) dtdx\,,
$$
obtaining thus the desired \emph{linearizating} effect on the $t-$parameter.

At \textsf{Step 2}, we apply a \emph{Gabor wave-packet decomposition} as follows: for $f_1$ and $f_2$ we proceed as in \eqref{20230313eq01} while for $f_3$ we use wave packets adapted to $|I_p^{j, 2k}|$, that is,
$$f_3=\sum_{p_3 \sim 2^{\frac{j}{2}+k}, \ n_3 \sim 2^{m-\frac{j}{2}+k}} \left\langle f_3, \phi_{p_3, n_3}^{j, 2k} \right\rangle \phi_{p_3, n_3}^{j, 2k}.$$
Therefore, we obtain
\begin{eqnarray} \label{20230313eq05}
\Lambda_{j, m}^k(\vec{f}) &\approx& 2^k \sum_{\substack{p \sim 2^{\frac{j}{2}+k} \\ q \sim 2^{\frac{j}{2}}}} \sum_{\substack{p_1 \sim 2^{\frac{j}{2}} \\ n_1 \sim 2^{\frac{j}{2}}}} \sum_{\substack{p_2 \sim 2^{\frac{j}{2}+k} \\ n_2 \sim 2^{\frac{j}{2}}}} \sum_{\substack{p_3 \sim 2^{\frac{j}{2}+k} \\ n_3 \sim 2^{m-\frac{j}{2}+k}}}\left\langle f_1, \phi_{p_1, n_1}^{j, k} \right\rangle
\left\langle f_2, \phi_{p_2, n_2}^{j, 2k} \right\rangle \\
&\cdot&
\left\langle f_3, \phi_{p_3, n_3}^{j, 2k} \right\rangle \int_{I_p^{j, 2k}} \int_{I_q^{j, k}} \phi_{p_1, n_1}^{j, k}(x-t) \phi_{p_2, n_2}^{j, 2k}(x+t^2) \phi_{p_3, n_3}^{j, 2k}(x+t^3) f_4(x)dtdx. \nonumber
\end{eqnarray}
At \textsf{Step 3}, we derive the \emph{time-frequency correlations} as follows:
\begin{itemize}
\item Arguing as before and using the double integral in the above expression together with the definitions of $\phi_{p_1, n_1}^{j, k}$, $\phi_{p_2, n_2}^{j, 2k}$ and $\phi_{p_3, n_3}^{j, 2k}$, we deduce the {\bf Case I.2}--spatial correlations:
\begin{equation} \label{20230313eq02}
p_1=\frac{p}{2^k}-q+O(1), \quad  p_2=p+\frac{q^2}{2^{\frac{j}{2}}}+O(1), \quad \textrm{and} \quad p_3=p+\frac{q^3}{2^{j+k}}+O(1).
\end{equation}
\item Next, exploiting the localization $t \in I_q^{j, k}$, one has
$t^2=\frac{2tq}{2^{\frac{j}{2}+k}}-\frac{q^2}{2^{j+2k}}+O\left(\frac{1}{2^{j+2k}} \right)$ and
$t^3=\frac{3q^2t}{2^{j+2k}}-\frac{2q^3}{2^{\frac{3j}{2}+3k}}+O \left(\frac{1}{2^{j+3k}} \right)$ that combined with \eqref{20230313eq02} further gives
\begin{eqnarray*}
&&\left| \int_{I_p^{j, 2k}} \int_{I_q^{j, k}} \phi_{p_1, n_1}^{j, k}(x-t) \phi_{p_2, n_2}^{j, 2k}(x+t^2) \phi_{p_3, n_3}^{j, 2k}(x+t^3) f_4(x)dtdx  \right| \\
&\approx&2^{\frac{3j}{4}+\frac{5k}{2}} \bigg| \int_{I_p^{j, 2k}} \left(\int_{I_q^{j, k}} e^{i2^{\frac{j}{2}+k}(-n_1t+2^kt^2 n_2+2^kt^2n_2+2^kt^3n_3)} dt \right)\,e^{i2^{\frac{j}{2}+2k}x \left(\frac{n_1}{2^k}+n_2+n_3 \right)} f_4(x) dx \bigg|\\
&\approx& 2^{\frac{3j}{4}+\frac{5k}{2}} \left| \int_{I_p^{j, 2k}} \left(\int_{I_q^{j, k}} e^{i2^{\frac{j}{2}+k}t\left(-n_1+\frac{2q}{2^{\frac{j}{2}}}n_2+\frac{3q^2}{2^{j+k}}n_3 \right)} dt \right) e^{i2^{\frac{j}{2}+2k}x \left(\frac{n_1}{2^k}+n_2+n_3 \right)} f_4(x) dx \right|.
\end{eqnarray*}
Now, since $\left|\frac{3q^2}{2^{j+k}}n_3 \right| \sim  2^{m-\frac{j}{2}}=O(1)$ we conclude from the above that
\begin{equation}\label{20230313eq04}
\left|\int_{I_p^{j, 2k}} \int_{I_q^{j, k}} \phi_{p_1, n_1}^{j, k}(x-t) \phi_{p_2, n_2}^{j, 2k}(x+t^2) \phi_{p_3, n_3}^{j, 2k}(x+t^3) f_4(x)dtdx \right|\lesssim 2^{\frac{k}{2}} \left| \left \langle f_4, \phi_{p, n_2 \left(1+\frac{2q}{2^{\frac{j}{2}+k}} \right)+n_3}^{j, 2k} \right \rangle \right|\,,
\end{equation}
together with a second time-frequency correlation for {\bf Case I.2} given by
\begin{equation} \label{20230313eq03}
n_1=\frac{2q}{2^{\frac{j}{2}}}n_2+O(1)\,.
\end{equation}
\end{itemize}

Substituting \eqref{20230313eq02}, \eqref{20230313eq03}, and \eqref{20230313eq04} back to \eqref{20230313eq05}, we obtain the discretized model
\begin{equation} \label{20230313eq10}
\left|\Lambda_{j, m}^k(\vec{f}) \right| \lesssim 2^{\frac{3k}{2}}
 \sum_{\substack{p \sim 2^{\frac{j}{2}+k} \\ q \sim 2^{\frac{j}{2}}}} \sum_{\substack{n_2 \sim 2^{\frac{j}{2}} \\ n_3 \sim 2^{m-\frac{j}{2}+k}}} \left| \left \langle f_1, \phi_{\frac{p}{2^k}-q, \frac{2qn_2}{2^{{\frac{j}{2}}}}}^{j, k} \right \rangle \right| \left | \left \langle f_2, \phi_{p+\frac{q^2}{2^{\frac{j}{2}}}, n_2}^{j, 2k} \right \rangle \right |  \left| \left \langle f_3, \phi_{p+\frac{q^3}{2^{j+k}}, n_3}^{j, 2k} \right \rangle \right| \left| \left \langle f_4, \phi_{p, n_2 \left(1+\frac{2q}{2^{\frac{j}{2}+k}} \right)+n_3}^{j, 2k} \right \rangle \right|.
\end{equation}
Motivated by the fact that $n_3 \sim 2^{m-\frac{j}{2}+k}$, we split both spatial and frequency indices as follows:
\begin{itemize}
\item For spatial indices, we have
$$\left\{p \sim 2^{\frac{j}{2}+k}  \right\}=\bigcup_{P \sim 2^{j-m}} J_P^k, \quad \textrm{where} \quad J_P^k:=\left[P2^{m-\frac{j}{2}+k}, (P+1)2^{m-\frac{j}{2}+k} \right]\qquad\textrm{and}$$
$$\left\{q \sim 2^{\frac{j}{2}}  \right\}=\bigcup_{Q \sim 2^{j-m-k}} J_Q^k, \quad \textrm{where} \quad J_Q^k:=\left[Q2^{m-\frac{j}{2}+k}, (Q+1)2^{m-\frac{j}{2}+k} \right].$$
\item For frequency indices, we have
$$\supp \widehat{f_1} \subseteq \left\{|\xi| \sim 2^{j+k} \right\}=\bigcup_{L_1 \sim 2^{j-m-k}} \Omega_1^{L_1}, \quad \textrm{where} \quad \Omega_1^{L_1}:= \left[L_1 2^{m+2k}, \left(L_1+1 \right) 2^{m+2k} \right],$$
$$\supp \widehat{f_2} \subseteq \left\{|\eta| \sim 2^{j+2k} \right\}=\bigcup_{L_2 \sim 2^{j-m-k}} \Omega_2^{L_2}, \quad \textrm{where} \quad \Omega_2^{L_2}:= \left[L_2 2^{m+3k}, \left(L_2+1 \right) 2^{m+3k} \right] \quad \textrm{and}$$
$$\supp \widehat{f_4} \subseteq \left\{|\eta| \sim 2^{j+2k} \right\}=\bigcup_{L_4 \sim 2^{j-m-k}} \Omega_4^{L_4}, \quad \textrm{where} \quad \Omega_4^{L_4}:= \left[L_4 2^{m+3k}, \left(L_4+1 \right) 2^{m+3k} \right].$$
\end{itemize}

Note that if $n_2 2^{\frac{j}{2}+2k} \in \Omega_2^{L_2}$, then we have $n_2 \in \widetilde{\Omega_2^{L_2}}:=\left[L_2 2^{m-\frac{j}{2}+k}, \left(L_2+1 \right) 2^{m-\frac{j}{2}+k} \right]$. Performing the above decomposition in \eqref{20230313eq10}, we have
\begin{eqnarray*}
&& \left| \Lambda_{j, m}^k(\vec{f})\right| \lesssim 2^{\frac{3k}{2}} \sum_{\substack{P \sim 2^{j-m} \\ Q, L_2 \sim 2^{j-m-k}}} \sum_{\substack{p \in J_P^k, \ q \in J_Q^k \\ n_2 \in \widetilde{\Omega_2^{L_2}},  \ n_3 \sim 2^{m-\frac{j}{2}+k}}} \left| \left \langle f_1, \phi_{\frac{p}{2^k}-q, \frac{2qn_2}{2^{{\frac{j}{2}}}}}^{j, k} \right \rangle \right| \left | \left \langle f_2, \phi_{p+\frac{q^2}{2^{\frac{j}{2}}}, n_2}^{j, 2k} \right \rangle \right | \\
 && \quad \quad \quad \quad \quad \cdot \left| \left \langle f_3, \phi_{p+\frac{q^3}{2^{j+k}}, n_3}^{j, 2k} \right \rangle \right| \left| \left \langle f_4, \phi_{p, n_2 \left(1+\frac{2q}{2^{\frac{j}{2}+k}} \right)+n_3}^{j, 2k} \right \rangle \right|.
\end{eqnarray*}
Applying Cauchy-Schwarz in $p, q, n_2$ and $n_3$, we have
$$
 \left| \Lambda_{j, m}^k(\vec{f})\right| \lesssim 2^{\frac{3k}{2}} \sum_{\substack{P \sim 2^{j-m} \\ Q, L_2 \sim 2^{j-m-k}}} \calA(P, Q, L_2) \:\calB(P, Q, L_2),
$$
where
$$
\calA^2(P, Q, L_2):=\sum_{\substack{p \in J_P^k, q \in J_Q^k \\ n_2 \in \widetilde{\Omega_2^{L_2}},  n_3 \sim 2^{m-\frac{j}{2}+k}}} \left| \left \langle f_1, \phi_{\frac{p}{2^k}-q, \frac{2qn_2}{2^{{\frac{j}{2}}}}}^{j, k} \right \rangle \right|^2  \left| \left \langle f_3, \phi_{p+\frac{q^3}{2^{j+k}}, n_3}^{j, 2k} \right \rangle \right|^2
$$
and
$$
\calB^2(P, Q, L_2):=\sum_{\substack{p \in J_P^k, \ q \in J_Q^k \\ n_2 \in \widetilde{\Omega_2^{L_2}},  \ n_3 \sim 2^{m-\frac{j}{2}+k}}} \left | \left \langle f_2, \phi_{p+\frac{q^2}{2^{\frac{j}{2}}}, n_2}^{j, 2k} \right \rangle \right |^2 \left| \left \langle f_4, \phi_{p, n_2 \left(1+\frac{2q}{2^{\frac{j}{2}+k}} \right)+n_3}^{j, 2k} \right \rangle \right|^2.
$$
For the term $\calA(P, Q, L_2)$, we first apply Parseval on the variables $n_2$ and $n_3$, which gives
$$
\calA^2(P, Q, L_2) \lesssim  \sum_{p \in J_P^k, q \in J_Q^k} \left\|f_1^{\Omega_1^{\frac{2QL_2}{2^{j-m-k}}}} \right\|^2_{L^2 \left(I_{\frac{p}{2^k}-q}^{j, k} \right)}  \left\|f_3 \right\|^2_{L^2 \left(I_{p+\frac{q^3}{2^{j+k}}}^{j, 2k} \right)}.
$$
Note that when $q\in J_Q^k$, the variation of the $\frac{q^3}{2^{j+k}}$ is at most $\lesssim \left| \frac{2^j \cdot 2^{m-\frac{j}{2}+k}}{2^{j+k}} \right| =2^{m-\frac{j}{2}} \le 1$ and, similarly, when $p\in J_P^k$  the variation of the term $\frac{p}{2^{m-\frac{j}{2}+2k}}$ has  magnitude $\sim \frac{1}{2^k} \le 1$; thus, applying Parseval in $q$ we get
$$\calA^2(P, Q, L_2) \lesssim  \sum_{p \in J_P^k} \left\|f_1^{\Omega_1^{\frac{2QL_2}{2^{j-m-k}}}} \right\|^2_{L^2 \left(I_{\frac{P}{2^k}-Q}^{2(j-m), 0} \right)}  \left\|f_3 \right\|^2_{L^2 \left(I_{p+\frac{Q^3}{2^{\frac{5j}{2}-3m-2k}}}^{j, 2k} \right)}.$$
Using now the almost disjointness in $p$ and the fact that $\left|\frac{Q^3}{2^{2(j-m+k)+k}} \right| \sim \frac{2^{3(j-m-k)}}{2^{2(j-m-k)+k}}=2^{j-m-2k} \le 1$, we conclude
$$
\calA(P, Q, L_2) \lesssim  \left\|f_1^{\Omega_1^{\frac{2QL_2}{2^{j-m-k}}}} \right\|_{L^2 \left(I_{\frac{P}{2^k}-Q}^{2(j-m), 0} \right)}  \left\|f_3 \right\|_{L^2 \left(I_p^{2(j-m), k} \right)}.
$$
Next, for the term $\calB(P, Q, L_2)$, by a similar argument as above, we have
$$
\calB(P, Q, L_2) \lesssim \left\|f_2^{\Omega_2^{L_2}} \right\|_{L^2 \left(I^{2(j-m), k}_{P+\frac{Q^2}{2^{j-m-k}}} \right)}\left\|f_4^{\Omega_4^{L_2}} \right\|_{L^2 \left(I_P^{2(j-m), k} \right)}.
$$
Therefore, we have
\begin{equation} \label{20230314eq01}
\left|\Lambda_{j, m}^k(\vec{f})\right| \lesssim 2^{\frac{3k}{2}} \sum_{\substack{P \sim 2^{j-m} \\ Q, L_2 \sim 2^{j-m-k}}} \left\|f_1^{\Omega_1^{\frac{2QL_2}{2^{j-m-k}}}} \right\|_{L^2 \left(I_{\frac{P}{2^k}-Q}^{2(j-m), 0} \right)}  \left\|f_3 \right\|_{L^2 \left(I_p^{2(j-m), k} \right)} \left\|f_2^{\Omega_2^{L_2}} \right\|_{L^2 \left(I^{2(j-m), k}_{P+\frac{Q^2}{2^{j-m-k}}} \right)}\left\|f_4^{\Omega_4^{L_2}} \right\|_{L^2 \left(I_P^{2(j-m), k} \right)}.
\end{equation}
Motivated by the above expression, we define the \emph{sparse set} of indices in {\bf Case I.2} as follows: for $i\in\{1, 2, 4\}$,
\begin{equation}
\sfS_\mu(f_i):= \bigg\{ (Q_i, L_i): \int_{I_{Q_i}^{2(j-m), (\Breve{i}-1)k}} \left| f_1^{\Omega_i^{L_i}} \right|^2\gtrsim 2^{-\mu \left(j-m-k \right)} \int_{I_{Q_i}^{2(j-m), (\Breve{i}-1)k}} |f_i|^2,  \ \substack{Q_i \sim 2^{j-m+(\Breve{i}-2)k} \\ L_i \sim 2^{j-m-k}} \bigg\},
\end{equation}
where $\Breve{i}:=\min\{i, 2\}$. We also define in the obvious fashion the \emph{uniform set} of indices $\sfU_\mu(f_i)$ for $i\in\{1, 2, 4\}$, as well as decompose the functions $f_1, f_2$ and $f_4$ into their sparse/uniform components--denoted by $f_i^{\sfS_\mu}$ and $f_i^{\sfU_\mu}$, $i\in\{1, 2, 4\}$, respectively. With these, we have
$$
\Lambda_{j, m}^k=:\Lambda_{j, m}^{k, \sfS_\mu}+\Lambda_{j, m}^{k, \sfU_\mu},
$$
with $\Lambda_{j, m}^{k, \sfS_\mu}(\vec{f}):=\Lambda_{j, m}^k \left(f_1^{\sfS_\mu}, f_2^{\sfS_\mu}, f_3, f_4^{\sfS_\mu} \right)$,
and, in the usual fashion, divide our proof in two cases:

\subsubsection{Treatment of $\Lambda_{j, m}^{k, \sfU_\mu}$} In this case, we have that at least one of $\{f_1, f_2, f_4\}$ is uniform, and thus, without loss of generality, we may assume $f_4=f_4^{\sfU_\mu}$. Then, by \eqref{20230314eq01} and Cauchy-Schwarz in $L_2$,  one has
\begin{equation}
\left|\Lambda_{j, m}^k(\vec{f}) \right| \lesssim  2^{\frac{3k}{2}} \cdot 2^{-\mu \left(j-m-k \right)} \sum_{\substack{P \sim 2^{j-m} \\ Q \sim 2^{j-m-k}}} \left\|f_3 \right\|_{L^2 \left(I_P^{2(j-m), k} \right)} \left\|f_4 \right\|_{L^2 \left(I_P^{2(j-m), k} \right)}  \left\|f_1 \right\|_{L^2 \left(I_{\frac{P}{2^k}-Q}^{2(j-m), 0} \right)}   \left\|f_2\right\|_{L^2 \left(I^{2(j-m), k}_{P+\frac{Q^2}{2^{j-m-k}}} \right)}.
\end{equation}
Next, by applying Cauchy-Schwarz in $Q$, we conclude
\begin{equation} \label{20230314eq30}
|\Lambda_{j, m}^k(\vec{f})| \lesssim 2^{\frac{3k}{2}} \cdot 2^{-\mu (j-m-k)} \left\|f_1 \right\|_{L^2(I^k)} \sum_{\widetilde{P} \sim 2^k} \left\|f_2 \right\|_{L^2 \left(I_{\widetilde{P}}^{0, 2k} \right)} \left\|f_3\right\|_{L^2 \left(I_{\widetilde{P}}^{0, 2k} \right)} \left\|f_4 \right\|_{L^2 \left(I_{\widetilde{P}}^{0, 2k} \right)}.
\end{equation}

\subsubsection{Treatment of $\Lambda_{j, m}^{k, \sfS_\mu}$} In this case, all the functions $\{f_1, f_2, f_4\}$ are sparse. Therefore, up to a factor $2^{\mu\left(j-m-k \right)}$, and for $i \in \{1, 2, 4\}$, there exists a unique measurable function $L_i: \left[2^{j-m+\left(\Breve{i}-2 \right)k}, 2^{j-m+\left(\Breve{i}-2 \right)k+1}\right] \to \left[2^{j-m-k}, 2^{j-m-k+1} \right]$, such that
$$
\int_{I_{Q_i}^{2(j-m), (\Breve{i}-1)k}} \left| f_1^{\Omega_i^{L_i}} \right|^2 \simeq  \int_{I_{Q_i}^{2(j-m), (\Breve{i}-1)k}} |f_i|^2.
$$
Therefore, arguing as usual, we have
\begin{eqnarray*}
\left| \Lambda_{j, m}^{k, \sfS_\mu}(\vec{f}) \right| &\lesssim& 2^{\frac{3k}{2}} \cdot 2^{3\mu(j-m-k)} \cdot \left(\# \cTFC \right)^{\frac{1}{2}} \\
&\cdot&  \vast(\sum_{\substack{P \sim 2^{j-m} \\ Q \sim 2^{j-m-k}}} \left\|f_1 \right\|^2_{L^2 \left(I_{\frac{P}{2^k}-Q}^{2(j-m), 0} \right)}  \left\|f_2\right\|^2_{L^2 \left(I^{2(j-m), k}_{P+\frac{Q^2}{2^{j-m-k}}} \right)}
 \left\|f_3 \right\|^2_{L^2 \left(I_p^{2(j-m), k} \right)}
 \left\|f_4 \right\|^2_{L^2 \left(I_P^{2(j-m), k} \right)} \vast)^{\frac{1}{2}},
\end{eqnarray*}
where
$$
\cTFC:=\left\{\substack{(P, Q)\\P \sim 2^{j-m}, Q \sim 2^{j-m-k}\\\exists L_2^{*}\in\N}\,:\,\frac{2QL_2^*}{2^{j-m-k}}=L_1 \left(\frac{P}{2^k}-Q \right), \ L_2^*=L_2 \left(P+\frac{Q^2}{2^{j-m-k}} \right), \  L_2^*=L_4(P) \right\}
$$
The proof of this case concludes with the usual statement: there exists some $\epsilon>0$, such that
\begin{equation} \label{20230314eq31}
\# \cTFC \lesssim 2^{(2-\epsilon)(j-m-k)+k}.
\end{equation}

In summary, in this case, we have there exists some $\epsilon>0$, such that
$$
\left|\Lambda_{j, m}^k (\vec{f}) \right| \lesssim 2^{-\epsilon \left(j-m-k \right)} \left\| \vec{f} \right\|_{L^{\vec{p}} \left(3I_0^{0, k}\right)}, \quad \forall\: \vec{p} \in {\bf H}^{+}.
$$

\subsection{Treatment of Case I.3:  $0 \le m \le \frac{j}{2}$ and $k \ge j-m$.}

In this case, we combine the following two features:
\begin{itemize}
\item Rank I LGC methodology adapted to the input functions $f_{1,j+k}$ and $f_{2,j+2k}$, and
\item the dominance of the Fourier localization of $f_{3,m+3k}$ over the corresponding Foruier localizations of $f_{1,j+k}$ and $f_{2,j+2k}$ that in turn implies that
\begin{equation} \label{f4support}
\supp\, \widehat{f_4}\subseteq\{ |\tau| \sim 2^{m+3k} \}\,.
\end{equation}
\end{itemize}
Concretely, based on \eqref{f4support}, Heisenberg's principle suggests the decomposition of the $x$-domain of integration into smaller intervals of length $2^{-m-3k}$ while the first item above imposes--part of \textsf{Step 1} of the LGC method--the linearizing scale for the $t$-domain at the level of $2^{-\frac{j}{2}-k}$; consequently, we write
$$\Lambda_{j, m}^k(\vec{f})\approx 2^k \sum_{\substack{p \sim 2^{m+2k} \\ q \sim 2^{\frac{j}{2}}}} \int_{I_p^{2m, 3k}} \int_{I_q^{j, k}} f_{1,j+k}(x-t)f_{2,j+2k}(x+t^2)f_{3,m+3k}(x+t^3) )f_4(x)dtdx.$$
Exploiting now the information below:
\begin{itemize}
\item $f_3$ is morally a constant on intervals of length $2^{-m-3k}$, and
\item the variation of the argument $t^3$ on any interval $I_q^{j, k}$ is $\lesssim 2^{-\frac{j}{2}-3k}<2^{-m-3k}$,
\end{itemize}
we deduce that
$$
\Lambda_{j, m}^k(\vec{f})\approx 2^k \sum_{\substack{p \sim 2^{m+2k} \\ q \sim 2^{\frac{j}{2}}}} \int_{I_p^{2m, 3k}} \left(\int_{I_q^{j, k}} f_1(x-t)f_2(x+t^2) dt  \right) f_3 \left(x+\frac{q^3}{2^{\frac{3j}{2}+3k}} \right) f_4(x) dx.
$$
Next, since
\begin{itemize}
\item $f_1$ is morally constant on intervals of length $2^{-j-k}>2^{-m-3k}$;
\item $f_2$ is morally constant on intervals of length $2^{-j-2k}>2^{-m-3k}$,
\end{itemize}
we see that the map
$$
x \longmapsto \int_{I_q^{j, k}} f_1(x-t)f_2(x+t^2) dt
$$
is essentially constant on the interval $I_p^{2m, 3k}$.

Putting all these elements together, we are able to \emph{decouple} the information carried by the pair $(f_1,f_2)$ from that carried by $(f_3,f_4)$, deriving the model
\begin{eqnarray}
\left|\Lambda_{j, m}^k(\vec{f})\right| \approx 2^k \vast |\sum_{\substack{p \sim 2^{m+2k} \\ q \sim 2^{\frac{j}{2}}}} \left(\frac{1}{ \left|I_p^{2m, 3k} \right|} \int_{I_p^{2m, 3k}} \int_{I_q^{j, k}}  f_1(x-t)f_2(x+t^2) dtdx\right)\,\left(\int_{I_p^{2m, 3k}} f_3 \left(x+\frac{q^3}{2^{\frac{3j}{2}+3k}} \right) f_4(x) dx \right)\vast|\quad\nonumber \\
\lesssim 2^{\frac{m}{2}+\frac{5k}{2}} \sum_{\substack{p \sim 2^{\frac{j}{2}+k} \\ q \sim 2^{\frac{j}{2}}}} \left( \int_{I_{p}^{j, 2k}} \left| \int_{I_q^{j, k}}  f_1(x-t)f_2(x+t^2)dt \right|^2 dx \right)^{\frac{1}{2}} \left[ \sum_{r=p2^{m-\frac{j}{2}+k}}^{(p+1)2^{m-\frac{j}{2}+k}}  \left(\int_{I_{r+\frac{q^3}{2^{\frac{3j}{2}-m}}}^{2m, 3k}} |f_3|^2 \right)  \left(\int_{I_r^{2m, 3k}} |f_4|^2 \right) \right]^{\frac{1}{2}},\nonumber
\end{eqnarray}
where for the latter inequality we made use of Cauchy--Schwarz.

With these settled, we focus on the first term and continue with \textsf{Step 2} of Rank I LGC by decomposing $f_1$ and $f_2$ in Gabor wave-packets:
$$
f_1=\sum_{\substack{p_1 \sim 2^{\frac{j}{2}} \\ n_1 \sim 2^{\frac{j}{2}}}} \left\langle f_1 , \phi_{p_1, n_1}^{j, k} \right\rangle  \phi_{p_1, n_1}^{j, k}, \quad \textrm{and} \quad
f_2=\sum_{\substack{p_2 \sim 2^{\frac{j}{2}+k} \\ n_2 \sim 2^{\frac{j}{2}}}} \left\langle f_2 , \phi_{p_2, n_2}^{j, 2k} \right\rangle  \phi_{p_2, n_2}^{j, 2k}.
$$
This gives
\begin{equation} \label{20230315eq03}
\int_{I_q^{j, k}} f_1(x-t)f_2(x+t^2)dt=\sum_{\substack{p_1 \sim 2^{\frac{j}{2}} \\ n_1 \sim 2^{\frac{j}{2}}}}\sum_{\substack{p_2 \sim 2^{\frac{j}{2}+k} \\ n_2 \sim 2^{\frac{j}{2}}}}  \left\langle f_1 , \phi_{p_1, n_1}^{j, k} \right\rangle  \left\langle f_2 , \phi_{p_2, n_2}^{j, 2k} \right\rangle \int_{I_q^{j, k}} \phi_{p_1, n_1}^{j, k}(x-t) \phi_{p_2, n_2}^{j, 2k}(x+t^2)dt.
\end{equation}

Finally, at \textsf{Step 3}, we begin by extracting the \emph{time-frequency correlations} as follows:  analyzing the $t$-integral in the above expression and using $x \in I_p^{j, 2k}$, we deduce the time correlations:
\begin{equation} \label{20230315eq01}
p_1=\frac{p}{2^k}-q+O(1) \quad \textrm{and} \quad p_2=p+\frac{q^2}{2^{\frac{j}{2}}}+O(1).
\end{equation}
Therefore, we have
\begin{equation}
\int_{I_q^{j, k}} \phi_{p_1, n_1}^{j, k}(x-t) \phi_{p_2, n_2}^{j, 2k}(x+t^2)dt\approx 2^{\frac{j}{2}+\frac{3k}{2}} \cdot e^{i 2^{\frac{j}{2}+2k} x \left(\frac{n_1}{2^k}+n_2 \right)} \cdot \int_{I_q^{j, k}} e^{i 2^{\frac{j}{2}+k}  t \left(-n_1+\frac{2qn_2}{2^{\frac{j}{2}}} \right)} dt,
\end{equation}
which further gives the time-frequency correlation in {\bf Case I.3} as:
\begin{equation} \label{20230315eq02}
n_1=\frac{2q}{2^{\frac{j}{2}}}n_2+O(1).
\end{equation}
Replacing \eqref{20230315eq01} and \eqref{20230315eq02} back to \eqref{20230315eq03}, and applying Parseval we get
\begin{equation} \label{20230315eq04}
 \int_{I_{p}^{j, 2k}} \left| \int_{I_q^{j, k}}  f_1(x-t)f_2(x+t^2)dt \right|^2 dx \lesssim 2^{-\frac{j}{2}-k}\,\sum_{n_2 \sim 2^{\frac{j}{2}}} \left| \left \langle f_1, \phi_{\frac{p}{2^k}-q, \frac{2qn_2}{2^{\frac{j}{2}}}}^{j, k} \right \rangle \right|^2 \left| \left\langle f_2, \phi_{p+\frac{q^2}{2^{\frac{j}{2}}}, n_2}^{j, 2k} \right \rangle \right|^2.
\end{equation}

Inserting now  \eqref{20230315eq04} into the previous model, we conclude
\begin{eqnarray}\label{f1}
&& |\Lambda_{j, m}^k(\vec{f})| \lesssim 2^{\frac{m}{2}-\frac{j}{4}+2k} \sum_{\substack{p \sim 2^{\frac{j}{2}+k} \\ q \sim 2^{\frac{j}{2}}}} \left( \sum_{n_2 \sim 2^{\frac{j}{2}}} \left| \left \langle f_1, \phi_{\frac{p}{2^k}-q, \frac{2qn_2}{2^{\frac{j}{2}}}}^{j, k} \right \rangle \right|^2 \left| \left\langle f_2, \phi_{p+\frac{q^2}{2^{\frac{j}{2}}}, n_2}^{j, 2k} \right \rangle \right|^2  \right)^{\frac{1}{2}} \\
 && \quad \quad \quad \quad \quad \quad
 \quad \quad  \quad \quad\cdot \left( \sum_{r=p 2^{m-\frac{j}{2}+k}}^{(p+1)2^{m-\frac{j}{2}+k}}  \left(\int_{I_{r+\frac{q^3}{2^{\frac{3j}{2}-m}}}^{2m, 3k}} |f_3|^2 \right)  \left(\int_{I_r^{2m, 3k}} |f_4|^2 \right) \right)^{\frac{1}{2}}\nonumber.
\end{eqnarray}

With the grouping of the terms in the model finalized, we pass to the final stage that focuses on the analysis of the time-frequency correlation set via the sparse-uniform dichotomy.

We thus define the \emph{sparse set} of indices relative to this improved BHT bound case as follows: for $i\in\{1, 2\}$,
\begin{equation}\label{Spars}
\sfS_\mu(f_i):=\left\{(p_i, n_i): \left| \left\langle f_i, \phi_{p_i, n_i}^{j, ik}  \right \rangle \right|^2 \gtrsim 2^{-\mu j} \int_{I_{p_i}^{j, ik}} |f_i|^2,  \quad p_i \sim 2^{\frac{j}{2}+(i-1)k}, n_i \sim 2^{\frac{j}{2}} \right\}.
\end{equation}
Define in the usual fashion the \emph{uniform set} of indices $\sfU_\mu(f_i)$ for $i\in\{1, 2\}$ and decompose the functions $f_1$ and $ f_2$ into their sparse/uniform components--denoted by $f_i^{\sfS_\mu}$ and $f_i^{\sfU_\mu}$.

Next, as expected, we decompose our discussion into two different cases:

In the uniform case, namely, if one of the functions $f_1$ or $f_2$ is uniform--assume without loss of generality that $f_1=f_1^{\sfU_\mu}$--taking \eqref{f1} we first apply Parseval in $n_2$ followed by a Cauchy-Schwarz in $p, q$ to get
\begin{eqnarray}
|\Lambda^{k, \sfU_\mu}_{j, m}(\vec{f})|&\lesssim& 2^{\frac{m}{2}-\frac{j}{4}+2k}\,2^{-\frac{\mu j}{2}}  \sum_{\substack{p \sim 2^{\frac{j}{2}+k} \\ q \sim 2^{\frac{j}{2}}}} \left\|f_1 \right\|_{L^2(I_{\frac{p}{2^k}-q}^{j, k})}  \left\|f_2 \right\|_{L^2(I_{p+\frac{q^2}{2^{\frac{j}{2}}}}^{j, 2k})}\nonumber\\
&\cdot&\left( \sum_{r=p 2^{m-\frac{j}{2}+k}}^{(p+1)2^{m-\frac{j}{2}+k}}  \left(\int_{I_{r+\frac{q^3}{2^{\frac{3j}{2}-m}}}^{2m, 3k}} |f_3|^2 \right)  \left(\int_{I_r^{2m, 3k}} |f_4|^2 \right) \right)^{\frac{1}{2}}\nonumber\\
&\lesssim& 2^{-\frac{\mu j}{2}}\,2^{2k}\,\left\|f_1 \right\|_{L^2(I^k)} \left\|f_2 \right\|_{L^2(I^k)}
\left(\sum_{s\sim2^{2k}} \left\|f_3 \right\|^2_{L^2(I^{0,3k}_s)} \left\|f_4 \right\|^2_{L^2(I^{0,3k}_s)}\right)^\frac{1}{2}\,.
\end{eqnarray}

In the sparse case, that is when both $f_1$ and $f_2$ are sparse, we use the fact that up to a factor of at most $2^{2\mu j}$ there are unique measurable functions $L_1(\cdot)$ and $L_2(\cdot)$ that achieve the maximum value for the local Fourier coefficients appearing in \eqref{Spars}. Now define in the usual manner the time-frequency correlations set
$$\cTFC:= \left\{\substack{(p_1, q) \\p_1 \sim 2^{\frac{j}{2}+k}, q \sim 2^{\frac{j}{2}}}\,:\,L_1 \left(\frac{p_1}{2^k}-q \right)=\frac{2q}{2^{\frac{j}{2}}} L_2 \left(p_1+\frac{q^2}{2^{\frac{j}{2}}} \right) \right\}.
$$
With these, recalling \eqref{f1}, we have
\begin{eqnarray*}
|\Lambda^{k, \sfS_\mu}_{j, m}(\vec{f})| &\lesssim& 2^{\frac{m}{2}-\frac{j}{4}+2k}\,2^{\mu j}\,\left(\# \cTFC \right)^{\frac{1}{2}} \\
&\cdot&\vast[ \sum_{\substack{p \sim 2^{\frac{j}{2}+k} \\ q \sim 2^{\frac{j}{2}}}} \left\|f_1 \right\|^2_{L^2(I_{\frac{p}{2^k}-q}^{j, k})}\,\left\|f_2 \right\|^2_{L^2(I_{p+\frac{q^2}{2^{\frac{j}{2}}}}^{j, 2k})}\left( \sum_{r=p 2^{m-\frac{j}{2}+k}}^{(p+1)2^{m-\frac{j}{2}+k}}  \left(\int_{I_{r+\frac{q^3}{2^{\frac{3j}{2}-m}}}^{2m, 3k}} |f_3|^2 \right)  \left(\int_{I_r^{2m, 3k}} |f_4|^2 \right) \right) \vast]^{\frac{1}{2}}\,.
\end{eqnarray*}

The desired conclusion follows from the fact that there exists some $\epsilon>0$, such that
$$\# \cTFC \lesssim 2^{j+k-\epsilon j}.$$

In summary, in this case, we have that there exists some $\epsilon>0$ such that
$$\left|\Lambda_{j, m}^k (\vec{f}) \right| \lesssim 2^{-\epsilon j} \left\| \vec{f} \right\|_{L^{\vec{p}} \left(3I_0^{0, k}\right)} \quad \forall\: \vec{p} \in {\bf H}^{+}.$$

\subsection{Treatment of Case II.1:  $\frac{j}{2} \le m \le j-100$ and $k < j-m$.}

In this case we apply Rank I LGC methodology adapted to (all) the three input functions $f_{1,j+k}$, $f_{2,j+2k}$, and $f_{3,m+3k}$. Indeed, at \textsf{Step 1} we divide the $(x,t)$ domain of integration as

$$\Lambda_{j, m}^k(\vec{f})\approx 2^k \sum_{\substack{p \sim 2^{\frac{j}{2}+2k} \\ q \sim 2^{\frac{j}{2}}}} \int_{I_p^{j, 3k}} \int_{I_q^{j, k}} f_{1,j+k}(x-t)f_{2,j+2k}(x+t^2)f_{3,m+3k}(x+t^3)f_4(x) dtdx\,,$$
obtaining the linearizing effect in the $t-$variable.

Next, at \textsf{Step 2}, we decompose the input functions in adapted Gabor wave-packets:
$$f_r \simeq \sum_{p_r \sim 2^{\frac{j}{2}+(r-1)k} , \ n_r \sim 2^{m-\frac{j}{2}}} \left \langle f_r, \phi_{p_r, n_r}^{j, rk} \right \rangle \phi_{p_r, n_r}^{j, rk}\,,\qquad r\in\{1,2,3\}\,,$$
in order to obtain
\begin{eqnarray} \label{20230315eq51}
&& \left|\Lambda_{j, m}^k(\vec{f}) \right| \simeq  2^k \sum_{\substack{p \sim 2^{\frac{j}{2}+2k} \\ q \sim 2^{\frac{j}{2}}}} \sum_{\substack{p_1 \sim 2^{\frac{j}{2}} \\ n_1 \sim 2^{\frac{j}{2}}}} \sum_{\substack{p_2 \sim 2^{\frac{j}{2}+k} \\ n_2 \sim 2^{\frac{j}{2}}}} \sum_{\substack{p_3 \sim 2^{\frac{j}{2}+2k} \\ n_3 \sim 2^{m-\frac{j}{2}}}} \left| \left \langle f_1, \phi_{p_1, n_1}^{j, k} \right \rangle \right| \left| \left \langle f_2, \phi_{p_2, n_2}^{j, 2k} \right \rangle \right| \left| \left \langle f_3, \phi_{p_3, n_3}^{j, 3k} \right \rangle \right| \\
&& \quad \quad \cdot \left| \int_{I_p^{j, 3k}} \int_{I_q^{j, k}} \phi_{p_1, n_1}^{j, k}(x-t) \phi_{p_2, n_2}^{j, 2k}(x+t^2) \phi_{p_3, n_3}^{j, 3k} (x+t^3) f_4(x)dtdx \right|. \nonumber
\end{eqnarray}
 Finally, at \textsf{Step 3}, in the first stage we identify the  time-frequency correlations: from the double integral above and  the restrictions $x \in I_p^{j, 3k}$ and $t \in I_q^{j, k}$, we get
\begin{equation} \label{20230315eq50}
p_1=\frac{p}{2^{2k}}-q+O(1), \quad  p_2=\frac{p}{2^k}+\frac{q^2}{2^{\frac{j}{2}}}+O(1), \quad \textrm{and} \quad p_3=p+\frac{q^3}{2^j}+O(1)\,.
\end{equation}
Inserting \eqref{20230315eq50} back to the double integral in \eqref{20230315eq51} and linearizing the phase function as usual, we deduce
\begin{eqnarray*}
\left|\int_{I_p^{j, 3k}} \int_{I_q^{j, k}} \phi_{p_1, n_1}^{j, k}(x-t) \phi_{p_2, n_2}^{j, 2k}(x+t^2) \phi_{p_3, n_3}^{j, 3k} (x+t^3) f_4(x)dtdx\right|\qquad\qquad\qquad\qquad\qquad\qquad\\
\approx 2^{\frac{3j}{4}+3k}\left| \int_{I_p^{j, 3k}} \left(\int_{I_q^{j, k}} e^{i2^{\frac{j}{2}+k}t \left(-n_1+\frac{2q n_2}{2^{\frac{j}{2}}}+\frac{3q^2n_3}{2^j} \right)} dt \right) e^{i2^{\frac{j}{2}+3k} x \left(n_3+\frac{n_2}{2^k}+\frac{n_1}{2^{2k}} \right)} f_4(x)dx\right|\lesssim 2^{\frac{k}{2}} \left| \left \langle f_4, \phi^{j, 3k}_{p, n_3+\frac{n_2}{2^k}+\frac{n_1}{2^{2k}}} \right \rangle \right|\,,
\end{eqnarray*}
together with the time-frequency correlation for {\bf Case II.1}:
\begin{equation} \label{20230315eq511}
n_1-\frac{2qn_2}{2^{\frac{j}{2}}}-\frac{3q^2n_3}{2^j}=0+O(1).
\end{equation}
Substituting \eqref{20230315eq50} and \eqref{20230315eq511} back to \eqref{20230315eq51}, we obtain the discretized model operator
\begin{eqnarray} \label{20230316eq80}
&& \left|\Lambda_{j, m}^k(\vec{f})\right| \lesssim 2^{\frac{3k}{2}} \sum_{\substack{p \sim 2^{\frac{j}{2}+2k} \\ q \sim 2^{\frac{j}{2}}}} \sum_{\substack{n_2 \sim 2^{\frac{j}{2}} \\ n_3 \sim 2^{m-\frac{j}{2}}}} \left| \left \langle f_1, \phi_{\frac{p}{2^{2k}}-q, \frac{2qn_2}{2^{\frac{j}{2}}}+\frac{3q^2n_3}{2^j}}^{j, k} \right \rangle \right| \left| \left \langle f_2, \phi_{\frac{p}{2^k}+\frac{q^2}{2^{\frac{j}{2}}}, n_2}^{j, 2k} \right \rangle \right| \nonumber \\
&& \quad \quad \quad \quad \quad \quad \quad \quad \quad \quad \cdot \left| \left \langle f_3, \phi_{p+\frac{q^3}{2^j}, n_3}^{j, 3k} \right \rangle \right| \left| \left \langle f_4, \phi^{j, 3k}_{p, n_3+\frac{n_2}{2^k}+\frac{1}{2^{2k}} \cdot \left(\frac{2qn_2}{2^{\frac{j}{2}}}+\frac{3q^2n_3}{2^j} \right)} \right \rangle \right|.
\end{eqnarray}
Observe that since $0 \le k \le j-m$, we have $\supp \,\widehat{f_4} \subseteq \{|\eta| \sim 2^{j+2k}\}$. Again, motivated by the restriction on the range of $n_3 \sim 2^{m-\frac{j}{2}}$, we decompose our spatial and frequency indices as follows:
\begin{itemize}
\item for spatial indices, we have
$$\left\{p \sim 2^{\frac{j}{2}+2k} \right\}=\bigcup_{P \sim 2^{j-m+2k}} J_P^k, \quad \textrm{where} \quad J_P^k:=\left[P2^{m-\frac{j}{2}}, (P+1)2^{m-\frac{j}{2}} \right]\qquad\textrm{and}$$
$$\left\{q \sim 2^{\frac{j}{2}} \right\}=\bigcup_{Q \sim 2^{j-m}} J_Q^k, \quad \textrm{where} \quad J_Q^k:=\left[Q2^{m-\frac{j}{2}}, (Q+1)2^{m-\frac{j}{2}} \right].$$
\item for frequency indices, we will need two types of decompositions:
\begin{itemize}
\item the first type is  given by
$$\supp \widehat{f_1} \subseteq \left\{|\xi| \sim 2^{j+k} \right\}=\bigcup_{L_1 \sim 2^{j-m}} \Omega_1^{L_1} \quad \textrm{where} \quad \Omega_1^{L_1}:=\left[L_12^{m+k}, (L_1+1)2^{m+k} \right]\quad\textrm{and}$$
$$\supp \widehat{f_2} \subseteq \left\{|\eta| \sim 2^{j+2k} \right\}=\bigcup_{L_2 \sim 2^{j-m}} \Omega_2^{L_2} \quad \textrm{where} \quad \Omega_2^{L_2}:=\left[L_2 2^{m+2k}, (L_2+1)2^{m+2k} \right]\,.$$
Notice here that if $n_2 2^{\frac{j}{2}+2k} \in \Omega_2^{L_2}$ then $n_2 \in \widetilde{\Omega_2^{L_2}}:=\left[L_2 2^{m-\frac{j}{2}}, (L_2+1) 2^{m-\frac{j}{2}} \right]$.
\item the second type is given by
$$\supp \widehat{f_1} \subseteq \left\{|\xi| \sim 2^{j+k} \right\}=\bigcup_{\ell_1 \sim 2^{j-m-k}} \omega_1^{\ell_1} \quad \textrm{where} \quad \omega_1^{\ell_1}:=\left[\ell_12^{m+2k}, (\ell_1+1)2^{m+2k} \right],$$
$$\supp \widehat{f_2} \subseteq \left\{|\xi| \sim 2^{j+2k} \right\}=\bigcup_{\ell_2 \sim 2^{j-m-k}} \omega_2^{\ell_2} \quad \textrm{where} \quad \omega_2^{\ell_2}:=\left[\ell_22^{m+3k}, (\ell_2+1)2^{m+3k} \right]\quad\textrm{and}$$
$$\supp \widehat{f_4} \subseteq \left\{|\eta| \sim 2^{j+2k} \right\}=\bigcup_{\ell_4 \sim 2^{j-m-k}} \omega_4^{\ell_4} \quad \textrm{where} \quad \omega_4^{\ell_4}:=\left[\ell_4 2^{m+3k}, (\ell_4+1)2^{m+3k} \right].$$
\end{itemize}
\end{itemize}
With these settled, using Cauchy-Schwarz, we write
\begin{eqnarray} \label{202303163170}
&& \left|\Lambda_{j, m}^k(\vec{f})\right| \lesssim 2^{\frac{3k}{2}}  \sum_{\substack{P \sim 2^{j-m+2k} \\ L_2, Q \sim 2^{j-m}}} \calA(P, Q , L_2)\: \calB(P, Q, L_2),
\end{eqnarray}
where
\begin{equation} \label{20230316eq90}
\calA^2(P, Q, L_2):=\sum_{\substack{p \in J_P^k \\ q \in J_Q^k}} \sum_{\substack{n_2 \in \widetilde{\Omega_2^{L_2}} \\ n_3 \sim 2^{m-\frac{j}{2}}}} \left| \left \langle f_1, \phi_{\frac{p}{2^{2k}}-q, \frac{2qn_2}{2^{\frac{j}{2}}}+\frac{3q^2n_3}{2^j}}^{j, k} \right \rangle \right|^2 \left| \left \langle f_3, \phi_{p+\frac{q^3}{2^j}, n_3}^{j, 3k} \right \rangle \right|^2,
\end{equation}
and
\begin{equation} \label{20230316eq91}
\calB^2(P, Q, L_2):=\sum_{\substack{p \in J_P^k \\ q \in J_Q^k}} \sum_{\substack{n_2 \in \widetilde{\Omega_2^{L_2}} \\ n_3 \sim 2^{m-\frac{j}{2}}}} \left| \left \langle f_2, \phi_{\frac{p}{2^k}+\frac{q^2}{2^{\frac{j}{2}}}, n_2}^{j, 2k} \right \rangle \right|^2 \left| \left \langle f_4, \phi^{j, 3k}_{p, n_3+\frac{n_2}{2^k}+\frac{1}{2^{2k}} \cdot \left(\frac{2qn_2}{2^{\frac{j}{2}}}+\frac{3q^2n_3}{2^j} \right)} \right \rangle \right|^2.
\end{equation}
By Parseval, we have
$$
\calA \left(P, Q, L_2 \right) \lesssim \left\| f_1^{\Omega_1^{\frac{2L_2Q}{2^{j-m}}}} \right\|_{L^2 \left(I_{\frac{P}{2^{2k}}-Q}^{2(j-m), k} \right)} \left\|f_3 \right\|_{L^2 \left(I_{P+\frac{Q^3}{2^{2(j-m)}}}^{2(j-m), 3k} \right)},
$$
and
$$
\calB(P, Q, L_2) \lesssim \left\| f_2^{\Omega_2^{L_2}} \right\|_{L^2 \left(I^{2(j-m), 2k}_{\frac{P}{2^k}+\frac{Q^2}{2^{j-m}}} \right)} \left\|f_4^{\omega_4^{\frac{L_2}{2^k}+\frac{2QL_2}{2^{j-m+2k}} }} \right\|_{L^2 \left(I_P^{2(j-m), 3k} \right)} ,
$$
where in the last estimate above, we have used the fact that $\left|\frac{2QL_2}{2^{\frac{3j}{2}-m+2k}} \right| \lesssim 2^{\frac{j}{2}-m}=O(1)$. Plugging the estimates of $\calA(P, Q, L_2)$ and $\calB(P, Q, L_2)$ back to \eqref{202303163170}, we deduce that
\begin{eqnarray} \label{20230316eq71}
&& \left|\Lambda_{j, m}^k(\vec{f})\right| \lesssim  2^{\frac{3k}{2}} \sum_{\substack{P \sim 2^{j-m+2k} \\ Q \sim 2^{j-m}}} \sum_{\ell_2 \sim 2^{j-m-k}} \left\|f_1^{\omega_1^\frac{2\ell_2Q}{2^{j-m}}} \right\|_{L^2 \left(I_{\frac{P}{2^{2k}}-Q}^{2(j-m), k} \right)}  \left\| f_2^{\omega_2^{\ell_2}} \right\|_{L^2 \left(I^{2(j-m), 2k}_{\frac{P}{2^k}+\frac{Q^2}{2^{j-m}}} \right)} \\
&& \quad \quad \quad \quad \quad \quad \cdot \left\|f_3 \right\|_{L^2 \left(I_{P+\frac{Q^3}{2^{2(j-m)}}}^{2(j-m), 3k} \right)}   \left\|f_4^{\omega_4^{\left(1+\frac{2Q}{2^{j-m+k}}\right)\ell_2}} \right\|_{L^2 \left(I_P^{2(j-m), 3k} \right)}. \nonumber
\end{eqnarray}
Motivated by the above expression, at the second stage of \textsf{Step 3} we define the \emph{sparse set} of indices
\begin{equation}
\sfS_{\mu}(f_i):= \bigg\{ (P_i, \ell_i): \int_{I_{P_i}^{2(j-m), \underline{i}k}} \left|f_i^{\omega_i^{\ell_i}} \right|^2 \gtrsim 2^{-\mu(j-m-k)} \int_{I_{P_i}^{2(j-m), \underline{i}k}} \left|f_i \right|^2, \ \substack{P_i \sim 2^{j-m+(\underline{i}-1)k} \\ \ell_i \sim 2^{j-m-k}} \bigg\},
\end{equation}
where $i\in\{1, 2, 4\}$ and $\underline{i}:=\min\{i, 3\}$. Define as usual the \emph{uniform set} of indices $\sfU_\mu(f_i)$ and decompose the functions $f_1, f_2$ and $f_4$ into their sparse/uniform components denoted by $f_i^{\sfS_\mu}$ and $f_i^{\sfU_\mu}$, $i=1, 2, 4$, respectively. Proceed similarly for $\Lambda_{m, j}^{k, \sfS_\mu}$ and $\Lambda_{m, j}^{k, \sfU_\mu}$.

\subsubsection{Treatment of $\Lambda_{j, m}^{k, \sfU_\mu}$} In this case, we have that at least one of the functions $f_1, f_2, f_4$ is uniform, and thus, without loss of generality, we may assume $f_4=f_4^{\sfU_\mu}$. Therefore, from \eqref{20230316eq71}, we have
\begin{equation}
|\Lambda_{j, m}^k(\vec{f})|\lesssim 2^{\frac{3k}{2}} \cdot 2^{-\mu(j-m-k)}  \sum_{\substack{P \sim 2^{j-m+2k} \\ Q \sim 2^{j-m}}} \left\|f_1  \right\|_{L^2 \left(I_{\frac{P}{2^{2k}}-Q}^{2(j-m), k} \right)} \left\| f_2 \right\|_{L^2 \left(I^{2(j-m), 2k}_{\frac{P}{2^k}+\frac{Q^2}{2^{j-m}}} \right)} \left\|f_3 \right\|_{L^2 \left(I_{P+\frac{Q^3}{2^{2(j-m)}}}^{2(j-m), 3k} \right)}   \left\|f_4 \right\|_{L^2 \left(I_P^{2(j-m), 3k} \right)}.
\end{equation}

\subsubsection{Treatment of $\Lambda_{j, m}^{k, \sfS_\mu}$} In this case, all the functions $f_1, f_2$ and $f_4$ are sparse and hence, from \eqref{20230316eq71}
\begin{eqnarray*}
&& \left|\Lambda_{j, m}^k(\vec{f}) \right| \lesssim 2^{\frac{3k}{2}} \cdot 2^{3\mu(j-m-k)} \cdot \left(\# \cTFC \right)^{\frac{1}{2}} \cdot \vast( \sum_{\substack{P \sim 2^{j-m+2k} \\ Q \sim 2^{j-m}}} \left\|f_1\right\|^2_{L^2 \left(I_{\frac{P}{2^{2k}}-Q}^{2(j-m), k} \right)}   \\
&& \quad \quad \quad \cdot \left\| f_2 \right\|^2_{L^2 \left(I^{2(j-m), 2k}_{\frac{P}{2^k}+\frac{Q^2}{2^{j-m}}} \right)}\left\|f_3 \right\|^2_{L^2 \left(I_{P+\frac{Q^3}{2^{2(j-m)}}}^{2(j-m), 3k} \right)}   \left\|f_4\right\|^2_{L^2 \left(I_P^{2(j-m), 3k} \right)} \vast)^{\frac{1}{2}},
\end{eqnarray*}
where
$$
\cTFC:= \left\{\substack{(P, Q)\\ P \sim 2^{j-m+2k}, Q \sim 2^{j-m}}\,:\,\frac{2\ell_2Q}{2^{j-m}}=L_1 \left(\frac{P}{2^{2k}}-Q \right), \ \ell_2=L_2 \left(\frac{P}{2^k}+\frac{Q^2}{2^{j-m}} \right), \ \left(1+\frac{2Q}{2^{j-m+k}}\right)\ell_2=L_4(P) \right\},
$$
and for $i\in\{1, 2, 4\}$, $L_i: \left[2^{j-m+(\underline{i}-1)k}, 2^{j-m+(\underline{i}-1)k+1} \right] \mapsto \left[2^{j-m-k}, 2^{j-m-k+1} \right]$ measurable functions.

Our desired decay bound follows from the control over the time-frequency correlation set $\cTFC $ in the usual form: there exists some $\epsilon>0$, such that
$$
\# \cTFC \lesssim 2^{(2-\epsilon)(j-m)+2k}.
$$
Again, the proof of the last statement above follows a standard approach now, and thus we omit the details here.

In summary, in this case, we have that there exists some $\epsilon>0$, such that
$$
\left|\Lambda_{j, m}^k (\vec{f}) \right| \lesssim 2^{-\epsilon \left(j-m-k \right)} \left\| \vec{f} \right\|_{L^{\vec{p}} \left(3I_0^{0, k}\right)} \quad \forall\: \vec{p} \in {\bf H}^{+}.
$$

\subsection{Treatment of Case II.2:  $\frac{j}{2} \le m \le j-100$ and $k \ge j-m$.}

 The treatment of this case follows a similar argument with the one presented at {\bf Case I.3}. Thus we will only outline the required adaptations/modifications on the  Rank I LGC methodology leaving the details to the interested reader:
\begin{enumerate}
\item [(1)] Firstly, we choose a decomposition of the $(x,t)-$domain adapted to the interval $I_q^{2m, k}$ rather than $I_q^{j, k}$ for the $t$-variable with the $x$-discretization preserved, that is
$$
\Lambda_{j, m}^k(\vec{f})\approx 2^k \sum_{ \substack{p \sim 2^{m+2k} \\ q \sim 2^m}} \int_{I_p^{2m, 3k}} \int_{I_q^{2m, k}} f_{1,j+k}(x-t)f_{2,j+2k}(x+t^2)f_{3,m+3k}(x+t^3)f_4(x)dtdx.
$$
\item [(2)] Secondly, we decompose $f_1$ and $f_2$ using Garbor frames adapted to the $m$ parameter instead of $j$:
$$
f_1=\sum_{\substack{p_1 \sim 2^m \\ n_1 \sim 2^{j-m}}} \left \langle f_1, \phi_{p_1, n_1}^{2m, k} \right \rangle \phi_{p_1, n_1}^{2m, k} \quad \textrm{and} \quad  f_2=\sum_{\substack{p_2 \sim 2^{m+k} \\ n_2 \sim 2^{j-m}}} \left \langle f_2, \phi_{p_2, n_2}^{2m, 2k} \right \rangle \phi_{p_2, n_2}^{2m, 2k}.
$$
\item[(3)] Thirdly, one is only left--essentially--with replacing in the proof of {\bf Case I.3} the scale $\frac{j}{2}$ by $m$.
\end{enumerate}

We summarize the final estimate in this case as follows: there exists some $\epsilon>0$ such that
$$
\left|\Lambda_{j, m}^k (\vec{f}) \right| \lesssim 2^{-\epsilon \left(j-m \right)} \left\| \vec{f} \right\|_{L^{\vec{p}} \left(3I_0^{0, k}\right)} \quad \forall\:p \in {\bf H}^{+}.
$$

\section{The stationary off-diagonal term $\Lambda^{\not{D},S}$: The case $l=m>j+100$ and $k \ge 0$}\label{stationaryoff2}

In this case, one has
$$
\frac{|\tau|}{2^{3k}} \simeq \frac{|\eta|}{2^{2k}} \gg \frac{|\xi|}{2^k},
$$
and hence\footnote{Note that in this case, $\supp \widehat{f_4}\subseteq\left\{|\tau| \sim 2^{m+3k} \right\}$, which makes the analysis of this case simpler than the one in Section \ref{20230321sec01} in which the location/size of $\supp \widehat{f_4}$ changes depending on the relative position of the parameters $k,j$ and $m$.}
$$
\supp\,\widehat{f_1} \subseteq \left\{|\xi| \sim 2^{j+k} \right\}, \ \supp\,\widehat{f_2} \subseteq \left\{|\eta| \sim 2^{m+2k} \right\}, \ \supp\,\widehat{f_3} \subseteq \left\{|\tau| \sim 2^{m+3k} \right\}, \ \textrm{and} \ \supp\,\widehat{f_4} \subseteq \left\{|\tau| \sim 2^{m+3k} \right\}.
$$
We consider two different cases.
\begin{enumerate}
\item [I.] $0 \le j \le \frac{m}{2}$;
\item [II.] $\frac{m}{2} \le j \le m-100$.
\end{enumerate}

\subsection{Treatment of Case I: $0 \le j \le \frac{m}{2}$.}

The proof of this case follows from a straightforward modification of the corresponding proof in Section \ref{20230321subsec01}, and hence, in what follows, we only provide a sketch.  To begin with, we decompose our main term as
\begin{equation} \label{20230321eq02}
\Lambda_{j, m, m}^k(\vec{f}):=\Lambda^k_{m, j}(\vec{f})\approx2^k \sum_{\substack{p \sim 2^{\frac{m}{2}+2k} \\ q \sim 2^{\frac{m}{2}}}} \int_{I_p^{m, 3k}} \int_{I_q^{m, k}} f_{1,j+k}(x-t)f_{2,m+2k}(x+t^2)f_{3,m+3k}(x+t^3)f_4(x)dtdx.
\end{equation}
Observe that on each $I_q^{m, k}$, the function $f_{1, j+k}$ is morally a constant, and hence we can rewrite \eqref{20230321eq02} as
\begin{equation} \label{20230321eq01}
\Lambda^k_{m, j}(\vec{f}) \approx 2^k \sum_{\substack{p \sim 2^{\frac{m}{2}+2k} \\ q \sim 2^{\frac{m}{2}}}} \left(\frac{1}{\left|I_0^{m, k} \right|} \int_{I^{m, k}_{\frac{p}{2^{2k}}-q}} f_1 \right) \int_{I_p^{m, 3k}} \int_{I_q^{m, k}} f_2(x+t^2)f_3(x+t^3)f_4(x)dtdx.
\end{equation}
Next, we decompose $f_2$ and $f_3$ in Garbor wave-packets adapted to the linearizing scale level:
\begin{equation} \label{20230321eq03}
f_i \simeq \sum_{\substack{p_i \sim 2^{\frac{m}{2}+(i-1)k} \\ n_i \sim 2^{\frac{m}{2}}}} \left\langle f_i, \phi_{p_i, n_i}^{m, ik}   \right \rangle \phi_{p_i, n_i}^{m, ik}, \quad i\in\left\{2, 3 \right\},
\end{equation}
and analyze the double integral in \eqref{20230321eq01} as usual by extracting the corresponding time-frequency correlations. This gives
\begin{equation}
\left| \Lambda^k_{m, j} (\vec{f}) \right| \lesssim 2^{\frac{3k}{2}} \sum_{\substack{p \sim 2^{\frac{m}{2}+2k} \\ q, n_3 \sim 2^{\frac{m}{2}}}} \left(\frac{1}{\left|I_0^{m, k} \right|^{\frac{1}{2}}} \int_{I_{\frac{p}{2^{2k}}-q}} |f_1| \right) \left| \left \langle f_2, \phi_{\frac{p}{2^k}+\frac{q^2}{2^{\frac{m}{2}}}, -\frac{3q n_3}{2 \cdot 2^{\frac{m}{2}}}}^{m, 2k} \right \rangle \right | \left| \left \langle f_3, \phi_{p+\frac{q^3}{2^m},n_3}^{m, 3k} \right \rangle \right| \left | \left \langle f_4, \phi^{m, 3k}_{p, \left(1-\frac{3q}{2^k \cdot 2^{\frac{m}{2}+1}} \right) n_3} \right \rangle \right|
\end{equation}
We are now ready to apply a time-frequency sparse-uniform dichotomy: we define the \emph{sparse set} of indices in this case by
$$
\sfS_\mu(f_i):= \left\{(p_i, n_i): \left| \left \langle f_i, \phi_{p_i, n_i}^{m, \underline{i}k} \right \rangle \right|^2 \gtrsim 2^{-\mu m} \int_{I_{p_i}^{m, \underline{i}k}} |f_i|^2, \quad p_i \sim 2^{\frac{m}{2}+(\underline{i}-1) k}, n_i \sim 2^{\frac{m}{2}} \right\}, \quad i\in\{2, 3, 4\}.
$$
We also define the \emph{uniform set} of indices $\sfU_\mu (f_i), i\in\{2, 3, 4\}$, and decompose the term $\Lambda^k_{m, j} (\vec{f})$ into its uniform and sparse components $\Lambda^{\sfU_\mu}_{m, j, k} (\vec{f})$ and $\Lambda^{\sfS_\mu}_{m, j, k} (\vec{f})$ as usual. Following now the standard reasonings (see for example Section \ref{20230321subsec01}), we have that the \emph{time-frequency correlation set} in this case is given by
$$
\cTFC:= \left\{ \substack{(p, q) \\ p \sim 2^{\frac{m}{2}+2k}, q \sim 2^{\frac{m}{2}}}: L_2 \left(\frac{p}{2^k}+\frac{q^2}{2^{\frac{m}{2}}} \right)=-\frac{3qn_3}{2^{\frac{m}{2}+1}}, \ L_3 \left(p+\frac{q^3}{2^m} \right)=n_3, \ L_4(p)= \left(1- \frac{3q}{2^k\cdot 2^{\frac{m}{2}+1}} \right) n_3 \right\}.
$$
where $L_i: \left[2^{\frac{m}{2}+(\underline{i}-1) k}, 2^{\frac{m}{2}+(\underline{i}-1) k+1} \right] \cap \Z \mapsto \left[2^{\frac{m}{2}}, 2^{\frac{m}{2}+1} \right] \cap \Z, \ i\in\{2, 3, 4\}$. In this case, the estimates for the sparse/uniform components follow the usual pattern and are given by
$$\left| \Lambda^{\sfU_\mu}_{m, j, k} (\vec{f}) \right| \lesssim 2^{-\frac{\mu m}{2}} \left\| \vec{f} \right\|_{L^{\vec{p}} \left(3I_0^{0, k}\right)}  \quad \textrm{and} \quad \left| \Lambda^{\sfS_\mu}_{m, j, k} (\vec{f}) \right| \lesssim 2^{(\frac{3\mu}{2}-\widetilde{\epsilon}) m} \left\| \vec{f} \right\|_{L^{\vec{p}} \left(3I_0^{0, k}\right)} \quad \forall p \in {\bf H}^{+},$$
for some $\widetilde{\epsilon}>0$ being sufficiently small. As a consequence, we conclude that in this case there exists $\epsilon>0$ such that
$$
\left| \Lambda^k_{m, j} (\vec{f}) \right| \lesssim 2^{-\epsilon m} \left\| \vec{f} \right\|_{L^{\vec{p}} \left(3I_0^{0, k}\right)} \quad \forall \vec{p} \in {\bf H}^{+}.
$$

\subsection{Treatment of Case II: $\frac{m}{2} \le j \le m-100$.} \label{20230321subsec10}
In this situation, we apply Rank I LGC methodology adapted to (all) the three input functions $f_{1,j+k}$, $f_{2,m+2k}$ and $f_{3,m+3k}$ with $m$ playing the role of the dominant parameter.

Indeed, following the usual steps, we apply the Garbor frame decomposition presented in \eqref{20230321eq03} for all the functions $f_1, f_2$ and $f_3$ and substitute them back into \eqref{20230321eq02} in order to deduce that
\begin{eqnarray*}
&& \Lambda^k_{m, j}(\vec{f}) \approx 2^{\frac{3k}{2}} \sum_{\substack{p \sim 2^{\frac{m}{2}+2k} \\ q \sim 2^{\frac{m}{2}}}}  \sum_{\substack{n_3 \sim 2^{\frac{m}{2}} \\ n_1 \sim 2^{j-\frac{m}{2}}}}  \left\langle f_1, \phi^{m, k}_{\frac{p}{2^{2k}}-q, n_1} \right\rangle  \left \langle f_2, \phi_{\frac{p}{2^k}+\frac{q^2}{2^{\frac{m}{2}}}, \frac{2^{\frac{m}{2}}}{2q} n_1-\frac{3q}{2^{\frac{m}{2}+1}}n_3}^{m, 2k} \right\rangle \\
&& \quad \quad \quad \quad \cdot \left\langle f_3, \phi_{p+\frac{q^3}{2^m}, n_3}^{m, 3k} \right \rangle \left \langle f_4, \phi_{p, n_3+\frac{1}{2^k} \cdot \left(\frac{2^{\frac{m}{2}}}{2q} n_1-\frac{3q}{2^{\frac{m}{2}+1}}n_3 \right) +\frac{n_1}{2^{2k}}}^{m, 3k} \right \rangle.
\end{eqnarray*}
Now for each $L_i \sim 2^{m-j}$, define
$$
\Omega_i^{L_i}:= \left[L_i2^{j+\underline{i}k}, (L_i+1)2^{j+\underline{i}k} \right], \quad i\in\{2, 3, 4\}.
$$
where we recall that $\underline{i}=\min \left\{i, 3 \right\}$. In particular, if $n_3 2^{\frac{m}{2}+3k} \in \Omega_3^{L_3}$, then $n_3 \in \widetilde{\Omega^{L_3}_3}:= \left[L_3 2^{j-\frac{m}{2}}, \left(L_3+1 \right) 2^{j-\frac{m}{2}} \right]$. Moreover, we decompose the spatial indices as follows:
$$\left\{ p \sim 2^{\frac{m}{2}+2k} \right\}=\bigcup_{P \sim 2^{m-j+2k}} J_P^k, \quad \textrm{where} \quad J_P^k:=\left[P2^{j-\frac{m}{2}}, (P+1)2^{j-\frac{m}{2}} \right]\qquad\textrm{and}$$
$$\left\{q \sim 2^{\frac{m}{2}} \right\}=\bigcup_{Q \sim 2^{m-j}} J_P^k, \quad \textrm{where} \quad J_Q^k:=\left[Q2^{j-\frac{m}{2}}, (Q+1)2^{j-\frac{m}{2}} \right].$$
Applying the above decomposition and arguing by standard Cauchy-Schwarz and Parseval, we have
\begin{eqnarray*}
&& \left|\Lambda^k_{m, j}(\vec{f}) \right| \lesssim 2^{\frac{3k}{2}} \sum_{\substack{P \sim 2^{m-j+2k} \\ L_3, Q \sim 2^{m-j}}} \left\|f_1 \right\|_{L^2 \left(I_{\frac{P}{2^{2k}}-Q}^{2(m-j), k} \right)} \left\|f_2^{\Omega_2^{-\frac{3Q}{2 \cdot 2^{m-j}} L_3}} \right\|_{L^2 \left(I_{\frac{P}{2^k}+\frac{Q^2}{2^{m-j}}}^{2(m-j), 2k} \right)} \\
&& \quad \quad \quad \quad \cdot \left\|f_3^{\Omega_3^{L_3}} \right\|_{L^2 \left(I_{P+\frac{Q^3}{2^{2(m-j)}}}^{2(m-j), 3k}\right)}  \left\| f_4^{\Omega_4^{ \left(1-\frac{1}{2^k} \cdot \frac{3Q}{2 \cdot 2^{m-j}}  \right) L_3 }}\right\|_{L^2 \left(I_P^{2(m-j), 3k} \right)}\,.
\end{eqnarray*}
Once at this point, we define the \emph{sparse indices} as follows: for $i\in\{2, 3, 4\}$,
$$
\sfS_\mu(f_i):= \left\{ (P_i, L_i): \int_{I_{P_i}^{2(m-j),\underline{i} k}} \left|f_i^{\Omega_i^{L_i}} \right|^2 \gtrsim 2^{-\mu (m-j)} \int_{I_{P_i}^{2(m-j), \underline{i}k }} |f_i|^2, \ \substack{ P_i \sim 2^{m-j+(\underline{i}-1)k} \\ L_i \sim 2^{m-j} } \right\},
$$
Finally, we define the \emph{uniform indices} $\sfU_\mu(f_i)$ in an obvious way and  decompose the term $\left|\Lambda^k_{m, j} \left(\vec{f}\right) \right|$ as usual into its uniform and sparse components, $\Lambda^{\sfU_\mu}_{m, j, k} (\vec{f})$ and $\Lambda^{\sfS_\mu}_{m, j, k} (\vec{f})$, respectively. Standard reasonings by now led us to the definition of the \emph{time-frequency set} as
$$
\cTFC:= \left\{ \substack{(P, Q) \\ P \sim 2^{m-j+2k}, Q \sim 2^{m-j}}: \ \substack{L_2 \left(\frac{P}{2^k}+\frac{Q}{2^{m-j}} \right)=-\frac{3Q}{2 \cdot 2^{m-j}} L_3, \  L_3 \left(P+\frac{Q^3}{2^{2(m-j)}} \right)=L_3 \\ L_4(P)= \left(1-\frac{1}{2^k} \cdot \frac{3Q}{2 \cdot 2^{m-j}}  \right) L_3 } \right\},
$$
where $L_i: \left[2^{\frac{m}{2}+\underline{i} k}, 2^{\frac{m}{2}+\underline{i} k+1} \right] \cap \Z \mapsto \left[2^{m-j}, 2^{m-j+1} \right] \cap \Z, \ i\in\{2, 3, 4\}$. The estimates for both uniform and sparse components are now
$$
\left| \Lambda^{k, \sfU_\mu}_{m, j} (\vec{f}) \right| \lesssim 2^{-\mu(m-j)} \left\| \vec{f} \right\|_{L^{\vec{p}} \left(3I_0^{0, k}\right)}  \quad \textrm{and} \quad \left| \Lambda^{k, \sfS_\mu}_{m, j} (\vec{f}) \right| \lesssim 2^{(3\mu-\widetilde{\epsilon}) (m-j)} \left\| \vec{f} \right\|_{L^{\vec{p}} \left(3I_0^{0, k}\right)}
$$
for any $\vec{p} \in {\bf H}^{+}$ and some absolute $\widetilde{\epsilon}>0$ sufficiently small. As a consequence, we conclude that
$$\left| \Lambda^k_{m, j} (\vec{f}) \right| \lesssim 2^{-\epsilon (m-j)} \left\| \vec{f} \right\|_{L^{\vec{p}} \left(3I_0^{0, k}\right)} \quad \forall \vec{p} \in {\bf H}^{+}.$$

\section{The stationary off-diagonal term $\Lambda^{\not{D},S}$: The case $j=m>l+100$ and $k \ge 0$}\label{stationaryoff3}

In this case, one has
$$
\frac{|\xi|}{2^k} \simeq \frac{|\tau|}{2^{3k}} \gg \frac{|\eta|}{2^{2k}},
$$
and hence
$$
\supp \widehat{f_1} \subseteq \left\{|\xi| \sim 2^{m+k} \right\}, \ \supp \widehat{f_2} \subseteq \left\{|\eta| \sim 2^{l+2k} \right\}, \ \supp \widehat{f_3} \subseteq \left\{|\tau| \sim 2^{m+3k} \right\} \ \textrm{and} \ \supp \widehat{f_4} \subseteq \left\{|\tau| \sim 2^{m+3k} \right\}.
$$
We consider two different cases.
\begin{enumerate}
\item [I.] $0 \le l \le \frac{m}{2}$;
\item [II.] $\frac{m}{2} \le l \le m-100$.
\end{enumerate}

\subsection{Treatment of Case I: $0 \le l \le \frac{m}{2}$.}

The proof of this case follows again from a straightforward modification of the argument in Section \ref{20230321subsec01}. We initiate the sketch of the proof with the decomposition of the main term:
\begin{equation} \label{20230321eq02a}
\Lambda_{m, l, m}^k (\vec{f}):=\Lambda^k_{m, l}(\vec{f})\approx2^k \sum_{\substack{p \sim 2^{\frac{m}{2}+2k} \\ q \sim 2^{\frac{m}{2}}}} \int_{I_p^{m, 3k}} \int_{I_q^{m, k}} f_{1,m+k}(x-t)f_{2,l+2k}(x+t^2)f_{3,m+3k}(x+t^3)f_4(x)dtdx.
\end{equation}
Observe that on each $I_{r}^{m, 2k}$ with $r\sim 2^{\frac{m}{2}+k}$, the function $f_{2, l+2k}$ is morally constant, and hence \eqref{20230321eq02a} can be rewritten as
\begin{equation} \label{20230321eq01a}
\Lambda^k_{m, l}(\vec{f}) \approx 2^k \sum_{\substack{p \sim 2^{\frac{m}{2}+2k} \\ q \sim 2^{\frac{m}{2}}}} \left(\frac{1}{\left|I_0^{m, 2k} \right|} \int_{I^{m, 2k}_{\frac{p}{2^{k}}+\frac{q^2}{2^{\frac{m}{2}}}}} f_2 \right) \int_{I_p^{m, 3k}} \int_{I_q^{m, k}} f_1(x-t)f_3(x+t^3)f_4(x)dtdx.
\end{equation}
Next, we decompose $f_1$ and $f_3$ in Garbor wave-packets adapted to the linearizing scale level:
\begin{equation} \label{20230321eq03a}
f_i \simeq \sum_{\substack{p_i \sim 2^{\frac{m}{2}+(i-1)k} \\ n_i \sim 2^{\frac{m}{2}}}} \left\langle f_i, \phi_{p_i, n_i}^{m, ik}   \right \rangle \phi_{p_i, n_i}^{m, ik},  \quad i= \left\{1, 3 \right\},
\end{equation}
and analyze the double integral in \eqref{20230321eq01a} as usual by extracting the corresponding time-frequency correlations. This gives
\begin{equation}
\left| \Lambda^k_{m, l} (\vec{f}) \right| \lesssim 2^{\frac{3k}{2}} \sum_{\substack{p \sim 2^{\frac{m}{2}+2k} \\ q, n_3 \sim 2^{\frac{m}{2}}}} \left| \left \langle f_1, \phi^{m, k}_{\frac{p}{2^{2k}}-q, \frac{3q^2n_3}{2^m}} \right \rangle \right|  \left\|f_2 \right\|_{L^2 \left(I_{\frac{p}{2^k}+\frac{q^2}{2^{\frac{m}{2}}}}^{m, 2k} \right)} \left| \left \langle f_3, \phi_{p+\frac{q^3}{2^m}, n_3}^{m, 3k} \right \rangle \right| \left | \left \langle f_4, \phi_{p, n_3 \left(1+\frac{3q^2}{2^m} \cdot \frac{1}{2^{2k}} \right)}^{m, 3k} \right \rangle \right|.
\end{equation}
We define the \emph{sparse set} of indices by
$$
\sfS_\mu(f_i):= \left\{(p_i, n_i): \left| \left \langle f_i, \phi_{p_i, n_i}^{m, \underline{i}k} \right \rangle \right|^2 \gtrsim 2^{-\mu m} \int_{I_{p_i}^{m, \underline{i}k}} |f_i|^2, \quad p_i \sim 2^{\frac{m}{2}+(\underline{i}-1) k}, n_i \sim 2^{\frac{m}{2}} \right\}, \quad i\in\{1, 3, 4\}\,,
$$
and as usual the \emph{uniform set} of indices $\sfU_\mu (f_i)$, $\Lambda^{\sfU_\mu}_{m, l, k} (\vec{f})$ and $\Lambda^{\sfS_\mu}_{m, l, k} (\vec{f})$. Following now the standard estimate (for example, as in Section \ref{20230321subsec01}), we see that the \emph{time-frequency correlation set} in this case is given by
$$
\cTFC:= \left\{ \substack{(p, q) \\ p \sim 2^{\frac{m}{2}+2k}, q \sim 2^{\frac{m}{2}}}: L_1 \left(\frac{p}{2^{2k}}-q \right)=\frac{3q^2n_3}{2^m}, \ L_3 \left(p+\frac{q^3}{2^m} \right)=n_3, \ L_4(p)= \left(1+\frac{3q^2}{2^m} \cdot \frac{1}{2^{2k}} \right) n_3 \right\},
$$
where $L_i: \left[2^{\frac{m}{2}+(\underline{i}-1) k}, 2^{\frac{m}{2}+(\underline{i}-1) k+1} \right] \cap \Z \mapsto \left[2^{\frac{m}{2}}, 2^{\frac{m}{2}+1} \right] \cap \Z, \ i\in\{1, 3, 4\}$. The estimates for the sparse/uniform components, in this case, are given by
$$
\left| \Lambda^{k, \sfU_\mu}_{m, l} (\vec{f}) \right| \lesssim 2^{-\frac{\mu m}{2}} \left\| \vec{f} \right\|_{L^{\vec{p}} \left(3I_0^{0, k}\right)}  \quad \textrm{and} \quad \left| \Lambda^{k, \sfS_\mu}_{m, l} (\vec{f}) \right| \lesssim 2^{(\frac{3\mu}{2}-\widetilde{\epsilon}) m} \left\| \vec{f} \right\|_{L^{\vec{p}} \left(3I_0^{0, k}\right)}
$$
for any $\vec{p} \in {\bf H}^{+}$ and some absolute $\widetilde{\epsilon}>0$ sufficiently small. As a consequence, we conclude that in this case
$$
\left| \Lambda^k_{m, l} (\vec{f}) \right| \lesssim 2^{-\epsilon m} \left\| \vec{f} \right\|_{L^{\vec{p}} \left(3I_0^{0, k}\right)} \quad \forall \vec{p} \in {\bf H}^{+}.
$$

\subsection{Treatment of Case II: $\frac{m}{2} \le l \le m-100$.}
In this situation, we apply Rank I LGC methodology adapted to (all) the three input functions $f_{1,m+k}$, $f_{2,l+2k}$ and $f_{3,m+3k}$ again with $m$ playing the role of the dominant parameter.

Thus, applying the Gabor frame decomposition presented in \eqref{20230321eq03a} but now adapted to all the three functions $f_1, f_2$ and $f_3$ and inserting them back into \eqref{20230321eq01a}, standard LGC arguments yield
\begin{eqnarray*}
&& \Lambda^k_{m, l}(\vec{f}) \approx 2^{\frac{3k}{2}} \sum_{\substack{p \sim 2^{\frac{m}{2}+2k} \\ q \sim 2^{\frac{m}{2}}}}  \sum_{\substack{n_3 \sim 2^{\frac{m}{2}} \\ n_2 \sim 2^{l-\frac{m}{2}}}}  \left\langle f_1, \phi^{m, k}_{\frac{p}{2^{2k}}-q, \frac{2qn_2}{2^{\frac{m}{2}}}+\frac{3q^2n_3}{2^m}} \right\rangle  \left \langle f_2, \phi_{\frac{p}{2^k}+\frac{q^2}{2^{\frac{m}{2}}}, n_2}^{m, 2k} \right\rangle \\
&& \quad \quad \quad \quad \cdot \left\langle f_3, \phi_{p+\frac{q^3}{2^m}, n_3}^{m, 3k} \right \rangle \left \langle f_4, \phi_{p, n_3+\frac{n_2}{2^k}+\frac{1}{2^{2k}} \cdot \left(\frac{2qn_2}{2^{\frac{m}{2}}}+\frac{3q^2n_3}{2^m}   \right) }^{m, 3k} \right \rangle.
\end{eqnarray*}
To treat the above term, we first decompose both spatial and frequency indices into small intervals of length $2^{l-\frac{m}{2}}$ (as in Section \ref{20230321subsec10}) and apply Cauchy-Schwarz and Parseval in order to get
\begin{eqnarray*}
&& \left|\Lambda^k_{m, l}(\vec{f}) \right| \lesssim 2^{\frac{3k}{2}} \sum_{\substack{P \sim 2^{m-l+2k} \\ L_3, Q \sim 2^{m-l}}} \left\|f^{\Omega_1^{\frac{3Q^2}{2^{m-l}}L_3}}_1 \right\|_{L^2 \left(I_{\frac{P}{2^{2k}}-Q}^{2(m-l), k} \right)} \left\|f_2 \right\|_{L^2 \left(I_{\frac{P}{2^k}+\frac{Q^2}{2^{m-l}}}^{2(m-l), 2k} \right)} \\
&& \quad \quad \quad \quad \cdot \left\|f_3^{\Omega_3^{L_3}} \right\|_{L^2 \left(I_{P+\frac{Q^3}{2^{2(m-l)}}}^{2(m-l), 3k}\right)}  \left\| f_4^{\Omega_4^{ \left(1+\frac{1}{2^k} \cdot \frac{3Q}{2 \cdot 2^{m-l}}  \right) L_3 }}\right\|_{L^2 \left(I_P^{2(m-l), 3k} \right)},
\end{eqnarray*}
where $\Omega_i^{L_i}:=\left[L_i2^{j+\underline{i}k}, (L_i+1)2^{j+\underline{i}k} \right], \quad i\in\{1, 3, 4\}$. Therefore, we define the \emph{sparse indices} in this case as follows: for $i\in\{1, 3, 4\}$,
$$
\sfS_\mu(f_i):= \left\{ (P_i, L_i): \int_{I_{P_i}^{2(m-l),\underline{i} k}} \left|f_i^{\Omega_i^{L_i}} \right|^2 \gtrsim 2^{-\mu (m-l)} \int_{I_{P_i}^{2(m-l), \underline{i}k }} |f_i|^2, \ \substack{ P_i \sim 2^{m-l+(\underline{i}-1)k} \\ L_i \sim 2^{m-l} } \right\},
$$
as well as the \emph{uniform indices} $\sfU_\mu(f_i)$ in an obvious way. Finally, we decompose the term $\left|\Lambda^k_{m, j} \left(\vec{f}\right) \right|$ as usual into a uniform, $\Lambda^{\sfU_\mu}_{m, l, k} (\vec{f})$, and a sparse component $\Lambda^{\sfS_\mu}_{m, l, k} (\vec{f})$. Standard reasonings convey that in this case the \emph{time-frequency correlation set} is given by
$$
\cTFC:= \left\{ \substack{(P, Q) \\ P \sim 2^{m-l+2k}, Q \sim 2^{m-l}}: \ \substack{L_1 \left(\frac{P}{2^{2k}}+Q\right)=\frac{3Q}{2^{m-l}} L_3, \  L_3 \left(P+\frac{Q^3}{2^{m-l}} \right)=L_3 \\ L_4(P)= \left(1+\frac{1}{2^k} \cdot \frac{3Q}{2 \cdot 2^{m-l}}  \right) L_3 } \right\},
$$
where $L_i: \left[2^{m-l+(\underline{i}-1) k}, 2^{m-l+(\underline{i}-1) k+1} \right] \cap \Z \mapsto \left[2^{m-l}, 2^{m-l+1} \right] \cap \Z, \ i\in\{1, 3, 4\}$.

The estimates for both uniform and sparse components can be summarized as: $\exists\:\widetilde{\epsilon}>0$ such that
$$
\left| \Lambda^{k, \sfU_\mu}_{m, l} (\vec{f}) \right| \lesssim 2^{-\mu(m-l)} \left\| \vec{f} \right\|_{L^{\vec{p}} \left(3I_0^{0, k}\right)}  \quad \textrm{and} \quad \left| \Lambda^{k, \sfS_\mu}_{m, l} (\vec{f}) \right| \lesssim 2^{(3\mu-\widetilde{\epsilon}) (m-l)} \left\| \vec{f} \right\|_{L^{\vec{p}} \left(3I_0^{0, k}\right)} \quad \forall\: \vec{p} \in {\bf H}^{+}\,.
$$
 As a consequence, we conclude that in this case
$$
\left| \Lambda^k_{m, l} (\vec{f}) \right| \lesssim 2^{-\epsilon (m-l)} \left\| \vec{f} \right\|_{L^{\vec{p}} \left(3I_0^{0, k}\right)} \quad \forall\: \vec{p} \in {\bf H}^{+}.
$$

\section{Treatment of the low oscillatory component $\Lambda^{Lo}$} \label{20230412Sec01}

Recall that
$$
\Lambda_{j, l, m}^k (\vec{f}):=\iint_{\R^2} f_{1, j}^k(x-t) f_{2, l}^k(x+t^2) f_{3, m}^k(x+t^3)f_4(x) 2^k \rho(2^k t)dt,
$$
where $\varrho \in C_0^\infty(\R)$ is an odd function with $\supp\,\varrho \subset \left[-4, 4\right] \backslash \left[-\frac{1}{4}, \frac{1}{4} \right]$, and
$$
f^k_{1, j}=\calF^{-1} \left( \widehat{f_1} \phi \left(\frac{\cdot}{2^{j+k}} \right) \right), \quad f^k_{2, l}=\calF^{-1} \left( \widehat{f_2} \phi \left(\frac{\cdot}{2^{l+2k}} \right) \right), \quad \textrm{and} \quad f^k_{3, m}=\calF^{-1} \left( \widehat{f_3} \phi \left(\frac{\cdot}{2^{m+3k}} \right) \right).
$$
One can further rewrite the above expression as
$$
\Lambda_{j, l, m}^k (\vec{f})=\int_{\R} \int_{\R^3} \widehat{f_1}(\xi)  \widehat{f_2}(\eta) \widehat{f_3}(\tau) \frakm_{j, l, m}^k(\xi, \eta, \tau) e^{ix(\xi+\eta+\tau)} f_4(x) d\xi d\eta d\tau dx,
$$
where
$$
\frakm_{j, l, m}^k \left(\xi, \eta, \tau \right)= \left(\int_{\R} e^{i \left(-\frac{\xi}{2^k}t+\frac{\eta}{2^{2k}} t^2+\frac{\tau}{2^{3k}} t^3 \right)} \rho(t) dt \right) \phi \left(\frac{\xi}{2^{j+k}} \right) \phi \left(\frac{\eta}{2^{l+2k}} \right) \phi \left(\frac{\tau}{2^{m+3k}} \right)\,,
$$
and $\phi$ is an even smooth function with $\supp \ \phi \subset \left[-4, 4 \right] \backslash \left[-\frac{1}{4}, \frac{1}{4} \right]$.

Our goal in this section is to prove the following result:

\begin{thm} \label{20230326lem01}

With the above notations, define the low oscillatory component by
$$
\Lambda^{Lo}:= \sum_{k \in \Z} \sum_{(j, l, m) \in \Z^3 \backslash \N^3} \Lambda_{j, l, m}^k.
$$
Then, for any $\vec{p}\in \textbf{B}$, we have
$$
\left|\Lambda^{Lo}(\vec{f})\right| \lesssim \left\|\vec{f}\, \right\|_{L^{\vec{p}}}\,.
$$
\end{thm}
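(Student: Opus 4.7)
The plan is to reduce the analysis of $\Lambda^{Lo}$ to the integration-by-parts scheme already used in the proof of Lemma \ref{offdiagnonst} (the non-stationary off-diagonal case). The starting observation---already noted in the footnote accompanying the definition of $\Lambda^{Lo}$---is that when $(j,l,m)\in\Z^3\setminus\N^3$, any negative index $r$ forces the corresponding term in $\varphi^k_{\xi,\eta,\tau}(t)$ to be of size $O(1)$ for $|t|\sim 1$, so $\varphi^k_{\xi,\eta,\tau}$ still genuinely oscillates in at most two of the three directions $\xi,\eta,\tau$, and the (non)stationary phase machinery of Section \ref{basicred} remains available. My proof splits $\Lambda^{Lo}$ according to the subset $S\subseteq\{1,2,3\}$, $S\neq\emptyset$, of indices among $\{j,l,m\}$ that lie in $\Z_-$, and organizes the $2^3-1=7$ resulting sub-cases by $|S|$.

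In the regime $|S|<3$, I would treat $\Lambda^{Lo,S}$ by direct analogy with Lemma \ref{offdiagnonst}. Set $n_+:=\max\{j,l,m\}\geq 0$. Running the same discriminant case analysis as in the proof of Lemma \ref{20230308lem01}, with the negative-index terms absorbed as lower-order perturbations, one verifies that for $|t|\sim 1$ and $(j,l,m)$ avoiding the analog of the three stationary sub-regimes of Lemma \ref{20230308lem01} one has $\bigl|\tfrac{d}{dt}\varphi^k_{\xi,\eta,\tau}(t)\bigr|\gtrsim 2^{n_+}$. Integration by parts then decomposes the multiplier as $\frakm_{j,l,m}^k=\frakA_{j,l,m}^k+\frakB_{j,l,m}^k$ exactly as in the proof of Lemma \ref{offdiagnonst}; applying the shifted maximal/square function bounds of Lemma \ref{sqfunct} in the non-negative directions and bounding the negative-direction cutoffs by their smooth low-frequency majorants yields a bound of the form $\lesssim 2^{-n_+}\cdot\textrm{poly}(n_+)\cdot\|\vec{f}\|_{L^{\vec{p}}}$ for every $\vec{p}\in\textbf{B}$. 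The exceptional stationary sub-regimes (the analogs of the three cases of Lemma \ref{20230308lem01}) are treated by direct adaptation of Sections \ref{20230321sec01}, \ref{stationaryoff2}, \ref{stationaryoff3}: the absence of one oscillation direction only simplifies the Rank I LGC analysis there, since the Gabor decomposition along the now-low-frequency direction collapses to a single smooth physical-scale cutoff.

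In the regime $|S|=3$, the entire phase $\varphi^k_{\xi,\eta,\tau}$ is $O(1)$ for $|t|\sim 1$ and integration by parts yields no decay. Instead, I exploit the odd symmetry of $\rho$, which gives $\int_\R\rho(t)\,dt=0$. Writing
\[
\int_\R e^{i\varphi^k_{\xi,\eta,\tau}(t)}\rho(t)\,dt=\int_\R\bigl(e^{i\varphi^k_{\xi,\eta,\tau}(t)}-1\bigr)\rho(t)\,dt
\]
and using $|e^{i\varphi}-1|\lesssim|\varphi|$, the multiplier is controlled by $\lesssim 2^{\max\{j,l,m\}}$ in this regime, which is geometric in the maximum of the negative indices and hence summable along that coordinate.

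The main obstacle is the summation of the per-$(j,l,m)$ estimates: in the sub-cases with $1\leq|S|\leq 2$, the bound $\lesssim 2^{-n_+}$ is independent of the negative components $r\in S$, so the naive geometric summation $\sum_{j_r<0}1$ diverges. The plan to circumvent this is to refrain from Littlewood--Paley decomposing along the negative directions altogether: for each $r\in S$, I replace the sum $\sum_{j_r<0}\phi(\cdot/2^{j_r+rk})$ by a single smooth low-frequency cutoff $\Phi(\cdot/2^{rk})$, which is still compatible with the shifted maximal function estimates of Lemma \ref{sqfunct} and corresponds morally to a Hardy--Littlewood majorant in that direction. The outer summation in $k$ is then handled by the standard Calder\'on reproducing identity. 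Piecing together the estimates from all seven sub-cases then yields the claimed bound $|\Lambda^{Lo}(\vec{f})|\lesssim_{\vec{p}}\|\vec{f}\|_{L^{\vec{p}}}$ for every $\vec{p}\in\textbf{B}$, completing the proof.
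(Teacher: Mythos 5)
Your decomposition by the set $S$ of negative indices matches the paper's case analysis, and for $|S|\in\{1,2\}$ your plan (collapse the negative directions to a single low-frequency cutoff, integrate by parts in the surviving oscillatory direction(s), and reduce the genuinely stationary two-positive-index configuration to the curved bilinear Hilbert transform) is essentially what the paper does. However, there is a genuine gap in your treatment of the regime $|S|=3$, and it propagates to the lowest-order terms of the mixed cases. The bound $|e^{i\varphi^k_{\xi,\eta,\tau}(t)}-1|\lesssim|\varphi^k_{\xi,\eta,\tau}(t)|$ is taken in absolute value, which destroys exactly the cancellation needed to perform the $k$-summation: after this step each single-scale piece is a quadrilinear form $\int_{\R} f_{1,k}\,f_{2,2k}\,f_{3,3k}\,f_4$ in which every factor is a low-pass (non-mean-zero) projection, and ``the standard Calder\'on reproducing identity'' does not sum such an expression --- $\sum_{k}$ of an $O(1)$ quantity diverges. (As a side remark, the claimed majorant $2^{\max\{j,l,m\}}$ is also not summable over $(j,l,m)\in\Z_-^3$, since for a fixed value of the maximum the other two indices range over an infinite set; but this is moot once the negative directions are collapsed.)

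What is actually required, and what the paper does in Section \ref{case1low}, is a Taylor expansion of the multiplier in the monomials $(\xi/2^k)^{\ell_1}(\eta/2^{2k})^{\ell_2}(\tau/2^{3k})^{\ell_3}$ that keeps track of \emph{which} variable supplies a symbol vanishing at the origin. The odd symmetry of $\rho$ kills the $(0,0,0)$ term, and any term with at least two positive exponents yields two Littlewood--Paley (mean-zero) factors, so that Cauchy--Schwarz in $k$ together with Lemma \ref{sqfunct} closes the estimate. The obstruction is the surviving lowest-order terms $(\ell_1,\ell_2,\ell_3)=(1,0,0)$ and $(0,0,1)$, which carry only \emph{one} mean-zero factor: the resulting object is the maximal anisotropic paraproduct $\mathcal{P}_{\mathfrak{M}}$ of \eqref{maxpara}, whose boundedness is a nontrivial theorem requiring the Abel-summation reorganization and the Hardy-space square-function characterization \eqref{maxpar1}--\eqref{maxpar2} (or Coifman--Meyer technology). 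Your proposal never isolates these terms and offers no mechanism for summing them in $k$; the same issue reappears in the $(\ell_1,\ell_2)=(0,0)$ subcase of the two-negative-indices regime. Until this single-mean-zero-factor configuration is identified and controlled, the argument does not close.
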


\begin{rem} In what follows, for the simplicity of the exposition we will only provide the proof for the slightly more restrictive range
$\textbf{\underline{B}}:=\left\{(p_1, p_2, p_3, p_4)\:|\:\frac{1}{p_1}+\frac{1}{p_2}+\frac{1}{p_3}+\frac{1}{p_4}=1\:\textrm{with}\:1<p_1, p_2, p_3, p_4<\infty\right\}$.
However, one can extend the above to cover the full range ${\bf B}$ by using more elaborate paraproducts arguments in the spirit of the Coifman-Meyer Theorem (see \cite{CM78}, \cite{CM97} and also \cite{MS13}).
\end{rem}

\begin{proof}

We consider several different cases.

\subsection{Case 1: All the three indices $j,l,m$ are negative.}\label{case1low}

In this case, we denote
\begin{equation} \label{20230325eq02}
\Lambda_{-, -, -} (\vec{f})=\sum_{k \in \Z} \Lambda_{-, -, -}^k (\vec{f})= \sum_{k \in \Z} \int_{\R} \int_{\R^3} \widehat{f_1}(\xi)  \widehat{f_2}(\eta) \widehat{f_3} (\tau) \frakm_{-. -, -}^k(\xi, \eta, \tau) e^{ix(\xi+\eta+\tau)} f_4(x) d\xi d\eta d\tau dx,
\end{equation}
where
$$
\frakm_{-,-,-}^k \left(\xi, \eta, \tau \right):= \left(\int_{\R} e^{i \left(-\frac{\xi}{2^k}t+\frac{\eta}{2^{2k}} t^2+\frac{\tau}{2^{3k}} t^3 \right)} \rho(t) dt \right) \phi_{-} \left(\frac{\xi}{2^k} \right) \phi_{-} \left(\frac{\eta}{2^{2k}} \right) \phi_{-} \left(\frac{\tau}{2^{3k}} \right),
$$
and $\phi_{-}$ is an even smooth function with $\supp \ \phi_{-} \subseteq [-5, 5]$.

We now apply a Taylor expansion argument to write
\begin{equation*}
\frakm_{-, -, -}^k \left(\xi, \eta, \tau \right) =\sum_{\ell_1, \ell_2, \ell_3 \ge 0} \frac{C_{\ell_1, \ell_2, \ell_3}}{\ell_1! \ell_2! \ell_3!}  \left(\frac{\xi}{2^k}  \right)^{\ell_1} \phi_{-} \left(\frac{\xi}{2^k} \right) \left(\frac{\eta}{2^{2k}}  \right)^{\ell_2}\phi_{-} \left(\frac{\eta}{2^{2k}} \right) \left(\frac{\tau}{2^{3k}}  \right)^{\ell_3} \phi_{-} \left(\frac{\tau}{2^{3k}} \right) \int_{\R} t^{\ell_1+2\ell_2+3\ell_3} \rho(t)dt\,,
\end{equation*}
where here $|C_{\ell_1, \ell_2, \ell_3}|\lesssim 1$ is allowed to change from line to line.

For simplicity, we denote by $\Phi_{-}^{\ell_1}(\xi):=(\xi)^{\ell_1} \phi_{-}(\xi)$ and $\Psi_{-}^{\ell_i}:=\calF^{-1} \left(\Phi_{-}^{\ell_i} \right),\: i\in\{1, 2, 3\}$. Therefore, letting $f_{i, \ell_i, ik}:=f_i * \left( \Psi_{-}^{\ell_i} \right)_{ik}$ and $\Psi_k(x):=2^{k} \Psi (2^{k}x), k \in \Z$, we rewrite \eqref{20230325eq02} as
\begin{eqnarray} \label{20230325eq11}
&&\Lambda_{-, -, -} (\vec{f})= \sum_{\ell_1, \ell_2, \ell_3 \ge 0} \Lambda_{-, -, -}^{\ell_1, \ell_2, \ell_3} (\vec{f}) \nonumber \\
&& \quad :=  \sum_{\ell_1, \ell_2, \ell_3 \ge 0} \frac{C_{\ell_1, \ell_2, \ell_3}}{\ell_1! \ell_2! \ell_3!} \sum_{k \in \Z} \int_{\R} f_{1, \ell_1, k}(x)
f_{2, \ell_2, 2k}(x)f_{3, \ell_3, 3k}(x)f_4(x) \left(  \int_{\R} t^{\ell_1+2\ell_2+3\ell_3} \rho(t)dt \right) dx\,.
\end{eqnarray}

\begin{obs}
It suffices to consider the case when $l_1+l_3$ is odd, as otherwise, the function $t^{\ell_1+2\ell_2+3\ell_3}  \rho(t)$ is odd, which makes the $t$-integral in \eqref{20230325eq11} be $0$ due to the mean zero condition satisfied by $\rho$.
\end{obs}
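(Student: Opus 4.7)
The plan is to show that each summand in \eqref{20230325eq11} with $\ell_1+\ell_3$ even is automatically zero, by exploiting the fact that the only factor in the summand which depends on the sign of $t$ is the scalar integral
\[
I(\ell_1,\ell_2,\ell_3):=\int_{\R} t^{\ell_1+2\ell_2+3\ell_3}\,\rho(t)\,dt,
\]
while the remaining data $f_{1,\ell_1,k},\,f_{2,\ell_2,2k},\,f_{3,\ell_3,3k},\,f_4$, the constants $C_{\ell_1,\ell_2,\ell_3}/(\ell_1!\ell_2!\ell_3!)$, and the spatial pairing against $f_4(x)$ are all entirely independent of the $t$-integration. Consequently, whenever $I(\ell_1,\ell_2,\ell_3)=0$, the corresponding triple $(\ell_1,\ell_2,\ell_3)$ drops out of the sum defining $\Lambda_{-,-,-}(\vec{f})$, and one may restrict the analysis to those triples for which $I(\ell_1,\ell_2,\ell_3)\neq 0$.

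To identify those triples, I would apply the change of variables $t\mapsto -t$ inside $I(\ell_1,\ell_2,\ell_3)$. Since $\rho$ is odd and $(-t)^N=(-1)^N t^N$ with $N:=\ell_1+2\ell_2+3\ell_3$, a one-line computation yields
\[
I(\ell_1,\ell_2,\ell_3)\;=\;(-1)^{N+1}\,I(\ell_1,\ell_2,\ell_3),
\]
which forces $I(\ell_1,\ell_2,\ell_3)=0$ whenever $N$ is even. A trivial parity bookkeeping step then converts this into a condition involving only $\ell_1$ and $\ell_3$: since $2\ell_2$ is always even and $3\ell_3\equiv \ell_3\pmod 2$, we have $N\equiv \ell_1+\ell_3\pmod 2$, so the vanishing of $I$ is equivalent to $\ell_1+\ell_3$ being even. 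Therefore only the parity $\ell_1+\ell_3$ odd can contribute, which is precisely the reduction asserted. The one ancillary point worth recording is that the Taylor expansion producing \eqref{20230325eq11} converges absolutely and uniformly in $k$ on the relevant supports, since $|\xi|/2^k,\,|\eta|/2^{2k},\,|\tau|/2^{3k}\lesssim 1$ on the support of the cutoffs $\phi_{-}$ and $|C_{\ell_1,\ell_2,\ell_3}|\lesssim 1$ is dominated by the factorial denominators; this legitimates the term-by-term suppression of vanishing summands. There is no substantive obstacle in this observation: its whole content is the parity equivalence $N$ even $\Longleftrightarrow$ $\ell_1+\ell_3$ even, combined with the mean-zero property of $\rho$.
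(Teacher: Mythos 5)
Your proposal is correct and is essentially the paper's own argument: the $t$-integral factors out as the scalar $\int_{\R} t^{\ell_1+2\ell_2+3\ell_3}\rho(t)\,dt$, and since $\rho$ is odd and $\ell_1+2\ell_2+3\ell_3\equiv \ell_1+\ell_3 \pmod 2$, this integrand is odd (hence the integral vanishes) exactly when $\ell_1+\ell_3$ is even. Your change-of-variables computation $t\mapsto -t$ is just an explicit writing-out of this parity observation, so nothing further is needed.
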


We first treat the case when $(\ell_1, \ell_2, \ell_3)$ is different from both $(1, 0, 0)$ and $(0, 0, 1)$. Observe that in this situation, there are at least two $\ell$'s in $\{\ell_1, \ell_2, \ell_2\}$ taking strictly positive values. Without loss of generality, we may assume $\ell_1, \ell_2>0$ which further implies $\int_{\R} \Phi_{-}^{\ell_i}(x)dx=0,\: i\in\{1, 2\}$. Therefore, via Cauchy--Schwarz
\begin{eqnarray*}
\left| \sum_{\substack{\ell_1, \ell_2, \ell_3 \ge 0 \\ (\ell_1, \ell_2, \ell_3) \neq (1, 0, 0)\\ (\ell_1, \ell_2, \ell_3) \neq (0, 0, 1)}} \Lambda^{\ell_1, \ell_2, \ell_3}_{-, -, -} (\vec{f})  \right|\lesssim \sum_{\ell_1, \ell_2, \ell_3 \ge 0} \frac{|C_{\ell_1, \ell_2, \ell_3}|}{\ell_1! \ell_2! \ell_3! } \int_{\R} \left( \sum_{k \in \Z} \left|f_{1, \ell_1, k} (x) \right|^2 \right)^{\frac{1}{2}} \left( \sum_{k \in \Z} \left|f_{2, \ell_2, 2k} (x) \right|^2 \right)^{\frac{1}{2}} M f_3(x)|f_4(x)| dx \\
\lesssim  \sum_{\ell_1, \ell_2, \ell_3 \ge 0} \frac{|C_{\ell_1, \ell_2, \ell_3}|}{\ell_1! \ell_2! \ell_3! } \int_{\R} Sf_1(x) Sf_2(x) M f_3(x) |f_4(x)| dx \lesssim \|\vec{f}\|_{L^{\vec{p}}},\qquad\qquad\qquad
\end{eqnarray*}
where in the first inequality of the second line in the above display, for expository reasons, we ignore the $l_j-$dependencies and thus allow $Sf$ and $Mf$ stand for the classical square function and Hardy-Littlewood maximal function, respectively (this slight notational abuse is safe due to the  $l_j !$ term decay).
\medskip

We are now left with the case $(\ell_1, \ell_2, \ell_3)=(1, 0, 0)$ or $(0, 0, 1)$. Due to the symmetry it suffices to consider the case $(\ell_1, \ell_2, \ell_3)=(1, 0, 0)$. To begin with, we have
$$
\left|\Lambda_{-, -, -}^{1, 0, 0} (\vec{f}) \right| \le  \left| \Lambda_{-, -, -;+}^{1, 0, 0} (\vec{f}) \right|\,+\, \left| \Lambda_{-, -, -;-}^{1, 0, 0} (\vec{f}) \right|,
$$
where in the RHS above the first term corresponds to the summation $\sum_{k\in\N}$ while the second term to the summation $\sum_{k\in \Z_{-}}$.

The second term is easy since then one can follow the earlier standard arguments in order to deduce that
\begin{equation*}
\left| \Lambda_{-, -, -;-}^{1, 0, 0} (\vec{f}) \right|\lesssim \int_{\R} Sf_1(x) Mf_2(x) M f_3(x) S f_4(x) dx \lesssim \|\vec{f}\|_{L^{\vec{p}}}\,.
\end{equation*}

The treatment of the first term is more delicate: we start with the following notational simplification: in what follows we will refer to $\phi$ and $\psi$ as two generic functions in $C_0^{\R}$ with, say $\textrm{supp}\,\phi,\,\textrm{supp}\,\psi\subset\{|\xi|<1\}$, and $0\notin \textrm{supp}\,\phi$ (one could allow the more general condition $\phi(0)=0$). With these notations the multiplier for the first term is given by
\begin{equation*}
\sum_{k=0}^{\infty} \phi(\frac{\xi}{2^k})\,\psi(\frac{\eta}{2^{2k}})\,\psi(\frac{\tau}{2^{3k}})\,.
\end{equation*}
Now, via an Abel summation argument, the above expression can be reduced up to terms that can be treated with the standard tools above to the situation
\begin{equation*}
\sum_{k=0}^{\infty} \psi(\frac{\xi}{2^k})\,\phi(\frac{\eta}{2^{2k}})\,\psi(\frac{\tau}{2^{3k}})\simeq \sum_{k=0}^{\infty} \psi(\frac{\xi}{2^k})\,\phi(\frac{\eta}{2^{2k}})\,\sum_{n=0}^{\infty}\phi(\frac{\tau}{2^{3k-n}})\,.
\end{equation*}
Once again the latter expression can be reduced up to easily controlled terms to\footnote{Strictly speaking in the RHS below we should have an expression of the form $\sum_{n=0}^{\infty}c_n\phi(\frac{\tau}{2^n})
\sum_{k=\lfloor\frac{n}{3}\rfloor}^{\lfloor\frac{n}{2}\rfloor} \psi(\frac{\xi}{2^k})\,\phi(\frac{\eta}{2^{2k}})$ for some suitable choice of $c_n\in\{0,1\}$ but this has no relevance for our later treatment so we will ignore the presence of $\{c_n\}_{n\geq 0}$.}
\begin{equation*}
\sum_{k=0}^{\infty} \psi(\frac{\xi}{2^k})\,\phi(\frac{\eta}{2^{2k}})\,\sum_{n=0}^{k}\phi(\frac{\tau}{2^{3k-n}})\approx\sum_{n=0}^{\infty}\phi(\frac{\tau}{2^n})
\sum_{k=\lfloor\frac{n}{3}\rfloor}^{\lfloor\frac{n}{2}\rfloor} \psi(\frac{\xi}{2^k})\,\phi(\frac{\eta}{2^{2k}})\,.
\end{equation*}

With this, we can now deduce that up to simple(r) expressions the term  $\left| \Lambda_{-, -, -;+}^{1, 0, 0} (\vec{f}) \right|$
is controlled by
\begin{equation}\label{Domlow}
\int_{\R}\sum_{n=0}^{\infty} (f_3*\check{\phi}_n)(x)\, (f_4*\check{\phi}_n)(x)\,
\left(\sum_{k=\lfloor\frac{n}{3}\rfloor}^{\lfloor\frac{n}{2}\rfloor} (f_1*\check{\psi}_k)(x)\, (f_2*\check{\phi}_{2k})(x)\right)\,dx\,.
\end{equation}
Define now the maximal (anisotropic) paraproduct operator
\begin{equation}\label{maxpara}
\mathcal{P}_{\mathfrak{M}}(f_1,f_2)(x):=\sup_{N\in\N}
\left|\sum_{k=0}^{N} (f_1*\check{\psi}_k)(x)\, (f_2*\check{\phi}_{2k})(x)\right|\,.
\end{equation}
Then, from \eqref{Domlow} and \eqref{maxpara}, we deduce that for any $\vec{p}\in\textbf{\underline{B}}$
\begin{equation}\label{lowtermcont}
\left| \Lambda_{-, -, -;+}^{1, 0, 0} (\vec{f}) \right|\lesssim_{\vec{p}} \|\vec{f}\|_{L^{\vec{p}}}\,+\, \int_{\R} \mathcal{P}_{\mathfrak{M}}(f_1,f_2)(x)\, Sf_3(x)\,Sf_4(x)\,dx\,.
\end{equation}
The desired conclusion follows now from the claim that $\mathcal{P}_{\mathfrak{M}}$ is bounded from $L^{p_1}(\R)\times L^{p_2}(\R)\,\rightarrow\,L^r(\R)$ for any $\frac{1}{p_1}+\frac{1}{p_2}=\frac{1}{r}$ with $1<p_1,p_2\leq \infty$ and $\frac{1}{2}<r<\infty$. Indeed, if one is willing to sacrifice the end-points corresponding to $p_2=\infty$ one can come up with a very simple and elegant argument for this last claim whose sketch we present now\footnote{This nice argument was generously communicated to us by C. Muscalu and it likely goes back to (at least) the body of work of Coifman and Meyer. In order to cover the full boundedness range of $\mathcal{P}_{\mathfrak{M}}$ one needs to use more sophisticated tools, as for example in the spirit of the work in \cite{MTT06}.}: we first write
\begin{equation}\label{maxpar1}
\|\mathcal{P}_{\mathfrak{M}}(f_1,f_2)\|_{L^r}\lesssim \|\mathcal{M}(\mathcal{P}(f_1,f_2))\|_{L^r}=\|\mathcal{P}(f_1,f_2)\|_{H^r}
\end{equation}
where by definition we set $\mathcal{M}(f):=\sup_{k\in\Z} |f*\check{\psi}_k|$ assuming here wlog that $\psi(0)=1$, $\mathcal{P}(f_1,f_2)(x):=\sum_{k=0}^{\infty}(f_1*\check{\psi}_k)(x)\, (f_2*\check{\phi}_{2k})(x)$ and $H^r$ stands for the Hardy space of index $r$ over $\R$. We next have
\begin{equation}\label{maxpar2}
\|\mathcal{P}(f_1,f_2\|_{H^r}\approx_r \|S(\mathcal{P}(f_1,f_2))\|_{L^r}\lesssim \|M(f_1)\, S(f_2)\|_{L^r}\lesssim
\|M(f_1)\|_{L^{p_1}}\,\|S(f_2)\|_{L^{p_2}}\lesssim_{p_1,p_2}\|f_1\|_{L^{p_1}}\,\|f_2\|_{L^{p_2}}\,,
\end{equation}
ending thus the proof of our claim.

\subsection{Case 2: Two of the indices are negative and one is positive.}

Without loss of generality we may assume that $j, l<0$ and $m \ge 0$ as the other two cases can be treated in a similar fashion. Then, one has
\begin{equation*}
\Lambda_{-, -, +} (\vec{f})=\sum_{m \ge 0} \sum_{k \in \Z} \Lambda_{-, -, m}^k (\vec{f})
=\sum_{m \ge 0} \sum_{k \in \Z} \int_{\R} \int_{\R^3} \widehat{f_1}(\xi)  \widehat{f_2}(\eta) \widehat{f_3} (\tau) \frakm_{-. -, m}^k(\xi, \eta, \tau) e^{ix(\xi+\eta+\tau)} f_4(x) d\xi d\eta d\tau dx,
\end{equation*}
where
$$
\frakm_{-, -, m}^k(\xi, \eta, \tau):=\left(\int_{\R} e^{i \left(-\frac{\xi}{2^k}t+\frac{\eta}{2^{2k}} t^2+\frac{\tau}{2^{3k}} t^3 \right)} \rho(t) dt \right) \phi_{-} \left(\frac{\xi}{2^k} \right) \phi_{-} \left(\frac{\eta}{2^{2k}} \right) \phi\left(\frac{\tau}{2^{m+3k}} \right).
$$
Applying a standard Taylor series expansion together with an integration by parts argument, we have
\begin{eqnarray*}
&& \frakm_{-, -, m}^k \left(\xi, \eta, \tau \right)=\sum_{\ell_1, \ell_2 \ge 0} \frac{C_{\ell_1, \ell_2}}{\ell_1! \ell_2!} \left(\frac{\xi}{2^k} \right)^{\ell_1} \phi_{-} \left(\frac{\xi}{2^k} \right) \left(\frac{\eta}{2^{2k}} \right)^{\ell_2} \phi_{-} \left(\frac{\eta}{2^{2k}} \right)  \phi \left(\frac{\tau}{2^{m+3k}} \right) \int_{\R} e^{i \frac{\tau}{2^{3k}} t^3} t^{\ell_1+2\ell_2} \rho(t)dt \\
&& \quad  \quad  \quad \approx \frac{1}{2^m} \sum_{\ell_1, \ell_2 \ge 0} \frac{C_{\ell_1, \ell_2}}{\ell_1! \ell_2!} \left(\frac{\xi}{2^k} \right)^{\ell_1} \phi_{-} \left(\frac{\xi}{2^k} \right) \left(\frac{\eta}{2^{2k}} \right)^{\ell_2} \phi_{-} \left(\frac{\eta}{2^{2k}} \right)  \phi \left(\frac{\tau}{2^{m+3k}} \right).
\end{eqnarray*}
Once here, the proof follows a similar path with the one provided for the case $j, l, m<0$, and thus we will provide below only a sketch. Recalling that $\calF^{-1} \phi$ has mean zero, we consider two subcases:
\begin{enumerate}
    \item [$\bullet$] If $(\ell_1, \ell_2) \neq (0, 0)$ we may assume without loss of generality that $\ell_1>0$ and hence $\Psi_{-}^{\ell_1}$ has mean zero; therefore, for any $\vec{p}\in \textbf{\underline{B}}$ and with the same notational abuse as before, we have
    $$
    \left|\Lambda_{-, -, +} (\vec{f}) \right| \lesssim \sum_{m \ge 0} \frac{1}{2^m} \sum_{\ell_1, \ell_2>0} \frac{|C_{\ell_1, \ell_2}|}{\ell_1! \ell_2!} \int_{\R} Sf_1(x)\,Mf_2(x)\,Sf_3(x)\, |f_4(x)| dx \lesssim \|\vec{f}\|_{L^{\vec{p}}}\,.
    $$
    \item [$\bullet$] If $(\ell_1, \ell_2)=(0, 0)$ then one can mimic the arguments provided in the second half of Section \ref{case1low}.
\end{enumerate}

\subsection{Case 3: One index is negative and two are positive.}\label{20230326subsec01}

Again, without loss of generality we may assume that $j, l \ge 0$ and $m<0$ as the other two cases can be treated in a similar fashion. Then, one has

\begin{equation} \label{20230330eq01}
\Lambda_{+, +, -} (\vec{f})=\sum_{j, l \ge 0} \sum_{k \in \Z} \Lambda_{j, l, -}^k (\vec{f})
=\sum_{j, l \ge 0} \sum_{k \in \Z} \int_{\R} \int_{\R^3} \widehat{f_1}(\xi)  \widehat{f_2}(\eta) \widehat{f_3} (\tau) \frakm_{j, l, -}^k(\xi, \eta, \tau) e^{ix(\xi+\eta+\tau)} f_4(x) d\xi d\eta d\tau dx,
\end{equation}
where
$$
\frakm_{j, l, -}^k(\xi, \eta, \tau):=\left(\int_{\R} e^{i \left(-\frac{\xi}{2^k}t+\frac{\eta}{2^{2k}} t^2+\frac{\tau}{2^{3k}} t^3 \right)} \rho(t) dt \right) \phi \left(\frac{\xi}{2^{j+k}} \right) \phi \left(\frac{\eta}{2^{l+2k}} \right) \phi_{-}\left(\frac{\tau}{2^{3k}} \right),
$$
which by a standard Taylor series argument can be written as
$$
\sum_{\ell_3 \ge 0} \frac{C_{\ell_3}}{\ell_3!} \left( \frac{\tau}{2^{3k}} \right)^{\ell_3} \phi_{-}\left(\frac{\tau}{2^{3k}} \right)   \phi \left(\frac{\xi}{2^{j+k}} \right) \phi \left(\frac{\eta}{2^{l+2k}} \right) \int_{\R} e^{i \left(-\frac{\xi}{2^k}t+\frac{\eta}{2^{2k}} t^2\right)} \rho_{\ell_3}(t) dt,
$$
where $\rho_{\ell_3}(t):=t^{3\ell_3} \rho(t)$.

Plugging the above expression back to \eqref{20230330eq01}, we see that
\begin{eqnarray*}
&& \left| \Lambda_{+, +, -} (\vec{f}) \right| \lesssim \sum_{\ell_3 \ge 0} \frac{|C_{\ell_3}|}{\ell_3!} \int_{\R} \left| \int_{\R} \widehat{f_3}(\tau) \left(\frac{\tau}{2^{3k}} \right)^{\ell_3} \phi_{-} \left(\frac{\tau}{2^{3k}} \right) e^{ix \tau} d\tau \right| |f_4(x)|\\
&&\left| \sum_{\substack{j, l  \ge 0 \\ k \in \Z}} \int_{\R^2} \widehat{f_1}(\xi) \widehat{f_2}(\eta) \phi \left(\frac{\xi}{2^{j+k}} \right) \phi \left(\frac{\eta}{2^{l+2k}} \right) \left(\int_{\R} e^{i \left(-\frac{\xi}{2^k}t+\frac{\eta}{2^{2k}} t^2 \right)} \rho_{\ell_3}(t) dt \right) e^{ix(\xi+\eta)} d\xi d\tau \right|\, dx \\
&& \quad \lesssim \sum_{\ell_3 \ge 0} \frac{|C_{\ell_3}|}{\ell_3!} \int_{\R} \left|B_{\ell_3}(f_1, f_2)(x) \right|\,M f_3(x)\, |f_4(x)|\,dx \lesssim \|\vec{f}\|_{L^{\vec{p}}},
\end{eqnarray*}
where
$$
B_{\ell_3}(f_1, f_2)(x):=\sum_{\substack{j, l  \ge 0 \\ k \in \Z}} \int_{\R^2} \widehat{f_1}(\xi) \widehat{f_2}(\eta) \phi \left(\frac{\xi}{2^{j+k}} \right) \phi \left(\frac{\eta}{2^{l+2k}} \right) \left(\int_{\R} e^{i \left(-\frac{\xi}{2^k}t+\frac{\eta}{2^{2k}} t^2 \right)} \rho_{\ell_3}(t) dt \right) e^{ix(\xi+\eta)} d\xi d\tau
$$
is the stationary component of the \emph{curved bilinear Hilbert transform along the curve} $(-t, \gamma(t))$ with $\gamma(t)=t^2$ whose complete $L^p$ boundedness range is known and has been investigated in a number of papers, see \textit{e.g.} \cite{Li13}, \cite{Lie15}, \cite{LX16}, \cite{Lie18}, \cite{GL20}, \cite{GL22}.
\end{proof}

\section{The diagonal term $\Lambda^{D}$: An outline of the case $k<0$}\label{Negative}

In this section we focus on the treatment of the main diagonal term for the situation when $k$ is negative. In contrast with what we have seen for the off diagonal case in the previous section and consistent with the story line from Section \ref{MaindiagKpositive} treating the main diagonal term for $k\geq 0$, the crux of the approach presented in this section relies on the newly developed Rank II LGC methodology.

That being said, as expected, the case $k<0$ mirrors in all the key points the case $k\geq 0$ discussed in great detail in Section \ref{MaindiagKpositive}. For this reason and due to space limitation concerns we will only insist on the most important modifications for treating the current negative case. In this context, with the usual notations and conventions, taking $m\in\N,\,k\in\Z_{-}$ and
\begin{equation} \label{20230330eq02}
\Lambda_m^k (\vec{f})=\int_{\R} \int_\R f_{1,m+k}(x-t)\, f_{2,m+2k}(x+t^2) \,f_{3,m+3k}(x+t^3)\, f_4(x)\, 2^k \rho(2^{k} t)\, dt\,dx\,,
\end{equation}
we notice the following interrelated key differences relative to the $k\geq 0$ case:
\begin{itemize}
\item the dominant frequency regime is imposed by $f_1$ and hence we now have $\supp \ \widehat{f_4} \subseteq \left\{|\xi| \sim 2^{m+k} \right\}$;

\item the dominant spatial interval in the $x-$variable becomes $I^{3k}:=I_0^{0, 3k}$ instead of  $I^k:=I_0^{0, k}$ for $k\geq 0$;

\item the roles of $f_1$ and $f_3$ are to be interchanged.
\end{itemize}

With these being said, in what follows we will provide just an outline for the proof of the following result that is the natural analogue of Theorem \ref{mainthm-2022}:

\begin{thm} \label{20230330thm01}
There exists some $\underline{\epsilon_1}>0$, such that for any $k<0$, $m \in\N$ and
$$\forall\:\:\vec{p}\in {\bf H}^{-}:=\left\{\vec{p} \in \overline{{\bf B}} \ | \ p_3=2 \ \& \ (p_1, p_2, p_4) \in \{2, \infty\} \right\}$$
the following holds:
\begin{equation}\label{keyestimneg}
\left|\Lambda_m^{k, I^{3k}}(\vec{f})\right| \lesssim 2^{-\underline{\epsilon_1} \min\{-k,\frac{m}{2}\}} \,\|\vec{f}\,\|_{L^{\vec{p}}(3\, I^{3k})}\,.
\end{equation}
\end{thm}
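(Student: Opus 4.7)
The plan is to mirror the structure used for the $k\geq 0$ case, treating Theorem \ref{20230330thm01} as the dual counterpart under the three interchanges highlighted at the beginning of Section \ref{Negative}: namely, the frequency dominance passes from $f_3$ to $f_1$ (so $f_4$ inherits the $f_1$-frequency support), the governing spatial interval becomes $I^{3k}$ (since for $k<0$ one has $|t|\approx 2^{-k}$, so $|t^3|\approx 2^{-3k}$ is the dominant composition scale), and the pair playing the role of $(f_1,f_2)$ in the joint Fourier coefficient is now $(f_2,f_3)$ with the Gabor decomposition applied to $f_1$. After a reduction to the interval $I^{3k}$ analogous to \eqref{20230130eq01} and the application of Rank II LGC as in Section \ref{IIPrelim}, one arrives at correlative time-frequency models that are the formal duals of \eqref{Case1} and \eqref{CTFM2}. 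A convenient mechanical device for transcribing the arguments of Sections \ref{MaindiagKpositive}--\ref{Sec06} is the change of variable $x\to x-t^3$ inside \eqref{20230330eq02}, which effectively swaps the roles of $f_3$ and $f_4$ and, coupled with the rescaling $t\to 2^{-k}s$, brings the integral into a form where the dominant physical scale is $s\sim 1$ and the remaining parameter is $\min\{-k,m/2\}$.

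I would then separate the proof into the three regimes used for $k\geq 0$:  (i) $-k\geq m/2$, handled by a spatial sparse-uniform dichotomy on $3I^{3k}$ as in Section \ref{MaindiagKpositive}; (ii) $0<-k\leq m/4$, handled by the frequency-fiber time-frequency dichotomy of Section \ref{Sec05}; and (iii) $m/4\leq -k<m/2$, handled as in Section \ref{Sec06}. In regime (i), successive Cauchy--Schwarz applications (with an $l^2\times l^2\times l^\infty$ H\"older step in the appropriate frequency variable) reduce matters to an analogue of $\textbf{L}_{m,k}(f_2,f_3)$ featuring a linearizing function $\lambda$ that now encodes the $f_1$-frequency location and is constant on intervals of length $2^{-2k}$; one then runs the same bootstrap constancy-propagation algorithm (Propositions \ref{constantprop}, \ref{mainprop01}) to pass $\lambda$ to the linearizing scale $2^{-\frac{m}{2}-k}$, whereupon Proposition \ref{mainprop02} applies after rescaling, since its cancellation mechanism rests on the curvature of the cubic phase, a feature that is preserved under the role swap.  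In regimes (ii) and (iii), the time-frequency correlation set $\sTFC$ takes the same algebraic shape, and the level-set analysis of Subsection \ref{20230306subsec01} goes through verbatim because the identity \eqref{abc1} is a purely algebraic consequence of the cubic ansatz and is insensitive to the sign of $k$.

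The main obstacle I expect is bookkeeping: one must carefully verify that all the scale separations and exponent choices used in Observations \ref{bkI} and \ref{bkII} remain admissible after the role-swap, and in particular that the sparse indices \eqref{sparse12}--\eqref{sparse34} and their time-frequency analogues in Section \ref{SU1} are defined relative to the correct dominant scale (now $|I^{3k}|$). A secondary technical point is that the key estimate \eqref{20230208eq51} for the monotonicity of $q\mapsto \sqrt{2^{m/2}u+q^2}/q$, which drives the non-singular part of $\frakD_{\textnormal{off}}^2$, must be replaced by the analogous monotonicity of the map arising after the rescaling $t\to 2^{-k}s$; this reduces to checking that the corresponding derivative is of size $\approx 2^{-m/2}$, which follows from the same second-order Taylor expansion.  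Once these adjustments are made, the decay rate $2^{-\underline{\epsilon_1}\min\{-k,m/2\}}$ emerges with $\underline{\epsilon_1}$ of the same order as the $\epsilon_1$ of Observation \ref{bkI}, thereby completing the proof; Theorem \ref{mainthm} then follows by combining Theorem \ref{mainthm-2022} with Theorem \ref{20230330thm01} and a standard multilinear interpolation across the vertices of ${\bf H}^{+}\cup {\bf H}^{-}$.
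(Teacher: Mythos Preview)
Your overall strategy—mirroring the $k\geq 0$ analysis with the roles of $f_1$ and $f_3$ swapped, Gabor-decomposing $f_1$, and running the sparse--uniform dichotomy in the three regimes—matches the paper's plan. However, two specific devices you propose do not work as stated and differ from what the paper actually does.

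First, the change of variables $x\to x-t^3$ does not produce the clean dual structure you describe: after this shift the arguments of $f_1,f_2,f_4$ become $x-t^3-t$, $x-t^3+t^2$, $x-t^3$, all dominated by the same $-t^3$ term, so the separation needed to isolate a joint coefficient in $(f_2,f_3)$ is lost rather than gained. The paper instead applies $t^3\to t$ in the $t$-integral (absorbing the Jacobian into $\rho$), which yields $f_1(x-t^{1/3})\,f_2(x+t^{2/3})\,f_3(x+t)\,f_4(x)$; now $f_3$ carries the linear argument and $f_1$ the ``most curved'' one, and the Rank~II LGC machinery transcribes directly with $J_{q,n}^k(f_3,f_2)$ in place of $J_{q,n}^k(f_1,f_2)$.

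Second, and more substantively, your claim that Proposition \ref{mainprop02} ``applies after rescaling'' understates what is needed. Because the arguments now involve $t^{1/3}$ and $t^{2/3}$, the phase $\Phi_{r,p}(x)$ in the analogue of \eqref{20230215eq01} carries fractional powers of $p$ and $r$, and the quantity $V_{p_1,p_2,r_1,r_2}(x)$ no longer separates as in the $k\geq 0$ case. The paper handles this by cubing the near-vanishing relation (see \eqref{20230407eq01}--\eqref{20230407eq02}), producing a degree-$9$ polynomial constraint in $s_1/2^{m/2}$; repeated differencing then forces $|A_{u_1,u_2,v_1}|,|B_{u_1,u_2,v_1}|\lesssim 2^{m/3+O(\epsilon m)}$, after which the argument closes. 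This cubic-power step has no counterpart in the $k\geq 0$ proof and is the one genuinely new ingredient you would need to supply. Similarly, the $\sTFC$ sets for $0\leq -k\leq m/2$ involve $(2^kq)^{-1/3}$ and $(2^kq)^{-2/3}$ rather than $q$ and $q^2$, so while the level-set analysis of Subsection \ref{20230306subsec01} does adapt, the algebra (in particular the analogue of \eqref{abc1}) is not verbatim and must be reworked with the fractional exponents.
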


\subsection{The case $-k \ge \frac{m}{2}$.}

\subsubsection{The correlative time-frequency model} We initiate our approach by applying the change of variable $t^3 \to t$ in the $t$-integral above--see \eqref{20230330eq02}, which, after the standard partitioning of the $x-$domain, gives\footnote{With a slight notational abuse, we are preserving the $\rho$ notation in \eqref{20230330eq03} though this stands for  $\frac{1}{3} \cdot \frac{2^{-2k}}{t^{\frac{2}{3}}} \cdot \rho \circ t^{\frac{1}{3}}$ relative to the original notation.}
\begin{equation} \label{20230330eq03}
\Lambda_m^k (\vec{f})\approx\int_{I^{3k}} \int_{\R} f_1(x-t^{\frac{1}{3}})\, f_2 \left(x+t^{\frac{2}{3}} \right) f_3(x+t)\, f_4(x) \,2^{3k} \rho(2^{3k} t)\, dt\,dx.
\end{equation}
Following the strategy presented in Section \ref{IIPrelim}, we apply a Gabor frame decomposition of the input $f_1$ in order to obtain
$$
\Lambda_m^k (\vec{f}):=\sum_{\substack{p \sim 2^{\frac{m}{2}-2k} \\ q \sim 2^{\frac{m}{2}}}} \int_{I_p^{m, k}}  \int_{I_q^{m, 3k}} f_3(x+t)f_2(x+t^{\frac{2}{3}}) \left(\sum_{\substack{r \sim 2^{\frac{m}{2}-2k} \\ n \sim 2^{\frac{m}{2}}}} \left \langle f_1, \phi_{r, n}^k \right \rangle \phi_{r, n}^k \left(x-t^\frac{1}{3}\right) \right) f_4(x) 2^{3k} \varrho(2^{3k}t)dtdx.
$$
Next, applying a scale linearization argument in the $t-$parameter as dictated by the LGC method we deduce a first spatial correlation in the form of
$$
p \cong r+\left(2^m q \right)^{\frac{1}{3}}\,,
$$
together with the expression
\begin{equation} \label{20230407eq10}
\Lambda_m^k (\vec{f}) \sim 2^{3k} \sum_{\substack{p \sim 2^{\frac{m}{2}-2k} \\ q, n \sim 2^{\frac{m}{2}}}} \left \langle f_1, \phi_{p-(2^m q)^{\frac{1}{3}}, n}^k \right \rangle \int_{I_p^{m, 3k}} f_4(x) \calJ_{q, p+(2^m q)^{\frac{1}{3}}, n}^k (f_3, f_2)(x)dx,
\end{equation}
where
$$
\calJ_{q, p+(2^mq)^{\frac{1}{3}}, n}^k(f_3, f_2)(x):=\left(\int_{I_q^{m, 3k}} f_3(x+t)f_2(x+t^{\frac{2}{3}}) e^{-\frac{1}{3}i \cdot 2^{\frac{m}{2}+3k} t \left(\frac{q}{2^{\frac{m}{2}}} \right)^{-\frac{2}{3}}n} dt \right) \phi_{p, n}^{ k}(x) e^{-\frac{2}{3}i n \cdot (2^m q)^{\frac{1}{3}}}.
$$

Once at this point, following the spirit in Section \ref{IIlgcklarge}, we obtain the analogue of  Key Heuristic 1 therein:
$\newline$

\noindent\textbf{Key Heuristic 1$^\prime$.} \textit{Since $\supp \ \widehat{f_i} \subseteq \left[2^{m+ik}, 2^{m+ik+1} \right]$ for $i\in\{2,3\}$, via Heisenberg uncertainty principle, we have that
\begin{enumerate}
\item $f_2$ is morally a constant on intervals of length $2^{-m-2k}$;
\item $f_3$ is morally a constant on intervals of length $2^{-m-3k}$.
\end{enumerate}
In this new setting, the assumption $-k \ge \frac{m}{2}$ guarantees that $|I_p^{m, k}|=2^{-\frac{m}{2}-k} \le 2^{-m-2k} \le 2^{-m-3k}$. As a consequence, at the moral level, we have that the mapping
\begin{equation}\label{Jqnk}
x \mapsto J_{q, n}^k(f_3, f_2):=\int_{I_q^{m, 3k}} f_3(x+t)f_2(x+t^{\frac{2}{3}}) e^{-\frac{1}{3}i \cdot 2^{\frac{m}{2}+3k} t \left(\frac{q}{2^{\frac{m}{2}}} \right)^{-\frac{2}{3}}n} dt
\end{equation}
is constant on the interval $I_p^{m, k}$.
}
$\newline$

As an application of the above heuristic, we can now simultaneously preserve the correlation of the pair $(f_2,f_3)$ and decouple its information from the one carried by the pair $(f_1,f_4)$ thus obtaining the correlative time-frequency model
\begin{eqnarray*}
&& \left|\Lambda_m^k (\vec{f}) \right| \lesssim 2^{3k} \sum_{\substack{p \sim 2^{\frac{m}{2}-2k} \\ q, n \sim 2^{\frac{m}{2}}}} \left| \left \langle f_1, \phi_{p-(2^m q)^{\frac{1}{3}}, n}^k \right \rangle \right|  \left| \left \langle f_4, \phi_{p, n}^k \right \rangle \right| \\
&& \quad \quad \quad \quad \quad \cdot \frac{1}{\left|I_p^{m, k} \right|} \left| \int_{I_p^{m, k}} \int_{I_q^{m, 3k}} f_3(x+t)f_2(x+t^\frac{2}{3}) e^{-\frac{1}{3} i 2^{\frac{m}{2}+3k}t \left(\frac{q}{2^{\frac{m}{2}}} \right)^{-\frac{2}{3}} n} dtdx \right|.
\end{eqnarray*}

\subsubsection{The spatial sparse-uniform dichotomy} Following the next stage of Rank II LGC, we define the \emph{sparse index sets} relative to our input functions as follows: for $\mu \in (0, 1)$ sufficiently small, we set
$$
\calS_\mu(f_i):= \left\{r \sim 2^{m+(i-3)k}: \frac{1}{\left|I_0^{0, ik+m} \right|} \int_{3 I_r^{0, ik+m}} |M f_i|^2 \gtrsim \frac{2^{\mu m}}{\left|I^{3k} \right|}  \int_{I^{3k}} |f_i|^2  \right\}, \quad i \in \{2, 3\},
$$
and
$$
\calS_\mu(f_i):= \left\{\ell \sim 2^{\frac{m}{2}-2k}: \frac{1}{\left|I_0^{m, k} \right|} \int_{3I_\ell^{m, k}} |f_i|^2 \gtrsim \frac{2^{\mu m}}{\left|I^{3k}  \right|} \int_{I^{3k}}  |f_i|^2 \right\}, \quad i \in \{1, 4\}.
$$
Moreover, for some $\mu_0>\mu>0$ suitable small (to be chosen later), we define the \emph{uniform index sets} in an obvious way and decompose the main term $\Lambda_m^k (\vec{f})$ into the \emph{uniform component}  given by
$$
\Lambda_m^{k, \calU_\mu}(\vec{f}):=\Lambda_m^k \left(f_1^{\calU_\mu}, f_2^{\calU_{\mu_0}}, f_3^{\calU_{\mu_0}}, f_4^{\calS_\mu} \right)+ \Lambda_m^k \left(f_1^{\calS_\mu}, f_2^{\calU_{\mu_0}}, f_3^{\calU_{\mu_0}}, f_4^{\calU_\mu} \right)+\Lambda_m^k \left(f_1^{\calU_\mu}, f_2^{\calU_{\mu_0}}, f_3^{\calU_{\mu_0}}, f_4^{\calU_\mu} \right),
$$
and the \emph{sparse component} given by
$$\Lambda_m^{k, \calS_\mu}(\vec{f}):=\Lambda_m^k(\vec{f})-\Lambda_m^{k, \calU_\mu}(\vec{f})\,.$$

\subsubsection{Treatment of the sparse component $\Lambda_m^{k, \calS_\mu}$}
The estimates for the sparse component follow essentially the same strategy as the one presented in Section \ref{20230330subsec01} with only minor modifications and hence we skip the details here.

\subsubsection{Treatment of the uniform component $\Lambda_m^{k, \calU_\mu}$}
Turning now our attention to the uniform component, without loss of generality, we identify $\Lambda_m^{k, \calU_\mu}(\vec{f})$ with the representative
$$
\Lambda_m^k \left(f_1^{\calU_\mu}, f_2^{\calU_{\mu_0}}, f_3^{\calU_{\mu_0}}, f_4 \right)\,.
$$
Applying now Cauchy-Schwarz and H\"older, one deduces
\begin{eqnarray*}
&& \left| \Lambda_m^{k, \calU_\mu}(\vec{f}) \right| \lesssim 2^{3k} \sum_{\ell \sim 2^{-2k}} \left( \sum_{n \sim 2^{\frac{m}{2}}} \sum_{p=2^{\frac{m}{2}}(\ell-1)}^{2^{\frac{m}{2}}(\ell+1)}  \left| \left \langle f_1^{\calU_\mu}, \phi_{p, n}^k \right \rangle \right|^2 \right)^{\frac{1}{2}} \left( \sum_{n \sim 2^{\frac{m}{2}}} \sum_{p=2^{\frac{m}{2}}\ell}^{2^{\frac{m}{2}}(\ell+1)}  \left| \left \langle f_4, \phi_{p, n}^k  \right \rangle \right|^2 \right)^{\frac{1}{2}} \\
&& \quad \cdot \left( \frac{1}{\left|I_0^{m, k} \right|} \sup_{n \sim 2^{\frac{m}{2}}} \int_{I_{\ell}^{0, k}}  \left( \sum_{q \sim 2^{\frac{m}{2}}} \left| \int_{I_q^{m, 3k}} f^{\calU_{\mu_0}}_3(x+t)f^{\calU_{\mu_0}}_2(x+t^{\frac{2}{3}}) e^{-\frac{1}{3} i \cdot 2^{\frac{m}{2}+3k} t \left(\frac{q}{2^{\frac{m}{2}}} \right)^{-\frac{2}{3}} n} dt \right|^2 \right)dx \right)^{\frac{1}{2}}.
\end{eqnarray*}
Letting $\lambda$ be the measurable function that assumes the supremum in the last term above, we have
\begin{enumerate}
    \item [$\bullet$] $\lambda: I^{3k} \mapsto \left[2^{\frac{m}{2}}, 2^{\frac{m}{2}+1} \right] \cap \Z$;
    \item [$\bullet$] $\lambda$ is constant on intervals $\left\{I_\ell^{0, k} \right\}_{\ell \sim 2^{-2k}}$.
\end{enumerate}
With this, using Parseval, we have
\begin{eqnarray*}
&& \left|\Lambda_m^{k, \calU_\mu} \right| \lesssim 2^{3k} \sum_{\ell \sim 2^{-2k}} \left\|f^{\calU_\mu}_1 \right\|_{L^2 \left(3I_\ell^{0, k} \right)} \left\|f_4 \right\|_{L^2\left(I_\ell^{0, k}\right)} \\
&& \quad \cdot \left( \frac{1}{\left|I_0^{m, k} \right|} \int_{I_{\ell}^{0, k}}  \left( \sum_{q \sim 2^{\frac{m}{2}}} \left| \int_{I_q^{m, 3k}} f^{\calU_{\mu_0}}_3(x+t)f^{\calU_{\mu_0}}_2(x+t^{\frac{2}{3}}) e^{-\frac{1}{3} i \cdot 2^{\frac{m}{2}+3k} t \left(\frac{q}{2^{\frac{m}{2}}} \right)^{-\frac{2}{3}} \lambda(x)} dt \right|^2 \right)dx \right)^{\frac{1}{2}}.
\end{eqnarray*}
Finally, from the uniformity of $f_1$ and Cauchy-Schwarz, we conclude
$$
\left| \Lambda_m^{k, \calU_\mu} \right|  \lesssim 2^{3k} \cdot 2^{\frac{\mu m}{2}} \left\|f_1 \right\|_{L^2(3 I^{3k})} \left\|f_4 \right\|_{L^2 (3 I^{3k})}  {\bf L}_{m, k}(f_3, f_2),
$$
where
$$
{\bf L}_{m, k}^2(f_3, f_2):=\frac{1}{\left|I_0^{m, 3k} \right|} \int_{I^{3k}}  \left( \sum_{q \sim 2^{\frac{m}{2}}} \left| \int_{I_q^{m, 3k}} f^{\calU_{\mu_0}}_3(x+t)f^{\calU_{\mu_0}}_2(x+t^{\frac{2}{3}}) e^{-\frac{1}{3} i \cdot 2^{\frac{m}{2}+3k} t \left(\frac{q}{2^{\frac{m}{2}}} \right)^{-\frac{2}{3}} \lambda(x)} dt \right|^2 \right)dx.
$$
Therefore, it suffices to show the following analog of Theorem \ref{20230202thm01}.

\begin{namedthm*}{Theorem \ref{20230202thm01}$^\prime$} \label{20230330thm02}
Let $-k \ge \frac{m}{2}$. Then under the setup above, there exists an absolute $\underline{\epsilon_2}>0$, such that for any $\lambda(\cdot): I^{3k} \mapsto \left[2^{\frac{m}{2}}, 2^{\frac{m}{2}+1} \right] \cap \Z$ measurable function that is constant on intervals $\left\{I_\ell^{0, k} \right\}_{\ell \sim 2^{-2k}}$ one has
$$
{\bf L}_{m, k}(f_3, f_2) \lesssim 2^{-\underline{\epsilon_2} m} \left\|f_2 \right\|_{L^2 \left(3 I^{3k} \right)}\left\|f_3 \right\|_{L^2 \left(3 I^{3k} \right)}.
$$
\end{namedthm*}
As expected, the proof of Theorem \ref{20230202thm01}$^\prime$ follows a similar strategy with the one employed for proving Theorem \ref{20230202thm01}. In what follows, we outline the main steps. We start with the analogue of Key Heuristic 3:
$\newline$

\noindent\textbf{Key Heuristic 3$^\prime$.} \textit{Let $w \in \left\{-k, \frac{m}{2} \right\}$ and assume that
\begin{center}
$\lambda(\cdot)$ is constant on intervals of length $2^{-2k-w}$.
\end{center}
Then, for each $q \sim 2^{\frac{m}{2}}$ and $p \sim 2^w$, the continuous function
$$
F: I_0^{0, 3k+w} \ni \alpha \longmapsto \int_{I_p^{0, 3k+w}} \left| \int_{I_q^{m, 3k}} f_3(x+t+\alpha) f_2 \left(x+t^{\frac{2}{3}} \right) e^{-\frac{1}{3} i \cdot 2^{\frac{m}{2}+3k} t \left(\frac{q}{2^{\frac{m}{2}}} \right)^{-\frac{2}{3}} \lambda(x)} dt \right|^2 dx
$$
is essentially constant, in the sense described by Proposition \ref{constantprop}.}
$\newline$

Following the presentation in Section \ref{DiagUnifKlarge}--see Key Heuristic 4 therein, we have now the analogue
\medskip

\noindent\textbf{Key Heuristic 4$^\prime$.} \textit{Given $p \sim 2^w$, our approach relies on the following {\bf dichotomy:}
\begin{enumerate}
    \item [$\bullet$] either $\lambda(\cdot)$ is morally constant on $I_p^{0, 3k+w}$;
    \item [$\bullet$] or there exists some $\underline{\epsilon_3^w}>0$, such that
    $$
    \left|V_{m, k}^{p, w}(f_3, f_2) \right| \lesssim 2^{-2\underline{\epsilon_3^w}m} \left\|f_2 \right\|_{L^2(3 I^{3k})}^2 \left\|f_3 \right\|_{L^2 \left(3 I^{3k} \right)}^2,
    $$
    where
    $$
    V_{m, k}^{p, w}(f_3, f_2):=2^{\frac{m}{2}+3k+w} \sum_{q \sim 2^{\frac{m}{2}}} \int_{I_p^{0, 3k+w}} \left| \int_{I_q^{m, 3k}} f_3(x+t) f_2 \left(x+t^{\frac{2}{3}} \right) e^{-\frac{1}{3} i \cdot 2^{\frac{m}{2}+3k} t \left(\frac{q}{2^{\frac{m}{2}}} \right)^{-\frac{2}{3}} \lambda(x)} dt \right|^2 dx.
    $$
\end{enumerate}}
$\newline$

To formulate the above principle rigorously, we follow the arguments in \eqref{20230205eq11}--\eqref{20230205eq12} and define the analogues of $V_{m, k}^{p, \calH_p^w}(f_3, f_2)$ and $V_{m, k}^{p, \calL_p^w}(f_3, f_2)$ in the obvious way. The two propositions corresponding to the above dichotomy are stated as follows:

\begin{namedthm*}{Proposition \ref{mainprop01}$^{\prime}$}
Let $w \in \left\{-k, \frac{m}{2} \right\}$, then there exists some $\underline{\epsilon_3^w}>0$, such that
$$
\left|V_{m, k}^{p, \calL_p^w}(f_3, f_2) \right| \lesssim 2^{-2 \underline{\epsilon_3^w} m} \left\|f_2 \right\|^2_{L^2 \left(3 I^{3k} \right)} \left\|f_3 \right\|^2_{L^2\left(3 I^{3k} \right)}.
$$
\end{namedthm*}

{\raggedleft \textit{Observation \ref{constancy}$^{\prime}$}} [\textsf{Constancy propagation for $\lambda$}] Mirroring the approach for $k\geq\frac{m}{2}$, we apply the following bootstrap algorithm:
\begin{itemize}
\item from our initial hypothesis, we know that $\lambda$ is constant on intervals $\left\{I_\ell^{0, k} \right\}_{\ell \sim 2^{-2k}}$. We thus run Proposition \ref{mainprop01}$^{\prime}$ for $w=-k$ in order to reduce matters to the case when $\lambda$ is constant on intervals $\left\{I_\ell^{0, 2k} \right\}_{\ell \sim 2^{-k}}$.

\item feeding this back to the analogue of Proposition \ref{constantprop} followed by Proposition \ref{mainprop01}$^{\prime}$ this time for $w=\frac{m}{2}$, we can put ourselves in the situation when $\lambda$ is constant on all intervals $\left\{I_p^{m, 3k} \right\}_{p \sim 2^{\frac{m}{2}}}$.
\end{itemize}

\begin{namedthm*}{Proposition \ref{mainprop02}$^{\prime}$}
Assume that for each $p \sim 2^{-k}$, one has
$$
\lambda(x)=\lambda(y) \quad \forall x, y \in I_p^{m, 3k}.
$$
Then there exists a suitable absolute constant $\underline{\epsilon_4}>0$ such that
$$
{\bf L}_{m, k}^2(f_3, f_2) \lesssim 2^{-2\underline{\epsilon_4} m} \left\|f_1 \right\|_{L^2 \left(3 I^{3k} \right)}^2 \left\|f_2 \right\|^2_{L^2 \left(3 I^{3k} \right)}
$$
\end{namedthm*}

Since the proofs of these last two Propositions follow similar arguments with those corresponding to their counterparts for $k \ge 0$ in what follows we present only an outline/summary of the arguments.

\medskip

\noindent\textit{Proof of Proposition  \ref{mainprop01}$^{\prime}$.} Using now Key Heuristic 3$^{\prime}$ and arguing via a standard $TT^*-$method, we see that it is enough to consider
\begin{equation}\label{20230206eq03n}
\calV^w_{m, k}:=2^{\frac{m}{2}+3k+w+\del_w m} \sum_{q \sim 2^{\frac{m}{2}}} \int_{I_0^{0, 3k+w}} \left| \int_{I_q^{m, 3k}} f_3(t)f_2\left(x+t^{\frac{2}{3}} \right) e^{-\frac{i}{3} 2^{\frac{m}{2}+3k}t \left(\frac{q}{2^{\frac{m}{2}}} \right)^{-\frac{2}{3}} \lambda(x)} dt \right|^2 dx.
\end{equation}
Applying a change of variables and Cauchy-Schwarz twice, we have
\begin{equation}\label{20230206eq30n}
\calV^w_{m, k} \lesssim 2^{\frac{m}{2}+3k+w+\del_w m} \calA \calB,
\end{equation}
where
$$
\calA^2:=\iint_{\left(I^{m, 3k}_0 \right)^2} \sum_{q \sim 2^{\frac{m}{2}}} \left|f_3 \left(t+\frac{q}{2^{\frac{m}{2}+3k}} \right) \right|^2  \left|f_3 \left(s+\frac{q}{2^{\frac{m}{2}+3k}} \right) \right|^2 dtds \lesssim \frac{2^{2\mu_0 m}}{2^{\frac{m}{2}}} \left\|f_3\right\|_{L^3 \left(3 I^{3k} \right)}^4,
$$
and
\begin{eqnarray*}
&& \calB^2:= \iint_{\left(I_0^{m, 3k} \right)^2} \sum_{q \sim 2^{\frac{m}{2}}} \bigg| \int_{9I_0^{0, 3k+w}} f_2 \left(x+\frac{2t}{3} \left(\frac{q}{2^{\frac{m}{2}+3k}} \right)^{-\frac{1}{3}} \right) f_2 \left(x+\frac{2s}{3} \left(\frac{q}{2^{\frac{m}{2}+3k}} \right)^{-\frac{1}{3}} \right) \\
&& \quad \quad\quad\quad\quad\quad \cdot e^{-\frac{i}{3} 2^{\frac{m}{2}+3k}(t-s) \left(\frac{q}{2^{\frac{m}{2}}} \right)^{-\frac{2}{3}} \lambda \left(x-\left(\frac{q}{2^{\frac{m}{2}+3k}} \right)^{\frac{2}{3}} \right)} dx  \bigg|^2 dtds.
\end{eqnarray*}
Following now the arguments presented in \textsf{Step III: Treatment of $\calB$} in the proof of  Proposition  \ref{mainprop01}, we derive
\begin{equation} \label{20230207eq05n}
\calB^2 \lesssim \frakC \frakD,
\end{equation}
where
\begin{eqnarray*}
\frakC^2%
&:=& \iiint_{\left(17I_0^{0, 3k+w} \right)^2 \times 7I_0^{m, 3k}} \left|f_2(x) f_2(y) f_2(x-2^kv) f_2(y-2^kv) \right|^2 dvdxdy \\
&\lesssim& 2^{-\frac{m}{2}+3k-2w} \cdot 2^{4\mu_0 m} \left\|f_2 \right\|_{L^2 \left(3 I^{3k} \right)}^8,
\end{eqnarray*}
and
\begin{eqnarray*}
&& \frakD^2:=\iiint_{\left(17I_0^{0, 3k+w} \right)^2 \times 7I_0^{m, 3k}} dvdxdy  \\
&& \quad \left| \int_{3I_0^{m, 3k}} \left(\sum_{q \sim 2^{\frac{m}{2}}} e^{-\frac{i}{2} 2^{\frac{m}{2}+3k}v\left(\frac{q}{2^{\frac{m}{2}}} \right)^{-\frac{1}{3}} \left[ \lambda \left(x-2^k t-\left(\frac{q}{2^{\frac{m}{2}+3k}} \right)^{\frac{2}{3}} \right)- \lambda \left(y-2^k t-\left(\frac{q}{2^{\frac{m}{2}+3k}} \right)^{\frac{2}{3}} \right) \right]} \right) dt\right|^2.
\end{eqnarray*}

To estimate the term $\frakD$, we follow the arguments at {\sf Step V} and {\sf Step VI} part of the proof of Proposition \ref{mainprop01} in order to deduce
$$
\frakD^2 \lesssim \left|I_0^{m, 3k} \right|^2 \sum_{q, q_1 \sim 2^{\frac{m}{2}}} \iint_{\left(35I_0^{0, 3k+w} \right)^2 \times 5I_0^{m, k}} \left\lceil \widetilde{\Xi}_{x, y, s}(q, q_1) \right\rceil dsdxdy,
$$
where
$$
\widetilde{\Xi}_{x, y, s}(q, q_1):=\left[ \lambda(x)-\lambda(y) \right]-\frac{ \lambda \left(x+2^k s+\frac{1}{2^{2k}} \left( \left(\frac{q}{2^{\frac{m}{2}}} \right)^{\frac{2}{3}}-\left(\frac{q_1}{2^{\frac{m}{2}}} \right)^{\frac{2}{3}} \right) \right) -\lambda \left(y+2^k s+\frac{1}{2^{2k}} \left( \left(\frac{q}{2^{\frac{m}{2}}} \right)^{\frac{2}{3}}-\left(\frac{q_1}{2^{\frac{m}{2}}} \right)^{\frac{2}{3}} \right) \right)}{\left(q_1/q \right)^{\frac{1}{3}}}.
$$
Applying the changes of variable $2^k s \to 2^k z-\frac{1}{2^{2k}} \left( \left(\frac{q}{2^{\frac{m}{2}}} \right)^{\frac{2}{3}}-\left(\frac{q_1}{2^{\frac{m}{2}}} \right)^{\frac{2}{3}} \right)+\frac{1}{2^{\frac{m}{2}+2k}} \left\lfloor 2^{\frac{m}{6}} \left(q^{\frac{2}{3}}-q_1^{\frac{2}{3}} \right) \right\rfloor$ followed by $u=\left\lfloor 2^{\frac{m}{6}} \left(q^{\frac{2}{3}}-q_1^{\frac{2}{3}} \right) \right\rfloor$, the above expression becomes
$$
\Xi_{x, y, z, u}(q_1):=\left[\lambda(x)-\lambda(y) \right]- \sqrt{ \frac{u}{2^{\frac{m}{6}} q_1^{\frac{2}{3}}}+1} \left[\lambda \left(x+2^k z+\frac{u}{2^{\frac{m}{2}+2k}} \right)-\lambda \left(y+2^k z+\frac{u}{2^{\frac{m}{2}+2k}} \right) \right].
$$
Since the above expression is in the same spirit with the one at {\bf Step IV \& V} in the proof of Proposition \ref{mainprop01}, one can now apply a similar analysis--whose details will be skipped here for brevity reasons.  \hfill\fbox

\medskip

\noindent\textit{Proof of Proposition  \ref{mainprop02}$^{\prime}$.} For the clarity of the exposition, we set as before $f_2=f_2^{\calU_{\mu_0}}$ and $f_3=f_3^{\calU_{\mu_0}}$ and, more importantly, we  choose to skip/ignore the preliminary reductions presented at \textsf{Step I} in the proof of Proposition \ref{mainprop02} thus avoiding the references to the parameters $\sigma$ and $\tau$ therein as well as to the latter introduced parameter $\varsigma$ (though of course, if one wants to produce a rigorous proof than the outline presented below must be amended in order to include all the these parameters).

With these clarified, the ``picture from above" can be presented as follows:
$$
{\bf L}_{m, k}^2(f_3, f_2) \lesssim 2^{\frac{m}{2}+3k} \frakE \frakF,
$$
where
$$
\frakE^2:=\int_{I^{3k}} \iint_{\left(I_0^{m, 3k}\right)^2} \left|f_2 \left(x+\frac{2 \cdot 2^k}{3}t  \right) f_2 \left(x+\frac{2 \cdot 2^k}{3}s  \right) \right|^2 dtdsdx \lesssim \frac{2^{2\mu_0 m}}{2^{m+3k}} \left\|f_2 \right\|_{L^2(3 I^{3k})}^4,
$$
and
\begin{eqnarray*}
&& \frakF^2:=\int_{I^{3k}} \iint_{\left(I_0^{m, 3k} \right)^2} \Bigg| \sum_{q \sim 2^{\frac{m}{2}}} f_3 \left(x+t\left(\frac{q}{2^{\frac{m}{2}}} \right)^{\frac{1}{3}}+\frac{q}{2^{\frac{m}{2}+3k}} \right) \\
&& \quad \quad \quad \quad \quad \quad \quad \quad \quad \cdot f_3 \left(x+s\left(\frac{q}{2^{\frac{m}{2}}} \right)^{\frac{1}{3}}+\frac{q}{2^{\frac{m}{2}+3k}} \right) e^{-\frac{i}{3} 2^{\frac{m}{2}+3k}(t-s) \left(\frac{q}{2^{\frac{m}{2}}} \right)^{-\frac{1}{3}} \lambda(x)} \Bigg|^2 dtdsdx.
\end{eqnarray*}
To estimate the term $\frakF$, we follow the arguments presented in \eqref{20230213eq10} and \eqref{20230215eq01} to deduce
$$
\frakF^2 \lesssim  \frakG \frakH,
$$
where
$$
\frakG^2:=\int_{I^{3k}} \iint_{\left(I_0^{m, 3k} \right)^2} \sum_{r \sim 2^{\frac{m}{2}}} \left| f_3 \left(x+\left(\frac{r}{2^{\frac{m}{2}}} \right)^{-\frac{1}{3}} t \right) f_3 \left(x+\left(\frac{r}{2^{\frac{m}{2}}} \right)^{-\frac{1}{3}} s \right) \right|^2 dtdsdx \lesssim 2^{-\frac{m}{2}-3k} \cdot 2^{2\mu_0 m} \left\|f_3 \right\|_{L^2 \left(3 I^{3k} \right)}^4,
$$
and
$$
\frakH^2:=\int_{I^{3k}} \iint_{\left(I_0^{m, 3k} \right)^2} \sum_{r \sim 2^{\frac{m}{2}}} \left| \sum_{p \sim 2^{\frac{m}{2}}} f_3\left(x+t-\frac{p}{2^{\frac{m}{2}+3k}} \right) f_3\left(x+s-\frac{p}{2^{\frac{m}{2}+3k}} \right) e^{-\frac{i}{3} \cdot 2^{\frac{m}{2}+3k}(t-s)\Phi_{r, p}(x)} \right|^2 dtdsdx,
$$
where
\begin{equation} \label{20230215eq01n}
\Phi_{r, p}(x):=\left(\frac{1-\frac{r}{2^{\frac{m}{2}}}}{\frac{pr}{2^m}} \right)^{\frac{1}{3}} \left[\left(\frac{1-\frac{r}{2^{\frac{m}{2}}}}{\frac{p}{2^\frac{m}{2}}}\right)^{\frac{1}{3}}-\left(\frac{1-\frac{r}{2^{\frac{m}{2}}}}{\frac{pr}{2^m}} \right)^{\frac{1}{3}} \right] \lambda \left(x-\frac{\frac{p}{2^{\frac{m}{2}}}}{2^{3k} \left(1-\frac{r}{2^{\frac{m}{2}}} \right)} \right).
\end{equation}
Now by applying a $TT^*$ argument followed by Cauchy-Schwarz, we see that
$$
\frakH^2 \lesssim \calI \calJ,
$$
where
\begin{eqnarray*}
&& \calI^2:=\sum_{p_1, p_2 \sim 2^{\frac{m}{2}}} \int_{I^{3k}} \iint_{\left(I_0^{m, 3k} \right)^2} \bigg| f_3\left(x+t-\frac{p_1}{2^{\frac{m}{2}+3k}} \right) f_3\left(x+s-\frac{p_1}{2^{\frac{m}{2}+3k}} \right) \\
&& \quad \quad \quad \quad \quad \cdot f_3\left(x+t-\frac{p_2}{2^{\frac{m}{2}+3k}} \right) f_3\left(x+s-\frac{p_2}{2^{\frac{m}{2}+3k}} \right) \bigg|^2 dtdsdx \lesssim 2^{3k} \cdot 2^{4\mu_0 m} \left\|f_3 \right\|_{L^2 \left(3 I^{3k} \right)}^8
\end{eqnarray*}
and
$$
\calJ^2:=\sum_{p_1, p_2 \sim 2^{\frac{m}{2}}} \int_{I^{3k}} \iint_{\left(I_0^{m, 3k} \right)^2} \left| \sum_{r \sim 2^{\frac{m}{2}}} e^{-\frac{i}{3} \cdot 2^{\frac{m}{2}+3k}(t-s) \left(\Phi_{r, p_1}(x)-\Phi_{r, p_2}(x) \right)} \right|^2 dtdsdx.
$$
Following the arguments in \eqref{20230215eq10}, we further see that
\begin{equation} \label{20230215eq10n}
\calJ^2 \lesssim 2^{-m-6k} \sum_{p_1, p_2, r_1, r_2 \sim 2^{\frac{m}{2}}} \int_{I^{3k}} \left\lceil V_{p_1, p_2, r_1, r_2}(x) \right\rceil dx,
\end{equation}
where $V_{p_1, p_2, r_1, r_2}(x):=\Phi_{r_1, p_1}(x)-\Phi_{r_1, p_2}(x)-\Phi_{r_2, p_1}(x)+\Phi_{r_2, p_2}(x)$. To treat the term $V_{p_1, p_2, r_1, r_2}$, we proceed as follows (up to an admissible global error term):
\begin{itemize}
\item  For $r_1$ being fixed, apply the change variables:
$$
u_1=\left\lfloor \frac{2^{\frac{m}{2}}p_1}{2^{\frac{m}{2}}-r_1}\right\rfloor, \quad u_2=\left\lfloor \frac{2^{\frac{m}{2}}p_2}{2^{\frac{m}{2}}-r_1}\right\rfloor, \quad  \textrm{and} \quad v_1=\left\lfloor \frac{2^{\frac{m}{2}}p_1}{2^{\frac{m}{2}}-r_2}\right\rfloor;
$$
\item Multiply the term $\left(\frac{u_1}{v_1} \left(\frac{r_1}{2^{\frac{m}{2}}}-1 \right)+1 \right)^{\frac{2}{3}} \Big\slash \left(\frac{r_1}{2^{\frac{m}{2}}} \right)^{\frac{1}{3}}$;

\item For $u_1, u_2, v_2$ being fixed, apply the change variable $\frac{s_1^3}{2^{\frac{3m}{2}}}=\frac{u_1}{v_1} \left(1-\frac{2^{\frac{m}{2}}}{r_1} \right)+\frac{2^{\frac{m}{2}}}{r_1}+O \left(\frac{1}{2^{\frac{m}{2}}} \right)$;

\item Multiply the term $\left(\frac{v_1}{2^{\frac{m}{2}}} \right)^{\frac{2}{3}}$ followed by the change variable $x \to x+\frac{v_1}{2^{\frac{m}{2}+3k}}$.
\end{itemize}

\medskip

As a consequence, the term $V_{p_1, p_2, r_1, r_2}$ becomes
$$
V_{u_1, u_2, v_1, s_1}(x):= \left| \underbrace{\frac{A_{u_1, u_2, v_1}(x)s_1^2}{2^m}+\frac{B_{u_1, u_2, v_1}(x)s_1}{2^{\frac{m}{2}}}}_{:=C_{u_1,u_2,v_1}(x)[s_1]}+ \underbrace{\left(B_{u_1, u_2, v_1}(x)+\frac{A_{u_1, u_2, v_1}(x)s_1^2}{2^m} \right) \left(\frac{s_1^3}{2^{\frac{3m}{2}}}-\frac{u_1}{v_1} \right)^{\frac{1}{3}}\left(\frac{v_1}{v_1-u_1} \right)^{\frac{1}{3}}}_{:=D_{u_1,u_2,v_1}(x)[s_1]} \right|,
$$
where
$$
A_{u_1, u_2, v_1}(x):=\left(\frac{v_1}{u_1} \right)^{\frac{2}{3}} \lambda \left(x-\frac{u_1-v_1}{2^{\frac{m}{2}+3k}} \right)-\left(\frac{v_1}{u_2} \right)^{\frac{2}{3}} \lambda \left(x-\frac{u_2-v_1}{2^{\frac{m}{2}+3k}} \right)
$$
and
$$
B_{u_1, u_2, v_1}(x)=\lambda(x)-\left(\frac{u_1}{u_2} \right)^{\frac{2}{3}} \lambda \left(x-\frac{v_1}{2^{\frac{m}{2}+3k}} \left(\frac{u_2}{u_1}-1 \right) \right).
$$
Moreover, by \eqref{20230215eq10n}, we have
\begin{equation} \label{20230608eq01}
\calJ^2 \lesssim 2^{-m-6k} \sum_{u_1, u_2, v_1, s_1 \sim 2^{\frac{m}{2}}} \int_{3I^{3k}} \left\lceil V_{u_1, u_2, v_1, s_2}(x) \right\rceil dx.
\end{equation}
To estimate the above expression, we first fix $u_1,\,u_2,\,v_1$ and $x$ and then note that it is sufficient to consider the case when
\begin{equation} \label{20230407eq01}
D_{u_1,u_2,v_1}(x)[s_1]=-C_{u_1,u_2,v_1}(x)[s_1]\,+\,O( 2^{\epsilon_5 m})
\end{equation}
for a suitable $\epsilon_5>0$ being sufficiently small. Taking a cubic power in \eqref{20230407eq01} and dividing by $2^m\,\left(\frac{v_1}{v_1-u_1} \right)$ on both sides of the equality, we deduce that\footnote{The expression on the left-hand side of \eqref{20230407eq02} is a polynomial of degree $9$ in $\frac{s_1}{2^{\frac{m}{2}}}$. Since we only need to use the coefficients of either the constant term or of the $9^{th}$ degree term, we omit displaying the explicit form of the remaining terms.}
\begin{equation} \label{20230407eq02}
\left|\frac{A_{u_1, u_2, v_1}(x)^3}{2^m} \cdot \frac{s_1^9}{2^{\frac{9m}{2}}}+\frac{3B_{u_1, u_2, v_1}(x)A_{u_1, u_2, v_1}(x)^2}{2^m} \cdot \frac{s_1^7}{2^{\frac{7m}{2}}}+ \dots -\frac{B_{u_1, u_2, v_1}(x)^3}{2^m} \cdot \frac{u_1}{v_1}\right| \lesssim 2^{10\epsilon_5 m}.
\end{equation}
Without loss of generality, we may assume that there are $\gtrsim 2^{\frac{m}{2}-\epsilon_6 m}$ many $s_1 \sim 2^{\frac{m}{2}}$ such that \eqref{20230407eq02} holds for some suitably small $0<\epsilon_6<20\epsilon_5$. Using this, together with repeated applications of the mean value theorem, we deduce that
\begin{equation} \label{20230216eq22n}
\left|A_{u_1, u_2, v_1}(x) \right|, \left|B_{u_1, u_2, v_1}(x) \right| \lesssim 2^{\frac{m}{3}+100\epsilon_5 m}.
\end{equation}
As a consequence, from \eqref{20230608eq01} and up to an admissible global error,  we have
\begin{eqnarray*}
\calJ^2%
& \lesssim & 2^{-m-6k} \sum_{u_1, u_2, v_1, s_1 \sim 2^{\frac{m}{2}}} \int_{3I^{3k}} \one_{ \left\{ x \in 3 I^{3k}, (u_1, u_2, v_1) \in \left[2^{\frac{m}{2}}, 2^{\frac{m}{2}+1} \right]^3: \left|B_{u_1, u_2, v_1}(x) \right| \lesssim 2^{\frac{m}{3}+100\epsilon 5 m} \right\}} (x) dx  \\
& \lesssim & 2^{-\frac{2m}{3}-6k+100\epsilon_5 m} \sum_{u_1, u_2, v_1, s_1 \sim 2^{\frac{m}{2}}} \int_{3I^{3k}} \left\lceil B_{u_1, u_2, v_1}(x) \right\rceil dx.
\end{eqnarray*}
Once at this point, one can apply similar reasonings with the ones used for finalizing \textsf{Step III} within the proof of Proposition \ref{mainprop02}. We leave these details to the interested reader.

\medskip

\subsection{The case $0 \le -k \le \frac{m}{2}$.}
The main goal of this section is to provide a sketch for the proof of the following:

\begin{thm} \label{20230407thm01}
Recall the expression of  $\Lambda_m^k (\vec{f})$ in \eqref{20230407eq10}. Then, there exists an $\underline{\epsilon}>0$ such that
$$
\left|\Lambda_m^k \left(\vec{f}\right) \right| \lesssim (1+|k|)\,2^{-\underline{\epsilon} \min \left\{-k, \frac{m}{2}+k \right\}} \left\|\vec{f} \right\|_{L^{\vec{p}} \left(3 I^{3k} \right)}\quad \forall p \in {\bf H}^{-}\,.
$$
\end{thm}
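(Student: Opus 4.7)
The approach will closely mirror the strategy of Sections \ref{Sec05} and \ref{Sec06}, carried out in the negative-$k$ regime after the interchange of the roles of $f_1$ and $f_3$ and the change of variable $t^3\to t$ that places us at the analogue of Section \ref{IIPrelim}. Starting from the correlative model in which a Gabor frame decomposition has been applied only to $f_1$ (the frequency-dominant input for $k<0$), I would first refine the frequency description of the pair $(f_2,f_3)$ in the spirit of Section \ref{Timefreqfoliation}: since the condition $-k<\frac{m}{2}$ breaks the moral constancy claimed in Key Heuristic 1$^\prime$, one partitions each frequency support $\supp\widehat{f_i}\subseteq[2^{m+ik},2^{m+ik+1}]$, $i\in\{2,3\}$, into dyadic sub-bands of length $2^{\frac{m}{2}+(i-1)k}$ indexed by $\ell_i\sim 2^{\frac{m}{2}+k}$ (the natural replacement for $2^{\frac{m}{2}-k}$ from the positive-$k$ case). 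Subtracting the oscillatory center of each band and invoking the Heisenberg principle, the resulting joint integrand $\underline{J}^k_{q,n}(\underline{f}_2^{\omega_2^{\ell_2}},\underline{f}_3^{\omega_3^{\ell_3}})$ becomes morally constant on the linearizing interval $I_p^{m,k}$. Standard linearization of the phase then yields a correlative time-frequency model analogous to \eqref{20230224main01} together with the corresponding time-frequency correlation set $\TFC_q^{-}$ encoding the curvature constraint on $(\ell_1,\ell_2,\ell_3;q)$ supplied by the analog of Lemma \ref{Rtfc}.

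Guided by the mirror of Key Heuristic 2, the analysis would then split into the two subcases $0\le -k\le\frac{m}{4}$ and $\frac{m}{4}\le -k\le\frac{m}{2}$, which respectively parallel Sections \ref{Sec05} and \ref{Sec06}. In each subcase I would install a time-frequency sparse--uniform dichotomy relative to all four inputs, adapted to a tiling of phase space by rectangles of area $\ge 2^{\frac{m}{2}}$ (as in Sections \ref{SU1} and \ref{SU2}), with sparsity threshold $2^{-\mu(-k)}$ in the first subcase and $2^{-\mu(\frac{m}{2}+k)}$ in the second. Decomposing $\Lambda_m^k=\Lambda_m^{k,\bbU_\mu}+\Lambda_m^{k,\bbS_\mu}$, the uniform component would be handled by the two subcases ``one of $f_1,f_4$ is uniform" and ``both $f_1,f_4$ are sparse with at least one of $f_2,f_3$ uniform" in the spirit of Subsections \ref{20230305subsec01} and \ref{subsec4.2.2}, combining Parseval, Cauchy--Schwarz and the unit-Jacobian change of variables $(x,t)\mapsto(x-t,x+t^2)$ together with the fact that a uniform input is well-spread along each frequency fiber; this produces the desired exponential gain $2^{-\frac{\mu}{2}\min\{-k,\,\frac{m}{2}+k\}}$.

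The main obstacle, as in the positive-$k$ case, will be the fully sparse component where all four inputs are replaced by their sparse counterparts. Here I would reduce matters---following Subsection \ref{20230306subsec01}---to obtaining improved control on the mirror time-frequency correlation set, namely an estimate of the form
\begin{equation*}
\#\,\sTFC^{-} \;\lesssim\; (1+|k|)\,2^{\,3(-k)-\epsilon\min\{-k,\,\frac{m}{2}+k\}},
\end{equation*}
produced via the same sequence of Cauchy--Schwarz applications, changes of variable and level-set reductions that delivered \eqref{TFCcontrolkey}. The delicate point is to verify that the mirror of the algebraic identity \eqref{abc1}---the manifestation of the quadratic coupling forced by the $t^2$-argument of the middle input, linking the measurable functions $L_1(\cdot),L_2(\cdot),L_3(\cdot)$ encoding the frequency localization of $f_1,f_2,f_3$---survives the present parametrization and still delivers the curvature-driven gain. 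The factor $(1+|k|)$ appearing in the final estimate is the natural slack between the two sub-regimes $-k\lessgtr\frac{m}{4}$ and parallels the logarithmic loss recorded in \eqref{abu}--\eqref{abu1}.
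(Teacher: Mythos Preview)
Your proposal is correct and follows essentially the same route as the paper: frequency foliation of the pair $(f_2,f_3)$ into bands indexed by $\ell_i\sim 2^{\frac{m}{2}+k}$, extraction of the mirrored time-frequency correlation set via the analogue of Lemma~\ref{Rtfc}, the Key Heuristic~2 split into the two subranges $0\le -k\le\frac{m}{4}$ and $\frac{m}{4}\le -k\le\frac{m}{2}$, and the sparse--uniform dichotomy reducing the sparse component to a level-set estimate on $\#\sTFC^{-}$. One small slip: after the change of variable $t^3\to t$ the relevant bilinear Jacobian change is $(x,t)\mapsto(x+t,\,x+t^{2/3})$ rather than $(x,t)\mapsto(x-t,\,x+t^2)$, and the $(1+|k|)$ factor arises specifically from the counting estimate analogous to \eqref{abu1} in Case~II rather than from slack between the two subregimes.
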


Following now the strategy presented in Section \ref{Diagkpsmall}, we start with a preliminary frequency foliation:
\begin{itemize}
\item for the functions $f_2$ and $f_3$ we have
$$
\supp\,\widehat{f_j} \subseteq \left [2^{m+jk}, 2^{m+jk+1} \right ]=\bigcup_{\ell_j \sim 2^{\frac{m}{2}+k}} \omega_j^{\ell_j}:=\bigcup_{\ell_j \sim 2^{\frac{m}{2}+k}} \left[\ell_j 2^{\frac{m}{2}+(j-1)k}, \left(\ell_j+1 \right)2^{\frac{m}{2}+(j-1)k} \right], \quad j \in \{2, 3\}.
$$
\item for the functions $f_1$ and $f_4$, we have
$$
\supp\,\widehat{f_j} \subset \left[2^{m+k}, 2^{m+k+1} \right]=\bigcup_{\ell_j \sim 2^{\frac{m}{2}+k}} \omega_j^{\ell_j}:=\bigcup_{\ell_j \sim 2^{\frac{m}{2}+k}} \left[\ell_j2^{\frac{m}{2}}, \left(\ell_j+1 \right) 2^{\frac{m}{2}} \right], \quad j \in \{1, 4\}.
$$
\end{itemize}
Using the above decomposition, and recalling the definition of $\underline{f}_j^{\omega_j^{\ell_j}}$ in \eqref{20230408eq01}, we deduce that
$$
f_i=\sum_{\ell_i \sim 2^{\frac{m}{2}+k}} f_i^{\omega_i^{\ell_i}}, \quad i\in\{1, 2, 3, 4\}\,.
$$
As a consequence, from \eqref{Jqnk} and the above, we have
$$
J_{q, n}^k \left(f_3, f_2 \right)=\sum_{\ell_2, \ell_3 \sim 2^{\frac{m}{2}+k}} e^{i \left(\ell_2 2^{\frac{m}{2}+k}+\ell_3 2^{\frac{m}{2}+2k} \right)x} \underline{J}_{q, n}^k \left(\underline{f}_3^{\omega_3^{\ell_3}}, \underline{f}_2^{\omega_2^{\ell_2}} \right)(x),
$$
where $\underline{f}_j^{\omega_j^{\ell_j}}, j=1, 2$ are defined as in \eqref{20230408eq01} and
$$
\underline{J}_{q, n}^k \left(\underline{f}_3^{\omega_3^{\ell_3}}, \underline{f}_2^{\omega_2^{\ell_2}} \right)(x):=\int_{I_q^{m, 3k}} \underline{f}_3^{\omega_3^{\ell_3}}(x+t) \underline{f}_2^{\omega_2^{\ell_2}}(x+t^{\frac{2}{3}})\, e^{i \left(\ell_2 2^{\frac{m}{2}+k}\,t^{\frac{2}{3}}+\ell_3 2^{\frac{m}{2}+2k}\,t \right)}\, e^{-\frac{i}{3} 2^{\frac{m}{2}+3k} t \left(\frac{q}{2^{\frac{m}{2}}} \right)^{-\frac{2}{3}} n_1} dt.$$
\medskip
\noindent\textit{Observation \ref{keyobs-02}$^{\prime}$.} \textit{
The mapping
$$
x \longmapsto \underline{J}_{p, n}^k\left(\underline{f}_3^{\omega_3^{\ell_3}}, \underline{f}_2^{\omega_2^{\ell_2}} \right)(x)
$$
is morally constant on intervals $\left\{I_p^{m, k} \right\}_{p \sim 2^{\frac{m}{2}-2k}}$.}
\medskip

Next, we need to formulate the \emph{the (rescaled) time-frequency correlation set} for our present case. By running first the Rank I LGC, after linearization and the Gabor frame decomposition of the inputs
$$f_j=\sum_{\substack{p_j \sim 2^{\frac{m}{2}-(3-j)k} \\ n_j \sim 2^{\frac{m}{2}}}} \left\langle f_j, \phi_{p_j, n_j}^{jk} \right\rangle  \phi_{p_j, n_j}^{jk}, \quad j\in\{1, 2, 3\}\,,$$
we deduce the \emph{time-frequency correlations for $k<0$} as
$$
p_1 \cong p-(q 2^m)^{\frac{1}{3}}, \quad p_2 \cong p2^k+(q2^m)^{\frac{2}{3}}, \quad p_3 \cong p2^{2k}+q,
$$
and
$$
n_1-2\left(\frac{q_1}{2^{\frac{m}{2}}} \right)^{\frac{1}{3}}n_2+3 \left(\frac{q}{2^{\frac{m}{2}}} \right)^{\frac{2}{3}} n_3 \cong 0.
$$
For $j \in \{1, 2, 3\}$ notice that if $2^{\frac{m}{2}+jk}n_j \in \omega_j^{\ell_j}$ then $n_j \in \widetilde{\omega_j^{\ell_j}} :=\left[\ell_j 2^{-k}, (\ell_j+1)2^{-k} \right]$. With these, we have

\begin{namedthm*}{Lemma \ref{Rtfc}$^{\prime}$}
Let $\ell_1, \ell_2, \ell_3$ and $q$ be defined as above, then
$$
\ell_1-2\left(\frac{q}{2^{\frac{m}{2}}} \right)^{\frac{1}{2}}\ell_2+3 \left(\frac{q}{2^{\frac{m}{2}}} \right)^{\frac{2}{3}}\ell_3 \cong 0.
$$
\end{namedthm*}
We now define the \emph{time-frequency correlation set} for $k<0$ as follows: for each $q \sim 2^{\frac{m}{2}}$, we let
$$
\TFC_q:=\left\{ \substack{(\ell_1, \ell_2, \ell_3) \\ \ell_1, \ell_2, \ell_3 \sim 2^{\frac{m}{2}+k}}: \ell_1-2\left(\frac{q}{2^{\frac{m}{2}}} \right)^{\frac{1}{2}}\ell_2+3 \left(\frac{q}{2^{\frac{m}{2}}} \right)^{\frac{2}{3}}\ell_3 \cong 0 \right\}.
$$
This further gives
\begin{eqnarray*}
\left|\Lambda_m^k \left(\vec{f}\right) \right|%
&\lesssim& 2^{3k} \sum_{\substack{p \sim 2^{\frac{m}{2}-2k} \\ q \sim 2^{\frac{m}{2}}}} \sum_{\left(\ell_1, \ell_2, \ell_3\right) \in \TFC_q} \sum_{n \in \widetilde{\omega_1^{\ell_1}}}  \left | \left \langle f_1^{\omega_1^{\ell_1}}, \phi_{p-(2^m q)^{\frac{1}{3}}, n}^k \right \rangle \right| \left| \left\langle f_4 e^{i2^{\frac{m}{2}+k}(\ell_2+\ell_32^k)x}, \phi_{p, n}^k \right \rangle \right|  \\
& \cdot& \frac{1}{\left|I_p^{m, 3k} \right|} \left| \int_{I_p^{m, k}} \int_{I_q^{m, k}} \underline{f}_3^{\omega_3^{\ell_3}}(x+t) \underline{f}_2^{\omega_2^{\ell_2}} (x+t^{\frac{2}{3}}) e^{i \left(\ell_2 2^{\frac{m}{2}+k}t^{\frac{2}{3}}+\ell_32^{\frac{m}{2}+2k}t-\frac{1}{3} 2^{\frac{m}{2}+3k}t \left(\frac{q}{2^{\frac{m}{2}}} \right)^{-\frac{2}{3}}n_1 \right)} dtdx \right|,
\end{eqnarray*}
which, by a change variable $n \to n+\ell_12^{-k}$ and a linearization of the term $t^{\frac{2}{3}}$, is further bounded above by
\begin{eqnarray*}
&& 2^{3k} \sum_{\substack{p \sim 2^{\frac{m}{2}-2k} \\ q \sim 2^{\frac{m}{2}} \\ n \sim 2^{-k}}}  \sum_{\left(\ell_1, \ell_2, \ell_3\right) \in \TFC_q}  \left | \left \langle f_1^{\omega_1^{\ell_1}}, \phi_{p-(2^m q)^{\frac{1}{3}}, n+\ell_12^{-k}}^k \right \rangle \right| \left| \left\langle f_4, \phi_{p, n+\ell_12^{-k}+\ell_2+\ell_32^k}^k \right \rangle \right|  \\
&& \cdot  \frac{1}{\left|I_p^{m, k} \right|} \left| \int_{I_p^{m, k}} \int_{I_q^{m, 3k}} \underline{f}_3^{\omega_3^{\ell_3}}(x+t) \underline{f}_2^{\omega_2^{\ell_2}} (x+t^{\frac{2}{3}}) e^{i2^{\frac{m}{2}+3k}t \left(\frac{q}{2^{\frac{m}{2}}} \right)^{-\frac{2}{3}}n} e^{ i 2^{\frac{m}{2}+3k} t 2^{-k}\left(\frac{2}{3}\ell_2 \left(\frac{q}{2^{\frac{m}{2}}}\right)^{-\frac{1}{3}}+\ell_3-\frac{1}{3} \left(\frac{q}{2^{\frac{m}{2}}} \right)^{-\frac{2}{3}} \ell_1 \right) } dtdx \right|.
\end{eqnarray*}
From the above expression, we have the following analogue of  Key Heuristic 2:
\medskip

\noindent\textbf{Key Heuristic 2$^{\prime}$.} \textit{Isolate the term
\begin{equation} \label{20230409eq01}
 \left| \left\langle f_4 e^{i2^{\frac{m}{2}+k}(\ell_2+\ell_32^k)x}, \phi_{p, n+\ell_12^{-k}}^k \right \rangle \right|.
\end{equation}
In analyzing the Gabor (local Fourier) coefficient in \eqref{20230409eq01}, we have
\begin{enumerate}
\item [$\bullet$] $\ell_2$ ranges over an interval of length $\sim 2^{\frac{m}{2}+k}$;
\item [$\bullet$] $n$ ranges over an interval of length $\sim 2^{-k}$.
\end{enumerate}
This now again gives us two different cases depending on the regime of dominance:
\\
\textsf{Case I}: $0 \le -k \le \frac{m}{4}$; this corresponds to $\ell_2$ dominance; \\
\textsf{Case II}: $\frac{m}{4} \le -k \le \frac{m}{2}$; this corresponds to $n$ dominance.}
\medskip

As in the situation $k \ge 0$, the proofs of the two cases above require first a time-frequency sparse-uniform dichotomy. Maintaining the brevity of our outline, for $\mu \in (0, 1)$ a small absolute constant, we first define the \emph{sparse set} of indices as follows:
\begin{enumerate}
    \item [$\bullet$] in \textit{\textsf{Case I}}, for $j \in \{2, 3\}$, we have
    $$
    \bbS_\mu(f_j):= \left\{(q_j, L_j): \int_{3 I_{q_j}^{0, (j-1)k}} \left|f_j^{\Omega_j^{L_j}}\right|^2 \gtrsim 2^{3\mu k} \int_{3 I_{q_j}^{0, (j-1)k}} \left|f_j\right|^2, \quad q_j \sim 2^{(j-4)k}, L_j \sim 2^{-k} \right\},
    $$
    and for $j \in \{1, 4\}$
    $$
    \bbS_\mu(f_j):=\left\{(q_j, L_j): \int_{3 I_{q_j}^{0, k}} \left|f_j^{3 \Omega_j^{L_j}} \right|^2 \gtrsim 2^{\mu k} \int_{3 I_{q_j}^{0, k}} \left|f_j \right|^2, \quad q_j \sim 2^{-2k}, L_j \sim 2^{-k} \right\}.
    $$
    \item [$\bullet$] in \textit{\textsf{Case II}}, for $j \in \{2, 3\}$, we have
    $$
    \bbS_\mu \left(f_j \right):= \left\{(q_j, L_j): \int_{3 I_{q_j}^{0, (j-1)k}} \left|f_j^{\omega_j^{\ell_j}}\right|^2 \gtrsim 2^{-3\mu \left(\frac{m}{2}+k \right)} \int_{3 I_{q_j}^{0, (j-1)k}} \left|f_j\right|^2, \quad q_j \sim 2^{(j-4)k}, \ell_j \sim 2^{\frac{m}{2}+k} \right\},
    $$
    and for $j \in \{1, 4\}$
    $$
    \bbS_\mu(f_j):=\left\{(q_j, L_j): \int_{3I_{q_j}^{0, k}} \left|f_j^{3 \Omega_j^{L_j}} \right|^2 \gtrsim 2^{-\mu \left(\frac{m}{2}+k \right)} \int_{3 I_{q_j}^{0, k}} \left|f_j \right|^2, \quad q_j \sim 2^{-2k}, L_j \sim 2^{-k} \right\}.
    $$
\end{enumerate}
Here, for $j\in\{1, 2, 3\}$, $\Omega_j^{L_j}:=\left[L_j2^{m+(j+1)k}, (L_j+1)2^{m+(j+1)k} \right]$ while $\Omega_4^{L_4}:=\left[L_42^{m+2k}, (L_4+1)2^{m+2k} \right]$ with $L_{(\cdot)} \sim 2^{-k}$.

Following now the arguments presented in Section \ref{Sec05} and Section \ref{Sec06}, respectively, we see that the proofs of the above two cases reduce to estimating the cardinality of certain time-frequency correlation sets:
\begin{enumerate}
    \item [$\bullet$] in \textit{\textsf{Case I}}, one has to show that there exists some $\epsilon>0$, such that
    \begin{equation}\label{tfc1}
    \# \sTFC \lesssim 2^{-(3-\epsilon)k}\,,\qquad\textrm{where}
    \end{equation}
    $$\sTFC:= \left\{ \substack{(p, q) \\ p \sim 2^{-2k}, q \sim 2^{-k}}: \frac{2}{3} \left(2^kq \right)^{-\frac{1}{3}} L_2 \left(p+\frac{q^{\frac{2}{3}}}{2^{\frac{k}{3}}} \right)-\frac{1}{3} \left(2^k q \right)^{-\frac{2}{3}} L_1(p)=L_3 \left(2^k p+q \right) \right\},$$
    with $L_1(\cdot), L_2(\cdot), L_3(\cdot)$ measurable functions obeying in the usual fashion
    $$L_1, L_2: \left[2^{-2k}, 2^{-2k+1}\right] \cap \Z \longmapsto \left[2^{-k}, 2^{-k+1} \right] \cap \Z\qquad\textrm{and}$$
    $$L_3: \left[2^{-k}, 2^{-k+1} \right] \cap \Z \longmapsto \left[2^{-k}, 2^{-k+1} \right] \cap \Z.$$
    \item [$\bullet$] in \textit{\textsf{Case II}}, one has to show that there exists some $\epsilon>0$, such that
    \begin{equation}\label{tfc2}
    \# \sTFC \lesssim |k| 2^{-3k-\epsilon \left(\frac{m}{2}+k \right)}\,,\qquad\textrm{where}
    \end{equation}
    $$\sTFC:= \left\{ \substack{(p, q) \\ p \sim 2^{-2k}, q \sim 2^{-k}}: \frac{2}{3} \left(2^kq \right)^{-\frac{1}{3}} \ell_2 \left(p+\frac{q^{\frac{2}{3}}}{2^{\frac{k}{3}}} \right)-\frac{1}{3} \left(2^k q \right)^{-\frac{2}{3}} \ell_1(p)=\ell_3 \left(2^k p+q \right) \right\},$$
    with $\ell_1(\cdot), \ell_2(\cdot), \ell_3(\cdot)$ measurable functions obeying
    $$\ell_1, \ell_2: \left[2^{-2k}, 2^{-2k+1}\right] \cap \Z \longmapsto \left[2^{\frac{m}{2}+k}, 2^{\frac{m}{2}+k+1} \right] \cap \Z \qquad \textrm{and}$$
    $$\ell_3: \left[2^{-k}, 2^{-k+1} \right] \cap \Z \longmapsto \left[2^{\frac{m}{2}+k}, 2^{\frac{m}{2}+k+1} \right] \cap \Z\,.$$
\end{enumerate}
The proof of the estimates \eqref{tfc1} and \eqref{tfc2} follow essentially the same arguments as in Subsection \ref{20230306subsec01}.

\section{The Banach boundedness range} \label{20230411Sec01}

In this section we sketch how to put all the previous estimates together in order to obtain the desired Banach range bounds for the quadrilinear form $\Lambda$ associated to our curved tri-linear Hilbert transform $H_{C}$. Recalling the decomposition in Section \ref{basicred}
\begin{equation}\label{LlohiL}
\Lambda=\Lambda^{Hi}\,+\,\Lambda^{Lo}=\Lambda^{Hi}_{+}\,+\,\Lambda^{Hi}_{-}\,+\,\Lambda^{Lo}\,,
\end{equation}
we proceed by analyzing each of the terms above.

\subsection{The high oscillatory component $\Lambda_{+}^{Hi}$}\label{Boundhiplus}

 To begin with, recall that the main term that we wish to understand is given by
\begin{equation}\label{hiplus}
\Lambda_{j, l, m}^k (\vec{f})=\iint_{\R^2} f_{1, j}^k(x-t) f_{2, l}^k(x+t^2) f_{3, m}^k(x+t^3)f_4(x) 2^k \varrho(2^k t)dtdx,
\end{equation}
with $j,l,m\in\N$, $k\in\N$ and
$$
f^k_{1, j}=\calF^{-1} \left( \widehat{f_1} \phi \left(\frac{\cdot}{2^{j+k}} \right) \right), \quad f^k_{2, l}=\calF^{-1} \left( \widehat{f_2} \phi \left(\frac{\cdot}{2^{l+2k}} \right) \right) \quad \textrm{and} \quad f^k_{3, m}=\calF^{-1} \left( \widehat{f_3} \phi \left(\frac{\cdot}{2^{m+3k}} \right) \right).
$$

In a first instance, preserving a faithful representation of the work performed in the previous sections, we collect all the key estimates we have obtained: there exists some absolute $\epsilon>0$ sufficiently small, such that for any $\vec{p} \in {\bf H}^{+}$, one has
\begin{enumerate}
    \item [$\bullet$] \textbf{Main diagonal term $\Lambda^{D}$ restricted to $k \ge 0$}:
    \begin{enumerate}
        \item [(1)] if $k \ge \frac{m}{2}$, then $|\Lambda^k_{m, m, m}(\vec{f})| \lesssim 2^{-\epsilon m} \|\vec{f}\|_{L^{\vec{p}}}$;
         \item [(2)] if $0 \le k \le \frac{m}{4}$, then $|\Lambda^k_{m, m, m}(\vec{f})| \lesssim 2^{-\epsilon k} \|\vec{f}\|_{L^{\vec{p}}}$;
         \item [(3)] if $\frac{m}{4} \le k \le \frac{m}{2}$, then $|\Lambda^k_{m, m, m}(\vec{f})| \lesssim k\, 2^{-\epsilon \left(\frac{m}{2}-k \right)} \|\vec{f}\|_{L^{\vec{p}}}$.
    \end{enumerate}
    \item [$\bullet$] \textbf{Main off-diagonal term $\Lambda^{\not{D}}$ restricted to $k \ge 0$ and $j=l>m+100$}:
    \begin{enumerate}
            \item [(4)] if $0 \le m \le \frac{j}{2}$ and $0 \le k \le \frac{j-m}{2}$, then $|\Lambda^k_{j, j, m} (\vec{f})|\lesssim 2^{-\epsilon \left(\frac{j}{2}-k \right)} \|\vec{f}\|_{L^{\vec{p}}}$;
            \item [(5)] if $0 \le m \le \frac{j}{2}$ and $\frac{j-m}{2} \le k \le j-m$, then $|\Lambda^k_{j, j, m} (\vec{f})| \lesssim 2^{-\epsilon \left(j-m-k \right)} \|\vec{f}\|_{L^{\vec{p}}}$;
            \item [(6)] if $0 \le m \le \frac{j}{2}$ and $k \ge j-m$, then $|\Lambda^k_{j, j, m} (\vec{f})| \lesssim 2^{-\epsilon j} \|\vec{f}\|_{L^{\vec{p}}}$;
            \item [(7)] if $\frac{j}{2} \le m \le j-100$ and $0 \le k \le j-m$, then $|\Lambda^k_{j, j, m} (\vec{f})| \lesssim 2^{-\epsilon (j-m-k)} \|\vec{f}\|_{L^{\vec{p}}}$;
             \item [(8)] if $\frac{j}{2} \le m \le j-100$ and $k \ge j-m$, then $|\Lambda^k_{j, j, m} (\vec{f})|  \lesssim 2^{-\epsilon (j-m)} \|\vec{f}\|_{L^{\vec{p}}}$.
    \end{enumerate}
    \item [$\bullet$] \textbf{Main off-diagonal term $\Lambda^{\not{D}}$ restricted to $k \ge 0$ and $l=m>j+100$}:
    \begin{enumerate}
        \item [(9)] if $0 \le j \le \frac{m}{2}$,  then $|\Lambda^k_{j, m, m} (\vec{f})| \lesssim 2^{-\epsilon m} \|\vec{f}\|_{L^{\vec{p}}}$;
        \item [(10)] if $\frac{m}{2} \le j \le m-100$,  then $|\Lambda^k_{j, m, m} (\vec{f})|\lesssim 2^{-\epsilon (m-j)} \|\vec{f}\|_{L^{\vec{p}}}$.
    \end{enumerate}

        \item [$\bullet$] \textbf{Main off-diagonal term $\Lambda^{\not{D}}$ restricted to $k \ge 0$ and $j=m>l+100$}:
    \begin{enumerate}
        \item [(11)] if $0 \le l \le \frac{m}{2}$,  then $|\Lambda^k_{m, l, m} (\vec{f})| \lesssim 2^{-\epsilon m} \|\vec{f}\|_{L^{\vec{p}}}$;
        \item [(12)] if $\frac{m}{2} \le j \le m-100$,  then $|\Lambda^k_{m, l, m}(\vec{f})| \lesssim 2^{-\epsilon (m-l)} \|\vec{f}\|_{L^{\vec{p}}}$.
    \end{enumerate}
\end{enumerate}
Once here, we make the following relevant

\begin{obs}\textsf{[Continuous bounds]}\label{ContBound}  Inspecting the above list of bounds one notices that many of the end-points for the various cases dependent on the ranges of $k,j,l,m$ appear as it would be ``discontinuous"/have jumps. Of course, this does not happen in reality but is rather an artifact of the distinct methods used to obtain these bounds. It is thus not hard to see that, given any transition point within the discussed range, one can extend within an $\epsilon$ neighborhood of that point the better bound by simply ``perturbing" the method employed for obtaining it. This fact would be exemplified in the Appendix---see Section \ref{AppendixC}. As a consequence, we have the following upgraded estimates: there exists $\epsilon>0$ such that
\begin{itemize}
\item for the main diagonal term, one has
$$
\left| \Lambda_{j, l, m}^k (\vec{f}) \right| \lesssim 2^{-\epsilon \min \left\{ 2|k|, m \right\}} \|\vec{f}\|_{L^{\vec{p}}} \quad \forall \,\vec{p} \in {\bf H}^{+},
$$
\item for the main off-diagonal term, one has
$$
\left| \Lambda_{j, l, m}^k (\vec{f}) \right| \lesssim 2^{-\epsilon \max \left\{|j-l|, |l-m|, |j-m| \right\}} \|\vec{f}\|_{L^{\vec{p}}} \quad \forall\,\vec{p} \in {\bf H}^{+}.
$$
\end{itemize}
\end{obs}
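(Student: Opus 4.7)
The plan is to verify the two ``continuous'' bounds by a case-by-case inspection of the transitions between the regimes (1)--(12), showing that in a small neighborhood of each transition point the better of the two neighboring bounds can be recovered by a perturbative adjustment of the method producing it. The key observation is that every estimate in (1)--(12) is derived through either Rank II LGC (in the diagonal regime) or Rank I LGC (in the stationary off-diagonal regime), and in each case the hypothesis delineating the relevant subregime enters only as an exponential threshold of the form $2^{-cN}$ for suitable $N\in\{|I_p^{m,3k}|^{-1},|\omega_j^{\ell_j}|,\ldots\}$. A relaxation of any such threshold by a multiplicative factor $2^{C\eta N}$ with $\eta$ sufficiently small is absorbable into the absolute constants $\epsilon_i$ appearing throughout Sections \ref{MaindiagKpositive}--\ref{stationaryoff3}, so each method extends across its boundary by an $\eta$-wide strip. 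Taking the better of the two extended bounds in the overlap produces a uniform estimate that, after renaming the absolute constant, matches the claimed form.

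For the diagonal component, the only genuine gap is at $|k|=m/2$: in the strip $|k|\in[\tfrac{m}{2}-\eta m,\tfrac{m}{2}]$ case (3) delivers essentially no decay while the $|k|\geq m/2$ analysis of Section \ref{MaindiagKpositive} produces $2^{-\epsilon_1 m}$. The strategy is to push the Rank II LGC argument of Section \ref{MaindiagKpositive} into this strip; the only place where the hypothesis $|k|\geq m/2$ was used is in Key Heuristic 1 to assert constancy of $J^k_{q,n}(f_1,f_2)$ on $I_p^{m,3|k|}$, and the ratio $|I_p^{m,3|k|}|/2^{-m-2|k|}$ degrades only by $2^{O(\eta m)}$ in the extended strip. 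For $\eta\ll\epsilon_1$, this loss is absorbed. On the complementary range $|k|\leq\tfrac{m}{2}-\eta m$, the bound $|k|\,2^{-\epsilon(m/2-|k|)}$ from (3) automatically implies $2^{-\epsilon'\cdot 2|k|}$ for a smaller absolute $\epsilon'$, while (2) covers $|k|\leq m/4$; together these yield $2^{-\epsilon\min\{2|k|,m\}}$ for a suitable $\epsilon>0$. The negative $k$ regime is handled symmetrically via Section \ref{Negative}.

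For the off-diagonal component, the same perturbative principle is applied to each transition arising within the three situations of Lemma \ref{20230308lem01}. Consider for instance $j=l>m+100$: the transitions $k=(j-m)/2$, $k=j-m$ and $m=j/2$ within Section \ref{20230321sec01} are bridged by extending the Rank I LGC argument of the neighboring subregime into an $\eta$-wide strip across the boundary. The most delicate is the transition $k=j-m$, where Case I.3's bound $2^{-\epsilon j}$ dominates over Case I.2's degenerating $2^{-\epsilon(j-m-k)}$: one extends Case I.3's decoupling and cardinality analysis of the time-frequency correlation set leftward into $k\in[j-m-\eta(j-m),j-m]$, noting that no new resonances appear provided $\eta$ is small. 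The remaining transitions and the symmetric cases $l=m>j+100$ and $j=m>l+100$ (Sections \ref{stationaryoff2}--\ref{stationaryoff3}) follow by identical reasoning and reindexing. The maximum $\max\{|j-l|,|l-m|,|j-m|\}$ appears naturally because it coincides, in each of the three stationary subcases, with the gap between the dominant and subordinate frequency scales driving the stationary-phase gain.

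The main obstacle is the diagonal transition at $|k|=m/2$. There the bootstrap constancy-propagation algorithm for the linearizing function $\lambda$ (Observations \ref{constancy}, \ref{boots2} and Propositions \ref{constantprop}--\ref{mainprop02}) is calibrated precisely to the regime $|k|\geq m/2$; pushing it into $|k|<m/2$ forces one to propagate constancy of $\lambda$ across scales at which the inputs $f_1,f_2$ are no longer morally constant. One resolves this by replacing Key Heuristic 1 with a single stage of the frequency-foliation of Section \ref{Diagkpsmall}, whose error is controlled by the same level-set analysis of the time-frequency correlation set as in Section \ref{20230306subsec01}. Since the extension is only into an $\eta m$-wide strip, one iteration suffices and the resulting bound retains the exponential decay in $m$. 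All other transitions reduce to checking stability of the sparse-uniform dichotomies \eqref{sparse12}--\eqref{sparse34} and the cardinality estimates for the time-frequency correlation sets under $O(\eta)$-perturbations of their defining thresholds, which is immediate from the construction and exemplified in Section \ref{AppendixC}.
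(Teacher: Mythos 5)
Your proposal is correct and follows essentially the same route as the paper: the same perturbative ``extend the better bound across an $O(\delta m)$-wide strip'' strategy, with the decisive worked case being the diagonal transition at $|k|=\frac{m}{2}$, which you resolve exactly as the paper does in Section \ref{AppendixC} --- by running one stage of the frequency foliation of Section \ref{Diagkpsmall} over the thin strip, reducing each fixed frequency triple to the $k\geq\frac{m}{2}$ model, and absorbing the $\#(\ell_1,\ell_2,\ell_3)\lesssim 2^{3\delta m}$ loss into the exponential gain by taking $\delta$ small relative to $\epsilon$. The only cosmetic issue is the internal tension between your second paragraph (which claims the $k\geq\frac{m}{2}$ method extends directly because only Key Heuristic 1 uses the hypothesis) and your final paragraph (which correctly identifies that the bootstrap for $\lambda$ is calibrated to $|k|\geq\frac{m}{2}$ and that the foliation is the right fix); the latter is the argument that actually works.
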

\medskip

Combining the above estimates (by possibly making $\epsilon$ smaller) we  obtain the global estimate
\begin{equation} \label{20230323eq01}
\left| \Lambda_{j, l, m}^k (\vec{f}) \right| \lesssim 2^{-\epsilon \max \left\{  \min \left\{ 2|k|, \max\{j, l, m\} \right\}, \max \left\{|j-l|, |l-m|, |j-m| \right\} \right\}} \|\vec{f}\|_{L^{\vec{p}}} \quad \forall\,\vec{p} \in {\bf H}^{+}.
\end{equation}
Note that the role of $j, l$ and $m$ is symmetric in the above estimate \eqref{20230323eq01}, hence it suffices to control
\begin{equation} \label{dec1}
\Lambda_{\ge 0}(\vec{f}):=\sum_{\substack{0 \le j \le l  \le m \\ k \ge 0}} \left|  \Lambda_{j, l, m}^k (\vec{f}) \right|=\Lambda_{\ge 0}^1(\vec{f})+\Lambda_{\ge 0}^2(\vec{f})+\Lambda_{\ge 0}^3(\vec{f})+\Lambda_{\ge 0}^4(\vec{f}),
\end{equation}
where
\begin{equation} \label{dec2}
\Lambda_{\ge 0}^1(\vec{f}):=\sum_{0 \le j \le l \le m \le 2k} \left|  \Lambda_{j, l, m}^k (\vec{f}) \right|, \quad \Lambda_{\ge 0}^2(\vec{f}):=\sum_{0 \le j \le l \le 2k \le m} \left|  \Lambda_{j, l, m}^k (\vec{f}) \right|,
\end{equation}
and
\begin{equation} \label{dec3}
\Lambda_{\ge 0}^3(\vec{f}):=\sum_{0 \le j \le 2k \le l \le m} \left|  \Lambda_{j, l, m}^k (\vec{f}) \right| \quad \textrm{and} \quad \Lambda_{\ge 0}^4(\vec{f}):=\sum_{0 \le 2k \le j \le l \le m } \left|  \Lambda_{j, l, m}^k (\vec{f}) \right|.
\end{equation}
In what follows we will discuss each of the four terms $\Lambda_{\ge 0}^i, i\in\{1, \ldots,4\}$. Without loss of generality, we may assume $\vec{p}=(2, 2, \infty, \infty)$.

\medskip

\textit{Estimate of $\Lambda_{\ge 0}^1(\vec{f})$.} From \eqref{20230323eq01}, Cauchy--Schwarz and Parseval, we have
\begin{eqnarray*}
\Lambda_{\ge 0}^1(\vec{f})%
&\lesssim& \sum_{0 \le j \le l \le m} 2^{-\epsilon m} \left\|f_3 \right\|_{L^\infty} \left\|f_4 \right\|_{L^\infty} \left(\sum_{k \ge 0}  \left\|f_{1, j}^k \right\|_{L^2}^2\right)^{\frac{1}{2}}\,\left(\sum_{k \ge 0}  \left\|f_{2, l}^k \right\|_{L^2}^2  \right)^{\frac{1}{2}}\\
&\lesssim& \sum_{m\geq 0} m^2\,2^{-\epsilon m} \left\|f_1\right\|_{L^2} \left\|f_2 \right\|_{L^2} \left\|f_3 \right\|_{L^\infty} \left\|f_4 \right\|_{L^\infty} \lesssim  \|\vec{f}\|_{L^{\vec{p}}}.
\end{eqnarray*}

\medskip

\textit{Estimate of $\Lambda_{\ge 0}^2(\vec{f})$.} In this situation, we further split $\Lambda_{\ge 0}^2$ into two components:
$$
\Lambda_{\ge 0}^2=\Lambda_{\ge 0}^{2, 1}+\Lambda_{\ge 0}^{2, 2},
$$
where
$$
\Lambda_{\ge 0}^{2, 1}(\vec{f}):=\sum_{\substack{0 \le j \le l \le 2k \le m \\ m-j \le 2k}} \left|  \Lambda_{j, l, m}^k (\vec{f}) \right| \quad \textrm{and} \quad \Lambda_{\ge 0}^{2, 2}:=\sum_{\substack{0 \le j \le l \le 2k \le m \\ m-j > 2k}} \left|  \Lambda_{j, l, m}^k (\vec{f}) \right|.
$$
We first treat the term $\Lambda_{\ge 0}^{2, 1}$: from \eqref{20230323eq01}, we have
\begin{eqnarray*}
\Lambda_{\ge 0}^{2, 1}(\vec{f})
&\lesssim&  \sum_{\substack{0 \le j \le l \le 2k \le m \\ 0 \le m \le 4k}} 2^{-\epsilon k}  \left\|f_{1, j}^k \right\|_{L^2 } \left\|f_{2, l}^k \right\|_{L^2 } \left\|f_3 \right\|_{L^\infty} \left\|f_4 \right\|_{L^\infty} \\
&\lesssim& \sum_{k \ge 0}  \sum_{0 \le j \le l \le 2k} k 2^{-\epsilon k} \left\|f_1\right\|_{L^2} \left\|f_2 \right\|_{L^2} \left\|f_3 \right\|_{L^\infty} \left\|f_4 \right\|_{L^\infty}\lesssim \|\vec{f}\|_{L^{\vec{p}}}.
\end{eqnarray*}
Next, we estimate the term $\Lambda_{\ge 0}^{2, 2}$: using again \eqref{20230323eq01} we have
\begin{eqnarray*}
\Lambda_{\ge 0}^{2, 2}(\vec{f})
&\lesssim& \sum_{\substack{0 \le j \le l \le 2k \le u+j \\ u \ge 2k}} 2^{-\epsilon u} \left\|f_{1, j}^k \right\|_{L^2 } \left\|f_{2, l}^k \right\|_{L^2 } \left\|f_3 \right\|_{L^\infty} \left\|f_4 \right\|_{L^\infty} \\
&\lesssim& \sum_{k \ge 0} \sum_{\substack{0 \le j \le l \le 2k \\ u \ge 2k}} 2^{-\epsilon u} \left\|f_1 \right\|_{L^2}\left\|f_2 \right\|_{L^2}\left\|f_3 \right\|_{L^\infty}\left\|f_1 \right\|_{L^\infty} \lesssim  \|\vec{f}\|_{L^{\vec{p}}}.
\end{eqnarray*}
Combining both estimates for $\Lambda_{\ge 0}^{2, 1}$ and $\Lambda_{\ge 0}^{2, 2}$ we deduce $\Lambda_{\ge 0}^2(\vec{f}) \lesssim \|\vec{f}\|_{L^{\vec{p}}}$.

\medskip

\textit{Estimate of $\Lambda_{\ge 0}^3(\vec{f})$.} The treatment of the term $\Lambda_{\ge 0}^3$ is similar to the one of $\Lambda_{\ge 0}^2$. Indeed, we again split
$$
\Lambda_{\ge 0}^3(\vec{f})=\Lambda_{\ge 0}^{3, 1}(\vec{f})+\Lambda_{\ge 0}^{3, 2}(\vec{f}),
$$
with
$$
\Lambda_{\ge 0}^{3, 1}(\vec{f}):=\sum_{\substack{0 \le j \le 2k \le l \le m \\ m-j \le 2k}} \left|  \Lambda_{j, l, m}^k (\vec{f}) \right| \quad \textrm{and} \quad \Lambda_{\ge 0}^{3, 2}(\vec{f}):=\sum_{\substack{0 \le j \le 2k \le l \le m \\ m-j > 2k}} \left|  \Lambda_{j, l, m}^k (\vec{f}) \right|.
$$
The term $\Lambda_{\ge 0}^{3, 1}$ can be estimated in the same fashion as $\Lambda_{\ge 0}^{2, 1}$. For the term $\Lambda_{\ge 0}^{3, 2}$, from \eqref{20230323eq01}, we deduce that
\begin{eqnarray*}
\Lambda_{\ge 0}^{3, 2}(\vec{f})&\lesssim& \sum_{k \ge 0} \sum_{\substack{0 \le j \le 2k \\ u \ge 2k}} 2^{-\epsilon u} \sum_{2k \le l \le u+2k} \left\|f_{1, j}^k \right\|_{L^2 } \left\|f_{2, l}^k \right\|_{L^2 } \left\|f_3 \right\|_{L^\infty} \left\|f_4 \right\|_{L^\infty}\\
&\lesssim& \sum_{k \ge 0} \sum_{\substack{ 0 \le j \le 2k \\u \ge 2k}} u2^{-\epsilon u} \left\|f_1 \right\|_{L^2}\left\|f_2 \right\|_{L^2}\left\|f_3 \right\|_{L^\infty}\left\|f_1 \right\|_{L^\infty}\lesssim \|\vec{f}\|_{L^{\vec{p}}}.
\end{eqnarray*}
As a consequence, we conclude $\Lambda_{\ge 0}^3(\vec{f}) \lesssim \|\vec{f}\|_{L^{\vec{p}}}$ as desired.

\medskip

\textit{Estimate of $\Lambda_{\ge 0}^4(\vec{f})$.} In this case, we write
$$
\Lambda_{\ge 0}^4=\Lambda_{\ge 0}^{4, 1}+\Lambda_{\ge 0}^{4, 2},
$$
with
$$
\Lambda_{\ge 0}^{4, 1}(\vec{f}):=\sum_{\substack{0 \le 2k \le j \le l \le m  \\ m \le 4k}} \left|  \Lambda_{j, l, m}^k (\vec{f}) \right| \quad \textrm{and} \quad \Lambda_{\ge 0}^{4, 2}:=\sum_{\substack{0 \le 2k \le j \le l \le m  \\ m > 4k}} \left|  \Lambda_{j, l, m}^k (\vec{f}) \right|.
$$
For the term $\Lambda_{\ge 0}^{4, 1}$, note that if $m \le 4k$, then $m-j \le 2k$, and hence from \eqref{20230323eq01}, one has
\begin{equation*}
\Lambda_{\ge 0}^{4, 1}(\vec{f})
\lesssim \sum_{k \ge 0} \sum_{j, l, m \le 4k} 2^{-\epsilon k} \left\|f_1 \right\|_{L^2}\left\|f_2 \right\|_{L^2}\left\|f_3 \right\|_{L^\infty}\left\|f_1 \right\|_{L^\infty}
\lesssim \|\vec{f}\|_{L^{\vec{p}}}.
\end{equation*}
For the term $\Lambda_{\ge 0}^{4, 2}$, using again \eqref{20230323eq01}, we see that
\begin{equation*}
\Lambda_{\ge 0}^{4, 2}(\vec{f})\lesssim \sum_{m \ge 0} \sum_{j, l, k \le m} 2^{-\epsilon m} \left\|f_1 \right\|_{L^2}\left\|f_2 \right\|_{L^2}\left\|f_3 \right\|_{L^\infty}\left\|f_4 \right\|_{L^\infty}\lesssim  \|\vec{f}\|_{L^{\vec{p}}}.
\end{equation*}
The final estimate for the term $\Lambda_{\ge 0}^4$ follows then immediately from the above two estimates.

\medskip

Thus, combining all the estimates of $\Lambda_{\ge 0}^i, i\in\{1, 2, 3, 4\}$, we get
\begin{equation}\label{ctrlplus}
\Lambda_{\ge 0}(\vec{f}) \lesssim \|\vec{f}\|_{L^{\vec{p}}} \quad\forall\, \vec{p} \in {\bf H}^{+}.
\end{equation}

Once at this point, by following the same type of reasonings--see the shifted square function argument therein--as in the proof of Lemma \ref{offdiagnonst}, we immediately deduce

\begin{lem}\label{fixjlm} Assume $j,l,m\in\N$ fixed and set
\begin{equation}\label{jlmplus}
 \Lambda_{j, l, m}^{\pm} (\vec{f}):=\sum_{k\in \Z_{\pm}}  |\Lambda_{j, l, m}^k (\vec{f})|\,.
\end{equation}
Then, for any $k\in\Z$ and $\vec{p}\in\overline{\textbf{B}}$, the following holds:
\begin{equation}
\left| \Lambda_{j, l, m}^{k} (\vec{f})\right|\lesssim_{\vec{p}} \|\vec{f}\|_{L^{\vec{p}}}\qquad\textrm{and}\qquad\left| \Lambda_{j, l, m}^{\pm} (\vec{f})\right|\lesssim_{\vec{p}} \left(\max\{j,l,m\}\right)^3\,\|\vec{f}\|_{L^{\vec{p}}}\,.
\end{equation}
\end{lem}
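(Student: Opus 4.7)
Both estimates follow by adapting the shifted square--function/shifted maximal--function strategy of Lemma \ref{offdiagnonst}, with the essential difference that there is no non-stationary phase to integrate by parts against, so the $2^{-n}$ gain is absent but also not needed.

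For the single-scale estimate, I would substitute $s = 2^k t$ to write
\begin{equation*}
\Lambda^k_{j,l,m}(\vec f) = \int_{\R} \int_{|s|\sim 1} f^k_{1,j}(x - s/2^k)\, f^k_{2,l}(x + s^2/2^{2k})\, f^k_{3,m}(x + s^3/2^{3k})\, f_4(x)\, \varrho(s)\, ds\, dx,
\end{equation*}
and dominate each Littlewood--Paley piece $|f^k_{i,\cdot}(y)|$ pointwise by $M f_i(y)$, uniformly in all parameters and in the shift. Fubini in $s$ followed by H\"older in $x$ and the $L^{p_i}$-boundedness of $M$ (for $p_i > 1$; trivial at $p_i = \infty$) yield $|\Lambda^k_{j,l,m}(\vec f)| \lesssim_{\vec p} \|\vec f\|_{L^{\vec p}}$ for any $\vec p \in \overline{\textbf{B}}$.

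For the summed estimate, I would perform the same change of variable and then handle $\sum_{k \in \Z_\pm}$ pointwise. The key observation is that reindexing $k' := j + k$ (resp.\ $l + 2k$, $m + 3k$) turns the shifts $s/2^k$, $s^2/2^{2k}$, $s^3/2^{3k}$ into $2^j s / 2^{k'}$, $2^l s^2 / 2^{k'}$, $2^m s^3 / 2^{k'}$, so that each partial sum is dominated---in the $\ell^2$ or $\ell^\infty$ sense---by a shifted square function $S^\phi_{2^j s} f_1$ or a shifted maximal function $M^\phi_{2^j s} f_1$ from Lemma \ref{sqfunct} (and analogously for $f_2, f_3$). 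Since $\sum_{i=1}^4 1/p_i = 1$ with each $p_i > 1$, at most one of $p_1, p_2, p_3$ equals $\infty$; I would reserve a shifted maximal function for that single input and assign shifted square functions to the other two, using the pointwise inequality $\sum_k |A_k B_k C_k| \le \sup_k |C_k| \cdot \bigl(\sum_k |A_k|^2\bigr)^{1/2}\bigl(\sum_k |B_k|^2\bigr)^{1/2}$.

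Plugging this in and then applying Fubini and H\"older in $x$ reduces matters to
\begin{equation*}
\int_{|s|\sim 1} \|S^\phi_{2^j s} f_1\|_{L^{p_1}}\, \|S^\phi_{2^l s^2} f_2\|_{L^{p_2}}\, \|M^\phi_{2^m s^3} f_3\|_{L^{p_3}}\, \|f_4\|_{L^{p_4}}\, |\varrho(s)|\, ds,
\end{equation*}
or a suitable permutation of the three shifted operators among $f_1, f_2, f_3$. The logarithmic bounds of Lemma \ref{sqfunct} with $|\tau| \sim 2^j, 2^l, 2^m$ respectively then produce the factor $j\cdot l \cdot m \le (\max\{j,l,m\})^3$, completing the proof. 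The only mild subtlety---and the only step that requires any care---is the $p=\infty$ endpoint of the shifted square function: this is precisely why the reassignment above, placing the shifted maximal function on the input whose $L^p$-index is $\infty$ (where no logarithmic loss occurs), is needed.
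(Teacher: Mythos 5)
Your overall strategy is the same one the paper intends: it gives no separate argument for this lemma and simply points back to the shifted square-function/shifted maximal-function scheme of Lemma \ref{offdiagnonst}, dropping the integration by parts (and hence the $2^{-n}$ gain) and keeping the logarithmic factors $j\cdot l\cdot m\le(\max\{j,l,m\})^3$ from Lemma \ref{sqfunct}. The single-scale estimate is fine as you wrote it, and the reindexing that converts the shifts $s/2^k$, $s^2/2^{2k}$, $s^3/2^{3k}$ into shift parameters $2^js$, $2^ls^2$, $2^ms^3$ relative to the Littlewood--Paley scales $j+k$, $l+2k$, $m+3k$ is exactly right.

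There is, however, one concrete error in the summed estimate. You assert that since $\sum_{i=1}^4 1/p_i=1$ with all $p_i>1$, at most one of $p_1,p_2,p_3$ equals $\infty$. That is false: $\vec p=(2,\infty,\infty,2)$ and $(\infty,\infty,2,2)$ lie in $\overline{\textbf{B}}$. For such exponents your assignment (two shifted square functions on the three curve inputs, one shifted maximal function on the input with infinite exponent) breaks down, because one of the two square functions would have to be estimated in $L^\infty$, where \eqref{sq1} is unavailable; and the pointwise inequality you use genuinely needs two $\ell^2$ sums in $k$. The correct count is that at least \emph{two of the four} exponents $p_1,\dots,p_4$ are finite, and when only one of $p_1,p_2,p_3$ is finite you must place the second square function on $f_4$. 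This is legitimate because in $\Lambda^k_{j,l,m}$ the function $f_4$ is paired against a function whose Fourier support lies in the sumset $A(k,j,l,m)=\mathrm{supp}\,\phi(\cdot/2^{j+k})+\mathrm{supp}\,\phi(\cdot/2^{l+2k})+\mathrm{supp}\,\phi(\cdot/2^{m+3k})$, so $f_4$ may be replaced by its frequency projection onto $A(k,j,l,m)$; these sets are finitely overlapping in $k$ for fixed $(j,l,m)$, and the associated (unshifted) square function is bounded on $L^{p_4}$ for $1<p_4<\infty$. This is precisely what the paper's proof of Lemma \ref{offdiagnonst} does when it assumes without loss of generality that $p_1,p_2\neq\infty$ and then invokes the ``symmetry of the proof in $f_1,f_2,f_3$ and $f_4$.'' With that repair your argument covers all of $\overline{\textbf{B}}$.
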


Combining now the proof of \eqref{ctrlplus} with Lemma \ref{fixjlm} and multi-linear interpolation applied within each of the decompositions \eqref{dec1}--\eqref{dec3}, we conclude that
$$
\Lambda_{\ge 0}(\vec{f}) \lesssim_{\vec{p}} \|\vec{f}\|_{L^{\vec{p}}}\quad \forall\,\vec{p} \in {\bf B}^+,
$$
where
$$
{\bf B}^+:=\left\{\substack{(p_1, p_2, p_3, p_4) \\ 1<p_i \le \infty, \ i\in\{1, 2, 3, 4\}\\p_1\not=\infty}: \frac{1}{p_1}+\frac{1}{p_2}+\frac{1}{p_3}+\frac{1}{p_4}=1 \right\}\,.
$$

\subsection{The high oscillatory component $\Lambda_-^{Hi}$}
The treatment of $\Lambda_-^{Hi}$ mirrors the one of $\Lambda_{+}^{Hi}$. Indeed, we first notice that in the regime $j,l,m\in\N$ and $k\in \Z_{-}$ the analogue of \eqref{hiplus} is given by
\begin{equation}\label{himinus}
\Lambda_{j, l, m}^k (\vec{f})=\iint_{\R^2} f^k_{1, j}\left(x-t^{\frac{1}{3}} \right) f^k_{2, l} \left(x+t^{\frac{2}{3}} \right) f^k_{3, m} \left(x+t \right) f_4(x) 2^{3k} \rho(2^{3k}t) dtdx.
\end{equation}
Combining now the estimates from Section \ref{Negative} with the $k<0$ analogues of those obtained above for the stationary off-diagonal term $\Lambda^{\not{D},S}$ and using similar reasonings with those stated in Observation \ref{ContBound} we obtain that there exists $\epsilon>0$ such that for any $j,l,m\in\N$ and $k\in\Z_{-}$ the following global estimate holds:
$$
\left| \Lambda_{j, l, m}^k (\vec{f}) \right| \lesssim 2^{-\epsilon \max \left\{  \min \left\{ 2|k|, \max\{j, l, m\} \right\}, \max \left\{|j-l|, |l-m|, |j-m| \right\} \right\}} \|\vec{f}\|_{L^{\vec{p}}} \quad \forall\, \vec{p} \in {\bf H}^{-}.
$$
Following now same type arguments as in the case $k \ge 0$, essentially by interchanging the role of $k$ there by $-k$, we get
$$
\Lambda_{\le 0}(\vec{f}):=\sum_{\substack{0 \le j \le l \le m \\ k \ge 0}} \left|\Lambda_{j, l, m}^k (\vec{f}) \right| \lesssim \|\vec{f}\|_{L^{\vec{p}}} \quad \forall\, \vec{p} \in {\bf H}^{-},
$$
which, as before, together with Lemma \ref{fixjlm} and a multi-linear interpolation argument, gives
$$
\Lambda_{\le 0}(\vec{f}) \lesssim_{\vec{p}} \|\vec{f}\|_{L^{\vec{p}}} \quad  \forall\, \vec{p} \in {\bf B}^{-},
$$
where
$$
{\bf B}^{-}:=\left\{ \substack{(p_1, p_2, p_3, p_4) \\ 1<p_i \le \infty, i\in\{1, 2, 3, 4\}\\p_3 \not=\infty}: \frac{1}{p_1}+\frac{1}{p_2}+\frac{1}{p_3}+\frac{1}{p_4}=1\right\}.
$$

\subsection{The low oscillatory component $\Lambda^{Lo}$}
In the third part of this section, we simply restate the main result in Section \ref{20230412Sec01}--see Theorem \ref{20230326lem01} therein--which asserts
$$
\Lambda^{Lo}(\vec{f}) \lesssim_{\vec{p}} \|\vec{f}\|_{L^{\vec{p}}} \quad \forall\,p \in {\bf B},
$$
where we recall that
$$
\Lambda^{Lo}(\vec{f}):=\sum_{k \in \Z} \sum_{(j, l, m) \in \Z^3 \backslash \N^3} \Lambda_{j, l, m}^k(\vec{f})\,.
$$

\subsection{Final range}
Combining all the estimates for $\Lambda_{+}^{Hi}$, $\Lambda_{-}^{Hi}$, and $\Lambda^{Lo}$, we conclude that
$$
|\Lambda(\vec{f})| \lesssim_{\vec{p}} \|\vec{f}\|_{L^{\vec{p}}} \quad \forall\,\vec{p} \in {\bf B}={\bf B}^{+} \cap {\bf B}^{-}.
$$
(see, Figure \ref{Fig01}).

\begin{center}
\begin{figure}[ht]
\begin{tikzpicture}[scale=0.9]
\pgfmathsetmacro{\factor}{.5};
\coordinate [label=right:] (A) at (2,0,-2*\factor);
\fill (2,0,-2*\factor) node [right] {$(1, 0, 0, 0)$};
\coordinate [label=left:] (B) at (-2,0,-2*\factor);
\fill (-2,0,-2*\factor) node [left] {$(0, 1, 0, 0)$};
\coordinate [label=above:] (C) at (0,2,2*\factor);
\fill (0, 2,2*\factor) node [above] {$(0, 0, 0, 1)$};
\coordinate [label=below:] (D) at (0,-2,2*\factor);
\fill (0,-2,2*\factor) node [below] {$(0, 0, 1, 0)$};
\coordinate [label=right:] (E) at (1,-1, 0*\factor);
\fill (1.1,-1,0*\factor) node [right] {$\left(\frac{1}{2}, 0, \frac{1}{2}, 0\right)$};
\foreach \i in {A,B,C,D}
\draw[-, fill=red!15, opacity=.1] (A)--(D)--(B)--(C)--cycle;
\draw[-, fill=green!50, opacity=.3] (A)--(D)--(B)--cycle;
\draw[-, fill=green!50, opacity=.3] (A)--(C)--(D)--cycle;
\fill (0, 0, 0*\factor) node [right] {${\bf B}$};
\draw [red] (C)--(D);
\draw [red, dashed] (A)--(B);
\draw [red] (A)--(C);
\draw [green] (A)--(D);
\draw [red] (B)--(C);
\draw [red] (B)--(D);
\fill [opacity=1, purple] (A) circle [radius=2pt];
\fill [opacity=1, purple] (B) circle [radius=2pt];
\fill [opacity=1, purple] (C) circle [radius=2pt];
\fill [opacity=1, purple] (D) circle [radius=2pt];
\fill [opacity=1, teal] (E) circle [radius=2pt];
\end{tikzpicture}
\caption{Banach boundedness range for $\Lambda$}
\label{Fig01}
\end{figure}
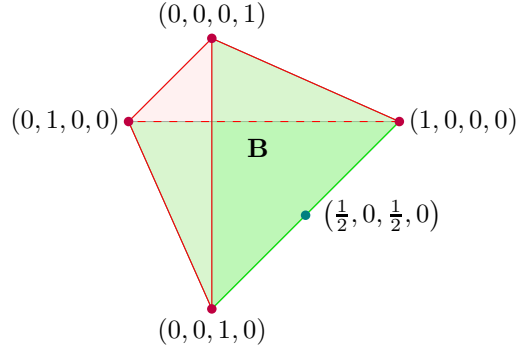
\end{center}

\medskip
\section{Appendix}\label{Appendix}

In this section we address three technical points that we encountered in some key moments along our proof:
\begin{itemize}
\item Localization of the time-frequency correlation sets;

\item A rigorous justification for Key Heuristic 1;

\item Continuity of the $\Lambda_{j, l, m}^k-$bounds.
\end{itemize}
Our presentation will be kept short and will follow the order listed above:

\medskip
\subsection{Localization of the time-frequency correlation set(s)} \label{AppendixB}

In this section we want to provide more evidence on how one can reduce \eqref{20230222eq21} to the main contribution represented by \eqref{LGC1} fact that requires some elaboration on the definition of the time-frequency correlation set \eqref{TFCqq}.

To begin with, we let
$$
\one_{I^k}(t)=\sum_{q \sim 2^{\frac{m}{2}}} \psi_{I_q^{m, k}}(t)
$$
be a smooth partition of unity adapted to the interval $I^k$. Using this, one can rewrite \eqref{20230222eq10} as
\begin{eqnarray*}
&& \Lambda_m^k(\vec{f})= 2^k \sum_{\substack{p \sim 2^{\frac{m}{2}+2k} \\ q \sim 2^{\frac{m}{2}}}}  \sum_{\substack{p_j \sim 2^{\frac{m}{2}+\left(j-1 \right)k}, \\ n_j \sim 2^{\frac{m}{2}},  \ j\in\{1, 2, 3\}}} \left \langle f_1, \phi_{p_1, n_1}^k \right \rangle \left \langle f_2, \phi_{p_2, n_2}^{2k} \right \rangle \left \langle f_3, \phi_{p_3, n_3}^{3k} \right \rangle \\
&&\quad \cdot \int_{I_p^{m, 3k}} \int_{\R} \phi_{p_1, n_1}^k \left(x-t \right) \phi_{p_2, n_2}^{2k} \left(x+t^2 \right) \phi_{p_3, n_3}^{3k} \left(x+t^3 \right) \psi_{I_q^{m, k}}(t) f_4(x) dtdx
\end{eqnarray*}
Now following the same reasoning as in {\sf Step 3} of Section \ref{LGC(I)}, we notice that the term \eqref{20230422eq01} takes the form
$$
2^{-\frac{m}{2}-k} \cdot C_{q, n_1, n_2, n_3} \cdot \int_{\R} e^{-it \left(n_1-\frac{2q}{2^{\frac{m}{2}}}n_2-\frac{3q^2}{2^m}n_3 \right)} \psi_{[0, 1]}(t)dt=2^{-\frac{m}{2}-k} \cdot C_{q, n_1, n_2, n_3}   \cdot \widehat{\psi_{[0, 1]}} \left(n_1-\frac{2q}{2^{\frac{m}{2}}}n_2-\frac{3q^2}{2^m}n_3 \right),
$$
where $\psi_{[0, 1]}$ is a smooth bump function supported near $0$. Since $\widehat{\psi_{[0, 1]}}$ is a Schwartz function, the main contribution from the above expression comes from those triples $(n_1, n_2, n_3)$ that satisfy the relation ``$n_1-\frac{2q}{2^{\frac{m}{2}}}n_2-\frac{3q^2}{2^m}n_3$ is close to the origin".  In order to make this more precise, one can define an $\bar{\epsilon}$-thickening of the original time-frequency correlation set
$$
\TFC_q^I(\bar{\epsilon}):=\left\{ \substack{(n_1, n_2, n_3) \\ n_1 ,n_2, n_3 \sim 2^{\frac{m}{2}}}: \left| n_1-\frac{2q}{2^{\frac{m}{2}}} n_2-\frac{3q^2}{2^m}n_3 \right|<2^{\bar{\epsilon} \min \left\{k, \frac{m}{2}\right\}} \right\},
$$
where $\bar{\epsilon}>0$ is an absolute constant smaller than the constant $\epsilon$ appearing in our main Theorem \ref{mainthm-2022}.

Once at this point, the strategy to follow is pretty straightforward:
\begin{itemize}
\item if $(n_1, n_2, n_3) \in \TFC_q^I(\bar{\epsilon})$ then one can follow with only trivial changes the proof of Theorem \ref{mainthm-2022};

\item if $(n_1, n_2, n_3) \notin \TFC_q^I(\bar{\epsilon})$ the one can prove the desired estimate by just applying Rank I LGC and exploiting the rapid decay of the expression $\widehat{\psi_{[0, 1]}} \left(n_1-\frac{2q}{2^{\frac{m}{2}}}n_2-\frac{3q^2}{2^m}n_3 \right)$.
 \end{itemize}

\subsection{A rigorous justification for Key Heuristic 1} \label{AppendixA}

We focus our attention on the claim \eqref{20230131eq04} in Key Heuristic 1 which states that for any $p \sim 2^{\frac{m}{2}+2k}$, the mapping
\begin{equation} \label{20230421eq01}
x \mapsto J_{q, n}^k(f_1, f_2)(x)=\int_{I_q^{m, k}} f_1(x-t)f_2(x+t^2) e^{3i \cdot 2^{\frac{m}{2}+k}t \left(\frac{q}{2^{\frac{m}{2}}} \right)^2 n} dt
\end{equation}
is morally constant on intervals of the form $I_p^{m, 3k}$ under the assumption that $\supp \ \widehat{f_i} \subseteq \left[2^{m+ik}, 2^{m+ik+1} \right]$ and $k \ge \frac{m}{2}$.

Fix now $p \sim 2^{\frac{m}{2}+2k}$, $x\in I_p^{m, 3k}$ and $t\in I_q^{m, k}$. Applying a Taylor series argument, we see that
\begin{equation} \label{20230421eq03}
f_2(x+t^2)=\int_{\R} \widehat{f_2}(\eta) \phi \left(\frac{\eta}{2^{m+2k}} \right) e^{i \left(\frac{p}{2^{\frac{m}{2}+3k}}+t^2 \right) \eta} e^{i \left(x-\frac{p}{2^{\frac{m}{2}+3k}} \right) \eta} d\eta=f_2 \left(\frac{p}{2^{\frac{m}{2}+3k}}+t^2 \right)+\textrm{Err}_p f_2(x, t),
\end{equation}
where
$$
\textrm{Err}_p f_2 (x, t):= \sum_{j=1}^{\infty} \frac{i^j}{j!} \left[2^{m+2k} \left(x-\frac{p}{2^{\frac{m}{2}+3k}} \right) \right]^j \int_{\R} \widehat{f_2}(\eta) \left(\frac{\eta}{2^{m+2k}} \right)^j \phi \left(\frac{\eta}{2^{m+2k}} \right) e^{i \left(\frac{p}{2^{\frac{m}{2}+3k}}+t^2 \right) \eta} d\eta.
$$
Plugging \eqref{20230421eq03} back to \eqref{20230421eq01}, we see that, for $x \in I_p^{m, 3k}$, it is sufficient to consider the situation when $J_{q, n}^k(f_1, f_2)(x)$ takes the form
\begin{equation} \label{20230421eq02}
\int_{I_q^{m, k}} f_1(x-t) f_2 \left(\frac{p}{2^{\frac{m}{2}+3k}}+t^2 \right) e^{3i \cdot 2^{\frac{m}{2}+k}t \left(\frac{q}{2^{\frac{m}{2}}} \right)^2 n} dt\,.
\end{equation}
Indeed, this is because the remaining term corresponding to $\textrm{Err}_p f_2 (x, t)$ is given by
$$\sum\limits_{j=1}^{\infty} \frac{i^j}{j!} \left[2^{m+2k} \left(x-\frac{p}{2^{\frac{m}{2}+3k}} \right) \right]^i J_{q, n, j}^k (f_1, f_2)(x)\,,$$
where $J_{q, n, j}^k (f_1, f_2)(x)$ is similar to \eqref{20230421eq02}. Thus, once at this point, one can simply transfer all the  arguments involving \eqref{20230421eq02} to the expression  $J_{q, n, j}^k(f_1, f_2)(x)$.

Finally, one can repeat the same argument for the term $f_1(x-t)$ and thus conclude that it is sufficient to consider the case when $J_{q, n}^k(f_1, f_2)(x)$ takes the form
$$
\int_{I_q^{m, k}} f_1\left(\frac{p}{2^{\frac{m}{2}+3k}}-t \right) f_2 \left(\frac{p}{2^{\frac{m}{2}+3k}}+t^2 \right) e^{3i \cdot 2^{\frac{m}{2}+k}t \left(\frac{q}{2^{\frac{m}{2}}} \right)^2 n} dt \quad \textrm{for} \ x \in I_p^{m, 3k},
$$
which proves the desired claim.

\medskip
\subsection{Continuity of the $\Lambda_{j, l, m}^k-$bounds.} \label{AppendixC}

In this section we provide a quick outline of how ``to glue" the decay estimates for $\Lambda_{j, l, m}^k$ listed at the beginning of Section \ref{Boundhiplus}. Indeed, recalling Observation \ref{ContBound} and its consequence in the form of the global estimate \eqref{20230323eq01} we now provide a justification of why these decay estimates are indeed ``continuous" with respect to $k,j,l,m$. We will take as a model\footnote{The rest of these decay estimates can be proved in a similar fashion.} case the items (1) and (3) in the list displayed in Section \ref{Boundhiplus}: we now claim that there exists $\del>0$ sufficiently small, such that in a small neighborhood of $k=\frac{m}{2}$, say $V_{\del m} \left(\frac{m}{2} \right):=\left\{k: \left|k-\frac{m}{2} \right|< \del m \right\}$ one has that
\begin{equation} \label{20230422eq02}
\Lambda_m^k (\vec{f}) \lesssim 2^{-\frac{\epsilon m}{2}} \|\vec{f}\|_{L^{\vec{p}}}, \quad  \vec{p} \in {\bf H}^{+}.
\end{equation}

Notice that it is sufficient to consider the case when $\frac{m}{2}-\del m \le k <\frac{m}{2}$. Since $k<\frac{m}{2}$, we apply the frequency decomposition in Section \ref{Diagkpsmall} and remark that we have $\ell_1, \ell_2, \ell_3 \sim 2^{\frac{m}{2}-k}<2^{\del m}$. Fixing now such a triple $(\ell_1, \ell_2, \ell_3)$ and following the arguments in Section \ref{Model}, we reach to the following expression
\begin{eqnarray*}
&&\Lambda_{m; \ell_1, \ell_2, \ell_3}^k(\vec{f}):= 2^k \sum\limits_{\substack{p \sim 2^{\frac{m}{2}+2k}\\ q \sim 2^{\frac{m}{2}} \\ n \sim 2^k}} \left| \left\langle f_3^{\omega_3^{\ell_3}}, \phi_{p+\frac{q^3}{2^m}, \ell_3 2^k+n}^{3k} \right \rangle \right|  \left| \left \langle f_4 e^{i 2^{\frac{m}{2}+3k} \left(\ell_1 2^{-k}+\ell_2 \right) \left( \cdot \right) }, \phi_{p, \ell_3 2^k+n}^{3k} \right \rangle \right| \nonumber \\
&& \cdot \frac{1}{\left|I_p^{m, 3k} \right|} \int_{I_p^{m, 3k}} \left| \int_{I_q^{m, k}}  \underline{f}_1^{\omega_1^{\ell_1}}(x-t) \underline{f}_2^{\omega_2^{\ell_2}}(x+t^2) e^{i 2^{\frac{m}{2}+k}t \cdot \frac{3q^2}{2^m}n}  e^{i 2^{\frac{m}{2}+k}t \cdot 2^k \left(-\ell_1 +\ell_2 \frac{2q}{2^{\frac{m}{2}}}+ \ell_3 \frac{3q^2}{2^m}\right)} dt\right| dx.
\end{eqnarray*}
Letting now $f_1:=\underline{f}_1^{\omega_1^{\ell_1}} \cdot  e^{-i 2^{\frac{m}{2}+k} (\cdot) \cdot 2^k \left(-\ell_1 +\ell_2 \frac{2q}{2^{\frac{m}{2}}}+ \ell_3 \frac{3q^2}{2^m}\right)}$, $f_2:=\underline{f}_2^{\omega_2^{\ell_2}}$, $f_3:=f_3^{\omega_3^{\ell_3}}$, and $f_4:=f_4 e^{i 2^{\frac{m}{2}+3k} \left(\ell_1 2^{-k}+\ell_2 \right) \left( \cdot \right) }$, we see that the above expression is similar to the main diagonal term \eqref{20230131main01} for $k \ge \frac{m}{2}$. Therefore, one can apply a similar argument with the one in Section \ref{MaindiagKpositive} in order to bound the term $\Lambda_{m; \ell_1, \ell_2, \ell_3}^k \left(\vec{f} \right)$. The desired decay \eqref{20230422eq02} follows now from the observation $\# (\ell_1, \ell_2, \ell_3) \lesssim 2^{3\del m}$ together with the choice $\del<\frac{\epsilon}{100}$.

\end{document}